\documentclass[10pt, reqno]{amsart}
\usepackage{amsfonts}
\usepackage{mathrsfs}
\usepackage{amscd}
\usepackage{amsmath}
\usepackage{amssymb}
\usepackage{latexsym}
\usepackage{lscape}
\usepackage{xypic}
\usepackage{comment}
\usepackage{amscd}
\usepackage{wasysym}  
\usepackage{tikz} 
\usetikzlibrary{matrix,arrows}
\usepackage{tikz-cd}
\usepackage{appendix}
\usepackage{geometry}
\usepackage[nice]{nicefrac}
\usepackage{dsfont}
\usepackage{color}
\usepackage{graphicx}
\usepackage{todonotes}
\usepackage{theoremref}
\usepackage[english]{babel}
\usepackage{bbm}
\usepackage{theoremref}
\usepackage{leftidx}

\usepackage[hyperindex,linktocpage=true]{hyperref}
\hypersetup{
	colorlinks,
	linkcolor={red!50!black},
	citecolor={blue!50!black},
	urlcolor={blue!80!black}
}

\newtheorem{thm}{Theorem}[section]

\newtheorem{prop}[thm]{Proposition}
\newtheorem{lem}[thm]{Lemma}

\newtheorem{lem-def}[thm]{Lemma-Definition}
\newtheorem{cor}[thm]{Corollary}

\theoremstyle{definition}

\newtheorem{ex}[thm]{Example}

\newtheorem{rmk}[thm]{Remark}
\newtheorem{dfn}[thm]{Definition}
\newtheorem{constr}[thm]{Construction}
\newtheorem{nota}[thm]{Notation}

\newtheorem{ass}[thm]{Assumption}

\makeatletter
\newcommand*{\da@rightarrow}{\mathchar"0\hexnumber@\symAMSa 4B }
\newcommand*{\da@leftarrow}{\mathchar"0\hexnumber@\symAMSa 4C }
\newcommand*{\xdashrightarrow}[2][]{%
	\mathrel{%
		\mathpalette{\da@xarrow{#1}{#2}{}\da@rightarrow{\,}{}}{}%
	}%
}
\newcommand{\xdashleftarrow}[2][]{%
	\mathrel{%
		\mathpalette{\da@xarrow{#1}{#2}\da@leftarrow{}{}{\,}}{}%
	}%
}
\newcommand*{\da@xarrow}[7]{%
	\sbox0{$\ifx#7\scriptstyle\scriptscriptstyle\else\scriptstyle\fi#5#1#6\m@th$}%
	\sbox2{$\ifx#7\scriptstyle\scriptscriptstyle\else\scriptstyle\fi#5#2#6\m@th$}%
	\sbox4{$#7\dabar@\m@th$}%
	\dimen@=\wd0 %
	\ifdim\wd2 >\dimen@
	\dimen@=\wd2 %   
	\fi
	\count@=2 %
	\def\da@bars{\dabar@\dabar@}%
	\@whiledim\count@\wd4<\dimen@\do{%
		\advance\count@\@ne
		\expandafter\def\expandafter\da@bars\expandafter{%
			\da@bars
			\dabar@ 
		}%
	}%  
	\mathrel{#3}%
	\mathrel{%   
		\mathop{\da@bars}\limits
		\ifx\\#1\\%
		\else
		_{\copy0}%
		\fi
		\ifx\\#2\\%
		\else
		^{\copy2}%
		\fi
	}%   
	\mathrel{#4}%
}
\makeatother

\newcommand{\Bb}{\mathcal{B}}
\newcommand{\Cc}{\mathcal{C}}
\newcommand{\Dd}{\mathcal{D}}
\newcommand{\Ee}{\mathcal{E}}
\newcommand{\Ff}{\mathcal{F}}
\newcommand{\Gg}{\mathcal{G}}
\newcommand{\Hh}{\mathcal{H}}
\newcommand{\Ii}{\mathcal{I}}

\newcommand{\Ll}{\mathscr{L}}

\newcommand{\Oo}{\mathcal{O}}
\newcommand{\Pp}{\mathcal{P}}

\newcommand{\Ss}{\mathcal{S}}
\newcommand{\Tt}{\mathcal{T}}

\newcommand{\IA}{\mathbb{A}}

\newcommand{\ID}{\mathbb{D}}
\newcommand{\IE}{\mathbb{E}}
\newcommand{\IF}{\mathbb{F}}
\newcommand{\IG}{\mathbb{G}}

\newcommand{\II}{\mathbb{I}}

\newcommand{\IL}{\mathbb{L}}
\newcommand{\IM}{\mathbb{M}}
\newcommand{\IN}{\mathbb{N}}

\newcommand{\IQ}{\mathbb{Q}}
\newcommand{\IR}{\mathbb{R}}
\newcommand{\IS}{\mathbb{S}}

\newcommand{\IV}{\mathbb{V}}

\newcommand{\IX}{\mathbb{X}}

\newcommand{\IZ}{\mathbb{Z}}

\newcommand{\BB}{\mathfrak{B}}

\newcommand{\RR}{\mathfrak{R}}

\newcommand{\ff}{\mathbf{f}}
\let\gge\gg
\renewcommand{\gg}{\mathfrak{g}}

\newcommand{\pp}{\mathfrak{p}}

\newcommand{\bfa}{\mathbf{a}}
\newcommand{\bfb}{\mathbf{b}}
\newcommand{\bfc}{\mathbf{c}}

\newcommand{\bfe}{\mathbf{e}}

\newcommand{\bfJ}{\mathbf{J}}

\numberwithin{equation}{section}

\DeclareMathOperator{\End}{End} % Endomorphism ring
\DeclareMathOperator{\Gr}{Gr} % Affine Grassmannian
\DeclareMathOperator{\Fl}{Fl} % Affine flag variety
\DeclareMathOperator{\Spec}{Spec} % Spectrum
\DeclareMathOperator{\Spf}{Spf} % Formal spectrum
\DeclareMathOperator{\Spd}{Spd} % Diamond spectrum
\newcommand{\can}{{\operatorname{can}}}% canonical deperfections
\newcommand{\perf}{{\operatorname{perf}}} % perfection
\DeclareMathOperator{\identity}{id} % Identity map
\DeclareMathOperator{\DM}{DM} % Motives
\DeclareMathOperator{\DTM}{DTM} % Tate motives
\DeclareMathOperator{\DATM}{DATM} % Artin-Tate motives
\DeclareMathOperator{\MTM}{MTM} % Mixed Tate motives
\DeclareMathOperator{\MATM}{MATM} % Mixed Artin-Tate motives
\DeclareMathOperator{\Sch}{Sch} % Category of schemes
\DeclareMathOperator{\op}{op} % Opposite category
\DeclareMathOperator{\St}{St} % Stable
\renewcommand{\Pr}{\operatorname{Pr}} % Presentable infinity category
\DeclareMathOperator{\et}{\acute{e}t} % étale motives
\DeclareMathOperator{\PreStk}{PreStk} % Prestacks
\DeclareMathOperator{\Ani}{An} % Anima
\DeclareMathOperator{\Fun}{Fun} % Functors
\DeclareMathOperator{\Gal}{Gal} % Galois group
\DeclareMathOperator{\pr}{pr} % Projection map
\DeclareMathOperator{\CT}{CT} % Constant term
\DeclareMathOperator{\Cent}{Cent} % Centralizer
\DeclareMathOperator{\Norm}{Norm} % Normalizer
\DeclareMathOperator{\der}{der} % Derived subgroup
\DeclareMathOperator{\im}{im} % Image
\DeclareMathOperator{\adj}{ad} % Adjoint
\DeclareMathOperator{\sico}{sc} % Simply connected
\DeclareMathOperator{\Aut}{Aut} % Automorphism group
\DeclareMathOperator{\Rep}{Rep} % Representation category
\DeclareMathOperator{\IC}{IC} % Intersection complex
\DeclareMathOperator{\Corr}{Corr} % Correspondences
\DeclareMathOperator{\id}{id} % Identity morphism
\DeclareMathOperator{\Hck}{Hck} % Hecke stack
\DeclareMathOperator{\loc}{loc} % Local
\DeclareMathOperator{\Sht}{Sht} % Shtuka space
\DeclareMathOperator{\Sh}{Sh} % Shimura variety
\DeclareMathOperator{\unr}{unr} % Unramified
\DeclareMathOperator{\vsp}{vsp} % Very special
\DeclareMathOperator{\Res}{Res} % Restriction
\DeclareMathOperator{\fd}{fd} % Finite dimensional
\DeclareMathOperator{\Perv}{Perv} % Perverse sheaves
\DeclareMathOperator{\Hom}{Hom} % Hom-groups
\DeclareMathOperator{\Irr}{Irr} % Irreducible components
\DeclareMathOperator{\tr}{tr} % Trace
\DeclareMathOperator{\cl}{cl} % Classical
\DeclareMathOperator{\Ad}{Ad} % Adjoint action
\DeclareMathOperator{\tors}{tors} % Torsion
\DeclareMathOperator{\defect}{def} % Defect
\DeclareMathOperator{\Coh}{Coh} % Coherent sheaves
\DeclareMathOperator{\Comp}{Comp} % Composition
\DeclareMathOperator{\naive}{naive} % Naive integral/local models
\DeclareMathOperator{\spl}{spl} % Splitting models
\DeclareMathOperator{\Lie}{Lie} % Lie algebra
\DeclareMathOperator{\Pair}{Pair} % Pairs
\DeclareMathOperator{\Disp}{Disp} % Displays
\DeclareMathOperator{\bas}{bas} % Basic Newton stratum
\DeclareMathOperator{\Top}{top} % Top-dimensional irreducible components
\DeclareMathOperator{\Sat}{Sat} % Satake
\DeclareMathOperator{\BM}{BM} % Borel--Moore homology
\DeclareMathOperator{\JL}{JL} % Jacquet--Langlands
\DeclareMathOperator{\Perf}{Perf} % Perfect schemes
\DeclareMathOperator{\AffSch}{AffSch} % Affine schemes
\DeclareMathOperator{\BD}{BD} % Beilinson-Drinfeld
\DeclareMathOperator{\Grp}{Grp} % Category of groups
\DeclareMathOperator{\Set}{Set} % Category of sets
\DeclareMathOperator{\Nm}{Nm} % Norm
\DeclareMathOperator{\Adm}{Adm} % Admissible locus
\DeclareMathOperator{\pfp}{pfp} % Perfectly of finite presentation
\DeclareMathOperator{\AlgStk}{AlgStk} % Algebraic stack
\DeclareMathOperator{\Repr}{Repr} % Representable
\DeclareMathOperator{\Cat}{Cat} % Category
\DeclareMathOperator{\CH}{CH} % Chow groups

\newcommand{\into}{\hookrightarrow}
\newcommand{\onto}{\twoheadrightarrow}
\newcommand{\pH}{{}^{\mathrm{p}}\mathrm{H}}

\newcommand{\IHom}{\underline{\operatorname{Hom}}} % Internal Hom
\newcommand{\unit}{\mathbbm{1}} % Unit object (as not yet sure about which coefficients to use)
\newcommand{\QVect}{\IQ\text{-}\operatorname{Vect}} % Q-vector spaces
\newcommand{\grQVect}{\operatorname{gr}\text{-}\IQ\text{-}\operatorname{Vect}} % Graded vector spaces
\newcommand{\Mod}{\text{-}\operatorname{Mod}} % Modules
\newcommand{\GL}{\mathrm{GL}} % General linear group
\newcommand{\SL}{\mathrm{SL}} % Special linear group
\newcommand{\PGL}{\mathrm{PGL}} % Projective linear group
\newcommand{\PU}{\mathrm{PU}} % Projective unitary group
\newcommand{\GSp}{\mathrm{GSp}} % Group of symplectic similitudes
\renewcommand{\phi}{\varphi}
\newcommand{\Tatep}{\mathrm{Tate}_p}

\newcommand{\comp}{\mathrm{c}} % Compact objects
\newcommand{\cons}{\mathrm{c}} % Constructible objects

\newcommand{\Gmot}{\widehat{G}_{\operatorname{mot}}}
\newcommand{\HBM}{\mathrm{H}^{\BM}} % Borel--Moore homology
\newcommand{\GIT}{/\!\!/} % GIT quotient

\begin{document}

\title[Cycles on splitting models]{Cycles on splitting models of Shimura varieties}
\author[Thibaud van den Hove]{Thibaud van den Hove}

\address{Max Planck Insitut für Mathematik, Vivatsgasse 7, 53111 Bonn, Germany}
\email{vandenhove@mpim-bonn.mpg.de}

%\subjclass[2020]{Primary 14G35; Secondary 11S37, 14C25, 14F42}

\begin{abstract}
	We construct exotic Hecke correspondences between the special fibers of different PEL type Shimura varieties, when the local groups are restrictions of scalars of unramified groups.
	In particular, the local groups themselves are not necessarily unramified, and the Shimura varieties can have bad reduction.
	By adapting the methods of Xiao--Zhu in the case of good reduction, we use this to construct new instances of the geometric Jacquet--Langlands correspondence, including a motivic refinement, and verify generic instances of the Tate conjecture for the special fibers of these Shimura varieties at very special level.
	Our main tool is to resolve the integral models by the splitting models of Pappas--Rapoport.
	We also define splitting versions of the moduli stacks of local shtukas and affine Deligne--Lusztig varieties, and study their geometry.
\end{abstract}

\maketitle

\setcounter{tocdepth}{1}
\tableofcontents
\setcounter{section}{0}

%\pagebreak

\thispagestyle{empty}

\section{Introduction}

Shimura varieties form a very interesting class of algebraic varieties.
Their cohomology has been used with great succes to realize instances of the Langlands correspondence, and thanks to their arithmetic properties, many conjectures concerning general algebraic varieties are more tractable for Shimura varieties.
Examples include Langlands' conjecture that all motivic L-functions are automorphic \cite{Langlands:Lfunctions}, and the Tate conjecture (cf.~\cite{HarderLanglandsRapoport:Algebraische} for an example on the generic fiber).
In \cite{XiaoZhu:Cycles}, Xiao--Zhu have studied certain cycles on the special fibers of Shimura varieties, and used them to show further instances of the Langlands correspondence and the Tate conjecture, generalizing certain constructions from \cite{TianXiao:Tate,HelmTianXiao:Tate}.
The Shimura varieties considered in \cite{XiaoZhu:Cycles} have good reduction, i.e., have hyperspecial level at a fixed prime \(p\).
The main goal of the present paper is to generalize the methods of \cite{XiaoZhu:Cycles}, by considering Shimura varieties with certain bad reduction, and to prove instances of the Jacquet--Langlands correspondence and the Tate conjecture for such Shimura varieties.
Another goal is to upgrade this construction to the motivic level, to provide further evidence for the conjecture that the Langlands correspondence is of motivic origin.
In order to simplify the exposition, we explain these two parts seperately in the introduction.
In particular, throughout this introduction, we will state various results involving \(\ell\)-adic étale sheaves for some \(\ell\neq p\), which we will refine to the level of étale motives in the main text.
(The proofs in the \(\ell\)-adic case can be shown verbatim, and are significantly easier than the motivic versions.)

\subsection{The main theorems}

Fix a prime \(p\), which we will assume to be odd throughout the introduction.
For hyperspecial level at \(p\), Shimura varieties are expected to have (and this is known in almost all cases) canonical smooth integral models, over the ring of integers of some completion of the reflex field.
Such a hyperspecial level exists exactly when the local group at \(p\) appearing in the Shimura datum is unramified, i.e., is quasi-split and splits over an unramified extension.
We will be interested in the case where the local group is still quasi-split, but does not necessarily split over an unramified extension.
In that case, there will still be a very special level (in the sense of \cite[Definition 6.1]{Zhu:Ramified}), generalizing the notion of hyperspecial level.

The paper \cite{XiaoZhu:Cycles} contains both global and local arguments.
On the local side, the main tools are the geometric Satake equivalence, relating the category of equivariant perverse sheaves on the affine Grassmannian with representations of the Langlands dual group, the construction of cohomological correspondences on the moduli stack of local shtukas, and the study of irreducible components of certain affine Deligne--Lusztig varieties.
The geometric objects appearing all depend on a (parahoric) level.
Moreover, at very special level, there is a version of the geometric Satake equivalence \cite{Zhu:Ramified,Richarz:Affine}, and the other ingredients also generalize, to a certain degree, to very special level.

However, the global side is more problematic.
There, the crucial input in \cite{XiaoZhu:Cycles} is the construction of so-called exotic Hecke correspondences between the special fibers of the canonical models of different Shimura varieties (which usually do not lift to the generic fiber), and the fact that these special fibers are smooth.
Of course, the special fibers of Shimura varieties at very special level are rarely smooth.
Moreover, in the PEL case, the construction of exotic Hecke correspondences uses the moduli interpretation of the canonical integral models, which is generally not available when the local group is ramified \cite{Pappas:ArithmeticModuli}.
(We note that more general constructions of exotic Hecke correspondences have been announced by Xiao--Zhu and van Hoften--Sempliner, but still under the assumption that the local group splits over an unramified extension of \(\IQ_p\).)

We solve both these problems simultaneously, by considering splitting integral models of Shimura varieties, as introduced in \cite{PappasRapoport:LocalII.Splitting}.
Indeed, we will assume that the the ramification of the local group arises mostly via restriction of scalars (i.e., that it is essentially unramified, in the sense of \thref{definition essentially unramified}).
In that case, the splitting models of PEL type Shimura varieties do admit natural moduli interpretations, and they are smooth at very special level.
In fact, they provide natural resolutions of singularities of the canonical models.
Throughout the introduction, we will denote by \(\Sh_{\mu,K}^{\spl}\) the perfection of the geometric special fiber of the splitting model of a Shimura variety, at level \(K=K_pK^p\), with \(K_p=\Gg(\IZ_p)\) the \(\IZ_p\)-valued points of a parahoric model \(\Gg\).
We note that various theorems stated in the introduction can be extended beyond the essentially unramified case.
For the sake of exposition, we will keep this assumption throughout the introduction, and refer to the main text for generalizations.

By using splitting models, we obtain the following two main theorems, confirming instances of the Tate conjecture, as well as \cite[Conjecture 4.60]{Zhu:Coherent}.
We refer to \thref{main global theorem} and \thref{coro JL} for more details and the unexplained notation.

\begin{thm}\thlabel{intro--Tate}
	Let \((\IG,\IX)\) be a PEL type Shimura datum with Hodge cocharacter \(\mu\), assume that \(G:=\IG_{\IQ_p}\) is essentially unramified with connected center, and that \(\Gg\) is very special parahoric.
	Assume that the basic element in \(B(G,\mu^*)\) is very special in the sense of \thref{very special Kottwitz elements}.
	Then:
	\begin{enumerate}
		\item The basic Newton stratum \(\Sh_{\mu,K,\bas}^{\spl}\) is pure of dimension \(\langle \rho,\mu\rangle\), and its Borel--Moore homology is isomorphic to
		\[\mathrm{H}^{\BM}_{\langle2 \rho,\mu\rangle}(\Sh_{\mu,K,\bas}^{\spl}) \cong C(\IG'(\IQ)\backslash \IG'(\IA_f)/K,\overline{\IQ}_\ell) \otimes_{\overline{\IQ}_\ell} V_{\mu^*}^{\Tatep}.\]
		Here, \(V_{\mu^*}^{\Tatep} \subseteq V_{\mu^*}\) is a nontrivial subspace of the irreducible \(\widehat{G}\)-representation of highest weight \(\mu^*\), and \(\IG'\) is an inner form of \(\IG\), which is trivial at the finite places, and compact modulo center at the archimedean place.
		
		\item Let \(\pi_f=\pi_{f,p}\pi_f^p\) be an irreducible \(\Hh_{K,\overline{\IQ}_\ell}\)-module.
		If the Satake parameter of the \(p\)-component \(\pi_{f,p}\) is \(V_{\mu^*}\)-general in the sense of \thref{defi general}, the restriction of the cycle class map to the \(\pi_f\)-isotypical component
		\[\mathrm{H}^{\BM}_{\langle2 \rho,\mu\rangle}(\Sh_{\mu,K,\bas}^{\spl})[\pi_f] \subseteq \mathrm{H}^{\BM}_{\langle2 \rho,\mu\rangle}(\Sh_{\mu,K,\bas}^{\spl}) \xrightarrow{\cl} \mathrm{H}^{\langle 2\rho,\mu\rangle}_{\comp}(\Sh_{\mu,K}^{\spl},\overline{\IQ}_\ell(\langle \rho,\mu\rangle))\]
		is injective.
		\item\label{surjectivity} Assume that \((\IG,\IX)\) gives rise to a compact unitary group Shimura variety, and that \(p\) is a split place, as in \cite{ScholzeShin:Cohomology}.
		Let \(\pi_f^p\) be an irreducible \(\Hh_{K^p,\overline{\IQ}_\ell}\)-module.
		Then the \(\pi_f^p\)-isotypical copmonent of the cycle class map surjects onto
		\[\sum_{\pi_p} T^{\langle \rho,\mu\rangle}(\pi_p \pi_f^p,\overline{\IQ}_\ell) \otimes \pi_p\pi_f^p\]
		if the Satake parameters of the \(\pi_p\) are all strongly \(V_{\mu^*}\)-general (in the sense of \thref{defi general}), where the \(\pi_p\) range over those irreducible representations of the local Hecke algebra \(\Hh_{\Gg}\) such that \(\pi_p\pi_f^p\) appears in \(C(\IG'(\IQ)\backslash \IG'(\IA_f)/K,\overline{\IQ}_\ell)\).
	\end{enumerate}
\end{thm}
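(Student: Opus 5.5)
The plan is to follow the strategy of Xiao--Zhu, replacing the canonical smooth models by the splitting models, which are smooth at very special level and carry a moduli interpretation in the essentially unramified PEL case.

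\textbf{Step 1: local input.} Construct splitting versions of the moduli stack of local shtukas and of the affine Deligne--Lusztig varieties \(X_\mu(b)^{\spl}\) for basic \(b\). Using the very special geometric Satake equivalence (Zhu, Richarz), show that \(X_\mu(b)^{\spl}\) is equidimensional of dimension \(\langle\rho,\mu\rangle\). Next, parametrize its top-dimensional irreducible components, modulo \(J_b(\IQ_p)\), by a basis of \(V_{\mu^*}^{\Tatep}\), the part of \(V_{\mu^*}\) cut out by the Tate/weight-zero condition coming from the very special Kottwitz element. This uses the Mirković--Vilonen type count as in Xiao--Zhu. I would obtain it by comparing the splitting affine Deligne--Lusztig variety with the usual one through the splitting map, which I expect to be an isomorphism over the relevant open locus, or at least birational on top-dimensional components.

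\textbf{Step 2: global input.} Use the moduli interpretation of the splitting models to build exotic Hecke correspondences
\[\Sh_{\tau}^{\spl} \leftarrow \Sh_{\mu\mid\tau}^{\spl}\to \Sh_{\mu,K}^{\spl},\]
where \(\Sh_\tau^{\spl}\) is the zero-dimensional Shimura set attached to \(\IG'\). Concretely, one modifies the \(p\)-divisible group together with its splitting filtration by isogenies of type given by the components from Step 1. Rapoport--Zink uniformization at the basic locus, proved for splitting models via the splitting RZ space, identifies
\[\Sh_{\mu,K,\bas}^{\spl}\cong \IG'(\IQ)\backslash X_\mu(b)^{\spl}\times \IG'(\IA_f^p)/K^p.\]
Combined with Step 1, this gives purity and the description of \(\mathrm{H}^{\BM}_{\langle2\rho,\mu\rangle}\) in part (1).

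\textbf{Step 3: injectivity (2).} Compute the intersection matrix of the cycles from part (1), i.e., the composition of the cycle class map with its transpose. Via cohomological correspondences on local shtukas, this becomes an endomorphism of \(C(\IG'(\IQ)\backslash\IG'(\IA_f)/K)\otimes V_{\mu^*}^{\Tatep}\) given by Hecke operators. After localizing at \(\pi_f\), it becomes a matrix indexed by weights whose determinant is nonzero exactly under the \(V_{\mu^*}\)-general hypothesis on the Satake parameter. Smoothness of \(\Sh_{\mu,K}^{\spl}\) is needed here so that Poincaré duality and the intersection pairing make sense; this is precisely why the splitting models are used. I expect this step to be the main obstacle. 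One must show that the very special Satake category still controls the relevant compositions of correspondences after passing to splitting models, since the splitting local model is a convolution-type resolution rather than a Schubert variety. I would first try to identify the pushforward of the constant sheaf along the splitting resolution with a sum of very special Satake sheaves, and then reduce to the Xiao--Zhu computation.

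\textbf{Step 4: surjectivity (3).} In the compact unitary, split-place case, use Scholze--Shin's description of the cohomology of \(\Sh_{\mu,K}^{\spl}\) to compute the dimension of the \(\pi_f^p\)-isotypic Tate classes. Under strong genericity, this dimension matches the rank of the cycle space from Step 3, so the injective map is also surjective.
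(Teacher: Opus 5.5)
Your overall architecture matches the paper's. The paper also uses splitting models as smooth resolutions, an exotic correspondence to the Shimura set of \(\IG'\) coming from basic uniformization, an intersection matrix computed by correspondences on local shtukas, and Scholze--Shin for (3). For uniformization, the paper simply pulls back the canonical uniformization along \(L^+\Gg\backslash\Gr^{\spl}_{\Gg,\leq\mu}\to L^+\Gg\backslash\Gr^{\can}_{\Gg,\leq\mu_I}\) rather than building a splitting RZ space, but either route is fine.

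The genuine gap is the mechanism you propose in Step 1. The map \(X^{\spl}_{\leq\mu^*}(b)\to X^{\can}_{\leq\mu_I^*}(b)\) is \emph{not} birational on top-dimensional components, and if it were, part (1) would be false.
For \(b\) basic and very special, each stratum \(X^{\can}_{\mu'_I}(b)\), \(\mu'_I\leq\mu_I^*\), is nonempty of dimension \(\langle\rho,\mu'_I\rangle\). Over it, the splitting map is a locally trivial fibration. Its fibers are fibers of the convolution map \(\Gr^{\spl}_{\Gg,\leq\mu^*}\to\Gr^{\can}_{\Gg,\leq\mu_I^*}\) over \(\Gr^{\can}_{\Gg,\mu'_I}\), equidimensional of dimension \(\langle\rho,\mu^*-\mu'_I\rangle\) since \(\mu\) is minuscule.
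Hence every lower stratum contributes top-dimensional components of \(X^{\spl}_{\leq\mu^*}(b)\). Modulo \(J_b(\IQ_p)\), their number is the number of top components of \(X^{\can}_{\mu'_I}(b)\) times the multiplicity of \(V_{\mu'_I}\) in \(V_{\mu^*}|_{\widehat{G}^I}\).
That is exactly why the count is \(\dim V_{\mu^*}^{\Tatep}\), for the \(\widehat{G}\)-representation. A bijection on top components would only give \(\dim V_{\mu_I^*}^{\Tatep}\). The two differ whenever \(V_{\mu^*}|_{\widehat{G}^I}\) is reducible, since every constituent has nonzero \(\Tatep\)-part (its basic element is the same very special one).
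These extra components are invisible on the canonical model, where their images drop dimension; this is the whole point of using splitting models. The correct Step 1 (\thref{Irreducible components of splitting adlv}, \thref{irreducible comp of ADLV in very special case}) combines four inputs:
the canonical component count, reduced to unramified groups;
Fox--Imai local triviality of the splitting map over each stratum;
Haines' equidimensionality of convolution fibers;
the identification of the top-dimensional fiber components with a basis of the multiplicity space.
This fibration argument is also what gives equidimensionality in (1); geometric Satake alone does not.

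Step 3 also skips the step that carries the argument. To turn the intersection matrix into morphisms of vector bundles on \(\widehat{G}^I\sigma_p/\widehat{G}^I\), you must know that the cycle class of each individual component is a specific correspondence. The paper does this as follows:
for each \(\bfb\in\IM\IV^{\spl,\Tatep}_{\mu^*}\), choose a minimal \(\nu_{\bfb,I}\) making \(\tau_{\bfb,I}=\lambda_I+\nu_{\bfb,I}-\sigma(\nu_{\bfb,I})\) central (\thref{minimal elements in satake to mv});
take the Satake cycle \(\bfa\) with \(\iota^{\spl}_{\IM\IV}(\bfa)=\bfb\);
show that the Satake/partial-Frobenius correspondence built from \(\bfa\) equals the fundamental class of the component. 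This rests on \(X^{\spl,\bfa}_{\mu,\nu_I}(\tau)\) having a unique top-dimensional component (\thref{Cycle in ADLV}, \thref{Langlands vs cycle class}).

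The determinant is then that of \(\bfJ(V_\mu\otimes V_{\tau_I}^*)\otimes_{\bfJ}\bfJ(V_{\tau_I}\otimes V_\mu^*)\to\bfJ\) for the possibly disconnected group \(\widehat{G}^I\). Reducing to Xiao--Zhu uses connectedness of \(Z_G\) to make \((\widehat{G}^I)_{\der}\) simply connected, plus descent from \((\widehat{G}^I)_{\der}\times Z(\widehat{G}^I)\) (\thref{determinant of pairing}).

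By contrast, what you call the main obstacle is comparatively cheap. It is the commutative Satake square for \(\Hck^{\spl}_{\Gg}\to\Hck^{\can}_{\Gg}\) together with full faithfulness of \(\MTM^{\Corr}_{\cons}(\Sht^{\spl}_{\Gg})\to\MTM^{\Corr}_{\cons}(\Sht^{\can}_{\Gg})\), which is essentially your idea of pushing forward along the splitting resolution.
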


Here, \(T^{\langle \rho,\mu\rangle}(\pi_p\pi_f^p,\overline{\IQ}_\ell)\) denotes the space of Tate classes in the \(\pi_p\pi_f^p\)-isotypic part of the degree \(\langle \rho,\mu\rangle\) cohomology in \(\Sh_{\mu,K}^{\spl}\); cf.~§\ref{subsec:Tate}.
In particular, the Tate conjecture holds for these \(\pi_f^p\).

\begin{rmk}
	Let us briefly comment on the assumptions appearing in this theorem.
	\begin{enumerate}
		\item The restriction in \eqref{surjectivity} to the Shimura varieties considered in \cite{ScholzeShin:Cohomology} appears because the required input on the cohomology of Shimura varieties does not seem to have appeared in other (ramified) cases.
		In particular, we only consider non-endoscopic situations.
		It is likely that using the methods of this paper, endoscopic situations will also be manageable as in \cite[§2.3]{XiaoZhu:Cycles}, once the stable trace formula is known for Shimura varieties at very special level.
		This is expected to appear in a forthcoming work of Haines--Zhou--Zhu.
		\item The assumption that \(\IG_{\IQ_p}\) is essentially unramified is crucial in order to have a resolution of singularities.
		Indeed, when \(\IG_{\IQ_p}\) is a ramified unitary group, it is not expected that the canonical integral model of the Shimura variety admits a smooth resolution.
		(The only exception is when \(\IG_{\IQ_p}\) is an odd ramified unitary group, and the very special parahoric \(\Gg\) is not absolutely special, where the canonical model is already smooth \cite{HainesRicharz:Smoothness}.)
		\item In contrast to the other statements appearing in this introduction, we only consider the Tate conjecture on the level of \(\ell\)-adic cohomology, since the conjecture does not make sense motivically.
		There is a \(\IQ_p\)-adic version of the Tate conjecture, using crystalline cohomology, and our methods should yield similar results as \thref{intro--Tate} for this version.
		Since, over finite fields, the Tate conjecture is independent of \(\ell\) in a certain sense (cf.~\cite[Theorem 2.9]{Tate:Conjectures} for a precise statement, the argument also works for \(\ell=p\) when using crystalline cohomology), we omit details.
	\end{enumerate}
\end{rmk}

\begin{thm}\thlabel{intro--JL}
	Let \((B,*,\Oo_B,V_i,(,),\Ll_i,h_i)\) be (integral at \(p\)) PEL data for \(i\in \{1,2\}\), yielding Shimura data \((\IG_i,\IX_i)\) with Hodge cocharacters \(\mu_i\).
	Fix an isomorphism \(V_1\otimes \IA_f\cong V_2\otimes \IA_f\) compatibly with the PEL data, which induces an isomorphism \(\IG_{1,\IA_f}\cong \IG_{2,\IA_f}\).
	Assume \(\IG_{1,\IQ_p}\) is essentially unramified, fix a very special parahoric model \(\Gg/\IZ_p\) and a level \(K=K^p\Gg(\IZ_p)\subseteq \IG_{1,\IA_f}\).
	Then there is a natural map
	\[\Hom_{\Coh(\widehat{G}^I\sigma/\widehat{G}^I)}\left(\widetilde{V_{\mu_1}},\widetilde{V_{\mu_2}}\right) \to \Hom_{\Hh_{K^p}}\left(\mathrm{H}_c^{\langle 2\rho,\mu_1\rangle}(\Sh_{\mu_1,K}^{\spl},\overline{\IQ}_\ell(\langle \rho,\mu_1\rangle)),\mathrm{H}_c^{\langle 2\rho,\mu_2\rangle}(\Sh_{\mu_2,K}^{\spl},\overline{\IQ}_\ell(\langle \rho,\mu_2\rangle))\right)\]
	compatible with composition, where \(I\) denotes the inertia group of \(\IQ_p\).
\end{thm}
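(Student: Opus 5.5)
The strategy is to adapt the Xiao--Zhu construction of the "spectral-to-cohomological" map to the splitting setting. The map factors in three steps:
\[\Hom_{\Coh(\widehat{G}^I\sigma/\widehat{G}^I)}(\widetilde{V_{\mu_1}},\widetilde{V_{\mu_2}}) \to \mathrm{Corr}_{\Sht^{\loc,\spl}}(\Ff_{\mu_1},\Ff_{\mu_2}) \to \Hom_{\Hh_{K^p}}\bigl(\mathrm{H}_c(\Sh^{\spl}_{\mu_1,K}),\mathrm{H}_c(\Sh^{\spl}_{\mu_2,K})\bigr).\]
The first step is purely local. The second is a pullback along a "local shtuka" map from the splitting Shimura varieties.

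\textbf{Local part.} Over \(\overline{\IF}_p\) I would define the splitting moduli stack of local shtukas \(\Sht^{\loc,\spl}_{\mu}\) at very special level. Since \(G\) is essentially unramified, \(G=\Res_{F/\IQ_p}H\) with \(H\) unramified over \(F\), and the splitting Hecke stack is an iterated convolution of unramified Hecke stacks, one for each embedding of \(F\). On it I place the perverse sheaf \(\Ff_\mu\) corresponding, via geometric Satake at very special level (Zhu, Richarz) combined with Satake for the unramified factors, to \(V_\mu\) restricted to \(\widehat{G}^I\).

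I would then prove the analogue of Xiao--Zhu's key local result:
\[\Hom_{\Coh(\widehat{G}^I\sigma/\widehat{G}^I)}(\widetilde{V_{\mu_1}},\widetilde{V_{\mu_2}}) \cong \mathrm{Corr}_{\Sht^{\loc}}(\Ff_{\mu_1},\Ff_{\mu_2}).\]
Here \(\widehat{G}^I\) is the Langlands dual group of the very special parahoric. The argument goes through the category of \(\sigma\)-twisted Frobenius-equivariant Satake sheaves, i.e.\ the categorical trace of Frobenius on the Satake category, which the \(\sigma\)-conjugation quotient computes. Using restriction of scalars, this reduces to the unramified computation of Xiao--Zhu with Frobenius permuting the factors. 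One must also check that the splitting resolution \(\Sht^{\loc,\spl}\to\Sht^{\loc}\) is small or semismall in the relevant sense, so that pushforward of the constant sheaf recovers the sheaves \(\Ff_\mu\) and correspondences transfer. This compatibility is compositional, which gives the claimed compatibility with composition.

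\textbf{Global part.} Using the moduli interpretation of Pappas--Rapoport splitting models for PEL data, I construct the perfectly smooth map
\[\Sh^{\spl}_{\mu_i,K}\to\Sht^{\loc,\spl}_{\mu_i},\]
obtained by taking the filtered Dieudonné module of the \(p\)-divisible group with \(\Oo_B\)-action, together with its splitting filtration, restricted to a truncation level. The fixed isomorphism \(V_1\otimes\IA_f\cong V_2\otimes\IA_f\) lets me build the exotic Hecke correspondence
\[\Sh^{\spl}_{\mu_1,K}\leftarrow \Sh^{\spl}_{\mu_1|\mu_2}\to\Sh^{\spl}_{\mu_2,K},\]
whose points are pairs of abelian varieties with splitting data and a \(p\)-quasi-isogeny that respects the prime-to-\(p\) structure. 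As in Xiao--Zhu, I would show this correspondence is Cartesian over the local Hecke correspondence \(\Sht_{\mu_1}\leftarrow\Sht_{\mu_1|\mu_2}\to\Sht_{\mu_2}\). This needs a Rapoport--Zink uniformization and Serre--Tate argument adapted to the splitting models.

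Pulling back cohomological correspondences along this Cartesian square, using proper base change, and taking \(\mathrm{H}_c\) gives the second map. Because the splitting special fibers are smooth, cohomology of the constant sheaf equals cohomology of the pulled-back \(\Ff_{\mu_i}\) in the correct degree, and \(\Hh_{K^p}\)-equivariance is automatic since every construction happens at \(p\).

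\textbf{Main obstacle.} I expect the hardest step to be the Cartesian property of the exotic Hecke correspondence relative to the local splitting shtukas. It requires showing that modifying the \(p\)-divisible group by a quasi-isogeny of type bounded by \(\mu_1|\mu_2\) uniquely transports the Pappas--Rapoport splitting filtration, and does so compatibly with Kottwitz-type conditions for both PEL data. My first approach would be to reduce to the unramified case through the decomposition \(\Oo_F\otimes W=\prod W\) and the successive Eisenstein-factor filtration.
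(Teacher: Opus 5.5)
The overall architecture matches the paper's: a local functor from free coherent sheaves on $\widehat{G}^I\sigma/\widehat{G}^I$ to correspondences on local shtukas, exotic Hecke correspondences that are cartesian over $\Sht_{\mu_1\mid\mu_2}$, pro-smooth pullback along $\loc_p$, and proper pushforward to a point. Reducing the canonical local construction to Xiao--Zhu by restriction of scalars is a reasonable alternative to the paper's direct construction of $\IL^{\can}$ (Satake correspondences, partial Frobenius correspondences and the maps $\Xi_W$, valid for any quasi-split $G$ at very special level).

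The local step, however, has a misplaced ingredient. You neither need nor can cite the isomorphism $\Hom_{\Coh}(\widetilde{V_{\mu_1}},\widetilde{V_{\mu_2}})\cong\Corr(\Ff_{\mu_1},\Ff_{\mu_2})$; the paper explicitly leaves full faithfulness open, and a functor suffices. What you do need is a way to turn correspondences on $\Sht^{\can}$ between $m_*\Ff_1$ and $m_*\Ff_2$ into correspondences on $\Sht^{\spl}$ between $\Ff_1$ and $\Ff_2$. This is forced because your global step pulls back along $\Sh^{\spl}_{\mu_i,K}\to\Sht^{\spl}$. Semismallness does not provide this; it only identifies $m_*$ of the shifted constant sheaf with $\Sat(V_\mu|_{\widehat{G}^I})$. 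The missing input is that both Hecke correspondences are fiber products over $\BB(G)'$, so $\Sht^{\spl}_{\infty\mid\infty}\cong\Sht^{\can}_{\infty\mid\infty}\times_{\Sht^{\can}\times\Sht^{\can}}(\Sht^{\spl}\times\Sht^{\spl})$. Proper base change, together with the K\"unneth and exchange isomorphisms, then identifies the two groups of correspondences. In other words, proper pushforward $\MTM^{\Corr}_{\cons}(\Sht^{\spl})\to\MTM^{\Corr}_{\cons}(\Sht^{\can})$ is fully faithful, and this is how the paper obtains $\IL^{\spl}$ from $\IL^{\can}$.

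Your declared main obstacle rests on a misconception, and the step would fail as planned. A point of $\Sht^{\spl}_{\mu_1\mid\mu_2}$ consists of two $\Ll$-multichains of polarized $\Oo_B$-$p$-divisible groups, \emph{each carrying its own} splitting structure, plus a quasi-isogeny that respects only the PEL (isocrystal) structure. No splitting filtration is transported, and none could be: the graded ranks $r_\tau$ of a splitting structure are determined by $\mu_i$, so they differ when $\mu_1\neq\mu_2$.

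Cartesianness is instead a direct moduli argument and needs no Rapoport--Zink uniformization, which only describes the basic locus. For the naive models, a quasi-isogeny $A_1[p^\infty]\dashrightarrow X_2$ yields, as in Rapoport--Zink 6.13, an $\Ll$-set $A_2$ with a quasi-isogeny $A_1\dashrightarrow A_2$ inducing it. The prime-to-$p$ level structure is transported along this quasi-isogeny, and the determinant condition is read off from $X_2$. The splitting case then follows because a splitting structure on $A_2$ is the same as one on $A_2[p^\infty]$: $\mathscr{H}_\Lambda$ is the Lie algebra of the universal vector extension (Mazur--Messing).

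Serre--Tate and Zink's Dieudonn\'e theory are needed elsewhere, namely for the perfect smoothness of the truncated $\loc_p$, which you only assert. The paper proves this for naive models via the infinitesimal lifting criterion for displays, and deduces the splitting case by base change.
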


Even though these two theorems are stated for splitting models of Shimura varieties, they directly imply similar statements for the canonical models.
Indeed, since the splitting models resolve the canonical models, their cohomology agrees with the cohomology of the nearby cycles of the canonical models.
Thus, the Jacquet--Langlands transfer maps indeed yield (an underived version of) the map required in \cite[Conjecture 4.60]{Zhu:Coherent}.
Moreover, although the Tate conjecture is usually stated for smooth projective varieties, there also exists a version for singular varieties \cite[Conjecture 7.3]{Jannsen:Mixed}.
And if the Tate conjecture holds for a resolution of singularities of such a singular variety, then the Tate conjecture already holds for the singular variety \cite[Theorem 7.10]{Jannsen:Mixed}.
Thus, the theorems above are relevant, even if one is primarily interested in canonical models of Shimura varieties.

Although working with splitting models fixes the global problems alluded to above, another issue arises on the local side: the connection between the special fibers of Shimura varieties with affine Grassmannians, affine Deligne--Lusztig varieties, and moduli of local shtukas, is specific to the canonical models.
To resolve these issues, we will define splitting versions of affine Grassmannians, affine Deligne--Lusztig varieties, and moduli of local shtukas, relate these to the special fibers of the splitting models of Shimura varieties, and study their geometry.

\subsection{Splitting affine Grassmannians and moduli of local shtukas}

Recall that the special fibers of Shimura varieties at parahoric level admit local models, with the same étale-local singularities, given by a union of Schubert varieties in a partial affine flag variety.
The idea of the splitting model from \cite{PappasRapoport:LocalII.Splitting} is to resolve this by a convolution product of (unions of) smaller Schubert varieties.
To place certain results below in a more natural context, it make sense to define an unbounded version, called the \emph{splitting affine Grassmannian}.

Let us now move to the local setting, and fix a nonarchimedean local field \(F\), as well as an essentially unramified reductive group \(G\) over \(F\), with very special parahoric model \(\Gg\).
If \(G= \Res_{F'/F} H\) for some totally ramified extension \(F'/F\), unramified \(F'\)-group \(H\), and reductive \(\Oo_{F'}\)-model \(\Hh\), we define the splitting affine Grassmannian \(\Gr_{\Gg}^{\spl}\) as the \([F':F]\)-fold convolution product of \(\Gr_{\Hh}=LH/L^+\Hh \cong \Gr_{\Gg}\).
In general, we have to be careful with the connected components, and we refer to \thref{definition splitting grassmannians} for a precise definition.
There is a natural map to the usual affine Grassmannian \(\Gr_{\Gg}\), which we will call the canonical affine Grassmannian to distinguish the two, and write \(\Gr_{\Gg}^{\can}\).
Moreover, \(\Gr_{\Gg}^{\spl}\) admits a natural \(L^+\Gg\)-action, and the corresponding quotient can be viewed as a splitting Hecke stack.

The (canonical) affine Grassmannian is of fundamental importance in the local Langlands correspondence, thanks to the geometric Satake equivalence.
This is an equivalence between \(L^+\Gg\)-equivariant perverse sheaves on \(\Gr_{\Gg}^{\can}\), and representations of the Langlands dual group \(\widehat{G}\) (or, in the ramified case,  the inertia-invariants \(\widehat{G}^I\) of the Langlands dual group).
Using the description of the splitting Grassmannian as a convolution product, we can similarly describe equivariant perverse sheaves on it, cf.~\thref{Satake for splitting models}.

\begin{prop}\thlabel{intro-theorem satake}
	There is a natural commutative diagram
	\[\begin{tikzcd}
		\Perv_{L^+\Gg}(\Gr_{\Gg}^{\spl}) \arrow[d] \arrow[r, "\cong"] & \Rep_{\overline{\IQ}_\ell}^{\fd}(\widehat{G}) \arrow[d]\\
		\Perv_{L^+\Gg}(\Gr_{\Gg}^{\can}) \arrow[r, "\cong"'] & \Rep_{\overline{\IQ}_\ell}^{\fd}(\widehat{G}^I),
	\end{tikzcd}\]
	where the vertical arrows are given by pushforward, and restriction of representations respectively.
\end{prop}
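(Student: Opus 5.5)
We reduce to the case \(G=\Res_{F'/F}H\) with \(F'/F\) totally ramified of degree \(d\), \(H\) unramified over \(F'\) with reductive model \(\Hh\), and \(\Gg\) the very special model induced by \(\Hh\). A general essentially unramified \(G\) is a product of such factors up to central isogeny, and the connected-component bookkeeping of \thref{definition splitting grassmannians} is compatible with products. Then \(\Gr_{\Gg}^{\spl}\) is the \(d\)-fold twisted product \(\Gr_{\Hh}\tilde\times\cdots\tilde\times\Gr_{\Hh}\), with \(L^+\Gg=L^+\Hh\) acting on the first factor. The map to \(\Gr_{\Gg}^{\can}\cong\Gr_{\Hh}\) is the convolution (multiplication) map \(m\). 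On the dual side, \(\widehat{G}=\widehat{H}^{d}\) with inertia of \(F\) permuting the factors transitively, so \(\widehat{G}^I=\widehat{H}\) is embedded diagonally.

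For the top horizontal equivalence, we use the standard description of equivariant perverse sheaves on a twisted product. Pulling back along the \(L^+\Hh^{d-1}\)-torsor \(LH\times\cdots\times LH\times\Gr_{\Hh}\to\Gr^{\spl}\), one sees that \(L^+\Gg\)-equivariant objects on \(\Gr^{\spl}\) are the same as \(L^+\Hh^{d}\)-equivariant objects on \(\Gr_{\Hh}^{d}\). Concretely, the twisted external product \(\Ff_1\tilde\boxtimes\cdots\tilde\boxtimes\Ff_d\) identifies \(\Perv_{L^+\Gg}(\Gr_{\Gg}^{\spl})\) with \(\Perv_{L^+\Hh^d}(\Gr_{\Hh}^d)\), up to a perverse shift. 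Stratawise, each stratum of \(\Gr^{\spl}\) is an iterated fibration of \(L^+\Hh\)-orbits, so \(t\)-exactness follows from the product case. By Künneth and semisimplicity of the Satake category (all objects are semisimple, since IC sheaves of Schubert cells have no extensions between them), this category is \(\Perv_{L^+\Hh}(\Gr_{\Hh})^{\otimes d}\). The unramified geometric Satake equivalence identifies this with \(\Rep(\widehat{H})^{\otimes d}=\Rep(\widehat{H}^d)=\Rep(\widehat{G})\).

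For the bottom row we take Zhu's and Richarz's very special Satake equivalence. In the present situation this is simply the unramified Satake for \(\Hh\) over the residue field, transported along \(\Gr_{\Gg}\cong\Gr_{\Hh}\), with \(\widehat{G}^I\cong\widehat{H}\). The main point is to check that this identification agrees with the normalization in \cite{Zhu:Ramified,Richarz:Affine}, i.e. that their dual group \(\widehat{G}^I\) maps isomorphically onto the diagonal \(\widehat{H}\). This holds because both are the Tannakian group of the same fiber-functor category. The only subtlety is the choice of pinning, which is \(I\)-stable.

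Commutativity then follows from the convolution theorem. The functor \(m_*\) sends \(\Ff_1\tilde\boxtimes\cdots\tilde\boxtimes\Ff_d\) to \(\Ff_1\star\cdots\star\Ff_d\). This is perverse because \(m\) is stratified semismall (Mirković–Vilonen), so the left vertical arrow is well defined. Under Satake, convolution corresponds to tensor product. On the representation side, restriction from \(\widehat{H}^d\) to the diagonal sends \(V_1\boxtimes\cdots\boxtimes V_d\) to \(V_1\otimes\cdots\otimes V_d\). The two composites therefore agree on external products, naturally in each variable, and hence on all objects by additivity.

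The step I expect to be the main obstacle is making this naturality precise, together with the monoidal compatibility constraints. Beyond that, the connected components in the non-simply-connected case need care, where \(\Gr^{\spl}\) is not literally the full twisted product. I would handle the latter by restricting to the relevant components and correspondingly to the appropriate quotient or isogeny of \(\widehat{H}^d\), as dictated by \thref{definition splitting grassmannians}.
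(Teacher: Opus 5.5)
Your core argument is the paper's. Identifying $L^+\Gg$-equivariant perverse sheaves on a twisted product with those on a product of Hecke stacks is \thref{Satake for convolution Gr}. The step ``pushforward along convolution $=$ restriction to the inertia-invariant (diagonal) subgroup'' is exactly the commutative square \eqref{diagram Sat for spl 1} in the proof of \thref{Satake for splitting models}, which the paper also obtains from monoidality of Satake.

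The gap is the reduction to a single factor $\Res_{F'/F}H$. \thref{definition essentially unramified} only constrains $G_{\adj}$, so $Z_G$ can be an arbitrary torus. For example, $G=T^1\times\SL_2$, with $T^1$ the norm-one torus of a ramified quadratic extension, is essentially unramified. It is not centrally isogenous to a product of restrictions of scalars of unramified groups: inertia has no nonzero invariants on $X^*(T^1)\otimes\IQ$, but has nonzero invariants on the rational character module of any nontrivial central torus of such a product. More to the point, $\Gr_{\Gg}^{\spl}$ is not defined by transport along isogenies. It is the fiber product $\pi_1(G)\times_{\pi_1(G_{\adj})}\prod_i\Res_{k_i/k}\bigl(\Gr_{\Hh_i}^{\can,\widetilde{\times} e_i}\bigr)$ of \thref{definition splitting grassmannians}. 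For $G=\Res_{F'/F}H$ this already agrees with $\Gr_{\Hh}^{\widetilde{\times} d}$ for every $H$. So the component issue you defer never arises in your reduced case; it is exactly what the reduction throws away.

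Your proposed repair, ``restrict to the relevant components and pass to a quotient or isogeny of $\widehat{H}^d$'', is not an argument, and it misdescribes the general situation. The map $\pi_1(G)\to\pi_1(G_{\adj})$ is in general neither injective nor surjective, so components are both discarded and duplicated. When $Z_G$ is a nontrivial torus, as in all the Shimura-variety applications, the map is surjective with kernel $X_*(Z_G)$, so there are \emph{more} components than in the adjoint twisted product. On the dual side, when $G$ is not isogenous to a restriction-of-scalars group, $\widehat{G}$ cannot be produced by a quotient or isogeny from such a dual group.

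The missing step is the paper's. Work with the adjoint twisted product, whose Satake category is $\Rep\bigl(\prod_i\widehat{H}_i^{e_if_i}\bigr)$, and use
\[
\widehat{G}\cong\prod_i\widehat{H}_i^{e_if_i}\overset{D(\pi_1(G_{\adj}))}{\times}D(\pi_1(G)).
\]
A $\widehat{G}$-representation is then a $\prod_i\widehat{H}_i^{e_if_i}$-representation together with a refinement of its central-character grading from $\pi_1(G_{\adj})$ to $\pi_1(G)$. This matches, object by object, an equivariant perverse sheaf on the adjoint twisted product together with a refinement of its connected-component grading from $\pi_1(G_{\adj})$ to $\pi_1(G)$. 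You must also check the analogous description on the canonical side: components of $\Gr_{\Gg}^{\can}$ are indexed by $\pi_1(G)_I$, each mapping isomorphically to a component of $\Gr_{\Gg_{\adj}}^{\can}$, and this must be compatible with $\widehat{G}^I$ so that the square still commutes after the refinement. With this in place of your reduction, the rest of your argument goes through.
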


We emphasize that although the bottom and right arrows are monoidal for the natural convolution product on \(\Perv_{L^+\Gg}(\Gr_{\Gg}^{\can})\), the category \(\Perv_{L^+\Gg}(\Gr_{\Gg}^{\spl})\) does not admit a natural monoidal structure.
Instead, the proposition should be seen as a geometric way to produce representations of \(\widehat{G}^I\), which are restrictions of irreducible representations of \(\widehat{G}\).
Indeed, although the simple representations of \(\widehat{G}^I\) correspond to the intersection complexes on certain Schubert varieties in \(\Gr_{\Gg}^{\can}\), it is more complicated to directly construct the perverse sheaf corresponding to the restrictions of \(\widehat{G}\)-representations.
(Another way to do this would be to use nearby cycles, as in \cite{Zhu:Ramified}.
But this would leave the realm of perfect geometry over \(\overline{\IF}_q\), and is hence not suited for our current purposes.)
Of particular interest to us is the \(\widehat{G}\)-representation whose highest weight is (the dual of) the Hodge cocharacter associated with a Shimura datum, as well as its restriction to \(\widehat{G}^I\).

Now that we have a splitting version of the affine Grassmannian and local Hecke stack, we can define splitting versions of any object which admits a map to the (canonical) local Hecke stack.
This includes the moduli stack of local shtukas, which we write \(\Sht_{\Gg}^{\spl}\to \Sht_{\Gg}^{\can} = LG/\Ad_{\sigma^{-1}} L^+\Gg\).
Here, \(\sigma\) denotes the arithmetic Frobenius of \(F\).
In particular, both objects admit a map to the Kottwitz stack \(LG/\Ad_{\sigma^{-1}} LG\).
Taking the self-fiber products of \(\Sht_{\Gg}^{\spl}\) and \(\Sht_{\Gg}^{\can}\) over the Kottwitz stack then yields natural Hecke correspondences over \(\Sht_{\Gg}^{\spl}\) and \(\Sht_{\Gg}^{\can}\).
Following \cite[§5.4]{XiaoZhu:Cycles}, we can define categories \(\mathrm{P}^{\Corr}(\Sht_{\Gg}^{\spl})\) and \(\mathrm{P}^{\Corr}(\Sht_{\Gg}^{\can})\), whose objects are perverse sheaves on \(\Sht_{\Gg}^{\spl}\) and \(\Sht_{\Gg}^{\can}\) respectively, and whose morphisms are cohomological correspondences supported on the Hecke correspondences above.
We refer to §\ref{subsec:motives on shtukas} for details.
By h-descent, this category is an incarnation of the category of \(\ell\)-adic sheaves on the Kottwitz stack parametrizing \(G\)-isocrystals, i.e., the constructible side of the (conjectural) categorical arithmetic local Langlands correspondence from \cite{Zhu:Coherent,Zhu:Tame}.

By applying a construction that resembles the categorical trace of Frobenius, we can construct cohomological correspondences on \(\Sht_{\Gg}^{\spl}\) and \(\Sht_{\Gg}^{\can}\), cf.~\thref{Main local theorem}.

\begin{thm}\thlabel{intro-theorem shtuka}
	Let \(G/F\) be an essentially unramified quasi-split reductive group, with very special parahoric \(\Gg/\Oo_F\).
	Then there exists a commutative diagram
	\[\begin{tikzcd}[column sep=small]
		\Coh_{\widehat{G}\text{-}\mathrm{fr}}^{\widehat{G}^I}(\widehat{G}^I\sigma) \arrow[ddd] \arrow[rrr] &&&\mathrm{P}^{\Corr}(\Sht_{\Gg}^{\spl}) \arrow[ddd]\\
		& \Rep_{\overline{\IQ}_\ell}^{\fd}(\widehat{G}) \arrow[d] \arrow[r, "\Sat"] \arrow[ul] & \Perv_{L^+\Gg}(\Gr_{\Gg}^{\spl}) \arrow[d] \arrow[ur] & \\
		& \Rep_{\overline{\IQ}_\ell}^{\fd}(\widehat{G}^I) \arrow[r, "\Sat"'] \arrow[ld] & \Perv_{L^+\Gg}(\Gr_{\Gg}^{\can}) \arrow[rd] & \\
		\Coh_{\widehat{G}^I\text{-}\mathrm{fr}}^{\widehat{G}^I}(\widehat{G}^I\sigma) \arrow[rrr] &&& \mathrm{P}^{\Corr}(\Sht_{\Gg}^{\can}),
	\end{tikzcd}\]
	where \(\Coh_{\widehat{G}\text{-}\mathrm{fr}}^{\widehat{G}^I}(\widehat{G}^I\sigma) \subseteq \Coh_{\widehat{G}^I\text{-}\mathrm{fr}}^{\widehat{G}^I}(\widehat{G}^I\sigma) \subseteq \Coh(\widehat{G}^I\sigma/\widehat{G}^I)\) are the subcategories of coherent sheaves on \(\widehat{G}^I\sigma /\widehat{G}^I\), generated by free vector bundles pulled back from representations of \(\widehat{G}\), respectively \(\widehat{G}^I\).
\end{thm}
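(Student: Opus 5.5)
We will build the diagram face by face, following the pattern of \cite[§6]{XiaoZhu:Cycles}. The middle square is \thref{intro-theorem satake}. The left triangle-trapezoid is pure algebra: a \(\widehat{G}\)-representation \(V\) goes to the free vector bundle \(\widetilde{V}=\mathcal{O}_{\widehat{G}^I\sigma}\otimes V|_{\widehat{G}^I}\) with its \(\widehat{G}^I\)-equivariant structure. Restricting first to \(\widehat{G}^I\) gives the same object, so that face commutes tautologically.

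The right trapezoid is given by pull-push along the natural maps. We have \(\Sht_{\Gg}^{\spl}\to \Hck_{\Gg}^{\spl}=L^+\Gg\backslash\Gr_{\Gg}^{\spl}\), and the analogous map for the canonical versions. Both maps are perfectly smooth of the same relative dimension, being étale-locally equivalent via the standard \(\sigma\)-twisting argument. So shifted pullback preserves perversity. The vertical map \(\mathrm{P}^{\Corr}(\Sht^{\spl})\to\mathrm{P}^{\Corr}(\Sht^{\can})\) is pushforward along \(\Sht^{\spl}\to\Sht^{\can}\). This map is the base change of the proper map \(\Hck^{\spl}\to\Hck^{\can}\), so proper base change shows the trapezoid commutes on objects. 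To see that this pushforward respects morphisms in the correspondence categories, note that the Hecke correspondence \(\Sht^{\spl}\times_{\mathrm{Kott}}\Sht^{\spl}\) maps properly to its canonical analogue. Proper pushforward of cohomological correspondences is then functorial: the correspondences map to each other compatibly over the Kottwitz stack, and the maps to the Kottwitz stack are the same.

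The main work is the top and bottom horizontal functors on morphisms, and their compatibility. On objects, \(\widetilde V\) goes to the pullback of \(\Sat(V)\). For the bottom functor, morphisms in \(\Coh(\widehat{G}^I\sigma/\widehat{G}^I)\) between free bundles \(\widetilde V,\widetilde W\) are \((\mathcal{O}(\widehat{G}^I\sigma)\otimes V^*\otimes W)^{\widehat{G}^I}\). By Peter–Weyl this is generated by elements of the form \(V\to V'\otimes W \to\) (twist by \(\sigma\)) for a \(\widehat{G}^I\)-representation \(V'\). I would proceed in three steps.

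First, via Satake and the monoidal structure on \(\Perv_{L^+\Gg}(\Gr^{\can})\), realize such a generator as a composition. One factor is a morphism \(\Sat(V)\to \Sat(V')\star\Sat(W)\), and the other is the partial-Frobenius/trace correspondence \(\Sat(V'^{\sigma})\star\Sat(W)\leftrightarrow \Sat(W)\star\Sat(V')\) on \(\Sht\). This follows the categorical trace construction of \cite[§6]{XiaoZhu:Cycles}.

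Second, check compatibility with composition and with the relations in the coordinate ring. This is done exactly as in loc. cit., using the Satake equivalence at very special level \cite{Zhu:Ramified,Richarz:Affine}.

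Third, for the top functor, restrict to \(V,W\) coming from \(\widehat{G}\). Apply the same construction, but lift the correspondence on \(\Sht^{\can}\) to \(\Sht^{\spl}\), using that \(\Sat^{\spl}(V)\) pushes forward to \(\Sat(V|_{\widehat{G}^I})\) with compatible \(\Hom\)s. The splitting Grassmannian carries no convolution structure, so we cannot build the top functor intrinsically. Instead, I would define it as a lift. Morphisms in \(\mathrm{P}^{\Corr}(\Sht^{\spl})\) are top-degree Borel–Moore classes of the splitting Hecke correspondence. Since the correspondence is the base change of the convolution \(\Gr^{\spl}\tilde\times\Gr^{\can}\) along a small resolution, these classes map isomorphically to those on the canonical side. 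This is the key geometric input, and I expect it to be the main obstacle. It requires a semismallness/dimension estimate for the fibers of \(\Gr^{\spl}\to\Gr^{\can}\) relative to \(\mu\), to identify irreducible components of top dimension. Once it holds, the upper functor is forced, and the commutativity of the outer square follows from the canonical case.
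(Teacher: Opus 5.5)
Your overall architecture is the paper's: build the bottom functor on morphisms by the Peter--Weyl and partial-Frobenius recipe of \cite[\S6]{XiaoZhu:Cycles}, define the top functor by lifting through the right-hand vertical functor, and deduce the upper trapezoid and outer square from full faithfulness of both outer vertical arrows. The gap is exactly the step you defer as the ``main obstacle'', namely full faithfulness of \(\mathrm{P}^{\Corr}(\Sht_{\Gg}^{\spl})\to\mathrm{P}^{\Corr}(\Sht_{\Gg}^{\can})\). The mechanism you sketch for it rests on false premises.

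First, \(\Gr_{\Gg,\leq\mu}^{\spl}\to\Gr_{\Gg,\leq\mu_I}^{\can}\) is only stratified semismall, not small. For \(G=\Res_{F'/F}\GL_2\) with \(F'/F\) totally ramified quadratic and \(\mu=((1,0),(1,0))\), the fibre over \(\varpi^{(1,1)}\) is a \(\mathbb{P}^1\). This matches the fact that \(V_\mu|_{\widehat{G}^I}=\mathrm{Sym}^2\oplus\det\) is reducible, so \(m_*\IC_\mu^{\spl}\neq\IC_{\mu_I}^{\can}\).

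Second, morphisms in \(\mathrm{P}^{\Corr}(\Sht_{\Gg}^{\spl})\) are colimits over \(\nu_I\) of groups \(\Hom(c_1^*\Ff_1,c_2^!\Ff_2)\) on \(\Sht^{\nu_I,\spl}_{\infty\mid\infty}\). These are top-degree Borel--Moore classes only for shifted constant sheaves on smooth pieces, together with a dimension bound on the support, as in the central/minuscule case of \S\ref{subsec:examples of local Langlands}. Reducing to such classes in general would need analogues of \cite[Lemmas 5.2.14--5.2.16]{XiaoZhu:Cycles} for splitting shtukas, which the paper notes have no obvious analogue.

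Third, the canonical-side target is the group of correspondences between the non-simple objects \(m_*\Ff_i\), not top-dimensional cycles on the canonical correspondence. A component count cannot match these: already for Satake correspondences, components \(\bfa\) with \(\beta^{\IS}(\bfa)\) indexed by \(\mu_I'<\mu_I\) map onto lower-dimensional loci.

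The missing idea is that full faithfulness is formal. Both Hecke correspondences are fibre products over \(\BB(G)'\), and the \(\nu_I\)-bounded splitting one is defined by pullback. Hence \(C^{\spl}:=\Sht^{\nu_I,\spl}_{\infty\mid\infty}\) is the base change of \(C^{\can}:=\Sht^{\nu_I,\can}_{\infty\mid\infty}\xrightarrow{(c_1,c_2)}\Sht_{\Gg}^{\can}\times\Sht_{\Gg}^{\can}\) along the proper map \(m\times m\). This is the cartesian square you already invoke for the right trapezoid.

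Work after passing to restricted shtukas, so that everything is a pfp algebraic stack. First base change \(m\) along \(c_1\); this gives a proper \(a\colon C'\to C^{\can}\) and \(c_1'\colon C'\to\Sht_{\Gg}^{\spl}\) with \(c_1^*m_*\cong a_*(c_1')^*\). Then base change along \(c_2a\); this gives \(b\colon C^{\spl}\to C'\) with \(a^!c_2^!m_*\cong b_*c_2^{\spl,!}\). The adjunctions \((a_*=a_!,a^!)\) and \((b^*,b_*)\) then yield
\[\Hom_{C^{\can}}(c_1^*m_*\Ff_1,c_2^!m_*\Ff_2)\cong\Hom_{C'}((c_1')^*\Ff_1,b_*c_2^{\spl,!}\Ff_2)\cong\Hom_{C^{\spl}}(c_1^{\spl,*}\Ff_1,c_2^{\spl,!}\Ff_2),\]
compatibly with enlarging \(\nu_I\).

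This is \thref{spl vs can fully faithful}, which the paper writes as a chain of K\"unneth, Verdier duality and base change isomorphisms. It holds for all objects and in all cohomological degrees, and uses no semismallness or dimension estimate. With it, your lifting definition of the top functor and the rest of your argument go through as in the paper.
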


By \cite{vdH:SphericalParameters}, the stack \(\widehat{G}^{I}\sigma/\widehat{G}\) is a closed substack of the stack of local Langlands parameter from \cite{Zhu:Coherent}.
In particular, the bottom arrow is a part of the conjectural categorical local Langlands correspondence.
In constrast to most literature on the subject, we emphasize that there is no assumption on the ramification, and in particular the theorem also applies to wildly ramified groups.
In fact, the bottom half of the diagram exists for general quasi-split groups, without the assumption that \(G\) is essentially unramified.

The interpretation of the bottom arrow as a part of the categorical local Langlands correspondence, suggests that it should be fully faithful.
We do not address this question here, but since the leftmost arrow is clearly fully faithful, it suggests that all arrows in the outer square are fully faithful.
For the rightmost arrow, this is indeed the case, and we give a direct proof in \thref{spl vs can fully faithful}.
Consequently, to construct the upper half of the diagram, it suffices to check the factorization on the level of objects, which is easy.
Nevertheless, for the application to \thref{intro--Tate}, it is important to have a direct construction of the uppermost arrow.

\subsection{Exotic Hecke correspondences}

Now, let us go back to the global setting.
In order to apply \thref{intro-theorem shtuka} to prove Theorems \ref{intro--Tate} and \ref{intro--JL}, we will pull back the cohomological correspondences between the moduli of local shtukas to the Shimura varieties.
For this, we will construct exotic Hecke correspondences, in situations where the local groups are allowed to be ramified.
As mentioned before, we will use splitting models of Shimura varieties.
Since exotic Hecke correspondences are characterized via the moduli of local shtukas, we will need the moduli of splitting local shtukas, as introduced in the previous subsection.

For \(i\in \{1,2\}\), let \((\IG_i,\IX_i)\) be two Shimura data arising from PEL data \((B,*,\Oo_B,V_i,(,),\Ll_i,h_i)\).
In particular, the semisimple \(\IQ\)-algebra \(B\) is the same in both PEL data.
Denote their reflex fields by \(\IE_i\), and their Hodge cocharacters by \(\mu_i\).
We moreover fix an isomorphism \(V_1\otimes \IA_f \cong V_2\cong \IA_f\), compatibly with the additional structures.
This yields an isomorphism \(\IG_{1,\IA_f} \cong \IG_{2,\IA_f}\), so that \(\IG_1\) and \(\IG_2\) are in particular isomorphic at \(p\).

By using moduli interpretations, \cite{PappasRapoport:LocalII.Splitting} have defined splitting integral models of Shimura varieties; let us denote the perfections of their special fibers at some level \(K\subseteq \IG_1(\IA_f)\cong \IG_2(\IA_f)\) by \(\Sh_{\mu_i,K}^{\spl}\).
(For the introduction, it will suffice to consider them over \(\overline{\IF}_p\).)
Roughly speaking, they parametrize abelian varieties with PEL structure, together with a suitable filtration on their cotangent bundle.
Similarly, in PEL type situations, we can show that a certain bounded part of the moduli of splitting local shtukas admits a moduli interpretation, in terms of \(p\)-divisible groups with extra structure.
For this, we were inspired by the methods of Hoff \cite{Hoff:Parahoric}.
By using these moduli interpretations, we can construct exotic Hecke correspondences, cf.~\thref{Exotic Hecke correspondences}.

\begin{thm}\thlabel{intro--exotic}
	Let \((\IG_i,\IX_i)\) be as above.
	Assume \(\IG_{1,\IQ_p}\cong \IG_{2,\IQ_p}\) is essentially unramified, and let \(\Gg\) be a very special parahoric model.
	Then there exists a diagram with cartesian squares
	\[\begin{tikzcd}
		\Sh_{\mu_1,K}^{\spl} \arrow[d] & \Sh_{\mu_1\mid \mu_2}^{\spl} \arrow[l] \arrow[r] \arrow[d] & \Sh_{\mu_2,K} \arrow[d] \\
		\Sht_{\Gg,\leq\mu_{1}}^{\spl} & \Sht_{\mu_{1}\mid \mu_{2}}^{\spl} \arrow[l] \arrow[r] & \Sht_{\Gg{,}\leq \mu_{2}}^{\spl},
	\end{tikzcd}\]
	where the horizontal arrows are representably by ind-(perfectly proper) ind-schemes.
	Here, \(\Sht_{\mu_{1}\mid \mu_{2}}^{\spl}\) is a suitable Hecke correspondence between the moduli of splitting local shtukas.
\end{thm}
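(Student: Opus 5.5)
We follow the Xiao--Zhu strategy for exotic Hecke correspondences: realize both vertical maps through moduli of \(p\)-divisible groups with PEL structure and splitting filtrations, and then define the middle term as a fiber product. The plan has four steps.

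\textbf{Step 1 (the vertical maps).} Given a point \((A,\iota,\lambda,\eta^p,\mathcal{F}^\bullet)\) of \(\Sh_{\mu_i,K}^{\spl}\), we take the \(p\)-divisible group \(A[p^\infty]\) with its \(\Oo_B\otimes\IZ_p\)-action and polarization. We also take the Pappas--Rapoport splitting filtration on its Dieudonné module. Using the moduli interpretation of the bounded part \(\Sht_{\Gg,\leq\mu_i}^{\spl}\), modelled on Hoff's treatment of the parahoric case, this data gives a point of \(\Sht_{\Gg,\leq\mu_i}^{\spl}\). Concretely, the Dieudonné module trivialized by a choice of lattice chain (\(\Gg\)-torsor) gives an \(L^+\Gg\)-torsor. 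Frobenius gives the modification, and the filtration refines it to a point of the convolution Grassmannian \(\Gr_{\Gg}^{\spl}\) bounded by \(\mu_i\). We first verify this at the level of perfect points via Dieudonné theory, and then in families using perfect Dieudonné theory (Gabber/Lau) over perfect rings.

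\textbf{Step 2 (the correspondence).} We define \(\Sht_{\mu_1\mid\mu_2}^{\spl}\) as the fiber product \(\Sht_{\Gg,\leq\mu_1}^{\spl}\times_{\mathrm{Kott}}\Sht_{\Gg,\leq\mu_2}^{\spl}\) over the Kottwitz stack \(LG/\Ad_{\sigma^{-1}}LG\). This is the Hecke correspondence already constructed earlier. We then define \(\Sh_{\mu_1\mid\mu_2}^{\spl}\) so that both squares are cartesian. Its points are pairs of PEL abelian varieties \(A_1\), \(A_2\) (with splitting data), together with a quasi-isogeny of \(p\)-divisible groups \(A_1[p^\infty]\dashrightarrow A_2[p^\infty]\) respecting the extra structures, and a matching of the prime-to-\(p\) level structures via the fixed isomorphism \(V_1\otimes\IA_f\cong V_2\otimes\IA_f\).

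\textbf{Step 3 (cartesianness, the main obstacle).} The hard part is showing both squares are cartesian. For the right square, take a point of \(\Sh_{\mu_1,K}^{\spl}\) and a modification of its local shtuka to one bounded by \(\mu_2\). We must produce a point of \(\Sh_{\mu_2,K}\), which requires an abelian variety \(A_2\) with \(A_2[p^\infty]\) the modified \(p\)-divisible group. Since \(\mu_1\neq\mu_2\) in general, \(A_2\) cannot be obtained by isogeny alone. Following Xiao--Zhu, we will instead use the fact that the PEL data share \(B\) and agree away from infinity. We construct \(A_2\) from \(A_1\) by Serre tensor / isogeny-type operations on the rational Tate modules at primes \(\ell\neq p\), together with Dieudonné modifications at \(p\), and we must check that the resulting polarization has the correct signature at infinity. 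This last check is the step I expect to be hardest. I would first try to reduce to the case where both \(\mu_i\) come from a common \(\IG\) up to an inner twist at infinity, compare Kottwitz invariants, and use that the signature condition is forced by the Kottwitz determinant condition, which in turn is encoded in the splitting filtration's bound. The bound on the splitting filtration then follows from the bound on the local shtuka.

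\textbf{Step 4 (properness of the horizontal maps).} The horizontal maps are base changes of the maps \(\Sht_{\mu_1\mid\mu_2}^{\spl}\to\Sht_{\Gg,\leq\mu_i}^{\spl}\), so it suffices to prove the claim for the local maps. Their fibers are affine Deligne--Lusztig-type loci inside convolution products of Schubert varieties in \(\Gr_{\Gg}^{\spl}\), which are ind-(perfectly proper) because each convolution factor is proper. The prime-to-\(p\) data contributes only an étale, and in fact trivial, factor once the level is fixed.
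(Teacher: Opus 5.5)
Steps 1, 2 and 4 are broadly in line with the paper: crystalline period maps via displays and Dieudonné theory, \(\Sht^{\spl}_{\mu_1\mid\mu_2}\) as a fibre product over \(\BB(G)'\), and properness of the horizontal maps by base change from the local situation. The gap is in Step 3, the only step with real content, which rests on a false premise and is left open.

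You claim that because \(\mu_1\neq\mu_2\), the abelian variety \(A_2\) cannot be obtained from \(A_1\) by isogeny. In characteristic \(p\) this is wrong. Over a perfect \(\IF_q\)-algebra, a quasi-isogeny \(\beta\colon A_1[p^\infty]\dashrightarrow X_2\) of \(\Ll\)-multichains of polarized \(\Oo_B\)-\(p\)-divisible groups is induced by a unique \(p\)-power quasi-isogeny \(A_1\dashrightarrow A_2\) of \(\Ll\)-sets of abelian schemes with \(A_2[p^\infty]=X_2\). For an honest isogeny you divide by the finite flat kernel; this is the classical argument of \cite[6.13]{RapoportZink:Period} that the paper uses. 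The \(\Oo_B\)-action and the homogeneous polarization transport along it.

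The rest of the verification is then formal:
\begin{itemize}
\item Since the quasi-isogeny is \(p\)-primary, it identifies \(H_1(A_1,\IA_f^p)\cong H_1(A_2,\IA_f^p)\). So \(\eta\), together with the fixed \(V_1\otimes\IA_f\cong V_2\otimes\IA_f\), gives the level structure on \(A_2\).
\item The only way \(\mu_2\) enters the special-fibre moduli problem is the Kottwitz determinant condition on \(\Lie A_2=\Lie X_2\). This is a condition on the \(p\)-divisible group, and it holds because \(X_2\) is \(\mu_2\)-bounded. There is no archimedean signature to check for an abelian variety over a perfect \(\IF_p\)-algebra.
\item The splitting structure transfers because a splitting structure on \(A_2\) is the same as one on \(A_2[p^\infty]\) \cite[1.12]{MazurMessing:Universal}.
\item Running the construction backwards gives the inverse. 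Hence \(\Sh^{\spl}_{\mu_1,K}\times_{\Sht^{\spl}_{\Gg,\preccurlyeq\mu_1}}\Sht^{\spl}_{\mu_1\mid\mu_2}\cong\Sh^{\spl}_{\mu_2,K}\times_{\Sht^{\spl}_{\Gg,\preccurlyeq\mu_2}}\Sht^{\spl}_{\mu_1\mid\mu_2}\), which is exactly the required cartesianness.
\end{itemize}

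Your proposed substitute (Serre-tensor or Tate-module operations at \(\ell\neq p\), inner twists at infinity, a signature check) is not only unnecessary but counterproductive. Any modification of \(A_1\) away from \(p\) changes \(H_1(-,\IA_f^p)\). That breaks compatibility with the transported level structure and the prime-to-\(p\) Hecke action, so such a map could not make the square cartesian.

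The same misconception distorts your description of points in Step 2. Pairs \((A_1,A_2)\) linked only by a quasi-isogeny of \(p\)-divisible groups, with level structures ``matched'' through \(V_1\otimes\IA_f\cong V_2\otimes\IA_f\), form the much larger triple fibre product \(\Sh_{\mu_1,K}\times_{\Sht_{\Gg,\preccurlyeq\mu_1}}\Sht_{\mu_1\mid\mu_2}\times_{\Sht_{\Gg,\preccurlyeq\mu_2}}\Sh_{\mu_2,K}\). Without a map between \(A_1\) and \(A_2\), the matching imposes no condition. The correct points carry a \(p\)-power quasi-isogeny \(A_1\dashrightarrow A_2\) of abelian varieties compatible with all the extra structure.

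Your closing remark in Step 3, that the bound on the local shtuka encodes the determinant condition, is in fact the entire verification once \(A_2\) is constructed by isogeny.
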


The construction for \thref{intro--exotic} only uses the moduli interpretations of \(\Sh_{\mu_i{,}K}^{\spl}\) and \(\Sht_{\Gg{,}\leq \mu_{i}}^{\spl}\).
In particular, we can similarly construct exotic Hecke correspondences between the special fibers \(\Sh_{\mu_i,K}^{\naive}\) of the naive integral models of Shimura varieties from \cite{RapoportZink:Period}.
Moreover, the schematic image of a natural map \(\Sh_{\mu_i,K}^{\spl} \to \Sh_{\mu_i,K}^{\naive}\) agrees with the special fiber of the canonical model of the Shimura variety.
Using this, we can construct exotic correspondences between the special fibers of canonical models of Shimura varieties, as the schematic image of a natural map \(\Sh_{\mu_1\mid \mu_2}^{\spl} \to \Sh_{\mu_1\mid \mu_2}^{\naive}\).
We refer to \thref{Exotic Hecke correspondences} for details.

In fact, the assumptions on \(\IG_{1,\IQ_p}\cong \IG_{2,\IQ_p}\) and \(\Gg\) can be substantially weakened: it suffices that the splitting models from \cite{PappasRapoport:LocalII.Splitting} are flat and absolutely weakly normal.
This is known in the essentially unramified case for general parahorics, as well as some ramified unitary group cases \cite{BijakowskiHernandez:GeometryAR}.
For more general unitary groups, it is likely that modified versions of this splitting model will admit exotic Hecke correspondence (and that these will induce exotic Hecke correspondences between the canonical models), but we do not pursue this here.

Combining \thref{intro--exotic} with the construction of cohomological correspondences on the moduli of (splitting) local shtukas from \thref{intro-theorem shtuka}, we obtain the geometric Jacquet--Langlands correspondence from \thref{intro--JL}.
For the Tate conjecture, we can obtain a similar diagram as in \thref{intro--exotic}, involving the special fiber of the splitting model of a more general Hodge type Shimura variety, and a zero-dimensional (weak) Shimura variety, via Rapoport--Zink uniformization (\thref{RZuniformization}.)
The resulting Jacquet--Langlands transfer map can then be identified with a cycle class map.
To study the injectivity and surjectivity of this map, we need more precise information about the irreducible components in the basic Newton stratum of \(\Sh_{\mu,K}^{\spl}\).
By Rapoport--Zink uniformization we can reduce this to studying irreducible components of splitting versions of affine Deligne--Lusztig varieties.

\subsection{Irreducible components of splitting affine Deligne--Lusztig varieties}

As discussed in the previous subsection, our next task is to introduce splitting versions of affine Deligne--Lusztig varieties.
Let \(G/F\) be an essentially unramified reductive group, and \(\mu\in X_*(T)^+\) a dominant cocharacter.
Then we have corresponding Schubert varieties \(\Gr_{\Gg,\leq \mu}^{\spl}\subseteq \Gr_{\Gg}^{\spl}\) and \(\Gr_{\Gg,\leq \mu_I}^{\can}\subseteq \Gr_{\Gg}^{\can}\), where \(\mu_I\in X_*(T)_I^+\) is the image of \(\mu\).
By definition, the (canonical) affine Deligne--Lusztig variety associated with \(\mu_I\) and \(b\in G(\breve{F})\) is
\[X_{\leq \mu_I}^{\can}(b) := \left\{gL^+\Gg\in LG/L^+\Gg\mid g^{-1}b \sigma(g)\in L^+\Gg\backslash \Gr_{\Gg,\leq \mu_I}^{\can}\right\}\subseteq \Gr_{\Gg}^{\can}.\]
In particular, it admits a map to \(L^+\Gg\backslash\Gr_{\Gg,\leq \mu_I}^{\can}\), and we define the \emph{splitting affine Deligne--Lusztig variety} associated with \(\mu\) and \(b\) via the following cartesian diagram:
\[\begin{tikzcd}[column sep=huge]
	X_{\leq \mu}^{\spl}(b) \arrow[rr] \arrow[d] && L^+\Gg\backslash \Gr_{\Gg,\leq \mu}^{\spl} \arrow[d]\\
	X_{\leq,\mu_I}^{\can}(b) \arrow[rr, "gL^+\Gg\mapsto g^{-1}b\sigma(g)"'] && L^+\Gg\backslash \Gr_{\Gg,\leq \mu_I}^{\can}.
\end{tikzcd}\]

In particular, the splitting versions of the Shimura variety and the affine Deligne--Lusztig varieties are obtained via pullback along the same map.
By Rapoport--Zink uniformization for the canonical models, this implies that the basic Newton stratum of the special fiber of the splitting model of the Shimura variety admits Rapoport--Zink uniformization by splitting versions of affine Deligne--Lusztig varieties, \thref{RZuniformization}.

To understand the irreducible components of \(X_{\leq \mu}^{\spl}(b)\), it suffices to understand the geometry of \(X_{\leq \mu_I'}^{\can}(b)\) (for general \(\mu_I'\leq \mu_I\)), as well as the fibers of the map \(\Gr_{\Gg,\leq \mu}^{\spl}\to \Gr_{\Gg,\leq \mu_I}^{\can}\).
But this is essentially a convolution morphism, whose fibers are well understood, and admit a representation-theoretic interpretation in terms of the multiplicities appearing in the tensor products of irreducible representations of the (inertia-invariants of the) dual group; cf.~\cite{Haines:Structure}.
Moreover, in the canonical and (essentially) unramified case, the irreducible components of affine Deligne--Lusztig varieties are well-understood, by \cite{XiaoZhu:Cycles,Nie:Irreducible,ZhouZhu:Twisted}.
This yields the following theorem, cf.~\thref{Irreducible components of splitting adlv}.
We denote by \(\IM\IV^{\can}\) and \(\IM\IV^{\spl}\) the sets of various Mirkovic--Vilonen cycles in the canonical and splitting affine Grassmannians, which form natural bases of the irreducible representations of \(\widehat{G}^I\) and \(\widehat{G}\).
Moreover, \(\lambda_b\in X_*(T)_{\Gamma_F}\) denotes the \emph{best integral approximation} of \(b\in B(G)\), in the sense of \cite{HamacherViehmann:Irreducible,ZhouZhu:Twisted}, and \(\Gamma_F:=\Gal(\overline{F}/F)\) the absolute Galois group of \(F\).

\begin{thm}
	Let \(\mu\in X_*(T)^+\) and \(b\in B(G,\mu)\).
	There exists a natural commutative diagram of sets
	\[\begin{tikzcd}
			J_b(F)\backslash \Irr^{\Top}X_{\leq \mu}^{\spl}(b) \arrow[d] \arrow[r, "\cong"] & \bigsqcup_{\lambda\in X^*(\widehat{T}),\lambda_{\Gamma_F}=\lambda_b}\IM\IV_\mu^{\spl}(\lambda) \arrow[d]\\
			\bigsqcup_{\mu_I'\leq \mu_I\in X_*(T)_I^+} J_b(F)\backslash \Irr^{\Top}X_{\mu_I'}^{\can}(b) \arrow[r, "\cong"] & \bigsqcup_{\mu_I'\leq \mu_I\in X_*(T)_I^+} \bigsqcup_{\lambda_I\in X^*(\widehat{T})_I,(\lambda_I)_{\Gamma_F}=\lambda_b} \IM\IV_{\mu_I'}^{\can}(\lambda_I),
	\end{tikzcd}\]
	where the horizontal maps are bijections.
	Moreover, the right horizontal arrow is given by decomposing the irreducible \(\widehat{G}\)-representation of highest weight \(\mu\) into irreducible \(\widehat{G}^I\)-representations and then comparing weight spaces under the map \(X_*(T)\to X_*(T)_I\).
\end{thm}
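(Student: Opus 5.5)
We study \(X_{\leq\mu}^{\spl}(b)\) through the cartesian square defining it. We stratify \(X_{\leq\mu_I}^{\can}(b)\) by the locally closed pieces \(X_{\mu_I'}^{\can}(b)\) for \(\mu_I'\leq\mu_I\); these are the preimages of the \(L^+\Gg\)-orbits \(\Gr_{\Gg,\mu_I'}^{\can}\). Over each piece, \(X_{\leq\mu}^{\spl}(b)\) restricts to the pullback of \(\Gr_{\Gg,\leq\mu}^{\spl}\times_{\Gr^{\can}_{\Gg}}\Gr^{\can}_{\Gg,\mu_I'}\to\Gr^{\can}_{\Gg,\mu_I'}\). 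This map is \(L^+\Gg\)-equivariant, hence étale-locally trivial over the single orbit, with fiber \(F_{\mu,\mu_I'}\) over the base point \(t^{\mu_I'}\). The first step is to show that \(F_{\mu,\mu_I'}\) is equidimensional with
\[\dim F_{\mu,\mu_I'}\le\langle\rho,\mu-\mu_I'\rangle\]
(suitably normalized), and that its top-dimensional components are counted by \(\Hom_{\widehat G^I}(V_{\mu_I'},V_\mu|_{\widehat G^I})\). Because \(\Gr^{\spl}\to\Gr^{\can}\) is a convolution morphism, this follows from the semismallness argument of \cite{Haines:Structure} combined with \thref{intro-theorem satake}. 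The pushforward of \(\IC_\mu^{\spl}\) is perverse and corresponds to \(V_\mu|_{\widehat G^I}\), so the multiplicity of \(\IC_{\mu_I'}^{\can}\) in it equals the number of top-dimensional components of \(F_{\mu,\mu_I'}\) (the relevant pieces).

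Next we combine this with the dimension formula for canonical ADLV. By \cite{XiaoZhu:Cycles,Nie:Irreducible,ZhouZhu:Twisted}, \(X_{\mu_I'}^{\can}(b)\) is equidimensional of dimension \(\langle\rho,\mu_I'-\nu_b\rangle-\tfrac12\defect(b)\), and \(J_b(F)\backslash\Irr^{\Top}\) of it is in bijection with \(\bigsqcup_{\lambda_I}\IM\IV_{\mu_I'}^{\can}(\lambda_I)\). Adding the fiber dimension, every stratum contributes components of dimension at most \(\langle\rho,\mu-\nu_b\rangle-\tfrac12\defect(b)\), with equality exactly for the relevant pieces. Hence
\[J_b(F)\backslash\Irr^{\Top}X_{\leq\mu}^{\spl}(b)\cong\bigsqcup_{\mu_I'}\bigl(J_b(F)\backslash\Irr^{\Top}X^{\can}_{\mu_I'}(b)\bigr)\times\Irr^{\Top}F_{\mu,\mu_I'}.\]
This requires the fibration to have irreducible-component monodromy that is trivial along each component of the base. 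We get this from the fact that \(L^+\Gg\) is connected, so the stabilizer of \(t^{\mu_I'}\) acts trivially on the top components of the fiber, by the usual MV argument. This is the main obstacle, since \(X^{\can}\to L^+\Gg\backslash\Gr^{\can}\) is not a smooth map. We would handle it as in \cite{XiaoZhu:Cycles} by passing through a finite-level torsor \(L^+\Gg\to L^+_m\Gg\) and a perfectly smooth chart.

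Finally, we match this with the right-hand side. Decompose \(V_\mu|_{\widehat G^I}=\bigoplus_{\mu_I'} V_{\mu_I'}\otimes M_{\mu_I'}\), where \(M_{\mu_I'}\) has a basis \(\Irr^{\Top}F_{\mu,\mu_I'}\). Weight spaces satisfy \(V_\mu(\lambda_I)=\bigoplus_{\lambda\mapsto\lambda_I}V_\mu(\lambda)\). Using the MV cycles in \(\Gr^{\spl}\) (intersections with semi-infinite orbits, which are compatible with the projection to \(\Gr^{\can}\)), one defines \(\IM\IV^{\spl}_\mu(\lambda)\) and a map
\[\IM\IV^{\spl}_\mu(\lambda)\to\bigsqcup_{\mu_I'}\IM\IV^{\can}_{\mu_I'}(\lambda_I)\times\Irr^{\Top}F_{\mu,\mu_I'}.\]
This map sends a cycle to the pair consisting of its image cycle in the open stratum it generically meets and the corresponding generic fiber component. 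It is a bijection by a dimension count on both sides, with equality ensured by the equal cardinalities coming from the decomposition above. Summing over \(\lambda\) with \(\lambda_{\Gamma_F}=\lambda_b\) gives the top bijection. The vertical maps (forgetting the fiber factor) then make the diagram commute by construction, and this also yields the stated description of the right arrow.
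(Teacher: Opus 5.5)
Your treatment of the splitting side follows the paper. You stratify \(X_{\leq\mu_I}^{\can}(b)\) by the \(X_{\mu_I'}^{\can}(b)\). You use that \(X_{\leq\mu}^{\spl}(b)\to X_{\leq\mu_I}^{\can}(b)\) is a locally trivial fibration over each stratum, with fiber the fiber of \(\Gr_{\Gg,\leq\mu}^{\spl}\to\Gr_{\Gg,\leq\mu_I}^{\can}\) over \(\varpi^{\mu_I'}\); the paper cites \cite[Lemma 5.1]{FoxImai:Supersingular} for this. You describe top-dimensional components as pairs, and identify the fibers of the vertical maps with the top-dimensional components of this convolution fiber via \cite[Proposition 3.1]{Haines:Structure}. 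Your explicit map on splitting MV cycles just spells out what the paper calls ``identifying the fibers of the vertical maps''.

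Two minor remarks on this part. First, what you call the main obstacle is not one. Local triviality over a single \(L^+\Gg\)-orbit is preserved by arbitrary base change, so no smoothness of \(g\mapsto g^{-1}b\sigma(g)\) is needed. Trivial monodromy on components comes from connectedness of the stabilizer, as you say. Second, equidimensionality of \(F_{\mu,\mu_I'}\) and of \(X_{\mu_I'}^{\can}(b)\) is not needed; only the dimension bounds and the top-dimensional components enter. The paper invokes equidimensionality of convolution fibers only for minuscule \(\mu\), in \thref{irreducible comp of ADLV in very special case}.

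The actual gap is the lower horizontal bijection. It is part of the statement, and you simply quote it from \cite{Nie:Irreducible,ZhouZhu:Twisted,XiaoZhu:Cycles}. Those results are proved for unramified groups, whereas \(G\) here is only essentially unramified. For example, \(\Res_{F'/F}H\) with \(F'/F\) totally ramified is not covered. Note also that for unramified \(G\) one has \(\Gr_{\Gg}^{\spl}=\Gr_{\Gg}^{\can}\), so the whole content of the theorem lies in the ramified case.

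You need the reduction the paper carries out:
\begin{itemize}
\item Pass to the adjoint group, as in \cite[Lemma 3.2]{Nie:Irreducible}.
\item Use \thref{reductions for adlv} to see that the canonical statement is compatible with products and with totally ramified restriction of scalars. The latter leaves \(\Gr^{\can}\), its Schubert varieties and the Frobenius unchanged.
\item Treat restriction of scalars \(\Res_{F'/F}G'\) along an unramified extension via the norm map. This identifies \(X_{\leq\mu_I}^{\can}(b)\) with the convolution affine Deligne--Lusztig variety \(X_{\leq\mu_{\bullet,I}}^{\can}(\Nm(b))\) of \(G'\).
\end{itemize}
The top-dimensional components of that convolution variety must then be analysed by the same fibration-over-strata and convolution-fiber argument you use for the splitting case. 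They are then matched with the MV cycles of \(\Gr_{\Gg}^{\can}\), which over \(\overline{k}\) is a product of copies of the affine Grassmannian of \(G'\). With this in place, the rest of your argument goes through.
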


This theorem illustrates another advantage of the splitting models.
When studying the cohomology of Shimura varieties (defined over a number field), it is the \(\widehat{G}\)-representation \(V_{\mu^*}\) with highest weight the (dual of the) Hodge cocharacter \(\mu^*\) that shows up, rather than the \(\widehat{G}^I\)-representation \(V_{\mu_I^*}\) of highest weight \(\mu_I^*\).
In particular, when using known results on the cohomology of Shimura varieties to deduce instances of the Tate conjecture, we need \(V_{\mu^*}\) rather than \(V_{\mu_I^*}\) to show up.
This does not happen for \(X_{\leq \mu_I^*}^{\can}(b)\), but it does happen for \(X_{\leq \mu^*}^{\spl}(b)\).

If moreover \(b\) is \emph{is very special} in the sense of \thref{very special Kottwitz elements} (which is a generalization of the notion of unramified elements from \cite[§4.2]{XiaoZhu:Cycles} to the quasi-split case), we have more refined information about the irreducible components, cf.~\thref{irreducible comp of ADLV in very special case}.
Moreover, for applications towards the Tate conjecture, we need a precise construction of the irreducible components of splitting affine Deligne--Lusztig varieties.
This is done in §\ref{sec--adlv and semi-infinite orbits}, by studying the relation between affine Deligne--Lusztig varieties and semi-infinite orbits, and bootstrapping from the case where \(G/F\) is unramified, which is handled in \cite[§4.4]{XiaoZhu:Cycles}.

The above gives us a good understanding of the cycle class map for the special fibers of splitting models of Shimura varieties, at least for cycles in the basic Newton stratum.
To deduce \thref{intro--Tate}, it remains to understand a certain Chevalley restriction map (as in \cite[§1.4]{XiaoZhu:Cycles}) for the group \(\widehat{G}^I\), as well as the Frobenius action on the cohomology of the Shimura variety (on the generic fiber, so that no choice of model is involved).
Since our local group is essentially unramified, the Chevalley restriction map can be understood by bootstrapping from the situation in \cite{XiaoZhu:Vector}.
Moreover, the cohomology of compact unitary group Shimura varieties has already been studied in \cite{ScholzeShin:Cohomology}, and the assumptions appearing in loc.~cit.~correspond exactly to the local group being essentially unramified.

\subsection{Motivic aspects of the Langlands program}

Another goal of this article is to show that the construction from \cite{XiaoZhu:Cycles} can be carried out motivically, i.e., by replacing the use of étale cohomology with étale motives.
This provides further evidence for the conjecture that the Langlands program is of motivic origin.
More precisely, the construction of (a part of) the categorical local Langlands functor in \thref{intro-theorem shtuka} holds motivically as well.
This is a first step towards a motivic refinement of the arithmetic categorical local Langlands correspondence, as asked for in \cite[Remark 4.41]{Zhu:Coherent}.
We will explain in detail the motivic nature of this correspondence in upcoming work.

Recall that the main cohomological ingredient of \cite{XiaoZhu:Cycles}, and hence of the present paper, is the geometric Satake equivalence.
In the situation of ramified groups, a motivic refinement of this equivalence have been constructed in \cite{vdH:RamifiedSatake}.
With this additional input, most theorems presented in the introduction can be refined motivically, and we refer to the main text for details.
As a concrete application, we deduce that the Jacquet--Langlands correspondence from \cite[Conjecture 4.60]{Zhu:Coherent} and \thref{intro--JL} is of motivic origin, and in particular independent of the auxiliary prime \(\ell\).
Moreover, when the special fibers of the splitting models of the Shimura varieties are proper, we can construct a Jacquet--Langlands transfer between the (rationalized) higher Chow groups of these special fibers, cf.~\thref{JL for Chow and crystalline}.
By applying the rigid realization functor, we can further deduce a Jacquet--Langlands transfer between their crystalline cohomology.

\begin{rmk}
	Let us clarify the relation of our arguments with those appearing in previous work, in particular when using motives.
	\begin{enumerate}
		\item It should be clear that our arguments are inspired by \cite{XiaoZhu:Cycles}, and our construction of motivic correspondences on the moduli of (either canonical or splitting) local shtukas is similar in spirit to the construction of loc.~cit.
		In order to emphasize the motivic difficulties that show up, in particular related to the Tate twists that we do not trivialize (cf.~\thref{twisting has no effect}, and the appearance of \(\widehat{G}^Iq^{-1}\sigma/\widehat{G}^I\) in \thref{Main local theorem}, rather than \(\widehat{G}^I\sigma/\widehat{G}^I\) as in \thref{intro-theorem shtuka}), we have explained the construction of the functor from \thref{intro-theorem shtuka} in detail.
		However, to check that certain properties of our construction hold, we will refer to \cite{XiaoZhu:Cycles} when no new ideas are required.
		We have also taken this opportunity to explain how several arguments of \cite{XiaoZhu:Cycles}, especially involving correspondences between moduli of restricted local shtukas, can be simplified.
		\item In \cite{Zhu:Tame}, Zhu has constructed a larger part of the arithmetic categorical local Langlangs correspondence.
		The approach in \cite{Zhu:Tame} is similar in spirit to \cite{XiaoZhu:Cycles}, by using the categorical trace of Frobenius, but relies on a more robust and general framework.
		We have followed the construction of \cite{XiaoZhu:Cycles}, since it is needed to work with abelian categories (as a derived version of the motivic and/or ramified Satake equivalence is not yet available), and we wanted to see explicitly how certain motivic difficulties could appear and be resolved.
		In upcoming work, we will come back to a motivic refinement of the arithmetic categorical local Langlands correspondence, and clarify the relation with the categorical trace of Frobenius.
		\item Finally, Yu \cite{Yu:Geometric} has refined certain arguments of \cite{XiaoZhu:Cycles} to the level of \(\overline{\IZ}_\ell\)-sheaves, and used them to construct a geometric Jacquet--Langlands transfer for the cohomology of Shimura varieties with nontrivial coefficients systems.
		His arguments do not immediately carry over to the motivic setting: in the setting of \thref{intro--JL}, he uses that \(\IG_1\) and \(\IG_2\) are isomorphic over \(\IQ_\ell\), which suffices when working with \(\ell\)-adic cohomology.
		On the other hand, we are using motives with \(\IQ\)-coefficients, and it is not necessarily true that \(\IG_1\cong \IG_2\) over \(\IQ\).
		Instead, since we are mainly working with PEL type Shimura varieties, we use the moduli interpretations to replace the arguments from \cite{Yu:Geometric} by a motivic argument.
		This still allows us to construct geometric Jacquet--Langlands transfers involving nontrivial local systems, without needing to work with integral coefficients, and we refer to \thref{JL transfer for local systems} for details.
	\end{enumerate} 
\end{rmk}

\subsection{Outline}

Let us now give an overview of the paper.

In §\ref{Sec:Models of Shimura varieties}, we recall the naive, canonical, and splitting local models and integral models of Shimura varieties, as well as the relevant PEL data.
We use this in §\ref{Sec:Displays} to define various stacks of (bounded) local shtukas, and provide moduli interpretations in the naive and splitting cases.
Using this, we construct exotic Hecke correspondences in §\ref{Sec:Exotic}.
The above sections are mostly moduli-theoretic.

Next, we move to a more group-theoretic approach.
In §\ref{Sec:Splitting Gr}, we define and study splitting versions of the affine Grassmannian for essentially unramified groups, and prove \thref{intro-theorem satake}.
Next, we introduce splitting versions of affine Deligne--Lusztig varieties in §\ref{Sec:ADLV}, and study their irreducible components.
In §\ref{Sec:Motivic corr}, we introduce the group-theoretic version of the moduli stack of splitting local shtukas, and construct motivic correspondences as in \thref{intro-theorem shtuka}, by using the motivic Satake equivalence from \cite{vdH:RamifiedSatake}.
Throughout these sections, we mostly use motives with \(\IZ[\frac{1}{p}]\)-coefficients.
But from §\ref{Subsec:Motivic correspondences on shtukas} onwards, we will change to \(\IQ\)-coefficients, so that the Peter--Weyl theorem applies.

In the last part of the paper, we put everything together.
By pulling back the motivic correspondences on the moduli of local shtukas to the exotic Hecke correspondences, we construct the motivic Jacquet--Langlands transfer in §\ref{Sec:JacquetLanglands}.
And in §\ref{Sec:Tate}, we deduce generic instances of the Tate conjecture for the special fibers of the splitting models of Shimura varieties.

Finally, in the Appendix \ref{Appendix:motives}, we recall the theory of étale motives that we use throughout the paper.
We also review the notion of cohomological correspondences.
The results of this appendix are probably well known, but we have included them since they have not appeared in the literature in the specific form needed in this paper.

\subsection{Notation}\label{Subsec:Notation}

Throughout this paper, we fix a prime \(p\); this will be assumed odd at various places, but not in general.
We will use \(F\) to denote a non-archimedean local field of residue characteristic \(p\), and fix a uniformizer \(\varpi\in F\).
Its ring of integers will be denoted \(\Oo_F\), its residue field by \(k=k_F\cong\IF_q\), its Galois group by \(\Gamma_F=\Gal(\overline{F}/F)\), and its inertia group by \(I=I_F\).
Similarly, we denote the residual Galois group by \(\Gamma_k=\Gal(\overline{k}/k)\).
We moreover fix a lift of the (arithmetic) \(q\)-Frobenius \(\sigma\in \Gamma_F\), or equivalently, a section \(\Gamma_k\to \Gamma_F\) of the natural quotient \(\Gamma_F\to \Gamma_k\).

We denote by \(\Perf_k\) the category of perfect \(k\)-schemes, and by \(\AffSch_k^{\perf}\subseteq \Perf_k\) the subcategory of perfect affine \(k\)-schemes.
More generally, for a ring \(R\), we denote by \(\AffSch_R\) the category of affine \(R\)-schemes.
For a perfect \(k\)-algebra \(R\), we have the ring of ramified Witt vectors \(W_{\Oo_F}(R):=W(R) \widehat{\otimes}_{W(k)} \Oo_F\), as well as the truncated version \(W_{\Oo_F,n}(R) := W_{\Oo_F}(R) \otimes_{\Oo_F} (\Oo_F/\varpi^{n+1})\) for \(n\geq 0\).
These rings of ramified Witt vectors are equipped with a natural Frobenius automorphism, which we also denote by \(\sigma\).

Next, we recall some group-theoretic notation.
For a smooth affine group scheme \(H\) over \(F\), we denote its loop group by \(LH\colon (\AffSch_k^{\perf})^{\op}\to \Grp\colon \Spec R \mapsto H(W_{\Oo_F}(R)\otimes_{\Oo_F} F)\); this is representable by an ind-(perfect scheme) \cite[Proposition 1.1]{Zhu:Affine}.
For a smooth affine group scheme \(\Hh/\Oo_F\), we have the positive loop group \(L^+\Hh\colon (\AffSch_{k}^{\perf})^{\op}\to \Grp\colon \Spec R \mapsto \Hh(W_{\Oo_F}(R))\).
This is a closed subgroup scheme of \(LH\) \cite[Lemma 1.2]{Zhu:Affine}, and is representable by a pro-smooth affine group scheme.
More precisely, it is the limit \(L^+\Hh:=\varprojlim_{n} L^n\Hh\), where \(L^n\Hh\colon (\AffSch_{k}^{\perf})^{\op} \to \Grp \colon \Spec R \mapsto \Hh(W_{\Oo_F,n}(R))\) are the truncated loop groups.
We will then call the étale sheafification \((LH/L^+\Hh)^{\et}=:\Fl_{\Hh}\) (as a functor \((\AffSch_k^{\perf})^{\op} \to \Set\)) the affine flag variety of \(\Hh\).

Now, assume that \(H=G\) is reductive and \(\Gg/\Oo_F\) a parahoric model, corresponding to a facet \(\ff\) in the Bruhat--Tits building of \(H\).
We will always fix a maximal split torus \(A\subset G\), a maximal \(\breve{F}\)-split torus \(S\) containing \(A\), and let \(T:=\Cent_G H\), which is a maximal torus.
We will denote groups of cocharacters by \(X_*(-)\), and groups of characters by \(X^*(-)\).
We denote the Iwahori--Weyl group by \(\tilde{W}=\Norm_G(S)(\breve{F})/\Tt(\breve{\Oo_F})\), where \(\Tt\) is the connected Néron model of \(T\), which is automatically contained in \(\Gg\).
Let \(W_{\ff} = \left(\Norm_S(G)(\breve{F}) \cap \Gg(\breve{\Oo_F}) \right)/\Tt(\breve{\Oo_F})\subseteq \tilde{W}\) denote the subgroup corresponding to \(\ff\), so that the \(L^+\Gg\)-orbits in \(\Fl_{\Gg}\) (after base change to \(\Spec \overline{k}\)) are indexed by \(W_{\ff} \backslash \tilde{W} /W_{\ff}\) \cite[Proposition 2.8]{Richarz:Schubert}; we denote this decomposition by
\[\Fl_{\Gg} = \bigsqcup_{w\in W_{\ff} \backslash \tilde{W} /W_{\ff}} \Fl_{\Gg,w}.\]
Such an orbit is called a Schubert cell, and we denote its closure (called a Schubert variety) by \(\Fl_{\Gg,\leq w} = \bigsqcup_{w'\leq w} \Fl_{\Gg,w'}\), where \(\leq\) denotes the Bruhat partial order.
For \(\mu\in X_*(T)\), we denote the \(\mu\)-admissible subset from \cite[(3.6)]{Rapoport:Guide} by \(\Adm_\mu^{\ff}\), and write \(\Fl_{\Gg,\preccurlyeq \mu} := \bigsqcup_{w\in \Adm_\mu^{\ff}} \Fl_{\Gg,w}\subseteq \Fl_{\Gg}\).
We denote the length function arising from the quasi-Coxeter structure of \(\tilde{W}\) by \(l\colon \tilde{W} \to \IZ_{\geq 0}\), and extend it to the double coset space via
\[l\colon W_{\ff} \backslash \tilde{W}/W_\ff \colon w \mapsto \max_{v\in W_\ff} \min_{v'\in W_\ff} l(vwv').\]
Then the orbit \(\Fl_{\Gg,w}\) is of dimension \(l(w)\) \cite[Proposition 2.8]{Richarz:Schubert}.

When \(H=G\) is reductive and \(\Hh=\Gg\) a very special parahoric model (in the sense of \cite[Definition 6.1]{Zhu:Ramified}), we will also write \(\Gr_{\Gg}^{\can} = \Fl_{\Gg}\), and call it the (canonical, twisted) affine Grassmannian of \(\Hh\) (we will use the subscript \(\can\) throughout, in order to distinguish it from the splitting affine Grassmannians introduced in \thref{definition splitting grassmannians}).
Such a reductive group \(G\) is automatically quasi-split \cite[Lemma 6.1]{Zhu:Ramified}, and we fix a Borel \(T\subseteq B \subseteq G\), which yields a notion of dominant cocharacters \(X_*(T)^+\subseteq X_*(T)\).
Moreover, \(X_*(T)\) is equipped with a \(\Gamma_F\)-action, and the inertia-coinvariants \(X_*(T)_I\) also admit a subset of \(B\)-dominant elements \(X_*(T)_I^+\).
Denote the natural pairing \(X^*(T)\times X_*(T) \to \IZ\) by \(\langle-,-\rangle\).
This pairing is \(I\)-invariant, and hence descends to a pairing \(X^*(T) \times X_*(T)_I \to \IZ\).
Using the above notation, \(W_{\ff}=W^I\) is isomorphic to the inertia-invariants of the (absolute) finite Weyl group \(W\), and we have \(W_{\ff}\backslash \tilde{W}/W_{\ff} \cong X_*(T)_I^+\).
We denote the corresponding Schubert cells and Schubert varieties by \(\Gr_{\Gg,(\leq) \mu_I}\), whose dimension is \(\langle 2\rho,\mu_I\rangle\), where \(\rho\) is the half-sum of the positive absolute roots of \(G\) (with respect to \(B\)).
For an element \(\lambda\in X_*(T)\), we will denote its image in the inertia-coinvariants by \(\lambda_I\in X_*(T)_I\).
Generally, to clarify whether we are working in \(X_*(T)\) or \(X_*(T)_I\), we will denote elements in \(X_*(T)_I\) with a subscript \((-)_I\), even if they do not come from a fixed lift in \(X_*(T)\).
Using this notation, we have \(\Fl_{\Gg,\preccurlyeq \mu} = \Gr_{\Gg,\leq \mu_I}^{\can}\).

\subsection{Acknowledgments}

I thank Remy van Dobben de Bruyn, Arnaud Eteve, Tom Haines, Manuel Hoff, Pol van Hoften, Timo Richarz, Peter Scholze, Torsten Wedhorn, Liang Xiao, Jize Yu, Mingjia Zhang, Rong Zhou, and Xinwen Zhu for helpful conversations and/or useful feedback on earlier versions of this paper.
This project started while I was a PhD-student at the TU Darmstadt, where I was supported by the European research council (ERC) under the European Union’s Horizon 2020 research and innovation programme (grant agreement No 101002592), and the LOEWE professorship in Algebra (through Timo Richarz), project number LOEWE/4b//519/05/01.002(0004)/87.
This project was finished at the Max Planck Institute for Mathematics in Bonn, and I thank them for their hospitality and excellent working conditions.

\section{Models of Shimura varieties}\label{Sec:Models of Shimura varieties}

Since one of the main objectives of this paper is to study the geometry of splitting models of Shimura varieties, we start by recalling the necessary definitions.
Whenever we will use PEL type Shimura varieties, we will assume \(p\neq 2\); in particular we assume this throughout this section.

\subsection{PEL and splitting data}

Let us start by recalling the data appearing in the moduli interpretations of Shimura varieties.

\begin{nota}\thlabel{PEL data}
	Throughout, we will consider the following notions of PEL data.
	\begin{enumerate}
		\item A local rational PEL datum is a tuple \((B,*,V,(,),\mu)\), with \(B\) a finite dimensional semisimple \(\IQ_p\)-algebra with involution \(*\), \(V\) a finite (left) \(B\)-module, and \((,)\colon V\times V\to \IQ_p\) a nondegenerate alternating bilinear form satisfying \((bv,w)=(v,b^*w)\) for \(v,w\in V\) and \(b\in B\).
		This defines an algebraic group \(G\) over \(\IQ_p\) via
		\[G(R)=\left\{g\in \GL_B(V\otimes_{\IQ_p} R)\mid (gv,gw)=c(g)(v,w), c(g)\in R^\times\right\},\]
		which we will always assume to be connected.
		Then \(\mu\) is a cocharacter \(\mu\colon \IG_m\to G\), defined over a finite extension of \(\IQ_p\), with field of definition \(E/\IQ_p\); its residue field will be denoted \(k_E\cong \IF_q\).
		Over such a finite extension, we obtain an eigenspace decomposition, and we assume only the weights \(0\) and \(1\) can appear, i.e., we have \(V=V_0\oplus V_1\); in particular \(\mu\) is minuscule.
		Finally, for a local rational PEL datum as above, we will denote by \(F\) the center of \(B\), and \(F_1\subseteq F\) the invariants under \(*\).
		\item A local integral PEL datum (refining \((B,*,V,(,),\mu)\)) consists of a *-stable maximal order \(\Oo_B\subseteq B\), and a selfdual (with respect to \((,)\)) multichain \(\Ll\) of \(\Oo_B\)-lattices in \(V\) \cite[Definition 3.4]{RapoportZink:Period}.
		Let \(\Gg/\IZ_p\) be the automorphism group of \(\Ll\); this is a smooth affine group scheme with generic fiber \(G\), whose neutral connected component is parahoric \cite[Remark 21.6.7]{ScholzeWeinstein:Berkeley}, and we will assume \(\Gg\) is actually a connected Bruhat--Tits stabilizer group scheme.
		We denote \(K_p:=\Gg(\IZ_p)\subseteq G(\IQ_p)\), which is the stabilizer of \(\Ll\).
		\item A global rational PEL datum consists of a tuple \((B,*,V,(,),h)\), with \(B\) a finite dimensional semisimple \(\IQ\)-algebra \(B\) with positive involution \(*\) and \(V\) a finite dimensional left \(B\)-module with nondegenerate bilinear form \((,)\colon V\times V\to \IQ\) satisfying \((bv,w)=(v,b^*w)\) for \(v,w\in V\) and \(b\in B\).
		Finally, \(h\colon \mathbb{C}\to \End_{B\otimes \IR}(V\otimes \IR)\) is a \(*\)-homomorphism such that the symmetric bilinear form \((V\otimes \IR)\times (V\otimes \IR)\to \IR\colon (v,w)\mapsto (v,h(i)w)\) is positive-definite.
		The data above defines an algebraic group \(\IG\) over \(\IQ\) via
		\[\IG(R) = \left\{x\in \GL_B(V\otimes_{\IQ} R)\mid (gv,gw)=c(g)(v,w), c(g)\in \IQ\right\}.\]
		As in the local case we will assume that \(\IG\) is connected (which amounts to excluding type D in Kottwitz's classification), but now we will also assume that \(\IG\) satisfies the Hasse principle (i.e., that \(\mathrm{H}^1(\IQ,\IG) \to \prod_v \mathrm{H}^1(\IQ_v,\IG)\) is injective).
		Then the global rational PEL datum above defines a Shimura datum \((\IG,\IX)\) with reflex field \(\IE\).
		We denote the canonical \(\IE\)-model of the associated Shimura variety at (sufficiently small) level \(K\subseteq \IG(\IA_f)\) by \(\Sh_K(\IG,\IX)\).
		\item Finally, a global integral (at \(p\)) PEL datum consists of a global rational datum, together with a local integral datum refining the local rational datum which is obtained as the base change of the global datum to \(\IQ_p\) (and \(\mu\) is induced by \(h\)).
		Note that we then have \(G=\IG_{\IQ_p}\).
		In this situation, we will also fix a place \(\pp\) of \(\IE\) lying over \(p\), as well as the \(\pp\)-adic completion \(E:=\IE_{\pp}\).
		Moreover, \(K^p\subseteq \IG(\IA_f^p)\) will denote a sufficiently small compact open subgroup, and \(K:=K^pK_p\).
	\end{enumerate}
	Recall that \(\Ll\) is selfdual if \(\Lambda\in \Ll\) implies \(\hat{\Lambda}\in \Ll\), where \(\hat{\Lambda}\) is the image of \(\Hom_{\IZ_p}(\Lambda,\IZ_p)\) under the isomorphism \(\Hom_{\IQ_p}(V\otimes \IQ_p,\IQ_p)\cong V\otimes \IQ_p\) induced by \((,)\).
\end{nota}

\begin{rmk}\thlabel{remarks about PEL data}
	\begin{enumerate}
		\item We assume the Hasse principle to ensure that the Shimura varieties defined via moduli interpretations below agree with Shimura varieties in the sense of Deligne (otherwise, they would be disjoint unions of such Shimura varieties).
		Many of the techniques of this paper should still work without this assumptions, and we leave the details for the interested reader.
		\item\label{Hodge embedding} Given a global integral PEL datum \((B,*,V,(,),h,\Oo_B,\Ll)\), we can obtain a second global integral PEL datum \((\IQ,*,V,(,),h,\IZ_p,\Ll)\).
		The corresponding algebraic group \(\IG'/\IQ\) agrees with \(\GSp(V)\), and there is a natural closed immersion \(i\colon \IG\subseteq \IG'\), which is compatible with the cocharacters \(\mu\).
		Moreover, we clearly have \(K_p = i^{-1}\left(\GSp(\Ll)(\IZ_p)\right)\cap G(\IQ_p)\).
	\end{enumerate}
\end{rmk}

The following notation will be used when defining splitting structures.

\begin{nota}\thlabel{notation before splitting}
	Fix a local integral PEL datum as above.
	\begin{enumerate}
		\item Let \(\Upsilon\) denote the set of homomorphisms \(F\to \overline{\IQ}_p\), and for \(\tau\in \Upsilon\) write \(F_\tau:=\tau(F)\) and \(F_\tau^+:=\tau(F^+)\) for the images in \(\overline{\IQ}_p\).
		\item We denote by \(\sim\) the equivalence relation on \(\Upsilon\), such that \(\tau\sim \tau'\) when their restrictions to \(F^+\) are in the same \(\Gal(\overline{\IQ}_p/\IQ_p)\)-orbit.
		\item For each equivalence class \([\tau]\), we fix a representative \(\tau\), and write \(F_{[\tau]}^+:=F_{\tau}^+\) and \(F_{[\tau]}:= F_{[\tau]}^+\otimes_{F^+} F\).
		Then we have \(F\cong \prod_{[\tau]\in \Upsilon/\sim} F_{[\tau]}\) and \(F^+\cong \prod_{[\tau]\in \Upsilon/\sim} F_{[\tau]}^+\).
		Similar factorizations exist for the rings of integers \(\Oo_F\) and \(\Oo_{F^+}\), as well as any module under a ring in \(\{F^+,F,\Oo_{F^+},\Oo_F\}\) (such as \(V\otimes \IQ_p\)).
		\item For an equivalence class \([\tau]\in \Upsilon/\sim\), we fix an ordering of its elements \((\tau_{[\tau],i})_{i=0,\ldots,[F_{[\tau]}\colon \IQ_p]-1}\), in such a way that any two elements with the same restriction to \(F^+\) are consecutive.
		\item We define the integers \((r_\tau)_{\tau\in \Upsilon}\) such that for any \(a\in F\) we have
		\[\det(T-a\mid V_1) = \prod_{\tau\in \Upsilon}(T-\tau(a))^{r_\tau}.\]
		Note that for each \(\tau\in \Upsilon\), the \(F_{[\tau]}\)-module \(V_{[\tau]}\) is free of rank \(r_{\tau}+r_{\tau\circ *}\).
		\item Finally, we denote by \(E^{\Gal}\) the Galois closure of \(E\) in \(\overline{\IQ}_p\); this contains all \(F_\tau\).
	\end{enumerate}
\end{nota}

For any \(b\in B^\times\) which normalizes \(\Oo_B\), and any \(\Oo_B\otimes_{\IZ_p} \Oo_S\)-module \(M\), denote by \(M^b\) its restriction of scalars along the isomorphism \(\Oo_B\to \Oo_B\colon x\mapsto b^{-1}xb\).

\begin{dfn}\thlabel{Defi:sets of pairs}
	Fix a local integral PEL datum, and let \(S\) be any \(\Oo_E\)-scheme.
	An \emph{\(\Ll\)-set of polarized \(\Oo_B\otimes_{\IZ_p} \Oo_S\)-modules} is a functor 
	\[\underline{\mathscr{H}}\colon \Ll\to \Oo_B\otimes_{\IZ_p} \Oo_S\Mod\colon \Lambda\mapsto \mathscr{H}_\Lambda,\]
	equipped with \(\Oo_B\otimes_{\IZ_p}\Oo_S\)-linear \emph{periodicity isomorphisms}
	\[\theta^b_\Lambda\colon \mathscr{H}_\Lambda^b\xrightarrow{\cong} \mathscr{H}_{b\Lambda},\]
	satisfying the following condition:
	\begin{enumerate}
		\item Each \(\mathscr{H}_\Lambda\) is a finite locally free \(\Oo_S\)-module.
		\item For each \(\Lambda\in \Ll\), there exists a perfect pairing \(\mathscr{H}_\Lambda\times \mathscr{H}_{\hat{\Lambda}} \to L\), which induces an isomorphism \(\mathscr{H}_\Lambda\xrightarrow{\cong} \mathscr{H}_{\hat{\Lambda}}^\vee\otimes L\), for some line bundle \(L\) on \(S\) (independent of \(\Lambda\in \Ll\)).
		These isomorphisms are moreover required to be compatible (in the natural way, cf.~\cite[Definition 2.1.12 (5)]{Lan:Compactifications}).
	\end{enumerate}
	Note that this invertible line bundle does not appear in \cite{RapoportZink:Period, Lan:Compactifications}, but will be necessary for us \cite[Remark 21.6.3]{ScholzeWeinstein:Berkeley}.
\end{dfn}

We can now define splitting structures.
This definition will be slightly different from \cite[Definition 2.3.3]{Lan:Compactifications}, but will be directly applicable for local models, rather than only for integral models of Shimura varieties.

\begin{dfn}
	Let \(S\) be an \(\Oo_E\)-scheme, and \(\underline{\mathscr{H}}\) an \(\Ll\)-set of polarized \(\Oo_B\otimes_{\IZ_p} \Oo_S\)-modules.
	A \emph{splitting structure} for \(\mathscr{H}\) consists of data \(\left\{(\underline{\mathscr{F}}_{[\tau]}^i, \underline{j}_{[\tau]}^i)_{[\tau]\in \Upsilon/\sim}\mid 0\leq i<[F_{[\tau]}\colon \IQ_p]\right\}\) as follows:
	\begin{enumerate}
		\item For each \([\tau]\in \Upsilon/\sim\) and \(0\leq i<[F_{[\tau]}\colon \IQ_p]\), the object \(\underline{\mathscr{F}}_{[\tau]}^i\) is a functor
		\[\Ll\to \Oo_B\otimes_{\IZ_p}\Oo_S\Mod\colon \Lambda\mapsto \mathscr{F}_{\Lambda,[\tau]}^i,\]
		and \(\underline{j}_{[\tau]}^i = \{j_{\Lambda,[\tau]}^i\}_{\Lambda\in \Ll}\) consists of injections \(\mathscr{F}_{\Lambda,[\tau]}^i \into \mathscr{H}_{\Lambda,[\tau]}\).
		\item Each \(\mathscr{F}_{\Lambda,[\tau]}^i\) and \(\mathscr{H}_{\Lambda,[\tau]}/\mathscr{F}_{\Lambda,[\tau]}^i\) is a finite locally free \(\Oo_S\)-module.
		\item Let \(\Lambda\in \Ll\) and \([\tau]\in \Upsilon/\sim\). Then for each \(0\leq i<[F_{[\tau]}\colon \IQ_p]\), we have \(\mathscr{F}_{\Lambda,[\tau]}^{i+1}\subseteq \mathscr{F}_{\Lambda,[\tau]}^i\) (where \(\mathscr{F}_{\Lambda,[\tau]}^{[F_{[\tau]}\colon \IQ_p]}=0\)), and the quotient is a locally free \(\Oo_S\)-module of rank \(r_{\tau_{[\tau],i}}\), 
		which is moreover annihilated by \(a\otimes 1 - 1\otimes \tau_{[\tau],i}(a)\) for any \(a\in \Oo_{F_{[\tau]}}\).
		\item Let \(\Lambda,[\tau],i\) be as above, and let \(b\in B^\times\) which normalizes \(\Oo_B\).
		Then the periodicity isomorphisms \(\theta_\Lambda^b\colon \mathscr{H}_{\Lambda}^b\cong \mathscr{H}_{b\Lambda}\) restrict to periodicity isomorphisms for \(\mathscr{F}_{\Lambda,[\tau]}^i\), i.e., there is a (necessarily unique) commutative diagram
		\[\begin{tikzcd}
			(\mathscr{F}_{\Lambda,[\tau]}^i)^b \arrow[r] \arrow[d, "\theta^{b,i}_{\Lambda,[\tau]}"'] & (\mathscr{H}_{\Lambda,[\tau]})^b\arrow[d, "\theta^b_\Lambda"]\\
			\mathscr{F}_{b\Lambda,[\tau]}^i \arrow[r] & \mathscr{H}_{b\Lambda,[\tau]}.
		\end{tikzcd}\]
		\item The \emph{determinant condition} holds, i.e., there is an equality of polynomials
		\(\det_{\Oo_S}(a\mid \mathscr{F}_{\Lambda,[\tau]}^0) = \det_{\Oo_S}(a\mid V_1)\) in \(a\in \Oo_B\).
		\item Let \(\Lambda,[\tau],i\) be as usual, and consider the perfect pairing \(\mathscr{H}_{\Lambda,[\tau]}\times \mathscr{H}_{\hat{\Lambda},[\tau]}\to L\).
		Then the orthogonal complement \((\mathscr{F}_{\Lambda,[\tau]}^i)^\bot\) of \(\mathscr{F}_{\Lambda,[\tau]}^i\) under this pairing satisfies
		\[\left(\prod_{k=0}^i (a\otimes 1 - 1 \otimes \tau_{[\tau],k}(a))\right)\left((\mathscr{F}_{\Lambda,[\tau]}^i)^\bot\right)\subseteq \mathscr{F}_{\hat{\Lambda},[\tau]}^i,\]
		for any \(a\in \Oo_{F_{[\tau]}}\) and \(0\leq i\leq [F_{[\tau]}\colon \IQ_p]\) divisible by \([F_{[\tau]}\colon F_{[\tau]}^+]\).
	\end{enumerate}
\end{dfn}

There is an obvious notion of isomorphisms of splitting structures, cf.~\cite[Definition 2.3.5]{Lan:Compactifications}.

\subsection{Local models}

Local models where first systematically studied (in the PEL case) in \cite{RapoportZink:Period}, in order to model the singularities of integral models of Shimura varieties and their special fibers.
Let us fix a local integral PEL datum; then we can attach to it various local models.
The following definition was introduced in \cite[Definition 3.27]{RapoportZink:Period}.

\begin{dfn}
	The \emph{naive local model} \(\mathscr{M}_{\Gg,\preccurlyeq\mu}^{\naive}\) is the projective \(\Oo_E\)-scheme representing the functor sending a scheme \(S\) to the following data, up to isomorphism:
	\begin{enumerate}
		\item A functor \(\Ll\mapsto \Oo_B\otimes_{\IZ_p} \Oo_S\Mod\colon \Lambda\mapsto t_{\Lambda}\),
		\item A natural transformation \(\phi_\Lambda\colon \Lambda\otimes_{\IZ_p} \Oo_S \to t_\Lambda\).
	\end{enumerate}
	These are moreover required to satisfy the following conditions:
	\begin{enumerate}
		\item Each \(t_\Lambda\) is finite locally free as an \(\Oo_S\)-module,
		\item Each \(\phi_\Lambda\) is surjective,
		\item The natural periodicity isomorphisms \(\Lambda^b\otimes_{\IZ_p} \Oo_S\cong b\Lambda\otimes_{\IZ_p} \Oo_S\) induce (necessarily unique) periodicity isomorphisms \(t_\Lambda^b\cong t_{b\Lambda}\) via \(\phi_\Lambda\) and \(\phi_{b\Lambda}\).
		\item There is an equality \(\det_{\Oo_S}(b\mid t_\Lambda) = \det_L(b\mid V_0)\) of polynomials in \(b\in \Oo_B\),
		\item The composite map \(t_\Lambda^\vee \xrightarrow{\phi_\Lambda^\vee} (\Lambda\otimes_{\IZ_p} \Oo_S)^\vee \cong \hat{\Lambda}\otimes_{\IZ_p} \Oo_S \xrightarrow{\phi_{\hat{\Lambda}}} t_{\hat{\Lambda}}\) vanishes.
	\end{enumerate}
\end{dfn}

In case \(G\) does not split over an unramified extension, the local model \(\mathscr{M}_{\Gg,\preccurlyeq\mu}^{\naive}\) is typically not flat over \(\Oo_E\) \cite{Pappas:ArithmeticModuli}.
Instead, we can obtain a flat model as follows.

\begin{dfn}
	The \emph{canonical local model} \(\mathscr{M}_{\Gg,\preccurlyeq\mu}^{\can}\) is the absolute weak normalization of the schematic closure in \(\mathscr{M}_{\Gg,\preccurlyeq\mu}^{\naive}\) of its generic fiber.
	In other words, it is the absolute weak normalization of the unique flat closed subscheme of \(\mathscr{M}_{\Gg,\preccurlyeq\mu}^{\naive}\) with the same generic fiber \cite[Proposition 2.8.5]{EGA4.2}.
\end{dfn}

Recall that a scheme is \emph{absolutely weakly normal} if every universal homeomorphism to it is an isomorphism.
Moreover, for any scheme, the category of universal homeomorphisms to it has an initial object, called its \emph{absolute weak normalization}.

It will be useful to compare this canonical local model to group-theoretic definitions of local models.
Recall the (schematic) local models conjectured in \cite[Conjecture 21.4.1]{ScholzeWeinstein:Berkeley}, whose existence has been proven in \cite{AGLR:Local,GleasonLourenco:Tubular}.

\begin{prop}\thlabel{comparison local models}
	The canonical local model \(\mathscr{M}_{\Gg,\preccurlyeq\mu}^{\can}\) agrees with the local model from \cite[Conjecture 21.4.1]{ScholzeWeinstein:Berkeley}.
\end{prop}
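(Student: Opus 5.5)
We use the characterization of the local model $\mathbb{M}_{\Gg,\mu}$ of \cite[Conjecture 21.4.1]{ScholzeWeinstein:Berkeley}, established in \cite{AGLR:Local,GleasonLourenco:Tubular}. It is the unique flat, projective, normal (hence absolutely weakly normal) $\Oo_E$-scheme with reduced special fiber, with generic fiber the flag variety $\Fl_{G,\mu}$, and whose associated v-sheaf is the v-sheaf local model $\Gr_{\Gg,\Spd \Oo_E,\leq\mu}$ in the Beilinson--Drinfeld Grassmannian. More usefully, there is the rigidity statement of \cite{AGLR:Local}: two flat projective absolutely weakly normal models whose v-sheaves agree (compatibly with the generic fibers) are isomorphic. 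So the plan is to show that $(\mathscr{M}_{\Gg,\preccurlyeq\mu}^{\can})^\diamond \cong \Gr_{\Gg,\leq\mu}$ as subsheaves of a common ambient v-sheaf, and then invoke this rigidity.

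First, I would embed everything into a lattice-chain Grassmannian. The moduli description of $\mathscr{M}^{\naive}_{\Gg,\preccurlyeq\mu}$ yields a closed immersion into a product of classical Grassmannians $\prod_{\Lambda}\mathrm{Grass}(\Lambda\otimes\Oo_E)$, which may be restricted to a finite set of representatives of $\Ll$ up to periodicity. Second, on the group side, the Hodge embedding of \thref{remarks about PEL data}~\eqref{Hodge embedding} and the lattice-chain description of $\Gg$ give a closed immersion of $\Gr_{\Gg}$ into the Beilinson--Drinfeld Grassmannian of $\GL(\Ll)$. Its $\mu$-bounded part for the minuscule $\mu$ lands in the v-sheaf of $\prod_\Lambda \mathrm{Grass}$, via $\Lambda\otimes W(R^+)\supseteq \mathcal{L}\supseteq \xi\Lambda\otimes W(R^+)$ with $\mathcal{L}/\xi\Lambda = \ker\phi_\Lambda$. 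The periodicity, self-duality and determinant conditions in the naive model are then the linear-algebra conditions for this lattice to be a $\Gg$-lattice chain. Hence $\Gr_{\Gg,\leq\mu}$ is a closed sub-v-sheaf of $(\mathscr{M}^{\naive})^\diamond$.

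Third, both $\Gr_{\Gg,\leq\mu}$ and $(\mathscr{M}^{\can})^\diamond$ are closed subsheaves of $(\mathscr{M}^{\naive})^\diamond$ with the same generic fiber $\Fl_{G,\mu}^\diamond$. The v-sheaf $\Gr_{\Gg,\leq\mu}$ is by definition the closure of its generic fiber, in the sense that it is proper over $\Spd\Oo_E$ and its generic fiber is dense. The schematic closure of the generic fiber in $\mathscr{M}^{\naive}$ has v-sheaf equal to the v-closure of the generic fiber. For a flat proper $\Oo_E$-scheme, every point of the special fiber specializes from the generic fiber, via valuative lifting and flatness. Passing to the absolute weak normalization does not change the v-sheaf, since universal homeomorphisms induce isomorphisms of v-sheaves. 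Therefore the two closed subsheaves coincide, and rigidity gives $\mathscr{M}^{\can}_{\Gg,\preccurlyeq\mu}\cong\mathbb{M}_{\Gg,\mu}$.

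The main obstacle is the third step. One must verify that the diamond of the flat closure equals the closure of the generic fiber inside the v-sheaf, i.e.\ that no extra special-fiber components arise in the v-topology. This also requires the comparison to be compatible with the specialization map, and requires knowing that the v-sheaf local model is the closure of its generic fiber, which is part of \cite{AGLR:Local}. I would handle this via the description of $|\mathscr{X}^\diamond|$ for proper flat $\mathscr{X}$ as $|\mathscr{X}|$ together with the specialization map.
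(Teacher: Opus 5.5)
Your outline is sound, but it departs from the paper exactly at the step you flag as the main obstacle. The paper never shows that $(\mathscr{M}^{\can}_{\Gg,\preccurlyeq\mu})^{\diamondsuit}$ is the v-closure of its generic fibre; it only uses the easy inclusion.

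\textbf{The paper's route.} Your second step is just \cite[Corollary 21.6.10]{ScholzeWeinstein:Berkeley}, which the paper cites. (The determinant condition is not part of being a $\Gg$-lattice chain, but you only need $\mathscr{M}^{\naive,\diamondsuit}$ to be closed with the right generic fibre.) It gives closed immersions $\mathscr{M}^{\can,\diamondsuit}\subseteq\mathscr{M}^{\naive,\diamondsuit}\subseteq\Gr^{\BD}_{\Gg,\Oo_E}$ with generic fibre $\Gr^{\BD}_{\Gg,E,\mu}$. Hence the v-closure $\IM^{\mathrm{AGLR}}_{\Gg,\preccurlyeq\mu}$ embeds into $\mathscr{M}^{\can,\diamondsuit}$. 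Full faithfulness \cite[Theorem 2.16]{AGLR:Local} lifts this embedding to a morphism $\alpha\colon\mathscr{M}^{\mathrm{AGLR}}_{\Gg,\preccurlyeq\mu}\to\mathscr{M}^{\can}_{\Gg,\preccurlyeq\mu}$. The paper then shows $\alpha$ is a universal homeomorphism:
\begin{itemize}
\item surjectivity is purely scheme-theoretic: the image is closed by properness and contains the generic fibre, which is dense because $\mathscr{M}^{\can}$ is flat;
\item injectivity on points of the perfected special fibre holds because $\alpha^{\diamondsuit}$ is a monomorphism, via \cite[Proposition 18.3.1]{ScholzeWeinstein:Berkeley}.
\end{itemize}
Absolute weak normality then makes $\alpha$ an isomorphism. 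So the paper moves the density question to $|\mathscr{M}^{\can}|$, where flatness answers it immediately.

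\textbf{Your route.} You instead need density of the generic fibre in $|\mathscr{Z}^{\diamondsuit}|$, for $\mathscr{Z}$ the flat closure. This is a statement about the diamond topology, not about $|\mathscr{Z}|$. A point of $|\mathscr{Z}^{\diamondsuit}|$ over the special fibre is a point $z\in|\mathscr{Z}|$ together with a valuation ring of (an extension of) its residue field. So ``$|\mathscr{Z}|$ plus the specialization map'' is not the right description. The density statement is nevertheless true, argued with valuations:
\begin{itemize}
\item use flatness to generize $z$ into the generic fibre;
\item realize this specialization by a discrete valuation ring;
\item compose its valuation with an extension of the given valuation on the residue field;
\item observe that the resulting continuous valuation is analytic and has the given point as a horizontal specialization.
\end{itemize}
With this, and the easy remark that $\mathscr{M}^{\can}$ is flat (it is reduced with all generic points in the generic fibre), equality of v-sheaves plus \cite[Theorem 2.16]{AGLR:Local} does give the result. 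Your approach identifies the v-sheaves directly, at the cost of this analysis of $|\mathscr{Z}^{\diamondsuit}|$. The paper's approach avoids that analysis entirely, needing only the inclusion of v-sheaves and scheme-theoretic flatness.
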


\begin{proof}
	The naive local model \(\mathscr{M}_{\Gg,\preccurlyeq\mu}^{\naive}\) is projective over \(\Oo_E\), hence the same holds for the canonical local model.
	By definition, \(\mathscr{M}_{\Gg,\preccurlyeq\mu}^{\can}\) is absolutely weakly normal, and it is flat by \cite[Proposition 2.18 (1)]{AGLR:Local}.
	
	Now, let \(\Gr_{\Gg,\Oo_E}^{\BD}\) be the Beilinson--Drinfeld Grassmannian from \cite[§20.3]{ScholzeWeinstein:Berkeley}, defined over \(\Spd \Oo_E\).
	By \cite[Corollary 21.6.10]{ScholzeWeinstein:Berkeley}, there is a closed immersion \(\mathscr{M}_{\Gg,\preccurlyeq\mu}^{\naive,\diamondsuit} \into \Gr_{\Gg,\Oo_E}^{\BD}\), which induces an isomorphism \(\mathscr{M}_{\Gg,\preccurlyeq\mu}^{\naive,\diamondsuit} \cong \Gr_{\Gg,E,\mu}^{\BD}\) after base change to \(\Spd E\); here \((-)^{\diamondsuit}\) is the functor defined in \cite[§18.1]{ScholzeWeinstein:Berkeley}.
	Since \((-)^\diamondsuit\) preserves closed immersions, and is invariant under taking absolute weak normalization \cite[Lemma 2.13]{AGLR:Local}, we also have closed immersions
	\[\mathscr{M}_{\Gg,\preccurlyeq\mu}^{\can,\diamondsuit} \into \mathscr{M}_{\Gg,\preccurlyeq\mu}^{\naive,\diamondsuit} \into \Gr_{\Gg,\Oo_E}^{\BD}.\]
	By definition, the v-sheaf local model \(\IM_{\Gg,\preccurlyeq\mu}^{\mathrm{AGLR}}\) from \cite[Definition 4.11]{AGLR:Local} is the v-closure of \(\Gr_{\Gg,E,\mu}^{\BD}\) inside \(\Gr_{\Gg,\Oo_E}^{\BD}\), i.e., the smallest closed sub-v-sheaf of \(\Gr_{\Gg,\Oo_E}^{\BD}\) containing \(\Gr_{\Gg,E,\mu}^{\BD}\).
	In particular, there is a closed immersion \(\IM^{\mathrm{AGLR}}_{\Gg,\preccurlyeq \mu} \into \mathscr{M}_{\Gg,\preccurlyeq\mu}^{\can,\diamondsuit}\), which induces an isomorphism on the generic fiber.
	By \cite[Theorem 7.21]{AGLR:Local} and \cite[Corollary 1.4]{GleasonLourenco:Tubular}, there is a normal flat projective \(\Oo_E\)-scheme \(\mathscr{M}_{\Gg,\preccurlyeq \mu}^{\mathrm{AGLR}}\) equipped with an isomorphism \(\mathscr{M}_{\Gg,\preccurlyeq\mu}^{\mathrm{AGLR},\diamondsuit} \cong \IM_{\Gg,\preccurlyeq\mu}^{\mathrm{AGLR}}\); this is exactly the schematic local model satisfying \cite[Conjecture 21.4.1]{ScholzeWeinstein:Berkeley}.
	
	By \cite[Theorem 2.16]{AGLR:Local} the closed immersion \(\IM^{\mathrm{AGLR}}_{\Gg,\preccurlyeq \mu} \into \mathscr{M}_{\Gg,\preccurlyeq\mu}^{\can,\diamondsuit}\) lifts to a unique morphism \(\alpha\colon \mathscr{M}_{\Gg,\preccurlyeq\mu}^{\mathrm{AGLR}} \to \mathscr{M}_{\Gg,\preccurlyeq\mu}^{\can}\).
	We claim \(\alpha\) is an isomorphism.
	As both schemes are absolutely weakly normal, it suffices to show \(\alpha\) is a universal homeomorphism.
	By flatness of \(\mathscr{M}_{\Gg,\preccurlyeq \mu}^{\can}\) and properness of \(\alpha\), it suffices to see that \(\alpha\) is universally injective.
	This can be checked fiberwise over \(\Spec \Oo_E\); as we already know \(\alpha\) induces an isomorphism on the generic fiber, it suffices to check the special fiber.
	In particular, by \cite[Lemma 3.4]{BhattScholze:Projectivity}, we may pass to perfections.
	But since \(\alpha^\diamondsuit\) is a closed immersion, hence a monomorphism by definition \cite[Definition 5.6]{Scholze:Etale}, \(\alpha^{\diamondsuit}\) induces an injection on field-valued points, hence the same holds for \(\alpha\) by \cite[Proposition 18.3.1]{ScholzeWeinstein:Berkeley}.
	This implies \(\alpha\) is universally injective, concluding the proof.
\end{proof}

\begin{cor}\thlabel{normality of local model}
	The canonical local model \(\mathscr{M}_{\Gg,\preccurlyeq\mu}^{\can}\) is normal.
\end{cor}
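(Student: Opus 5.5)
This follows directly from \thref{comparison local models}, so the plan is to transport normality across the isomorphism produced there. In that proof we constructed a morphism \(\alpha\colon \mathscr{M}_{\Gg,\preccurlyeq\mu}^{\mathrm{AGLR}} \to \mathscr{M}_{\Gg,\preccurlyeq\mu}^{\can}\) and showed that it is an isomorphism of \(\Oo_E\)-schemes. The source \(\mathscr{M}_{\Gg,\preccurlyeq\mu}^{\mathrm{AGLR}}\) is, by \cite[Theorem 7.21]{AGLR:Local} together with \cite[Corollary 1.4]{GleasonLourenco:Tubular}, a normal flat projective \(\Oo_E\)-scheme. Normality is intrinsic to the scheme, hence invariant under isomorphism, so \(\mathscr{M}_{\Gg,\preccurlyeq\mu}^{\can}\) is normal.

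The only point that needs care is the justification that \(\mathscr{M}_{\Gg,\preccurlyeq\mu}^{\mathrm{AGLR}}\) is normal, as opposed to merely absolutely weakly normal. I would cite the cited results precisely here. In \cite{AGLR:Local}, normality of the schematic local model follows from two facts: the special fiber is reduced, being a union of Schubert varieties in the admissible locus (coherence/Cohen--Macaulayness in tame cases, and \cite{GleasonLourenco:Tubular} in general); and the generic fiber is a smooth flag variety. Serre's criterion then applies: a flat scheme over a DVR whose generic fiber is normal and whose special fiber is reduced is normal. This is the argument used in loc.~cit., so we can invoke it directly.

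I do not expect any real obstacle. If one wants to avoid relying on the statement of normality in loc.~cit., the fallback is to use the description of the special fiber of \(\mathscr{M}^{\mathrm{AGLR}}_{\Gg,\preccurlyeq\mu}\) as \(\Fl_{\Gg,\preccurlyeq\mu}\). This is reduced by the coherence results for admissible loci. One then runs the Serre-criterion argument oneself.
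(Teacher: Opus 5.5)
Your proof is correct and is the paper's argument: the paper cites \cite[Corollary 1.4]{GleasonLourenco:Tubular} for normality of the schematic local model and transfers it to \(\mathscr{M}_{\Gg,\preccurlyeq\mu}^{\can}\) via \thref{comparison local models}, exactly as you do. One caution about your fallback: \(\Fl_{\Gg,\preccurlyeq\mu}\) only describes the \emph{perfection} of the special fiber, and a perfection is automatically reduced, so reducedness of the actual special fiber cannot be read off from that description; it is precisely the deep input supplied by \cite{AGLR:Local} and \cite{GleasonLourenco:Tubular}.
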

\begin{proof}
	This follows from \cite[Corollary 1.4]{GleasonLourenco:Tubular}.
\end{proof}

\begin{cor}\thlabel{local model and admissible locus}
	The perfection of the special fiber of \(\mathscr{M}_{\Gg,\preccurlyeq\mu}^{\can}\) agrees with the \(\mu\)-admissible locus of the affine flag variety \(\Fl_{\Gg,\preccurlyeq\mu} \subseteq \Fl_{\Gg}\).
\end{cor}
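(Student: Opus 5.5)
By \thref{comparison local models}, \(\mathscr{M}_{\Gg,\preccurlyeq\mu}^{\can}\) agrees with the schematic local model \(\mathscr{M}_{\Gg,\preccurlyeq\mu}^{\mathrm{AGLR}}\) of \cite[Conjecture 21.4.1]{ScholzeWeinstein:Berkeley}. It therefore suffices to identify the perfected special fiber of the latter with \(\Fl_{\Gg,\preccurlyeq\mu}\). For that model this is precisely the description of the special fiber proved in \cite{AGLR:Local}. There the reduced special fiber is shown to be the union of Schubert varieties indexed by the \(\mu\)-admissible set, using the v-sheaf description together with the coherence/``test function'' arguments. So the plan reduces the corollary to citing that theorem, and I would structure it as follows.

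First, pass to perfections. By \cite[Lemma 3.4]{BhattScholze:Projectivity}, the perfection of the special fiber of a scheme only depends on its underlying reduced structure up to universal homeomorphism. Moreover, the \((-)^\diamondsuit\) functor applied to a perfect scheme over \(k_E\) recovers it fully faithfully \cite[Proposition 18.3.1]{ScholzeWeinstein:Berkeley}. Hence it is enough to identify the special fiber of the v-sheaf \(\IM_{\Gg,\preccurlyeq\mu}^{\mathrm{AGLR}}\) inside the special fiber of \(\Gr_{\Gg,\Oo_E}^{\BD}\), which is \(\Fl_{\Gg}^{\diamondsuit}\) (after base change to \(k_E\)). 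Second, invoke \cite{AGLR:Local} (the theorem identifying the special fiber of the v-sheaf local model with the \(\mu\)-admissible locus \(\mathcal{A}(\mu)\)). This gives \((\IM_{\Gg,\preccurlyeq\mu}^{\mathrm{AGLR}})_{k_E} = \Fl_{\Gg,\preccurlyeq\mu}^{\diamondsuit}\) as closed sub-v-sheaves of \(\Fl_{\Gg}^\diamondsuit\). Combining this with the isomorphism \(\mathscr{M}^{\mathrm{AGLR},\diamondsuit}\cong \IM^{\mathrm{AGLR}}\) and full faithfulness on perfect schemes, we obtain \((\mathscr{M}_{\Gg,\preccurlyeq\mu}^{\can})_{k_E}^{\perf}\cong \Fl_{\Gg,\preccurlyeq\mu}\) compatibly with the embeddings into \(\Fl_{\Gg}\).

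The main obstacle is making sure that the identification in \cite{AGLR:Local} applies in our generality. The group \(\Gg\) is an arbitrary parahoric arising from a PEL datum, and \(p\) is odd; the result there is stated for tame or more general groups subject to hypotheses. If needed, I would fall back on the alternative argument. One inclusion follows from \(L^+\Gg\)-equivariance plus the fact that the translation elements \(t_{\lambda}\), for \(\lambda\) in the Weyl orbit of \(\mu\), specialize into the special fiber; closedness then gives the whole admissible locus. For the reverse inclusion, one uses that the special fiber has dimension \(\langle 2\rho,\mu\rangle\) by flatness, together with the bound on the relative positions of specializations (Bruhat order on \(\tilde W\), via the determinant and spin conditions). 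The case check for the equality on the level of irreducible components is where I would expect most work. Finally, I would note that one must interpret \(\Fl_{\Gg,\preccurlyeq\mu}\) after base change from \(k\) to \(k_E\).
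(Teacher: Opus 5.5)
Your plan matches the paper's proof, which is exactly the combination of \thref{comparison local models} with \cite[Theorem 6.16]{AGLR:Local}. That theorem is cited there without further hypotheses, so you do not need the fallback argument. This is just as well, since that argument only sketches the reverse inclusion, which is the genuinely hard direction.
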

\begin{proof}
	By \thref{comparison local models}, this follows from \cite[Theorem 6.16]{AGLR:Local}.
\end{proof}

Finally, we have the splitting local model, introduced in \cite[§14]{PappasRapoport:LocalII.Splitting}.
Let \(\underline{\Lambda}\) be the \(\Ll\)-set of polarized \(\Oo_B\otimes_{\IZ_p} \Oo_S\)-modules given by \(\Lambda\mapsto \Lambda\otimes_{\IZ_p} \Oo_S\).

\begin{dfn}
	The \emph{splitting local model} \(\mathscr{M}_{\Gg,\preccurlyeq\mu}^{\spl}\) is the projective \(\Oo_{E^{\Gal}}\)-scheme representing the functor sending a scheme \(S\) to the set of isomorphism classes of splitting structures for \(\underline{\Lambda}\).
\end{dfn}

	Our approach throughout this paper will be to deduce results for the canonical models from similar results for the naive and splitting models, which are more accessible thanks to their moduli interpretations.
	For this, we will often assume that the splitting model is flat and absolutely weakly normal.
	Although this does not hold in general (as pointed out in \cite[§14]{PappasRapoport:LocalII.Splitting}, orthogonal groups and ramified unitary groups can cause problems), it is known in many cases.
	We refer to \cite{PappasRapoport:LocalII.Splitting,BijakowskiHernandez:GeometryPEL,BijakowskiHernandez:GeometryAR,ShenZheng:Fzips} for examples, and will highlight certain examples where the splitting models are even smooth in \thref{example splitting models} below.

\begin{ass}\thlabel{assumption splitting flat}
	The splitting model \(\mathscr{M}_{\Gg,\preccurlyeq\mu}^{\spl}\) (attached to a fixed local integral PEL datum) is flat over \(\Oo_{E^{\Gal}}\), and absolutely weakly normal.
\end{ass}

There is always a natural map \(\mathscr{M}_{\Gg,\preccurlyeq\mu}^{\spl}\to \mathscr{M}_{\Gg,\preccurlyeq\mu}^{\naive}\), sending a splitting structure to the assignment 
\[\Lambda\mapsto \left(\Lambda \otimes_{\IZ_p} \Oo_S\right)/\left(\bigoplus_{[\tau]\in \Upsilon/\sim} \mathscr{F}_{\Lambda,[\tau]}^0\right).\]
If we moreover assume \thref{assumption splitting flat}, this map factors uniquely through \(\mathscr{M}_{\Gg,\preccurlyeq \mu}^{\can}\), yielding maps
\[\mathscr{M}_{\Gg,\preccurlyeq\mu}^{\spl}\to \mathscr{M}_{\Gg,\preccurlyeq\mu}^{\can}\to \mathscr{M}_{\Gg,\preccurlyeq\mu}^{\naive}.\]
Furthermore, the parahoric group scheme \(\Gg\otimes_{\IZ_p} \Oo_E\) acts on the three local models (where we have to further base change to \(\Oo_{E^{\Gal}}\) for the splitting model), for which the above maps are equivariant.

\begin{lem}\thlabel{flatness of splitting models implies closure}
	Under \thref{assumption splitting flat}, the canonical model \(\mathscr{M}_{\Gg,\preccurlyeq\mu}^{\can}\) is the absolute weak normalization of the schematic image of the natural map \(\mathscr{M}_{\Gg,\preccurlyeq\mu}^{\spl} \to \mathscr{M}_{\Gg,\preccurlyeq\mu}^{\naive}\).
\end{lem}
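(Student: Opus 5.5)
Let \(Z\subseteq \mathscr{M}_{\Gg,\preccurlyeq\mu}^{\naive}\otimes_{\Oo_E}\Oo_{E^{\Gal}}\) denote the schematic image of \(\pi\colon\mathscr{M}_{\Gg,\preccurlyeq\mu}^{\spl}\to \mathscr{M}_{\Gg,\preccurlyeq\mu}^{\naive}\). Strictly speaking, the schematic image lives over \(\Oo_{E^{\Gal}}\), so I will first work after base change to \(\Oo_{E^{\Gal}}\). Since \(\Oo_E\to\Oo_{E^{\Gal}}\) is faithfully flat, schematic closures of generic fibers commute with this base change. At the end I will descend, using that the map factors through \(\mathscr{M}^{\can}\). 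The plan is to identify \(Z\) with the flat closure \(\mathscr{M}^{\mathrm{fl}}\) of the generic fiber of the naive model, base changed to \(\Oo_{E^{\Gal}}\). The absolute weak normalizations then agree by definition of \(\mathscr{M}_{\Gg,\preccurlyeq\mu}^{\can}\).

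First, I show \(Z\subseteq \mathscr{M}^{\mathrm{fl}}\). Under \thref{assumption splitting flat}, \(\mathscr{M}^{\spl}\) is flat over \(\Oo_{E^{\Gal}}\), so its generic fiber is schematically dense. Hence the schematic image of \(\pi\) equals the schematic closure of the image of the generic fiber \(\mathscr{M}^{\spl}_{E^{\Gal}}\). That image lies in the generic fiber of the naive model, so \(Z\) is contained in \(\mathscr{M}^{\mathrm{fl}}\). Moreover \(Z\) is flat, being the schematic image of a flat scheme: it is torsion free, because its functions inject into those of \(\mathscr{M}^{\spl}\).

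Second, I show equality on generic fibers. Over \(E^{\Gal}\), the splitting model maps onto the generic fiber of the naive model, which coincides with that of the canonical model. Indeed, on the generic fiber the splitting structure is uniquely determined: the filtration is given by the eigenspace decomposition of \(F\otimes E^{\Gal}\), so \(\pi_{E^{\Gal}}\) is an isomorphism. Alternatively, it is proper and surjective onto the flag variety \(\mathscr{M}^{\naive}_{E^{\Gal}}\), using homogeneity under \(G\) together with the determinant condition. In either case the scheme-theoretic image of \(\pi_{E^{\Gal}}\) is all of \(\mathscr{M}^{\naive}_{E^{\Gal}}\); this generic-fiber comparison is the step requiring the most care. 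Since \(Z\) and \(\mathscr{M}^{\mathrm{fl}}\) are both flat closed subschemes with the same generic fiber, they coincide by \cite[Proposition 2.8.5]{EGA4.2}.

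Finally, taking absolute weak normalizations gives \((Z)^{\mathrm{awn}}\cong \mathscr{M}^{\can}\otimes\Oo_{E^{\Gal}}\), up to the base change remark above. This is compatible with the factorization \(\mathscr{M}^{\spl}\to\mathscr{M}^{\can}\) already noted.
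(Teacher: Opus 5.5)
Your core argument is the paper's: flatness of \(\mathscr{M}^{\spl}_{\Gg,\preccurlyeq\mu}\) makes its schematic image the closure of the image of its generic fiber, and the map is an isomorphism on generic fibers (your eigenspace argument is the right reason). Then \cite[Proposition 2.8.5]{EGA4.2} identifies the result with the flat closure \(\mathscr{M}^{\mathrm{fl}}\).

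The gap is in your last step. The lemma concerns the schematic image inside \(\mathscr{M}^{\naive}_{\Gg,\preccurlyeq\mu}\) itself, an \(\Oo_E\)-scheme, and says its absolute weak normalization is \(\mathscr{M}^{\can}_{\Gg,\preccurlyeq\mu}\). You instead prove \(Z=\mathscr{M}^{\mathrm{fl}}\otimes_{\Oo_E}\Oo_{E^{\Gal}}\) and then assert \(Z^{\mathrm{awn}}\cong\mathscr{M}^{\can}_{\Gg,\preccurlyeq\mu}\otimes_{\Oo_E}\Oo_{E^{\Gal}}\). This silently uses that absolute weak normalization commutes with the ramified base change \(\Oo_E\to\Oo_{E^{\Gal}}\). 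Your base change remark only covers schematic closures, and for absolute weak normalization the commutation fails in general. For example, let \(\varpi_E\) be a uniformizer of \(E\), \(E'=E(\varpi_E^{1/2})\), and \(E''=E(s)\) with \(s^3=\varpi_E\). Then \(\Oo_{E'}\) is normal, but \(\Oo_{E'}\otimes_{\Oo_E}\Oo_{E''}\cong\Oo_{E''}[x]/(x^2-s^3)\) is a cusp whose normalization \(\Oo_{E''}[s^{1/2}]\) is a universal homeomorphism onto it. The descent you announce ``using that the map factors through \(\mathscr{M}^{\can}\)'' is never carried out, and the factorization alone does not supply it. In the present situation one might rescue your claim from normality of \(\mathscr{M}^{\can}_{\Gg,\preccurlyeq\mu}\) and reducedness of its special fiber, but that is extra input you neither supply nor need.

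The repair is to stay inside \(\mathscr{M}^{\naive}_{\Gg,\preccurlyeq\mu}\). Schematic images of quasi-compact morphisms compose, so the schematic image of \(\mathscr{M}^{\spl}_{\Gg,\preccurlyeq\mu}\to\mathscr{M}^{\naive}_{\Gg,\preccurlyeq\mu}\otimes_{\Oo_E}\Oo_{E^{\Gal}}\to\mathscr{M}^{\naive}_{\Gg,\preccurlyeq\mu}\) is that of \(Z\to\mathscr{M}^{\naive}_{\Gg,\preccurlyeq\mu}\). Since \(Z=\mathscr{M}^{\mathrm{fl}}\otimes_{\Oo_E}\Oo_{E^{\Gal}}\to\mathscr{M}^{\mathrm{fl}}\) is faithfully flat, this image is \(\mathscr{M}^{\mathrm{fl}}\), whose absolute weak normalization is \(\mathscr{M}^{\can}_{\Gg,\preccurlyeq\mu}\) by definition.

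This is what the paper does, more directly. The generic fiber of \(\mathscr{M}^{\spl}_{\Gg,\preccurlyeq\mu}\) is \((\mathscr{M}^{\naive}_{\Gg,\preccurlyeq\mu}\otimes_{\Oo_E}E)\otimes_E E^{\Gal}\), which is faithfully flat over \(\mathscr{M}^{\naive}_{\Gg,\preccurlyeq\mu}\otimes_{\Oo_E}E\). Hence the schematic image in \(\mathscr{M}^{\naive}_{\Gg,\preccurlyeq\mu}\) is the closure of the whole generic fiber, with no base change of the normalization ever needed.
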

\begin{proof}
	Since \(\mathscr{M}_{\Gg,\preccurlyeq\mu}^{\spl}\) is flat by assumption, it is the schematic closure (in itself) of its generic fiber.
	Moreover, \(\mathscr{M}_{\Gg,\preccurlyeq\mu}^{\spl}\to \mathscr{M}_{\Gg,\preccurlyeq\mu}^{\naive}\) induces an isomorphisms on generic fibers, up to a change of base field.
	Hence, the schematic closure of \(\mathscr{M}_{\Gg,\preccurlyeq\mu}^{\spl} \to\mathscr{M}_{\Gg,\preccurlyeq\mu}^{\naive}\) is the schematic closure in \(\mathscr{M}_{\Gg,\preccurlyeq\mu}^{\naive}\) of its generic fiber.
	Taking the absolute weak normalization then yields exactly the canonical local model.
\end{proof}

For the next corollary, we will work with perfections of special fibers of local models.
Since absolute weak normalization maps are universal homeomorphisms, which become isomorphisms after perfection, we see that the natural map \((\mathscr{M}_{\Gg,\preccurlyeq\mu}^{\can})_{\IF_q}^{\perf} \to (\mathscr{M}_{\Gg,\preccurlyeq\mu}^{\naive})_{\IF_q}^{\perf}\) is a closed immersion.

\begin{cor}\thlabel{corollary proper surjective}
	Under \thref{assumption splitting flat}, the natural map \((\mathscr{M}_{\Gg,\preccurlyeq\mu}^{\spl})_{\IF_q}^{\perf} \to (\mathscr{M}_{\Gg,\preccurlyeq\mu}^{\naive})_{\IF_q}^{\perf}\) factors as
	\[(\mathscr{M}_{\Gg,\preccurlyeq\mu}^{\spl})_{\IF_q}^{\perf} \to (\mathscr{M}_{\Gg,\preccurlyeq\mu}^{\can})_{\IF_q}^{\perf} \to (\mathscr{M}_{\Gg,\preccurlyeq\mu}^{\naive})_{\IF_q}^{\perf},\]
	where the first map is a perfectly proper surjective, and the second map a closed immersion.
\end{cor}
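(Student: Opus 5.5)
The plan is to obtain the factorization by taking special fibers of the integral factorization, and then to settle the two asserted properties separately. Under \thref{assumption splitting flat}, the discussion preceding \thref{flatness of splitting models implies closure} gives morphisms of \(\Oo_{E^{\Gal}}\)-schemes
\[\mathscr{M}_{\Gg,\preccurlyeq\mu}^{\spl}\to \mathscr{M}_{\Gg,\preccurlyeq\mu}^{\can}\otimes_{\Oo_E}\Oo_{E^{\Gal}}\to \mathscr{M}_{\Gg,\preccurlyeq\mu}^{\naive}\otimes_{\Oo_E}\Oo_{E^{\Gal}}.\]
Their composite is the natural map. I will pass to special fibers (over the residue field of \(E^{\Gal}\); if it is bigger than \(\IF_q\), one simply restricts scalars, which changes nothing below) and then perfect. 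This immediately gives the claimed factorization.

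For the second map, I will use the remark just before the corollary. The map \(\mathscr{M}^{\can}\to\mathscr{M}^{\naive}\) is the composite of the absolute weak normalization map, which is a universal homeomorphism onto the schematic closure, with the closed immersion of that closure into \(\mathscr{M}^{\naive}\). After taking special fibers and perfecting, the universal homeomorphism becomes an isomorphism of perfect schemes. Hence the second map is a closed immersion.

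For the first map, perfect properness is straightforward. Both \(\mathscr{M}^{\spl}\) and \(\mathscr{M}^{\can}\) are projective over \(\Oo_{E^{\Gal}}\), so the map between them is proper. Properness is preserved by base change, and the perfection of a proper map of finite type \(\IF_q\)-schemes is perfectly proper.

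The main point is surjectivity, and here I will use flatness of \(\mathscr{M}^{\spl}\). The image of the proper map \(\mathscr{M}^{\spl}\to\mathscr{M}^{\can}_{\Oo_{E^{\Gal}}}\) is closed. It contains the generic fiber, because the map is an isomorphism there by the proof of \thref{flatness of splitting models implies closure}. The target \(\mathscr{M}^{\can}\) is flat over \(\Oo_E\), and flatness survives base change to \(\Oo_{E^{\Gal}}\). For a flat scheme over a DVR, the generic fiber is topologically dense: every point of the special fiber specializes from a point of the generic fiber (\cite[Proposition 2.8.5]{EGA4.2}). So a closed set containing the generic fiber is the whole space, and the map is surjective. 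Surjectivity is stable under base change to the special fiber and is unaffected by perfection, since perfection is a homeomorphism. This finishes the argument.

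The only step I expect to need care is justifying that the map is an isomorphism on generic fibers after the change of base field. If needed, I would check this directly: over \(E^{\Gal}\) the modules split into \(\tau\)-eigenspaces, so a splitting structure is determined by \(\mathscr{F}^0\), which amounts to the naive datum. Alternatively, it suffices to show that the generic-fiber map is surjective on points, and that follows from the same eigenspace description.
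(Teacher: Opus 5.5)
Your proposal is correct and follows essentially the same route as the paper. You obtain the factorization from the integral maps, the closed immersion because the absolute weak normalization becomes an isomorphism after perfection, perfect properness from projectivity, and surjectivity because the closed image of \(\mathscr{M}^{\spl}_{\Gg,\preccurlyeq\mu}\) contains the generic fiber. The only difference is that the paper cites \thref{flatness of splitting models implies closure} for surjectivity, while you reprove that step directly from density of the generic fiber in the flat model \(\mathscr{M}^{\can}_{\Gg,\preccurlyeq\mu}\). So that step uses flatness of the canonical model, not of the splitting model as your announcing sentence says.
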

\begin{proof}
	The factorization follows from \thref{flatness of splitting models implies closure}, and the fact that universal homeomorphisms become isomorphisms after perfection.
	The second map is a closed immersion by the discussion above.
	Finally, the map is perfectly proper since all schemes are (perfectly) projective, and surjectivity again follows from \thref{flatness of splitting models implies closure}.
\end{proof}

\begin{ex}\thlabel{example splitting models}
	Let us give some examples where \thref{assumption splitting flat} is satisfied, and make the special fiber of the splitting model more explicit.
	We will assume that \(G_{\adj}=\Res_{F'/F} G'\) is the restriction of scalars of an unramified \(F'\)-group \(G'\) along a totally ramified extension \(F'/F\), and that \(\Gg\) is a very special parahoric of \(G\).
	(Recall that local models are invariant under central isogenies.)
	Then the corresponding splitting local model \(\mathscr{M}_{\Gg,\preccurlyeq \mu}^{\spl}\) is smooth over \(\Oo_{E^{\Gal}}\) by \cite[Proposition 2.3]{ShenZheng:Fzips}.
	More precisely, the proof shows that the special fiber of \(\mathscr{M}_{\Gg,\preccurlyeq\mu}^{\spl}\) is a convolution product of minuscule Schubert varieties in the (power series) affine Grassmannian for \(G'\).
	Note that by \cite[Lemma 3.15]{AGLR:Local}, these agree with the convolution product of minuscule Schubert varieties in the Witt vector affine Grassmannian from \cite{Zhu:Affine,BhattScholze:Projectivity}.
	
	More generally, under the same assumption on \(G_{\adj}\), but if \(\Gg\) is a general parahoric, \cite[Proposition A.6]{ShenZheng:Fzips} show that the splitting model \(\mathscr{M}_{\Gg,\preccurlyeq\mu}^{\spl}\) is flat over \(\Oo_{E^{\Gal}}\), and that its special fiber is given by the convolution product of admissible loci in convolution partial affine flag varieties.
	The same proof (i.e., by reducing to the case of local models for unramified groups) shows that \(\mathscr{M}_{\Gg,\preccurlyeq\mu}^{\spl}\) is normal, and in particular absolutely weakly normal.
	
	Finally, local models are compatible with products, and invariant under central isogenies.
	Moreover, the local model of a Weil restriction along an unramified restriction corresponds to the Weil restriction of the local model along the corresponding extension of residue fields.
	Thus, the example above suffices to describe the splitting local models more generally when \(G\) is essentially unramified in the sense of \thref{definition essentially unramified} below.
\end{ex}

\subsection{Integral models of Shimura varieties}\label{subsec:integral models}

Next, we move to the global setting, and introduce integral models of PEL type Shimura varieties.
For this, we fix a global integral PEL datum, and let \(\Sh_K(\IG,\IX)/\IE\) be the Shimura variety associated to the underlying rational datum.
For a place \(\pp\) of \(\IE\) with completion \(E\), there is a unique canonical integral model of this Shimura variety defined over \(\Oo_E\) \cite{Pappas:Integral,PappasRapoport:padic}.
However, this canonical model does not have a nice moduli interpretation in general.
To remedy this, we will also consider more general integral models of \(\Sh_K(\IG,\IX)\), which do have a moduli interpretation, and which are related to the canonical integral model.
The first model was defined in \cite[Definition 6.9]{RapoportZink:Period}, and we refer to loc.~cit.~for the unexplained terminology.

\begin{dfn}\thlabel{Defi:naive model}
	The \emph{naive integral model} \(\mathscr{S}_{K^pK_p}^{\naive}(\IG,\IX)\) is the quasi-projective \(\Oo_E\)-scheme representing the functor sending a test scheme \(S\) to the following data, up to isomorphism:
	\begin{enumerate}
		\item An \(\Ll\)-set \(A=\{A_\Lambda\}_{\Lambda\in \Ll}\) of abelian varieties over \(S\) (\cite[Definition 6.5]{RapoportZink:Period}),
		\item A \(\IQ\)-homogeneous polarization \(\overline{\lambda}\) of \(A\) (\cite[Definition 6.7]{RapoportZink:Period}),
		\item A \(K^p\)-level structure \(\overline{\eta}\colon H_1(A,\IA_f^p)\cong V\otimes \IA_f^p \mod K^p\), respecting the bilinear forms on both sides up to a scalar in \((\IA_f^p)^\times\),
	\end{enumerate}
	such that the determinant condition \(\det_{\Oo_S}(b\mid \mathrm{Lie} A_\Lambda) = \det_{\overline{\IQ}_p}(b\mid V_0)\) (as polynomials in \(b\in \Oo_B\)) holds for each \(\Lambda\in \Ll\).
\end{dfn}

As for the local model, this naive integral model is not flat in general, and hence cannot be the canonical integral model.
However, we can obtain a flat model as before.

\begin{dfn}\thlabel{Defi:canonical model}
	The canonical integral model \(\mathscr{S}_{K^pK_p}^{\can}(\IG,\IX)\) is the normalization of the schematic closure of the inclusion
	\[\Sh_K(\IG,\IX)_E\to \mathscr{S}_{K^pK_p}^{\naive}(\IG,\IX).\]
\end{dfn}

This definition is suitable for our purposes, and agrees with more modern constructions in the literature.
It will follow from \eqref{diagram of models} below, combined with \thref{normality of local model}, that it in fact suffices to take the absolute weak normalization in the definition above.
However, for the proof of the following proposition, it will be convenient to already know \(\mathscr{S}_{K^pK_p}^{\can}(\IG,\IX)\) is normal.

\begin{prop}
	The integral model from \thref{Defi:canonical model} satisfies the conditions from \cite[Conjecture 4.2.2]{PappasRapoport:padic}.
\end{prop}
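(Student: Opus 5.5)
We will not verify the axioms of \cite[Conjecture 4.2.2]{PappasRapoport:padic} directly. Instead, we identify \(\mathscr{S}_{K^pK_p}^{\can}(\IG,\IX)\) with the Hodge type integral model that Pappas--Rapoport construct by normalizing in a Siegel moduli space. We then quote their theorem that this model satisfies the conjecture. Recall what that theorem asserts (for \(p>2\), Hodge type, and \(K_p\) the stabilizer of a lattice chain which is a connected parahoric). Take a Hodge embedding and the induced map of Shimura varieties to the Siegel moduli space. Then the normalization of the schematic closure of \(\Sh_K(\IG,\IX)_E\) in the Siegel integral model (base changed to \(\Oo_E\)) satisfies \cite[Conjecture 4.2.2]{PappasRapoport:padic}. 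This includes the extension property, the Hecke-equivariance of the tower, and the existence of the \(\Gg\)-shtuka with the prescribed formal completions. The hypotheses of this theorem are provided by the standing assumptions in \thref{PEL data}: \(p\neq 2\), and \(\Gg\) is a connected Bruhat--Tits stabilizer group scheme. Moreover, \thref{remarks about PEL data}\eqref{Hodge embedding} gives the Hodge embedding \(i\colon \IG\into \IG'=\GSp(V)\), satisfying \(K_p=i^{-1}(\GSp(\Ll)(\IZ_p))\cap G(\IQ_p)\).

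The first step is to choose the prime-to-\(p\) level \(K'^p\subseteq \IG'(\IA_f^p)\) appropriately, and to write \(K':=K'^p\GSp(\Ll)(\IZ_p)\). As in Kisin's argument, we can choose \(K'^p\) containing \(i(K^p)\) such that \(\Sh_K(\IG,\IX)\to \Sh_{K'}(\IG',\IX')_E\) is a closed immersion. The Siegel integral model \(\mathscr{S}^{\naive}_{K'}(\IG',\IX')\) is the naive model of \thref{Defi:naive model} for the datum \((\IQ,*,V,(,),h,\IZ_p,\Ll)\). It is the Rapoport--Zink moduli space of \(\Ll\)-sets of polarized abelian varieties. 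There is an evident forgetful morphism
\[f\colon \mathscr{S}_{K^pK_p}^{\naive}(\IG,\IX)\to \mathscr{S}^{\naive}_{K'}(\IG',\IX')_{\Oo_E},\]
which forgets the \(\Oo_B\)-action and pushes the level structure from \(K^p\) to \(K'^p\). This morphism is compatible with the embedding on generic fibers.

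The second step is to show that \(f\) is finite, and this is the step where I expect the main work. Over the generic fiber, \(f\) is a closed immersion by the choice of \(K'^p\). Over the integral model, I would argue as follows. Quasi-finiteness should follow from rigidity of endomorphisms of abelian schemes: given the \(\Ll\)-set of abelian schemes with polarization, an \(\Oo_B\)-action is determined by finitely many endomorphisms. The locus where given endomorphisms exist is closed, by Grothendieck's rigidity. Moreover, the \(K^p\)-level structure lifts in finitely many ways. Properness can be checked via the valuative criterion: by the Néron property, an \(\Oo_B\)-action on the generic fiber of an abelian scheme over a discrete valuation ring extends uniquely. The determinant condition is closed, and \(K^p\)-level structures extend because \(H_1(A,\IA_f^p)\) is a local system. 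Together these give that \(f\) is proper and quasi-finite, hence finite.

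The last step is formal. Let \(\mathscr{Z}\) denote the schematic closure of \(\Sh_K(\IG,\IX)_E\) in \(\mathscr{S}^{\naive}_{K^pK_p}(\IG,\IX)\), and \(\mathscr{Z}'\) its closure in \(\mathscr{S}^{\naive}_{K'}(\IG',\IX')_{\Oo_E}\). Since \(f\) is finite, \(f\) maps \(\mathscr{Z}\) finitely and surjectively onto \(\mathscr{Z}'\), and this map is an isomorphism on generic fibers. Both schemes are flat over \(\Oo_E\), so the generic fiber is schematically dense in each. Hence the normalization of \(\mathscr{Z}\) and the normalization of \(\mathscr{Z}'\) are both the integral closure of \(\Oo_{\mathscr{Z}'}\) in the function field of \(\Sh_K(\IG,\IX)\), so they coincide. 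Therefore \(\mathscr{S}^{\can}_{K^pK_p}(\IG,\IX)\) is exactly the Pappas--Rapoport model. These identifications are compatible as \(K^p\) varies with Hecke operators: the constructions are functorial, and \(K'^p\) can be chosen cofinally. Applying \cite{PappasRapoport:padic} then gives the conditions of Conjecture 4.2.2, and these are independent of the auxiliary choices by the uniqueness proven there.
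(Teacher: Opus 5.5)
Your proposal is correct, and its overall strategy is the paper's. You identify \(\mathscr{S}^{\can}_{K^pK_p}(\IG,\IX)\) with the Hodge-type model obtained by normalizing the closure of \(\Sh_K(\IG,\IX)\) in a Siegel moduli space with parahoric level given by \(\Ll\), and then quote the existing theorem. The paper cites \cite[Theorem 7.1.8, Remark 7.1.9 b)]{KisinPappasZhou:Integral}; your appeal to the Hodge-type theorem of \cite{PappasRapoport:padic} is the same input the paper itself uses in \S\ref{Sec:Tate}.

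The difference is in how the two closures are compared. The paper asserts that the forgetful map \(\mathscr{S}^{\naive}_{K^pK_p}(\IG,\IX)\to\mathscr{S}\) is a closed immersion. Hence the schematic closures of \(\Sh_K(\IG,\IX)\) in the two spaces coincide on the nose, and one normalizes the same scheme. You use only that \(f\) is finite and an isomorphism over the generic fibre, and observe that normalization does not see such a finite birational map. This needs less input: \(K'^p\) only has to make \(f\) a closed immersion generically (Kisin's lemma), and you never need injectivity of \(f\) on special-fibre points. That injectivity is the part of the paper's closed-immersion claim left unjustified. The price is that you must actually prove finiteness.

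Your quasi-finiteness sketch should be tightened. Closedness of the locus where prescribed endomorphisms exist does not by itself bound the number of \(\Oo_B\)-structures on a fixed polarized \(\Ll\)-set. The right use of rigidity is this:
\begin{itemize}
\item endomorphisms and étale level structures extend uniquely over nilpotent thickenings, so \(f\) is formally unramified;
\item since \(f\) is of finite type, it is unramified, hence locally quasi-finite;
\item combined with your valuative-criterion argument for properness, this gives finiteness.
\end{itemize}
Finally, \(\Sh_K(\IG,\IX)\) need not be connected, so read ``function field'' as total ring of fractions. That is, take the normalization of \(\mathscr{Z}'\) relative to \(\Sh_K(\IG,\IX)_E\).
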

\begin{proof}
	By \thref{remarks about PEL data} \eqref{Hodge embedding}, we can find a Hodge embedding of \(\Sh_K(\IG,\IX)\) into some Siegel modular variety, which satisfies the condition of \cite[Theorem 7.1.8]{KisinPappasZhou:Integral}.
	This Siegel modular variety admits a natural integral model with parahoric level at \(p\) given by the same lattice multichain \(\Ll\), defined by the usual moduli interpretation, which we will denote \(\mathscr{S}\).
	Consider the open and closed immersions
	\[\Sh_K(\IG,\IX) \into \mathscr{S}_{K^pK_p}^{\naive}(\IG,\IX) \into \mathscr{S},\]
	arising naturally from \thref{Defi:naive model}.
	Then the schematic closures of \(\Sh_K(\IG,\IX)\) in \(\mathscr{S}_{K^pK_p}^{\naive}(\IG,\IX)\) and \(\mathscr{S}\) agree, and \(\mathscr{S}_{K^pK_p}^{\can}(\IG,\IX)\) is by definition the normalization of this schematic closure.
	Thus \(\mathscr{S}_{K^pK_p}^{\can}(\IG,\IX)\) agrees with the integral model constructed in \cite[Theorem 7.1.8]{KisinPappasZhou:Integral}, so that it satisfies \cite[Conjecture 4.2.2]{PappasRapoport:padic} by \cite[Remark 7.1.9 b)]{KisinPappasZhou:Integral}.
\end{proof}

Finally, we also have a splitting integral model \cite[§15]{PappasRapoport:LocalII.Splitting}.

\begin{dfn}\thlabel{Defi:splitting model}
	The splitting model \(\mathscr{S}_{K^pK_p}^{\spl}(\IG,\IX)\) is the quasi-projective \(\Oo_{E^{\Gal}}\)-scheme representing the functor sending a scheme \(S\) the set of isomorphism classes of tuples \(\{(\{A_\Lambda\}_{\Lambda\in \Ll}, \overline{\lambda},\overline{\eta}, \underline{\mathscr{F}}_{[\tau]}^i, \underline{j}_{[\tau]}^i)\}\). 
	Here, \((\{A_\Lambda\}_{\Lambda\in \Ll}, \overline{\lambda},\overline{\eta})\) is an \(S\)-valued point of \(\mathscr{S}_{K^pK_p}^{\naive}(\IG,\IX)\).
	Moreover, \((\underline{\mathscr{F}}_{[\tau]}^i, \underline{j}_{[\tau]}^i)\) is a splitting structure for the \(\Ll\)-set of polarized \(\Oo_B\otimes_{\IZ} \Oo_S\)-modules \(\underline{\mathscr{H}}\), sending \(\Lambda\) to the Lie algebra of the universal vector extension of \(A_\Lambda\) (or equivalently, the first de Rham cohomology of \(A_\Lambda\) \cite[§4]{MazurMessing:Universal}).
	Finally, each \(\mathscr{F}_{\Lambda,[\tau]}^0\) is required to agree with the kernel of the natural surjection \(\mathscr{H}_{\Lambda,[\tau]}\to (\Lie A_\Lambda)_{[\tau]}\).
\end{dfn}

We refer to \cite[Lemma 2.3.9]{Lan:Compactifications} for an explicit proof that the generic fiber of this model agrees with \(\Sh_K(\IG,\IX)_{E^{\Gal}}\), and to \cite[Lemma 2.2.7]{Lan:Compactifications} for a proof of the fact that the Lie algebra of the universal vector extension determines an \(\Ll\)-set of polarized \(\Oo_B\otimes_{\IZ_p} \Oo_S\)-modules.
Note that there is always an obvious map \(\mathscr{S}_{K^pK_p}^{\spl}(\IG,\IX)\to \mathscr{S}^{\naive}_{K^pK_p}(\IG,\IX)\), obtained by forgetting the splitting structure.
If \thref{assumption splitting flat} holds, then as explained in \cite[§15]{PappasRapoport:LocalII.Splitting}, one has the following local model diagram with cartesian squares, where the vertical arrows are smooth of relative dimension \(\dim G\):
\begin{equation}\label{diagram of models}
	\begin{tikzcd}
		\mathscr{S}_{K^pK_p}^{\spl}(\IG,\IX) \arrow[d] \arrow[r] & \mathscr{S}_{K^pK_p}^{\can}(\IG,\IX) \arrow[d] \arrow[r] & \mathscr{S}_{K^pK_p}^{\naive}(\IG,\IX) \arrow[d] \\
		{[}\Gg_{\Oo_{E^{\Gal}}} \backslash \mathscr{M}_{\Gg,\preccurlyeq\mu}^{\spl}{]} \arrow[r] & {[}\Gg_{\Oo_{E}} \backslash \mathscr{M}_{\Gg,\preccurlyeq\mu}^{\can}{]} \arrow[r] & {[}\Gg_{\Oo_{E}} \backslash \mathscr{M}_{\Gg,\preccurlyeq\mu}^{\naive}{]}.
	\end{tikzcd}
\end{equation}
In particular, if any of the local models is flat or smooth, the same holds for the corresponding integral model of \(\Sh_K(\IG,\IX)\).
Finally, the following lemma can be proven similarly as \thref{flatness of splitting models implies closure}.

\begin{lem}
	If \thref{assumption splitting flat} holds, then the canonical model \(\mathscr{S}_{K^pK_p}^{\can}(\IG,\IX)\) is the absolute weak normalization of the schematic closure of the natural map \(\mathscr{S}_{K^pK_p}^{\spl}(\IG,\IX) \to \mathscr{S}_{K^pK_p}^{\naive}(\IG,\IX)\).
\end{lem}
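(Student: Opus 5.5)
The argument parallels the proof of \thref{flatness of splitting models implies closure}, transported to the global setting via the local model diagram \eqref{diagram of models}. First I would show that \(\mathscr{S}_{K^pK_p}^{\spl}(\IG,\IX)\) is flat over \(\Oo_{E^{\Gal}}\). The left vertical arrow in \eqref{diagram of models} is smooth, and \(\mathscr{M}_{\Gg,\preccurlyeq\mu}^{\spl}\) is flat by \thref{assumption splitting flat}, so flatness passes to the integral model. A flat scheme over a DVR is the schematic closure of its generic fiber. Its generic fiber is \(\Sh_K(\IG,\IX)_{E^{\Gal}}\) by \cite[Lemma 2.3.9]{Lan:Compactifications}, and the forgetful map to \(\mathscr{S}^{\naive}_{K^pK_p}(\IG,\IX)\) restricts there to the natural inclusion \(\Sh_K(\IG,\IX)_{E^{\Gal}}\to \mathscr{S}^{\naive}_{K^pK_p}(\IG,\IX)_{E^{\Gal}}\).

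Next I would identify the schematic image of the map \(f\colon \mathscr{S}^{\spl}\to \mathscr{S}^{\naive}\), which is quasi-compact. Its scheme-theoretic image \(Z\) is the closure of the image of the schematically dense open \(\mathscr{S}^{\spl}_{E^{\Gal}}\). That open has the same image as \(\Sh_K(\IG,\IX)_E\): the base change \(E^{\Gal}/E\) is faithfully flat and finite, so the schematic image of \(\Sh_K(\IG,\IX)_{E^{\Gal}}\to \mathscr{S}^{\naive}\) coincides with that of \(\Sh_K(\IG,\IX)_E\). Hence \(Z\) equals the schematic closure of \(\Sh_K(\IG,\IX)_E\) in \(\mathscr{S}^{\naive}\). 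The only care needed is that \(\mathscr{S}^{\naive}\) is over \(\Oo_E\) while \(\mathscr{S}^{\spl}\) lives over \(\Oo_{E^{\Gal}}\), so the closure should be taken for the composite \(\mathscr{S}^{\spl}\to \mathscr{S}^{\naive}_{\Oo_{E^{\Gal}}}\to \mathscr{S}^{\naive}\), using that formation of scheme-theoretic image for quasi-compact maps is local and compatible with flat base change.

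Finally I would compare normalization with absolute weak normalization. By \thref{Defi:canonical model}, \(\mathscr{S}^{\can}\) is the normalization of \(Z\), so I need to show this normalization is already the absolute weak normalization of \(Z\). The canonical integral model sits in the middle column of \eqref{diagram of models}, smooth-locally isomorphic to \(\mathscr{M}^{\can}_{\Gg,\preccurlyeq\mu}\), which is normal by \thref{normality of local model}. Hence \(\mathscr{S}^{\can}\) is normal, in particular absolutely weakly normal. The normalization map \(\mathscr{S}^{\can}\to Z\) is finite and birational, and it is a universal homeomorphism because the middle-to-right square of \eqref{diagram of models} identifies it smooth-locally with \(\mathscr{M}^{\can}\to \mathscr{M}^{\naive}\), which is the absolute weak normalization of a closed subscheme. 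A universal homeomorphism from an absolutely weakly normal scheme is the absolute weak normalization, which proves the claim.

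The main obstacle is this last step: confirming that the cartesian diagram \eqref{diagram of models} really identifies the normalization map with the local-model weak normalization. This is essentially the content of \cite[§15]{PappasRapoport:LocalII.Splitting}, and I would cite it rather than reprove it.
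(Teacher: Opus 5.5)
Your proof is correct and follows the paper's route. The paper proves this lemma by pointing to the local-model version: flatness of the splitting model makes its generic fiber schematically dense, so the schematic image is the closure of the generic fiber. It also remarks separately that the local model diagram \eqref{diagram of models}, together with normality of the canonical local model, lets the normalization in the definition of \(\mathscr{S}^{\can}_{K^pK_p}(\IG,\IX)\) be replaced by the absolute weak normalization.

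Your write-up makes these two steps explicit. The one point you leave implicit is why ``normal implies absolutely weakly normal'' holds here: it is because everything is flat over \(\Oo_E\), so all generic points have characteristic zero. This fails for normal schemes in characteristic \(p\).
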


\section{Displays and local shtukas}\label{Sec:Displays}

A key tool in \cite{XiaoZhu:Cycles} is the moduli stack of local shtukas, which is defined group-theoretically.
On the other hand, in order to construct exotic correspondences between the special fibers of canonical models of Shimura varieties, we will construct exotic correspondences between special fibers of splitting and naive models, which are defined via linear algebra.
To characterize these, we will define ``naive" and ``splitting" analogues of the moduli of local shtukas, also via linear algebra.
We will also need natural maps between these variants, such that the stack of (canonical) local shtukas is a closed substack of the stack of naive local shtukas, similar to the situation for local models and integral models of Shimura varieties.
Moreover, we will want natural maps from various models of Shimura varieties to moduli of local shtukas to be pro-perfectly smooth, for which it is easier to work with formal schemes, without passing to perfections.
In the canonical case, this was obtained by Hoff \cite{Hoff:Parahoric}, by constructing a stack of displays with extra structure, which can be obtained from the local model, and whose special fiber yields the moduli of local shtukas after perfection.
The goal of this section will be to generalize this to different local models, yielding naive and splitting versions of displays and local shtukas.
Throughout this section, we again assume \(p\neq 2\).

\begin{rmk}
	Note that \cite{Hoff:Parahoric} uses (among other things) the flatness of the canonical local model, so certain techniques do not directly apply to naive (or even splitting) models.
	Nevertheless, this can be resolved by appealing to their moduli interpretation.
	In particular, the assumption \cite[2.28]{Hoff:Parahoric} will not be relevant for us.
\end{rmk}

\subsection{Displays}

Let us fix an integral local PEL datum as in \thref{PEL data}, and let \(E\) be the field of definition of \(\mu\).
Throughout this section, \(R\) will denote a \(p\)-complete \(\Oo_E\)- or \(\Oo_{E^{\Gal}}\)-algebra, depending on whether we are in the naive, canonical, or splitting situation.
We also denote by \(\mathrm{M}_{\Gg,\preccurlyeq\mu}^{\spl}\to \mathrm{M}_{\Gg,\preccurlyeq\mu}^{\can}\to \mathrm{M}_{\Gg,\preccurlyeq\mu}^{\naive}\) the \(p\)-completions of \(\mathscr{M}_{\Gg,\preccurlyeq\mu}^{\spl}\to \mathscr{M}_{\Gg,\preccurlyeq\mu}^{\can}\to \mathscr{M}_{\Gg,\preccurlyeq\mu}^{\naive}\).

\begin{nota}
	\begin{enumerate}
		\item In this section and the next only, we will consider the ring of Witt vectors \(W(R)\) and its truncated version \(W_n(R)\) for any \(p\)-complete \(\IZ_p\)-algebra \(R\).
		We will denote the augmentation ideal, i.e., the kernel of \(W_n(R) \to R\), by \(I_R\subseteq W_n(R)\).
		If \(R\) is moreover a complete noetherian local \(\IZ_p\)-algebra with finite reside field (or an algebraic closure thereof), we will denote by \(\widehat{W}(R)\subseteq W(R)\) the associated Zink ring from \cite{Zink:Dieudonne}.
		Using this notation, we can extend the usual definition of the positive loop group \(L^+\Gg\) and its truncations \(L^n\Gg\) to arbitrary \(p\)-complete \(\IZ_p\)-algebras.
		\item Throughout this section, we will treat the case of truncated and non-truncated objects simultaneously, and omit them from the terminology.
		For this, we fix some \(m\geq 0\), and we allow \(m=\infty\).
		Then the \(\infty\)-truncated Witt vectors are defined to be the usual Witt vectors.
	\end{enumerate}
\end{nota}

\begin{dfn}
	A naive (resp.~canonical, resp.~splitting) \((\Gg,\mu)\)-pair over \(R\) is a pair \((\Pp,q)\), with \(\Pp\) a \(\Gg\)-torsor over \(W_m(R)\), and \(q\) a \(\Gg\)-equivariant map \(\Pp_R \to (\mathrm{M}_{\Gg,\preccurlyeq\mu}^{\naive})_R\) (resp.~\(\Pp_R\to (\mathrm{M}_{\Gg,\preccurlyeq\mu}^{\can})_R\), resp.~\(\Pp_R\to (\mathrm{M}_{\Gg,\preccurlyeq\mu}^{\spl})_R\)).
	
	If \(R\) is complete, noetherian, local, and with residue field \(\overline{\IF}_p\), a (naive, canonical, or splitting) \emph{Dieudonné \((\Gg,\mu)\)-pair} is defined as a similar pair \((\Pp,q)\), where instead \(\Pp\) is a \(\Gg\)-torsor over \(\widehat{W}(R)\).
\end{dfn}

There is an obvious notion of isomorphism of pairs.

\begin{lem}\thlabel{lemma pairs}
	Sending \(R\) to the groupoid of naive \((\Gg,\mu)\)-pairs defines a stack over \(\Spf \Oo_E\), denoted \(\Pair_{\Gg,\preccurlyeq\mu}^{\naive,(m)}\).
	There is a natural equivalence \(\Pair_{\Gg,\preccurlyeq\mu}^{\naive,(m)} \cong (L^m\Gg\backslash \mathrm{M}_{\Gg,\preccurlyeq\mu}^{\naive})^{\et}\).
	Similar statements hold for canonical and splitting pairs (where the latter define a stack over \(\Spf \Oo_{E^{\Gal}}\)).
\end{lem}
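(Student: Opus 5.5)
The plan is to prove both assertions at once by comparing \((\Gg,\mu)\)-pairs with \(L^m\Gg\)-torsors together with an equivariant map to the local model. The stack property and the equivalence are formal consequences of three inputs: \(\Gg\)-torsors over \(W_m(R)\) satisfy étale descent in \(R\), \(\mathrm{M}^{\naive}_{\Gg,\preccurlyeq\mu}\) is a sheaf, and \(L^m\Gg\) acts on \(\mathrm{M}^{\naive}_{\Gg,\preccurlyeq\mu}\) through the quotient \(L^m\Gg\to L^0\Gg=\Gg\otimes_{\IZ_p}R\). The canonical and splitting cases use the same argument; for the splitting case we replace \(\Oo_E\) by \(\Oo_{E^{\Gal}}\).

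First, the stack property. Let \(R\to R'\) be an étale cover of \(p\)-complete algebras. Then \(W_m(R)\to W_m(R')\) is again an étale cover, by the standard results of van der Kallen and Borger on Witt vectors of étale maps, passing to the limit when \(m=\infty\). Since \(\Gg\) is affine, \(\Gg\)-torsors over \(W_m(-)\) therefore satisfy étale descent. The map \(q\) is a morphism between sheaves for the étale topology on \(R\)-schemes, and \(\mathrm{M}^{\naive}_{\Gg,\preccurlyeq\mu}\) is representable, so \(q\) descends as well. \(\Gg\)-equivariance can be checked after the cover. Hence pairs form a stack.

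Second, the equivalence. Given a \(\Gg\)-torsor \(\Pp\) over \(W_m(R)\), define the functor \(\Qq(R')=\Pp(W_m(R'))\) on \(R\)-algebras \(R'\). It is an \(L^m\Gg\)-torsor for the étale topology. Surjectivity of \(L^m\Gg\)-orbit maps étale locally follows because \(\Pp\) is trivialized over an étale cover of \(W_m(R)\). Any such cover is refined by one of the form \(W_m(R')\) with \(R\to R'\) étale, using the henselian pair \((W_m(R),I_R)\) and the fact that étale algebras over \(W_m(R)\) are determined by their reduction to \(R\).

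Conversely, an \(L^m\Gg\)-torsor trivialized over \(R'\) gives, via the same descent, a \(\Gg\)-torsor over \(W_m(R)\). A \(\Gg\)-equivariant map \(\Pp_R\to\mathrm{M}\) is the same as an \(L^m\Gg\)-equivariant map \(\Qq\to\mathrm{M}\), since the action factors through \(L^0\Gg\) and \(\Qq\times^{L^m\Gg}L^0\Gg=\Pp_R\). Pairs \((\Qq,q)\) of this kind are by definition the \(R\)-points of the étale stack quotient \((L^m\Gg\backslash\mathrm{M})^{\et}\). Naturality in \(R\) and compatibility with isomorphisms are routine.

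The main obstacle is the Witt-vector input: the equivalence between étale covers of \(W_m(R)\) and étale covers of \(R\) for general \(p\)-complete \(R\), particularly when \(m=\infty\), where one must pass to the limit over finite truncations and check that torsors are compatible with that limit. I would argue first for finite \(m\) using nilpotence of \(I_R\) modulo \(p\)-adic completeness, and then obtain \(m=\infty\) from \(W(R)=\varprojlim W_m(R)\) and \(L^+\Gg=\varprojlim L^m\Gg\), with torsors for a limit of smooth affine groups along surjective transition maps.
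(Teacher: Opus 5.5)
Your proposal follows the paper's proof. The paper invokes the equivalence between \(\Gg\)-torsors over \(W_m(R)\) and étale \(L^m\Gg\)-torsors over \(R\) from \cite[Lemma 2.12]{BueltelHedayatzadeh:Windows} and calls the rest clear. That rest is what you also do: the equivariant map to the local model factors through the reduction \(L^m\Gg\to\Gg\), and one then reads off the points of the étale quotient stack. The Witt-vector descent you sketch is exactly what that citation supplies. One caveat: for \(m=\infty\) the map \(W(R)\to W(R')\) is not étale, so that case must go through the limit over finite truncations, as in your last paragraph, or be taken directly from the citation.
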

\begin{proof}
	It suffices to note that \(\Gg\)-torsors over \(W_m(R)\) are equivalent to \(L^m\Gg\)-torsors over \(R\) \cite[Lemma 2.12]{BueltelHedayatzadeh:Windows}, the rest is clear.
\end{proof}

In order to give linear algebraic moduli descriptions for these stacks in the naive and splitting case, we recall the notions of polarized multichains as in \cite[Definition 3.14]{RapoportZink:Period} (cf.~also \cite[Definition 21.6.2]{ScholzeWeinstein:Berkeley}).

\begin{dfn}\thlabel{defi-multichains}
	\begin{enumerate}
		\item\label{defi-multi1} An \emph{\(\Ll\)-multichain of \(\Oo_B\otimes_{\IZ_p} R\)-lattices} \(\{M_\Lambda\}\) is a functor
		\[\Ll\to \Oo_B \otimes_{\IZ_p} R\Mod\colon \Lambda\mapsto M_{\Lambda}\]
		where the transition maps are injective,
		together with periodicity isomorphisms \(\theta_\Lambda^b\colon M_\Lambda^b\cong M_{b\Lambda}\) for each \(b\in B^\times\) normalizing \(\Oo_B\).
		This data is moreover required to satisfy:
		\begin{enumerate}
			\item Locally on \(\Spec R\), \(M_{\Lambda}\) is isomorphic to \(\Lambda \otimes_{\IZ_p} R\) as \(\Oo_B \otimes_{\IZ_p} R\)-modules.
			\item In case \(F\) is a field, then for any two adjacent lattices \(\Lambda\subseteq \Lambda'\), the quotient \(M_{\Lambda'}/M_{\Lambda}\) is isomorphic to \(\Lambda'/\Lambda \otimes_{\IZ_p} R\) as \(\Oo_B\otimes_{\IZ_p} R\)-modules, locally on \(\Spec R\).
			In general, \(F\) decomposes as a product of fields, which induces a decomposition of \(V\) and \(\Ll\), and a similar condition is required after the projection onto each factor in this decomposition.
			\item The periodicity isomorphisms \(\theta_\Lambda^b\) commute with the transition maps \(M_{\Lambda} \to M_{\Lambda'}\).
			\item For \(b\in B^\times \cap\Oo_B\) normalizing \(\Oo_B\), the composition \(M_\Lambda^b \cong M_{b\Lambda} \to M_{\Lambda}\) is multiplication by \(b\).
		\end{enumerate}
		\item An \emph{\(\Ll\)-multichain of polarized \(\Oo_B \otimes_{\IZ_p} R\)-lattices} is a tuple \((\{M_\Lambda\},L,\lambda)\), where \(\{M_\Lambda\}\) is a multichain of \(\Oo_B\otimes_{\IZ_p} R\)-lattices, \(L\) is an invertible \(R\)-module, and \(\lambda\colon \{M_\Lambda\}\cong \{M^\vee_{\hat{\Lambda}}\otimes L\}\) is an antisymmetric isomorphism of multichains.
		\item An \emph{\(\Ll\)-multichain of polarized pairs} is a tuple \((\{M_\Lambda\},\{M_{1,\Lambda}\},L,\lambda)\).
		 Here, \((\{M_\Lambda\},L,\lambda)\) is an \(\Ll\)-multichain of polarized \(\Oo_B \otimes_{\IZ_p} W_m(R)\)-lattices, and \(\Lambda\mapsto M_{1,\Lambda}\subseteq M_\Lambda\) is a functor, where each \(M_{1,\Lambda}\subseteq M_\Lambda\) is the preimage of a direct summand \(M_{1,\Lambda}/I_RM_{\Lambda}\subseteq M_\Lambda/I_RM_\Lambda\),
		 such that \(\det_R(b\mid M_\Lambda/M_{1,\Lambda}) = \det_L(b\mid V_0)\) as polynomials in \(b\in \Oo_B\).
		 Finally, we assume the isomorphisms \(M_{\Lambda}\cong M^\vee_{\hat{\Lambda}}\otimes L\) induced by \(\lambda\) identify the submodules \(M_{1,\Lambda}\cong M^*_{1,\hat{\Lambda}}\otimes L\), where 
		 \[M^*_{1,\hat{\Lambda}} = \{\phi\in \Hom_{W_m(R)}(M_{\hat{\Lambda}},W_m(R))\mid \phi(M_{1,\hat{\Lambda}})\subseteq I_R\}.\]
		 \item If \(R\) is moreover complete, noetherian, local, and with residue field \(\overline{\IF}_p\), a \emph{Dieudonné \(\Ll\)-multichain of polarized pairs} is defined similarly as in (3), where instead \((\{M_{\Lambda}\},L,\lambda)\) is an \(\Ll\)-multichain of polarized \(\Oo_B \otimes_{\IZ_p} \widehat{W}(R)\)-lattices.
	\end{enumerate}
\end{dfn}

\begin{prop}\thlabel{moduli description of pairs}
	\begin{enumerate}
		\item The groupoid of naive \((\Gg,\mu)\)-pairs over \(R\) is naturally equivalent to the groupoid of \(\Ll\)-multichains of polarized pairs over \(R\).
		\item The groupoid of splitting \((\Gg,\mu)\)-pairs over \(R\) is naturally equivalent to the groupoid of \(\Ll\)-chains of polarized pairs over \(R\), together with a splitting structure on \(M_{1,\Lambda}/I_RM_\Lambda\).
	\end{enumerate}
	When \(R\) is complete, noetherian, local, and with residue field \(\overline{\IF}_p\), the same holds for the Dieudonné versions.
\end{prop}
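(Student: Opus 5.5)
Both parts follow the same pattern. First identify \(\Gg\)-torsors over \(W_m(R)\) with \(\Ll\)-multichains of polarized \(\Oo_B\otimes_{\IZ_p}W_m(R)\)-lattices. Then identify the datum \(q\) with the submodules \(M_{1,\Lambda}\), together with a splitting structure in the splitting case.

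\textbf{Step 1: torsors as multichains.} Since \(\Gg\) is the automorphism group scheme of the polarized multichain \(\Ll\), the functor
\[\Pp\mapsto \Pp\times^{\Gg}\{\Lambda\otimes_{\IZ_p}W_m(R)\}\]
sends \(\Gg\)-torsors over \(W_m(R)\) to \(\Ll\)-multichains of polarized \(\Oo_B\otimes_{\IZ_p}W_m(R)\)-lattices. Its quasi-inverse is \(M\mapsto \underline{\mathrm{Isom}}(\{\Lambda\otimes W_m(R)\},M)\). The key input is that every such multichain is étale-locally (equivalently \(p\)-adically, by Hensel/smoothness of the Isom scheme) isomorphic to the standard one. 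This is \cite[Theorem 3.16]{RapoportZink:Period}, in the form of \cite[Proposition A.4 / Corollary 21.6.2]{ScholzeWeinstein:Berkeley} over \(\IZ_p\)-algebras. We pass from \(W_m(R)\) to \(R\) using that \(W_m(R)\) is henselian along \(I_R\). Equivalently, via \cite[Lemma 2.12]{BueltelHedayatzadeh:Windows} as in \thref{lemma pairs}, we only need \(L^m\Gg\)-torsors over \(R\), and the Isom functor is smooth (Rapoport–Zink), so étale-local triviality over \(R/p^n\) lifts.

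This step is the main obstacle. One has to make sure the local triviality statement holds over \(W_m(R)\) for arbitrary \(p\)-complete \(R\), and with the line bundle \(L\) replacing a trivial similitude factor. I would reduce to \(R/p^n\) and to its reduction along \(I_R\), using smoothness of \(\underline{\mathrm{Isom}}\) to lift sections.

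\textbf{Step 2: the map \(q\).} Given a torsor \(\Pp\), a \(\Gg\)-equivariant map \(\Pp_R\to (\mathrm{M}^{\naive}_{\Gg,\preccurlyeq\mu})_R\) is the same as a section of the twisted form \(\Pp_R\times^{\Gg}\mathrm{M}^{\naive}\). By the moduli description of the naive local model, and by functoriality in the lattice chain, this twisted form represents quotients \(M_\Lambda\otimes_{W_m(R)}R\onto t_\Lambda\) satisfying the naive local model conditions with \(\Lambda\otimes R\) replaced by \(M_\Lambda/I_RM_\Lambda\).

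Setting \(M_{1,\Lambda}\) to be the preimage of \(\ker(M_\Lambda/I_RM_\Lambda\to t_\Lambda)\), the conditions translate as follows:
\begin{itemize}
\item surjectivity and local freeness of \(t_\Lambda\) correspond to \(M_{1,\Lambda}/I_RM_\Lambda\) being a direct summand;
\item periodicity corresponds to functoriality of \(\Lambda\mapsto M_{1,\Lambda}\);
\item the determinant condition is the stated one;
\item the vanishing of \(t_\Lambda^\vee\to t_{\hat\Lambda}\) is exactly \(M_{1,\Lambda}\cong M^*_{1,\hat\Lambda}\otimes L\).
\end{itemize}
The last point holds because, modulo \(I_R\), it says the submodule is its own orthogonal, and \(M^*_{1,\hat\Lambda}\) is the preimage of that orthogonal. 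This gives (1).

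For (2), the same twisting argument applied to the moduli description of \(\mathscr{M}^{\spl}\) yields a splitting structure on the \(\Ll\)-set \(\Lambda\mapsto M_\Lambda/I_RM_\Lambda\). Its \(\mathscr{F}^0\) is forced to equal \(M_{1,\Lambda}/I_RM_\Lambda\), by the definition of the map \(\mathscr{M}^{\spl}\to\mathscr{M}^{\naive}\). So the data is a naive pair together with a splitting structure on \(M_{1,\Lambda}/I_RM_\Lambda\), compatibly with morphisms.

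\textbf{Dieudonné case.} The Dieudonné versions follow verbatim, replacing \(W_m(R)\) by \(\widehat W(R)\). This works because \(\widehat W(R)\to R\) has the same henselian property, so Step 1 goes through.
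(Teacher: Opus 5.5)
Your proposal is correct and follows the paper's proof. The paper also identifies \(\Gg\)-torsors over \(W_m(R)\) with \(\Ll\)-multichains of polarized lattices, citing \cite[Corollary 21.6.6]{ScholzeWeinstein:Berkeley}. It then identifies \(q\) with the submodules \(M_{1,\Lambda}\), respectively with a lift to \(\mathrm{M}^{\spl}_{\Gg,\preccurlyeq\mu}\) (i.e.\ a splitting structure refining \(M_{1,\Lambda}/I_RM_\Lambda\)), via the moduli descriptions of the local models. Your Step 2 simply spells out the condition-by-condition check that the paper leaves implicit.

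The only remark is that Step 1 is not a real obstacle. The Rapoport--Zink/Scholze--Weinstein local-triviality result applies directly over the \(\IZ_p\)-algebra \(W_m(R)\) (resp.\ \(\widehat{W}(R)\)), so your detour through \(R/p^n\) and henselian lifting is unnecessary for this statement.
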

\begin{proof}
	An \(\Ll\)-multichain of polarized lattices is equivalent to a \(\Gg\)-torsor over \(W_m(R)\) \cite[Corollary 21.6.6]{ScholzeWeinstein:Berkeley}.
	The additional datum \(M_{1,\Lambda}/I_RM_\Lambda\subseteq M_\Lambda/I_RM_\Lambda \cong M_\Lambda\otimes_{W_m(R)} R\) (or rather, the quotient by this submodule) is then equivalent to a map \(\Pp_R\to (\mathrm{M}_{\Gg,\mu}^{\naive})_R\).
	
	In the splitting case, a splitting structure on \(M_{1,\Lambda}/I_RM_\Lambda\) is equivalent to a lift \(\Pp_R\to (\mathrm{M}_{\Gg,\mu}^{\spl})_R\).
	The Dieudonné case can be handled similarly-
\end{proof}

Next, we would like to define displays.
We fix some \(n>0\) (where \(n=\infty\) is allowed), such that \(m\geq n+1\).
Recall that to a pair \((M,M_1)\), one can associate a finite projective \(W_n(R)\)-module \(\widetilde{M_1}\) as in \cite[Definition 2.7]{Hoff:EKOR}; this forms a \(\sigma\)-linear functor.
Let us denote by \((-)^\sigma\) the Frobenius twist.

\begin{prop}\thlabel{chains to pairs}
	The functor \((M,M_1)\mapsto \widetilde{M_1}\) above induces a natural functor
	\[(\{M_\Lambda\},\{M_{1,\Lambda}\},L,\lambda)\mapsto (\widetilde{M_{1,\Lambda}}, L^\sigma,\widetilde{\lambda}),\]
	from the category of \(\Ll\)-multichains of polarized pairs over \(R\) to the category of \(\Ll\)-multichains of polarized \(\Oo_B\otimes_{\IZ_p} W_n(R)\)-lattices.
	
	If \(R\) is complete, noetherian, local, and with residue field \(\overline{\IF}_p\), we get a similar functor from the category of Dieudonné \(\Ll\)-multichains of polarized pairs to the category of \(\Ll\)-multichains of polarized \(\Oo_B \otimes_{\IZ_p} \widehat{W}(R)\)-lattices.
\end{prop}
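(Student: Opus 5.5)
The plan is to apply the construction \((M,M_1)\mapsto \widetilde{M_1}\) of \cite[Definition 2.7]{Hoff:EKOR} lattice by lattice, and then check that each piece of multichain structure is carried along. Recall how the construction goes. \(\widetilde{M_1}\) is the \(W_n(R)\)-module generated by symbols \(\widetilde{x}\) for \(x\in M_1\), subject to relations making it the \(\sigma\)-linearization of the "divided Frobenius" on \(M_1\). Equivalently, it is the pushout of \(M_1\) along \(I_R\xrightarrow{\sigma^{-1}\text{-}\mathrm{divided}}W_n(R)\), twisted by \(\sigma\). Choosing a normal decomposition \(M=T\oplus L'\) with \(M_1=I_RT\oplus L'\), one gets \(\widetilde{M_1}\cong T^\sigma\oplus L'^\sigma\), where \(T^\sigma\) and \(L'^\sigma\) are the base changes to \(W_n(R)\). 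In particular \(\widetilde{M_1}\) is finite projective of the same rank as \(M\). The construction is functorial for maps of pairs \((M,M_1)\to(M',M'_1)\), i.e.\ maps \(M\to M'\) sending \(M_1\) into \(M'_1\). It is also \(\sigma\)-linear in the sense that \(\Oo_B\)-actions are transported, because \(\Oo_B\) acts \(\IZ_p\)-linearly and \(\sigma\) is trivial on \(\IZ_p\).

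First I would define the functor on objects. For each \(\Lambda\) the transition maps \(M_\Lambda\to M_{\Lambda'}\) preserve the \(M_{1}\)'s by assumption, so \(\Lambda\mapsto\widetilde{M_{1,\Lambda}}\) is a functor into \(\Oo_B\otimes W_n(R)\)-modules. The periodicity isomorphisms \(\theta^b_\Lambda\) preserve \(M_1\) and commute with restriction of scalars along conjugation by \(b\), so they induce \(\widetilde{\theta}^b_\Lambda\). The compatibility with multiplication by \(b\in B^\times\cap\Oo_B\) also passes through functoriality, since multiplication by \(b\) is an endomorphism of pairs.

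Next I would verify the lattice axioms in \thref{defi-multichains}\eqref{defi-multi1}. These are local on \(\Spec R\), so I may work Zariski (or étale) locally. There the polarized multichain \((\{M_\Lambda\},L,\lambda)\) is isomorphic to the standard one \(\{\Lambda\otimes W_m(R)\}\) (\cite[Corollary 21.6.6]{ScholzeWeinstein:Berkeley}), and \(\{M_{1,\Lambda}\}\) is determined by a point of the naive local model. The key claim is that \(\widetilde{M_{1,\Lambda}}\) is locally isomorphic to \(\Lambda\otimes W_n(R)\), and that the quotients for adjacent lattices are locally \(\Lambda'/\Lambda\otimes W_n(R)\). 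I would get this by choosing normal decompositions compatible with the \(\Oo_B\)-action and with an adjacent pair \(\Lambda\subseteq\Lambda'\), then computing. This is the main obstacle: the transition maps \(\widetilde{M_{1,\Lambda}}\to\widetilde{M_{1,\Lambda'}}\) must again be injective with the right cokernel.

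To handle it, I would pass to the universal case over \(L^m\Gg\times\mathrm{M}^{\naive}_{\Gg,\preccurlyeq\mu}\) using \thref{lemma pairs}. Alternatively, I would use Hoff's observation that \(\widetilde{M_1}\) is identified with \(M_1\otimes_{W_m(R),\sigma}W_n(R)\) modulo the kernel of the divided Frobenius. Then injectivity reduces to the statement after inverting \(p\), together with the fact that the cokernel is computed by a \(\sigma\)-twist of \(M_{\Lambda'}/M_{\Lambda}\) modulo a filtered piece; the latter is locally free of the correct type, which can be checked on geometric points. For \(p\)-torsion-free \(R\) the verification is a direct computation. For general \(R\) I would reduce to the universal case via flatness of \(W_n\) of the relevant rings, or cite the analogous argument in \cite{Hoff:Parahoric}.

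Finally I would handle the polarization. The perfect pairing \(M_\Lambda\times M_{\hat\Lambda}\to W_m(R)\otimes L\) identifies \(M_{1,\Lambda}\) with \(M_{1,\hat\Lambda}^*\otimes L\). Hoff's construction is compatible with duality: \(\widetilde{M^*_{1}}\cong(\widetilde{M_1})^\vee\), with the line bundle twisted to \(L^\sigma\) (\cite{Hoff:EKOR}). Applying this gives \(\widetilde\lambda\colon\widetilde{M_{1,\Lambda}}\cong\widetilde{M_{1,\hat\Lambda}}^\vee\otimes L^\sigma\). Antisymmetry is preserved because the induced pairing is computed on generators \(\widetilde x\) via the divided pairing. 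Functoriality in isomorphisms of multichains is then clear.

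The Dieudonné case is identical, replacing \(W_m(R)\) and \(W_n(R)\) by \(\widehat W(R)\) and using Zink's divided Frobenius on \(\widehat I_R\).
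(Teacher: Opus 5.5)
Your outline follows the paper's proof. You apply \((M,M_1)\mapsto\widetilde{M_1}\) lattice by lattice, get the transition maps, the periodicity isomorphisms and conditions (c), (d) of \thref{defi-multichains} from functoriality, and get local freeness from the rank statement in \cite[Definition 2.7]{Hoff:EKOR}. You get \(\widetilde{\lambda}\) from compatibility of the construction with duals and twists \cite[Remark 3.10]{Hoff:EKOR}, and treat the Dieudonn\'e case in parallel. The step you isolate as the main obstacle (injectivity of the transition maps and condition (b) on adjacent quotients) is settled in the paper only by citation: the construction preserves injections by \cite[Definition 2.7]{Hoff:EKOR}, and (b) is checked as in \cite[Proposition 3.5]{Hoff:EKOR}. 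So your fallback of citing Hoff is exactly the paper's route, and your proof stands only through that citation.

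The self-contained arguments you sketch for that step would not work as written.

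First, \(\Oo_B\)-stable normal decompositions do not exist in general when \(\Oo_B\) is ramified over \(\IZ_p\), which is the case this paper is about. Such a decomposition would make \(M_1/I_RM\) an \(\Oo_B\otimes_{\IZ_p}R\)-direct summand of \(M/I_RM\), hence \(\Oo_B\otimes_{\IZ_p}R\)-projective. Hodge filtrations in ramified local models need not be projective: an example is \(\pi\cdot(R[\pi]/(\pi^2))^{\oplus 2}\), the singular point of the Hilbert--Blumenthal local model for a ramified quadratic \(F\). This failure is what splitting models are designed to repair.

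Second, the universal case over \(L^m\Gg\times\mathrm{M}^{\naive}_{\Gg,\preccurlyeq\mu}\) is not \(p\)-torsion free, because the naive local model is in general not flat over \(\Oo_E\). A computation for \(p\)-torsion-free \(R\) therefore does not cover it. Moreover \(W_n(A)\to W_n(R)\) need not be flat, so injectivity would not descend along it; only cokernel statements descend, by right exactness.

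Third, testing injectivity after inverting \(p\) is vacuous for finite \(n\) and \(R\) an \(\IF_p\)-algebra, since then \(p^nW_n(R)=0\). In that case even the standard chain \(\{\Lambda\otimes_{\IZ_p}W_n(R)\}\) has non-injective transition maps for \(\Lambda\subsetneq\Lambda'\): the kernel is \(\mathrm{Tor}_1^{\IZ_p}(\Lambda'/\Lambda,W_n(R))\neq 0\). So there the content of (b) that can actually be verified is the local form of the cokernels.
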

\begin{proof}
	By \cite[Definition 2.7]{Hoff:EKOR}, the assignment \((M,M_1) \mapsto \widetilde{M_1}\) is functorial and preserves injections.
	In particular, we get the required periodicity isomorphisms \(\widetilde{M_{1,\Lambda}}^b\cong \widetilde{M_{1,b\Lambda}}\).
	Thus, to see that \(\{\widetilde{M_{1,\Lambda}}\}\) forms an \(\Ll\)-multichain of \(\Oo_B \otimes_{\IZ_p} W_m(R)\)-lattices, we have to check the conditions of \thref{defi-multichains} \eqref{defi-multi1} are satisfied.
	Conditions (c) and (d) follow from functoriality, (a) follows from functoriality and the fact that \(\widetilde{M_{1,\Lambda}}\) has the correct rank (this is included in \cite[Definition 2,7]{Hoff:EKOR}) , and finally (b) can be checked similarly to \cite[Proposition 3.5]{Hoff:EKOR}.
	Finally, the fact that \((\widetilde{M_{1,\Lambda}}, L^\sigma,\widetilde{\lambda})\) forms a multichain of polarized lattices follows since \((M,M_1)\mapsto \widetilde{M_1}\) is compatible with duals and twists \cite[Remark 3.10]{Hoff:EKOR}.
	
	The Dieudonné case can be handled similarly (cf.~also \cite[Proposition 4.3.3]{Hoff:Thesis}).
\end{proof}

In particular, by \cite[Corollary 21.6.6]{ScholzeWeinstein:Berkeley} and \cite[Lemma 2.12]{BueltelHedayatzadeh:Windows}, we get a morphism
\[\Pair_{\Gg,\preccurlyeq\mu}^{\naive,(m)} \to \{L^n\Gg\text{-torsors over } R\}.\]

\begin{dfn}
	The above discussion determines an \(L^n\Gg\)-torsor over \((L^m\Gg\backslash \mathrm{M}_{\Gg,\preccurlyeq\mu}^{\naive})^{\et}\), i.e., an \(L^m\Gg\)-equivariant \(L^n\Gg\)-torsor over \(\mathrm{M}_{\Gg,\preccurlyeq\mu}^{\naive}\), denoted \(\mathrm{M}_{\Gg,\preccurlyeq\mu}^{\naive,(n)}\).
	These can be pulled back to \(L^m\Gg\)-equivariant \(L^n\Gg\)-torsors \(\mathrm{M}_{\Gg,\preccurlyeq\mu}^{\can,(n)}\to \mathrm{M}_{\Gg,\preccurlyeq\mu}^{\can}\) and \(\mathrm{M}_{\Gg,\preccurlyeq\mu}^{\spl,(n)}\to \mathrm{M}_{\Gg,\preccurlyeq\mu}^{\spl}\).
\end{dfn}

Note that, in the canonical case, this torsor agrees with the one constructed in \cite{Hoff:Parahoric} (when the construction in loc.~cit.~applies): this follows from \cite[Propositions 4.3.8 and 4.6.7]{Hoff:Thesis}.
We can now introduce the notions of displays, attached to the various local models.

\begin{dfn}\thlabel{defi display}
	The stack of naive \((\Gg,\mu)\)-displays is the quotient stack \(\Disp_{\Gg,\preccurlyeq\mu}^{\naive,(m,n)}:=(L^m\Gg\backslash \mathrm{M}_{\Gg,\preccurlyeq\mu}^{\naive,(n)})^{\et}\), where the quotient is taken for the diagonal action.
	The stacks of canonical and splitting \((\Gg,\mu)\)-displays are defined similarly.
\end{dfn}

In the naive and splitting cases, these have natural moduli interpretations.

\begin{prop}\thlabel{moduli description of displays}
	The groupoid \(\Disp_{\Gg,\preccurlyeq\mu}^{\naive,(m,n)}(R)\) is equivalent to the groupoid of tuples \[(\{M_\Lambda\},\{M_{1,\Lambda}\},L,\lambda,\Psi),\] with \((\{M_\Lambda\},\{M_{1,\Lambda}\},L,\lambda)\in \Pair_{\Gg,\preccurlyeq\mu}^{\naive,(m)}(R)\), and \(\Psi\colon (\{M_\Lambda\},L,\lambda)\cong (\widetilde{M_{1,\Lambda}}, L^\sigma,\widetilde{\lambda})\) an isomorphism of \(\Ll\)-multichains of polarized \(\Oo_B\otimes_{\IZ_p} W_n(R)\)-lattices.
	
	A similar description holds for \(\Disp_{\Gg,\preccurlyeq\mu}^{\spl,(m,n)}(R)\), by replacing \(\Pair_{\Gg,\preccurlyeq\mu}^{\naive,(m)}(R)\) with \(\Pair_{\Gg,\preccurlyeq\mu}^{\spl,(m)}(R)\).
\end{prop}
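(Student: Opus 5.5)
The plan is to unwind the definition of \(\Disp_{\Gg,\preccurlyeq\mu}^{\naive,(m,n)}\) as the étale quotient \((L^m\Gg\backslash \mathrm{M}_{\Gg,\preccurlyeq\mu}^{\naive,(n)})^{\et}\), and to combine the moduli descriptions of pairs and of \(\Gg\)-torsors that are already available. First, as in \thref{lemma pairs}, an \(R\)-point of the quotient is an \(L^m\Gg\)-torsor \(Q\) over \(R\) together with an \(L^m\Gg\)-equivariant map \(Q\to \mathrm{M}_{\Gg,\preccurlyeq\mu}^{\naive,(n)}\). Since \(\mathrm{M}_{\Gg,\preccurlyeq\mu}^{\naive,(n)}\to \mathrm{M}_{\Gg,\preccurlyeq\mu}^{\naive}\) is an \(L^m\Gg\)-equivariant \(L^n\Gg\)-torsor, such a map is the same as two pieces of data. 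The first is the composite \(L^m\Gg\)-equivariant map \(Q\to \mathrm{M}_{\Gg,\preccurlyeq\mu}^{\naive}\), which is a point of \(\Pair^{\naive,(m)}_{\Gg,\preccurlyeq\mu}(R)\). The second is a trivialization of the pulled-back \(L^n\Gg\)-torsor, descended from \(Q\) to \(\Spec R\).

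More precisely, I will argue as follows. The torsor \(\mathrm{M}^{\naive,(n)}_{\Gg,\preccurlyeq\mu}\) was defined so that its descent to \(\Pair^{\naive,(m)}_{\Gg,\preccurlyeq\mu}\) is the \(L^n\Gg\)-torsor attached to \((\widetilde{M_{1,\Lambda}},L^\sigma,\widetilde\lambda)\) by \thref{chains to pairs}. There is also a second \(L^n\Gg\)-torsor on the pair stack, namely the reduction modulo \(W_n\) of the \(L^m\Gg\)-torsor \(\Pp\) itself, i.e.\ the torsor attached to \((\{M_\Lambda\otimes W_n(R)\},L,\lambda)\). The diagonal action is what matters here: \(L^m\Gg\) acts on \(\mathrm{M}^{\naive,(n)}\) through its action on the base and, via the projection \(L^m\Gg\to L^n\Gg\), on the fiber. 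Consequently, an \(L^m\Gg\)-equivariant section of the \(L^n\Gg\)-torsor over \(Q\) amounts to an isomorphism between the torsor associated to \(\Pp\otimes W_n\) and the torsor associated to the \(\widetilde{M_1}\)-construction. I will check this identification on the universal case over \(\mathrm{M}^{\naive}\), where \(\Pp\) is trivialized.

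The final step translates isomorphisms of \(L^n\Gg\)-torsors into isomorphisms \(\Psi\) of \(\Ll\)-multichains of polarized \(\Oo_B\otimes W_n(R)\)-lattices. For this I use \cite[Corollary 21.6.6]{ScholzeWeinstein:Berkeley} together with \cite[Lemma 2.12]{BueltelHedayatzadeh:Windows}, which say that \(L^n\Gg\)-torsors over \(R\), \(\Gg\)-torsors over \(W_n(R)\), and such multichains are equivalent groupoids. Since this equivalence is compatible with the functor defining the torsor, the isomorphisms correspond bijectively, and naturality in \(R\) gives the equivalence of groupoids.

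The splitting case is handled the same way. The only change is to use the splitting half of \thref{moduli description of pairs}, because \(\mathrm{M}^{\spl,(n)}\) is by definition pulled back from \(\mathrm{M}^{\naive,(n)}\).

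I expect the main obstacle to be the middle step. It requires carefully verifying that the torsor \(\mathrm{M}^{\naive,(n)}\), with its diagonal \(L^m\Gg\)-action, is exactly \(\underline{\mathrm{Isom}}\bigl(\Pp\otimes W_n,\ \widetilde{\Pp}\bigr)\). Here one must check that the \(\sigma\)-linearity of \(\widetilde{M_1}\) and the twist \(L^\sigma\) are matched correctly, so that the \(L^m\Gg\)-action on the two sides agrees.
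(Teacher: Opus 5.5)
This is correct and is essentially the paper's argument: the paper only says the statement follows from the definitions (the diagonal $L^m\Gg$-quotient of the $L^n\Gg$-torsor $\mathrm{M}^{\naive,(n)}_{\Gg,\preccurlyeq\mu}$) together with \cite[Corollary 21.6.6]{ScholzeWeinstein:Berkeley} and \cite[Lemma 2.12]{BueltelHedayatzadeh:Windows}. Your identification of this diagonal quotient with $\underline{\mathrm{Isom}}(\Pp\otimes W_n(R),\widetilde{\Pp})$ over the pair stack is exactly the unwinding the paper leaves implicit; just note that, because the action is diagonal, the extra datum is an isomorphism from $\Pp\otimes W_n(R)$ to the twisted torsor (as your second paragraph says), not a descended trivialization of it (as your first paragraph phrases it).
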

\begin{proof}
	This follows from the definitions, as well as \cite[Corollary 21.6.6]{ScholzeWeinstein:Berkeley} and \cite[Lemma 2.12]{BueltelHedayatzadeh:Windows}.
\end{proof}

The groupoid of canonical displays has been described in \cite{Hoff:Parahoric}, but this description will not be relevant for us.
On the other hand, the above proposition suggests the following notion of Dieudonné displays, which we will only need in the naive case.

\begin{dfn}
	Let \(R\) be a complete, noetherian, local \(\Oo_E\)-algebra, with residue field \(\overline{\IF}_p\).
	Then a \emph{naive Dieudonné \((\Gg,\mu)\)-display} over \(R\) is the datum of a Dieudonné \(\Ll\)-multichain of polarized pairs \((\{M_\Lambda\},\{M_{1,\Lambda}\},L,\lambda)\) over \(R\), together with an isomorphism \((\{M_{\Lambda}\},L,\lambda) \cong (\widetilde{M_{1,\Lambda}},L^\sigma,\widetilde{\lambda})\) of \(\Ll\)-multichains of polarized \(\Oo_B \otimes_{\IZ_p} \widehat{W}(R)\)-lattices.
	A \emph{splitting Dieudonné \((\Gg,\mu)\)-display} can be defined similarly.
\end{dfn}

In order to connect the stacks of displays to Shimura varieties, we will use chains of \(p\)-divisible groups as in \cite[Definition 24.3.1]{ScholzeWeinstein:Berkeley}.

\begin{dfn}
	\begin{enumerate}
		\item An \(\Ll\)-multichain of \(\Oo_B\)-\(p\)-divisible groups over \(R\) is a functor \(\Lambda\mapsto X_\Lambda\) to \(p\)-divisible groups over \(R\) with an \(\Oo_B\)-action, together with periodicity isomorphisms \(\theta_\Lambda^b\colon X_\Lambda^b\cong X_{b\Lambda}\), satisfying the following conditions:
		\begin{enumerate}
			\item If \(M_\Lambda\) is the Lie algebra of the universal vector extension of \(X_\Lambda\), then \(\Lambda\mapsto M_\Lambda\) defines an \(\Ll\)-multichain of \(\Oo_B\otimes_{\IZ_p} R\)-lattices.
			\item The periodicity isomorphisms \(\theta_\Lambda^b\) commute with the transition maps \(X_\Lambda\to X_{\Lambda'}\).
			\item For \(b\in B^\times \cap \Oo_B\) normalizing \(\Oo_B\), the composition \(X_\Lambda^b\cong X_{b\Lambda} \to X_\Lambda\) is multiplication by \(b\).
		\end{enumerate}
		\item An \(\Ll\)-multichain of polarized \(\Oo_B\)-\(p\)-divisible groups is a tuple \((\{X_\Lambda\},\IL,\lambda)\), with \({X_\Lambda}\) an \(\Ll\)-multichain of \(\Oo_B\)-\(p\)-divisible groups, \(\IL\) an étale rank 1 \(\IZ_p\)-local system on \(\Spec R\), and \(\lambda\colon X_\Lambda\cong X_{\hat{\Lambda}}^\vee\otimes_{\IZ_p} \IL\) an antisymmetric isomorphism of multichains.
	\end{enumerate}
\end{dfn}

We now fix a global integral (at \(p\)) PEL datum as in \thref{PEL data}, lifting the local PEL datum fixed at the beginning of this section.
We will denote the corresponding Shimura datum by \((\IG,\IX)\), and by \(\widehat{\mathscr{S}}_{K^pK_p}^{\naive}(\IG,\IX)\) etc.~the \(p\)-adic completions of the various integral models of \(\Sh_K(\IG,\IX)\). 

\begin{prop}\thlabel{shimura to display}
	There is a natural commutative diagram
	\[\begin{tikzcd}
		\widehat{\mathscr{S}}^{\spl}_{K^pK_p}(\IG,\IX) \arrow[d] \arrow[r] & \widehat{\mathscr{S}}^{\can}_{K^pK_p}(\IG,\IX) \arrow[d] \arrow[r] & \widehat{\mathscr{S}}^{\naive}_{K^pK_p}(\IG,\IX) \arrow[d]\\
		\Disp_{\Gg,\preccurlyeq\mu}^{\spl,(m,n)} \arrow[r] \arrow[d] & \Disp_{\Gg,\preccurlyeq\mu}^{\can,(m,n)} \arrow[r] \arrow[d] & \Disp_{\Gg,\preccurlyeq\mu}^{\naive,(m,n)} \arrow[d]\\
		\Pair_{\Gg,\preccurlyeq\mu}^{\spl,(m)} \arrow[r] \arrow[d] & \Pair_{\Gg,\preccurlyeq\mu}^{\can,(m)} \arrow[r] \arrow[d] & \Pair_{\Gg,\preccurlyeq\mu}^{\naive,(m)} \arrow[d]\\
		\Gg_{\Oo_{E^{\Gal}}}\backslash \mathrm{M}_{\Gg,\preccurlyeq\mu}^{\spl} \arrow[r] & \Gg_{\Oo_{E}} \backslash \mathrm{M}_{\Gg,\preccurlyeq\mu}^{\can} \arrow[r] & \Gg_{\Oo_{E}} \backslash \mathrm{M}_{\Gg,\preccurlyeq\mu}^{\naive},
	\end{tikzcd}\]
	with cartesian squares.
\end{prop}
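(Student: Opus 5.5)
The plan is to build the diagram from the bottom up, using the moduli interpretations wherever possible and pulling back along the structure maps.

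\textbf{Bottom two rows.} The horizontal maps between local models were constructed in §\ref{Sec:Models of Shimura varieties}, and they are \(\Gg\)-equivariant, so they induce the bottom row. By \thref{lemma pairs}, \(\Pair^{\ast,(m)}_{\Gg,\preccurlyeq\mu}\cong (L^m\Gg\backslash \mathrm{M}^{\ast}_{\Gg,\preccurlyeq\mu})^{\et}\) for \(\ast\in\{\spl,\can,\naive\}\). The bottom vertical maps are then induced by the reduction map \(L^m\Gg\to \Gg\). The bottom squares are cartesian because both rows are quotients of the same spaces \(\mathrm{M}^{\ast}\), by \(L^m\Gg\) and by \(\Gg\) respectively; this is a formal statement about quotient stacks.

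\textbf{Display row.} The \(L^n\Gg\)-torsors \(\mathrm{M}^{\can,(n)}\) and \(\mathrm{M}^{\spl,(n)}\) were defined as pullbacks of \(\mathrm{M}^{\naive,(n)}\). Hence the map \(\Disp\to\Pair\) is the map \((L^m\Gg\backslash \mathrm{M}^{(n)})^{\et}\to (L^m\Gg\backslash\mathrm{M})^{\et}\), and the middle squares are cartesian by construction.

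\textbf{Top row.} It suffices to construct the top-right vertical map \(\widehat{\mathscr{S}}^{\naive}\to\Disp^{\naive}\). The other two then arise once the top squares are known to be cartesian, via the identification \(\widehat{\mathscr{S}}^{\ast}\cong \widehat{\mathscr{S}}^{\naive}\times_{\Disp^{\naive}}\Disp^{\ast}\), which I check next.

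\emph{Construction of the naive map.} Given \((\{A_\Lambda\},\overline\lambda,\overline\eta)\) over a \(p\)-nilpotent \(R\), the \(p\)-divisible groups \(\{A_\Lambda[p^\infty]\}\) with their polarization form an \(\Ll\)-multichain of polarized \(\Oo_B\)-\(p\)-divisible groups. By Lau's theory (Zink displays, or Dieudonné displays over \(\widehat W(R)\) when \(R\) is complete local), each \(A_\Lambda[p^\infty]\) has a display \((M_\Lambda,M_{1,\Lambda},\Psi_\Lambda)\). Functoriality and compatibility with duality then give a polarized multichain of pairs together with \(\Psi\colon M\cong \widetilde{M_1}\). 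The determinant condition follows from \(M_\Lambda/M_{1,\Lambda}=\Lie A_\Lambda\). One must still verify the multichain axioms (a) and (b) of \thref{defi-multichains} for \(\{M_\Lambda\}\), and do so over \(W(R)\) rather than only modulo \(I_R\). For this I would follow \cite[Proposition 24.3.?]{ScholzeWeinstein:Berkeley}/\cite{Hoff:Parahoric}: the reduction modulo \(I_R\) is the de Rham multichain, which is a multichain by \cite{RapoportZink:Period}, and local freeness of the quotients lifts along the nilpotent thickening. Truncating gives the \((m,n)\)-version, and \thref{moduli description of displays} turns this into a point of \(\Disp^{\naive,(m,n)}\).

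\emph{Cartesianness of the top squares.} For the splitting square, compare \thref{Defi:splitting model} with \thref{moduli description of pairs}(2). A splitting structure on the de Rham multichain \(\mathscr H_\Lambda\) is the same as a splitting structure on \(M_\Lambda/I_RM_\Lambda\), since the display reduces to the de Rham realization, \(M_\Lambda/I_RM_\Lambda\cong \mathscr{H}_\Lambda\), compatibly with the Hodge filtration. Hence \(\widehat{\mathscr{S}}^{\spl}=\widehat{\mathscr{S}}^{\naive}\times_{\Pair^{\naive}}\Pair^{\spl}\), and this is also the fiber product over \(\Disp\) by the cartesian middle squares.

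For the canonical square, first observe that the vertical maps are formally smooth: by Grothendieck–Messing/Lau deformation theory, deforming the abelian variety is the same as deforming the display. Consequently, the fiber product \(\widehat{\mathscr{S}}^{\naive}\times_{\Disp^{\naive}}\Disp^{\can}\) is formally smooth over \(\mathrm{M}^{\can}\), hence flat and normal by \thref{normality of local model}. It is also a closed formal subscheme of \(\widehat{\mathscr{S}}^{\naive}\) with the same generic fiber, because \(\mathrm{M}^{\can}\subseteq\mathrm{M}^{\naive}\) is the flat closure up to weak normalization. It therefore agrees with \(\widehat{\mathscr{S}}^{\can}\), using the local model diagram \eqref{diagram of models}.

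\textbf{Expected obstacle.} The main difficulty is the canonical square, and in particular handling the weak normalization. The map \(\mathrm{M}^{\can}\to\mathrm{M}^{\naive}\) is not literally a closed immersion, so I would argue that normalization commutes with smooth base change: both sides are normal and finite birational over the flat closure, hence isomorphic. A secondary subtlety is the multichain axiom (b) over \(W_m(R)\), which I expect to handle as in \cite[Proposition 3.5]{Hoff:EKOR}.
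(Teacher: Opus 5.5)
Your bottom two rows are handled exactly as in the paper. For the top row, however, you take a longer route. The paper does not compare moduli problems or normalizations at this stage. Once the lower squares are known to be cartesian, one has $\Disp^{\ast,(m,n)}_{\Gg,\preccurlyeq\mu}\cong \Disp^{\naive,(m,n)}_{\Gg,\preccurlyeq\mu}\times_{\Gg\backslash \mathrm{M}^{\naive}_{\Gg,\preccurlyeq\mu}}\Gg\backslash\mathrm{M}^{\ast}_{\Gg,\preccurlyeq\mu}$. This does two things at once. It produces the map $\widehat{\mathscr{S}}^{\can}_{K^pK_p}(\IG,\IX)\to\Disp^{\can,(m,n)}_{\Gg,\preccurlyeq\mu}$ from the local model map in \eqref{diagram of models}. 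And, by the pasting lemma applied to the $p$-completed cartesian squares of \eqref{diagram of models}, it shows every top square is cartesian, uniformly in $(m,n)$ including $m=n=\infty$. The only input beyond formal manipulations is a compatibility you also use: the reduction of the display modulo $I_R$ is $\mathscr{H}_\Lambda$ with its Hodge filtration, so that $\widehat{\mathscr{S}}^{\naive}\to\Disp^{\naive}\to\Gg\backslash\mathrm{M}^{\naive}$ is the local model map.

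Your splitting square is the same argument phrased through moduli. Your canonical square instead re-proves the canonical square of \eqref{diagram of models}: flat closure and normalization commute with smooth base change, and $\mathrm{M}^{\can}$ is the normalization of the flat closure by \thref{normality of local model}. This is sound, and it has the merit of explaining why the normalization in \thref{Defi:canonical model} matches the absolute weak normalization defining the local model. But it becomes redundant the moment you invoke \eqref{diagram of models}, as you do at the end.

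If you keep this argument, tighten three points.
\begin{enumerate}
\item The smoothness you need is that of the local model map, not of $\widehat{\mathscr{S}}^{\naive}\to\Disp^{\naive,(m,n)}$. The latter is \thref{smoothness of shimura to display}, which is stated only for finite $(m,n)$ and which the paper proves afterwards, deducing the non-naive cases from the present proposition. By contrast, the lower cartesian squares identify your fiber product with $\widehat{\mathscr{S}}^{\naive}\times_{\Gg\backslash\mathrm{M}^{\naive}}\Gg\backslash\mathrm{M}^{\can}$ for every $(m,n)$, including $\infty$. There, ``formally smooth over the display stack, hence normal'' does not apply as written.
\item As you observe yourself, this fiber product is not a closed formal subscheme of $\widehat{\mathscr{S}}^{\naive}$. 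The comparison with $\widehat{\mathscr{S}}^{\can}$ therefore has to go through finiteness and birationality over the flat closure.
\item That comparison is best done algebraically, before $p$-completion.
\end{enumerate}
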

\begin{proof}
	The models \(\mathscr{S}_{K^pK_p}^{\naive}(\IG,\IX)\) and \(\mathscr{S}_{K^pK_p}^{\spl}(\IG,\IX)\) parametrize abelian varieties with extra structure, as in \thref{Defi:naive model}.
	In particular, we can associate to any \(R\)-valued point of their completions an \(\Ll\)-multichain of polarized \(\Oo_B\)-\(p\)-divisible groups, and then also a (naive or splitting) display by \cite[Proposition 2.1]{Lau:Smoothness} and \thref{moduli description of displays} (although we use the contravariant normalization as in \cite[Remark 1.30]{Hoff:Parahoric}).
	We note that to pass from polarized \(p\)-divisible groups to polarized displays, the \(\IZ_p\)-local system \(\IL\) induces a line bundle on \(R\) via \(R\otimes_{\IZ_p} \IL\).
	This yields the desired maps \(\widehat{\mathscr{S}}_{K^pK_p}^{\naive}(\IG,\IX) \to \Disp_{\Gg,\preccurlyeq\mu}^{\naive,(m,n)}\) and \(\widehat{\mathscr{S}}_{K^pK_p}^{\spl}(\IG,\IX) \to \Disp_{\Gg,\preccurlyeq\mu}^{\spl,(m,n)}\).
	Combined with \thref{lemma pairs} and \thref{defi display}, we get the following commutative diagram:
	\[\begin{tikzcd}
		\widehat{\mathscr{S}}^{\spl}_{K^pK_p}(\IG,\IX) \arrow[d] \arrow[r] & \widehat{\mathscr{S}}^{\can}_{K^pK_p}(\IG,\IX) \arrow[r] & \widehat{\mathscr{S}}^{\naive}_{K^pK_p}(\IG,\IX) \arrow[d]\\
		\Disp_{\Gg,\preccurlyeq\mu}^{\spl,(m,n)} \arrow[r] \arrow[d] & \Disp_{\Gg,\preccurlyeq\mu}^{\can,(m,n)} \arrow[r] \arrow[d] & \Disp_{\Gg,\preccurlyeq\mu}^{\naive,(m,n)} \arrow[d]\\
		\Pair_{\Gg,\preccurlyeq\mu}^{\spl,(m)} \arrow[r] \arrow[d] & \Pair_{\Gg,\preccurlyeq\mu}^{\can,(m)} \arrow[r] \arrow[d] & \Pair_{\Gg,\preccurlyeq\mu}^{\naive,(m)} \arrow[d]\\
		\Gg_{\Oo_{E^{\Gal}}}\backslash \mathrm{M}_{\Gg,\preccurlyeq\mu}^{\spl} \arrow[r] & \Gg_{\Oo_{E}} \backslash \mathrm{M}_{\Gg,\preccurlyeq\mu}^{\can} \arrow[r] & \Gg_{\Oo_{E}} \backslash \mathrm{M}_{\Gg,\preccurlyeq\mu}^{\naive}.
	\end{tikzcd}\]
	Since we already have a map \(\mathscr{S}_{K^pK_p}^{\can}(\IG,\IX) \to \Gg_{\Oo_{E}} \backslash \mathscr{M}_{\Gg,\preccurlyeq \mu}^{\can}\) \eqref{diagram of models}, the remaining map \(\widehat{\mathscr{S}}_{K^pK_p}^{\can}(\IG,\IX) \to \Disp_{\Gg,\preccurlyeq\mu}^{\can,(m,n)}\) will follow once we know the squares in the two bottom rows are cartesian.
	The cartesianness of the upper squares will then follow since the squares in \eqref{diagram of models} are cartesian.
	
	Since \(\Pair_{\Gg,\preccurlyeq\mu}^{\naive,(m)} \cong (L^{m}\Gg\backslash \mathrm{M}_{\Gg,\preccurlyeq\mu}^{\naive})^{\et}\) and \(\Disp_{\Gg,\preccurlyeq\mu}^{\naive,(m,n)}\cong (L^m\Gg\backslash \mathrm{M}_{\Gg,\preccurlyeq\mu}^{\naive,(n)})^{\et}\), and similarly for the canonical and splitting versions, the middle vertical arrows are \(L^m\Gg\)-quotients of \(L^n\Gg\)-torsors, so that the middle squares are cartesian.
	On the other hand, the descriptions \(\Pair_{\Gg,\preccurlyeq\mu}^{\naive,(m)}\cong (L^m\Gg \backslash \mathrm{M}_{\Gg,\preccurlyeq \mu}^{\naive})^{\et}\), and so on, also shows that the lower squares are cartesian, as desired.
\end{proof}

Finally, the maps \(\widehat{\mathscr{S}}_{K^pK_p}(\IG,\IX) \to \Disp_{\Gg,\preccurlyeq\mu}^{(m,n)}\) have the following important geometric property, generalizing \cite[Theorem 2.56]{Hoff:Parahoric} to different local models.

\begin{thm}\thlabel{smoothness of shimura to display}
	Assume that \(m,n\) are finite.
	Then the maps \(\widehat{\mathscr{S}}^{\naive}_{K^pK_p}(\IG,\IX) \to \Disp_{\Gg,\preccurlyeq\mu}^{\naive,(m,n)}\), \(\widehat{\mathscr{S}}^{\can}_{K^pK_p}(\IG,\IX) \to \Disp_{\Gg,\preccurlyeq\mu}^{\can,(m,n)}\), and \(\widehat{\mathscr{S}}^{\spl}_{K^pK_p}(\IG,\IX) \to \Disp_{\Gg,\preccurlyeq\mu}^{\spl,(m,n)}\) are smooth.
\end{thm}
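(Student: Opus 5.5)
Since all three squares in the top row of \thref{shimura to display} are cartesian, smoothness of the naive map implies smoothness of the other two by base change. So the plan is to prove only that \(\widehat{\mathscr{S}}^{\naive}_{K^pK_p}(\IG,\IX) \to \Disp_{\Gg,\preccurlyeq\mu}^{\naive,(m,n)}\) is smooth, imitating the proof of \cite[Theorem 2.56]{Hoff:Parahoric}. Hoff's argument uses flatness of the canonical local model, and the idea is to replace that input by the moduli interpretations from \thref{moduli description of pairs} and \thref{moduli description of displays}.

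\textbf{Step 1: reduce to a formal lifting criterion.} For finite \(m,n\), the target is a quotient of the \(p\)-adic formal scheme \(\mathrm{M}^{\naive,(n)}_{\Gg,\preccurlyeq\mu}\) by the smooth group \(L^m\Gg\), so it is a locally noetherian algebraic formal stack. The source is locally of finite type over \(\Spf\Oo_E\). The map is therefore locally of finite presentation, and smoothness reduces to formal smoothness. Concretely, we must lift along square-zero thickenings \(R\to R/J\) of \(p\)-nilpotent rings. By the usual argument with smooth covers and artinian rings (as in Hoff), it suffices to treat surjections of artinian local \(\Oo_E\)-algebras with residue field \(\overline{\IF}_p\), or even small extensions. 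Over such rings the truncated display of level \((m,n)\) is the truncation of a Dieudonné display over \(\widehat{W}(R)\).

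\textbf{Step 2: deformation theory.} Given a point of the source over \(R/J\) and a lift of the associated naive display to \(R\), we must lift the \(\Ll\)-multichain of polarized abelian varieties with \(\Oo_B\)-action, compatibly. By Serre--Tate, it suffices to lift the multichain of polarized \(\Oo_B\)-\(p\)-divisible groups; the prime-to-\(p\) level structure \(\overline{\eta}\) lifts uniquely by étaleness. By Zink--Lau, over artinian local rings with perfect residue field, \(p\)-divisible groups are equivalent to Dieudonné displays. This equivalence is compatible with \(\Oo_B\)-actions, duality, transition maps and periodicity isomorphisms, so multichains of polarized \(\Oo_B\)-\(p\)-divisible groups are equivalent to naive Dieudonné \((\Gg,\mu)\)-displays. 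The determinant condition is part of both sides.

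\textbf{Step 3: lift the display.} It remains to show that the forgetful map from naive Dieudonné displays over \(R\) to \((m,n)\)-truncated naive displays is formally smooth, i.e.\ that any Dieudonné display over \(R/J\) together with a compatible truncated display over \(R\) lifts to a Dieudonné display over \(R\). I would follow Hoff's truncation argument here. The truncation from level \(\infty\) to \((m,n)\) is a torsor under the kernel of \(L^{\infty}\Gg \to L^m\Gg\) (a pro-unipotent, formally smooth group) acting on the pair datum, combined with a Frobenius that contracts that kernel. For \(m\geq n+1\), the standard Lang-type argument shows the Frobenius-twisted action on the fibre is formally smooth. The pair component lifts because the naive local model appears only through the \(L^m\Gg\)-equivariant map to \(\mathrm{M}^{\naive}\), and the needed lift of the filtration \(M_{1,\Lambda}\) is supplied by the given truncated display. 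So flatness of \(\mathrm{M}^{\naive}\) is never used: we only lift data already specified at level \(m\).

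The main obstacle is Step 3: making sure Hoff's truncation argument really avoids flatness of the local model, and that the group-theoretic torsor \(\mathrm{M}^{\naive,(n)}\) matches the linear-algebraic \(\widetilde{M_1}\) construction at the level of deformations. I would first try to use \thref{chains to pairs} together with \cite[Propositions 4.3.8 and 4.6.7]{Hoff:Thesis} to transport Hoff's proof verbatim, checking at each step that only the moduli description, and not flatness, is invoked.
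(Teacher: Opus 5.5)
Your proposal follows the paper's proof essentially step for step: you reduce to the naive model via the cartesian squares of \thref{shimura to display}, apply the infinitesimal lifting criterion over Artinian local \(\Oo_E\)-algebras with residue field \(\overline{\IF}_p\), use Serre--Tate together with Zink's Dieudonné theory (and Hoff's thesis for the extra structure) to identify deformations of the point with deformations of the naive Dieudonné \((\Gg,\mu)\)-display (the paper phrases this as universality of the Dieudonné display \(M_x\) over the completed local ring), and conclude with formal smoothness of the truncation \(\Disp_{\Gg,\preccurlyeq\mu}^{\naive,(\infty,\infty)}\to\Disp_{\Gg,\preccurlyeq\mu}^{\naive,(m,n)}\). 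You overestimate the difficulty of Step 3, since no Lang-type argument is needed: by \thref{defi display} the truncation map factors through \(L^+\Gg\backslash \mathrm{M}_{\Gg,\preccurlyeq\mu}^{\naive,(n)}\), where the first map pulls back to the \(\ker(L^+\Gg\to L^n\Gg)\)-torsor \(\mathrm{M}_{\Gg,\preccurlyeq\mu}^{\naive,(\infty)}\to \mathrm{M}_{\Gg,\preccurlyeq\mu}^{\naive,(n)}\) and the second is a base change of \(*/L^+\Gg\to */L^m\Gg\), so formal smoothness follows directly from smoothness of \(\Gg\) and flatness of the local model never enters.
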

\begin{proof}
	By \thref{shimura to display}, it suffices to consider the naive models (although the case of splitting models can also be handled directly, via a similar argument).
	
	Let \(x\in \mathscr{S}_{K^pK_p}^{\naive}(\IG,\IX)(\overline{\IF}_p)\) be a point, and let \(R\) be the completed local ring of \(\mathscr{S}_{K^pK_p}^{\naive}(\IG,\IX)\) at \(x\).
	Then the moduli interpretation of \(\mathscr{S}_{K^pK_p}^{\naive}(\IG,\IX)\) gives an abelian variety with extra structure over \(R\), and hence an \(\Ll\)-multichain of polarized \(\Oo_B\)-\(p\)-divisible groups over \(R\).
	By Dieudonné theory \cite{Zink:Dieudonne}, this yields a naive Dieudonné \((\Gg,\mu)\)-display \(M_x\) over \(R\).
	We claim that \(M_x\) is the universal deformation of its base change to \(x\).
	In other words, the deformation functor sending a complete noetherian local \(\Oo_E\)-algebra \(R\) with residue field \(\overline{\IF}_p\) to the set of deformations (as naive Dieudonné \((\Gg,\mu)\)-displays) of the base change of \(M_x\) to \(x\) over \(R\), is pro-represented by the completed local ring of \(\mathscr{S}_{K^pK_p}^{\naive}(\IG,\IX)\) at \(x\), with universal object given by \(M_x\).
	Indeed, recall that the Serre--Tate theorem \cite[Theorem V.2.3]{Messing:Crystals} tells us that the deformation theory of an abelian variety and its underlying \(p\)-divisible group agree.
	Moreover, over a complete noetherian local \(\Oo_E\)-algebra \(R\) with residue field \(\overline{\IF}_p\), the category of naive \((\Gg,\mu)\)-displays is equivalent to the category of \(\Ll\)-multichains of polarized \(\Oo_B\)-\(p\)-divisible groups; this follows from the main theorem of \cite{Zink:Dieudonne}, combined with \cite[Corollary 4.3.2 and Lemma 4.7.4]{Hoff:Thesis} to handle the extra structure.
	This shows that \(M_x\) is indeed the universal deformation of its base change to \(x\).
	
	Now, to prove that \(\widehat{\mathscr{S}}^{\naive}_{K^pK_p}(\IG,\IX) \to \Disp_{\Gg,\preccurlyeq\mu}^{\naive,(m,n)}\) is smooth, the infinitesimal lifting criterion \cite[Lemma 4.9]{Hoff:EKOR} implies it suffices to show every commutative diagram
	\[\begin{tikzcd}
		\Spec R \arrow[d] \arrow[r] & \widehat{\mathscr{S}}_{K^pK_p}^{\naive}(\IG,\IX) \arrow[d]\\
		\Spec R' \arrow[r] & \Disp_{\Gg,\preccurlyeq\mu}^{\naive,(m,n)},
	\end{tikzcd}\]
	where \(R'\to R\) is a surjection of Artinian local \(\Oo_E\)-algebras with residue fields \(\overline{\IF}_p\),
	admits a lift \(\Spec R'\to \widehat{\mathscr{S}}_{K^pK_p}^{\naive}(\IG,\IX)\).
	But this follows from the universality of the Dieudonné display \(M_x\), and the fact that \(\Disp_{\Gg,\preccurlyeq\mu}^{\naive,(\infty,\infty)} \to \Disp_{\Gg,\preccurlyeq\mu}^{\naive,(m,n)}\) is formally smooth (which in turn directly follows from \thref{defi display}).
\end{proof}

\subsection{\(\mu\)-bounded local shtukas}\label{Sec: bounded local shtukas}

The stack of local sthukas (at level \(\Gg\)) from \cite[§5.2]{XiaoZhu:Cycles} was defined as the étale stackification of the quotient of \(LG\) by the Frobenius-twisted conjugation action of \(L^+\Gg\), where everything is defined over \(\IF_q\).
This group-theoretic definition yields connections to affine flag varieties, affine Deligne--Lusztig varieties, and the categorical local Langlands correspondance via the stack of \(G\)-isocrystals \cite{Zhu:Coherent,Zhu:Tame}.
Nevertheless, in order to connect it to Shimura varieties, it is useful to have linear algebraic descriptions.
The following definition is motivated by \cite[Proposition 2.41]{Hoff:Parahoric}, and depends only on the local integral PEL datum.

\begin{dfn}
	The \emph{stack of naive local shtukas} is defined as the perfection of the special fiber of \(\Disp_{\Gg,\preccurlyeq\mu}^{\naive,(m.n)}\); it is denoted by
	\[\Sht_{\Gg,\preccurlyeq\mu}^{\naive,(m,n)}:=(\Disp_{\Gg,\preccurlyeq\mu}^{\naive,(m,n)})^{\perf}_{\IF_q}.\]
	The stack of \emph{canonical} or \emph{splitting local shtukas} are defined and denoted similarly.
\end{dfn}

\begin{rmk}\thlabel{group theoretic interpretation of sht}
	\thref{defi display} implies that this definition of \(\Sht_{\Gg,\preccurlyeq}^{\can}\) agrees (up to renormalization) with the group-theoretic definitions of the moduli of truncated local sthukas from \cite{XiaoZhu:Cycles,ShenYuZhang:EKOR,Zhu:Tame} (cf.~also \cite[Proposition 2.41]{Hoff:Parahoric}).
	In particular, when \(m=n=\infty\), we see that \(\Sht_{\Gg,\preccurlyeq}^{\can,\infty,\infty}\) admits a natural closed immersion into \((LG/\Ad_{\sigma^{-1}}L^+\Gg)^{\et}\).
\end{rmk}
From the definitions, we get natural maps
\[\Sht^{\spl,(m,n)}_{\Gg,\preccurlyeq\mu}\to \Sht^{\can,(m,n)}_{\Gg,\preccurlyeq\mu}\to \Sht^{\naive,(m,n)}_{\Gg,\preccurlyeq\mu},\]
where the second map is a closed immersion.

We can also provide moduli descriptions for \(\Sht_{\Gg,\preccurlyeq\mu}^{\naive}\) and \(\Sht_{\Gg,\preccurlyeq\mu}^{\spl}\).

\begin{lem}\thlabel{moduli description shtukas}
	Let \(R\) be a perfect \(\IF_q\)-algebra.
	Then \(\Sht_{\Gg,\preccurlyeq\mu}^{\naive}(R)\) is equivalent to the groupoid of \(\Ll\)-multichains of polarized \(\Oo_B\)-\(p\)-divisible groups \(\{X_\Lambda\}_{\Lambda\in \Ll}\) such that \(\det_R(b\mid \Lie X_\Lambda) = \det_{\overline{\IQ}_p}(b\mid V_0)\) as polynomials in \(b\in \Oo_B\).
	
	Moreover, \(\Sht_{\Gg,\preccurlyeq\mu}^{\spl}(R)\) is equivalent to the groupoid of tuples \((X,\underline{\mathscr{F}}_{[\tau]}^i, \underline{j}_{[\tau]}^i)\), where \(X\in \Sht_{\Gg,\mu}^{\naive}(R)\) and \((\underline{\mathscr{F}}_{[\tau]}^i, \underline{j}_{[\tau]}^i)\) is a splitting structure for the \(\Ll\)-set of polarized \(\Oo_B\otimes_{\IZ} R\)-modules \(\underline{\mathscr{H}}\), sending \(\Lambda\) to the Lie algebra of the universal vector extension of \(X_\Lambda\).
\end{lem}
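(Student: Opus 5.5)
We unwind the definition: \(\Sht_{\Gg,\preccurlyeq\mu}^{\naive}\) (with \(m=n=\infty\)) is the perfection of the special fiber of \(\Disp_{\Gg,\preccurlyeq\mu}^{\naive,(\infty,\infty)}\). The plan is to identify its \(R\)-points, for \(R\) a perfect \(\IF_q\)-algebra, with \(\Disp_{\Gg,\preccurlyeq\mu}^{\naive,(\infty,\infty)}(R)\). We then describe these via \thref{moduli description of displays} and translate to \(p\)-divisible groups using Dieudonné theory over perfect rings.

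\textbf{Step 1: perfection does not change perfect points.} For a stack \(\mathcal{X}\) over \(\IF_q\), the perfection satisfies \(\mathcal{X}^{\perf}(R)=\mathcal{X}(R)\) for perfect \(R\). Hence \(\Sht_{\Gg,\preccurlyeq\mu}^{\naive}(R)\simeq \Disp_{\Gg,\preccurlyeq\mu}^{\naive,(\infty,\infty)}(R)\). By \thref{moduli description of displays}, the latter is the groupoid of tuples \((\{M_\Lambda\},\{M_{1,\Lambda}\},L,\lambda,\Psi)\), with \(M_\Lambda\) a polarized multichain over \(W(R)\) and \(\Psi\colon M_\Lambda\cong \widetilde{M_{1,\Lambda}}\).

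\textbf{Step 2: displays become Dieudonné modules.} Over a perfect ring, \(I_R=pW(R)\), so \(M_{1,\Lambda}\supseteq pM_\Lambda\). The construction of \(\widetilde{M_{1,\Lambda}}\) from \cite[Definition 2.7]{Hoff:EKOR} identifies \(\Psi^{-1}\) with a \(\sigma\)-linear Frobenius \(F\) on \(M_\Lambda\) whose image is governed by \(M_{1,\Lambda}\). Since \(\sigma\) is bijective, \(F\) and \(V\) make sense, and the datum \((M_\Lambda,M_{1,\Lambda},\Psi)\) is equivalent to a Dieudonné module \((M_\Lambda,F,V)\) with \(M_{1,\Lambda}\) recovered as \(V M_\Lambda\) (up to the contravariant normalization of \cite[Remark 1.30]{Hoff:Parahoric}). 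The Hodge filtration \(M_{1,\Lambda}/pM_\Lambda\subseteq M_\Lambda/pM_\Lambda\) is then intrinsic, and the determinant condition on \(M_\Lambda/M_{1,\Lambda}\) becomes a determinant condition on \(\Lie\).

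\textbf{Step 3: Dieudonné modules become \(p\)-divisible groups.} Apply the equivalence between \(p\)-divisible groups over perfect \(\IF_p\)-algebras and Dieudonné modules over \(W(R)\), due to Gabber and Lau (cf.~\cite{Lau:Smoothness}). This equivalence is exact, functorial, and compatible with duality and twists by rank-one \(\IZ_p\)-local systems. Functoriality carries over the \(\Oo_B\)-action, the transition maps and the periodicity isomorphisms. Compatibility with duals carries over the polarization \(\lambda\); the invertible \(W(R)\)-module \(L\) with its Frobenius-structure \(L\cong L^\sigma\) corresponds to an étale \(\IZ_p\)-local system \(\IL\).

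\textbf{Main obstacle: the multichain axioms.} We must check that a multichain of Dieudonné lattices corresponds exactly to an \(\Ll\)-multichain of \(p\)-divisible groups. Concretely, the conditions on \(M_\Lambda\) in \thref{defi-multichains}\eqref{defi-multi1}, in particular the local freeness of the adjacent quotients \(M_{\Lambda'}/M_\Lambda\), must match the condition that the de Rham realizations form a multichain. The point is that the universal vector extension Lie algebra equals \(M_\Lambda/pM_\Lambda\). Conditions (a) and (b) then need to be deduced mod \(p\) from their \(W(R)\)-versions, and conversely lifted, by a Nakayama and rank argument as in \cite[Proposition 3.5]{Hoff:EKOR} and \cite[Lemma 4.7.4]{Hoff:Thesis}. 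One also has to verify that \(\Lie X_\Lambda=M_\Lambda/M_{1,\Lambda}\) under the chosen normalization, so that the two determinant conditions agree.

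\textbf{Splitting case.} By \thref{moduli description of displays} and \thref{moduli description of pairs}, a splitting display is a naive display together with a splitting structure on \(M_{1,\Lambda}/I_RM_\Lambda\subseteq M_\Lambda/I_RM_\Lambda\). Under the identification \(M_\Lambda/pM_\Lambda\cong\) the Lie algebra of the universal vector extension of \(X_\Lambda\), this becomes a splitting structure for \(\underline{\mathscr{H}}\), with \(\mathscr{F}^0\) matching the Hodge filtration. This gives the second description.
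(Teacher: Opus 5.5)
Your proposal is correct and follows essentially the same route as the paper. Both reduce via \thref{moduli description of pairs} and \thref{moduli description of displays} to displays over perfect rings, pass to \(p\)-divisible groups by Dieudonné theory over perfect rings \cite[Theorem 6.4]{Lau:Smoothness}, and identify \((L, L\cong L^\sigma)\) with an étale rank one \(\IZ_p\)-local system. The paper differs only in emphasis: it cites Artin--Schreier--Witt theory explicitly for that last identification, and it does not spell out the matching of the multichain axioms, which you flag and sketch via Nakayama and rank arguments.
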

\begin{proof}
	By Artin--Schreier--Witt theory (cf.~the proof of \cite[Theorem 12.3.4]{ScholzeWeinstein:Berkeley}, or \cite[Lemma 2.22]{BartlingHoff:Moduli}), étale rank 1 \(\IZ_p\)-local systems on \(\Spec R\) are equivalent to line bundles on \(\Spec W(R)\) with an isomorphism to their Frobenius-twist, when \(R\) is a perfect \(\IF_p\)-algebra.
	Thus, the lemma follows from Propositions \ref{moduli description of pairs} and \ref{moduli description of displays}, since displays are equivalent to \(p\)-divisible groups over perfect rings by Dieudonné theory \cite[Theorem 6.4]{Lau:Smoothness}.
\end{proof}

\begin{rmk}
	We could have used the above moduli interpretations as definitions of \(\Sht_{\Gg,\preccurlyeq\mu}^{\naive}\) and \(\Sht_{\Gg,\preccurlyeq\mu}^{\spl}\).
	However, this would make it less clear how to connect these stacks to the group-theoretical definition of \(\Sht_{\Gg,\preccurlyeq\mu}^{\can}\), whence the connection to local models.
\end{rmk}

Finally, let us make explicit the connection between the special fibers of Shimura varieties and moduli of local shtukas, for which we fix a global integral (at \(p\)) PEL datum.
Let us denote by \(\Sh_{\mu,K}^{\naive}\) the perfection of the special fiber of \(\mathscr{S}^{\naive}(\IG,\IX)\), and similarly for the canonical and splitting models.
Applying \thref{shimura to display} and passing to the perfection of the special fiber, we obtain the following diagram with cartesian squares:
\begin{equation}\label{shimura varieties to shtukas}\begin{tikzcd}
		\Sh_{\mu,K}^{\spl} \arrow[r] \arrow[d, "\loc_p^{\spl,(m,n)}"'] & \Sh_{\mu,K}^{\can} \arrow[r] \arrow[d, "\loc_p^{\can,(m,n)}"] & \Sh_{\mu,K}^{\naive} \arrow[d, "\loc_p^{\naive,(m,n)}"]\\
		\Sht_{\Gg,\preccurlyeq\mu}^{\spl,(m,n)} \arrow[r] & \Sht_{\Gg,\preccurlyeq\mu}^{\can,(m,n)} \arrow[r] & \Sht_{\Gg,\preccurlyeq\mu}^{\naive,(m,n)}.
\end{tikzcd}\end{equation}
These maps \(\loc_p\) are usually called the \emph{crystalline period maps}.
At least for the naive and splitting models, they  are given by sending an abelian variety with extra structure to its underlying \(p\)-divisible group (with similar extra structure).
\thref{smoothness of shimura to display} immediately implies the following.

\begin{cor}\thlabel{smoothness of shimura to shtuka}
	If \(m>n\) are finite, the crystalline period map \(\loc_p^{\naive,(m,n)}\colon \Sh_{\mu,K}^{\naive} \to \Sht_{\Gg,\preccurlyeq\mu}^{\naive,(m,n)}\) is perfectly smooth.
	The same holds for \(\loc_p^{\can,(m,n)}\) and \(\loc_p^{\spl,(m,n)}\).
\end{cor}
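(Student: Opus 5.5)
The plan is to deduce the corollary from \thref{smoothness of shimura to display} by taking special fibers and passing to perfections, and to check that both operations preserve smoothness. Start from the smooth map \(\widehat{\mathscr{S}}^{\naive}_{K^pK_p}(\IG,\IX) \to \Disp_{\Gg,\preccurlyeq\mu}^{\naive,(m,n)}\) of \(p\)-adic formal stacks, with \(m,n\) finite. Smoothness is stable under base change, so base changing along \(\Spec \IF_q \to \Spf \Oo_E\) (or along \(\Spf \Oo_{E^{\Gal}}\) in the splitting case) gives a smooth morphism from the special fiber of the naive integral model to the special fiber of \(\Disp_{\Gg,\preccurlyeq\mu}^{\naive,(m,n)}\). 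The latter is an honest algebraic stack: by \thref{defi display} it is the étale quotient of the finite type scheme \(\mathrm{M}_{\Gg,\preccurlyeq\mu}^{\naive,(n)}\) by the smooth finite type group \(L^m\Gg\), because finite truncations are used.

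Next, pass to perfections. A smooth morphism \(f\colon X\to Y\) of finite type algebraic stacks over \(\IF_q\) gives a perfectly smooth morphism \(f^{\perf}\colon X^{\perf}\to Y^{\perf}\). Here perfectly smooth means smooth-locally the perfection of a smooth morphism. To verify this, choose a smooth atlas \(U\to Y\). Then \(X\times_Y U\to U\) is smooth, and perfection commutes with fiber products up to the canonical identification \((X\times_Y U)^{\perf}\cong X^{\perf}\times_{Y^{\perf}} U^{\perf}\). This reduces the claim to the case of schemes, where it holds by definition. By the definition of \(\Sht_{\Gg,\preccurlyeq\mu}^{\naive,(m,n)}\) as the perfected special fiber of displays and the definition of \(\loc_p^{\naive,(m,n)}\) via \eqref{shimura varieties to shtukas}, this \(f^{\perf}\) is exactly the crystalline period map. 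This proves the naive case.

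For the canonical and splitting cases there are two options. The first is to repeat the argument above using the other two smooth maps in \thref{smoothness of shimura to display}. The second is to use the cartesian squares in \eqref{shimura varieties to shtukas}, since perfect smoothness is stable under base change. The cartesian-square route is cleaner, but one must be careful that the squares are cartesian after perfection. This holds because \thref{shimura to display} gives cartesian squares before taking special fibers, and perfection commutes with fiber products.

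The main obstacle is a bookkeeping issue about the condition \(m>n\). It ensures that the \(L^m\Gg\)-action on the \(L^n\Gg\)-torsor \(\mathrm{M}^{(n)}\) factors appropriately, so that the quotient is algebraic of finite type, and that the infinitesimal lifting argument in \thref{smoothness of shimura to display} applies. I would check that this matches the hypothesis \(m\geq n+1\) already imposed when displays were defined. I would also confirm that "perfectly smooth" for stacks is used in the smooth-local sense, so that the atlas argument above suffices.
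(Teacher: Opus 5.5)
Your proposal is correct and follows the paper's route: the paper says the corollary follows immediately from \thref{smoothness of shimura to display}, obtained by base changing the smooth maps to the special fiber and passing to perfections. One small correction to your bookkeeping remark: the hypothesis \(m\geq n+1\) is needed to define the \(L^n\Gg\)-torsor \(\mathrm{M}^{\naive,(n)}_{\Gg,\preccurlyeq\mu}\) via \(\widetilde{M_1}\), whereas finite type of the quotient stack comes from \(m,n\) being finite, not from any factoring of the \(L^m\Gg\)-action.
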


Finally, we note that a splitting structure on an abelian variety is equivalent to a splitting structure on the associated \(p\)-divisible group \cite[1.12]{MazurMessing:Universal}.
This gives another proof that the big rectangle in \eqref{shimura varieties to shtukas} is cartesian.

\section{Exotic Hecke correspondences}\label{Sec:Exotic}

Exotic Hecke correspondences are correspondences between the special fibers of possibly different Shimura varieties, which do not necessarily lift to the integral models.
They have striking applications in the Langlands program, such as geometric realizations of Jacquet--Langlands transfers between the cohomology of different Shimura varieties.
Examples of such exotic correspondences have appeared in \cite{Helm:Towards,TianXiao:Tate,HelmTianXiao:Tate}, and more general constructions have been studied in \cite{XiaoZhu:Cycles}.
However, all these references concern Shimura varieties with good reduction, i.e., with hyperspecial level at \(p\), so that the groups appearing in the Shimura data are unramified at \(p\).
We will give examples of such exotic correspondences in ramified cases.
In this section, we will only work in positive characteristic and with perfect objects.
We also set \(m=n=\infty\), i.e., we work with non-truncated objects, and remove \((m,n)\) from the notation.
In order to use moduli interpretations, we continue to assume \(p\neq 2\).

\subsection{Hecke correspondences between moduli of local shtukas}

In order to characterize exotic Hecke correspondences, let us define certain correspondences on moduli of local shtukas.
We fix two local integral PEL data \((B,*,V,(,),\mu_i,\Oo_B,\Ll)\) for \(i=1,2\), i.e., only the cocharacters \(\mu_i\) are allowed to differ.
Consider the quotient stack \(\BB(G)':=(LG/\Ad_{\sigma^{-1}} LG)^{\et}\).

\begin{lem}
	There is a natural morphism \(\Sht_{\Gg,\preccurlyeq\mu}^{\naive}\to \BB(G)'\), extending the usual map \(\Sht_{\Gg,\preccurlyeq\mu}^{\can}\subseteq LG/\Ad_{\sigma^{-1}} L^+\Gg\to LG/\Ad_{\sigma^{-1}} LG = \BB(G)'\) induced by \thref{group theoretic interpretation of sht}.
\end{lem}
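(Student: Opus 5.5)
The natural approach is to forget the Hodge filtration and the integral structure, keeping only the rational isocrystal with $G$-structure. By definition, $\Sht_{\Gg,\preccurlyeq\mu}^{\naive}$ is the perfection of the special fiber of $\Disp_{\Gg,\preccurlyeq\mu}^{\naive}=(L\Gg\backslash \mathrm{M}^{\naive,(\infty)}_{\Gg,\preccurlyeq\mu})^{\et}$, where $L\Gg$ denotes the $m=\infty$ version $L^+\Gg$. I would first construct an $L^+\Gg$-equivariant map
\[\mathrm{M}^{\naive,(\infty)}_{\Gg,\preccurlyeq\mu,\IF_q}{}^{\perf}\to LG,\]
with $L^+\Gg$ acting on the target by $\Ad_{\sigma^{-1}}$. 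Passing to étale quotients, and using $L^+\Gg\subseteq LG$, then gives a map to $(LG/\Ad_{\sigma^{-1}}LG)^{\et}=\BB(G)'$.

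To construct this map, I would work over a perfect $\IF_q$-algebra $R$ and use the moduli description of \thref{moduli description of displays} (with $m=n=\infty$). A point consists of a pair $(\{M_\Lambda\},\{M_{1,\Lambda}\},L,\lambda)$ together with an isomorphism $\Psi\colon (\{M_\Lambda\},L,\lambda)\cong(\widetilde{M_{1,\Lambda}},L^\sigma,\widetilde\lambda)$.

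The key observation is that, for $R$ perfect, $p$ is invertible after inverting $p$ on $W(R)$, and the construction of $\widetilde{M_1}$ from \cite[Definition 2.7]{Hoff:EKOR} comes with a canonical isomorphism $\widetilde{M_1}[1/p]\cong M^\sigma[1/p]$. Concretely, $\widetilde{M_1}$ is generated by $\sigma$-linearized $M$ and by $\tfrac1p$ times $\sigma$-linearized $M_1$. This isomorphism is compatible with transition maps, periodicity isomorphisms and polarizations (up to the twist $L^\sigma$), by the functoriality used in \thref{chains to pairs}.

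Composing, $\Psi$ induces an isomorphism of rational polarized multichains $\{M_\Lambda[1/p]\}\cong\{M_\Lambda^\sigma[1/p]\}$. After trivializing the $\Gg$-torsor $\{M_\Lambda\}$ étale locally, i.e. choosing $M_\Lambda\cong\Lambda\otimes W(R)$ compatibly via \cite[Corollary 21.6.6]{ScholzeWeinstein:Berkeley}, this isomorphism becomes an element $g\in G(W(R)[1/p])=LG(R)$. The element $g$ is a similitude because $L$ is trivialized as well.

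Changing the trivialization by $h\in L^+\Gg(R)$ changes $g$ to $h^{-1}g\sigma(h)$, up to the normalization convention. This is exactly the $\Ad_{\sigma^{-1}}$ action, so the construction descends to a morphism of stacks.

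Compatibility with the canonical case reduces to comparing with \thref{group theoretic interpretation of sht}. There the identification $\Sht^{\can}\into (LG/\Ad_{\sigma^{-1}}L^+\Gg)^{\et}$ comes from \cite[Proposition 2.41]{Hoff:Parahoric}, whose map is built in exactly this way from the display: a torsor with Frobenius $\Psi$, rationalized. One therefore needs to check that the torsor-plus-Frobenius datum extracted above, restricted to $\Sht^{\can}$, agrees with Hoff's. I would verify this using \cite[Propositions 4.3.8 and 4.6.7]{Hoff:Thesis}, which already identify the naive-pulled-back torsor with Hoff's torsor.

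I expect the main obstacle to be this final compatibility, together with the renormalization conventions ($\sigma$ versus $\sigma^{-1}$, and contravariant Dieudonné theory). The rational isomorphism $\widetilde{M_1}[1/p]\cong M^\sigma[1/p]$ must match Hoff's group-theoretic $\mu$-twist, and my first attempt would be to check it on the universal local model point, where $M_1$ is given by $\mu(p)$.
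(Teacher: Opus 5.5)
Your construction is correct, but it is not the route the paper takes.

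The paper argues geometrically. By \cite[Corollary 21.6.10 and Proposition 18.3.1]{ScholzeWeinstein:Berkeley} there is a closed immersion \(L^+\Gg\backslash \mathscr{M}^{\naive}_{\Gg,\preccurlyeq\mu}\into L^+\Gg\backslash\Fl_{\Gg}\) extending \thref{local model and admissible locus}. Base changing it along \(LG/\Ad_{\sigma^{-1}}L^+\Gg\to L^+\Gg\backslash \Fl_{\Gg}\) realizes \(\Sht^{\naive}_{\Gg,\preccurlyeq\mu}\) as a closed substack of \(LG/\Ad_{\sigma^{-1}}L^+\Gg\) containing \(\Sht^{\can}_{\Gg,\preccurlyeq\mu}\), and the map to \(\BB(G)'\) is then just the projection. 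This makes compatibility with the canonical case automatic. It also produces the closed immersion into \(LG/\Ad_{\sigma^{-1}}L^+\Gg\) that is reused in \thref{diagram of moduli of shtukas} and \thref{from corr to sht}.

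You instead rationalize the display, which is the isocrystal description the paper only mentions in \thref{remark modified Kottwitz stack}. The facts you actually need are that for perfect \(R\) one has \(I_R=pW(R)\) and \(W(R)\) is \(p\)-torsion free, so that \(\widetilde{M_1}=p^{-1}\sigma^*M_1\subseteq\sigma^*M[1/p]\). The polarizations then match only up to a power of \(p\), which is harmless since you land in similitudes.

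Your route is explicit and fits directly with the description of \(\Sht^{\naive}_{\mu_1\mid\mu_2}\) by quasi-isogenies. Its cost is exactly the check you flag: identifying the torsor \(\mathrm{M}^{\can,(\infty)}_{\Gg,\preccurlyeq\mu}\) over \(\Fl_{\Gg,\preccurlyeq\mu}\) with the restriction of \(LG\to\Fl_{\Gg}\) via \(\widetilde{M_1}\subseteq\sigma^*M[1/p]\). The paper's assertion that the base change ``can be identified with'' \(\Sht^{\naive}_{\Gg,\preccurlyeq\mu}\) implicitly needs this same identification for the naive torsor, so your argument spells out what the paper leaves tacit. Moreover, \(M_1\) is recovered from the lattice chain \(\widetilde{M_1}\) by undoing the Frobenius twist and multiplying by \(p\), so your map also yields the paper's closed immersion.
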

\begin{proof}
	By \cite[Corollary 21.6.10 and Proposition 18.3.1]{ScholzeWeinstein:Berkeley}, there is a natural closed immersion \(L^+\Gg\backslash \mathscr{M}_{\Gg,\preccurlyeq\mu}^{\naive}\to L^+\Gg\backslash \Fl_{\Gg}\), extending the identification of the canonical local model with the \(\mu\)-admissible locus from \thref{local model and admissible locus}.
	Base changing this closed immersion along \(LG/\Ad_{\sigma^{-1}} L^+\Gg\to L^+\Gg\backslash \Fl_{\Gg}\) yields an \(L^+\Gg\)-fibration over \(L^+\Gg\backslash\mathscr{M}_{\Gg,\preccurlyeq\mu}^{\naive}\), which can be identified with \(\Sht_{\Gg,\preccurlyeq\mu}^{\naive}\).
	We obtained the desired map as the composition
	\[\Sht_{\Gg,\preccurlyeq\mu}^{\naive}\to LG/\Ad_{\sigma^{-1}} L^+\Gg\to LG/\Ad_{\sigma^{-1}} LG = \BB(G)'.\]
\end{proof}

\begin{rmk}\thlabel{remark modified Kottwitz stack}
	\begin{enumerate}
		\item The stack \(\BB(G)':= (LG /\Ad_{\sigma^{-1}} LG)^{\et}\) is similar to the Kottwitz stack \(\BB(G) := (LG/ \Ad_\sigma LG)^{\et}\) from \cite{Zhu:Coherent,Zhu:Tame}, and explains our notation \(\BB(G)'\).
		We have chosen to work with \(\BB(G)'\), to follow the conventions of \cite{XiaoZhu:Cycles,Hoff:Parahoric}.
		Nevertheless, we could also have used \(\BB(G)\) instead, with minor modifications.
		\item By \cite[Lemma 3.23]{Zhu:Tame}, \(\BB(G)\) classifies \(G\)-isocrystals.
		Moreover, for perfect \(\IF_q\)-algebras \(R\), Dieudonné theory yields a fully faithful functor from \(p\)-divisible groups up to isogeny to the category of isocrystals; a similar result holds for \(p\)-divisible groups with extra structure.
		This yields an equivalent way to define the map \(\Sht_{\Gg,\preccurlyeq\mu}^{\naive} \to \BB(G)'\).
	\end{enumerate}
\end{rmk}

\begin{dfn}\thlabel{defi correspon shtuka}
	The Hecke correspondence \(\Sht_{\Gg,\preccurlyeq\mu_1}^{\naive}\leftarrow \Sht_{\mu_1\mid \mu_2}^{\naive} \to \Sht_{\Gg,\preccurlyeq\mu_2}^{\naive}\) is defined as the fiber product
	\[\begin{tikzcd}
		&\Sht_{\mu_1\mid \mu_2}^{\naive} \arrow[ld] \arrow[rd] & \\
		\Sht_{\Gg,\preccurlyeq\mu_1}^{\naive} \arrow[rd] && \Sht_{\Gg,\preccurlyeq\mu_2}^{\naive} \arrow[ld]\\
		& \BB(G)'.&
	\end{tikzcd}\]
	The Hecke correspondences \(\Sht_{\Gg,\preccurlyeq\mu_1}^{\spl}\leftarrow \Sht_{\mu_1\mid \mu_2}^{\spl} \to \Sht_{\Gg,\preccurlyeq\mu_2}^{\spl}\) and \(\Sht_{\Gg,\preccurlyeq\mu_1}^{\can}\leftarrow \Sht_{\mu_1\mid \mu_2}^{\can} \to \Sht_{\Gg,\preccurlyeq\mu_2}^{\can}\) are defined similarly.
\end{dfn}

For the stack of canonical local shtukas, \thref{defi correspon shtuka} agrees with the similar definition in \cite[Definition 5.2.8]{XiaoZhu:Cycles}.
Moreover, for splitting and naive local shtukas, we have the following moduli interpretation, which follow immediately from the definitions.

\begin{lem}\thlabel{moduli description hecke corr of shtukas}
	Let \(R\) be any perfect \(\IF_q\)-algebra
	\begin{enumerate}
		\item The groupoid \(\Sht_{\mu_1\mid \mu_2}^{\naive}(R)\) is naturally equivalent to the groupoid of triples \((\Ee_1,\Ee_2,\beta)\), where \(\Ee_i=\{\Ee_{i,\Lambda}\}_{\Lambda\in \Ll}\in \Sht_{\Gg,\preccurlyeq\mu_i}^{\naive}\), and \(\beta = \{\beta_\Lambda \colon \Ee_{1,\Lambda} \sim \Ee_{2,\Lambda}\}_{\Lambda\in \Ll}\) is a quasi-isogeny \(\Ll\)-multichains of polarized \(\Oo_B\)-\(p\)-divisible groups (i.e., compatible with the additional structures, in the natural sense).
		\item The same holds when replacing ``naive" by ``spl".
	\end{enumerate}
\end{lem}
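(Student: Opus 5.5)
The plan is to compute the fiber product in \thref{defi correspon shtuka} on \(R\)-points, using the moduli descriptions of \thref{moduli description shtukas} on the two factors and the description of the map to \(\BB(G)'\) through isocrystals from \thref{remark modified Kottwitz stack}. The naive and splitting cases go the same way, so I treat the naive case and add the splitting data at the end.

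First I describe \(\BB(G)'(R)\) for a perfect \(\IF_q\)-algebra \(R\). By \cite[Lemma 3.23]{Zhu:Tame} (adapted to \(\Ad_{\sigma^{-1}}\)), this is the groupoid of \(G\)-isocrystals over \(R\). Via the self-dual multichain \(\Ll\) and \cite[Corollary 21.6.6]{ScholzeWeinstein:Berkeley} after inverting \(p\), these become polarized \(B\otimes W(R)[1/p]\)-isocrystals of type \(V\): all lattices \(\Lambda\otimes\IQ_p=V\) are identified, so the multichain collapses to a single object. The map \(\Sht^{\naive}_{\Gg,\preccurlyeq\mu_i}\to\BB(G)'\) then sends \(\{X_\Lambda\}\) to the rational Dieudonné module of any \(X_\Lambda\). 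This is independent of \(\Lambda\), because the transition maps \(X_\Lambda\to X_{\Lambda'}\) are isogenies, and it carries the induced \(B\)-action and the polarization up to the similitude line.

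Next, the \(R\)-points of the fiber product are triples \((\Ee_1,\Ee_2,\gamma)\), where \(\gamma\) is an isomorphism of the associated \(G\)-isocrystals. Here I use that \(\BB(G)'\) is a stack whose \(R\)-points form a groupoid (étale sheafification commutes with evaluating on the fiber product). By the fully faithfulness of Dieudonné theory up to isogeny over perfect rings, with extra structure (\thref{remark modified Kottwitz stack}(2)), \(\gamma\) is the same as a quasi-isogeny \(\beta_{\Lambda_0}\colon\Ee_{1,\Lambda_0}\sim\Ee_{2,\Lambda_0}\) compatible with the \(\Oo_B\)-action and with the polarizations up to a scalar. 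Conjugating by the transition isogenies and the periodicity isomorphisms, this propagates to a compatible family \(\{\beta_\Lambda\}\), and conversely such a family is determined by any one member. The polarization datum also matches: the line bundle \(L\) (equivalently the local system \(\IL\)) on each side maps to the similitude factor, and \(\gamma\) includes an identification of these rationally.

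The main obstacle I expect is this last compatibility. I need to check that isomorphisms in \(\BB(G)'\), which see the polarization only through \(c(g)\in LG\), correspond exactly to quasi-isogenies respecting \(\lambda\) up to a rational multiplier together with a quasi-isogeny of the \(\IZ_p\)-local systems \(\IL_i\). I would verify this on the level of multichains over \(W(R)[1/p]\) using the description \(\Gg\)-torsors \(\leftrightarrow\) polarized multichains. Finally, for the splitting case, the splitting structures are extra data on each \(\Ee_i\) that lie over the naive stacks, and the map to \(\BB(G)'\) factors through the naive stack. Hence \(\Sht^{\spl}_{\mu_1\mid\mu_2}=\Sht^{\naive}_{\mu_1\mid\mu_2}\times_{\Sht^{\naive}\times\Sht^{\naive}}(\Sht^{\spl}\times\Sht^{\spl})\), which gives (2).
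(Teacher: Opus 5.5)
Your proposal is correct and is essentially the paper's argument, which the paper regards as immediate from the definitions. One computes the fiber product over \(\BB(G)'\) using that \(\BB(G)'\) classifies \(G\)-isocrystals and that Dieudonn\'e theory over perfect rings is fully faithful up to isogeny, with extra structure (the remark on the modified Kottwitz stack); the splitting case then follows formally because the maps to \(\BB(G)'\) factor through the naive stacks. The step you flag as the main obstacle is also formal: the similitude component of an isomorphism of \(G\)-isocrystals is exactly an isomorphism of the rank-one similitude isocrystals, i.e.\ an isomorphism \(\IL_1\otimes\IQ_p\cong\IL_2\otimes\IQ_p\), and requiring \(\beta\) to respect \(\lambda\) relative to it is the ``rational multiplier'' condition, so these are a single datum, not two.
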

Note that \(G\)-isocrystals do not depend on an integral model \(\Gg\), hence we do not need to ask for \(\beta\) to respect the splitting structures.
In particular, the points of these Hecke correspondences of local shtukas can be viewed as modifications of \(p\)-divisible groups with extra structure.

The Hecke correspondences for the various stacks of local shtukas can be related as follows.

\begin{lem}\thlabel{diagram of moduli of shtukas}
	There is a natural commutative diagram
	\[\begin{tikzcd}
		\Sht_{\Gg,\preccurlyeq\mu_1}^{\spl} \arrow[d] & \Sht_{\mu_1\mid \mu_2}^{\spl} \arrow[l] \arrow[r] \arrow[d] & \Sht_{\Gg,\preccurlyeq\mu_2}^{\spl} \arrow[d] \\
		\Sht_{\Gg,\preccurlyeq\mu_1}^{\can} \arrow[d] & \Sht_{\mu_1\mid \mu_2}^{\can} \arrow[l] \arrow[r] \arrow[d] & \Sht_{\Gg,\preccurlyeq\mu_2}^{\can} \arrow[d] \\
		\Sht_{\Gg,\preccurlyeq\mu_1}^{\naive} & \Sht_{\mu_1\mid \mu_2}^{\naive} \arrow[l] \arrow[r] & \Sht_{\Gg,\preccurlyeq\mu_2}^{\naive}.
	\end{tikzcd}\]
	Moreover, the lower vertical maps are closed immersions, and if \thref{assumption splitting flat} hold for both local integral PEL data, the upper vertical maps are proper surjective.
\end{lem}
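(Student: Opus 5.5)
The plan is to build the diagram from the maps of local shtuka stacks already available and the universal property of fiber products over \(\BB(G)'\). By construction (the discussion after \thref{group theoretic interpretation of sht}) we have maps \(\Sht^{\spl}_{\Gg,\preccurlyeq\mu_i}\to \Sht^{\can}_{\Gg,\preccurlyeq\mu_i}\to \Sht^{\naive}_{\Gg,\preccurlyeq\mu_i}\). They are obtained by perfecting the special fibers of the maps of display stacks, which come from the \(L^m\Gg\)-equivariant maps of local models \(\mathrm{M}^{\spl}\to\mathrm{M}^{\can}\to\mathrm{M}^{\naive}\) and the pulled-back torsors \(\mathrm{M}^{(n)}\). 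The first step is to check that these maps commute with the structure maps to \(\BB(G)'\). For the canonical-to-naive map this is the content of the preceding lemma, since the map \(\Sht^{\naive}\to\BB(G)'\) was constructed to extend the canonical one. For the splitting stack the map to \(\BB(G)'\) is defined as the composite through \(\Sht^{\can}\) (or equivalently via the underlying \(G\)-isocrystal, by \thref{remark modified Kottwitz stack}), so compatibility holds by definition. Functoriality of fiber products over \(\BB(G)'\) in \thref{defi correspon shtuka} then gives the middle vertical arrows and the commutativity of all squares.

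Next I show that the lower vertical maps are closed immersions. The outer ones \(\Sht^{\can}_{\Gg,\preccurlyeq\mu_i}\to\Sht^{\naive}_{\Gg,\preccurlyeq\mu_i}\) are closed immersions, as noted after \thref{group theoretic interpretation of sht}. This follows because \((\mathscr{M}^{\can})^{\perf}_{\IF_q}\to(\mathscr{M}^{\naive})^{\perf}_{\IF_q}\) is a closed immersion, and closed immersions are stable under the torsor and quotient constructions, which are base changes. The middle map is the induced map of fiber products
\[\Sht^{\can}_{\Gg,\preccurlyeq\mu_1}\times_{\BB(G)'}\Sht^{\can}_{\Gg,\preccurlyeq\mu_2}\to \Sht^{\naive}_{\Gg,\preccurlyeq\mu_1}\times_{\BB(G)'}\Sht^{\naive}_{\Gg,\preccurlyeq\mu_2}.\]
It factors as a base change of \(\Sht^{\can}_{\mu_1}\to\Sht^{\naive}_{\mu_1}\), followed by a base change of \(\Sht^{\can}_{\mu_2}\to\Sht^{\naive}_{\mu_2}\). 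Since closed immersions are stable under base change and composition, the middle map is a closed immersion.

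The same factorization trick handles the upper maps. Under \thref{assumption splitting flat}, \thref{corollary proper surjective} gives that \((\mathscr{M}^{\spl})^{\perf}_{\IF_q}\to(\mathscr{M}^{\can})^{\perf}_{\IF_q}\) is perfectly proper and surjective (base changed along \(\IF_q\to k_{E^{\Gal}}\) as needed). The map \(\Sht^{\spl}_{\Gg,\preccurlyeq\mu}\to\Sht^{\can}_{\Gg,\preccurlyeq\mu}\) is a base change of this map. Indeed, the squares in \thref{shimura to display} relating displays, pairs and \(\Gg\backslash\mathrm{M}\) are cartesian, and the squares remain cartesian after perfection of special fibers, since perfection commutes with fiber products. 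Properness and surjectivity are stable under base change, so the outer upper maps are proper surjective. The middle one is then a composite of two base changes of these, hence again proper surjective.

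The main obstacle is making the base-change claim in the splitting case precise. The quotient stacks are infinite-level (\(m=n=\infty\)) limits of \((L^m\Gg\backslash\mathrm{M}^{(n)})^{\et}\). I would argue at each finite level, where \thref{shimura to display} applies, and then pass to the limit, using that the transition maps are affine torsors, so the relevant squares stay cartesian. One also has to be careful that "proper" here means representable and perfectly proper. This holds because the map is a base change of a map of perfect schemes, after pulling back along the smooth cover by the torsor.
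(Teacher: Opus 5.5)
Your proposal is correct and follows essentially the paper's route. The middle closed immersion comes from factoring through \(\Sht^{\naive}_{\Gg,\preccurlyeq\mu_1}\times_{\BB(G)'}\Sht^{\can}_{\Gg,\preccurlyeq\mu_2}\), and proper surjectivity from writing \(\Sht^{\spl}_{\mu_1\mid\mu_2}\to\Sht^{\can}_{\mu_1\mid\mu_2}\) as a composite of two base changes of the local-model map of \thref{corollary proper surjective} (your intermediate fiber product is the paper's \(X^{\can}\)); the finite-level limit argument you flag as the main obstacle is unnecessary, since the cartesian squares of \thref{shimura to display} are formal consequences of the quotient descriptions and already hold with \(m=n=\infty\).
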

\begin{proof}
	The existence and commutativity of the diagram follows from the definitions.
	It remains to see the middle vertical arrows are surjective and a closed immersion respectively, the rest is clear.
	Since the map \(\Sht_{\mu_1\mid \mu_2}^{\can} \to \Sht_{\mu_1\mid \mu_2}^{\naive}\) can be factored as
	\[\Sht_{\mu_1\mid \mu_2}^{\can} = \Sht_{\Gg,\preccurlyeq\mu_1}^{\can}\times_{\BB(G)'} \Sht_{\Gg,\preccurlyeq\mu_2}^{\can} \to \Sht_{\Gg,\preccurlyeq\mu_1}^{\naive} \times_{\BB(G)'} \Sht_{\Gg,\preccurlyeq\mu_2}^{\can} \to \Sht_{\Gg,\preccurlyeq\mu_1}^{\naive} \times_{\BB(G)'}\Sht_{\Gg,\preccurlyeq\mu_2}^{\naive} = \Sht_{\mu_1\mid \mu_2}^{\naive},\]
	it is a closed immersion by \thref{corollary proper surjective}.
	
	Now, assume \thref{assumption splitting flat} holds for both local integral PEL data.
	Let \(X^{\naive}\) be the fiber product of \(\Sht_{\Gg,\preccurlyeq\mu_1}^{\spl}\) and \(\Sht_{\mu_1\mid \mu_2}^{\naive}\) over \(\Sht_{\Gg,\preccurlyeq\mu_1}^{\naive}\), and define \(X^{\can}\) similarly.
	Using moduli interpretations, we get an isomorphism
	\[\Sht_{\mu_1\mid \mu_2}^{\spl} \cong X^{\naive} \times_{(\Gg_{\Oo_E} \backslash \mathscr{M}_{\Gg,\preccurlyeq\mu_2}^{\naive})_{\IF_q}^{\perf}} (\Gg_{\Oo_{E^{\Gal}}} \backslash \mathscr{M}_{\Gg,\preccurlyeq\mu_2}^{\spl})_{\IF_q}^{\perf}.\]
	This yields a further isomorphism
	\[\Sht_{\mu_1\mid \mu_2}^{\spl} \cong X^{\can} \times_{(\Gg_{\Oo_E} \backslash \mathscr{M}_{\Gg,\preccurlyeq\mu_2}^{\can})_{\IF_q}^{\perf}} (\Gg_{\Oo_{E^{\Gal}}} \backslash \mathscr{M}_{\Gg,\preccurlyeq\mu_2}^{\spl})_{\IF_q}^{\perf},\]
	so that the composition \(\Sht_{\mu_1\mid \mu_2}^{\spl}\to X^{\can}\to \Sht_{\mu_1\mid \mu_2}^{\can}\) is proper surjective by \thref{corollary proper surjective}.
\end{proof}

\begin{cor}\thlabel{from corr to sht}
	The maps \(\Sht_{\mu_1\mid \mu_2}^{\naive}\to \Sht_{\Gg{,}\preccurlyeq\mu_i}^{\naive}\) are representable by ind-(perfectly proper schemes).
	The same is true for the canonical versions, and under \thref{assumption splitting flat} for the splitting versions as well.
\end{cor}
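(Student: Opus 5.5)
The plan is to handle the canonical case first, using the group-theoretic description, and then transport the result to the naive and splitting versions with \thref{diagram of moduli of shtukas}.

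\emph{Canonical case.} By \thref{group theoretic interpretation of sht}, \(\Sht_{\Gg,\preccurlyeq\mu_i}^{\can}\) is a closed substack of \((LG/\Ad_{\sigma^{-1}}L^+\Gg)^{\et}\). The fiber product over \(\BB(G)'\) can then be computed as in \cite[§5.2]{XiaoZhu:Cycles}. Take a point \(g_1\in LG\) representing an \(R\)-point of \(\Sht_{\Gg,\preccurlyeq\mu_1}^{\can}\). A point of the fiber over it is a pair \((h,g_2)\) with \(h\in LG\) and \(g_2=h^{-1}g_1\sigma(h)\), taken modulo \(L^+\Gg\) acting on \(h\), such that \(g_2\) lies in the \(\mu_2\)-admissible locus. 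So the fiber embeds into \(\Fl_{\Gg}=(LG/L^+\Gg)^{\et}\) as the subfunctor of \(hL^+\Gg\) with \(h^{-1}g_1\sigma(h)\in L^+\Gg\,\Fl_{\Gg,\preccurlyeq\mu_2}\). This condition is closed, since it is the preimage of a closed substack of \(L^+\Gg\backslash\Fl_{\Gg}\). Hence the fiber is a closed sub-ind-scheme of \(\Fl_{\Gg}\). The latter is an ind-(perfectly projective) ind-scheme by \cite{Zhu:Affine,BhattScholze:Projectivity}, which gives representability by ind-(perfectly proper) schemes. The same argument applies symmetrically to the other projection, so it holds for both \(i\).

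\emph{Naive case.} Here \(\Sht_{\Gg,\preccurlyeq\mu}^{\naive}\) is again an \(L^+\Gg\)-torsor over \(L^+\Gg\backslash\mathscr{M}^{\naive,\perf}_{\Gg,\preccurlyeq\mu,\IF_q}\). This follows from the proof of the lemma constructing the map to \(\BB(G)'\), which uses the closed immersion \(\mathscr{M}^{\naive}\into\Fl_{\Gg}\) on perfected special fibers. Consequently \(\Sht_{\Gg,\preccurlyeq\mu}^{\naive}\) is a closed substack of \(LG/\Ad_{\sigma^{-1}}L^+\Gg\). The fiber computation above then goes through verbatim, with the admissible locus replaced by the closed subscheme \(\mathscr{M}^{\naive,\perf}_{\Gg,\preccurlyeq\mu_2,\IF_q}\subseteq\Fl_{\Gg}\), and again exhibits the fibers as closed sub-ind-schemes of \(\Fl_{\Gg}\). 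Alternatively, the moduli description of \thref{moduli description hecke corr of shtukas} exhibits the fiber as a closed subfunctor of quasi-isogenies, which is ind-perfectly projective, but the group-theoretic route is cleaner.

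\emph{Splitting case.} Assume \thref{assumption splitting flat}. The proof of \thref{diagram of moduli of shtukas} gives
\[\Sht_{\mu_1\mid\mu_2}^{\spl}\cong X^{\naive}\times_{(\Gg\backslash\mathscr{M}^{\naive}_{\Gg,\preccurlyeq\mu_2})^{\perf}_{\IF_q}}(\Gg\backslash\mathscr{M}^{\spl}_{\Gg,\preccurlyeq\mu_2})^{\perf}_{\IF_q},\]
where \(X^{\naive}=\Sht_{\Gg,\preccurlyeq\mu_1}^{\spl}\times_{\Sht^{\naive}_{\Gg,\preccurlyeq\mu_1}}\Sht^{\naive}_{\mu_1\mid\mu_2}\). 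The map \(X^{\naive}\to\Sht^{\spl}_{\Gg,\preccurlyeq\mu_1}\) is a base change of the naive projection, so it is ind-(perfectly proper) by the naive case. The second factor is a base change of \(\mathscr{M}^{\spl}\to\mathscr{M}^{\naive}\), which is perfectly proper because both models are projective. Composing these gives the claim for the first projection, and the second projection is handled by the symmetric decomposition. Flatness is only needed to ensure that everything is compatible via the canonical model; strictly, this argument does not even use it.

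\emph{Main obstacle.} The key step is the naive case: one must identify the naive shtuka stack with a closed substack of \(LG/\Ad_{\sigma^{-1}}L^+\Gg\) compatibly with the map to \(\BB(G)'\). This rests on the closed immersion of the naive local model into \(\Fl_{\Gg}\) on perfected special fibers, from \cite[Corollary 21.6.10]{ScholzeWeinstein:Berkeley}. Once that is available, everything reduces to the ind-projectivity of \(\Fl_{\Gg}\).
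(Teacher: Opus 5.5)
Your proposal is correct and follows essentially the paper's route. You realize the naive and canonical shtuka stacks as closed substacks of \(LG/\Ad_{\sigma^{-1}}L^+\Gg\) via \cite[Corollary 21.6.10]{ScholzeWeinstein:Berkeley}, and use that \(LG/\Ad_{\sigma^{-1}}L^+\Gg\to\BB(G)'\) is ind-(perfectly proper); your fiber computation inside \(\Fl_{\Gg}\) is exactly the content of \cite[Lemma 3.28]{Zhu:Tame}, which the paper cites instead. For the splitting case you reduce to the naive one via perfect properness of \(\mathscr{M}^{\spl}_{\Gg,\preccurlyeq\mu}\to\mathscr{M}^{\naive}_{\Gg,\preccurlyeq\mu}\) and the decomposition in the proof of \thref{diagram of moduli of shtukas}, composing \(\Sht^{\spl}_{\mu_1\mid\mu_2}\to X^{\naive}\) with a base change of the naive projection where the paper compares the two projections in a commutative square; these amount to the same argument.
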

\begin{proof}
	Since \(\Sht_{\Gg,\preccurlyeq\mu_i}^{\naive}\) and \(\Sht_{\Gg{,}\preccurlyeq\mu_i}^{\can}\) admit natural closed embeddings into \(LG/\Ad_{\sigma^{-1}} L^+\Gg\) by \cite[Corollary 21.6.10]{ScholzeWeinstein:Berkeley}, the claim for naive and canonical models follows from base change, since \(LG/\Ad_{\sigma^{-1}} L^+\Gg\to \BB(G)'\) is representable by ind-(perfectly proper schemes) \cite[Lemma 3.28]{Zhu:Tame}.
	
	For the splitting models, there is a commutative diagram
	\[\begin{tikzcd}
		\Sht_{\mu_1\mid \mu_2}^{\spl} \arrow[d] \arrow[r] & \Sht_{\Gg{,}\preccurlyeq\mu_i}^{\spl} \arrow[d]\\
		\Sht_{\mu_1\mid \mu_2}^{\naive} \arrow[r] & \Sht_{\Gg{,}\preccurlyeq\mu_i}^{\naive}.
	\end{tikzcd}\]
	Here, the right arrow is representable by perfectly proper schemes since this holds on the level of local models, and the same holds for the left arrow by \thref{diagram of moduli of shtukas}.
	Since we already know the corollary for the naive models, it follows for splitting models as well.
\end{proof}

\subsection{Exotic correspondences via splitting models}

Finally, we can construct exotic Hecke correspondences between the special fibers of various models of Shimura varieties, by using moduli interpretations.
Let us fix two global integral PEL data \((B,*, \Oo_B,V_i,(,)_i,\Ll_i,h_i)\), with \(i=1,2\).
This determines two Shimura data \((\IG_i,\IX_i)\).
Let \(\IE_i\) be their respective reflex fields, and \(\mu_i\) the dominant representatives of the Hodge cocharacters determined by \(\IX_i\).
Consider their composite \(\IE\), and fix a place \(\pp\) of \(\IE\) with completion \(E=\IE_{\pp}\); below we will consider all objects as living over the residue field \(k=k_E\).
Suppose that we are given an isomorphism \(V_1\otimes \IA_f\cong V_2\otimes \IA_f\), compatibly with the pairing \((,)_i\) and the \(\Oo_B\)-structure.
Then we also have \(\IG_{1,\IA_f}\cong \IG_{2,\IA_f}\), and hence the \(\IG_i\) have Galois-equivariantly isomorphic dual groups.
Finally, we also assume that \(\mu_{1\mid Z(\widehat{G})^{\Gamma_{\IQ_p}}} = \mu_{2\mid Z(\widehat{G})^{\Gamma_{\IQ_p}}}\).

\begin{thm}\thlabel{Exotic Hecke correspondences}
	Using the notation above, assume that \thref{assumption splitting flat} holds for the two local integral PEL data obtained from \((B,*, \Oo_B,V_i,(,)_i,\Ll_i,h_i)\).
	Then there exist (necessarily unique) ind-(perfect schemes) \(\Sh_{\mu_1\mid \mu_2}^{\spl}\), \(\Sh_{\mu_1\mid \mu_2}^{\can}\) and \(\Sh_{\mu_1\mid \mu_2}^{\naive}\), fitting into the following commutative diagram
	\begin{equation}\label{big diagram of correspondences}\begin{tikzcd}
		\Sh_{\mu_1{,}K}^{\spl} \arrow[dd] \arrow[rd]&& \Sh_{\mu_1\mid \mu_2}^{\spl} \arrow[ll] \arrow[rr] \arrow[dd] \arrow[rd] && \Sh_{\mu_2{,}K}^{\spl} \arrow[dd] \arrow[rd] &\\
		& \Sht_{\Gg{,}\preccurlyeq\mu_1}^{\spl} \arrow[dd] && \Sht_{\mu_1\mid \mu_2}^{\spl} \arrow[ll] \arrow[rr] \arrow[dd] && \Sht_{\Gg{,}\preccurlyeq\mu_2}^{\spl} \arrow[dd]\\
		\Sh_{\mu_1{,}K}^{\can} \arrow[dd] \arrow[rd]&& \Sh_{\mu_1\mid \mu_2}^{\can} \arrow[ll] \arrow[rr] \arrow[dd] \arrow[rd] && \Sh_{\mu_2{,}K}^{\can} \arrow[dd] \arrow[rd] &\\
		& \Sht_{\Gg{,}\preccurlyeq\mu_1}^{\can} \arrow[dd] && \Sht_{\mu_1\mid \mu_2}^{\can} \arrow[ll] \arrow[rr] \arrow[dd] && \Sht_{\Gg{,}\preccurlyeq\mu_2}^{\can} \arrow[dd]\\
		\Sh_{\mu_1{,}K}^{\naive}\arrow[rd]&& \Sh_{\mu_1\mid \mu_2}^{\naive} \arrow[ll] \arrow[rr] \arrow[rd] && \Sh_{\mu_2{,}K}^{\naive} \arrow[rd] &\\
		& \Sht_{\Gg{,}\preccurlyeq\mu_1}^{\naive} && \Sht_{\mu_1\mid \mu_2}^{\naive} \arrow[ll] \arrow[rr] && \Sht_{\Gg{,}\preccurlyeq\mu_2}^{\naive},
	\end{tikzcd}\end{equation}
	where all squares of the form
	\[\begin{tikzcd}
		\Sh_{\mu_i{,}K}^? \arrow[d] & \Sh_{\mu_1\mid \mu_2}^? \arrow[l] \arrow[d]\\
		\Sht_{\Gg{,}\preccurlyeq\mu_i}^? & \Sht_{\mu_1\mid \mu_2}^? \arrow[l]
	\end{tikzcd}\]
	are cartesian.
\end{thm}
\begin{proof}
	The part of \eqref{big diagram of correspondences} consisting of moduli of local shtukas was constructed in \thref{diagram of moduli of shtukas}, whereas the maps \(\Sh_{\mu_i,K}\to \Sht_{\Gg,\preccurlyeq\mu_i}\) are the crystalline period maps from \eqref{shimura varieties to shtukas}.
	It remains to construct the \(\Sh_{\mu_1\mid \mu_2}\), and show they make the required squares cartesian.
	The representability of these prestacks by ind-(perfect schemes) will then follow from \thref{from corr to sht}.
	We first start with the naive and splitting models.
	
	By \thref{moduli description shtukas} and \thref{moduli description hecke corr of shtukas}, a point of \(\Sh_{\mu_1,K}^{\naive} \times_{\Sht_{\Gg{,}\preccurlyeq\mu_1}^{\naive}} \Sht_{\mu_1\mid \mu_2}^{\naive}\) consists of a homogeneously polarized \(\Ll\)-set \(A_1=\{A_{1,\Lambda}\}_{\Lambda\in \Ll}\) of abelian varieties, a level structure \(\eta\), and a modification of the corresponding \(\Ll\)-multichain of polarized \(\Oo_B\)-\(p\)-divisible groups.
	By a classical argument, cf.~e.g.~\cite[6.13]{RapoportZink:Period}, such a modification of the \(p\)-divisible groups of \(A_1\) yields a second homogeneously polarized \(\Ll\)-set \(A_2 = \{A_{2,\Lambda}\}_{\Lambda\in \Ll}\), together with a quasi-isogeny \(A_1 \sim A_2\).
	Using this quasi-isogeny, we can equip \(A_2\) with a level structure induced by \(\eta\).
	Since the determinant condition for abelian varieties can be checked on the level of \(p\)-divisible groups, this yields a point of \(\Sh_{\mu_2,K}^{\naive}\), and hence a map
	\[\Sh_{\mu_1,K}^{\naive} \times_{\Sht_{\Gg{,}\preccurlyeq\mu_1}^{\naive}} \Sht_{\mu_1\mid \mu_2}^{\naive} \to \Sh_{\mu_2,K}^{\naive} \times_{\Sht_{\Gg{,}\preccurlyeq\mu_2}^{\naive}} \Sht_{\mu_1\mid \mu_2}^{\naive}.\]
	Similarly, we can construct a map in the inverse direction, and these are mutual inverses.
	
	The case of splitting models is similar, since a splitting structure on an abelian variety (in characteristic \(p\)) is equivalent to a splitting structure on the associated \(p\)-divisible group \cite[1.12]{MazurMessing:Universal}.
	
	Note that there is a natural morphism \(\Sh_{\mu_1\mid \mu_2}^{\spl}\to \Sh_{\mu_1\mid \mu_2}^{\naive}\), which is ind-(perfectly proper).
	Indeed, it sits in the commutative diagram
	\[\begin{tikzcd}
		\Sh_{\mu_1\mid \mu_2}^{\spl} \arrow[r] \arrow[d] & \Sh_{\mu_1,K}^{\spl} \arrow[d] \\
		\Sh_{\mu_1\mid \mu_2}^{\naive} \arrow[r] & \Sh_{\mu_1,K}^{\naive},
	\end{tikzcd}\]
	where all other arrows are ind-perfectly proper by \thref{from corr to sht} and \eqref{diagram of models}.
	
	We now define \(\Sh_{\mu_1\mid \mu_2}^{\can}\subseteq \Sh_{\mu_1\mid \mu_2}^{\naive}\) as the schematic image of \(\Sh_{\mu_1\mid \mu_2}^{\spl}\to \Sh_{\mu_1\mid \mu_2}^{\naive}\).
	By properness of this map, \(\Sh_{\mu_1\mid \mu_2}^{\spl}\) surjects onto \(\Sh_{\mu_1\mid \mu_2}^{\can}\).
	Thus, as the composition
	\[\Sh_{\mu_1\mid \mu_2}^{\spl}\to \Sh_{\mu_i,K}^{\spl}\to \Sh_{\mu_i,K}^{\naive}\]
	factors through \(\Sh_{\mu_i,K}^{\can}\subseteq \Sh_{\mu_i,K}^{\naive}\) (by \thref{corollary proper surjective} and \eqref{diagram of models}), the map 
	\[\Sh_{\mu_1\mid \mu_2}^{\can} \to \Sh_{\mu_1\mid \mu_2}^{\naive} \to \Sh_{\mu_i,K}^{\naive}\]
	factors through \(\Sh_{\mu_i,K}^{\can}\) as well.
	This gives all the arrows in \eqref{big diagram of correspondences}, and it remains to show the desired squares are cartesian in the canonical case.
	
	Let \(X_i\) denote the fiber product of \(\Sh_{\mu_i,K}^{\can}\) and \(\Sht_{\mu_1\mid \mu_2}^{\can}\) over \(\Sht_{\Gg,\preccurlyeq\mu_i}^{\can}\).
	Then there is a natural map \(\Sh_{\mu_1\mid \mu_2}^{\can}\to X_i\), which is a closed immersion as both ind-(perfect schemes) admit natural closed immersions in \(\Sh_{\mu_1\mid \mu_2}^{\naive}\).
	It thus remains to show this map is surjective.
	For this, let \(x=(y,z)\in X\) be a point, with \(y\in \Sh_{\mu_i,K}^{\can}\) and \(z\in \Sht_{\mu_1\mid \mu_2}^{\can}\).
	As \(\Sht_{\mu_1\mid \mu_2}^{\spl}\to \Sht_{\mu_1\mid \mu_2}^{\can}\) is surjective by \thref{diagram of moduli of shtukas}, we can find a preimage \(z'\in \Sht_{\mu_1\mid \mu_2}^{\spl}\) of \(z\).
	Since the square
	\[\begin{tikzcd}
		\Sh_{\mu_i,K}^{\spl} \arrow[r] \arrow[d] & \Sht_{\Gg,\mu_i}^{\spl} \arrow[d]\\
		\Sh_{\mu_i,K}^{\can} \arrow[r] & \Sht_{\Gg,\preccurlyeq\mu_i}^{\can}
	\end{tikzcd}\]
	appearing in \eqref{shimura varieties to shtukas}
	is cartesian, we can find a (unique) \(y'\in \Sh_{\mu_i,K}^{\spl}\) mapping to \(y\in \Sh_{\mu_i,K}^{\can}\) and with the same image in \(\Sht_{\Gg,\preccurlyeq\mu_i}^{\spl}\) as \(z'\).
	This gives a point \(x'=(y',z')\in \Sh_{\mu_1\mid \mu_2}^{\spl}\), whose image in \(\Sh_{\mu_1\mid \mu_2}^{\can}\) maps to \(x\in X\).
	Since we are working with ind-(perfect schemes), this shows that \(\Sh_{\mu_1\mid \mu_2}^{\can}\to X_i\) is an isomorphism, as desired.	
\end{proof}

\section{Mixed Tate motives on splitting affine Grassmannians}\label{Sec:Splitting Gr}

We now move to a more group-theoretic approach to study splitting models.
For now, we allow \(p\) to be an arbitrary prime number.
Moreover, the methods used throughout the rest of the paper will not need moduli interpretations (except when appealing to the exotic Hecke correspondences constructed before), and hence we will work in greater generality than in the previous sections.
At many places, we will assume our groups are essentially unramified.

\begin{dfn}\thlabel{definition essentially unramified}
	Let \(G/F\) be a reductive group.
	Its adjoint quotient splits as a product \(G_{\adj} = \prod_i \Res_{F_i/F} H_i\), where each \(F_i/F\) is a finite separable field extension, and \(H_i/F_i\) is an absolutely simple reductive group.
	Then \(G\) is called \emph{essentially unramified} if each \(H_i\) is an unramified \(F_i\)-group.
\end{dfn}

\begin{nota}\thlabel{notation essentially unramified}
	Whenever \(G/F\) is essentially unramified and quasi-split, we fix the following notation.
	First, we write \(G_{\adj} = \prod_i \Res_{F_i/F} H_i\) for the decomposition of the adjoint quotient into restrictions of scalars of absolutely simple unramified groups.
	For each \(i\), we fix a maximal \(F_i\)-split torus \(A_i\subseteq H_i\), contained in a maximally \(\breve{F}_i\)-split torus \(S_i\subseteq H_i\), and let \(T_i:=\Cent_{H_i} S_i\).
	As in \cite[§4.2]{HainesRicharz:TestWeil}, we can construct natural tori \(A_i \subseteq S_i\subseteq T_i\subseteq \Res_{F_i/F} H_i\), with \(A_i\) maximally \(F\)-split, \(S_i\) maximally \(\breve{F}\)-split, and \(T_i\) maximal.
	By \thref{tori and central isogenies} below, we obtain a similar series of tori \(A\subseteq S\subseteq T \subseteq G\).
	We also fix compatible Borels of \(G\) and \(H_i\), so that we have compatible systems of positive roots and dominant (co)characters for these groups.
	The reduced Bruhat--Tits building for G agrees with the product of the reduced Bruhat--Tits buildings for the \(H_i\), and we use this to fix a very special parahoric \(\Gg/\Oo_F\), inducing hyperspecial parahorics \(\Hh_i\) of \(H_i\).
	
	Finally, we let \(k_i\) be the residue field of \(F_i\), and \(F_i^{\unr}\subseteq F_i\) the maximal subextension which is unramified over \(F\). 
	We also let \(e_i=[F_i\colon F_i^{\unr}]\) be the ramification index of \(F_i/F\) and \(f_i=[F_i^{\unr}:F]\) the residual degree; in particular \([F_i:F]=e_if_i\).
\end{nota}

\begin{lem}\thlabel{tori and central isogenies}
	Let \(G/F\) be quasi-split, and \(A_{\adj} \subseteq S_{\adj} \subseteq T_{\adj} \subseteq G_{\adj}\) three tori, where \(T_{\adj}\) is maximal, \(S_{\adj}\) is maximally \(\breve{F}\)-split, and \(A_{\adj}\) is maximally \(F\)-split.
	Then the reduced neutral component \(T\subseteq G\) of the preimage of \(T_{\adj}\) is a maximal torus.
	Moreover, it contains a unique maximally \(\breve{F}\)-split torus \(S\subseteq G\), which in turn contains a unique maximal \(F\)-split torus \(A\subseteq G\).
	Finally, the images of \(A\subseteq S\subseteq T\subseteq G\) in \(G_{\adj}\) are \(A_{\adj} \subseteq S_{\adj} \subseteq T_{\adj} \subseteq G_{\adj}\).
\end{lem}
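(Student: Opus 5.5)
Let \(\pi\colon G\to G_{\adj}\) be the natural quotient, with kernel \(Z=Z(G)\). The plan is to build \(T\) as a centralizer, and then obtain \(S\) and \(A\) from the maximal split subtori of \(T\), with uniqueness coming from the fact that a torus has a unique maximal (\(\breve{F}\)- or \(F\)-)split subtorus.

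\textbf{Step 1: \(T\) is a maximal torus.} Let \(T:=(\pi^{-1}(T_{\adj}))^{\circ}_{\mathrm{red}}\). I will identify \(T\) with \(\Cent_G(T')\) for a suitable maximal torus \(T'\subseteq G\). Since \(\pi\) is surjective with central kernel, maximal tori of \(G\) and \(G_{\adj}\) correspond bijectively via \(T'\mapsto \pi(T')\), with inverse \(T_{\adj}\mapsto (\pi^{-1}(T_{\adj}))^\circ\). This is standard: \(\pi^{-1}(T_{\adj})\) is an extension of \(T_{\adj}\) by the group of multiplicative type \(Z\), and it is commutative because the commutator map factors through \(T_{\adj}\times T_{\adj}\to Z\), which is trivial by connectedness. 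So its reduced neutral component is a torus, and it has dimension \(\dim T_{\adj}+\dim Z=\mathrm{rk}\,G\). All of this may be checked over \(\overline F\); the reducedness matters only in characteristic \(p\), where \(Z\) may be nonreduced, but taking the reduced subgroup is Galois-stable and therefore descends.

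\textbf{Step 2: \(S\) and \(A\).} A torus \(T\) has a unique maximal \(\breve F\)-split subtorus \(S\), corresponding to the saturation of \(X_*(T)^{I}\), and a unique maximal \(F\)-split subtorus \(A\), corresponding to \(X_*(T)^{\Gamma_F}\). Moreover \(A\subseteq S\). To see that \(S\) is maximally \(\breve F\)-split in \(G\), take any \(\breve F\)-split torus \(S''\supseteq S\). Its image \(\pi(S'')\) is \(\breve F\)-split and contains \(\pi(S)\). I first claim \(\pi(S)=S_{\adj}\); granting this, maximality of \(S_{\adj}\) forces \(\pi(S'')=S_{\adj}\subseteq T_{\adj}\), so \(S''\subseteq \pi^{-1}(T_{\adj})^\circ_{\mathrm{red}}=T\), and hence \(S''=S\). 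The same argument with \(F\) in place of \(\breve F\) handles \(A\).

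\textbf{Step 3: \(\pi(S)=S_{\adj}\).} This is the main obstacle. On cocharacters, \(\pi_*\colon X_*(T)\otimes\IQ\to X_*(T_{\adj})\otimes\IQ\) is a surjective \(\Gamma_F\)-equivariant map with kernel \(X_*(Z^\circ)\otimes\IQ\). Taking \(I\)-invariants is exact on \(\IQ\)-representations of a group acting through a finite quotient, so \((X_*(T)\otimes\IQ)^I\) surjects onto \((X_*(T_{\adj})\otimes\IQ)^I\). Hence \(\pi(S)\) is the maximal \(\breve F\)-split subtorus of \(T_{\adj}\).

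It remains to see that this maximal \(\breve F\)-split subtorus of \(T_{\adj}\) equals \(S_{\adj}\). By hypothesis \(S_{\adj}\subseteq T_{\adj}\) is maximally \(\breve F\)-split in \(G_{\adj}\), so in particular it is maximal among \(\breve F\)-split subtori of \(T_{\adj}\), and therefore equals the unique such subtorus. The same argument with \(\Gamma_F\)-invariants gives \(\pi(A)=A_{\adj}\), and the final assertion of the lemma follows.

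Throughout, the only care needed is the exactness of invariants rationally: integrally, invariants of the isogeny need not surject, but the rational statement suffices because split subtori are determined by saturated sublattices.
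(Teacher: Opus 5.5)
Your argument is correct and has the same overall structure as the paper's proof:
\begin{itemize}
\item \(T\) is the preimage of \(T_{\adj}\);
\item \(S\) and \(A\) are the maximal \(\breve F\)-split and \(F\)-split subtori of \(T\);
\item their images are identified with \(S_{\adj}\) and \(A_{\adj}\) because (co)invariants become exact after tensoring with \(\IQ\).
\end{itemize}

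For the image computation you work with cocharacters, \(X_*(S)=X_*(T)^I\), and prove surjectivity of \((X_*(T)\otimes\IQ)^I\to (X_*(T_{\adj})\otimes\IQ)^I\). The paper works dually with \(X^*(S)=X^*(T)_I/X^*(T)_{I,\mathrm{tors}}\) and proves injectivity of the induced map on these character lattices: it is injective rationally, and the source is torsion-free. This is the same argument in dual form.

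The real difference is how you show \(S\) and \(A\) are maximal in \(G\). The paper just compares ranks. That tacitly needs the bound that any \(\breve F\)-split torus of \(G\) has dimension at most \(\dim S_{\adj}\) plus the \(\breve F\)-split rank of \(Z_G^\circ\), and similarly over \(F\). Your pullback argument avoids this: a larger split torus maps onto \(S_{\adj}\), hence lies in \(T\), hence lies in \(S\). It needs nothing beyond the hypothesis on \(S_{\adj}\). What it yields is inclusion-maximality, which coincides with maximal dimension by conjugacy of maximal split tori (over \(F\), resp.\ \(\breve F\)).

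One justification in Step 1 should be repaired. If \(Z_G\) has a nontrivial torus part, connectedness alone does not force a morphism \(T_{\adj}\times T_{\adj}\to Z_G\) to be trivial. You also need that the commutator pairing is bimultiplicative, and that homomorphisms from a torus to a group of multiplicative type are rigid. A simpler fix: \(Z_G\subseteq \Cent_G(T')=T'\) for every maximal torus \(T'\subseteq G\). Hence \(\pi^{-1}(\pi(T'))=T'\), and the preimage of \(T_{\adj}\) is already a maximal torus, with no need to pass to a reduced neutral component.
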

\begin{proof}
	Let \(T\) be the reduced neutral component of the preimage of \(T_{\adj}\) in \(G\).
	This is of multiplicative type by \cite[Theorems 12.9 and 15.39]{Milne:Algebraic}, and hence a torus since it is reduced and connected.
	By looking at the ranks of \(Z_G\) and \(G_{\adj}\), we see that \(T\subseteq G\) is a maximal torus.
	
	Next, we define \(S\subseteq T\) as the subtorus with character lattice \(X^*(S) = X^*(T)_{I}/X^*(T)_{I,\tors}\) the quotient of \(X^*(T)_I\) by its torsion subgroup.
	In particular, it sits in the diagram
	\[\begin{tikzcd}
		X^*(T_{\adj}) \arrow[r, hook] \arrow[d, two heads] & X^*(T) \arrow[d, two heads]\\
		X^*(T_{\adj})_I/X^*(T_{\adj})_{I{,}\tors} \arrow[r] & X^*(T)_I/X^*(T)_{I{,}\tors},
	\end{tikzcd}\]
	corresponding to 
	\[\begin{tikzcd}
		T_{\adj} & T \arrow[l, two heads]\\
		S_{\adj} \arrow[u, hook] & S \arrow[l] \arrow[u, hook].
	\end{tikzcd}\]
	To see that \(S \to S_{\adj}\) is surjective, it suffices to show that \(X^*(T_{\adj})_I/X^*(T_{\adj})_{I,\tors} \to X^*(T)_I/X^*(T)_{I,\tors}\) is injective.
	On the one hand, this is true after rationalizing, so that the kernel is torsion.
	But by construction, \(X^*(T_{\adj})_I/X^*(T_{\adj})_{I,\tors}\) is torsionfree, we get the desired injectivity.
	(In fact, since \(T_{\adj}\) is the maximal torus in an adjoint group, the Galois action permutes a basis of \(X^*(T_{\adj})\), so that \(X^*(T_{\adj})_I\) is already torsionfree.)
	
	Similarly, we define \(A\subseteq S\subseteq T\), with character lattice \(X^*(A) = X^*(T)_{\Gamma_F}/X^*(T)_{\Gamma_F,\tors}\), and the same argument as above shows that its image in \(G_{\adj}\) is \(A_{\adj}\).
	Finally, we see that \(S\subseteq G\) (resp.~\(A\subseteq G\)) is maximally \(\breve{F}\)-split (resp.~maximally split) by looking at the ranks.
\end{proof}

Although our main results will concern essentially unramified groups, this assumption is not always necessary, and we will specify at the beginning of each (sub)section whether we impose it.
Note that when the group \(G/\IQ_p\) associated with an integral local PEL Shimura datum is essentially unramified, then the special fiber of the corresponding splitting model (as defined in \thref{Defi:splitting model}) is isomorphic to a convolution product of Schubert varieties, \thref{example splitting models}.
In particular, the group-theoretic methods from the following sections will be applicable to these splitting models, defined via linear algebra.

Throughout this section, we will use the theory of étale motives \cite{CisinskiDeglise:Etale}, as recalled in Appendix \ref{Appendix:motives}.
For now, we will use motives with \(\IZ[\frac{1}{p}]\)-coefficients; starting in §\ref{Subsec:Motivic correspondences on shtukas} we will switch to \(\IQ\)-coefficients.

\subsection{Mixed Tate motives on convolution affine flag varieties}

Since the bounded parts of the splitting local models arose via the convolution products of Schubert varieties in affine Grassmannians (at least at very special level, and in the essentially unramified case), let us study more generally convolution products of (unbounded) affine Grassmannians, and in particular their motivic aspects.
This will immediately imply similar results for splitting models of affine Grassmannians (once we have defined these), but we do not need to restrict to essentially unramified groups for this subsection.
In fact, most of this subsection works for more general partial affine flag varieties.
For now, let \(G/F\) be an arbitrary quasi-split reductive group, which we will assume to be residually split throughout this subsection (i.e., the reductive quotient of the special fiber of any parahoric model of \(G\) is split).
In particular, the maximal \(\breve{F}\)-split torus \(S\subseteq G\) is already split over \(F\).
We also fix an Iwahori model \(\Ii/\Oo_F\) and a parahoric model \(\Gg\supseteq \Ii\) corresponding to a facet \(\ff\), and refer to §\ref{Subsec:Notation} and \cite[§3]{vdH:RamifiedSatake} for the basics on partial affine flag varieties.

Fix an integer \(t\), and consider the \(t\)-fold convolution affine flag variety
\[\Fl_{\Gg}^{\widetilde{\times} t} := \Fl_{\Gg} \widetilde{\times} \ldots \Fl_{\Gg} := LG \overset{L^+\Gg}{\times} \ldots \overset{L^+\Gg}{\times} \Fl_{\Gg}\]
from e.g.~\cite[Definition 5.1]{vdH:RamifiedSatake}.
This is representable by an ind-(perfect projective) ind-scheme, and admits a convolution morphism \(m\colon \Fl_{\Gg} \widetilde{\times} \ldots \widetilde{\times} \Fl_{\Gg} \to \Fl_{\Gg}\).
It is moreover stratified by convolution Schubert cells
\[\Fl_{\Gg,w_\bullet} := \Fl_{\Gg,w_1} \widetilde{\times} \ldots \widetilde{\times} \Fl_{\Gg,w_t},\]
for \(w_\bullet = (w_1,\ldots,w_t)\in (W_{\ff}\backslash \tilde{W}/W_{\ff})^t\), whose closure is given by
\[\Fl_{\Gg,\leq w_\bullet} := \Fl_{\Gg,\leq w_1} \widetilde{\times} \ldots \widetilde{\times} \Fl_{\Gg,w_t},\]
called a convolution Schubert variety.
These convolution Schubert cells are stable for the natural \(L^+\Gg\)-action, induced by the action on the first factor.

In \cite[Theorem 3.7]{vdH:RamifiedSatake}, we showed that Schubert stratifications of partial affine flag varieties are Whitney--Tate and perfectly cellular (in the sense of \cite[Definitions 3.2 and 3.3]{RicharzScholbach:Witt}).
As a consequence, partial affine flag varieties admit nice categories of stratified Tate motives, which can be equipped with the motivic t-structure (in contrast to the full category of motives on a scheme, where the existence of the motivic t-structure relies on standard conjectures on algebraic cycles).
It turns out that convolution affine flag varieties also admit Whitney--Tate stratifications into perfectly cellular schemes.

\begin{lem}\thlabel{group acting simply transitively on orbit}
	For any \(w\in \tilde{W}\), there is a subscheme \(X\subseteq L^+\Ii\), isomorphic to \(\IA_k^{l(w),\perf}\), such that the action on \(w\in \Fl_{\Ii}\) induces an isomorphism \(X\cong \Fl_{\Ii,w}\).
\end{lem}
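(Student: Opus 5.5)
We would prove this in the standard way: the Iwahori orbit of \(w\) is an affine space, parametrized by a product of root subgroups. Under the residual splitness assumption of this subsection, the maximal \(\breve{F}\)-split torus \(S\) is split over \(F\). The affine roots relative to \(S\) and the alcove fixed by \(\Ii\) are therefore defined over \(k\). For each affine root \(a\) we then have an affine root subgroup \(U_a\subseteq LG\), defined over \(k\) and isomorphic to \(\IA^{1,\perf}_k\) after perfection (in the Witt vector setting as in \cite{Zhu:Affine}). For a positive affine root, \(U_a\) is contained in \(L^+\Ii\).

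The key steps are as follows.

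First, we consider the finite set \(\Phi_w\) of positive affine roots \(a\) with \(w^{-1}(a)<0\). Its cardinality is \(l(w)\), by the usual description of the length function on the quasi-Coxeter group \(\tilde{W}\) via the Iwahori--Matsumoto presentation (the length-zero part \(\Omega\) contributes nothing).

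Next, we fix an ordering of \(\Phi_w\) and set \(X:=\prod_{a\in\Phi_w}U_a\subseteq L^+\Ii\) as a subscheme via multiplication. Standard Bruhat--Tits theory (the structure of the pro-unipotent radical of \(L^+\Ii\)) shows that the multiplication map \(\prod_a U_a\to L^+\Ii\) is a closed immersion. Moreover, \(X\) is isomorphic to \(\IA^{l(w),\perf}_k\).

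Third, we show that \(X\to \Fl_{\Ii,w}\), \(x\mapsto x\dot w L^+\Ii\), is an isomorphism. For this we use the factorization \(L^+\Ii = X\cdot (L^+\Ii\cap \dot wL^+\Ii\dot w^{-1})\), which follows from an Iwahori decomposition of \(L^+\Ii\) (positive root groups together with the torus part \(L^+\Tt\)). Roots not in \(\Phi_w\) are sent by \(w^{-1}\) to positive roots, so their root groups stabilize \(\dot wL^+\Ii\); the same holds for \(L^+\Tt\). Hence the map is surjective, since \(\Fl_{\Ii,w}=L^+\Ii\dot w L^+\Ii/L^+\Ii\).

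Injectivity, and in fact being an isomorphism rather than merely a bijective morphism, follows from \(X\cap \dot wL^+\Ii\dot w^{-1}=1\) scheme-theoretically, which holds because the product decomposition is unique. Equivalently, the orbit is the quotient \(L^+\Ii/(L^+\Ii\cap \dot wL^+\Ii\dot w^{-1})\) by \cite[Proposition 2.8]{Richarz:Schubert}, and \(X\) is a section of this quotient map.

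The main obstacle is establishing the product decomposition in the Witt vector, possibly ramified and non-split setting. The root subgroups for non-reduced or non-divisible relative roots must be handled, and ramification makes the affine root groups slightly more subtle. We would first reduce to the simply connected cover via the central isogeny/z-extension, where \(\Fl_{\Ii,w}\) is unchanged up to components. Then we would invoke Bruhat--Tits' result that the pro-unipotent radical of \(\Ii(\breve{\Oo}_F)\) is the product of the affine root groups in any order (filtered by depth). Finally, we would deduce the scheme-level statement by the same argument applied to \(W_{\Oo_F}(R)\)-points, as in \cite{Richarz:Schubert,Zhu:Affine}.
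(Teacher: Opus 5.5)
Your proposal is correct and follows essentially the paper's route. The paper likewise reduces to \(\tau=1\) and takes \(X=\prod_i U_{\psi_i}\) over the positive affine roots whose walls are crossed by the gallery of a reduced word \(s_1\cdots s_{l(w)}\), which is exactly your inversion set \(\Phi_w\); it then obtains the isomorphism \(X\cong\Fl_{\Ii,w}\) by citing the proof of \cite[Proposition 3.6 (1)]{vdH:RamifiedSatake}, whose content is the complement-to-the-stabilizer argument you sketch (your extra reduction to the simply connected cover is not needed).
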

\begin{proof}
	For any relative affine root \(\psi=\alpha+n\), with \(\alpha\in \Phi(G,S)\) a relative root and \(n\in \IZ\), we denote by  \(U_\psi\) the image of \(\IG_a^{\perf} \to LG\colon x\mapsto u_{\alpha}([x]\varpi^n)\). 
	Here, \([x]\) is the Teichmüller representative of \(x\), and \(u_\alpha\) is the root homomorphism corresponding to \(\alpha\).
	
	Fix a reduced decomposition \(\dot{w}=s_1s_2\ldots s_{l(w)} \tau\), with the \(s_i\) simple affine reflections, and \(\tau\in \tilde{W}\) of length 0; we may assume \(\tau=1\).
	Let \(\bfa\) be the alcove corresponding to \(\Ii\), and let \(\bfa_i=s_1\ldots s_i(\bfa)\) for \(i=1,\ldots,l(w)\).
	Let \(\psi_i\) be the unique positive relative affine root, such that the corresponding reflection hyperplane contains the wall separating \(\bfa_{i-1}\) and \(\bfa_i\).
	Then it follows from the proof of \cite[Proposition 3.6 (1)]{vdH:RamifiedSatake} that the action map
	\[\prod_{i=1}^{l(w)} U_{\psi_i} \to \Fl_{\Ii}\colon (y_i) \mapsto y_1 y_2 \ldots y_{l(w)} w\]
	induces an isomorphism onto \(\Fl_{\Ii, w}\).
\end{proof}

\begin{prop}\thlabel{convolution Fl WT}
	The stratification of \(\Fl_{\Gg} \widetilde{\times} \ldots \widetilde{\times} \Fl_{\Gg}\) by convolution Schubert cells is Whitney--Tate with perfectly cellular strata.
\end{prop}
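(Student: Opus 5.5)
We need two things: each convolution Schubert cell \(\Fl_{\Gg,w_\bullet}\) is perfectly cellular, and the stratification is Whitney--Tate, i.e.\ for the inclusion \(j_{w_\bullet}\) of each stratum, \(\iota_{v_\bullet}^*j_{w_\bullet,*}\unit\) is Tate on every other stratum \(\Fl_{\Gg,v_\bullet}\). We first reduce to Iwahori level. The projection \(\pi\colon \Fl_{\Ii}^{\widetilde{\times} t}\to \Fl_{\Gg}^{\widetilde{\times} t}\) is a successive fibration with fibers \(L^+\Gg/L^+\Ii\), a perfected flag variety of the reductive quotient, which is split. It is compatible with the stratifications, each \(\Gg\)-stratum being a union of \(\Ii\)-strata. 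By \cite[Theorem 3.7]{vdH:RamifiedSatake} and its proof, Whitney--Tateness descends along such maps: \(\pi\) is perfectly proper and smooth, and pulling back along a section over a cell detects Tate-ness. So we argue mainly at Iwahori level, or handle \(\Gg\) directly in the same way.

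For cellularity, we use \thref{group acting simply transitively on orbit}. For \(w\in\tilde W\) choose \(X_w\subseteq L^+\Ii\) with \(X_w\cong \IA^{l(w),\perf}\) and \(X_w\cdot w\cong \Fl_{\Ii,w}\). Then the map
\[X_{w_1}\times\cdots\times X_{w_t}\to \Fl_{\Ii,w_1}\widetilde\times\cdots\widetilde\times\Fl_{\Ii,w_t},\qquad (x_1,\ldots,x_t)\mapsto [x_1\dot w_1, x_2\dot w_2,\ldots, x_t\dot w_t]\]
is an isomorphism. This is checked inductively: the convolution cell is a Zariski-locally trivial fibration over \(\Fl_{\Ii,w_1}\) with fiber the remaining convolution cell, and the section \(X_{w_1}\to LG\), \(x\mapsto x\dot w_1\), trivializes it globally. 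Hence each stratum is \(\IA^{\sum l(w_i),\perf}\), which is perfectly cellular. In the parahoric case we instead take \(w_i\) minimal in its double coset, and the same argument applies using the fibration over the parahoric cells, whose cellularity is already known.

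For Whitney--Tateness we induct on \(t\), with \(t=1\) being \cite[Theorem 3.7]{vdH:RamifiedSatake}. Consider the projection \(p\colon \Fl^{\widetilde\times t}\to \Fl^{\widetilde\times (t-1)}\) onto the first \(t-1\) factors. It is an étale-locally trivial fibration with fiber \(\Fl_{\Gg}\). By the cellular trivialization above, it becomes trivial over each stratum \(\Fl_{\Gg,v_\bullet}\) of the base, with the fiber stratification being the Schubert stratification. The inclusion of \(\Fl_{w_\bullet}\) factors as the inclusion of \(\Fl_{w_{\le t-1}}\widetilde\times\Fl_{\Gg,w_t}\) into \(p^{-1}(\Fl_{\le w_{\le t-1}})\) composed with further inclusions. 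Compute \(\iota_{v_\bullet}^*j_{w_\bullet,*}\unit\) by restricting to \(p^{-1}(\Fl_{v_{\le t-1}})\cong \Fl_{v_{\le t-1}}\times \Fl_{\Gg}\). By smooth base change along this trivialization, which uses that \(L^+\Gg\)-equivariant objects can be twisted, the restriction becomes an exterior product of \(\iota^*j_*\unit\) on the base, which is Tate by induction, and \(\iota^*j_*\unit\) on \(\Fl_{\Gg}\), which is Tate by the case \(t=1\). Exterior products and pullbacks of Tate motives along cellular maps are Tate, which gives the claim.

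The main obstacle is justifying this base change and exterior-product step. Locally over a base stratum, the twisted product is only trivial after using the action of the torsor \(LG\to\Fl_{\Gg}\), so one must check that \(j_*\) commutes with the trivialization. I would handle this by writing the twisted product as a quotient \(X_{v}\dot v\times \Fl_{\Gg}\) using the section from the cellularity step and applying smooth (indeed isomorphism) base change. Alternatively, one can use that \(L^+\Ii\)-equivariance makes \(j_*\unit\) on \(\Fl_{\Gg}\) compatible with the twisting, as in the convolution arguments of \cite[§5]{vdH:RamifiedSatake}.
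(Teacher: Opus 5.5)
Your cellularity argument is the same as the paper's: you use the section \(x\mapsto x\dot w_1\) from \thref{group acting simply transitively on orbit} to trivialize a convolution cell over \(\Fl_{\Ii,w_1}\), after reducing to Iwahori level. The Whitney--Tate part, however, has a genuine gap, and it sits exactly at the step you flag as the main obstacle.

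Restricting to \(p^{-1}(\Fl_{v_{\le t-1}})\) is a \(*\)-pullback along a locally closed immersion. The functor \(j_{w_\bullet,*}\) does not commute with such a pullback: base change for \(j_*\) holds along smooth maps, not along locally closed immersions. So a trivialization of \(p\) over the base \emph{stratum} \(\Fl_{v_{\le t-1}}\) tells you nothing about \(\iota_{v_\bullet}^*j_{w_\bullet,*}\unit\).

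Concretely, suppose \(v_{\le t-1}\neq w_{\le t-1}\). Then \(\Fl_{w_\bullet}\) does not meet \(p^{-1}(\Fl_{v_{\le t-1}})\) at all, so ``base change along the trivialization'' would return \(0\). But \(\iota_{v_\bullet}^*j_{w_\bullet,*}\unit\) is in general nonzero. Your proposed repair via \(X_v\dot v\times\Fl_{\Gg}\) has the same defect. It is still a trivialization over the stratum, not over a neighbourhood of it in the closure of \(\Fl_{w_{\le t-1}}\), so it cannot control \(j_*\). Only the case \(v_{\le t-1}=w_{\le t-1}\) goes through as you describe.

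What is needed is a product description of \(j_{w_\bullet,*}\unit\) itself, valid smooth-locally rather than stratum by stratum. The paper obtains this from the twisted exterior product. For \(t=2\) one has
\[(\iota_v\widetilde{\times}\iota_w)^*(\iota_{v'}\widetilde{\times}\iota_{w'})_*\unit\cong(\iota_v^*\iota_{v',*}\unit)\widetilde{\boxtimes}(\iota_w^*\iota_{w',*}\unit).\]
This follows from the K\"unneth compatibilities of \(*\)-pullback and \(*\)-pushforward with \(\boxtimes\) \cite{JinYang:Kunneth}. After pulling back along the pro-smooth \(L^+\Gg\)-torsor \(LG\times\Fl_{\Gg}\to\Fl_{\Gg}\widetilde{\times}\Fl_{\Gg}\), with which \(j_*\) and \(\iota^*\) do commute, \(\widetilde{\boxtimes}\) becomes an honest exterior product.

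Both factors are Tate by \cite[Theorem 3.7]{vdH:RamifiedSatake}, applied directly at parahoric level, so your reduction of the Whitney--Tate property to \(\Ii\) is unnecessary. A twisted product of Tate objects is Tate, since \(\unit\widetilde{\boxtimes}\unit\cong\unit\).

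Your section over a stratum is genuinely useful in this last step, not in computing \(\iota^*j_*\). It identifies \(A\widetilde{\boxtimes}B\) on \(\Fl_{\Ii,v}\widetilde{\times}\Fl_{\Ii,w}\cong X_v\times\Fl_{\Ii,w}\) with an ordinary exterior product \(A\boxtimes B\). Your alternative suggestion, using equivariance as in the convolution arguments of \cite[\S5]{vdH:RamifiedSatake}, points toward the right argument. It has to be run through the torsor, though, not through trivializations over strata.
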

\begin{proof}
	For simplicity, we only handle the convolution product of two affine flag varieties.
	Let \(v,v',w,w'\in W_{\ff} \backslash \tilde{W} /W_{\ff}\) correspond to Schubert cells \(\Fl_{\Gg,w}\xrightarrow{\iota_w}\Fl_{\Gg}\) etc.
	Then \((\iota_v\widetilde{\times} \iota_w)^*(\iota_{v'}\widetilde{\times} \iota_{w'})_*\unit\) can be identified with the twisted exterior product \((\iota_v^*\iota_{v',*}\unit) \widetilde{\boxtimes} (\iota_w^*\iota_{w',*}\unit)\) (as defined in \cite[Notation 6.10]{vdH:RamifiedSatake}, but without the truncation).
	Indeed, this follows from the compatibility of both *-pullback and *-pushforward with exterior products \cite[Proposition 2.1.20]{JinYang:Kunneth}.
	Thus, since \(\iota_v^*\iota_{v',*}\unit\in \DM(\Fl_{\Gg,v})\) and \(\iota_w^*\iota_{w',*}\unit\in \DM(\Fl_{\Gg,w})\) are Tate by \cite[Theorem 3.7]{vdH:RamifiedSatake}, it suffices to show that the twisted exterior product of the monoidal units is Tate.
	But this clearly agrees with the monoidal unit of \(\Fl_{\Gg,v}\widetilde{\times} \Fl_{\Gg,w}\).
	
	It remains to show that \(\Fl_{\Gg,v}\widetilde{\times}\Fl_{\Gg,w}\) is perfectly cellular.
	By \cite[Proposition 3.6 (3)]{vdH:RamifiedSatake}, we may assume \(\Gg=\Ii\) is an Iwahori model of \(G\).
	But then we can apply \thref{group acting simply transitively on orbit} to find a subscheme \(H\subseteq L^+\Ii\), isomorphic to some perfect affine space, such that the action map induces an isomorphism \(H\times \left( \{v\} \widetilde{\times} \Fl_{\Gg,w}\right) \cong \Fl_{\Gg,v} \widetilde{\times} \Fl_{\Gg,w}\).
	Since \(\Fl_{\Gg,w} \cong \{v\} \widetilde{\times} \Fl_{\Gg,w}\) is already perfectly cellular by \cite[Proposition 3.6 (1)]{vdH:RamifiedSatake}, we are done.
\end{proof}

In particular, the category \(\DTM(\Fl_{\Gg} \widetilde{\times} \ldots \widetilde{\times} \Fl_{\Gg})\) of stratified Tate motives on the convolution affine flag variety (as in \cite[Definition 2.11]{vdH:RamifiedSatake}) admits a natural t-structure by \cite[Proposition 2.12]{vdH:RamifiedSatake}, whose heart \(\MTM(\Fl_{\Gg} \widetilde{\times} \ldots \widetilde{\times} \Fl_{\Gg})\) is called the category of mixed Tate motives.
Similarly, we can define equivariant versions \(\DTM_{L^+\Gg}(\Fl_{\Gg} \widetilde{\times} \ldots \widetilde{\times} \Fl_{\Gg})\) and \(\MTM_{L^+\Gg}(\Fl_{\Gg} \widetilde{\times} \ldots \widetilde{\times} \Fl_{\Gg})\).

In order to define suitable categories of stratified Tate motives on the moduli stack of local shtukas, it will be useful to consider natural torsors over the (convolution) affine flag variety as well.
Namely, consider the \(L^+\Gg\)-torsor over \(\Fl_{\Gg} \widetilde{\times} \ldots \widetilde{\times} \Fl_{\Gg}\) given by \(LG \overset{L^+\Gg}{\times} \ldots \overset{L^+\Gg}{\times} LG\); in particular this is just \(LG\) when \(t=1\).
We will denote te restriction of this torsor to a convolution Schubert cell or variety by \(\Fl_{\Gg,(\leq) w_\bullet}^{(\infty)} \to \Fl_{\Gg,(\leq) w_\bullet}\).
Moreover, for any \(n\geq 0\), the quotient \(L^+\Gg \to L^n\Gg\) induces an \(L^n\Gg\)-torsor \(\Fl_{\Gg,(\leq) w_\bullet}^{(n)} \to \Fl_{\Gg,(\leq) w_\bullet}\).
We then have stratifications
\begin{equation}\label{WT torsor over Fl}LG \overset{L^+\Gg}{\times} \ldots \overset{L^+\Gg}{\times} LG = \bigsqcup_{w_\bullet \in (W_\ff \backslash \tilde{W} /W_\ff)^t} \Fl_{\Gg,w_\bullet}^{(\infty)} \quad \text{ and } \quad LG \overset{L^+\Gg}{\times} \ldots \overset{L^+\Gg}{\times} \Fl_{\Gg}^{(n)} = \bigsqcup_{w_\bullet \in (W_\ff \backslash \tilde{W} /W_\ff)^t} \Fl_{\Gg,w_\bullet}^{(n)}.\end{equation}

\begin{cor}\thlabel{WT torsors over Fl}
	For finite \(n\geq 0\), the stratification \eqref{WT torsor over Fl} is Whitney--Tate with perfectly cellular strata.
\end{cor}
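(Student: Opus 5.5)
The plan is to deduce the corollary from \thref{convolution Fl WT} by showing that passing to the \(L^n\Gg\)-torsors \(\Fl_{\Gg,w_\bullet}^{(n)}\to \Fl_{\Gg,w_\bullet}\) preserves both properties: the strata remain perfectly cellular, and the Whitney--Tate property survives. The key input is that for finite \(n\), the truncated loop group \(L^n\Gg\) is a perfectly smooth connected affine group, and it is perfectly split unipotent-by-(split reductive), since \(G\) is residually split. Concretely, the kernel of \(L^n\Gg\to L^0\Gg\) is split unipotent, and the reductive quotient of \(L^0\Gg=\Gg_k^{\perf}\) is split.

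\textbf{Perfect cellularity.} First I show each \(\Fl_{\Gg,w_\bullet}^{(n)}\) is perfectly cellular. Using \thref{group acting simply transitively on orbit}, as in the proof of \thref{convolution Fl WT}, each stratum \(\Fl_{\Gg,w_\bullet}\) is a product of a perfect affine space with a smaller convolution cell. Iterating this, the stratum is isomorphic to a perfect affine space \(\IA^{d,\perf}\), with an explicit section obtained from the subgroups \(U_{\psi_i}\subseteq LG\). The \(L^n\Gg\)-torsor therefore trivializes over this cell: since the cell is an affine space, one can also argue that \(L^n\Gg\)-torsors over it are trivial, because \(L^n\Gg\) is built from \(\IG_a\)'s and a split group. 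Hence \(\Fl_{\Gg,w_\bullet}^{(n)}\cong \IA^{d,\perf}\times L^n\Gg\). Via the Bruhat decomposition of the split reductive quotient, \(L^n\Gg\) is itself stratified by products of affine spaces and split tori. This gives the required (refined) cellular structure, consistent with how \cite{RicharzScholbach:Witt} and \cite[§3]{vdH:RamifiedSatake} treat \(LG\)-torsors over Schubert cells.

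\textbf{Whitney--Tate.} Let \(\pi\colon\Fl^{(n)}\to\Fl\) be the torsor map. It is perfectly smooth, so \(\pi^*\) commutes with \(\iota^*\) and \(\iota_*\) by smooth base change. Since the strata of the torsor are exactly the preimages of the strata below, we get
\[\iota^{(n),*}_{v_\bullet}\iota^{(n)}_{w_\bullet,*}\unit \cong \pi^*\iota_{v_\bullet}^*\iota_{w_\bullet,*}\unit.\]
The right-hand side is \(\pi^*\) of a Tate motive by \thref{convolution Fl WT}. Pullback preserves Tate motives, since it sends \(\unit(k)\) to \(\unit(k)\) and is exact, so this object is Tate on \(\Fl_{\Gg,v_\bullet}^{(n)}\). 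If the refined stratification of \(L^n\Gg\) is needed, I would additionally check that \(L^n\Gg\) with its Bruhat stratification is Whitney--Tate, using the finite-dimensional flag variety.

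\textbf{Main obstacle.} I expect the main obstacle to be the triviality of the torsor over each stratum, together with the precise cellularity convention for the fibre \(L^n\Gg\), which is not itself an affine space. I would first try to trivialize the torsor directly with the explicit section \(\prod U_{\psi_i}\cdot \dot w\) lifted to \(LG\). For the convention, I would check that \cite[Definition 3.3]{RicharzScholbach:Witt} allows strata of the form affine space times split torus, or else refine along the big cell of \(L^n\Gg\).
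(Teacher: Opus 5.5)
Your Whitney--Tate step is the paper's argument: the torsor maps are perfectly smooth, so smooth base change identifies \((\iota^{(n)}_{v_\bullet})^*(\iota^{(n)}_{w_\bullet})_*\unit\) with the \(*\)-pullback of \(\iota_{v_\bullet}^*\iota_{w_\bullet,*}\unit\), which is Tate by \thref{convolution Fl WT}.

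The cellularity step, however, rests on a false claim. \thref{group acting simply transitively on orbit} is about \(L^+\Ii\)-orbits, and the product decomposition in the proof of \thref{convolution Fl WT} only holds after reducing to \(\Gg=\Ii\). For a general parahoric \(\Gg\), including the very special ones the paper uses next, a convolution Schubert cell is not an affine space, and the \(L^n\Gg\)-torsor need not be trivial over it.

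Concretely, take \(\Gg=\GL_{2,\Oo_F}\), \(t=1\) and \(\mu=(1,0)\). Then \(\Gr_{\Gg,\mu}\cong\mathbb{P}^{1,\perf}\) parametrizes lattices \(\varpi\Lambda_0\subset\Lambda\subset\Lambda_0\) of colength one. The \(L^0\Gg\)-torsor is the frame bundle of \(\Lambda/\varpi\Lambda\), which sits in an exact sequence \(0\to\Oo(1)\to\Lambda/\varpi\Lambda\to\Oo(-1)\to 0\). This sequence splits, so \(\Lambda/\varpi\Lambda\cong\Oo(1)\oplus\Oo(-1)\), which is not trivial. Hence your isomorphism \(\Fl^{(n)}_{\Gg,w_\bullet}\cong\IA^{d,\perf}\times L^n\Gg\) fails in general, for every \(n\); point counts already rule it out.

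Your fallback, that torsors over an affine space are trivial because \(L^n\Gg\) is built from \(\IG_a\)'s and a split group, is also not formal. It would need a Quillen--Suslin/Raghunathan type theorem.

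The missing idea is to refine by Iwahori orbits before trivializing, which is what the paper does. Take \(w\) a minimal length representative of its coset \(wW_{\ff}\). The \(L^+\Ii\)-orbit of \(w\) in \(\Fl_{\Gg}\) is isomorphic to \(\IA^{l(w),\perf}\) via \((y_i)\mapsto y_1\cdots y_{l(w)}w\). Over it, \((y_i)\mapsto y_1\cdots y_{l(w)}\dot w\in LG\) is a section of the torsor.

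Decomposing the first \(t-1\) factors into Iwahori orbits likewise turns the twisted product into an honest product, which reduces you to \(t=1\). Then \(\Fl^{(n)}_{\Gg,w_\bullet}\) is stratified by pieces isomorphic to \(\IA^{d,\perf}\times L^n\Gg\). Your Bruhat-decomposition argument for \(L^n\Gg\) (split unipotent kernel, split reductive quotient) then finishes the proof. With this refinement your proof coincides with the paper's.
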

\begin{proof}
	Let \(v_\bullet \leq w_\bullet\), and consider the diagram with cartesian squares
	\[\begin{tikzcd}
		\Fl_{\Gg,v_\bullet}^{(n)} \arrow[d, "f_{v_\bullet}"'] \arrow[r, "\iota_{v_\bullet}^{(n)}"] & \Fl_{\Gg,\leq w_\bullet}^{(n)} \arrow[d, "f_{\leq w_\bullet}"] & \Fl_{\Gg,w_\bullet}^{(n)} \arrow[d, "f_{w_\bullet}"] \arrow[l, "\iota_{w_\bullet}^{(n)}"'] \\
		\Fl_{\Gg,v_\bullet}\arrow[r, "\iota_{v_\bullet}"'] & \Fl_{\Gg,\leq w_\bullet} & \Fl_{\Gg,w_\bullet}, \arrow[l, "\iota_{w_\bullet}"]
	\end{tikzcd}\]
	where the vertical arrows are \(L^n\Gg\)-torsors.
	Since \(L^n\Gg\) is perfectly smooth, this gives equivalences
	\[(\iota_{v_\bullet}^{(n)})^* (\iota_{w_\bullet}^{(n)})_* f_{w_\bullet}^* \cong (\iota_{v_\bullet}^{(n)})^* f_{\leq w_\bullet}^* (\iota_{w_\bullet})_* \cong f_{v_\bullet}^* (\iota_{v_\bullet})^* (\iota_{w_\bullet})_*.\]
	Whitney--Tateness of the stratification then follows from \thref{convolution Fl WT}, since *-pullback along the vertical maps preserves stratified Tate motives.
	
	It remains to show each \(\Fl_{\Gg, w_\bullet}^{(n)}\) is perfectly cellular.
	Note that \(\Fl_{\Gg, w_\bullet}^{(n)} = \Fl_{\Gg,w_1} \widetilde{\times} \ldots \widetilde{\times} \Fl_{\Gg,w_{t-1}}\widetilde{\times} \Fl_{\Gg,w_t}^{(n)}\), so by repeatedly using \thref{group acting simply transitively on orbit} (and decomposing the first factor into its Iwahori-orbits), we may assume \(t=1\).
	Similarly, \thref{group acting simply transitively on orbit} gives a stratification of \(\Fl_{\Gg, w}\), where each stratum is the product of a perfect affine space with a fiber of \(\Fl_{\Gg, w}^{(n)} \to \Fl_{\Gg, w}\), i.e., \(L^n\Gg\).
	But \(L^n\Gg\) is an extension of its unipotent radical, which is split unipotent, and a split reductive group, which is cellular by the Bruhat decomposition, concluding the lemma.
\end{proof}

\begin{rmk}\thlabel{remarks about DATM on LG}
	\begin{enumerate}
		\item The previous corollary assumed \(n\) to be finite, in order for the strata to be perfectly of finite type.
		However, we have \(L^+\Gg=\varprojlim_{n>0} L^n\Gg\) and hence \(\Fl_{\Gg,\leq w_\bullet}^{(\infty)}=\varprojlim_{n>0} \Fl_{\Gg,\leq w_\bullet}^{(n)}\).
		Hence, since \(\DM\) is Kan extended from pfp schemes, we have 
		\[\DM(\Fl_{\Gg,\leq w_\bullet}^{(\infty)}) \cong \varinjlim_{n>0} \DM(\Fl_{\Gg,\leq w_\bullet}^{(n)})\]
		by \eqref{kancolim},
		where the transition maps are given by !-pullback.
		But these are fully faithful, since the kernels of \(L^+\Gg\to L^m\Gg\to L^n\Gg\) are split pro-unipotent.
		In particular, for \(m\geq n\), such !-pullback induces equivalences \(\DTM(\Fl_{\Gg,\leq w_\bullet}^{(n)}) \cong \DTM(\Fl_{\Gg,\leq w_\bullet}^{(m)})\).
		We will then sometimes write \(\DTM(\Fl_{\Gg,\leq w_\bullet}^{(\infty)})\) for the essential image of the fully faithful !-pullback \(\DTM(\Fl_{\Gg,\leq w_\bullet}^{(n)}) \to \DM(\Fl_{\Gg,\leq w_\bullet}^{(\infty)})\).
		We will use similar notation for the equivariant and unbounded variants.
		In particular, this yields a category \(\DTM(LG)\) of stratified Tate motives on the loop group \(LG\), where the stratification is induced by the stratification on \(\Fl_{\Gg}\); this depends on the choice of parahoric \(\Gg\).
		\item By \thref{WT torsors over Fl}, we could also apply \cite[Proposition 2.12]{vdH:RamifiedSatake} to equip each \(\DTM(\Fl_{\Gg,\leq w_\bullet}^{(n)})\) with a t-structure.
		However, for the purposes of defining mixed Tate motives on moduli of local shtukas, we will renormalize the t-structure.
		Namely, instead of the t-structure from \cite[Proposition 2.7]{vdH:RamifiedSatake}, we will use the t-structure obtained by gluing the t-structures on \(\DTM(\Fl_{\Gg,w_\bullet}^{(n)})\), where the connective part is generated by 
		\[\{f_{w_\bullet}^!\unit(m)[\sum_{i=1}^t l(w_i)] \mid m\in \IZ\},\]
		where \(f_{w_\bullet}\colon \Fl_{\Gg,w_\bullet}^{(n)}\to \Fl_{\Gg,w_\bullet}\) is the \(L^n\Gg\)-torsor.
		We will still denote the resulting heart by \(\MTM(\Fl_{\Gg,\leq w_\bullet}^{(n)})\), and call it the category of (stratified) mixed Tate motives.
		In particular, this makes the equivalence \(\DTM(\Fl_{\Gg,\leq w_\bullet}^{(n)}) \cong \DTM(\Fl_{\Gg,\leq w_\bullet}^{(m)})\) for \(m\geq n\) (given by !-pullback) t-exact, and hence yields a t-structure on \(\DTM(\Fl_{\Gg,\leq w_\bullet}^{(\infty)})\).
		We make similar considerations for the equivariant and unbounded variants.
		\item All the stratifications above are in fact anti-effective (in the sense of \cite[Definition 2.6]{CassvdHScholbach:Geometric}), but this will not be relevant for the present paper.
		\item If \(G\) is quasi-split but not residually split, we can use Artin--Tate motives \(\DATM\) instead of \(\DTM\), as in \cite[§2.4]{vdH:RamifiedSatake}.
		The above remarks then remain valid.
		These categories \(\DATM\), and their hearts \(\MATM\), depend on a choice of field extension \(k'/k\) after which \(G\) becomes residually split.
		We will not use Artin--Tate motives in the rest of this paper.
	\end{enumerate}
\end{rmk}

We now specialize to the case where \(\Gg\) is very special, and write \(\Gr_{\Gg}^{\can}=\Fl_{\Gg}\).
Then mixed Tate motives on \(\Gr_{\Gg}^{\can}\) are related to the inertia-invariants of the Langlands dual group via the motivic Satake equivalence \cite[Theorem 1.1]{vdH:RamifiedSatake}.
The following result allows us to relate motives on convolution Grassmannians to Langlands dual data as well.
We denote by \(\Hck_{\Gg}^{\can}:= (L^+\Gg\backslash \Gr_{\Gg}^{\can})^{\et}\) the local Hecke stack, and define \(\DTM(\Hck_{\Gg}):= \DTM_{L^+\Gg}(\Gr_{\Gg}^{\can})\) (which is equipped with a t-structure).

\begin{prop}\thlabel{Satake for convolution Gr}
	The !-pullback along both morphisms
	\[\Gr_{\Gg}^{\can} \widetilde{\times} \ldots \widetilde{\times} \Gr_{\Gg}^{\can} \to L^+\Gg \backslash \Gr_{\Gg}^{\can} \widetilde{\times} \ldots \widetilde{\times} \Gr_{\Gg}^{\can} \to \Hck_{\Gg}^{\can} \times \ldots \times \Hck_{\Gg}^{\can}\]
	induce equivalences on stratified mixed Tate motives (i.e., on \(\MTM\)).
\end{prop}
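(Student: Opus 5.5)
The plan is to treat the two !-pullbacks separately, using the standard fact that for stratified mixed Tate motives the forgetful functor from equivariant to non-equivariant motives is fully faithful on the heart when the acting group is split pro-unipotent-by-connected. We first identify the target. The map
\[L^+\Gg\backslash \Gr_{\Gg}^{\can}\widetilde{\times}\ldots\widetilde{\times}\Gr_{\Gg}^{\can}\to \Hck_{\Gg}^{\can}\times\ldots\times\Hck_{\Gg}^{\can}\]
should be an isomorphism of stacks. Indeed, \(L^+\Gg\backslash (LG\overset{L^+\Gg}{\times}\ldots\overset{L^+\Gg}{\times}\Gr_{\Gg}^{\can}) \cong L^+\Gg\backslash LG\overset{L^+\Gg}{\times} LG\overset{L^+\Gg}{\times}\cdots\), and the obvious "twisted product" identification sends this to \((L^+\Gg\backslash LG/L^+\Gg)\times\ldots\). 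To make this precise, we would write the map via \((g_1,\ldots,g_t)\mapsto (g_1, g_1^{-1}g_2,\ldots)\)-type coordinates and check it on the torsor \(LG\overset{L^+\Gg}{\times}\ldots\overset{L^+\Gg}{\times}LG\). If it is only an equivalence after étale sheafification, that still suffices. Granting this, the second !-pullback is an equivalence already on \(\DTM\). It is t-exact because the stratifications match: the convolution cell \(\Fl_{\Gg,w_\bullet}\) corresponds to the product of the orbits \(L^+\Gg\backslash\Gr_{\Gg,w_i}\), and the shift \(\sum l(w_i)\) matches the sum of the individual normalizations. It therefore induces an equivalence on \(\MTM\).

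For the first map, we use the definition of \(\MTM_{L^+\Gg}\) as the heart of the equivariant \(\DTM\), described via the \(L^n\Gg\)-equivariant categories on the bounded pieces \(\Gr_{\Gg,\leq w_\bullet}\) for \(n\gg0\). Here the action factors through \(L^n\Gg\) by \thref{remarks about DATM on LG}. We then proceed as in the proof of the motivic Satake equivalence for a single \(\Gr_{\Gg}^{\can}\) in \cite{vdH:RamifiedSatake}, where it is shown that \(\MTM_{L^+\Gg}(\Gr_{\Gg}^{\can})\to\MTM(\Gr_{\Gg}^{\can})\) is an equivalence. The steps are as follows.

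First, the functor is fully faithful on hearts. Since \(L^n\Gg\) is an extension of a split reductive group by a split unipotent one, the kernel of \(L^n\Gg\to\) its reductive quotient acts with contractible fibers. The connected reductive part forces \(\Hom\) and \(\mathrm{Ext}^1\) in the heart to be computed non-equivariantly, using that \(\mathrm{Ext}^1\) between equivariant objects only sees \(\pi_0\) of stabilizers, and the stabilizers \(\mathrm{Stab}_{L^+\Gg}(x)\) of points in convolution cells are connected. The cells are \(L^+\Gg\)-orbits, namely \(L^+\Gg\cdot\{w_1\}\widetilde{\times}\ldots\), and the stabilizers are intersections of conjugates of \(L^+\Gg\), hence connected (a parahoric-type group).

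Second, the functor is essentially surjective. Every object of \(\MTM(\Gr_{\Gg}^{\canonical\widetilde{\times}t})\) is an iterated extension of intermediate extensions of \(\unit(m)[\cdot]\) from strata. These are equivariant since the strata are orbits, and full faithfulness on \(\mathrm{Ext}^1\) lets us lift extensions. Before relying on this, we should double-check that the strata really are single \(L^+\Gg\)-orbits: the convolution cell is fibered over the orbit \(\Gr_{\Gg,w_1}\), with fiber \(\Gr_{\Gg,w_2}\widetilde{\times}\ldots\), and the stabilizer of \(w_1\) need not act transitively on the fiber. This is the point to be careful about. If the cells are not single orbits, we instead argue directly using \thref{group acting simply transitively on orbit} and \thref{convolution Fl WT}. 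Reducing to Iwahori level, the strata are affine-space bundles and each stratum carries a unique Tate local system up to twist, so equivariance of simple objects can be checked stratum by stratum.

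The main obstacle will be this full faithfulness on \(\mathrm{Ext}^1\) and the essential surjectivity, i.e. showing that no non-equivariant extensions occur. We would first try to deduce it from the corresponding single-Grassmannian statement of \cite{vdH:RamifiedSatake} by induction on \(t\). Using \(\Gr_{\Gg}^{\can}\widetilde{\times}X = LG\overset{L^+\Gg}{\times}X\), motives on it that are Tate along fibers are equivalent to \(L^+\Gg\)-equivariant Tate motives on \(LG\times X\), and we would peel off one factor at a time.
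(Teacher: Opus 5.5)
The step that fails is your claim that \(L^+\Gg\backslash(\Gr_{\Gg}^{\can}\widetilde{\times}\ldots\widetilde{\times}\Gr_{\Gg}^{\can})\to\Hck_{\Gg}^{\can}\times\ldots\times\Hck_{\Gg}^{\can}\) is an isomorphism. After taking the \(L^+\Gg\)-quotient, a twisted product becomes a fiber product over \(BL^+\Gg\), not a product. For \(t=2\) one has \(L^+\Gg\backslash(\Gr_{\Gg}^{\can}\widetilde{\times}\Gr_{\Gg}^{\can})\cong\Hck_{\Gg}^{\can}\times_{BL^+\Gg}\Hck_{\Gg}^{\can}\). The map \([g,x]\mapsto([g],[x])\) to \(\Hck_{\Gg}^{\can}\times\Hck_{\Gg}^{\can}\) is then a base change of the diagonal of \(BL^+\Gg\), so its fibers are \(L^+\Gg\) (in general \((L^+\Gg)^{t-1}\)). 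Your change of coordinates only identifies the twisted product with \(\Gr_{\Gg}^{\can}\times\Gr_{\Gg}^{\can}\) carrying the \emph{diagonal} action. That gives \(L^+\Gg\backslash(\Gr_{\Gg}^{\can}\times\Gr_{\Gg}^{\can})\), which again is not \(\Hck_{\Gg}^{\can}\times\Hck_{\Gg}^{\can}\). So on \(\MTM\) this pullback is a priori only fully faithful, because its fibers are connected. Its essential surjectivity says: a mixed Tate motive on the twisted product that is equivariant for the left \(L^+\Gg\)-action is automatically equivariant for \(L^+\Gg\) acting on the second factor alone. This decoupling is the real content of the proposition, and your proposal does not address it.

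Your argument for the first map does not close either. You rightly notice that convolution cells are not single \(L^+\Gg\)-orbits. Intermediate extensions of Tate objects from the cells are still equivariant, since the cells are \(L^+\Gg\)-stable, so everything hinges on extensions. But ``\(\mathrm{Ext}^1\) only sees \(\pi_0\) of stabilizers'' is not a valid principle. For a connected group, the map from equivariant to non-equivariant \(\mathrm{Ext}^1\) in the heart is injective but in general not surjective. For example, let \(\mathbb{G}_m\) act simply transitively on itself: all stabilizers are trivial, yet the Kummer extension of \(\unit\) by \(\unit(1)\) attached to the coordinate is Tate and not equivariant. So the ``main obstacle'' you flag is exactly the content, and it stays open.

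The missing idea is to reduce everything to the single-Grassmannian statement \cite[Proposition 6.21]{vdH:RamifiedSatake}. Work on \(LG\times\Gr_{\Gg}^{\can}\) (for \(t=2\)) with the maps \(p(k,g,x)=(g,x)\), \(a_1(k,g,x)=(gk^{-1},x)\), \(a_2(k,g,x)=(gk^{-1},kx)\) and \(a_3(k,g,x)=(g,kx)\).
Inside \(\MTM(LG\times\Gr_{\Gg}^{\can})\):
\(\MTM(\Gr_{\Gg}^{\can}\widetilde{\times}\Gr_{\Gg}^{\can})\) is cut out by \(a_2^!\Ff\cong p^!\Ff\);
\(\MTM(\Gr_{\Gg}^{\can}\times\Gr_{\Gg}^{\can})\) is cut out by \(a_1^!\Ff\cong p^!\Ff\);
the single-Grassmannian result gives \(a_3^!\Ff\cong p^!\Ff\) for every \(\Ff\).
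Since \(a_1=a_3\circ(\id\times a_2)\circ\nabla\) with \(\nabla(k,y)=(k^{-1},k,y)\), \(a_2\)-equivariance implies \(a_1\)-equivariance. Hence every object of \(\MTM(\Gr_{\Gg}^{\can}\widetilde{\times}\Gr_{\Gg}^{\can})\) is pulled back from \(\Gr_{\Gg}^{\can}\times\Hck_{\Gg}^{\can}\). One more application of the single-factor result reaches \(\Hck_{\Gg}^{\can}\times\Hck_{\Gg}^{\can}\). Both pullbacks in the statement are fully faithful on \(\MTM\) by \cite[Lemmas 2.2.12 and 3.2.12]{RicharzScholbach:Intersection}, and their composite is now essentially surjective, so each is an equivalence. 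Your closing idea of passing to \(LG\times X\) and peeling off factors points this way, but it lacks precisely this decoupling step.
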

If \(G\) is not necessarily residually split (but still quasi-split), the analogous result for Artin Tate motives still holds, by the same proof (or by using Galois-descent).
\begin{proof}
	For simplicity, we will again only consider the case \(t=2\).
	First, note that both maps preserve the stratifications, so that !-pullback along them preserves stratified Tate motives.
	By definition of the t-structures, this pullback is also t-exact, so that we get induced functors between mixed Tate motives.
	Combining \cite[(2.2.8), and Lemmas 2.2.12 and 3.2.12]{RicharzScholbach:Intersection}, we see that both functors are fully faithful when restricted to \(\MTM\).
	
	Now, consider the following diagram, consisting of \(L^+\Gg\)-torsors:
	\begin{equation}\label{diagram for convolution grassmannians}\begin{tikzcd}
			&LG \times \Gr_{\Gg}^{\can} \arrow[dl] \arrow[dr] &\\
			\Gr_{\Gg}^{\can} \widetilde{\times} \Gr_{\Gg}^{\can} \arrow[dr] && \Gr_{\Gg}^{\can} \times \Gr_{\Gg}^{\can} \arrow[dl]\\
			&\Gr_{\Gg}^{\can} \times \Hck_{\Gg}^{\can}.&
	\end{tikzcd}\end{equation}
	As above, !-pullback along these morphisms preserves mixed Tate motives and is fully faithful when restricted to \(\MTM\) (recall that we defined \(\MTM(LG)\) in \thref{remarks about DATM on LG}).
	(For the rest of the proof, we may safely restrict to finite unions of Schubert varieties, in which case we may replace the \(L^+\Gg\)-torsors by torsors over finite type quotients with split pro-unipotent kernel.
	We omit this to simplify the notation.)
	Consider also the following actions and projection map:
	\[a_1\colon L^+\Gg\times LG \times \Gr_{\Gg}^{\can} \to LG \times \Gr_{\Gg}^{\can}\colon (g,h,x)\mapsto (hg^{-1},x),\]
	\[a_2\colon L^+\Gg\times LG \times \Gr_{\Gg}^{\can} \to LG \times \Gr_{\Gg}^{\can}\colon (g,h,x)\mapsto (hg^{-1},gx),\]
	\[a_3\colon L^+\Gg\times LG \times \Gr_{\Gg}^{\can} \to LG \times \Gr_{\Gg}^{\can}\colon (g,h,x)\mapsto (h,gx),\]
	\[p\colon L^+\Gg\times LG \times \Gr_{\Gg}^{\can} \to LG \times \Gr_{\Gg}^{\can}\colon (g,h,x)\mapsto (h,x).\]
	Recall from \cite[Proposition 3.2.20]{RicharzScholbach:Intersection} that the full subcategory \(\MTM(\Gr_{\Gg}^{\can} \times \Gr_{\Gg}^{\can})\subseteq \MTM(LG\times \Gr_{\Gg}^{\can})\) consists of those mixed Tate motives \(\Ff\) such that \(p^!\Ff\cong a_1^!\Ff\).
	Similarly, \(\MTM(\Gr_{\Gg}^{\can} \widetilde{\times} \Gr_{\Gg}^{\can})\subseteq \MTM(LG\times \Gr_{\Gg}^{\can})\) consists exactly of those objects \(\Ff\) for which \(p^!\Ff\cong a_2^!\Ff\).
	Moreover, by \cite[Proposition 6.21]{vdH:RamifiedSatake}, every \(\Ff\in \MTM(LG\times \Gr_{\Gg}^{\can})\) satisfies \(p^!\Ff\cong a_3^!\Ff\).
	
	We claim that pullback along the two lower arrows of \eqref{diagram for convolution grassmannians} induce equivalences on \(\MTM\); for the lower right arrow this follows from \cite[Proposition 6.21]{vdH:RamifiedSatake}.
	Let \(\Ff\in \MTM(\Gr_{\Gg}^{\can} \widetilde{\times} \Gr_{\Gg}^{\can}) \subseteq \MTM(LG\times \Gr_{\Gg}^{\can})\),
	and consider the morphisms
	\[L^+\Gg\times X \xrightarrow{\nabla\colon (g,x)\mapsto (g^{-1},g,x)} L^+\Gg\times L^+\Gg\times X \xrightarrow{\id\times a_2} L^+\Gg\times X\xrightarrow{a_3} X,\]
	where we have denoted \(X:=LG\times \Gr_{\Gg}^{\can}\).
	Then the composition agrees with \(a_1\).
	But then 
	\[a_1^!\Ff\cong \nabla^!\circ (\id \times a_2)^! \circ a_3^!(\Ff)\cong \nabla^!\circ (\id \times a_2)^! \circ p^!(\Ff) \cong a_2^!\Ff\cong p^!\Ff.\]
	Thus \(\Ff\in \MTM(\Gr_{\Gg}^{\can}\times \Hck_{\Gg}^{\can})\), so that the lower left arrow in \eqref{diagram for convolution grassmannians} induces an essentially surjective !-pullback on \(\MATM\), which is then also an equivalence, .
	We can then conclude the proposition by applying \cite[Proposition 6.21]{vdH:RamifiedSatake}.
\end{proof}

Combining this with the motivic Satake equivalence \cite[Theorem 1.1]{vdH:RamifiedSatake}, and since the Hecke stack is compatible with products, we deduce
\begin{equation}\label{eq: Satake for convolution Gr}\MTM_{L^+\Gg}(\Gr_{\Gg}^{\can,\widetilde{\times}t}) \cong \bigotimes_{\MTM(\Spec k)}^t \Rep_{\widehat{G}^I}(\MTM(\Spec k))\cong \Rep_{(\widehat{G}^I)^t}(\MTM(\Spec k)).\end{equation}
Although this makes \(\MTM_{L^+\Gg}(\Gr_{\Gg}^{\can,\widetilde{\times} t})\) symmetric monoidal via transport of structures, it is not clear how to describe this monoidal structure directly.

\subsection{Unbounded splitting affine Grassmannians}

In the previous sections, we considered splitting versions of local models, whose special fiber was (in nice cases) the convolution product of minuscule Schubert cells.
One can naturally extend this notion, and define \emph{unbounded} splitting affine Grassmannians.
These will be closely related to the convolution affine Grassmannians studied above, but in order to connect the geometric and representation-theoretic aspects, we have to be careful about their connected components.
For the rest of this section, we will assume \(G/F\) is essentially unramified (and hence use \thref{notation essentially unramified}).
We also omit the assumption that \(G\) is residually split (although we still assume it is quasi-split); instead we will immediately consider our geometry to live over \(\overline{k}\). 
In particular, \(\Res_{k_i/k} \Gr_{\Hh_i}^{\can}\) will implicitly denote the base change to \(\overline{k}\) of the restriction of scalars of \(\Gr_{\Hh_i}^{\can}\) along \(k_i/k\); we hope this will not cause confusion. 

Recall that the connected components of affine Grassmannians agree with the Galois-coinvariants of the Borovoi fundamental groups \cite[Proposition 1.21]{Zhu:Affine}.
Consequently, the connected components of convolution affine Grassmannians are identified with products of those.

\begin{dfn}\thlabel{definition splitting grassmannians}
	For each \(i\), let \(F_i^{\unr}\subseteq F_i\) be as in \thref{notation essentially unramified}, and fix an ordering on the set of embeddings \(F_i^{\unr} \to \overline{F}\) (of which there are \(e_i\)).
	According to this choice, consider \(\prod_i \Res_{k_i/k}\left(\Gr_{\Hh_i}^{\can,\widetilde{\times} e_i}\right)\), whose group of geometric connected components is canonically isomorphic to \(\prod_i \pi_1(H_i)^{e_if_i} \cong \pi_1(G_{\adj})\).
	
	The \emph{splitting affine Grassmannian} is the ind-(perfect projective) ind-\(\overline{k}\)-scheme given by the fiber product
	\[\Gr_{\Gg}^{\spl} := \pi_1(G) \times_{\pi_1(G_{\adj})}\prod_i \Res_{k_i/k}\left(\Gr_{\Hh_i}^{\can,\widetilde{\times} e_i}\right).\]
	It admits a natural \(L^+\Gg\)-action, and the \emph{splitting Hecke stack} is defined as the étale quotient
	\[\Hck_{\Gg}^{\spl}:=(L^+\Gg\backslash \Gr_{\Gg}^{\spl})^{\et}.\]
\end{dfn}

\begin{rmk}
	\begin{enumerate}
		\item The identification \(\prod_i \pi_1(H_i)^{e_if_i} \cong \pi_1(G_{\adj})\) is not compatible with the Galois actions (cf.~\cite[§5]{Borel:Automorphic}).
		For this reason, we only define \(\Gr_{\Gg}^{\spl}\) over \(\overline{k}\), rather than \(k\).
		When making the connections to Shimura varieties later on, we will only need a certain subscheme of \(\Gr_{\Gg}^{\spl}\), which is actually naturally defined over \(\Spec k\), cf.~\cite[§5.4]{Levin:Local}.
		Nevertheless, we have given the definition above, since it places the results of Sections \ref{Sec:Splitting Gr}-\ref{Sec:Motivic corr} in a natural more general context.
		\item By construction, there are canonical bijections 
		\[\pi_0(\Hck_{\Gg}^{\spl}) \cong \pi_0(\Gr_{\Gg}^{\spl}) \cong \pi_1(G).\]
		\item There is a natural map \(\Gr_{\Gg}^{\can} \to \Gr_{\Gg_{\adj}}^{\can} \cong \prod_i \Res_{k_i/k}\left(\Gr_{\Hh_i}^{\can}\right)\), which induces an isomorphism on each connected component of \(\Gr_{\Gg}^{\can}\) by \cite[§3]{HainesRicharz:Smoothness}.
		This yields maps
		\[\Gr_{\Gg}^{\spl} \to \Gr_{\Gg}^{\can} \qquad \text{ and } \qquad \Hck_{\Gg}^{\spl} \to \Hck_{\Gg}^{\can}.\]
		Since \(\pi_1(G)\to \pi_1(G)_I\) is surjective, these maps are surjective as well. 
	\end{enumerate} 
\end{rmk}

The convolution Grassmannians \(\Gr_{\Hh_i}^{\can,\widetilde{\times} e_i}\) admit a stratification by convolution products of Schubert cells in \(\Gr_{\Hh_i}^{\can}\), inducing a stratification of \[\prod_i \Res_{k_i/k}\left(\Gr_{\Hh_i}^{\can,\widetilde{\times} e_i} \right) \times_{\Spec k} \Spec \overline{k} \cong \prod_i \prod_{f_i} \Gr_{\Hh_i}^{\can,\widetilde{\times} e_i}.\]

\begin{prop}\thlabel{strata in splitting Gr}
	Pulling back the above stratification to \(\Gr_{\Gg}^{\spl}\) yields an \(L^+\Gg\)-stable stratification of \(\Gr_{\Gg}^{\spl}\), naturally indexed by \(X_*(T)^+\).
\end{prop}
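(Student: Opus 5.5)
Our approach is to make the stratification explicit on the product of convolution Grassmannians, then pass to the fiber product and identify the index set with \(X_*(T)^+\).

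\textbf{Step 1: indexing on the product.} After base change to \(\overline{k}\), the product \(\prod_i \prod_{f_i} \Gr_{\Hh_i}^{\can,\widetilde{\times} e_i}\) is stratified by convolution Schubert cells. Since each \(\Hh_i\) is hyperspecial, the cells of \(\Gr_{\Hh_i}^{\can}\) are indexed by \(X_*(T_{H_i})^+\). Hence the strata of the product are indexed by \(\prod_i (X_*(T_{H_i})^+)^{e_if_i}\). We would then use the construction of \(T_i\subseteq \Res_{F_i/F}H_i\) from \cite[§4.2]{HainesRicharz:TestWeil} to identify \(X_*(T_i)\cong \bigoplus_{\Hom(F_i,\overline{F})} X_*(T_{H_i})\), compatibly with dominance. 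Here we fix the bijection between embeddings \(F_i\to\overline{F}\) and pairs (embedding of \(F_i^{\unr}\), ordered factor) via the chosen orderings. This gives \(\prod_i (X_*(T_{H_i})^+)^{e_if_i}\cong X_*(T_{\adj})^+\).

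\textbf{Step 2: passing to \(\Gr_{\Gg}^{\spl}\).} Each stratum lies in a single connected component, because the strata are connected. The component is determined by the image of the index in \(\pi_1(G_{\adj}) = X_*(T_{\adj})/\IZ\Phi^\vee\). Pulling back along the projection \(\Gr_{\Gg}^{\spl}\to \prod_i \Res_{k_i/k}(\Gr_{\Hh_i}^{\can,\widetilde{\times} e_i})\), each stratum \(S_{\lambda_{\adj}}\) therefore decomposes as a disjoint union of copies indexed by the fiber of \(\pi_1(G)\to \pi_1(G_{\adj})\) over the class of \(\lambda_{\adj}\). So the strata of \(\Gr_{\Gg}^{\spl}\) are indexed by pairs \((\lambda_{\adj},c)\in X_*(T_{\adj})^+\times \pi_1(G)\) with matching images in \(\pi_1(G_{\adj})\).

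It remains to identify this set with \(X_*(T)^+\) via \(\lambda\mapsto (\lambda_{\adj},[\lambda])\). Since \(\pi_1(G)=X_*(T)/\IZ\Phi^\vee\) and \(\pi_1(G_{\adj})=X_*(T_{\adj})/\IZ\Phi^\vee\), the square formed by \(X_*(T)\to X_*(T_{\adj})\) and the two quotient maps is cartesian. Indeed, given compatible \((\lambda_{\adj},c)\), lift \(c\) to some \(\lambda'\in X_*(T)\); then \(\lambda_{\adj}-\lambda'_{\adj}\in \IZ\Phi^\vee\) lifts uniquely to \(\IZ\Phi^\vee\subseteq X_*(T)\), which gives surjectivity. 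For injectivity, the kernel of \(X_*(T)\to X_*(T_{\adj})\) is \(X_*(Z)\), and it injects into \(\pi_1(G)\) because \(X_*(Z)\cap \IZ\Phi^\vee=0\). Finally, dominance is detected on the image in \(X_*(T_{\adj})\), so the bijection restricts to dominant elements.

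\textbf{Step 3: \(L^+\Gg\)-stability.} The \(L^+\Gg\)-action on \(\Gr_{\Gg}^{\spl}\) is through its image in \(\prod_i \Res L^+\Hh_i\), acting on the first convolution factor. This action preserves convolution Schubert cells. It also preserves connected components, since \(L^+\Gg\) is connected, so it preserves the refined strata.

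The main obstacle is Step 1. There one must make the identification \(X_*(T)\to X_*(T_{\adj})\cong \prod_i X_*(T_{H_i})^{e_if_i}\) precise and compatible with the chosen orderings, and check that convolution cells for the hyperspecial \(\Hh_i\) correspond to dominant coweights of the absolute torus rather than to inertia coinvariants. We would handle this by unwinding \cite[§4.2]{HainesRicharz:TestWeil} and \thref{tori and central isogenies}.
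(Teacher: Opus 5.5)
Your proposal is correct and follows essentially the same route as the paper: you index the strata of \(\prod_i \prod_{f_i}\Gr_{\Hh_i}^{\can,\widetilde{\times} e_i}\) by \(\prod_i (X_*(T_{H_i})^+)^{e_if_i}\), use the description of connected components of the fiber product, and check that the squares comparing \(X_*(T)^+\to X_*(T)\to \pi_1(G)\) with their adjoint counterparts are cartesian. Your explicit verification of the right-hand square, via \(X_*(Z)\cap \IZ\Phi^\vee=0\) and lifting along \(\IZ\Phi^\vee\), just unpacks the paper's remark that the kernels of the horizontal maps are identified because they are generated by the same coroots.
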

\begin{proof}
	This follows since the stratification of \(\prod_i \prod_{f_i} \Gr_{\Hh_i}^{\can,\widetilde{\times} e_i}\) is indexed by \(\prod_i (X_*(T_{H_i})^+)^{e_if_i}\), from the description of the connected components of all objects involved, and from the fact that the following diagram has cartesian squares:
	\begin{equation}\label{equation indices}\begin{tikzcd}
		X_*(T)^+ \arrow[d] \arrow[r] & X_*(T) \arrow[d] \arrow[r] & \pi_1(G)\arrow[d] \\
		\prod_i (X_*(T_{H_i})^+)^{e_if_i} \arrow[r] & \prod_i X_*(T_{H_i})^{e_if_i} \arrow[r] & \prod_i \pi_1(H_i)^{e_if_i}.
	\end{tikzcd}\end{equation}
	Indeed, the left square is cartesian since a cocharacter of \(T\) is dominant exactly when it is dominant as a cocharacter of \(G_{\adj}\). 
	Moreover the right square is cartesian since the middle vertical arrow induces an isomorphism between the kernels of both horizontal arrows, being generated by the same coroots.
\end{proof}

\begin{dfn}\thlabel{defi splitting Schubert}
	\begin{enumerate}
		\item For \(\mu\in X_*(T)^+\), we denote the corresponding stratum in \(\Gr_{\Gg}^{\spl}\) obtained from the previous proposition by \(\Gr_{\Gg,\mu}^{\spl}\), and its closure by \(\Gr_{\Gg,\leq \mu}^{\spl}\).
		These are called \emph{splitting Schubert cells}, resp.~\emph{splitting Schubert varieties}.
		\item Each \(\Gr_{\Gg,\leq \mu}^{\spl}\) admits a natural \(L^+\Gg\)-action.
		In particular, for a tuple \(\mu_\bullet = (\mu_1,\ldots,\mu_t)\in (X_*(T)^+)^t\), we define the convolution products
		\[\Gr_{\Gg,\mu\bullet}^{\spl} := \Gr_{\Gg,\mu_1}^{\spl} \widetilde{\times} \ldots \widetilde{\times} \Gr_{\Gg,\mu_t}^{\spl} \quad \text{ and } \Gr_{\Gg,\leq \mu\bullet}^{\spl} := \Gr_{\Gg,\leq \mu_1}^{\spl} \widetilde{\times} \ldots \widetilde{\times} \Gr_{\Gg,\leq \mu_t}^{\spl}.\]
		More generally, we can also take the convolution product between the strata in \(\Gr_{\Gg}^{\can}\) and \(\Gr_{\Gg}^{\spl}\).
		All these objects admit left \(L^+\Gg\)-actions, and the quotients, denoted \(\Hck_{\Gg,\mu_\bullet}^{\spl}:=(L^+\Gg \backslash \Gr_{\Gg,\leq \mu_\bullet}^{\spl})^{\et}\) etc.~will be called \emph{convolution Hecke stacks}.
	\end{enumerate}
\end{dfn}

\begin{rmk}\thlabel{remarks on splitting grassmannians}
	\begin{enumerate}
		\item Throughout the rest of the paper, we will often compare the geometry of canonical affine Grassmannians (whose Schubert stratification is indexed by \(X_*(T)_I^+\)) and splitting affine Grassmannians (whose Schubert stratification is indexed by \(X_*(T)^+\)). 
		Recall from §\ref{Subsec:Notation} that for \(\mu\in X_*(T)\), we will denote by \(\mu_I\) its image in \(X_*(T)_I\).
		Moreover, we will always denote elements in \(X_*(T)_I\) with a subscript \(I\), even if we do not fix a lift in \(X_*(T)\).
		\item The canonical morphism \(\Gr_{\Gg}^{\spl}\to \Gr_{\Gg}^{\can}\) restricts to a surjective birational map \(\Gr_{\Gg,\leq \mu}^{\spl}\to \Gr_{\Gg,\leq \mu_I}^{\can}\).
		However, it does not in general restrict to a map \(\Gr_{\Gg,\mu}^{\spl}\to \Gr_{\Gg,\mu_I}^{\can}\).
		\item The obvious closure relations hold for the stratification \(\Gr_{\Gg}^{\spl} = \bigsqcup_{\mu\in X_*(T)^+} \Gr_{\Gg,\mu}^{\spl}\).
		\item In contrast to the canonical case, there is in general no natural map \(\Gr_{\Gg,\leq \mu_\bullet}^{\spl} \to \Gr_{\Gg,\leq \sum_i \mu_i}^{\spl}\).
		However, there is still a natural convolution map \[m_{\mu_\bullet} \colon \Gr_{\Gg,\leq \mu_\bullet}^{\spl} \to \Gr_{\Gg}^{\can}.\]
	\end{enumerate}
\end{rmk}

The following corollary is immediate from \thref{convolution Fl WT} and the definition of \(\Gr_{\Gg}^{\spl}\).

\begin{cor}
	The stratification
	\(\Gr_{\Gg}^{\spl} = \bigsqcup_{\mu\in X_*(T)^+} \Gr_{\Gg,\mu}^{\spl}\)
	is Whitney--Tate with perfectly cellular strata.
\end{cor}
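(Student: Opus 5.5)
The plan is to reduce everything to \thref{convolution Fl WT}, applied to the groups \(H_i\), and then transport the conclusion along the two operations that build \(\Gr_{\Gg}^{\spl}\) from convolution Grassmannians: base change from \(k\) to \(\overline{k}\) (which splits the Weil restrictions) and the fiber product with \(\pi_1(G)\) over \(\pi_1(G_{\adj})\).

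First, \(\prod_i \Res_{k_i/k}(\Gr_{\Hh_i}^{\can,\widetilde{\times} e_i})\times_{\Spec k}\Spec\overline{k}\) is isomorphic to \(\prod_i\prod_{f_i}\Gr_{\Hh_i}^{\can,\widetilde{\times} e_i}\). Each \(H_i\) is unramified, so it becomes residually split over \(\overline{k}\). Hence \thref{convolution Fl WT} applies to each factor: its stratification by convolution Schubert cells is Whitney--Tate with perfectly cellular strata.

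Next, I pass to the finite product. A product of perfectly cellular schemes is perfectly cellular, since products of perfect affine spaces are perfect affine spaces and the filtrations multiply. Whitney--Tateness is also preserved. For strata \(\iota_v\) and \(\iota_w\) of a product, \((\iota_v)^*(\iota_w)_*\unit\) decomposes as an exterior product of the corresponding motives on the factors. This uses the Künneth compatibility of \(*\)-pullback and \(*\)-pushforward with exterior products \cite[Proposition 2.1.20]{JinYang:Kunneth}, exactly as in the proof of \thref{convolution Fl WT}. Exterior products of Tate motives are Tate.

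Finally, I handle the fiber product. By \thref{strata in splitting Gr} and its cartesian diagram \eqref{equation indices}, the map
\[\Gr_{\Gg}^{\spl}=\pi_1(G)\times_{\pi_1(G_{\adj})}\prod_i\prod_{f_i}\Gr_{\Hh_i}^{\can,\widetilde{\times}e_i}\to \prod_i\prod_{f_i}\Gr_{\Hh_i}^{\can,\widetilde{\times}e_i}\]
restricts, on each connected component, to an isomorphism onto a connected component. Here the components are indexed by the image of \(\pi_1(G)\to\pi_1(G_{\adj})\), with finite kernel. Each stratum \(\Gr_{\Gg,\mu}^{\spl}\) therefore maps isomorphically onto a stratum of the target, so perfect cellularity transfers directly. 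Since the map is a disjoint union of open-and-closed isomorphisms onto components, the Whitney--Tate condition is local on connected components and also transfers.

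The only point requiring care is this last step. One must check that the stratification pulled back in \thref{strata in splitting Gr} really restricts to the standard stratification on each component, so that no stratum is glued across components. This is exactly what the cartesianness of \eqref{equation indices} guarantees, so I expect no genuine obstacle.
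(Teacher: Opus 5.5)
Your proposal is correct and follows the paper's approach. The paper simply declares the corollary immediate from \thref{convolution Fl WT} and the definition of \(\Gr_{\Gg}^{\spl}\); you have made explicit the passage to \(\overline{k}\), the Künneth step for finite products, and the componentwise identification via \eqref{equation indices}.

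One harmless slip: \(\pi_1(G)\to\pi_1(G_{\adj})\) need not have finite kernel (for \(\GL_n\) it is \(\IZ\to\IZ/n\IZ\)), and the components of \(\Gr_{\Gg}^{\spl}\) are indexed by \(\pi_1(G)\) itself, not by its image. Your argument never uses either claim.
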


Consequently, we can define \(\DTM(\Gr_{\Gg}^{\spl})\) and \(\DTM(\Hck_{\Gg}^{\spl}):=\DTM_{L^+\Gg}(\Gr_{\Gg}^{\spl})\), and equip them with t-structures with hearts \(\MTM\), similar to the case of convolution Grassmannians (\thref{remarks about DATM on LG}).
The connection to representation theory via the Satake equivalence can then be stated as follows.

\begin{prop}\thlabel{Satake for splitting models}
	There are natural equivalences fitting into a commutative diagram
	\[\begin{tikzcd}
		\Rep_{\widehat{G}}(\MTM(\Spec \overline{k})) \arrow[d] \arrow[r, "\cong"', "\Sat"] & \MTM(\Hck_{\Gg}^{\spl}) \arrow[d]\\
		\Rep_{\widehat{G}^I}(\MTM(\Spec \overline{k})) \arrow[r, "\cong", "\Sat"'] & \MTM(\Hck_{\Gg}^{\can}),
	\end{tikzcd}\]
	where the vertical arrows are given by restriction of representations and proper pushforward along \(\Hck_{\Gg}^{\spl}\to \Hck_{\Gg}^{\can}\) respectively.
\end{prop}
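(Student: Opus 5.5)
The plan is to build the top equivalence out of the convolution Satake equivalence \eqref{eq: Satake for convolution Gr}, and then check compatibility with the bottom row by computing the pushforward along the convolution map.

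\textbf{Step 1: reduce to the adjoint factors.} Over \(\overline{k}\) we have \(\prod_i \Res_{k_i/k}\Gr_{\Hh_i}^{\can,\widetilde{\times}e_i}\cong \prod_i\prod_{f_i}\Gr_{\Hh_i}^{\can,\widetilde{\times}e_i}\). Each \(\Hh_i\) is hyperspecial, so the inertia acts trivially on \(\widehat{H}_i\). Hence \eqref{eq: Satake for convolution Gr}, combined with \thref{Satake for convolution Gr}, identifies \(L^+\Gg_{\adj}\)-equivariant mixed Tate motives on this product with
\[\Rep_{\prod_i \widehat{H}_i^{e_if_i}}(\MTM(\Spec\overline{k})),\]
and \(\prod_i \widehat{H}_i^{e_if_i}\) is the simply connected cover \(\widehat{G}_{\adj}^{\vee}=\widehat{G_{\adj}}\) of \(\widehat{G}_{\mathrm{der}}\). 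For the equivariance, I would use that \(L^+\Gg\to L^+\Gg_{\adj}\) has the same orbits on each component, so the equivariant categories agree by the argument of \thref{Satake for convolution Gr}.

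\textbf{Step 2: pass to \(\Gr_{\Gg}^{\spl}\).} Since \(\Gr_{\Gg}^{\spl}\) is a fiber product over \(\pi_1(G_{\adj})\) with \(\pi_1(G)\), \(\MTM(\Hck_{\Gg}^{\spl})\) is a \(\pi_1(G)\)-graded version of the previous category. Each \(\pi_1(G)\)-component is identified with the corresponding \(\pi_1(G_{\adj})\)-component. On the dual side, \(\Rep(\widehat{G})\) is the fiber product of \(\Rep(\widehat{G_{\adj}})\) (graded by its center's characters, i.e.\ \(\pi_1(G_{\adj})\)) with \(\pi_1(G)\)-graded vector spaces, compatibly with \(Z(\widehat{G})\)-characters. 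This is exactly the cartesian square \eqref{equation indices} on the level of weights and central characters. Under this identification, simple objects on both sides are indexed by \(X_*(T)^+\) via \thref{strata in splitting Gr}, giving \(\Sat\) for the top row. The abelian and semisimple-with-Tate-twists structure needed is the one furnished by \cite[Theorem 1.1]{vdH:RamifiedSatake}.

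\textbf{Step 3: commutativity.} The map \(\Gr_{\Gg}^{\spl}\to\Gr_{\Gg}^{\can}\) is, factorwise, the convolution map \(m\colon \Gr_{\Hh_i}^{\widetilde{\times}e_i}\to \Gr_{\Hh_i}\) composed with the isomorphism of components \(\Gr_{\Gg}^{\can}\to \Gr_{\Gg_{\adj}}^{\can}\) on each component, where the twisted Grassmannian of \(\Res_{F_i/F}H_i\) is identified with \(\Gr_{\Hh_i}\). There are two things to check.

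First, \(m_*\) preserves \(\MTM\). This holds because convolution is stratified semismall and preserves Tate motives, as in the motivic Satake construction of \cite{vdH:RamifiedSatake}.

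Second, the Satake equivalence intertwines \(m_*\) with the tensor product. Here the subtlety is that \(\Gr_{\Hh_i}\) regarded as the twisted Grassmannian for \(\Res_{F_i/F}H_i\) has dual group \(\widehat{H}_i^{e_i}\) with \(I\)-invariants the diagonal \(\widehat{H}_i\). The monoidality of the motivic Satake functor then says that \(m_*\) of an external twisted product \(\Ff_1\widetilde{\boxtimes}\cdots\widetilde{\boxtimes}\Ff_{e_i}\) corresponds to \(V_1\otimes\cdots\otimes V_{e_i}\). This is exactly restriction from \(\widehat{H}_i^{e_i}\) to the diagonal \(\widehat{H}_i=(\widehat{H}_i^{e_i})^I\).

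Naturality of the resulting isomorphism (functoriality in morphisms of representations) follows from the fact that the Satake equivalence for \(\Hh_i\) is monoidal, not just on objects.

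\textbf{Main obstacle.} The main obstacle is Step 3's identification of the restriction functor \(\Rep(\widehat{G})\to\Rep(\widehat{G}^I)\) with the diagonal restriction compatibly with the chosen pinnings, and with the bottom motivic Satake equivalence for the ramified group \(G\). That latter equivalence is defined via \(\Gg\) directly, not via \(\Hh_i\). I would first reduce to \(G=\Res_{F'/F}H\) adjoint by Step 2. Then I would compare the ramified Satake of \cite{vdH:RamifiedSatake} for \(\Res_{F'/F}H\) with the unramified one for \(H\) via the isomorphism \(\Gr_{\Gg}\cong\Gr_{\Hh}\), checking that the fiber functors (total cohomology), and hence the Tannakian groups, match with \(\widehat{G}^I\cong\widehat{H}\) diagonally embedded.
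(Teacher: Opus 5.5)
Your proposal is correct and follows essentially the same route as the paper: convolution Satake on the adjoint factors, with \(m_*\) corresponding to restriction to the inertia-invariant diagonal \(\prod_i\widehat{H}_i^{f_i}\subseteq\prod_i\widehat{H}_i^{e_if_i}\) (by monoidality); then refining the \(\pi_1(G_{\adj})\)-grading by connected components to a \(\pi_1(G)\)-grading, which gives \(\widehat{G}\cong\prod_i\widehat{H}_i^{e_if_i}\times^{D(\pi_1(G_{\adj}))}D(\pi_1(G))\); and finally passing to equivariant objects via \thref{Satake for convolution Gr}. The paper handles your ``main obstacle'' (matching the bottom row with \(\Rep_{\widehat{G}^I}\)) with a one-line ``similar argument'' applied on the canonical side. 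One small caveat: your appeal to simple objects and semisimplicity is unnecessary, and is not available with the \(\IZ[\frac{1}{p}]\)-coefficients in force in that section, since the grading-refinement argument already yields the equivalence.
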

\begin{proof}
	First, the fact that pushforward along \(\Hck_{\Gg}^{\spl}\to \Hck_{\Gg}^{\can}\) preserves \(\MTM\) follows from the similar assertion for convolution Grassmannians \cite[Proposition 5.4]{vdH:RamifiedSatake}.
	
	\thref{Satake for convolution Gr} and \cite[Theorem 1.1]{vdH:RamifiedSatake} give a commutative diagram
	\begin{equation}\label{diagram Sat for spl 1}\begin{tikzcd}
		\Rep_{\prod_i \widehat{H}_i^{e_if_i}}(\MTM(\Spec \overline{k})) \arrow[d] \arrow[r, "\cong"] & \MTM(\prod_i \Gr_{\Hh_i}^{\can,\widetilde{\times} e_i}) \arrow[d]\\
		\Rep_{\prod_i \widehat{H}_i^{f_i}}(\MTM(\Spec \overline{k})) \arrow[r, "\cong"] & \MTM(\prod_i \Gr_{\Hh_i}^{\can}),
	\end{tikzcd}\end{equation}
	where the left vertical arrow is given by restriction along the inertia invariants \(\prod_i \widehat{H}_i^{f_i}\subseteq \prod_i \widehat{H}_i^{e_if_i}\), and the right vertical arrow by proper pushforward.
	Moreover, an object in \(\MTM(\Gr_{\Gg}^{\spl})\) is equivalent to an object in \(\MTM(\prod_i \Gr_{\Hh_i}^{\can,\widehat{\times} e_i})\) together with a refinement of its \(\prod_i \pi_1(H_i)^{e_if_i}\)-grading (arising via the connected components) to a \(\pi_1(G)\)-grading.
	Using the above equivalence, we see that \(\MTM(\Gr_{\Gg}^{\spl})\) is equivalent to representations (internally in \(\MTM(\Spec \overline{k})\)) of 
	\[\prod_i \widehat{H}_i^{e_if_i} \overset{D(\prod_i \pi_1(H_i)^{e_if_i})}{\times} D(\pi_1(G)),\] 
	where \(D(-)\) denotes the diagonalizable group scheme with given character group.
	But this is exactly \(\widehat{G}\), giving an equivalence \begin{equation}\label{diagram Sat for spl 2}\begin{tikzcd}
		\Rep_{\widehat{G}}(\MTM(\Spec \overline{k})) \arrow[d] \arrow[r, "\cong"] & \MTM(\Gr_{\Gg}^{\spl}) \arrow[d]\\
		\Rep_{\prod_i \widehat{H}_i^{e_if_i}}(\MTM(\Spec \overline{k})) \arrow[r, "\cong"] & \MTM(\prod_i \Gr_{\Hh_i}^{\can,\widetilde{\times} e_i}).
	\end{tikzcd}\end{equation}
	(We note that a similar argument was used while identifying the dual group in the motivic Satake equivalence \cite[Lemma 9.5]{vdH:RamifiedSatake}.)
	A similar argument also shows that the composition of the diagrams \eqref{diagram Sat for spl 1} and \eqref{diagram Sat for spl 2} factors through \(\Rep_{\widehat{G}^I}(\MTM(\Spec \overline{k})) \cong \MTM(\Gr_{\Gg}^{\can})\).
	
	Finally, we can replace all category of mixed Tate motives by their \(L^+\Gg\)-equivariant versions by \thref{Satake for convolution Gr}, giving the desired equivalence.
\end{proof}

Again, it is not clear how to directly construct a monoidal structure on \(\MTM(\Hck_{\Gg}^{\spl})\) that makes the above equivalence monoidal.

\begin{nota}\thlabel{notation representations}
	Recall the IC-functors as defined in e.g.~\cite[Definition 6.17]{vdH:RamifiedSatake}.
	For \(\mu_I\in X_*(T)^+_I\) and \(n\in \IZ\), we will denote by \(V_{\mu_I}^{\can}(n)\in \Rep_{\widehat{G}^I}(\MTM(\Spec \overline{k}))\) the object corresponding to \(\IC_{\mu_I}(\unit(n)) \in \MTM(\Hck_{\Gg}^{\can})\).
	
	Similarly, we can define IC-functors
	\[\IC_\mu\colon \MTM(\Spec \overline{k}) \to \MTM(\Hck_{\Gg}^{\spl}),\]
	for \(\mu\in X_*(T)^+\).
	Again, we will denote by \(V_\mu^{\spl}(n) \in \Rep_{\widehat{G}}(\MTM(\Spec \overline{k}))\) the object corresponding to \(\IC_\mu(\unit(n))\in \MTM(\Hck_{\Gg}^{\spl})\).
\end{nota}

\subsection{Satake correspondences}

We now recall and generalize the Satake correspondences from \cite[(3.1.14)]{XiaoZhu:Cycles}.

\begin{dfn}
	Let \(\mu_\bullet,\lambda_\bullet,\kappa_\bullet\) be tuples of elements in \(X_*(T)^+\).
	The \emph{Satake correspondence} \(\Gr_{\lambda_\bullet\mid\mu_\bullet}^{\spl}\) is defined as
	\[\Gr_{\lambda_\bullet\mid\mu_\bullet}^{\spl} := \Gr_{\Gg,\leq \mu_\bullet}^{\spl} \times_{\Gr_{\Gg}^{\can}} \Gr_{\Gg,\leq \lambda_\bullet}^{\spl}.\]
	These correspondences admit composition morphisms
	\begin{equation}\label{composition of Satake corr}\Gr_{\kappa_\bullet\mid \lambda_\bullet}^{\spl} \times_{\Gr_{\Gg,\lambda_\bullet}^{\spl}} \Gr_{\lambda_\bullet\mid\mu_\bullet}^{\spl} \to \Gr_{\kappa_\bullet\mid \mu_\bullet}^{\spl}.\end{equation}
	Moreover, they admit a natural (diagonal) \(L^+\Gg\)-action, and taking the étale quotient yields the Satake correspondence between Hecke stacks
	\[\Hck_{\lambda_\bullet\mid \mu_\bullet}^{\spl} := (L^+\Gg \backslash \Gr_{\lambda_\bullet\mid \mu_\bullet}^{\spl})^{\et}.\]
\end{dfn}

For a tuple \(\mu_\bullet\), let us denote \(\langle 2\rho,\mu_\bullet \rangle := \sum_i \langle 2\rho,\mu_i\rangle\).
Recall from §\ref{Subsec:Notation} that \(\HBM\) denotes the \(\ell\)-adic étale Borel--Moore homology, for which we have chosen an auxiliary prime \(\ell\neq p\).
In the proposition below, we will use the notion of motivic correspondences \(\Corr\), for which we refer to \thref{defi motivic corr}.

\begin{prop}\thlabel{basic properties of Satake correspondences}
	Let \(\mu_\bullet,\lambda_\bullet\) be tuples of elements in \(X_*(T)^+\).
	\begin{enumerate}
		\item The Satake correspondence \(\Gr_{\lambda_\bullet\mid\mu_\bullet}^{\spl}\) has dimension \(\leq \langle \rho,\lambda_\bullet+\mu_\bullet\rangle\).
		\item For any \(\Ff_1\in \MTM(\Gr_{\Gg,\lambda_\bullet}^{\spl})\) and \(\Ff_2\in \MTM(\Gr_{\Gg,\mu_\bullet}^{\spl})\), there is a natural isomorphism 
		\[\Hom_{\MTM(\Gr_{\Gg})^{\can}}(m_{\lambda_\bullet,*}\Ff_1,m_{\mu_\bullet,*}\Ff_2) \cong \Corr_{\Gr_{\lambda_\bullet\mid \mu_\bullet}^{\spl}}(\Ff_1,\Ff_2).\]
		Similar isomorphisms exist if \(\Ff_1,\Ff_2\) are assumed to be \(L^+\Gg\)-equivariant instead, i.e., they lie in \(\MTM(\Hck_{\Gg}^{\spl})\).
		Moreover, these isomorphisms are compatible with composition (the composition of correspondences is defined via \thref{composition of correspondences} along \eqref{composition of Satake corr}).
		\item \label{homs and irr} Let \(V_{\mu_\bullet} = \bigotimes_{i=1}^t V_{\mu_i}\), where \(V_{\mu_i}\) is the irreducible \(\overline{\IQ}_\ell\)-linear \(\widehat{G}\)-representation of highest weight \(\mu_i\), and similarly for \(\lambda_\bullet\).
		Then there is a canonical isomorphism
		\[\Hom_{\widehat{G}^I}(V_{\lambda_\bullet},V_{\mu_\bullet}) \cong \HBM_{\langle 2\rho,\lambda_\bullet + \mu_\bullet \rangle}(\Gr_{\lambda_\bullet\mid \mu_\bullet}^{\spl}).\]
	\end{enumerate}
	Similar results hold when replacing \(\Gr_{\lambda_\bullet \mid \mu_\bullet}^{\spl}\) by a fiber product over \(\Gr_{\Gg}^{\can}\) involving convolution products of canonical Schubert varieties, or even convolution products of canonical Schubert varieties with splitting Schubert varieties.
\end{prop}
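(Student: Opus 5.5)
Following the strategy of \cite[§3.1]{XiaoZhu:Cycles}, I would prove all three parts by reducing to properties of the convolution maps \(m_{\mu_\bullet}\colon \Gr_{\Gg,\leq\mu_\bullet}^{\spl}\to \Gr_{\Gg}^{\can}\), which are essentially convolution morphisms of canonical Schubert varieties.

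\emph{Step 1 (semismallness).} By \thref{definition splitting grassmannians}, each \(\Gr_{\Gg,\leq\mu_i}^{\spl}\) is, up to connected components, a convolution product of Schubert varieties in the \(\Gr_{\Hh_i}^{\can}\). Hence \(m_{\mu_\bullet}\) is, étale-locally on the base, a convolution morphism \(\Gr^{\can}_{\Gg,\leq \nu_{1,I}}\widetilde{\times}\cdots\widetilde{\times}\Gr^{\can}_{\Gg,\leq\nu_{s,I}}\to\Gr^{\can}_{\Gg}\) for suitable \(\nu_{j,I}\). By the standard results on such maps (\cite{Haines:Structure}, or the semismallness used in \cite{vdH:RamifiedSatake}), this morphism is stratified semismall. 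Concretely, the fibre over \(\Gr^{\can}_{\Gg,\nu_I}\) has dimension \(\leq \langle\rho,\mu_\bullet\rangle-\langle\rho,\nu_I\rangle\), and \(\dim \Gr_{\Gg,\leq\mu_\bullet}^{\spl}=\langle2\rho,\mu_\bullet\rangle\).

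\emph{Step 2 (dimension bound, part (1)).} Stratify \(\Gr_{\lambda_\bullet\mid\mu_\bullet}^{\spl}\) according to the image stratum \(\Gr^{\can}_{\Gg,\nu_I}\). Each piece has dimension at most
\[\langle\rho,\mu_\bullet\rangle-\langle\rho,\nu_I\rangle+\langle\rho,\lambda_\bullet\rangle-\langle\rho,\nu_I\rangle+\langle2\rho,\nu_I\rangle=\langle\rho,\lambda_\bullet+\mu_\bullet\rangle,\]
which gives (1).

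\emph{Step 3 (correspondences, part (2)).} Write \(\Ff_i\) for the given mixed Tate motives. The Hom-space to compute is \(\Hom(m_{\lambda_\bullet,*}\Ff_1,m_{\mu_\bullet,*}\Ff_2)\). Using properness of the \(m\)'s and base change in the fibre square defining \(\Gr_{\lambda_\bullet\mid\mu_\bullet}^{\spl}\), this becomes
\[\Hom\bigl(\pr_1^*\Ff_1,\pr_2^!\Ff_2\bigr) \;=\; \Corr_{\Gr_{\lambda_\bullet\mid\mu_\bullet}^{\spl}}(\Ff_1,\Ff_2),\]
by the formalism of \thref{defi motivic corr}. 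The pushforwards \(m_*\Ff_i\) lie in \(\MTM\) by \cite[Proposition 5.4]{vdH:RamifiedSatake}, so the Hom can equally be taken in \(\MTM\). Compatibility with composition is formal from \thref{composition of correspondences} together with base change along \eqref{composition of Satake corr}. The equivariant version follows by running the same argument on the Hecke stacks, or by full faithfulness of forgetting equivariance on \(\MTM\) (\thref{Satake for convolution Gr}).

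\emph{Step 4 (part (3)).} Take \(\ell\)-adic realizations with \(\Ff_1=\IC_{\lambda_\bullet}\) and \(\Ff_2=\IC_{\mu_\bullet}\). These are the constant sheaves shifted by \(\langle2\rho,\lambda_\bullet\rangle\) and \(\langle2\rho,\mu_\bullet\rangle\), since the convolution Schubert varieties are smooth-equivalent to products of rationally smooth minuscule-type pieces. Where that smoothness fails, I would use IC-sheaves and the fact that Step 1 controls only the top-dimensional part. Under these identifications,
\[\Corr(\IC_{\lambda_\bullet},\IC_{\mu_\bullet})=\mathrm{H}^0\bigl(\Gr_{\lambda_\bullet\mid\mu_\bullet}^{\spl},\omega[-\langle2\rho,\lambda_\bullet+\mu_\bullet\rangle]\bigr)=\HBM_{\langle2\rho,\lambda_\bullet+\mu_\bullet\rangle}.\]
On the other side, \(m_{\lambda_\bullet,*}\IC_{\lambda_\bullet}\) corresponds under \thref{Satake for splitting models} and \eqref{eq: Satake for convolution Gr} to \(V_{\lambda_\bullet}\) restricted to \(\widehat{G}^I\). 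The \(\ell\)-adic Satake equivalence is fully faithful, so the Hom-space equals \(\Hom_{\widehat{G}^I}(V_{\lambda_\bullet},V_{\mu_\bullet})\). The variants mixing canonical and splitting factors go the same way.

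The main obstacle is Step 4. The convolution splitting Schubert varieties need not be smooth, and identifying \(\Corr(\IC,\IC)\) with top Borel--Moore homology requires the stalk bounds of the IC sheaves. Here I would invoke semismallness from Step 1 together with \cite[Lemma 3.1.x]{XiaoZhu:Cycles}-style arguments: only the smooth open part contributes in top degree, because the dimension estimate forces all lower strata to contribute in strictly smaller homological degree.
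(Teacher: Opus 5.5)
Your Steps 1--3 agree with the paper's proof. Part (1) comes from stratified semismallness of the convolution maps (the paper cites \cite[Lemma 5.6]{vdH:RamifiedSatake}), and part (2) from proper base change in the cartesian square defining \(\Gr^{\spl}_{\lambda_\bullet\mid\mu_\bullet}\). For (3) the paper only refers to \cite[Proposition 3.1.10 (3)]{XiaoZhu:Cycles}. Your reduction (Satake plus (2) applied to \(\IC\)-sheaves) is the right one. However, the final identification \(\Corr_{X}(\IC_{\lambda_\bullet},\IC_{\mu_\bullet})\cong\HBM_{2d}(X)\), with \(X=\Gr^{\spl}_{\lambda_\bullet\mid\mu_\bullet}\) and \(d=\langle\rho,\lambda_\bullet+\mu_\bullet\rangle\), is not established by your argument.

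First, the \(\IC\)-sheaves are not shifted constant sheaves. For non-minuscule \(\mu\), \(\Gr^{\spl}_{\Gg,\leq\mu}\) is a convolution of Schubert varieties in unramified affine Grassmannians, and these are in general not rationally smooth: their \(\IC\)-stalks are Lusztig's \(q\)-analogues of weight multiplicities. So you must restrict to the open part \(U\subseteq X\) lying over the open cells \(\Gr^{\spl}_{\Gg,\lambda_\bullet}\) and \(\Gr^{\spl}_{\Gg,\mu_\bullet}\), where indeed \(\Corr_U=\HBM_{2d}(U)=\HBM_{2d}(X)\).

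Put \(Z=X\setminus U\), \(\iota=(\pr_1,\pr_2)\), and \(F=\IHom(\pr_1^*\IC_{\lambda_\bullet},\pr_2^!\IC_{\mu_\bullet})\cong\ID\iota^*(\IC_{\lambda_\bullet}\boxtimes\ID\IC_{\mu_\bullet})\). The restriction \(H^0(X,F)\to H^0(U,F)\) is an isomorphism once \(H^0_c\) and \(H^{-1}_c\) of \(Z\), with coefficients in \(\iota^*(\IC_{\lambda_\bullet}\boxtimes\IC_{\mu_\bullet})\) (using \(\ID\IC\cong\IC\) up to twist), both vanish.

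Your tools are the perverse support condition and the dimension bound. The first says stalks along a lower stratum \(\Gr^{\spl}_{\Gg,\lambda'_\bullet}\) lie in degrees \(\leq-\dim\Gr^{\spl}_{\Gg,\lambda'_\bullet}-1\). The second is \(\dim S_{\lambda'_\bullet,\mu'_\bullet}\leq\langle\rho,\lambda'_\bullet+\mu'_\bullet\rangle\) for the strata \(S_{\lambda'_\bullet,\mu'_\bullet}=\Gr^{\spl}_{\Gg,\lambda'_\bullet}\times_{\Gr^{\can}_{\Gg}}\Gr^{\spl}_{\Gg,\mu'_\bullet}\) of \(Z\). Together these only kill degrees \(\geq 0\). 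That gives injectivity of the restriction map, not surjectivity.

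The degree \(-1\) term is a genuine possible obstruction. The dimension bound is attained whenever some \(V_{\nu_I}\) occurs in both \(V_{\lambda'_\bullet}\) and \(V_{\mu'_\bullet}\). In that case the top compactly supported cohomology of the stratum meets the \(\IC\)-stalk in degree exactly \(-\dim-1\). For constant coefficients, a drop of one in dimension costs two in Borel--Moore degree; the \(\IC\) support condition only buys one.

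The missing ingredient is parity vanishing of \(\IC\)-stalks on affine Grassmannians, which here reduces to the unramified case. It says \(\IC_{\lambda}|_{\Gr_{\lambda'}}\) is concentrated in degrees \(\equiv\dim\Gr_{\lambda'}\bmod 2\), hence in degrees \(\leq-\dim\Gr_{\lambda'}-2\). This pushes every boundary contribution into degrees \(\leq-2\), and the restriction becomes an isomorphism.

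Alternatively, combine the injectivity with a dimension count: \(\#\Irr^{\Top}(X)=\sum_{\nu_I}m_{\lambda_\bullet}(\nu_I)\,m_{\mu_\bullet}(\nu_I)=\dim\Hom_{\widehat{G}^I}(V_{\lambda_\bullet},V_{\mu_\bullet})\). Here \(m_{\lambda_\bullet}(\nu_I)\) is the multiplicity of \(V_{\nu_I}\) in \(V_{\lambda_\bullet}|_{\widehat{G}^I}\), which equals the number of top-dimensional components of the fibres of \(m_{\lambda_\bullet}\) over \(\Gr^{\can}_{\Gg,\nu_I}\) \cite[Proposition 3.1]{Haines:Structure}. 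One uses \(L^+\Gg\)-equivariance over each \(\Gr^{\can}_{\Gg,\nu_I}\) to match top components of \(X\) with pairs of fibre components.
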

\begin{proof}
	This is well known.
	Briefly, (1) follows from the semi-smallness of the convolution morphism \cite[Lemma 5.6]{vdH:RamifiedSatake}.
	(2) follows from proper base change along the cartesian diagram
	\[\begin{tikzcd}
		\Gr_{\lambda_\bullet\mid \mu_\bullet}^{\spl} \arrow[d] \arrow[r] & \Gr_{\Gg,\mu_\bullet}^{\spl} \arrow[d, "m_{\mu_\bullet}"]\\
		\Gr_{\Gg,\lambda_\bullet}^{\spl} \arrow[r, "m_{\lambda_\bullet}"'] & \Gr_{\Gg}^{\can}.
	\end{tikzcd}\]
	Finally, (3) can be proven similar to \cite[Proposition 3.1.10 (3)]{XiaoZhu:Cycles}.
\end{proof}

For applications later on, we will need to consider correspondences constructed using both canonical and splitting models of affine Grassmannians.
To not overload the notation, we will only state the results in the generality we need, and leave the obvious generalizations to the reader.

\begin{dfn}\thlabel{specific Satake corr}
	Let \(\mu\in X_*(T)^+\) and \(\lambda_I,\nu_I\in X_*(T)_I^+\).
	The Satake correspondence \(\Gr_{\nu_I,\mu\mid \lambda_I}^{\spl}\) is defined as the fiber product
	\[\Gr_{\nu_I,\mu\mid \lambda_I}^{\spl} := (\Gr_{\Gg,\leq \nu_I}^{\can} \widetilde{\times} \Gr_{\Gg,\leq \mu}^{\spl}) \times_{\Gr_{\Gg}^{\can}} \Gr_{\Gg,\leq \lambda_I}^{\can}.\]
	By \thref{basic properties of Satake correspondences}, this scheme has dimension \(\leq \langle \rho,\mu_I+\nu_I+\lambda_I\rangle\).
	We denote the set of \(\langle \rho,\mu_I+\nu_I+\lambda_I\rangle\)-dimensional irreducible components by
	\[\IS_{\nu_I,\mu\mid \lambda_I}^{\spl}:= \Irr^{\langle \rho,\mu_I+\nu_I+\lambda_I\rangle}(\Gr_{\nu_I,\mu\mid \lambda_I}^{\spl}).\]
	For \(\bfa\in \IS_{\nu_I,\mu\mid \lambda_I}^{\spl}\), the corresponding irreducible component will be denoted by \(\Gr_{\nu_I,\mu\mid \lambda_I}^{\spl,\bfa}\subseteq \Gr_{\nu_I,\mu\mid \lambda_I}^{\spl}\).
	
	Similarly, we define
	\[\Gr_{\nu_I,\mu_I\mid \lambda_I}^{\can}:= (\Gr_{\Gg,\leq \nu_I}^{\can} \widetilde{\times} \Gr_{\Gg,\leq \mu_I}^{\can}) \times_{\Gr_{\Gg}^{\can}} \Gr_{\Gg,\leq \lambda_I}^{\can},\]
	which is also of dimension \(\leq \langle \rho,\mu_I+\nu_I+\lambda_I \rangle\).
	We also set
	\[\IS_{\nu_I,\mu\mid \lambda_I}^{\can} := \Irr^{\langle \rho,\nu_I+\mu_I+\lambda_I\rangle}(\Gr_{\nu_I,\mu_I\mid \lambda_I}^{\can}).\]
	Both versions admit natural \(L^+\Gg\)-actions, and we denote their quotients by
	\[\Hck_{\nu_I,\mu\mid \lambda_I}^{\spl} \quad \text{ and } \quad \Hck_{\nu_I,\mu_I\mid \lambda_I}^{\can}.\]
\end{dfn}

For \(\mu\in X_*(T)\), we denote by \(V_\mu\) the irreducible \(\IQ\)-linear \(\widehat{G}\)-representation of highest weight \(\mu\), and similar for \(\lambda_I\in X_*(T)_I^+\) and \(\widehat{G}^I\) (using \cite[Lemma 4.10]{Zhu:Ramified}).
By \thref{basic properties of Satake correspondences} (3), \(\IS_{\nu_I,\mu\mid \lambda_I}^{\spl}\) forms a natural basis of \(\Hom_{\widehat{G}^I}(V_{\nu_I} \otimes V_\mu, V_{\lambda_I})\).
Similarly, \(\IS_{\nu_I,\mu_I\mid \lambda_I}^{\can}\) forms a natural basis of \(\Hom_{\widehat{G}^I}(V_{\nu_I} \otimes V_{\mu_I}, V_{\lambda_I})\).

\begin{lem}\thlabel{comparing Satake cycles}
	Let \(\mu\in X_*(T)^+\) and \(\lambda_I,\nu_I\in X_*(T)_I^+\).
	Then there is a natural map
	\[\beta^{\IS}\colon \IS_{\nu_I,\mu\mid \lambda_I}^{\spl} \to \coprod_{\mu_I'\leq \mu_I\in X_*(T)_I^+} \IS_{\nu_I,\mu_I'\mid \lambda_I}^{\can}.\]
	Moreover, for any \(\bfa\in \IS_{\nu_I,\mu\mid \lambda_I}^{\spl}\), the image of \(\Gr_{\nu_I,\mu\mid \lambda_I}^{\spl, \bfa}\) in \(\Gr_{\nu_I,\mu_I\mid \lambda_I}^{\can}\) contains \(\Gr_{\nu_I,\mu_I'\mid \lambda_I}^{\can, \beta^{\IS}(\bfa)}\).
\end{lem}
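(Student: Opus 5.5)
The plan is to stratify \(\Gr_{\Gg,\leq\mu}^{\spl}\) according to where it lands in the canonical Grassmannian, and then compare top-dimensional components by a dimension count. Concretely, the map \(\pi\colon \Gr_{\Gg,\leq \mu}^{\spl}\to \Gr_{\Gg,\leq \mu_I}^{\can}\) is a convolution morphism. By \thref{remarks on splitting grassmannians}, it is \(L^+\Gg\)-equivariant and surjective, and we have \(\Gr_{\Gg,\leq\mu_I}^{\can}=\bigsqcup_{\mu_I'\leq\mu_I}\Gr_{\Gg,\mu_I'}^{\can}\). Pulling this decomposition back gives locally closed pieces \(\pi^{-1}(\Gr^{\can}_{\Gg,\mu_I'})\). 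Convolving with \(\Gr_{\Gg,\leq\nu_I}^{\can}\) on the left and base changing along \(\Gr_{\Gg,\leq\lambda_I}^{\can}\) then yields a decomposition of \(\Gr_{\nu_I,\mu\mid\lambda_I}^{\spl}\) into pieces \(Z_{\mu_I'}\). Each \(Z_{\mu_I'}\) maps to \(Y_{\mu_I'}:=(\Gr_{\Gg,\leq\nu_I}^{\can}\widetilde{\times}\Gr_{\Gg,\mu_I'}^{\can})\times_{\Gr_{\Gg}^{\can}}\Gr_{\Gg,\leq\lambda_I}^{\can}\). This map is a fibration, Zariski-locally trivial after using the \(L^+\Gg\)-equivariance on the second factor, with fiber \(\pi^{-1}(t_{\mu_I'})\). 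Here \(t_{\mu_I'}\) is a base point of \(\Gr^{\can}_{\Gg,\mu_I'}\).

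The key dimension input is semismallness of the convolution morphism (\cite[Lemma 5.6]{vdH:RamifiedSatake}, as used in \thref{basic properties of Satake correspondences}). It gives \(\dim\pi^{-1}(t_{\mu_I'})\leq\langle\rho,\mu_I-\mu_I'\rangle\), and we know \(\dim Y_{\mu_I'}\leq\langle\rho,\nu_I+\mu_I'+\lambda_I\rangle\). Hence \(\dim Z_{\mu_I'}\leq\langle\rho,\nu_I+\mu_I+\lambda_I\rangle\). A top-dimensional component \(\Gr^{\spl,\bfa}_{\nu_I,\mu\mid\lambda_I}\) has a dense open part lying in a unique piece \(Z_{\mu_I'}\). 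For its dimension to be maximal, equality must hold in both estimates. So the closure of its image in \(Y_{\mu_I'}\) must be a component of dimension \(\langle\rho,\nu_I+\mu_I'+\lambda_I\rangle\). Its closure in \(\Gr^{\can}_{\nu_I,\mu_I'\mid\lambda_I}\) is therefore an element of \(\IS^{\can}_{\nu_I,\mu_I'\mid\lambda_I}\), which we define to be \(\beta^{\IS}(\bfa)\).

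It remains to verify that this component really lies in the image of \(\Gr^{\spl,\bfa}_{\nu_I,\mu\mid\lambda_I}\) in \(\Gr_{\nu_I,\mu_I\mid\lambda_I}^{\can}\), via \(\Gr_{\Gg,\leq\mu_I'}^{\can}\subseteq\Gr_{\Gg,\leq\mu_I}^{\can}\). The map to the canonical correspondence is proper, being a base change of \(\pi\), which is perfectly proper. The image of the component is therefore closed and irreducible. It contains the dense image in \(Y_{\mu_I'}\), and hence contains its closure \(\Gr^{\can,\beta^{\IS}(\bfa)}_{\nu_I,\mu_I'\mid\lambda_I}\).

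The main obstacle is making the fibration statement over \(Y_{\mu_I'}\) precise enough for the dimension count. We need \(\dim Z_{\mu_I'}\leq\dim Y_{\mu_I'}+\dim(\text{fiber})\), and we need to know that a top-dimensional component dominates a top-dimensional component of \(Y_{\mu_I'}\). My first approach is to write the convolution as \(LG\overset{L^+\Gg}{\times}(-)\) and use the \(L^+\Gg\)-homogeneity of \(\Gr^{\can}_{\Gg,\mu_I'}\) to trivialize \(\pi\) over each canonical cell. After that, the claim reduces to the elementary fact that for a map with fibers of dimension at most \(d\), a component of the source of dimension \(\dim(\text{base})+d\) must dominate a component of the base of full dimension.
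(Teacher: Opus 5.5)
Your proposal is correct and follows essentially the same route as the paper: locate the generic point of \(\Gr^{\spl,\bfa}_{\nu_I,\mu\mid\lambda_I}\) over a unique stratum \(\Gr^{\can}_{\Gg,\mu_I'}\), use the semismallness bound \(\langle\rho,\mu_I-\mu_I'\rangle\) on fibers to force the image to have dimension at least \(\langle\rho,\nu_I+\mu_I'+\lambda_I\rangle\), take the closure as \(\beta^{\IS}(\bfa)\), and get the containment from perfect properness. The local triviality you flag as the main obstacle is not needed: the general inequality \(\dim(\text{source})\leq\dim(\text{image})+\max\dim(\text{fibers})\) for pfp morphisms suffices, and this is exactly how the paper argues.
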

\begin{proof}
	Let \(\bfa\in \IS_{\nu_I,\mu\mid \lambda_I}^{\spl}\).
	Then the image of the generic point of \(\Gr_{\nu_I,\mu\mid \lambda_I}^{\spl,\bfa}\) under the natural map 
	\begin{equation}\label{SatakeEq1}\Gr_{\nu_I,\mu\mid \lambda_I}^{\spl} \to \Gr_{\nu_I,\mu_I\mid \lambda_I}^{\can}\end{equation}
	lies in 
	\begin{equation}\label{SatakeEq2}(\Gr_{\Gg,\leq \nu_I}^{\can} \widetilde{\times} \Gr_{\Gg,\mu_I'}^{\can})\times_{\Gr_{\Gg}^{\can}} \Gr_{\Gg,\leq \lambda_I}^{\can}\end{equation}
	for some unique \(\mu_I'\leq \mu_I\in X_*(T)_I^+\).
	Let \(X\) be the intersection of the image of \(\Gr_{\nu_I,\mu\mid \lambda_I}^{\spl,\bfa}\) in \(\Gr_{\nu_I,\mu_I\mid \lambda_I}^{\can}\) with \eqref{SatakeEq2}.
	
	Note that the fibers of \eqref{SatakeEq1} over \eqref{SatakeEq2} have dimension at most \(\langle \rho,\mu_I-\mu_I'\rangle\).
	Since \(\Gr_{\nu_I,\mu\mid \lambda_I}^{\spl,\bfa}\) has dimension \(\langle \rho,\nu_I+\mu_I+\lambda_I\rangle\), the dimension of \(X\) is at least
	\[\langle \rho,\nu_I+\mu_I+\lambda_I\rangle - \langle \rho,\mu_I-\mu_I'\rangle = \langle \rho,\nu_I+\mu_I'+\lambda_I\rangle.\]
	Thus the closure of \(X\) in \(\Gr_{\nu_I,\mu_I'\mid \lambda_I}^{\can}\) defines an element in \(\IS_{\nu_I,\mu_I'\mid \lambda_I}^{\can}\), giving us the desired map.
	
	To prove the final statement, note that \(\Gr_{\nu_I,\mu\mid \lambda_I}^{\spl}\to \Gr_{\nu_I,\mu_I\mid \lambda_I}^{\can}\) is perfectly proper, so that the image of \(\Gr_{\nu_I,\mu\mid \lambda_I}^{\spl,\bfa}\) is closed.
	Its intersection with the irreducible \(\Gr_{\nu_I,\mu_I'\mid \lambda_I}^{\can,\beta^{\IS}(\bfa)}\) is then closed, non-empty, and contains the generic point.
	Hence, this intersection must be the whole \(\Gr_{\nu_I,\mu_I'\mid \lambda_I}^{\can,\beta^{\IS}(\bfa)}\), concluding the proof.
\end{proof}

\subsection{Semi-infinite orbits}

Next, we study semi-infinite orbits in splitting affine Grassmannians, generalizing \cite[§3.2]{vdH:RamifiedSatake}, although we will not need semi-infinite orbits for general parabolics.
Recall that \(G/F\) is essentially unramified, and that all the geometry is over \(\Spec \overline{k}\).

Recall from e.g.~\cite[§3.2]{vdH:RamifiedSatake}, that the twisted affine Grassmannian admits a stratification into semi-infinite orbits
\[\Gr_{\Gg}^{\can} = \bigsqcup_{\nu_I\in X_*(T)_I} \Ss_{\nu_I}^{\can},\]
corresponding to the affine Grassmannian \(\Gr_{\Bb}^{\can}:=LB/L^+\Bb\) (where \(\Bb\) is the schematic closure of \(B\) in \(\Gg\)), or equivalently, the attractor locus for the \(\IG_m\)-action on \(\Gr_{\Gg}^{\can}\) induced by a regular dominant cocharacter \cite[Theorem 5.2]{AGLR:Local}.
As in \cite[§5.1]{vdH:RamifiedSatake}, one can take the twisted products of these semi-infinite orbits, yielding a decomposition of convolution Grassmannians.
Via \thref{definition splitting grassmannians}, this immediately yields a decomposition of splitting affine Grassmannians, which we denote
\begin{equation}
	\Gr_{\Gg}^{\spl} = \bigsqcup_{\nu\in X_*(T)} \Ss_\nu^{\spl};
\end{equation}
the fact that the index set is \(X_*(T)\) follows from \eqref{equation indices}.
This is a stratification, since this already holds for canonical Grassmannians, and hence for convolution Grassmannians.
Note that under the natural map
\[\Gr_{\Gg}^{\spl}\to \Gr_{\Gg}^{\can},\]
\(\Ss_{\nu}^{\spl}\) gets sent to \(\Ss_{\nu_I}^{\can}\); this follows from \cite[(3.2.7)]{XiaoZhu:Cycles}.

We now have two stratifications of \(\Gr_{\Gg}^{\can}\) and \(\Gr_{\Gg}^{\spl}\), by Schubert cells and by semi-infinite orbits.
Recall from \cite[Proposition 3.11]{vdH:RamifiedSatake} that if \(\Gr_{\Gg,\mu_I}^{\can} \cap \Ss_{\nu_I}^{\can}\) is non-empty, it is equidimensional of dimension \(\langle \rho,\mu_I+\nu_I\rangle\).
Using \cite[(3.2.8)]{XiaoZhu:Cycles}, which also holds for general quasi-split groups, it follows that if \(\Gr_{\Gg,\mu}^{\spl} \cap \Ss_\nu^{\spl}\) is non-empty, then it is equidimensional of dimension \(\langle \rho,\mu+\nu\rangle\).

\begin{dfn}\thlabel{Defi MV sets}
	Let \(\mu_I\in X_*(T)_I^+\) and \(\nu_I\in X_*(T)_I\).
	Then the set of \emph{(canonical) Mirkovic--Vilonen (or MV) cycles} \(\IM\IV^{\can}_{\mu_I}(\nu_I)\) is defined as the set of irreducible components of \(\Ss_{\nu_I}^{\can}\cap \Gr_{\Gg,\mu_I}^{\can}\).
	These are in bijection with the irreducible components of \(\Ss_{\nu_I}^{\can}\cap \Gr_{\Gg,\leq \mu_I}^{\can}\), and we will often identify these two sets.
	For \(\bfb\in \IM\IV_{\mu_I}^{\can}(\nu_I)\), we denote the corresponding irreducible component by \((\Ss_{\nu_I}^{\can}\cap \Gr_{\Gg,\leq \mu_I}^{\can})^{\bfb}\).
	
	Similarly, if \(\mu\in X_*(T)^+\) and \(\nu\in X_*(T)\), the set of \emph{splitting Mirkovic--Vilonen cycles} \(\IM\IV^{\spl}_\mu(\nu)\) is defined as the set of irreducible components of \(\Ss_\nu^{\spl}\cap \Gr_{\Gg,\mu}^{\spl}\).
\end{dfn}

\begin{lem}\thlabel{comparing MV cycles}
	Let \(\mu\in X_*(T)^+\) and \(\nu\in X_*(T)\).
	Then there is a natural map
	\[\beta^{\IM\IV}\colon \IM\IV^{\spl}_\mu(\nu) \to \coprod_{\mu_I'\leq \mu_I\in X_*(T)_I^+} \IM\IV^{\can}_{\mu_I'}(\nu_I).\]
\end{lem}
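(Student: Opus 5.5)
We mimic the construction of \(\beta^{\IS}\) in \thref{comparing Satake cycles}, replacing the Satake correspondences by intersections with semi-infinite orbits.

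Fix \(\bfb\in \IM\IV^{\spl}_\mu(\nu)\), and let \(Z^{\bfb}\subseteq \Ss_\nu^{\spl}\cap \Gr_{\Gg,\mu}^{\spl}\) be the corresponding irreducible component. It has dimension \(\langle \rho,\mu+\nu\rangle\). Since \(\Gr_{\Gg,\mu}^{\spl}\subseteq \Gr_{\Gg,\leq\mu}^{\spl}\) and the natural map \(\pi\colon\Gr_{\Gg}^{\spl}\to \Gr_{\Gg}^{\can}\) sends \(\Gr_{\Gg,\leq\mu}^{\spl}\) into \(\Gr_{\Gg,\leq \mu_I}^{\can}\) and \(\Ss_\nu^{\spl}\) into \(\Ss_{\nu_I}^{\can}\), the image \(\pi(Z^{\bfb})\) lies in \(\Ss_{\nu_I}^{\can}\cap \Gr_{\Gg,\leq \mu_I}^{\can}\). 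This locus is stratified by the pieces \(\Ss_{\nu_I}^{\can}\cap\Gr_{\Gg,\mu_I'}^{\can}\) for \(\mu_I'\leq \mu_I\). Hence the image of the generic point of \(Z^{\bfb}\) lies in a unique stratum \(\Ss_{\nu_I}^{\can}\cap \Gr_{\Gg,\mu_I'}^{\can}\). Let \(X:=\pi(Z^{\bfb})\cap \Gr_{\Gg,\mu_I'}^{\can}\); it is irreducible, being the image of a dense open subset of \(Z^{\bfb}\).

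The key step is a dimension count. The fibers of \(\pi\colon \Gr_{\Gg,\leq \mu}^{\spl}\to \Gr_{\Gg,\leq\mu_I}^{\can}\) over \(\Gr_{\Gg,\mu_I'}^{\can}\) have dimension at most \(\langle\rho,\mu_I-\mu_I'\rangle\). This is the semi-smallness of convolution morphisms (\cite[Lemma 5.6]{vdH:RamifiedSatake}), applied factorwise via \thref{definition splitting grassmannians}, together with \(\dim \Gr_{\Gg,\leq\mu}^{\spl}=\langle 2\rho,\mu\rangle=\langle 2\rho,\mu_I\rangle\). Therefore
\[\dim X\geq \langle\rho,\mu+\nu\rangle-\langle\rho,\mu_I-\mu_I'\rangle=\langle\rho,\mu_I'+\nu_I\rangle,\]
using that \(\rho\) pairs through \(I\)-coinvariants. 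On the other hand, \(\Ss_{\nu_I}^{\can}\cap\Gr_{\Gg,\mu_I'}^{\can}\) is equidimensional of dimension \(\langle\rho,\mu_I'+\nu_I\rangle\) by \cite[Proposition 3.11]{vdH:RamifiedSatake}. So the closure of \(X\) in this intersection is an irreducible component, and we define \(\beta^{\IM\IV}(\bfb)\in \IM\IV^{\can}_{\mu_I'}(\nu_I)\) to be that component. Naturality is clear, since everything is defined in terms of generic points and images.

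The main obstacle is the fiber-dimension bound. It requires the fibers of \(\pi\) over the whole of \(\Gr_{\Gg,\mu_I'}^{\can}\) to be small, not only their intersections with \(\Gr_{\Gg,\mu}^{\spl}\). This is exactly semi-smallness with respect to the stratification by \(\Gr^{\can}_{\Gg,\mu_I'}\), which I would verify by reducing to the convolution of Schubert varieties of the \(\Hh_i\) and using that \(\mu_I\) is the image of the sum of the convolved coweights. A secondary point is checking that \(\pi(\Ss_\nu^{\spl})\subseteq \Ss_{\nu_I}^{\can}\), which the text attributes to \cite[(3.2.7)]{XiaoZhu:Cycles}.
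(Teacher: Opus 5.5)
Your proposal is correct and follows the paper's proof essentially step by step: locate the canonical stratum \(\Gr_{\Gg,\mu_I'}^{\can}\) containing the image of the generic point of the splitting MV cycle, use the semi-smallness bound \(\langle\rho,\mu_I-\mu_I'\rangle\) on the fibers of \(\Gr_{\Gg,\leq\mu}^{\spl}\to\Gr_{\Gg,\leq\mu_I}^{\can}\) over that stratum to get \(\dim X\geq\langle\rho,\mu_I'+\nu_I\rangle\), and take the closure as an element of \(\IM\IV^{\can}_{\mu_I'}(\nu_I)\). The points you flag (the fiber bound over the whole stratum, and \(\pi(\Ss_\nu^{\spl})\subseteq\Ss_{\nu_I}^{\can}\)) are exactly the inputs the paper uses without further comment.
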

\begin{proof}
	Let \((\Ss_\nu^{\spl}\cap \Gr_{\Gg,\leq \mu}^{\spl})^{\bfb}\subseteq \Gr_{\Gg}^{\spl}\) correspond to some \(\bfb\in \IM\IV^{\spl}_\mu(\nu)\).
	Then its image \(X\) under the natural map \(\Gr_{\Gg}^{\spl}\to \Gr_{\Gg}^{\can}\) is still irreducible, and lies in \(\Ss_{\nu_I}^{\can}\cap \Gr_{\Gg,\leq \mu_I}^{\can}\).
	Let \(\mu_I'\in X_*(T)_I^+\) such that \(\Gr_{\Gg,\mu_I'}^{\can}\) contains the image of the unique generic point of \((\Ss_\nu^{\spl}\cap \Gr_{\Gg,\leq \mu}^{\spl})^{\bfb}\); then \(X\cap \Gr_{\Gg,\mu_I'}^{\can}\) is open dense in \(X\), and \(\mu'\leq \mu\in X_*(T)_I^+\).
	
	Now, the fibers of the convolution map \(\Gr_{\Gg}^{\spl}\to \Gr_{\Gg}^{\can}\) over \(\Gr_{\Gg,\mu_I'}^{\can}\) have dimension at most \(\langle \rho,\mu_I-\mu_I'\rangle\).
	Since \((\Ss_\nu^{\spl}\cap \Gr_{\Gg,\leq \mu}^{\spl})^{\bfb}\) has dimension \(\langle \rho,\mu_I+\nu_I\rangle\), \(X\) has dimension at least 
	\[\langle \rho,\mu_I+\nu_I-\mu_I+\mu_I'\rangle = \langle \rho,\mu_I'+\nu_I\rangle.\]
	Thus, the closure of \(X\cap \Gr_{\Gg,\mu_I'}^{\can}\) in \(\Gr_{\Gg,\leq \mu_I'}^{\can}\cap \Ss_{\nu_I}^{\can}\) determines an element in \(\IM\IV^{\can}_{\mu_I'}(\nu_I)\), yielding the desired map.
\end{proof}

\begin{rmk}\thlabel{remark MV cycles}
	Via the geometric Satake equivalence, the canonical Mirkovic--Vilonen cycles yield natural bases of irreducible representations (in characteristic \(0\)) of \(\widehat{G}^I\).
	Consequently, the splitting Mirkovic--Vilonen cycles yield natural bases of irreducible representations of \(\widehat{G}\).
	Thus, the map constructed in the lemma above gives a geometric description of the restriction functor from \(\widehat{G}\)-representations to \(\widehat{G}^I\)-representations, using \thref{Satake for splitting models}.
	
	On the other hand, the canonical Mirkovic--Vilonen cycles can be defined for arbitrary quasi-split \(G\), not necessarily essentially unramified.
	Moreover, in the context of the ramified Satake equivalence, restriction of representations along \(\widehat{G}^I\subseteq \widehat{G}\) is given by nearby cycles \cite{Zhu:Ramified} along a deformation of a split affine Grassmannian into \(\Gr_{\Gg}\).
	On this split affine Grassmannian, one also has MV cycles.
	It is an interesting question to find a direct map from MV-cycles on such a split affine Grassmannian to canonical MV cycles on \(\Gr_{\Gg}\), which would correspond to the restriction along \(\widehat{G}^I\subseteq \widehat{G}\).
\end{rmk}

\begin{prop}\thlabel{Satake to MV cycles}
	Let \(\mu\in X_*(T)^+\), and \(\nu_I,\lambda_I\in X_*(T)_I\) such that \(\nu_I\) and \(\lambda_I+\nu_I\) are dominant.
	Then there is a unique injective map
	\[\iota_{\IM\IV}^{\spl}\colon \IS_{\nu_I,\mu\mid \lambda_I+\nu_I}^{\spl} \to \coprod_{\lambda\in X_*(T)\colon \lambda\mapsto \lambda_I\in X_*(T)_I} \IM\IV^{\spl}_\mu(\lambda),\]
	characterized such that for each \(\bfa\in \IS_{\nu_I,\mu\mid \lambda_I+\nu_I}^{\spl}\), we have 
	\[\Gr_{\nu_I,\mu\mid \lambda_I+\nu_I}^{\spl,\bfa} \cap (\bigsqcup_{\lambda\in X_*(T)\colon \lambda\mapsto \lambda_I\in X_*(T)_I} \Ss_{\nu_I}^{\can}\widetilde{\times}\Ss_\lambda^{\spl}) = (\Ss_{\nu_I}^{\can}\cap \Gr_{\Gg,\leq \nu_I}^{\can}) \widetilde{\times} (\Ss_\lambda^{\spl}\cap \Gr_{\Gg,\leq \mu}^{\spl})^{\iota_{\IM\IV}^{\spl}(\bfa)}\]
	as subschemes of \(\Gr_{\Gg}^{\can}\widetilde{\times} \Gr_{\Gg}^{\spl}\).
	
	Similarly, there is a unique injective map
	\[\iota_{\IM\IV}^{\can} \colon \IS_{\nu_I,\mu_I\mid \lambda_I+\nu_I}^{\can} \to \IM\IV^{\can}_{\mu_I}(\lambda_I),\]
	such that for each \(\bfa\in \IS_{\nu_I,\mu_I\mid \lambda_I+\nu_I}^{\can}\) we have 
	\[\Gr_{\nu_I,\mu_I\mid \lambda_I+\nu_I}^{\can,\bfa}\cap (\Ss_{\nu_I}^{\can} \widetilde{\times} \Ss_{\lambda_I}^{\can}) = (\Ss_{\nu_I}^{\can} \cap \Gr_{\Gg,\leq \nu_I}^{\can}) \widetilde{\times} (\Ss_{\lambda_I}^{\can} \cap \Gr_{\Gg,\leq \mu_I}^{\can})^{\iota_{\IM\IV}^{\can}(\bfa)}\]
	as subschemes of \(\Gr_{\Gg}^{\can} \widetilde{\times} \Gr_{\Gg}^{\can}\).
	
	Finally, the diagram
	\begin{equation}\label{comparison Satake vs MV}\begin{tikzcd}
		\IS_{\nu_I,\mu\mid \lambda_I+\nu_I} \arrow[r, "\iota_{\IM\IV}^{\spl}"] \arrow[d, "\beta^{\IS}"'] & \coprod_{\lambda\in X_*(T)\colon \lambda\mapsto \lambda_I\in X_*(T)_I} \IM\IV^{\spl}_\mu(\lambda) \arrow[d, "\beta^{\IM\IV}"]\\
		\coprod_{\mu_I'\leq \mu_I\in X_*(T)_I^+} \IS_{\nu_I,\mu_I'\mid \lambda_I+\nu_I} \arrow[r, "\iota_{\IM\IV}^{\can}"'] & \coprod_{\mu_I'\leq \mu_I\in X_*(T)_I^+} \IM\IV^{\can}_{\mu_I'}(\lambda_I)
	\end{tikzcd}\end{equation}
	commutes and is cartesian.
\end{prop}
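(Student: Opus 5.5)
The plan follows the classical argument in \cite[§3.2]{XiaoZhu:Cycles} (cf.~also \cite[Proposition 3.11]{vdH:RamifiedSatake}): use the $L^+\Gg$-equivariance of the first factor together with the semi-infinite stratification. Treat the canonical statement first, then the splitting one.

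\textbf{Canonical case.} Consider the projection $\Gr_{\nu_I,\mu_I\mid \lambda_I+\nu_I}^{\can}\to \Gr_{\Gg,\leq\nu_I}^{\can}$ and intersect with $\Ss_{\nu_I}^{\can}\widetilde{\times}\Ss_{\lambda_I}^{\can}$. Since $\nu_I$ is dominant, $\Ss_{\nu_I}^{\can}\cap\Gr_{\Gg,\leq\nu_I}^{\can}$ is open dense in $\Gr_{\Gg,\leq\nu_I}^{\can}$, and it is the orbit of $\varpi^{\nu_I}$ under the positive unipotent part $L^+\mathcal{U}$. Translating by $L\mathcal{U}$ identifies the fiber over a point $u\varpi^{\nu_I}$ with $\varpi^{\nu_I}\cdot(\Ss_{\lambda_I}^{\can}\cap\Gr_{\Gg,\leq\mu_I}^{\can})$. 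This gives
$$\Gr^{\can}_{\nu_I,\mu_I\mid\lambda_I+\nu_I}\cap(\Ss_{\nu_I}^{\can}\widetilde\times\Ss_{\lambda_I}^{\can})\cong(\Ss_{\nu_I}^{\can}\cap\Gr^{\can}_{\Gg,\leq\nu_I})\widetilde\times(\Ss_{\lambda_I}^{\can}\cap\Gr_{\Gg,\leq\mu_I}^{\can}),$$
where the fiber-product condition over $\Gr_{\Gg,\leq\lambda_I+\nu_I}^{\can}$ is automatic: the image lies in $\Ss_{\lambda_I+\nu_I}^{\can}$, and $\Ss_{\lambda_I+\nu_I}^{\can}$ meets $\Gr^{\can}_{\Gg,\leq \lambda_I+\nu_I}$ only in its open part. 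The right-hand side is equidimensional of dimension $\langle\rho,2\nu_I+\mu_I+\lambda_I\rangle$ by \cite[Proposition 3.11]{vdH:RamifiedSatake}, which equals the top dimension $\langle\rho,\nu_I+\mu_I+(\lambda_I+\nu_I)\rangle$. Moreover, the intersection with the semi-infinite stratum is open in the full correspondence, since $\Ss_{\nu_I}^{\can}\widetilde\times\Ss^{\can}_{\lambda_I}$ is the top stratum in the relevant attracting decomposition. Hence each top-dimensional component $\bfa$ either meets this open set, and then determines a unique MV cycle, or does not. Injectivity is then clear. The main point is that every $\bfa$ does meet it, and this is the step I expect to be hardest. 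I would handle it by a dimension count on the complementary strata $\Ss_{\nu'_I}\widetilde\times\Ss_{\lambda'_I}$ with $\nu'_I<\nu_I$: each such stratum has dimension at most $\langle\rho,\nu_I+\nu'_I\rangle+\langle\rho,\mu_I+\lambda'_I\rangle$ with $\nu_I'+\lambda_I'=\nu_I+\lambda_I$, which is strictly smaller than the top dimension. Alternatively, one can import the argument of \cite[Lemma 3.2.3]{XiaoZhu:Cycles} verbatim.

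\textbf{Splitting case.} Apply the same argument with $\Gr^{\spl}_{\Gg,\leq\mu}$ in the second factor, using the stratification $\Ss_\lambda^{\spl}$ and the equidimensionality $\dim(\Gr_{\Gg,\mu}^{\spl}\cap\Ss^{\spl}_\lambda)=\langle\rho,\mu+\nu\rangle$ noted before \thref{Defi MV sets}. The fiber condition only sees the image in $\Gr^{\can}_{\Gg}$, which lies in $\Ss^{\can}_{\lambda_I+\nu_I}$, so one must take the union over all lifts $\lambda$ of $\lambda_I$.

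\textbf{Compatibility.} For the square \eqref{comparison Satake vs MV}, note that $\beta^{\IS}$ and $\beta^{\IM\IV}$ are both defined by taking the image of the generic point under the convolution map $\Gr^{\spl}\to\Gr^{\can}$, and this map is compatible with the product decompositions above because $\Ss^{\spl}_\lambda$ maps to $\Ss^{\can}_{\lambda_I}$. Commutativity then follows by comparing generic points. For cartesianness, both horizontal maps are injections onto the MV cycles cut out by the open locus. Under this identification, the condition that a splitting MV cycle lies in the image of $\iota_{\IM\IV}^{\spl}$ is that the corresponding component lies in the fiber product over $\Gr^{\can}_{\Gg,\leq\lambda_I+\nu_I}$. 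That condition depends only on the image in $\Gr^{\can}$, and hence only on $\beta^{\IM\IV}$ of the cycle, which gives the cartesian property.
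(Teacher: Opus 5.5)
Two steps of your canonical-case argument fail. Since you obtain the splitting case by ``the same argument'', they fail there as well.

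First, the fiber-product condition over \(\Gr^{\can}_{\Gg,\leq\lambda_I+\nu_I}\) is \emph{not} automatic. Knowing \(m(x)\in\Ss^{\can}_{\lambda_I+\nu_I}\) does not place \(m(x)\) in \(\Gr^{\can}_{\Gg,\leq\lambda_I+\nu_I}\). For a counterexample, take \(\nu_I=\lambda_I=0\) and \(\mu_I\) with \(\langle\rho,\mu_I\rangle>0\) and \(0\) a weight of \(V_{\mu_I}\). Then the correspondence \(\Gr^{\can}_{0,\mu_I\mid 0}\) is a single point, so \(\IS^{\can}_{0,\mu_I\mid 0}=\varnothing\). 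Your formula would instead give \(\Ss^{\can}_0\cap\Gr^{\can}_{\Gg,\leq\mu_I}\), which has dimension \(\langle\rho,\mu_I\rangle\). Your identification would make \(\iota^{\can}_{\IM\IV}\) bijective, which is false in general; this is why only injectivity is claimed, and why \thref{minimal elements in satake to mv} is needed later.

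What is true is the following. Let \(P:=(\Ss^{\can}_{\nu_I}\cap\Gr^{\can}_{\Gg,\leq\nu_I})\widetilde\times(\Ss^{\can}_{\lambda_I}\cap\Gr^{\can}_{\Gg,\leq\mu_I})\), which is equidimensional. The intersection is the closed subset of \(P\) cut out by \(m\in\Gr^{\can}_{\Gg,\leq\lambda_I+\nu_I}\). This still suffices for the characterization and for injectivity. Indeed, the open part of a top-dimensional component meeting it is irreducible, closed in \(P\), and of dimension \(\dim P\). Hence it has the form \((\Ss^{\can}_{\nu_I}\cap\Gr^{\can}_{\Gg,\leq\nu_I})\widetilde\times(\Ss^{\can}_{\lambda_I}\cap\Gr^{\can}_{\Gg,\leq\mu_I})^{\bfb}\).

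Second, the step you flag as the main one does not go through. If \(\nu'_I+\lambda'_I=\nu_I+\lambda_I\), your bound \(\langle\rho,\nu_I+\nu'_I\rangle+\langle\rho,\mu_I+\lambda'_I\rangle\) equals \(\langle\rho,2\nu_I+\mu_I+\lambda_I\rangle\). That is exactly the top dimension, so nothing is excluded. You also never consider the strata where \(m(x)\) lies in \(\Ss^{\can}_\kappa\) with \(\kappa\neq\lambda_I+\nu_I\).

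The missing input is semismallness of convolution. Let \(C\) be a top-dimensional component.
\begin{itemize}
\item Semismallness of \(m\) forces \(m(C)=\Gr^{\can}_{\Gg,\leq\lambda_I+\nu_I}\).
\item Semismallness of \(\Gr^{\can}_{\Gg,\leq\nu'_I}\widetilde\times\Gr^{\can}_{\Gg,\leq\mu_I}\to\Gr^{\can}_{\Gg}\) for \(\nu'_I<\nu_I\) shows that \(C\) cannot lie over \(\Gr^{\can}_{\Gg,\leq\nu'_I}\). Hence \(\pr(C)=\Gr^{\can}_{\Gg,\leq\nu_I}\).
\item The preimages in \(C\) of the dense opens \(\Ss^{\can}_{\nu_I}\cap\Gr^{\can}_{\Gg,\leq\nu_I}\) and \(\Ss^{\can}_{\lambda_I+\nu_I}\cap\Gr^{\can}_{\Gg,\leq\lambda_I+\nu_I}\) are then nonempty opens of the irreducible \(C\), so they meet.
\end{itemize}
The same argument works verbatim with \(\Gr^{\spl}_{\Gg,\leq\mu}\).

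The paper proceeds differently. It takes the canonical case from \cite[Lemma 3.2.7]{XiaoZhu:Cycles}. It deduces nonemptiness in the splitting case from \thref{comparing Satake cycles}: the image of \(\Gr^{\spl,\bfa}_{\nu_I,\mu\mid\lambda_I+\nu_I}\) contains \(\Gr^{\can,\beta^{\IS}(\bfa)}_{\nu_I,\mu'_I\mid\lambda_I+\nu_I}\), which meets the open locus. It then proves cartesianness by comparing fiber cardinalities of \(\beta^{\IS}\) and \(\beta^{\IM\IV}\); both equal the multiplicity of \(V_{\mu'_I}\) in \(V_\mu\) restricted to \(\widehat{G}^I\).

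Once the image of \(\iota\) is described correctly, your geometric cartesianness argument is a valid alternative to that count. It uses that the image in \(\Gr^{\can}_{\Gg}\) of a splitting MV cycle \(\bfb\) is exactly \(\beta^{\IM\IV}(\bfb)\), which holds for dimension reasons.
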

\begin{proof}
	The map \(\iota_{\IM\IV}^{\can}\) be constructed similar to \cite[Lemma 3.2.7]{XiaoZhu:Cycles}; we explain how to deduce \(\iota_{\IM\IV}^{\spl}\).
	
	Let \(\bfa\in \IS_{\nu_I,\mu\mid \lambda_I+\nu_I}^{\spl}\).
	It suffices to show the intersection
	\[Z^{\spl}:=\Gr_{\nu_I,\mu\mid \lambda_I+\nu_I}^{\spl,\bfa} \cap (\bigsqcup_{\lambda\in X_*(T)\colon \lambda\mapsto \lambda_I\in X_*(T)_I} \Ss_{\nu_I}^{\can}\widetilde{\times}\Ss_\lambda^{\spl}) \subseteq \Gr_{\Gg}^{\can} \widetilde{\times} \Gr_{\Gg}^{\spl}\]
	is irreducible of dimension \(\langle \rho,\lambda_I+\mu_I+2\nu_I\rangle\).
	Indeed, then this subscheme must be of the form \((\Ss_{\nu_I}^{\can}\cap \Gr_{\Gg,\leq \nu_I}^{\can}) \widetilde{\times} (\Ss_\lambda^{\spl}\cap \Gr_{\Gg,\leq \mu}^{\spl})^{\iota_{\IM\IV}^{\spl}(\bfa)}\) for some uniquely defined \(\lambda\in X_*(T)\) lifting \(\lambda_I\) and \(\iota_{\IM\IV}^{\spl}(\bfa)\in \IM\IV^{\spl}_\mu(\lambda)\), determining the map \(\iota_{\IM\IV}^{\spl}\).
	As the base change of the open immersion \[(\Ss_{\nu_I}^{\can}\cap \Gr_{\Gg,\leq \nu_I}^{\can}) \times (\Ss_{\lambda_I+\nu_I}^{\can}\cap \Gr_{\Gg,\leq \lambda_I+\nu_I}^{\can}) \subseteq \Gr_{\Gg,\leq \nu_I}^{\can} \times \Gr_{\Gg,\leq \lambda_I+\nu_I}^{\can},\]
	we see that \(Z\) is an open subscheme of \(\Gr_{\nu_I,\mu\mid \lambda_I+\nu_I}^{\spl,\bfa}\).
	In particular, the irreducibility and desired dimension will follow, once we know that \(Z^{\spl}\) is nonempty.
	
	For this, note that by \thref{comparing Satake cycles}, the image of \(Z^{\spl}\) under \(\Gr_{\nu_I,\mu\mid \lambda_I+\nu_I}^{\spl}\to \Gr_{\nu_I,\mu_I\mid \lambda_I+\nu_I}^{\can}\) contains
	\[Z^{\can} := \Gr_{\nu_I,\mu_I'\mid \lambda_I+\nu_I}^{\can,\beta^{\IS}(\bfa)} \cap \Ss_{\nu_I}^{\can} \widetilde{\times} \Ss_{\lambda_I}^{\can} \subseteq \Gr_{\Gg}^{\can} \widetilde{\times} \Gr_{\Gg}^{\can},\]
	for some \(\mu_I'\leq \mu_I\).
	Since \(Z^{\can}\) is non-empty by the proof of \cite[Lemma 3.2.7]{XiaoZhu:Cycles}, the same holds for \(Z^{\spl}\).
	
	The commutativity of \eqref{comparison Satake vs MV} follows by construction.
	To see that it is moreover cartesian, since \(\iota_{\IM\IV}^{\spl}\) is injective it suffices to observe that the fibers of \(\beta^{\IS}\) and \(\beta^{\IM\IV}\) over an element indexed by \(\mu_I'\leq \mu_I\) have the same cardinality.
	Indeed, this cardinality is equal to the multiplicity of \(V_{\mu_I'}\) in the restriction of \(V_\mu\) to \(\widehat{G}^I\), which follows from \thref{basic properties of Satake correspondences} (3) and \thref{remark MV cycles} respectively.
\end{proof}

\section{Splitting affine Deligne--Lusztig varieties}\label{Sec:ADLV}

To understand Newton strata of Shimura varieties, it is important to study the irreducible components of affine Deligne--Lusztig varieties, which are related to Shimura varieties via Rapoport--Zink uniformization.
Our goal in this section is to define variants of the affine Deligne--Lusztig varieties in the context of splitting models, and study their irreducible components.

\subsection{Recollections on the Kottwitz set}

For now, we let \(G\) be any quasi-split reductive group over the nonarchimedean local field \(F\), and \(\Gg/\Oo_F\) a very special parahoric model.
Recall that affine Deligne--Lusztig varieties are attached to pairs of elements: a Schubert cell in \(\Gr_{\Gg}^{\can}\), and an element in the Kottwitz set \(B(G)\), classifying \(G\)-isocrystals \cite{Kottwitz:Isocrystals1,Kottwitz:Isocrystals2}.
This Kottwitz set is defined as \(B(G):=G(\breve{F})/b\sim g^{-1}b\sigma(g)\), i.e., the set of \(\sigma\)-twisted conjugacy classes in \(G(\breve{F})\).
To such a \(b\in B(G)\) we can also associate the twisted centralizer group, which is the reductive \(F\)-group given by
\[J_b\colon R\mapsto \{g\in G(R\otimes_F \breve{F})\mid g^{-1}b\sigma(g)=b\}.\]
Up to isomorphism, this does not depend on the choice of representative of \(b\in B(G)\).
Kottwitz \cite{Kottwitz:Isocrystals1} defines two invariants of \(B(G)\): the Newton map 
\[\nu\colon B(G)\to (X_*(T)^+_{\IQ})^{\Gamma_F}\colon b\mapsto \nu_b,\]
where we recall that \(T\subseteq B\subseteq G\) are a maximal torus and Borel defined over \(F\), and the Kottwitz map
\[\kappa\colon B(G)\to \pi_1(G)_{\Gamma_F}.\]
Together, they characterize \(B(G)\) in the sense that \(\nu\times \kappa\colon B(G)\to (X_*(T)^+_{\IQ})^{\Gamma_F}\times \pi_1(G)_{\Gamma_F}\) is injective.
This induces a partial order \(\leq\) on \(B(G)\), by declaring \(b\leq b'\) when \(\kappa(b)=\kappa(b')\) and \(\nu_b\leq \nu_{b'}\) with respect to the (rationalized) dominance order.
The minimal elements for this partial order are exactly the basic elements, defined as those \(b\) for which \(\nu_b\) is central.
Moreover, \(\kappa\) induces a bijection between the basic elements in \(B(G)\) and \(\pi_1(G)_{\Gamma_F}\) \cite[Proposition 5.6]{Kottwitz:Isocrystals1}.

For a dominant cocharacter \(\mu\), we denote
\[B(G,\mu)=\left\{b\in B(G)\mid \kappa(b) = \mu^\flat \text{ and } \nu(b) \leq \overline{\mu}\right\},\]
where \(\mu^\flat\) is the image of \(\mu\) in \(\pi_1(G)_{\Gamma_F}\), and \(\overline{\mu} = \frac{1}{|\Gal(\widetilde{F}/F)|}\sum_{\gamma\in \Gal(\widetilde{F}/F)}\gamma(\mu)\), for some finite \(\widetilde{F}/F\) splitting \(G\).
Then \(B(G,\mu)\) contains a unique basic element by \cite[§5]{Kottwitz:Isocrystals1}.

In case \(G\) splits over an unramified extension, the so-called unramified elements of \(B(G)\) are introduced and studied in \cite[§4.2]{XiaoZhu:Cycles}; these are the elements in the image of \(B(T)\to B(G)\), or equivalently, those elements for which \(J_b\) is an unramified \(F\)-group.
The following notion is the natural generalization to general quasi-split groups.

\begin{dfn}\thlabel{very special Kottwitz elements}
	The set \(B(G)_{\vsp}\) of \emph{very special} elements in \(B(G)\) is the image of the map \(B(T)\to B(G)\).
	Since any two choices of \(T\) are rationally conjugate, \(B(G)_{\vsp}\) is independent of this choice.
\end{dfn}

The same definition appears in \cite[Definition 2.2]{Hamann:Geometric}, where these elements are still called unramified. 
However, if \(G\) is not unramified, then neither will be \(J_b\).
Instead, we will see below that \(b\in B(G)_{\vsp}\) if and only if \(J_b\) is quasi-split, or equivalently, \(J_b\) admits a very special parahoric \cite[Lemma 6.1]{Zhu:Ramified}; whence the different terminology.

Denote by \(W_0 = W^{\Gamma_F}\) the relative Weyl group of \(G\); it acts on \(X_*(T)_{\Gamma_F}\).

\begin{lem}\thlabel{classification of spherical elements}
	The map \(X_*(T)\to B(G)\colon \mu\mapsto [\varpi^\mu]\) induces a bijection
	\[X_*(T)_{\Gamma_F}/W_0 \cong B(G)_{\vsp}.\]
\end{lem}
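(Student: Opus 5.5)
The plan is to factor the map through \(B(T)\), using Kottwitz's description of \(B(T)\), and then identify the fibers of \(B(T)\to B(G)\) with \(W_0\)-orbits via the invariants \(\nu\) and \(\kappa\).

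First, for the torus \(T\) (where \(\sigma\)-conjugacy is ordinary twisted conjugacy in an abelian group), the Kottwitz map gives an isomorphism \(\kappa_T\colon B(T)\cong X_*(T)_{\Gamma_F}\) \cite{Kottwitz:Isocrystals1}. Its inverse is induced by \(\mu\mapsto [\varpi^\mu]\): the image of \(\varpi^\mu\) under \(\kappa_T\) is the class of \(\mu\). Composing with \(B(T)\to B(G)\) shows that \(X_*(T)\to B(G)\) factors through a surjection \(X_*(T)_{\Gamma_F}\to B(G)_{\vsp}\), by the definition of \(B(G)_{\vsp}\).

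Next I check that this factors through \(W_0\)-orbits. Any \(w\in W_0=W^{\Gamma_F}\) lifts to an element \(\dot w\in \Norm_G(T)(F)\); here one uses that \(G\) is quasi-split, so \(W_0\) is the relative Weyl group of \(A\) and is realized by \(F\)-rational normalizers. Then \(\sigma(\dot w)=\dot w\), and \(\dot w\varpi^\mu\dot w^{-1}=\varpi^{w\mu}\) up to an element of \(T(\breve{\Oo}_F)\). That element is \(\sigma\)-conjugate to \(1\) in \(T\) by Lang's theorem for the connected Néron model (it lies in the kernel of \(\kappa_T\)). Hence \([\varpi^{w\mu}]=[\varpi^\mu]\) in \(B(G)\).

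The main step is injectivity of \(X_*(T)_{\Gamma_F}/W_0\to B(G)\), which I would prove using \(\nu\times\kappa\) on \(B(G)\). For \(b=[\varpi^\mu]\), \(\kappa_G(b)\) is the image of \(\mu\) in \(\pi_1(G)_{\Gamma_F}\), and \(\nu_b\) is the dominant representative of the Galois average \(\overline{\mu}\in X_*(T)_{\IQ}^{\Gamma_F}\); this follows from compatibility of \(\nu,\kappa\) with \(T\to G\). So suppose \([\varpi^\mu]=[\varpi^{\mu'}]\). Then \(\overline{\mu}\) and \(\overline{\mu'}\) are \(W_0\)-conjugate, using that \(W\)-conjugate \(\Gamma_F\)-invariant elements are \(W_0\)-conjugate. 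After replacing \(\mu'\) by a \(W_0\)-translate we may assume \(\overline{\mu}=\overline{\mu'}\). The difference \(\mu-\mu'\in X_*(T)_{\Gamma_F}\) then has torsion image in the \(\Gamma_F\)-average, and it maps to zero in \(\pi_1(G)_{\Gamma_F}\). The obstacle is to conclude from this that \(\mu=\mu'\) in \(X_*(T)_{\Gamma_F}\), perhaps only after a further \(W_0\)-twist by elements fixing \(\overline{\mu}\).

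My approach to this obstacle is to pass to the centralizer \(M=\Cent_G(\overline{\mu})\), which is a Levi defined over \(F\). By standard reductions (\cite[§4, §6]{Kottwitz:Isocrystals2}), \(\varpi^\mu\) and \(\varpi^{\mu'}\) are basic in \(M\), and they are \(\sigma\)-conjugate in \(G\) with the same Newton point. This gives \(\sigma\)-conjugacy in \(M\) up to an element of \(W_0\) normalizing \(M\). For basic elements of \(M\), the invariant \(\kappa_M\) is injective into \(\pi_1(M)_{\Gamma_F}\). It remains to show that \(X_*(T)_{\Gamma_F}\to \pi_1(M)_{\Gamma_F}\) is injective on elements with fixed central Newton point. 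The kernel is generated by coroots of \(M\), whose average lies in the span of \(M\)-coroots. That average is zero on \(Z(M)\), while the Newton point of such an element is central, so the kernel element must be trivial. The torsion part needs care, and the identification \(\pi_1(M)=X_*(T)/\langle \text{coroots of } M\rangle\) handles it.

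With this, \(\kappa_M\) detects the class in \(X_*(T)_{\Gamma_F}\), which gives injectivity modulo \(W_0\).
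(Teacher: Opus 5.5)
Your overall strategy is sound, and it gives more than the paper does: the paper only cites \cite{XiaoZhu:Cycles} for the unramified case and \cite{Hamann:Geometric} in general. The earlier steps are fine:
\begin{itemize}
\item surjectivity via \(\kappa_T\colon B(T)\cong X_*(T)_{\Gamma_F}\);
\item \(W_0\)-invariance via \(F\)-rational lifts of \(W_0=W(G,A)\) together with Lang's theorem for \(\Tt\);
\item the reduction to \(\overline{\mu}=\overline{\mu'}\) via Newton points.
\end{itemize}

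The gap is at the one step with real content, which you flag (``the torsion part needs care'') but do not prove. Set \(Q_M^\vee=\IZ\Phi_M^\vee\). Right exactness of \(\Gamma_F\)-coinvariants applied to \(0\to Q_M^\vee\to X_*(T)\to\pi_1(M)\to 0\) only tells you that \(\delta=\mu-\mu'\) is the image of some \(q\in (Q_M^\vee)_{\Gamma_F}\). Your centrality argument only shows \(\overline{\delta}=0\), i.e.\ that \(\delta\) is torsion in \(X_*(T)_{\Gamma_F}\), and that group can have torsion. The identification \(\pi_1(M)=X_*(T)/Q_M^\vee\) does not by itself exclude a nonzero torsion \(\delta\). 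For instance, if \(\Gamma_F\) acted on a coroot by \(-1\), then \((Q_M^\vee)_{\Gamma_F}\) would contain a \(\IZ/2\) whose image could be such a \(\delta\).

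The missing input is quasi-splitness once more:
\begin{itemize}
\item \(\Gamma_F\) preserves \(\Phi^+=\Phi(B,T)\) for the \(F\)-rational Borel \(B\), and it fixes \(\overline{\mu}\).
\item Hence it preserves \(\Phi_M^+=\Phi_M\cap\Phi^+\) and permutes the simple coroots of \(M\), which form a \(\IZ\)-basis of \(Q_M^\vee\).
\item Therefore \((Q_M^\vee)_{\Gamma_F}\) is free on the \(\Gamma_F\)-orbits of simple coroots, so the averaging map \((Q_M^\vee)_{\Gamma_F}\to (Q_M^\vee\otimes\IQ)^{\Gamma_F}\subseteq (X_*(T)\otimes\IQ)^{\Gamma_F}\) is injective.
\item So \(\overline{q}=\overline{\delta}=0\) forces \(q=0\), hence \(\delta=0\).
\end{itemize}
In particular, no further \(W_0\)-twist by elements fixing \(\overline{\mu}\) is needed.

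You can also pass to \(M\) more cleanly than via ``standard reductions''. Suppose \(g^{-1}\varpi^{\mu}\sigma(g)=\varpi^{\mu'}\) with \(\overline{\mu}=\overline{\mu'}\). Functoriality of the Newton homomorphism, \(\nu_{g^{-1}b\sigma(g)}=\mathrm{Int}(g^{-1})\circ\nu_b\), shows that \(g\) centralizes \(\overline{\mu}\). So \(g\in M(\breve{F})\) with \(M=\Cent_G(\overline{\mu})\), which gives honest \(\sigma\)-conjugacy in \(M\), with no twist by an element of \(W_0\) normalizing \(M\). You then only need that \(\kappa_M\) restricted to \(B(T)\) is the projection \(X_*(T)_{\Gamma_F}\to\pi_1(M)_{\Gamma_F}\). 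The injectivity of \(\kappa_M\) on basic elements is not needed at all.
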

\begin{proof}
	This is proven in \cite[Lemma 4.2.2]{XiaoZhu:Cycles} in case \(G\) is unramified, and \cite[Lemma 2.3]{Hamann:Geometric} in general. 
\end{proof}

Now, consider the Langlands dual group \(\widehat{G}\) of \(G\), defined over \(\IQ\).
This is a pinned reductive group, which comes equipped with a \(\Gamma_F\)-action preserving the maximal torus and Borel \(\widehat{T}\subseteq \widehat{B}\subseteq \widehat{G}\).
Then the fixed points \(\widehat{G}^{\Gamma_F}\) have a split reductive neutral component and constant \(\pi_0\),  \cite[Proposition 5.4 (1) and (7)]{ALRR:Fixed}.
Moreover, a maximal diagonalizable subgroup is given by \(\widehat{T}^{\Gamma_F}\), with character group \(X_*(T)_{\Gamma_F}\).
The same argument as \cite[§5.5]{Haines:Dualities} (cf.~also \cite[Lemma 4.10]{Zhu:Ramified}) then shows that the category of algebraic \(\IQ\)-linear representations of \(\widehat{G}^{\Gamma_F}\) is semisimple, with simple objects the highest weight representations, which are indexed by \(X_*(T)^+_{\Gamma_F}\).

\begin{lem}\thlabel{unramified in BGmu}
	Let \(\mu\in X_*(T)^+\) with image \(\mu_I\in X_*(T)^+_I\), and consider the irreducible \(\IQ\)-representation \(V_{\mu_I}\) of \(\widehat{G}^I\) of highest weight \(\mu_I\).
	Let \(\tau\in X_*(T)\), with corresponding element \([\varpi^\tau]\in B(G)_{\vsp}\).
	Then \([\varpi^\tau]\in B(G,\mu)\) if and only if the subspace \(\bigoplus_{\lambda_I\in \tau_I+(\sigma-1)X_*(T)_I} V_{\mu_I}(\lambda_I) \subseteq V_{\mu_I}\) does not vanish.
\end{lem}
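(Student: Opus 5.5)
The plan is to turn both sides of the equivalence into statements about the image \(\tau_{\Gamma_F}\) of \(\tau\) in \(X_*(T)_{\Gamma_F}=(X_*(T)_I)_\sigma\), and then compare them using the representation theory of \(\widehat{G}^{\Gamma_F}=(\widehat{G}^I)^\sigma\) recalled just before the lemma.

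\emph{Reformulating the representation side.} The coset \(\tau_I+(\sigma-1)X_*(T)_I\) is exactly the fiber over \(\tau_{\Gamma_F}\) of the projection \(X_*(T)_I\to X_*(T)_{\Gamma_F}\). So the subspace in the statement is nonzero if and only if \(\tau_{\Gamma_F}\) is a weight of the restriction of \(V_{\mu_I}\) to the diagonalizable group \(\widehat{T}^{\Gamma_F}\), whose character group is \(X_*(T)_{\Gamma_F}\). By \thref{classification of spherical elements} both sides are invariant under \(W_0\). I will therefore replace \(\tau\) by a representative whose image \(\tau_{\Gamma_F}\) is dominant, i.e.\ lies in \(X_*(T)_{\Gamma_F}^+\).

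\emph{Reformulating the Kottwitz side.} For \(b=[\varpi^\tau]\), I will use two standard facts. First, \(\kappa(b)\) is the image of \(\tau\) in \(\pi_1(G)_{\Gamma_F}\). Second, \(\nu_b\) is the dominant representative of the \(\Gamma_F\)-average of \(\tau\), which only depends on \(\tau_{\Gamma_F}\). Hence \(b\in B(G,\mu)\) means two things: \(\tau_{\Gamma_F}\) and \(\mu_{\Gamma_F}\) have the same image in \(\pi_1(G)_{\Gamma_F}\), and the rational averages satisfy \(\overline{\tau}\leq\overline{\mu}\) in the dominance order.

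\emph{The easy direction (\(\Rightarrow\)).} Suppose \(\lambda_I\) is a weight of \(V_{\mu_I}\) lying in the coset. Then \(\lambda_I-\mu_I\) lies in the coroot lattice of \(\widehat{G}^I\), whose image in \(\pi_1(G)_{\Gamma_F}\) vanishes, so the \(\kappa\)-condition holds. Moreover, \(\lambda_I\) lies in the convex hull of \(W^I\mu_I\). Averaging over \(\sigma\) is linear and maps this hull into the hull of the \(W_0\)-orbit of \(\overline{\mu}\). Hence the dominant representative of the average of \(\lambda_I\), which is \(\nu_b\), satisfies \(\nu_b\leq\overline{\mu}\).

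\emph{The converse direction (\(\Leftarrow\)), and the main obstacle.} Here I must produce an actual lattice weight of \(V_{\mu_I}\) in the fiber over \(\tau_{\Gamma_F}\). I plan to argue through the restriction of \(V_{\mu_I}\) to \(\widehat{G}^{\Gamma_F}\). This restriction contains the highest weight representation of highest weight \(\mu_{\Gamma_F}\), namely the image of the highest weight line, by the semisimplicity and highest-weight classification recalled before the lemma. The weights of that representation form a saturated set in \(X_*(T)_{\Gamma_F}\). As in \cite[§5.5]{Haines:Dualities} for the neutral component, extended to the full group using that \(\pi_0\) is constant, this saturated set should consist of all \(\lambda\) in the class of \(\mu_{\Gamma_F}\) modulo the relative coroot lattice whose dominant representative is \(\leq\mu_{\Gamma_F}\).

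It then remains to check that the \(\kappa\)- and \(\nu\)-conditions on the dominant element \(\tau_{\Gamma_F}\) give exactly these two conditions. The delicate point is torsion in \(X_*(T)_{\Gamma_F}\) and in \(\pi_1(G)_{\Gamma_F}\). The \(\nu\)-inequality only controls the difference \(\mu_{\Gamma_F}-\tau_{\Gamma_F}\) rationally. One needs the equality of \(\kappa\)-invariants to upgrade "a nonnegative rational combination of relative simple coroots" to membership in the integral relative coroot lattice, modulo torsion that is absorbed by \(\widehat{T}^{\Gamma_F}\). I expect this comparison to be the main obstacle. I would first try to handle it by reducing, via \thref{tori and central isogenies} and products, to \(G\) adjoint and absolutely simple after restriction of scalars, where \(X_*(T_{\adj})_I\) is torsion-free; alternatively I would cite \cite[Lemma 2.3]{Hamann:Geometric}.
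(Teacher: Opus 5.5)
\textbf{Gap in the converse direction.} Your reduction to \(\tau_{\Gamma_F}\) dominant, and your identification of the subspace with the \(\tau_{\Gamma_F}\)-weight space of \(V_{\mu_I}\) restricted to \(\widehat{T}^{\Gamma_F}\), are exactly the paper's first two steps. Your direct proof of (\(\Rightarrow\)), via the \(\kappa\)-invariant and the convex-hull estimate, is fine.

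The gap is in (\(\Leftarrow\)). The content of that direction is that \(\kappa(b)=\mu^\flat\) and \(\nu_b\le\overline{\mu}\) together force \(\mu_{\Gamma_F}-\tau_{\Gamma_F}\) to be a nonnegative integral combination of simple roots of \(\widehat{G}^{\Gamma_F}\). You flag this as the ``main obstacle'' but never prove it, and neither fallback closes it:
\begin{itemize}
\item Hamann's Lemma 2.3 is the classification \(X_*(T)_{\Gamma_F}/W_0\cong B(G)_{\vsp}\), which is the input for the paper's classification of very special elements. It says nothing about \(B(G,\mu)\).
\item Passing to \(G_{\adj}\) discards exactly the \(\pi_1(G)_{\Gamma_F}\)-information carried by \(\kappa\), which you would then have to recover.
\end{itemize}
The paper does not prove this step by hand either. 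It cites \cite[Corollary 2.6]{Hamann:Geometric}, which gives \([\varpi^\tau]\in B(G,\mu)\iff\tau_{\Gamma_F}\le\mu_{\Gamma_F}\) for dominant \(\tau_{\Gamma_F}\).

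\textbf{How to close it directly.} The step is short once you use the right lattice. This is not a ``relative coroot lattice'' (that lives in \(X_*(A)\)); it is the image of the coroot lattice \(Q^\vee\) of \(G\) in \(X_*(T)_{\Gamma_F}\), i.e.\ the root lattice of \(\widehat{G}^{\Gamma_F}\).
\begin{enumerate}
\item Since the Galois action is pinned, \(\Gamma_F\) permutes the simple coroots. So this image is spanned by the images \(a_O\) of the simple coroots, one for each \(\Gamma_F\)-orbit \(O\), and these \(a_O\) are the simple roots of \(\widehat{G}^{\Gamma_F}\).
\item By right exactness of coinvariants, this image is exactly \(\ker(X_*(T)_{\Gamma_F}\to\pi_1(G)_{\Gamma_F})\). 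Hence \(\kappa(b)=\mu^\flat\) gives \(\mu_{\Gamma_F}-\tau_{\Gamma_F}=\sum_O n_O a_O\) with \(n_O\in\IZ\), as an exact equality in \(X_*(T)_{\Gamma_F}\), torsion included.
\item The rational images of the \(a_O\) are averages over disjoint orbits of simple coroots, hence linearly independent. So the \(n_O\) are determined by the rational image, and in particular \(\sum_O\IZ a_O\) meets the torsion of \(X_*(T)_{\Gamma_F}\) trivially.
\item For dominant \(\tau_{\Gamma_F}\) one has \(\nu_b=\overline{\tau}\). The inequality \(\nu_b\le\overline{\mu}\) then says precisely that \(n_O\ge 0\) for all \(O\).
\item The \(\widehat{G}^{\Gamma_F}\)-submodule generated by the highest weight line of \(V_{\mu_I}\) has all its \(\widehat{T}^{\Gamma_F}\)-weights in \(\mu_{\Gamma_F}-\sum_O\IZ_{\ge 0}a_O\). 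By step 3, this set maps injectively to \(X^*((\widehat{T}^{\Gamma_F})^\circ)\).
\item Saturation of weights for the connected group \((\widehat{G}^{\Gamma_F})^\circ\) then shows that \(\tau_{\Gamma_F}\) is one of these weights.
\end{enumerate}
No torsion ever has to be ``absorbed''.
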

\begin{proof}
	By replacing \(\tau\) by an element in its \(W_0\)-orbit, we may assume \(\tau_{\Gamma_F}\in X_*(T)^+_{\Gamma_F}\).
	Then \[\bigoplus_{\lambda_I\in \tau_I+(\sigma-1)X_*(T)_I} V_{\mu_I}(\lambda_I)\] is exactly the \(\tau_{\Gamma_F}\)-weight space for \(V_{\mu_I\mid \widehat{G}^{\Gamma_F}}\), which is nontrivial exactly when \(\tau_{\Gamma_F}\leq \mu_{\Gamma_F}\).
	By \cite[Corollary 2.6]{Hamann:Geometric}, this is equivalent to \([\varpi^\tau]\in B(G,\mu)\).
\end{proof}

Let \(m\) be the degree of some unramified extension of \(F\) over which \(G\) becomes residually split, and let
\[\Lambda:=\left\{\lambda\in X_*(T)_I\mid \sum_{i=0}^{m-1} \sigma^i(\lambda)\in X_*(Z_G)_I\right\}.\]
Then, for a \(\widehat{G}^I\)-representation \(V\), we define \begin{equation}\label{Tatep}V^{\Tatep}:= \bigoplus_{\lambda_I\in \Lambda} V(\lambda_I).\end{equation}
Similarly, following \thref{Defi MV sets}, for \(\mu_I\in X_*(T)_I^+\) (resp.~\(\mu\in X_*(T)^+\)), we define
\begin{equation}\label{Tate MV cycles}\IM\IV_{\mu_I}^{\can,\Tatep}:= \bigsqcup_{\lambda_I\in \Lambda} \IM\IV_{\mu_I}^{\can}(\lambda_I), \quad \text{resp. } \quad \IM\IV_{\mu}^{\spl,\Tatep}:=\bigsqcup_{\lambda\in X_*(T) \colon \lambda_I\in \Lambda} \IM\IV_\mu^{\spl}(\lambda) ,\end{equation}
which yield bases of \(V_{\mu_I}^{\Tatep}\) (resp.~\(V_\mu^{\Tatep}\)).

\begin{lem}\thlabel{description of Lambda}
	Let \(\mu_I\in X_*(T)_I^+\).
	There exists \(\tau_I\in \Lambda\) with \(V_{\mu_I}^{\Tatep} = \bigoplus_{\lambda_I\in \tau_I+(\sigma-1)X_*(T)_I} V_{\mu_I}(\lambda_I)\).
	Similarly, if \(\mu\in X_*(T)^+\) and \(V_\mu\) is considered as a \(\widehat{G}^I\)-representation, there exists \(\tau_I \in \Lambda\) such that \(V_\mu^{\Tatep} = \bigoplus_{\lambda_I\in \tau_I+(\sigma-1)X_*(T)_I} V_\mu(\lambda_I)\).
\end{lem}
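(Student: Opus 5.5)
The key observation is that the weights of \(V_{\mu_I}\) (as a \(\widehat{G}^I\)-representation, or of \(V_\mu\) restricted to \(\widehat{G}^I\)) all lie in a single coset of the root lattice. The plan is to show that \(\Lambda\) meets this coset in exactly one class modulo \((\sigma-1)X_*(T)_I\). Write \(Q_I\subseteq X_*(T)_I\) for the image of the coroot lattice \(Q^\vee\). Every weight \(\lambda_I\) of \(V_{\mu_I}\) satisfies \(\lambda_I\in \mu_I+Q_I\), and the same holds for the \(\widehat{G}^I\)-weights of \(V_\mu\), since these are images of weights of \(V_\mu\), which lie in \(\mu+Q^\vee\). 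So it suffices to prove two things for a fixed coset \(c=\mu_I+Q_I\):
\begin{enumerate}
	\item if \(\lambda_I,\lambda_I'\in c\cap\Lambda\), then \(\lambda_I-\lambda_I'\in(\sigma-1)X_*(T)_I\);
	\item if \(c\cap\Lambda\) contains a weight and \(\lambda_I\in c\cap\Lambda\), then \(\lambda_I+(\sigma-1)X_*(T)_I\subseteq\Lambda\).
\end{enumerate}
Given these, one takes \(\tau_I\) to be any weight of \(V_{\mu_I}\) lying in \(\Lambda\). If there is none, both sides of the claimed equality vanish for any choice of \(\tau_I\in\Lambda\), for instance \(\tau_I=0\), since then \(\tau_I+(\sigma-1)X_*(T)_I\subseteq\Lambda\) avoids all weights.

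Point (2) is immediate. The norm \(N=\sum_{i=0}^{m-1}\sigma^i\) kills \((\sigma-1)X_*(T)_I\), because \(\sigma^m\) acts trivially on \(X_*(T)_I\): \(G\) is residually split over the degree-\(m\) unramified extension, so \(\sigma^m\) acts on \(X_*(T)_I\) through the inertia. Hence \(\Lambda\) is a union of \((\sigma-1)X_*(T)_I\)-cosets. Granting this, the honest content lies in point (1).

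For (1), let \(\delta=\lambda_I-\lambda_I'\in Q_I\) with \(N(\delta)\in X_*(Z_G)_I\). I would argue rationally first. On \(X_*(T)_{I,\IQ}\), the image of \(X_*(Z_G)_{I}\) intersects \(Q_{I,\IQ}\) trivially, so \(N(\delta)\) is torsion in \(Q_I\), say \(N(\delta)\in(Q_I)_{\tors}\). To pass from this to \(\delta\in(\sigma-1)X_*(T)_I\), I would reduce to the adjoint group. Projecting to \(T_{\adj}\), \(Q_I\) maps into \(X_*(T_{\adj})_I\), which is torsion-free, as noted in the proof of \thref{tori and central isogenies} (the Galois action permutes a basis). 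Moreover \(\sigma\) permutes a basis there: \(X_*(T_{\adj})_I\) is a permutation module spanned by inertia-orbit sums of fundamental coweights, and \(\sigma\) permutes these with all orbits of size dividing \(m\). For a permutation module \(\IZ[\sigma\text{-set}]\), a standard computation shows \(\ker N=(\sigma-1)\) applied to the module. This gives \(\bar\delta=(\sigma-1)\bar y\) in \(X_*(T_{\adj})_I\).

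The main obstacle is lifting this back to \(X_*(T)_I\), which may be neither torsion-free nor a permutation module, together with handling the torsion in \(Q_I\). I would use the essentially unramified structure here. Because \(G_{\adj}=\prod\Res_{F_i/F}H_i\) with \(H_i\) unramified, inertia acts on each \(X_*(T_{H_i})^{e_if_i}\) by permuting factors, so \(Q_I\cong\prod Q_{H_i}^{f_i}\) is torsion-free. Consequently \(Q_I\to X_*(T_{\adj})_I\) is injective, and \(\delta\in\ker N|_{Q_I}\). On \(Q_I\), \(\sigma\) acts via a diagram automorphism and a cyclic shift of the \(f_i\) factors, so \(Q_I\) is a permutation module on simple coroots. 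Applying the permutation-module argument directly in \(Q_I\) then gives \(\delta=(\sigma-1)y\) with \(y\in Q_I\subseteq X_*(T)_I\). The same proof applies verbatim to \(V_\mu\), which settles both statements.
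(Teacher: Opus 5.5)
Your argument is correct, but it takes a genuinely different route from the paper.

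\textbf{The paper's route.} The paper works through the Kottwitz set. For two weights \(\lambda_I,\lambda_I'\in\Lambda\), the classes \([\varpi^{\lambda_I}]\) and \([\varpi^{\lambda_I'}]\) are basic and lie in \(B(G,\mu)\) by \thref{unramified in BGmu}, so they coincide. The classification \(B(G)_{\vsp}\cong X_*(T)_{\Gamma_F}/W_0\) of \thref{classification of spherical elements} then gives \(\lambda_I=w(\lambda_I')+(\sigma-1)\nu_I\) with \(w\in W_0\). Finally, from \(N(w\lambda_I'-\lambda_I')=0\) the paper concludes, without further comment, that \(w\lambda_I'-\lambda_I'\in(\sigma-1)X_*(T)_I\).

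\textbf{Your route.} You bypass \(B(G)\) entirely. Weights differ by elements of \(Q_I\). The norm of such a difference is central and lies in the torsion-free \(Q_I\), so it vanishes. Norm-zero elements of a \(\sigma\)-permutation lattice are \((\sigma-1)\)-boundaries, by the vanishing of \(\hat{H}^{-1}\) of \(\IZ\) under a cyclic group.

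\textbf{What each buys.} The paper's last, unjustified step is exactly an instance of your lattice claim, since \(w\lambda_I'-\lambda_I'\in Q_I\). So your argument supplies that missing justification and makes the Kottwitz-set input superfluous. You also check the reverse inclusion \(\tau_I+(\sigma-1)X_*(T)_I\subseteq\Lambda\) and the degenerate case \(V^{\Tatep}=0\), which the paper passes over. What the paper's route buys is the interpretation of the Tate weights through the unique basic element of \(B(G,\mu)\). That is the form in which the lemma is used in \thref{When is basic element unramified}.

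\textbf{Remark on generality.} The detour through \(X_*(T_{\adj})_I\) is unnecessary. The appeal to the essentially unramified structure is also not available: the lemma sits in the part of the paper where \(G\) is an arbitrary quasi-split group. You do not need it, though. Since \(T\subseteq B\) are defined over \(F\), \(\Gamma_F\) permutes the simple coroots \(\Delta^\vee\). Hence:
\begin{enumerate}
\item \((Q^\vee)_I\cong\IZ[\Delta^\vee/I]\) is free.
\item It injects into \(X_*(T)_I\), because its composite to the torsion-free \(X_*(T_{\adj})_I\) is a rational isomorphism.
\item \(\sigma\) permutes this basis.
\end{enumerate}
So your proof works verbatim in the stated generality.

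\textbf{Remark on point (2).} Residual splitness over the degree-\(m\) extension only makes \(\sigma^m\) trivial on \(X_*(T)_I\otimes\IQ\), not on its torsion. For the inclusion you actually need, rational triviality already suffices when \(\tau_I\) is a weight. Indeed, if \(\lambda=\tau_I+(\sigma-1)x\) is a weight, then \((\sigma-1)x\in Q_I\). Hence \(N(\lambda)-N(\tau_I)=(\sigma^m-1)x\) is a torsion element of \(Q_I\), and so it is \(0\).

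For the degenerate choice \(\tau_I=0\), you do need \(\sigma^m=1\) on all of \(X_*(T)_I\). This means choosing \(m\) so that \(\Gamma_k\) acts on \(\widehat{G}^I\) through \(\Gal(k_m/k)\), as the paper does later and implicitly assumes here.
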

\begin{proof}
	We start with \(\mu_I\in X_*(T)_I^+\).
	Let \(\lambda_I,\lambda_I'\) be two weights of \(V_{\mu_I}\), and assume they lie in \(\Lambda\).
	Then \([\varpi^{\lambda_I}],[\varpi^{\lambda_I'}]\in B(G)\) are both basic, and since they lie in \(B(G,\mu)\) they must agree.
	As they are also very special, \thref{classification of spherical elements} implies that there exist \(w\in W_0\) and \(\nu_I\in X_*(T)_I\) such that \(\lambda_I=w(\lambda_I')+(\sigma-1)\nu_I\).
	Now, \[\sum_{i=0}^{m-1} \sigma^i(w(\lambda_I')-\lambda_I') = w(\sum_{i=0}^{m-1} \sigma^i(\lambda_I')) - \sum_{i=0}^{m-1} \sigma^i(\lambda_I'),\]
	which vanishes as \(\lambda_I'\in \Lambda\) and \(W_0\) acts trivially on \(X_*(Z_G)_I\).
	It follows that \(w(\lambda_I')-\lambda_I' = (\sigma-1)\nu_I'\) for some \(\nu_I'\in X_*(T)_I\), so that \(\lambda_I-\lambda_I' = (\sigma-1)(\nu_I+\nu_I')\).
	Taking \(\tau_I = \lambda'_I\) then yields the desired element.
	
	To prove the second assertion, i.e., we start with a dominant lift \(\mu\) of \(\mu_I\), the argument above shows that the proposition still holds with the same \(\tau_I\).
\end{proof}

We can now give alternative characterizations of \(B(G)_{\vsp}\), starting with the basic elements.
Recall that the \emph{defect} \(\defect_G(b)\) of \(b\) is defined as the difference between the \(F\)-ranks of \(G\) and \(J_b\).

\begin{prop}\thlabel{When is basic element unramified}
	Let \(\mu\in X_*(T)\), and \(b\in B(G,\mu)\) the basic element.
	The following statements are equivalent:
	\begin{enumerate}
		\item\label{item unramified} \(b\in B(G)_{\vsp}\);
		\item\label{item Tate} \(V_{\mu_I}^{\Tatep}\) is nontrivial;
		\item\label{item defect} \(\defect_G(b) = 0\);
		\item\label{item character} Consider \(\mu_{\adj}\) as a character of \(\widehat{T}_{\sico}\).
		Then the restriction of \(\mu_{\adj}\) to \(Z(\widehat{G}_{\sico})^{\Gamma_F}\) is trivial.
	\end{enumerate}
	Moreover, if the center \(Z_G\subseteq G\) is connected and the above conditions hold, then \(b\) can be represented as \(\varpi^\tau\) for some \(\tau\in X_*(Z_G)\).
\end{prop}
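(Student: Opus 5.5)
I will prove the cycle of implications \eqref{item unramified}\(\Leftrightarrow\)\eqref{item Tate}, \eqref{item unramified}\(\Leftrightarrow\)\eqref{item defect}, and \eqref{item Tate}\(\Leftrightarrow\)\eqref{item character}, and then deal with the final assertion about connected center.

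\textbf{Step 1: \eqref{item unramified}\(\Leftrightarrow\)\eqref{item Tate}.}
Suppose \(b=[\varpi^\tau]\) is very special. By \thref{classification of spherical elements} we may move \(\tau\) in its \(W_0\)-orbit. Since \(b\) is basic, \(\nu_b\) is central, and \(\nu_b\) is the \(\Gamma_F\)-average of \(\tau\). This should force \(\sum_{i=0}^{m-1}\sigma^i(\tau_I)\) to lie in \(X_*(Z_G)_I\), at least up to \(W_0\)-conjugation and after passing to \(\sigma\)-coinvariants. The point is that \(N_m(\tau_I)\) is \(\sigma\)-invariant with central rationalization, and for residually split \(G\) over the degree \(m\) extension the relevant elements are controlled by the norm.

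I will try to choose \(\tau\) inside its class \(\tau_I+(\sigma-1)X_*(T)_I\) so that \(\tau_I\in\Lambda\). Then \thref{unramified in BGmu} shows that the weight space \(\bigoplus_{\lambda_I\in \tau_I+(\sigma-1)X_*(T)_I}V_{\mu_I}(\lambda_I)\) is nonzero. It is contained in \(V_{\mu_I}^{\Tatep}\), because \(\Lambda\) is stable under adding \((\sigma-1)X_*(T)_I\): the \(m\)-fold norm kills \((\sigma-1)\) modulo \(\sigma^m-1\), and \(\sigma^m\) acts trivially on \(X_*(T)_I\).

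Conversely, if \(V_{\mu_I}^{\Tatep}\neq0\), pick a weight \(\lambda_I\in\Lambda\). By \thref{unramified in BGmu}, \([\varpi^{\lambda_I}]\) lies in \(B(G,\mu)\). It is basic, since \(\lambda_I\in\Lambda\) makes its Newton point central. By uniqueness of the basic element of \(B(G,\mu)\), it equals \(b\), so \(b\) is very special.

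\textbf{Step 2: \eqref{item unramified}\(\Leftrightarrow\)\eqref{item defect}.}
If \(b=[\varpi^\tau]\) is basic, then \(J_b\) is an inner form of \(G\) containing the torus \(T\). Its \(F\)-rank equals that of \(G\), because \(A\subseteq J_b\), the \(\sigma\)-twist by \(\varpi^\tau\) being trivial on the split torus \(A\). Conversely, if \(\defect_G(b)=0\), then \(J_b\) is quasi-split, since an inner form of the same rank is quasi-split. Its maximal torus then transfers, and by Kottwitz's description of basic \(b\) as coming from a maximal torus (\(B(T)\to B(G)\) hits \(b\) exactly when \(J_b\) is quasi-split with the elliptic torus replaced by a quasi-split torus), \(b\) comes from \(B(T)\). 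I will cite the standard fact that basic \(b\) lies in the image of \(B(T')\) for a maximal torus \(T'\) of \(J_b\), and that a quasi-split \(J_b\) can take \(T'\) stably conjugate to \(T\).

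\textbf{Step 3: \eqref{item Tate}\(\Leftrightarrow\)\eqref{item character}.}
This is representation theory of \(\widehat{G}^{\Gamma_F}\). The group \(Z(\widehat{G}_{\sico})^{\Gamma_F}\) acts on \(V_{\mu}\) by the character \(\mu_{\adj}\). The set \(\Lambda\) modulo \((\sigma-1)\) corresponds exactly to the weights of \(\widehat{T}^{\Gamma_F}\) that are trivial on this central subgroup, up to the center of \(\widehat{G}\). Hence a weight in \(\Lambda\) exists if and only if the central character is trivial. I expect this identification of \(\Lambda\) with characters killed by \(Z(\widehat{G}_{\sico})^{\Gamma_F}\) to be the main obstacle. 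I would first reduce to adjoint \(G\) and then to the absolutely simple factors, \(\Res_{F_i/F}H_i\) when \(G\) is essentially unramified, or general quasi-split factors via the \(\widehat{G}^{\Gamma_F}\) structure results of \cite{ALRR:Fixed}, and compare root-lattice cosets.

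\textbf{Final assertion.}
When \(Z_G\) is connected, \(X_*(T)_I\to\pi_1(G)_I\) and the cokernel computations behave well. I would show that \(\tau_I\in\Lambda\) can be moved by \((\sigma-1)X_*(T)_I\) and by \(W_0\) into the image of \(X_*(Z_G)\). This should use that \(X_*(Z_G)\to\pi_1(G)\) has torsion-free cokernel data, together with the bijection between basic elements of \(B(G)\) and \(\pi_1(G)_{\Gamma_F}\): any class in \(\pi_1(G)_{\Gamma_F}\) with central Newton point lifts to \(X_*(Z_G)\).
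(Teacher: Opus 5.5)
Your Step 1 follows the paper's route, and your Step 2 is a legitimate alternative for (1)\(\Leftrightarrow\)(3). You use \(T\subseteq J_b\) for one direction and transfer of a maximally split torus of a quasi-split \(J_b\) for the other. The paper instead uses that \(\defect_G(b)=0\) iff \(b\) becomes \(\sigma\)-conjugate to \(1\) in \(G_{\adj}(\breve{F})\), together with the fact that \(G_{\adj}(\breve{F})\) is generated by \(\im(G(\breve{F}))\) and \(T_{\adj}(\breve{F})\).

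The genuine gap is Step 3. It is your only link to condition (4), so as written nothing shows that (4) implies the other conditions. The identification it rests on is false. For split \(G=\PGL_2\) one has \(\Lambda=\{0\}\), whereas the characters of \(\widehat{T}\) killed by \(Z(\widehat{G}_{\sico})=\mu_2\) form the coroot lattice \(2\IZ\omega^\vee\). A weight in \(\Lambda\) is in fact trivial on the image of all of \(\widehat{T}_{\sico}^{\Gamma_F}\), which is much stronger than being trivial on \(Z(\widehat{G}_{\sico})^{\Gamma_F}\); the discrepancy is exactly the coroot lattice. So (2)\(\Rightarrow\)(4) is fine. But (4)\(\Rightarrow\)(2) needs a saturation statement that produces a weight of \(V_{\mu_I}\) in \(\Lambda\) from triviality of the central character. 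You do not supply this, and you yourself call it the main obstacle.

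The missing idea is to bypass representation theory and connect (4) to (3) through the Kottwitz map of \(G_{\adj}\). Under \(\pi_1(G_{\adj})_{\Gamma_F}\cong X^*(Z(\widehat{G}_{\sico})^{\Gamma_F})\), condition (4) says exactly that \([\mu_{\adj}]=0\). The image \(b_{\adj}\) is the unique basic element of \(B(G_{\adj},\mu_{\adj})\), and \([1]\) is basic with \(\kappa([1])=0\). Hence (4) holds iff \([1]\in B(G_{\adj},\mu_{\adj})\), iff \(b_{\adj}=[1]\), iff \(J_b\) is the trivial inner twist, i.e.\ quasi-split, i.e.\ \(\defect_G(b)=0\).

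Your sketch of the final assertion also relies on a false claim. Not every basic class lifts to \(X_*(Z_G)\): for \(\PGL_2\), the nontrivial class in \(\pi_1(G)=\IZ/2\) is basic with Newton point \(0\), while \(X_*(Z_G)=0\). You must use the hypotheses. When \(Z_G\) is a torus, the sequence \(0\to X_*(Z_G)\to\pi_1(G)\to\pi_1(G_{\adj})\to 0\) is exact. By right exactness of \(\Gamma_F\)-coinvariants and \([\mu_{\adj}]=0\), the class \(\kappa(b)\) lifts to some \(\tau\in X_*(Z_G)\). Then \(b=[\varpi^\tau]\), since basic classes are determined by \(\kappa\). (The paper argues instead that \(G(\breve{F})\to G_{\adj}(\breve{F})\) is surjective because \(\mathrm{H}^1(\breve{F},Z_G)=0\).)

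Likewise, the hedged claim in Step 1 needs an actual argument. The element \(\sum_{i=0}^{m-1}\sigma^i(\tau_I)\) is rationally central, so its image in the torsion-free permutation module \(X_*(T_{\adj})_I\) vanishes. For connected \(Z_G\), the kernel of \(X_*(T)_I\to X_*(T_{\adj})_I\) is the image of \(X_*(Z_G)_I\), so \(\tau_I\in\Lambda\).
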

\begin{proof}
	\eqref{item unramified} \(\Leftrightarrow\) \eqref{item Tate}: This follows from Lemmas \ref{unramified in BGmu} and \ref{description of Lambda}.
	
	\eqref{item unramified} \(\Leftrightarrow\) \eqref{item defect}: If \(b\in B(G)_{\vsp}\), then \(J_b\cong G\) and hence \(\defect_G(b)=0\).
	Indeed, \(J_b\) is an inner form of \(G\) for basic \(b\) \cite[§5.2]{Kottwitz:Isocrystals1}, and a Levi subgroup for very special \(b\). 
	Conversely, recall that \(\defect_G(b)=0\) if and only if \(b\) is \(\sigma\)-conjugate to 1 in \(G_{\adj}(\breve{F})\).
	Hence, as \(G_{\adj}(\breve{F})\) is generated by \(\im(G(\breve{F}) \to G_{\adj}(\breve{F}))\) and \(T_{\adj}(\breve{F})\), we see that \(b\in B(G)_{\vsp}\).
		
	\eqref{item defect} \(\Leftrightarrow\) \eqref{item character}: We will again use the fact that \(\defect_G(b)=0\) exactly when \(b = [1]\in B(G_{\adj})\).
	Since \(1\) is always basic, it lies in \(B(G_{\adj},\mu_{\adj})\) if and only if \([\mu_{\adj}] = 0 \in \pi_1(G_{\adj})_\Gamma = (Z(\widehat{G}_{\sico})^\Gamma)^D\), and the equivalence follows.
	
	Finally, if \(Z_G\) is connected, then \(G(\breve{F})\to G_{\adj}(\breve{F})\) is surjective.
	If the above conditions additionally hold, then \(b\) is \(\sigma\)-conjugate to \(1\in G_{\adj}(\breve{F})\), and hence there exists some \(\tau\in X_*(Z_G)\) such that \(b\) is \(\sigma\)-conjugate to \(\varpi^\tau\).
\end{proof}

\begin{cor}\thlabel{unramified iff quasisplit}
	Let \(b\in B(G)\) be any element.
	Then the following conditions are equivalent:
	\begin{enumerate}
		\item\label{item vsp} \(b\) is very special,
		\item\label{item nonbasic defect} \(\defect_G(b)=0\), and
		\item\label{item quasisplit} the twisted centralizer \(J_b\) is quasi-split over \(F\).
	\end{enumerate}
\end{cor}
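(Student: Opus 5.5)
The plan is to reduce to the basic case, \thref{When is basic element unramified}, by passing to a Levi subgroup. Let \(b\in B(G)\) be arbitrary, with Newton point \(\nu_b\). Let \(M:=\Cent_G(\nu_b)\) be the standard Levi subgroup centralizing the dominant Newton point; it is defined over \(F\) because \(\nu_b\) is \(\Gamma_F\)-invariant. Since \(G\) is quasi-split, \(M\) is quasi-split, it contains \(T\), and \(b\) admits a representative \(b_M\in M(\breve{F})\) that is basic in \(M\). We also have \(J_b^G\cong J_{b_M}^M\), by the standard argument of Kottwitz: an element of \(J_b\) centralizes \(\nu_b\), so it lies in \(M\).

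With this in hand, each of the three conditions transfers from \(G\) to \(M\).

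\emph{Condition \eqref{item nonbasic defect}.} \(J_b\) is an inner form of \(M\), and \(M\) has the same \(F\)-rank as \(G\) because it contains the maximal split torus \(A\). Hence \(\defect_G(b)=\defect_M(b_M)\).

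\emph{Condition \eqref{item vsp}.} Very specialness passes both ways, since the map \(B(T)\to B(G)\) factors through \(B(M)\).
\begin{itemize}
\item If \(b_M\) is very special in \(M\), then \(b\) is very special in \(G\).
\item Conversely, suppose \(b=[\varpi^\tau]\) with \(\tau\in X_*(T)\). After replacing \(\tau\) by a \(W_0\)-conjugate as in \thref{classification of spherical elements}, we may assume its Newton point \(\tau_{\Gamma_F}\) (averaged) is \(G\)-dominant. Then \(\varpi^\tau\in M(\breve{F})\) is basic in \(M\), because its Newton point \(\nu_b\) is central in \(M\). By uniqueness of the \(M\)-basic reduction, up to \(\sigma\)-conjugacy in \(M\) (Kottwitz, \cite[§6]{Kottwitz:Isocrystals1}), \(b_M=[\varpi^\tau]_M\), which is very special in \(M\).
\end{itemize}
The main obstacle is exactly this last identification. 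Several \(M\)-basic classes could map to \(b\); the fix is that \(\kappa_M\) of the reduction is determined by \(b\) together with dominance of \(\nu_b\), which is Kottwitz's \(B(M)_{\mathrm{basic},G\text{-dominant}}\to B(G)\) injectivity.

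Now apply \thref{When is basic element unramified} to \(M\) and the basic element \(b_M\); this uses no hypothesis beyond quasi-splitness. It gives that \(b_M\) is very special in \(M\) if and only if \(\defect_M(b_M)=0\). Combined with the two transfers above, this proves \eqref{item vsp}\(\Leftrightarrow\)\eqref{item nonbasic defect}.

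It remains to show \eqref{item nonbasic defect}\(\Leftrightarrow\)\eqref{item quasisplit}. The group \(J_b\) is an inner form of the quasi-split group \(M\). An inner form of a quasi-split group is quasi-split if and only if it is isomorphic to it, if and only if it has the same \(F\)-rank. Indeed, a quasi-split inner form is unique up to isomorphism, and among inner forms the quasi-split one is characterized by having maximal \(F\)-rank, namely that of the centralizer of a maximal split torus. Since \(\mathrm{rk}_F M=\mathrm{rk}_F G\), \(J_b\) is quasi-split exactly when \(\defect_G(b)=0\).
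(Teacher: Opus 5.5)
Your proposal is correct and follows the paper's route. Like the paper, you pass to the standard Levi \(M=\Cent_G(\nu_b)\), which has the same \(F\)-rank as \(G\) and in which \(b\) becomes basic, and apply the basic case to get (1)\(\Leftrightarrow\)(2); then (2)\(\Leftrightarrow\)(3) follows from \(J_b\) being an inner form of the quasi-split group \(M\). The one step you spell out that the paper leaves implicit is that very specialness of \(b\) in \(G\) forces very specialness of its basic reduction \(b_M\), via injectivity of the \(G\)-dominant basic classes of \(B(M)\) into \(B(G)\), and you handle it correctly.
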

\begin{proof}
	To see that \eqref{item vsp} and \eqref{item nonbasic defect} are equivalent, we may pass to a Levi subgroup of \(G\) (which has the same \(F\)-rank) where \(b\) is basic.
	Then we can apply \thref{When is basic element unramified}.
	
	For the equivalence between \eqref{item nonbasic defect} and \eqref{item quasisplit}, recall that \(J_b\) is an inner form of a (quasi-split) Levi subgroup of \(G\).
	As above, this Levi has the same \(F\)-rank as \(G\), so we can conclude by the fact that a reductive \(F\)-group has the same rank as its (unique) quasi-split inner form if and only if it is already quasi-split.
\end{proof}

\subsection{Splitting versions of affine Deligne--Lusztig varieties}\label{sec:splitting adlv}

We now define various incarnations of affine Deligne--Lusztig varieties, which we will always consider over \(\Spec \overline{k}\).
We start by recalling their canonical versions, for which we let \(G/F\) be an arbitrary quasi-split reductive group with very special parahoric model \(\Gg/\Oo_F\).

\begin{dfn}
	Let \(\mu_I\in X_*(T)^+_I\) and \(b\in G(\breve{F})\).
	The \emph{(canonical) affine Deligne--Lusztig variety} associated to \(\mu_I\) and \(b\) is the subscheme of \(\Gr_{\Gg}^{\can}\) given by
	\[X_{\leq \mu_I}^{\can}(b):=\left\{g\in LG/L^+\Gg\mid g^{-1}b\sigma(g)\in L^+\Gg\backslash \Gr_{\Gg,\leq \mu_I}^{\can }\right\}.\]
	In other words, it is defined by the following cartesian diagram:
	\[\begin{tikzcd}
		X_{\leq \mu_I}^{\can}(b) \arrow[d, hook] \arrow[r] & \Gr_{\Gg}^{\can}\widetilde{\times} \Gr_{\Gg,\leq \mu_I}^{\can}\arrow[d, "\pr\times m", hook]\\
		\Gr_{\Gg}^{\can} \arrow[r, "\id \times b\sigma"] & \Gr_{\Gg}^{\can}\times \Gr_{\Gg}^{\can}.
	\end{tikzcd}\]
	Analogously, one defines \(X_{\mu_I}^{\can}(b)\), by replacing the closed Schubert variety \(\Gr_{\Gg,\leq \mu_I}^{\can}\) by the Schubert cell \(\Gr_{\Gg,\mu_I}^{\can}\); then
	\[X_{\leq \mu_I}^{\can}(b) = \bigsqcup_{\mu_I' \leq \mu_I\in X_*(T)_I^+}X_{\mu_I'}^{\can}(b).\]
	Finally, for a tuple \(\mu_{\bullet,I}\) of elements in \(X_*(T)_I^+\), the \emph{convolution affine Deligne--Lusztig variety} is defined via the cartesian diagram
	\[\begin{tikzcd}
		X_{\leq \mu_{\bullet,I}}^{\can}(b) \arrow[d] \arrow[r] & \Gr_{\Gg}^{\can}\widetilde{\times} \Gr_{\Gg,\leq \mu_{\bullet,I}}^{\can}\arrow[d, "\pr\times m"]\\
		\Gr_{\Gg}^{\can} \arrow[r, "\id \times b\sigma"] & \Gr_{\Gg}^{\can}\times \Gr_{\Gg}^{\can},
	\end{tikzcd}\]
	i.e., \(X_{\leq \mu_{\bullet,I}}^{\can}(b) := X_{\leq |\mu_{\bullet,I}|}^{\can}(b) \times_{\Gr_{\Gg,\leq |\mu_{\bullet,I}|}^{\can}} \Gr_{\Gg,\leq \mu_{\bullet,I}}^{\can}\).
\end{dfn}

It is well known that \(X_{\leq \mu_I}^{\can}(b)\) (and the other variants) depends, up to isomorphism, only on \(\mu_I\) and the \(\sigma\)-twisted conjugacy class of \(b\) in \(G(\breve{F})\); we will henceforth usually consider \(b\) as an element in \(B(G)\).
By the following lemma, affine Deligne--Lusztig varieties for essentially unramified groups can be understood by studying affine Deligne--Lusztig varieties for unramified groups.

\begin{lem}\thlabel{reductions for adlv}
	\begin{enumerate}
		\item Let \(G_1,G_2\) be reductive \(F\)-groups, and let \(b_i\in G_i(\breve{F})\) and \(\mu_{i,I}\in X_*(T_i)_I^+\) for \(i=1,2\).
		Then there is a canonical isomorphism 
		\[X_{\mu_{1,I}}^{\can}(b_1) \times X_{\mu_{2,I}}^{\can}(b_2) \cong X_{(\mu_{1,I},\mu_{2,I})}^{\can}(b_1,b_2),\] where the latter is an affine Deligne--Lusztig variety for the group \(G_1\times G_2\).
		\item Let \(F/F'\) be a totally ramified extension, and \(G':=\Res_{F/F'} G\) with maximal torus \(T':=\Res_{F/F'} T\subseteq G'\).
		Let \(\mu_{I_F},\mu_{I_{F'}}'\) correspond to each other under the isomorphism \(\in X_*(T)_{I_F}^+ \cong X_*(T')_{I_{F'}}^+\), and similar for \(b,b'\) in \(G(\breve{F}) \cong G'(\breve{F'})\).
		Then there is a natural isomorphism 
		\[X_{\mu_{I_F}}^{\can}(b) \cong X_{\mu_{I_{F'}}'}^{\can}(b').\]
		\item Let \(F'/F\) be an unramified extension of degree \(d\), and \(\sigma'=\sigma^d\) the arithmetic Frobenius for \(F'\).
		Let \(\Sigma\) be the set of embeddings \(F'\to \breve{F}\) over \(F\), fix some \(\tau_0\in \Sigma\), and let \(\tau_i = \sigma^i(\tau_0)\) for \(i=0,\ldots,d-1\).
		Assume \(G/F\) is of the form \(\Res_{F'/F} G'\) for some quasi-split \(G'/F'\), with corresponding parahoric model \(\Gg'/\Oo_{F'}\).
		Let \(b\in G(\breve{F}) = \prod_{\tau\in \Sigma} G'(\breve{F})\), and 
		\[\Nm(b):= b_{\tau_0} \sigma(b_{\tau_1}) \ldots \sigma^{d-1}(b_{\tau_{d-1}})\in G'(\breve{F}).\]
		For \(\mu_I=(\mu_{\tau,I})\in X_*(T)_I^+\cong \prod_{\tau\in \Sigma} X_*(T')_I^+\), let \[\mu_{\bullet,I} = (\mu_{\tau_0},\ldots \sigma(\mu_1),\ldots,\sigma^{d-1}(\mu_{\tau_{d-1}}))\] be a tuple of elements in \(X_*(T')_I^+\) indexed by \(\Sigma\).
		Then there is a canonical isomorphism
		\[X_{\leq \mu_I}^{\can}(b) \cong X_{\leq \mu_{\bullet,I}}^{\can}(\Nm(b)).\]
		Moreover, let \(2\rho_{G}\) and \(2\rho_{G'}\) be the sum of the positive absolute roots for \(G\) and \(G'\) respectively.
		Then
		\[\langle 2\rho_G,\mu_I\rangle = \langle 2\rho_{G'},\sum_i \sigma^i(\mu_{\tau_i,I})\rangle. \quad \text{ and } \quad \langle 2\rho_G,\nu_b\rangle = \langle 2\rho,\nu_{\Nm(b)}\rangle.\]
		Finally, let \(J_b^{G}\) be the twisted centralizer of \(b\in G(\breve{F})\), and \(J_{\Nm(b)}^{G'}\) the twisted centralizer of \(\Nm(b)\in G'(\breve{F})\).
		Then there is a canonical isomorphism
		\[J_b^G \cong \Res_{F'/F} J_{\Nm(b)}^{G'}.\]
		\item\label{it-ADLV isog} Let \(G\to G'\) be a surjective morphism with central kernel, and \(T'\) the schematic image of \(T\) in \(G'\).
		Let \(\mu_I'\in X_*(T')_I^+\) and \(b'\in G'(\breve{F})\) be the images of \(\mu_I\in X_*(T)_I^+\) and \(b\in G(\breve{F})\).
		Then the natural morphism
		\[X_{\leq \mu_I}^{\can}(b) \to X_{\leq \mu_I'}^{\can}(b')\]
		induces isomorphisms between corresponding connected components.
		More precisely, let 
		\[c_{b,\mu_I}\pi_1(G)_I^{\sigma} := \{\gamma\in \pi_1(G)_I\mid (\sigma-1)(\gamma) = [\mu_I] - w(b)\},\]
		where \(w\) denotes the natural map \(LG\to \pi_0(LG_{\overline{k}}) \cong \pi_1(G)_I\).
		Then the following diagram is cartesian:
		\[\begin{tikzcd}
			X_{\leq \mu_I}^{\can}(b) \arrow[d] \arrow[r] & X_{\leq \mu_I'}^{\can}(b')\arrow[d]\\
			c_{b,\mu_I}\pi_1(G)_I^{\sigma} \arrow[r] & c_{b',\mu_I'}\pi_1(G')_I^{\sigma}.
		\end{tikzcd}\]
	\end{enumerate}
\end{lem}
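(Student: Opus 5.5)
We treat the four parts separately; each reduces to an identification of loop groups, affine Grassmannians and Frobenius-twisted conjugation, followed by passing to the cartesian diagram that defines \(X^{\can}\).

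For (1), we use \(L(G_1\times G_2)=LG_1\times LG_2\) and \(L^+(\Gg_1\times\Gg_2)=L^+\Gg_1\times L^+\Gg_2\). These give \(\Gr^{\can}_{\Gg_1\times\Gg_2}\cong \Gr^{\can}_{\Gg_1}\times\Gr^{\can}_{\Gg_2}\), compatibly with the Schubert stratifications, which are indexed by \(X_*(T_1)_I^+\times X_*(T_2)_I^+\). Since \(\sigma\) acts factorwise, the defining condition \(g^{-1}b\sigma(g)\in\) (orbit) splits as a product of the two conditions. For (2), we use that \(W_{\Oo_{F'}}(R)\otimes_{\Oo_{F'}}\Oo_F = W_{\Oo_F}(R)\) when \(F/F'\) is totally ramified: both have residue field \(k\), and \(\breve F=\breve F'\otimes_{F'}F\). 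This gives \(L_{F'}G'=L_FG\) and \(L^+\Gg'=L^+\Gg\) for the Weil-restricted parahoric, compatibly with \(\sigma\). We then match the index sets via \(X_*(T')=\mathrm{Ind}\,X_*(T)\), so that \(X_*(T')_{I_{F'}}\cong X_*(T)_{I_F}\) (Shapiro), and we check that dominance corresponds.

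Part (3) carries the substance. Here \(\breve F\otimes_F F'\cong\prod_{\tau\in\Sigma}\breve F\), so \(LG=\prod_\tau LG'\) and \(\Gr^{\can}_{\Gg}=\prod_\tau\Gr^{\can}_{\Gg'}\), and \(\sigma\) permutes the factors cyclically, \(\sigma(g)_{\tau_i}=\sigma(g_{\tau_{i+1}})\) up to the indexing convention. Writing \(g=(g_0,\dots,g_{d-1})\), the condition \(g^{-1}b\sigma(g)\in\prod_i \Gr_{\leq\mu_{\tau_i}}\) says that \(g_i^{-1}b_{\tau_i}\sigma(g_{i+1})\in L^+\Gg'\cdot\varpi^{\le\mu_{\tau_i}}\cdot L^+\Gg'\) for each \(i\). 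Starting from \(g_0\) and successively applying \(\sigma^i\), this is equivalent to the chain
\[g_0 \to b_{\tau_0}\sigma(g_1) \to b_{\tau_0}\sigma(b_{\tau_1})\sigma^2(g_2)\to\cdots\to \Nm(b)\sigma^d(g_0)\]
having successive relative positions \(\le\mu_{\tau_0},\le\sigma(\mu_{\tau_1}),\dots\). The map \(g\mapsto\) (this chain) therefore identifies \(X_{\leq\mu_I}^{\can}(b)\) with the convolution variety \(X_{\leq\mu_{\bullet,I}}^{\can}(\Nm(b))\) for \(\sigma'=\sigma^d\), because the intermediate points of the chain are exactly the data \(g_1,\dots,g_{d-1}\) twisted by \(\sigma^i\). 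The main point to check carefully is that \(\sigma^i\) is an isomorphism of perfect schemes, so that the identification holds scheme-theoretically and not just on points. The formula for \(\langle2\rho,\mu_I\rangle\) follows because \(\rho_G\) decomposes as \((\rho_{G'})_\tau\), \(\sigma\) preserves \(\rho_{G'}\), and pairing with \(\rho\) factors through coinvariants. For the Newton points, we use that \(\nu_b\) has \(\tau_i\)-components given by \(\sigma^{-i}\)-twists of \(\frac1d\nu'_{\Nm(b)}\) and sum. For \(J_b^G\cong\Res J^{G'}_{\Nm(b)}\), we send \(g\) to \(g_{\tau_0}\): the equation \(g^{-1}b\sigma(g)=b\) determines \(g_{\tau_{i+1}}\) from \(g_{\tau_i}\) recursively and forces \(g_{\tau_0}^{-1}\Nm(b)\sigma^d(g_{\tau_0})=\Nm(b)\). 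We check this functorially in \(R\); this is Kottwitz's standard Shapiro argument.

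Part (4) is the step I expect to be the main obstacle. We use \cite[§3]{HainesRicharz:Smoothness}, as recalled earlier in the paper: \(\Gr^{\can}_\Gg\to\Gr^{\can}_{\Gg'}\) is an isomorphism on each connected component, which is compatible with Schubert varieties, and \(\pi_0(\Gr^{\can}_\Gg)=\pi_1(G)_I\). The map \(X\to c_{b,\mu_I}\pi_1(G)_I^\sigma\) is \(g\mapsto w(g)\), which is well defined since \(w(g^{-1}b\sigma(g))=[\mu_I]\) on the component containing \(\Gr_{\leq\mu_I}\). We then verify cartesianness componentwise: a point of the fiber product is a component \(\gamma\) of \(\Gr_\Gg\) together with a point \(g'\) of \(X'(b')\) on the image component. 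Lifting \(g'\) uniquely to \(g\) in component \(\gamma\), the element \(g^{-1}b\sigma(g)\) lies in the component \([\mu_I]\) and maps into \(\Gr_{\leq\mu'_I}\). Using the isomorphism on that component, it therefore lies in \(\Gr_{\leq\mu_I}\). The subtlety is the \(L^+\Gg\)-double-coset formulation: we must argue with the map on components of \(LG\) modulo \(L^+\Gg\) rather than on quotient stacks, and we must use that \(L^+\Gg\to L^+\Gg'\) has image acting through the same orbits. We would handle this by working with \(\Gr\times\Gr\) and the relative-position map \(\pr\times m\), as in the defining diagram.
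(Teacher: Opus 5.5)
Your proposal is correct and is essentially the paper's argument: the paper proves (1) directly from the definitions and cites the literature for (2) (invariance of affine Grassmannians and Schubert varieties under totally ramified Weil restriction, Haines--Richarz), (3) (Zhu's norm argument, which is exactly your chain $g_0\to b_{\tau_0}\sigma(g_1)\to\cdots\to\Nm(b)\sigma^d(g_0)$) and (4) (a lemma of Pappas--Rapoport). For (4) you argue directly instead of citing: the map $\Gr^{\can}_{\Gg}\to\Gr^{\can}_{\Gg'}$ is an isomorphism on each connected component compatibly with Schubert varieties, and $\gamma\in c_{b,\mu_I}\pi_1(G)_I^{\sigma}$ forces $g^{-1}b\sigma(g)$ into the component $[\mu_I]$, which is a valid replacement for that citation.
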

\begin{proof}
	\begin{enumerate}
		\item is immediate from the definitions.
		\item follows since affine Grassmannians (and Schubert varieties) are invariant under totally ramified restrictions of scalars \cite[Lemma 3.2]{HainesRicharz:Smoothness}, compatibly with the Frobenius morphisms.
		\item can be proven verbatim as in \cite[Lemmas 3.5 and 3.7]{Zhu:Affine}.
		\item is shown in \cite[Lemma 5.4.2]{PappasRapoport:Integral}.
	\end{enumerate}
\end{proof}

Using these reductions, we can understand irreducible components of \(X_{\leq \mu_I}^{\can}(b)\) and \(X_{\leq \mu}^{\spl}(b)\) (as defined below) in the essentially unramified case by studying affine Deligne--Lusztig varieties for unramified groups, as in \cite{XiaoZhu:Cycles,Nie:Irreducible,ZhouZhu:Twisted} (cf.~\cite[Appendix A]{ZhouZhu:Twisted} for numerical results in the case of ramified groups).

Now we come to the splitting versions of affine Deligne--Lusztig varieties.
For the rest of this section, we assume that \(G/F\) is quasi-split and essentially unramified.
By definition, there is a map \(X_{\leq \mu_I}^{\can}(b)\to L^+\Gg\backslash \Gr_{\Gg,\leq \mu_I}^{\can}\colon g\mapsto g^{-1}b\sigma(g)\).
This yields the following natural definition.

\begin{dfn}\thlabel{definition splitting adlv}
	Let \(\mu\in X_*(T)^+\) and \(b\in B(G)\).
	The \emph{splitting affine Deligne--Lusztig variety} associated to \(\mu\) and \(b\) is defined via the following cartesian diagram:
	\begin{equation}\label{diagram splitting adlv}\begin{tikzcd}[column sep=huge]
		X_{\leq \mu}^{\spl}(b) \arrow[d] \arrow[r] & L^+\Gg \backslash \Gr_{\Gg,\leq \mu}^{\spl} \arrow[d]\\
		X_{\leq \mu_I}^{\can}(b) \arrow[r, "g\mapsto g^{-1}b\sigma(g)"] & L^+\Gg \backslash \Gr_{\Gg,\leq \mu_I}^{\can}.
	\end{tikzcd}\end{equation}
\end{dfn}

Since the maps in this diagram are \(J_b(F)\)-equivariant (for the trivial actions on \(L^+\Gg\backslash \Gr_{\Gg,\leq \mu_I}^{\can}\) and \(L^+\Gg\backslash \Gr_{\Gg,\leq \mu}^{\spl}\)), we see that \(X_{\leq \mu}^{\spl}(b)\) is equipped with a natural \(J_b(F)\)-action as well.
Moreover, the map \(L^+\Gg\backslash \Gr_{\Gg,\leq \mu}^{\spl}\to L^+\Gg\backslash \Gr_{\Gg,\leq \mu_I}^{\can}\) is schematic and pfp, so that \(X_{\leq \mu}^{\spl}(b)\) is representable by a scheme, locally pfp (since this holds for \(X_{\leq \mu_I}^{\can}(b)\)).
It turns out that the geometry of \(X_{\leq \mu}^{\spl}(b)\) can be understood by studying \(X_{\leq \mu_I}^{\can}(b)\), as well as the fibers of \(\Gr_{\Gg,\leq \mu}^{\spl}\to \Gr_{\Gg,\leq \mu_I}^{\can}\).

\begin{prop}\thlabel{geometry splitting adlv}
	Let \(\mu\in X_*(T)^+\) and \(b\in B(G)\).
	Both \(X_{\leq\mu}^{\spl}(b)\) and \(X_{\leq\mu_I}^{\can}(b)\) are nonempty if and only if \(b\in B(G,\mu)\). 
	If this is the case, then they are both of dimension \(\langle \rho,\mu-\nu_b\rangle-\frac{1}{2}\defect_G(b)\).
\end{prop}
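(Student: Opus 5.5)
The plan is to handle the canonical variety first and then transfer the result to the splitting variety through the cartesian square \eqref{diagram splitting adlv}.

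\emph{Canonical case.} By \thref{reductions for adlv}, nonemptiness and dimension of \(X_{\leq \mu_I}^{\can}(b)\) reduce to those of convolution affine Deligne--Lusztig varieties for unramified groups. Part (4) handles central isogenies: it passes to \(G_{\adj}=\prod_i\Res_{F_i/F}H_i\), changing only connected components. Part (1) reduces to a single factor, part (2) removes the totally ramified restriction of scalars, and part (3) handles the unramified one. The same lemma shows that the invariants \(\langle 2\rho,\mu_I\rangle\), \(\langle 2\rho,\nu_b\rangle\) and \(\defect_G(b)\) are preserved, the last via \(J_b^G\cong \Res J^{G'}_{\Nm(b)}\). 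After this reduction I would invoke the known results for unramified groups. Nonemptiness iff \(b\in B(G,\mu)\) is Gashi's theorem, i.e.\ the Kottwitz--Rapoport conjecture. The dimension formula \(\langle\rho,\mu-\nu_b\rangle-\tfrac12\defect(b)\) is due to G\"ortz--Haines--Kottwitz--Reuman, Viehmann, Hamacher and Zhu. For the convolution variety \(X_{\leq\mu_{\bullet,I}}^{\can}\), the fibers of the convolution map are semismall, so it has the same dimension as \(X^{\can}_{\leq|\mu_{\bullet,I}|}\), compare \cite[§3]{XiaoZhu:Cycles}. One also needs that \(B(G,\mu)\) for \(G\) corresponds to \(B(G',|\mu_\bullet|)\) under \(\Nm\), which is standard. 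Alternatively, for ramified quasi-split \(G\) one can cite \cite[Appendix A]{ZhouZhu:Twisted} directly.

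\emph{Splitting case.} The map \(X^{\spl}_{\leq\mu}(b)\to X^{\can}_{\leq\mu_I}(b)\) is a base change of \(L^+\Gg\backslash\Gr^{\spl}_{\Gg,\leq\mu}\to L^+\Gg\backslash\Gr^{\can}_{\Gg,\leq\mu_I}\), which is surjective by \thref{remarks on splitting grassmannians}. Hence the two varieties are nonempty simultaneously. For the dimension, stratify the base as \(X^{\can}_{\leq\mu_I}(b)=\bigsqcup_{\mu_I'\leq\mu_I}X^{\can}_{\mu_I'}(b)\). Over the \(\mu_I'\)-stratum, the fibers are fibers of \(\Gr^{\spl}_{\Gg,\leq\mu}\to\Gr^{\can}_{\Gg,\leq\mu_I}\) over \(\Gr^{\can}_{\Gg,\mu_I'}\), which have dimension \(\leq\langle\rho,\mu_I-\mu_I'\rangle\) by semismallness, as used in \thref{comparing MV cycles}. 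Whenever \(\mu_I'\) occurs in \(V_\mu|_{\widehat G^I}\), this bound is attained, by \thref{basic properties of Satake correspondences}(3) and the multiplicity interpretation. For \(\mu_I'\) with \(b\in B(G,\mu_I')\), the canonical result gives \(\dim X^{\can}_{\mu_I'}(b)=\langle\rho,\mu_I'-\nu_b\rangle-\tfrac12\defect(b)\). Note that \(\langle\rho,\mu\rangle=\langle\rho,\mu_I\rangle\) via the \(I\)-invariant pairing. Each stratum therefore contributes at most \(\langle\rho,\mu-\nu_b\rangle-\tfrac12\defect_G(b)\). The stratum \(\mu_I'=\mu_I\) attains this bound, since the fibers there are points birationally: the map is birational by \thref{remarks on splitting grassmannians}, and for \(b\in B(G,\mu)\) the open stratum \(X^{\can}_{\mu_I}(b)\) is nonempty and dense of the full dimension.

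\emph{Main obstacle.} The delicate step is the canonical dimension formula for ramified quasi-split \(G\). It requires checking that the reductions in \thref{reductions for adlv} carry \(B(G,\mu)\), \(\nu_b\) and the defect compatibly, and that the unramified convolution case follows from the non-convolution one. I would first try to cite \cite{ZhouZhu:Twisted} and \cite{He:Annals} for quasi-split groups directly, and fall back on the reduction above only if those citations do not cover the situation.
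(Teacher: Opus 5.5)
Your proof is correct. For the splitting variety it is the paper's argument: nonemptiness follows from surjectivity of the base-changed map. The upper bound comes from stratifying by the \(X^{\can}_{\mu_I'}(b)\) and using the semismallness bound \(\langle\rho,\mu-\mu_I'\rangle\) on fibers. For the lower bound the paper only uses that a surjective pfp map does not decrease dimension. So your passage through the open stratum \(\mu_I'=\mu_I\), and the density claim you make there, can be dropped.

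The genuine difference is in the canonical case. The paper does not reduce to unramified groups. It cites He's results for quasi-split groups directly: \cite[Theorem 7.1]{He:Geometric} for nonemptiness and \cite[Theorem 2.29]{He:Hecke} for the dimension. It then explains why his hypotheses are harmless:
\begin{itemize}
\item the restriction on the characteristic only concerns reducedness, which does not matter for topological statements;
\item tameness is only used in the reduction to adjoint groups, which \thref{reductions for adlv}(4) replaces;
\item the function-field setting can be replaced by the Witt vector affine flag varieties.
\end{itemize}

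Your main route instead runs through all of \thref{reductions for adlv} down to (convolution) affine Deligne--Lusztig varieties for unramified groups. You then treat the convolution varieties arising from unramified Weil restriction by the same semismallness argument you use in the splitting step. This relies only on the established unramified mixed-characteristic results. It costs extra bookkeeping, however: you must check that \(b\in B(G,\mu)\) if and only if \(\Nm(b)\) lies in the corresponding \(B(G',|\mu_{\bullet,I}|)\), that \(\nu_b\) and the defect match, and that part (2) extends to convolution varieties. It also uses that \(G\) is essentially unramified, whereas the paper's canonical argument does not.

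Your fallback of citing quasi-split results directly is the paper's route. It would still need the remark, supplied by the paper, on removing He's tameness and function-field hypotheses.
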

\begin{proof}
	We will appeal to results from \cite{He:Geometric}, where the group \(G\) is assumed to be tamely ramified over a local function field, whose characteristic does not divide the order of \(\pi_1(G_{\der})\).
	However, these assumptions can be eliminated for our purposes.
	First, the restriction on the characteristic ensures that loop groups and partial affine flag varieties are reduced.
	Since we will only be interested in topological properties, this restriction is not important.
	The assumption that \(G/F\) is tame only appears in the reduction to adjoint groups \cite[§1.4]{He:Geometric}; we may instead appeal to \thref{reductions for adlv} \eqref{it-ADLV isog}.
	The restriction to local functions fields arose since the Witt vector affine flag varieties from \cite{Zhu:Affine,BhattScholze:Projectivity} were not yet available, but the arguments go through verbatim, cf.~\cite[§6.3]{He:Geometric}.
	Thus, the non-emptiness statement for the canonical affine Deligne--Lusztig variety follows from \cite[Theorem 7.1]{He:Geometric}.
	The statement for the splitting model follows, since \(X_{\leq \mu}^{\spl}(b)\to X_{\leq \mu_I}^{\can}(b)\) is surjective, as the pullback of a surjective map.
	
	The dimension statement for the canonical version follows from \cite[Theorem 2.29]{He:Hecke}.
	In particular, since \(X_{\leq \mu}^{\spl}(b)\to X_{\leq \mu_I}^{\can}(b)\) is pfp and surjective, we have \(\dim(X_{\leq \mu}^{\spl}(b))\geq \dim(X_{\leq \mu}^{\can}(b)) = \langle \rho,\mu-\nu_b\rangle -\frac{1}{2} \defect_G(b)\), and we need to show this is also an upper bound.
	For this, we will use the diagram \eqref{diagram splitting adlv}: to compute the fibers of \(X_{\leq \mu}^{\spl}(b)\to X_{\leq \mu_I}^{\can}(b)\), it suffices to compute the fibers of \(\Gr_{\Gg,\leq \mu}^{\spl}\to \Gr_{\Gg,\leq \mu_I}^{\can}\).
	Note that by definition, the lower map \(g\mapsto g^{-1}b\sigma(g)\) sends \(X_{\mu_I'}^{\can}(b)\) to \(L^+\Gg\backslash \Gr_{\Gg,\mu_I'}^{\can}\) for any \(\mu_I'\leq \mu_I\in X_*(T)_I^+\).
	But since convolution morphisms are stratified semismall (\cite[Lemma 5.6]{vdH:RamifiedSatake}, based on \cite{MirkovicVilonen:Geometric}), the fiber of \(\Gr_{\Gg,\leq \mu}^{\spl}\to \Gr_{\Gg,\leq \mu_I}^{\can}\) over any point in \(\Gr_{\Gg,\mu_I'}\) has dimension at most \(\langle\rho,\mu-\mu_I'\rangle\).
	Thus, since \(\dim(X_{\mu_I'}^{\can}(b))\leq \langle\rho,\mu_I'-\nu_b\rangle -\frac{1}{2}\defect_G(b)\), the preimage of \(X_{\mu_I'}^{\can}(b)\) in \(X_{\leq \mu}^{\spl}(b)\) has dimension at most 
	\[\langle\rho,\mu_I'-\nu_b \rangle-\frac{1}{2}\defect_G(b) + \langle\rho,\mu-\mu_I'\rangle= \langle\rho,\mu-\nu_b\rangle - \frac{1}{2}\defect_G(b),\]
	as desired.
\end{proof}

Recall that \(J_b(F)\) acts on \(X_{\leq \mu_I}^{\can}(b)\) and \(X_{\leq \mu}^{\spl}(b)\), and hence induces an action on the set of top-dimensional irreducible components of both.
Recall moreover that the MV-cycles yield canonical bases of the irreducible representations of \(\widehat{G}^I\) and \(\widehat{G}\), \thref{remark MV cycles}.
Finally, for an element \(b\in B(G)\), recall its \emph{best integral approximation} \(\lambda_b\in X_*(T)_{\Gamma_F}\), as defined in \cite[Lemma A.2.1]{ZhouZhu:Twisted} (cf.~also \cite[Lemma 2.1]{HamacherViehmann:Irreducible}).

\begin{thm}\thlabel{Irreducible components of splitting adlv}
	Let \(\mu\in X_*(T)^+\) and \(b\in B(G,\mu)\).
	There exists a natural commutative diagram of sets
	\begin{equation}\label{irreducible components of adlv}\begin{tikzcd}
		J_b(F)\backslash \Irr^{\Top}X_{\leq \mu}^{\spl}(b) \arrow[d] \arrow[r, "\cong"] & \bigsqcup_{\lambda\in X^*(\widehat{T}),\lambda_{\Gamma_F}=\lambda_b}\IM\IV_\mu^{\spl}(\lambda) \arrow[d]\\
		\bigsqcup_{\mu_I'\leq \mu_I\in X_*(T)_I^+} J_b(F)\backslash \Irr^{\Top}X_{\mu_I'}^{\can}(b) \arrow[r, "\cong"] & \bigsqcup_{\mu_I'\leq \mu_I\in X_*(T)_I^+} \bigsqcup_{\lambda_I\in X^*(\widehat{T})_I,(\lambda_I)_{\Gamma_F}=\lambda_b} \IM\IV_{\mu_I'}^{\can}(\lambda_I),
	\end{tikzcd}\end{equation}
	where the horizontal maps are bijections.
	Moreover, the right horizontal arrow is given by decomposing the irreducible \(\widehat{G}\)-representation of highest weight \(\mu\) into irreducible \(\widehat{G}^I\)-representations and then comparing weight spaces under the map \(X_*(T)\to X_*(T)_I\).
\end{thm}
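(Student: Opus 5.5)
The plan is to build the bottom row first, then lift it to the top row through the fibres of the map \(X_{\leq \mu}^{\spl}(b)\to X_{\leq\mu_I}^{\can}(b)\). The bottom horizontal bijection is the known classification of top-dimensional irreducible components of canonical affine Deligne--Lusztig varieties. Using \thref{reductions for adlv}, one reduces to products, then (via the norm map) to unramified restrictions of scalars, and then to totally ramified restrictions of scalars. Because \(G\) is essentially unramified, this reduces to unramified groups, with the central isogeny step handled by part \eqref{it-ADLV isog}. For unramified groups, \cite{XiaoZhu:Cycles,Nie:Irreducible,ZhouZhu:Twisted} identify \(J_b(F)\backslash\Irr^{\Top}X_{\mu'}(b)\) with MV-cycles of weight \(\lambda\) with \(\lambda_{\Gamma_F}=\lambda_b\). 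Under the reductions, convolution ADLVs for \(\Nm(b)\) correspond to tensor products, and the MV bases match. One point to check here is which \(\mu_I'\) actually contribute top-dimensional components: by \thref{geometry splitting adlv}, \(\dim X^{\can}_{\mu_I'}(b)=\langle\rho,\mu_I'-\nu_b\rangle-\frac12\defect_G(b)\), so "top" has to be read relative to the fibre dimension \(\langle\rho,\mu-\mu_I'\rangle\).

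For the top row, stratify \(X_{\leq\mu}^{\spl}(b)\) by the preimages \(Y_{\mu_I'}\) of \(X^{\can}_{\mu_I'}(b)\). By the cartesian square \eqref{diagram splitting adlv}, \(Y_{\mu_I'}\to X^{\can}_{\mu_I'}(b)\) is (étale locally over \(L^+\Gg\backslash\Gr^{\can}_{\Gg,\mu_I'}\)) a fibration whose fibre is the fibre \(\Ff_{\mu_I'}\) of \(\Gr_{\Gg,\leq\mu}^{\spl}\to\Gr^{\can}_{\Gg,\leq\mu_I}\) over a point of \(\Gr^{\can}_{\Gg,\mu_I'}\). The semismallness argument in \thref{geometry splitting adlv} shows that \(\dim Y_{\mu_I'}\le \langle\rho,\mu-\nu_b\rangle-\frac12\defect_G(b)\), with equality exactly on top components of \(X^{\can}_{\mu_I'}(b)\) paired with top-dimensional components of \(\Ff_{\mu_I'}\). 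By \thref{basic properties of Satake correspondences} (3) (for the Satake correspondence with \(\lambda_\bullet\) replaced by the canonical Schubert variety), these top components of \(\Ff_{\mu_I'}\) form a basis of \(\Hom_{\widehat G^I}(V_{\mu_I'},V_\mu)\), i.e.\ of the multiplicity space of \(V_{\mu_I'}\) in \(V_\mu|_{\widehat G^I}\).

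The main obstacle is monodromy. Irreducible components of \(Y_{\mu_I'}\) over a component \(C\) of \(X^{\can}_{\mu_I'}(b)\) correspond to orbits of the monodromy of the fibration on \(\Irr^{\Top}\Ff_{\mu_I'}\). I would show this monodromy is trivial by pulling back along the \(L^+\Gg\)-torsor, using that \(L^+\Gg\) is connected: the top components of \(\Ff\) extend to the irreducible components \(\Gr^{\spl,\bfa}\) of \(\Gr_{\Gg,\leq\mu}^{\spl}\times_{\Gr^{\can}}\Gr^{\can}_{\Gg,\mu_I'}\), which are \(L^+\Gg\)-stable. This gives a \(J_b(F)\)-equivariant bijection
\[\Irr^{\Top}X_{\leq\mu}^{\spl}(b)\cong\bigsqcup_{\mu_I'}\Irr^{\Top}X^{\can}_{\mu_I'}(b)\times \IS_{0,\mu\mid\mu_I'}^{\spl},\]
where \(J_b(F)\) acts only on the first factor. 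Taking quotients and applying the bottom bijection gives the composite
\[\bigsqcup_{\mu_I'}\bigl(\bigsqcup_{\lambda_I}\IM\IV^{\can}_{\mu_I'}(\lambda_I)\bigr)\times\IS^{\spl}_{0,\mu\mid\mu_I'}.\]

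Finally, identify this composite with \(\bigsqcup_\lambda\IM\IV^{\spl}_\mu(\lambda)\) using \thref{Satake to MV cycles} with \(\nu_I=0\) (or with \(\nu_I\) a large central twist so that dominance holds), together with the cartesian square \eqref{comparison Satake vs MV}. That square identifies \(\IM\IV^{\spl}_\mu(\lambda)\), fibred over the canonical MV cycles via \(\beta^{\IM\IV}\), with the Satake cycles \(\IS^{\spl}\) fibred via \(\beta^{\IS}\). This yields the top bijection. The vertical maps are the forgetful projection on the left and \(\beta^{\IM\IV}\) on the right, so commutativity holds by construction. The description of the right arrow as restriction of \(V_\mu\) to \(\widehat G^I\) followed by comparing weights is then \thref{remark MV cycles}.
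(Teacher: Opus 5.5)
Up to the last step your argument is the paper's: the bottom row by reduction to the unramified results of Nie and Zhou--Zhu, then the top row by stratifying \(X^{\spl}_{\leq\mu}(b)\) over the \(X^{\can}_{\mu_I'}(b)\) and pairing top components downstairs with top components of the fibre \(\Ff_{\mu_I'}\) of \(\Gr^{\spl}_{\Gg,\leq\mu}\to\Gr^{\can}_{\Gg,\leq\mu_I}\) over \(\varpi^{\mu_I'}\). Your monodromy argument, via \(L^+\Gg\)-stability of the components of \(\Gr^{\spl}_{\Gg,\leq\mu}\times_{\Gr^{\can}_{\Gg}}\Gr^{\can}_{\Gg,\mu_I'}\), is a legitimate and more explicit replacement for the paper's appeal to the local triviality result of Fox--Imai.

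The gap is the final identification with splitting MV cycles. \thref{Satake to MV cycles} with \(\nu_I=0\) needs \(\lambda_I\) dominant. Moreover \(\IS^{\can}_{0,\mu_I'\mid\lambda_I}\), the set of \(\langle\rho,\mu_I'+\lambda_I\rangle\)-dimensional components of \(\Gr^{\can}_{\Gg,\leq\mu_I'}\cap\Gr^{\can}_{\Gg,\leq\lambda_I}\), is empty unless \(\lambda_I=\mu_I'\). Hence the cartesian square \eqref{comparison Satake vs MV} at \(\nu_I=0\) only identifies with \(\IS^{\spl}_{0,\mu\mid\mu_I'}\) the fibre of \(\beta^{\IM\IV}\) over the highest-weight cycle, i.e.\ the unique element of \(\IM\IV^{\can}_{\mu_I'}(\mu_I')\). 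A central \(\nu_I\) only translates everything: it produces no new Satake cycles and cannot make a non-dominant \(\lambda_I\) dominant. The weights \(\lambda_I\) with \((\lambda_I)_{\Gamma_F}=\lambda_b\) are in general not highest weights of the constituents \(V_{\mu_I'}\). So you have not constructed the bijection \(\bigsqcup_{\lambda\mapsto\lambda_I}\IM\IV^{\spl}_\mu(\lambda)\cong\bigsqcup_{\mu_I'}\IM\IV^{\can}_{\mu_I'}(\lambda_I)\times\IS^{\spl}_{0,\mu\mid\mu_I'}\) over \(\beta^{\IM\IV}\) that your composite requires.

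The simplest repair is to bypass Satake cycles and rerun your fibration argument on semi-infinite orbits.
\begin{enumerate}
\item The preimage of \(\Ss^{\can}_{\lambda_I}\cap\Gr^{\can}_{\Gg,\leq\mu_I}\) in \(\Gr^{\spl}_{\Gg,\leq\mu}\) is \(\bigsqcup_{\lambda\mapsto\lambda_I}(\Ss^{\spl}_\lambda\cap\Gr^{\spl}_{\Gg,\leq\mu})\). It has dimension at most \(\langle\rho,\mu_I+\lambda_I\rangle\), and its components of exactly this dimension are \(\bigsqcup_{\lambda\mapsto\lambda_I}\IM\IV^{\spl}_\mu(\lambda)\).
\item Over \(\Ss^{\can}_{\lambda_I}\cap\Gr^{\can}_{\Gg,\mu_I'}\), which is equidimensional of dimension \(\langle\rho,\mu_I'+\lambda_I\rangle\), the map is the restriction of the \(L^+\Gg\)-equivariant bundle over \(\Gr^{\can}_{\Gg,\mu_I'}\) with fibre \(\Ff_{\mu_I'}\). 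This fibre has dimension at most \(\langle\rho,\mu_I-\mu_I'\rangle\), and its components are \(L^+\Gg\)-stable.
\item The dimensions add up exactly. So the top components lying over this stratum are canonically the pairs (canonical MV cycle, top component of \(\Ff_{\mu_I'}\)), and the image of such a pair is the given canonical MV cycle. Thus \(\beta^{\IM\IV}\) is the projection to the first entry.
\end{enumerate}
This identifies every fibre of \(\beta^{\IM\IV}\) over \(\IM\IV^{\can}_{\mu_I'}(\lambda_I)\) canonically with \(\IS^{\spl}_{0,\mu\mid\mu_I'}\), which gives the top bijection and makes the square commute.

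Alternatively you can keep your route, with three changes. For each canonical MV cycle, use a sufficiently dominant non-central \(\nu_I\), large enough that the cycle lies in the image of \(\iota^{\can}_{\IM\IV}\). Identify the corresponding \(\beta^{\IS}\)-fibre with \(\IS^{\spl}_{0,\mu\mid\mu_I'}\) by the same fibration argument over \(\Gr^{\can}_{\Gg,\leq\nu_I}\widetilde{\times}\Gr^{\can}_{\Gg,\mu_I'}\). Then check that the result is independent of \(\nu_I\).
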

\begin{proof}
	We first consider the canonical case.
	By the same argument as \cite[Lemma 3.2]{Nie:Irreducible}, we can assume \(G\) is adjoint.
	Using \thref{reductions for adlv}, it is clear that the validity of the theorem is invariant under passage to products and totally ramified restrictions of scalars.
	The validity is invariant under unramified restrictions of scalars; this is more subtle, but can be handled similarly as the passage from the canonical to the splitting affine Deligne--Lusztig varieties explained below.
	Thus, since \(G\) is assumed essentially unramified, the lower bijection of \eqref{irreducible components of adlv} follows from the case of unramified groups, which was shown in \cite[Theorem 0.5]{Nie:Irreducible} or \cite[Theorem A]{ZhouZhu:Twisted}.
	It remains to construct the upper and left arrows, and show the diagram commutes.
	
	Recall from the proof of \thref{geometry splitting adlv} that \(X_{\leq \mu_I}^{\can}(b) = \bigsqcup_{\mu_I'\leq \mu_I\in X_*(T)_I^+}X_{\mu_I'}^{\can}(b)\), with \(\dim(X_{\mu_I'}^{\can}(b)) = \langle\rho,\mu_I'-\nu_b \rangle-\frac{1}{2} \defect_G(b)\), and that the fiber of any point in \(X_{\mu_I'}^{\can}(b)\) under \(X_{\leq \mu_I}^{\spl}(b)\to X_{\leq \mu_I}^{\can}(b)\) is of dimension at most \(\langle \rho,\mu-\mu_I'\rangle\).
	Moreover, by \cite[Lemma 5.1]{FoxImai:Supersingular}, the map \(X_{\leq \mu}^{\spl}(b) \to X_{\leq \mu_I}^{\can}(b)\) is a locally trivial fibration over each \(X_{\mu_I'}^{\can}(b)\).
	Thus, any top-dimensional irreducible component of \(X_{\leq \mu}^{\spl}(b)\) arises uniquely from an irreducible component in some \(X_{\mu'}^{\can}(b)\) and an \(\langle\rho,\mu-\mu_I'\rangle\)-dimensional irreducible component of the fiber of \(\varpi^{\mu_I'}\in \Gr_{\Gg,\leq \mu_I}^{\can}\) under \(\Gr_{\Gg,\leq \mu}^{\spl}\to \Gr_{\Gg,\leq \mu_I}^{\can}\).
	This gives a canonical map \(\Irr^{\Top}X_{\leq \mu}^{\spl}(b) \to \bigsqcup_{\mu_I'\leq \mu_I\in X_*(T)_I^+} \Irr^{\Top}X_{\mu_I'}^{\can}(b)\), which is \(J_b(F)\)-equivariant and hence induces a map on \(J_b(F)\)-orbits.
	Given the lower bijection in the diagram \eqref{irreducible components of adlv}, the upper bijection follows by identifying the fibers of the vertical maps: it is well-known that the irreducible components of maximal dimension in the fibers of convolution morphisms yield a natural basis for the weight spaces of the tensor product of simple representations; see e.g.~\cite[Proposition 3.1]{Haines:Structure}, or \cite[Corollary 5.1.5]{Zhu:Introduction} for a proof which also works in mixed characteristic.
	(Note that since we are working with \(\mu\)-bounded objects, the subtlety about connected components in the definition of \(\Gr_{\Gg}^{\spl}\) does not play a role here.)
	Commutativity of the diagram then follows by construction.
\end{proof}

The following special case will be the most important towards applications for Shimura varieties, and we record it since it simplifies the situation considerably.

\begin{cor}\thlabel{irreducible comp of ADLV in very special case}
	Assume \(b\in B(G)\) is very special, that \(\mu\in X_*(T)^+\) is minuscule, and that the center \(Z_G\) of \(G\) is connected.
	Fix a very special parahoric subgroup \(J_b(\Oo_F)\subseteq J_b(F)\).
	Then \(X_{\leq \mu}^{\spl}(b)\) and \(X_{\leq \mu_I}^{\can}(b)\) are equidimensional of dimension \(\langle \rho,\mu-\nu_b\rangle\), and there is a natural commutative diagram of sets
	\[\begin{tikzcd}
		\Irr X_{\leq \mu}^{\spl}(b)	\arrow[d] \arrow[r, "\cong"] & \bigsqcup_{\lambda\in X^*(\widehat{T}),\lambda_\Gamma=\lambda_b}\IM\IV_\mu^{\spl}(\lambda) \times J_b(F)/J_b(\Oo_F) \arrow[d]\\
		\bigsqcup_{\mu_I'\leq \mu_I\in X_*(T)_I^+} \Irr X_{\mu_I'}^{\can}(b) \arrow[r, "\cong"] & \bigsqcup_{\mu_I'\leq \mu_I\in X_*(T)_I^+} \bigsqcup_{\lambda_I\in X^*(\widehat{T})_I,(\lambda_I)_\Gamma=\lambda_b} \IM\IV_{\mu_I'}^{\can}(\lambda_I) \times J_b(F)/J_b(\Oo_F).
	\end{tikzcd}\]
\end{cor}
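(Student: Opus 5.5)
The plan is to deduce this from \thref{Irreducible components of splitting adlv}. Two further inputs are needed: a computation of the stabilizers of top-dimensional components when $b$ is very special, and an equidimensionality statement.

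First, since $b$ is very special, \thref{unramified iff quasisplit} gives $\defect_G(b)=0$. \thref{geometry splitting adlv} then shows that both $X_{\leq\mu}^{\spl}(b)$ and $X_{\leq\mu_I}^{\can}(b)$ have dimension $\langle\rho,\mu-\nu_b\rangle$.

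Next, equidimensionality for the canonical version. Because $\mu$ is minuscule, $\mu_I$ is minimal in $X_*(T)_I^+$ only up to the strata $\mu_I'\leq\mu_I$, so the lower strata must be controlled. I would reduce, as in the proof of \thref{Irreducible components of splitting adlv}, to adjoint, unramified, and (via \thref{reductions for adlv}) products and totally ramified restrictions of scalars. Equidimensionality of $X_{\leq\mu_I}^{\can}(b)$ is known in the unramified case (Hartl--Viehmann, or \cite{XiaoZhu:Cycles}, \cite{ZhouZhu:Twisted}). For unramified restriction of scalars the convolution variety appears; it is still equidimensional because the semismall convolution fibers are equidimensional.

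Equidimensionality for the splitting version then follows from the fibration argument of the previous proof. Over each stratum $X_{\mu_I'}^{\can}(b)$, the map $X^{\spl}_{\leq\mu}(b)\to X^{\can}_{\leq\mu_I}(b)$ is a locally trivial fibration with fibers of dimension at most $\langle\rho,\mu-\mu_I'\rangle$. I would then argue that every irreducible component of $X^{\spl}_{\leq\mu}(b)$ has full dimension. One route is that $\Gr_{\Gg,\leq\mu}^{\spl}$ is smooth of dimension $\langle 2\rho,\mu\rangle$ (a convolution of minuscule Schubert varieties), so $X_{\leq\mu}^{\spl}(b)$ is locally cut out from a smooth space by the right number of equations, giving a lower bound on the dimension of every component. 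I expect this step to be the main obstacle. If the lower bound fails, I would instead verify directly via \cite{Nie:Irreducible} that all components come from top-dimensional pieces.

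Finally, the stabilizer computation. Under the very special hypothesis with $Z_G$ connected, \thref{When is basic element unramified} together with passage to a Levi lets $b$ be represented by $\varpi^\tau$, so $J_b$ is quasi-split. Following \cite[§4.4]{XiaoZhu:Cycles} and \cite[Theorem A]{ZhouZhu:Twisted}, the stabilizer in $J_b(F)$ of each top-dimensional canonical component is a very special parahoric, conjugate to $J_b(\Oo_F)$; connectedness of the center is what removes the finite-index ambiguity. Hence each $J_b(F)$-orbit is $J_b(F)/J_b(\Oo_F)$. This is compatible with the equivariant map $\Irr X^{\spl}\to\Irr X^{\can}$, since the splitting components are constructed as preimages of canonical components and carry the same stabilizer. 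Combining these with the bijections of \thref{Irreducible components of splitting adlv} yields the diagram.
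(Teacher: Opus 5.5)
Your proposal has a genuine gap at the step you flag yourself: equidimensionality of $X_{\leq\mu}^{\spl}(b)$.

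Semismallness only says that the fibers of $\Gr_{\Gg,\leq\mu}^{\spl}\to\Gr_{\Gg,\leq\mu_I}^{\can}$ over $\Gr_{\Gg,\mu_I'}^{\can}$ have dimension \emph{at most} $\langle\rho,\mu-\mu_I'\rangle$. With only this bound, an irreducible component of $X_{\leq\mu}^{\spl}(b)$ whose generic point lies over a lower stratum $X_{\mu_I'}^{\can}(b)$ could come from a fiber component of smaller dimension. It would then have dimension strictly less than $\langle\rho,\mu-\nu_b\rangle$.

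Your proposed lower bound does not rule this out. $X_{\leq\mu}^{\spl}(b)$ is the fiber product of two things: the affine Deligne--Lusztig variety, which is a subscheme of the ind-scheme $\Gr_{\Gg}^{\can}$ cut out by a Frobenius-twisted condition on $g^{-1}b\sigma(g)$, and $L^+\Gg\backslash\Gr_{\Gg,\leq\mu}^{\spl}$. Smoothness of $\Gr_{\Gg,\leq\mu}^{\spl}$ does not present this as a locus cut out by an expected number of equations. Indeed, $\langle\rho,\mu-\nu_b\rangle$ is not a naive difference of dimensions, and lower bounds of this type need purity of Newton strata plus a product formula (as in Hartl--Viehmann), none of which you set up. The fallback via \cite{Nie:Irreducible} does not help either: it concerns top-dimensional components and says nothing about the absence of smaller ones.

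The missing input is the following. Since $\mu$ is minuscule, $\Gr_{\Gg,\leq\mu}^{\spl}\to\Gr_{\Gg,\leq\mu_I}^{\can}$ is a convolution of minuscule Schubert varieties. Its fibers over points of $\Gr_{\Gg,\mu_I'}^{\can}$ are therefore equidimensional of dimension exactly $\langle\rho,\mu_I-\mu_I'\rangle$, by \cite[Theorem 3.1]{Haines:Equidimensionality} (see \cite[Theorem 5.3]{FoxImai:Supersingular} in mixed characteristic). This is where minusculeness of $\mu$ is really used. It is also what justifies your claim in the unramified restriction-of-scalars step of the canonical case; semismallness does not give it.

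With this, the splitting case follows as in the paper:
\begin{enumerate}
\item Each stratum $X_{\mu_I'}^{\can}(b)$ is equidimensional of dimension $\langle\rho,\mu_I'-\nu_b\rangle$, by the unramified-group input you already use, applied to every $\mu_I'\leq\mu_I$.
\item Its preimage in $X_{\leq\mu}^{\spl}(b)$ is a locally trivial fibration with equidimensional fibers, hence equidimensional of dimension $\langle\rho,\mu-\nu_b\rangle$.
\item Every irreducible component of $X_{\leq\mu}^{\spl}(b)$ has its generic point in one of these finitely many pieces, so it has full dimension.
\end{enumerate}

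The rest of your plan matches the paper's proof: the dimension via vanishing defect, the reduction of the canonical case to unramified groups, and the transfer of very special stabilizers from canonical to splitting components using that $J_b(F)$ acts trivially on $L^+\Gg\backslash\Gr_{\Gg,\leq\mu}^{\spl}$. For the stabilizer statement in the generality needed, the reference is \cite{HeZhouZhu:Stabilizers}.
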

\begin{proof}
	By \thref{reductions for adlv}, the equidimensionality of \(X_{\leq \mu_I}^{\can}(b)\) for very special \(b\) follows from the same assertion for unramified groups, which is explained in \cite[§1.2]{XiaoZhu:Cycles}.
	(Again, the reduction step concerning restrictions of scalars along unramified extensions is subtle, but can be handled similarly as the passage from canonical to splitting objects below).
	The equidimensionality of \(X_{\leq \mu}^{\spl}(b)\) then follows by comparing the dimensions of smaller affine Deligne--Lusztig varieties in \(X_{\leq \mu_I}^{\can}(b)\) with the dimensions of the fibers of the map \(X_{\leq \mu}^{\spl}(b)\to X_{\leq \mu_I}^{\can}(b)\).
	Indeed, the fiber of this map over a point in \(X_{\leq \mu_I'}^{\can}(b)\) is equidimensional of dimension \(\langle \rho,\mu_I-\mu_I'\rangle\) by \cite[Theorem 3.1]{Haines:Equidimensionality} (cf.~\cite[Theorem 5.3]{FoxImai:Supersingular} for a proof which works in mixed characteristic).
	The dimension in question then follows from \thref{geometry splitting adlv}, since the defect vanishes for very special elements.
	
	The required commutative diagram will follow from \thref{Irreducible components of splitting adlv} once we show the stabilizer in \(J_b(F)\) of any element in \(\Irr X_{\mu_I'}^{\can}(b)\) or \(\Irr X_{\leq \mu}^{\spl}(b)\) is very special, as all very special parahorics are conjugate by since \(G\) is essentially unramified and \(Z_G\) is connected.
	(Note that this does not hold for e.g.~odd ramified unitary groups.)
	Since \(X_{\leq \mu}^{\spl}(b)\) is defined via the cartesian diagram \eqref{diagram splitting adlv}, where \(J_b(F)\) acts trivially on the right hand side, the stabilizer of an element in \(\Irr X_{\leq \mu}^{\spl}(b)\) agrees with the stabilizer of its image in \(\Irr X_{\mu_I'}^{\can}(b)\) for some \(\mu_I'\leq \mu_I\in X_*(T)_I^+\).
	Thus it suffices to control the stabilizers for the canonical affine Deligne--Lusztig varieties.
	But these have been shown to be very special in \cite[Theorem 4.1.2, Remark 2.2.3]{HeZhouZhu:Stabilizers} (cf.~also \cite[Theore, 4.4.14]{XiaoZhu:Cycles} in the unramified case, under the mild assumption \cite[Hypothesis 4.4.1]{XiaoZhu:Cycles}).
\end{proof}

\subsection{Affine Deligne--Lusztig varieties and semi-infinite orbits}\label{sec--adlv and semi-infinite orbits}

For applications towards the Tate conjecture later on, we need more refined information about the irreducible components appearing in \thref{irreducible comp of ADLV in very special case}.
Throughout this subsection, we fix \(\mu\in X_*(T)^+\) and \(\tau\in X_*(T)\), for which we assume the very special element \([\varpi^\tau]\) lies in \(B(G,\mu)\).
By \thref{classification of spherical elements}, we may assume \(\tau_{\Gamma_F}\in X_*(T)^+_{\Gamma_F}\) is dominant.
We also choose some \(\lambda_I\in \tau_I + (\sigma-\identity) X_*(T)_I\), a lift \(\lambda \in X_*(T)\), and some \(\bfb\in \IM\IV^{\spl}_\mu(\lambda)\).

\begin{lem}\thlabel{minimal elements in satake to mv}
	Assume \(Z_G\) is connected.
	Then there exists some \(\nu_I\in X_*(T)_I^+\) such that \(\lambda_I + \nu_I - \sigma(\nu_I)\) is dominant, and \(\bfb\) lies in the image of the map
	\[\iota_{\IM\IV}^{\spl} \colon \IS_{\nu_I,\mu\mid \lambda_I+\nu_I}^{\spl} \to \coprod_{\lambda\in X_*(T)\colon \lambda\mapsto \lambda_I\in X_*(T)_I} \IM\IV_\mu^{\spl}(\lambda)\]
	from \eqref{comparison Satake vs MV}.
	Moreover among these \(\nu_I\), there is a minimal \(\nu_I^{\bfb}\), in the sense that for any other \(\nu_I\) satisfying the above, \(\nu_I-\nu_I^{\bfb}\) is dominant.
	Such \(\nu_I^{\bfb}\) is unique up to adding elements in \(X_*(Z_G)_I\).
\end{lem}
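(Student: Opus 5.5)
The plan is to reduce to the canonical situation using the cartesian square \eqref{comparison Satake vs MV}, and then, through \thref{reductions for adlv}-type reductions, to the unramified case treated in \cite[§4.4]{XiaoZhu:Cycles}.

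\emph{Step 1: reduce to the canonical case.} Because \eqref{comparison Satake vs MV} is cartesian and \(\iota_{\IM\IV}^{\spl}\) is injective, \(\bfb\) lies in the image of \(\iota^{\spl}_{\IM\IV}\) on \(\IS^{\spl}_{\nu_I,\mu\mid\lambda_I+\nu_I}\) if and only if \(\beta^{\IM\IV}(\bfb)\in\IM\IV^{\can}_{\mu_I'}(\lambda_I)\) lies in the image of \(\iota^{\can}_{\IM\IV}\) on \(\IS^{\can}_{\nu_I,\mu_I'\mid\lambda_I+\nu_I}\). Hence the set of admissible \(\nu_I\) for \(\bfb\) equals the set of admissible \(\nu_I\) for the canonical MV cycle \(\beta^{\IM\IV}(\bfb)\). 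It therefore suffices to prove the lemma for canonical MV cycles.

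\emph{Step 2: describe the image of \(\iota^{\can}_{\IM\IV}\) by crystal data.} Both sides of \(\iota^{\can}_{\IM\IV}\) are compatible with products and with totally ramified restriction of scalars, the latter because Schubert varieties and semi-infinite orbits are invariant under it. After these reductions \(G\) becomes a product of unramified groups, with \(\widehat{G}^I\) the corresponding product of dual groups. For unramified groups, the Braverman--Gaitsgory and Xiao--Zhu description (cf.~the proof of \cite[Lemma 3.2.7]{XiaoZhu:Cycles}) says the following: an MV cycle \(\bfb'\in \IM\IV_{\mu_I'}(\lambda_I)\) is hit from \(\IS_{\nu_I,\mu_I'\mid\lambda_I+\nu_I}\) if and only if \(\langle \nu_I,\alpha_i\rangle\geq\varepsilon_i(\bfb')\) for every simple root \(\alpha_i\) of \(\widehat{G}^I\). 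Here \(\varepsilon_i\) is the crystal string function, which is the tensor product rule for \(B(\nu_I)\otimes B(\mu_I')\). So the admissible set is
\[N(\bfb)=\{\nu_I\in X_*(T)_I^+ \mid \langle\nu_I,\alpha_i\rangle\ge \varepsilon_i(\bfb')\ \forall i,\ \ \lambda_I+\nu_I-\sigma(\nu_I)\text{ dominant}\},\]
where \(\bfb'=\beta^{\IM\IV}(\bfb)\). The first condition is a lower bound on each simple coordinate.

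\emph{Step 3: existence and minimality; this is the main obstacle.} The second condition is not monotone in \(\nu_I\), so this step needs care. Since \(Z_G\) is connected, \(X_*(T)_I/X_*(Z_G)_I\) maps isomorphically onto the coweight-type lattice with coordinates \(\langle -,\alpha_i\rangle\). So an element \(\nu_I\) is determined, up to \(X_*(Z_G)_I\), by the integers \(n_i=\langle\nu_I,\alpha_i\rangle\), and any tuple of integers is realized. This also gives the uniqueness statement. Since \(\sigma\) permutes the simple roots, the twisted condition reads \(\langle\lambda_I,\alpha_i\rangle+n_i-n_{\sigma^{-1}(i)}\ge 0\).

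I would first try the following. Take the componentwise minimal solution, obtained by iterating \(n_i\mapsto\max\bigl(\varepsilon_i(\bfb'),\, n_{\sigma^{-1}(i)}-\langle\lambda_I,\alpha_i\rangle\bigr)\) along each \(\sigma\)-orbit of simple roots. The fact that \(\lambda_{\Gamma_F}=\tau_{\Gamma_F}\) is dominant gives \(\sum_{\text{orbit}}\langle\lambda_I,\alpha_i\rangle\ge0\). This makes the iteration stabilize and shows that solutions exist, for instance by taking \(n_i\) constant and large. The same inequality should show that the solution set is closed under componentwise minimum, so the stabilized minimum is the minimal element \(\nu_I^{\bfb}\), and every other solution differs from it by something with nonnegative simple coordinates, i.e., something dominant.

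The delicate point is that "closed under minimum" needs exactly the orbit-sum nonnegativity. If this fails for some orbit, I would instead follow the argument of \cite[§4.4]{XiaoZhu:Cycles} verbatim on each \(\sigma\)-orbit.
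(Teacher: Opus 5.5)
Your Step 1 is the paper's first reduction, and your passage to unramified factors is the same as the paper's. Where you differ is in how existence and minimality are obtained, and your route is correct in substance.

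\textbf{Comparison with the paper.} The paper first passes to \(G_{\adj}\), which is the only group that is literally a product of restrictions of scalars. It then reduces to the unramified case and quotes \cite[Lemma 4.4.3]{XiaoZhu:Cycles}. Finally it lifts the adjoint minimal element along \(X_*(T)_I^+\to X_*(T_{\adj})_I^+\), which is surjective because \(Z_G\) is connected.

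You instead reprove the content of that lemma. Write \(n_i=\langle\nu_I,\alpha_i\rangle\) and \(l_i:=\langle\lambda_I,\alpha_i\rangle\). The crystal criterion \(n_i\geq\varepsilon_i(\bfb')\) turns admissibility into lower bounds together with difference constraints \(n_i-n_{\sigma^{-1}(i)}\geq -l_i\). Your identification of \(X_*(T)_I/X_*(Z_G)_I\) with \(\IZ^{\Delta}\), where \(\Delta\) is the set of simple roots of \(\widehat{G}^I\), follows from right exactness of coinvariants applied to \(0\to X_*(Z_G)\to X_*(T)\to X_*(T_{\adj})\to 0\). It replaces the paper's lifting step and gives the uniqueness clause directly.

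Your route shows exactly where dominance of \(\tau_{\Gamma_F}\) is used. In exchange, two inputs must be stated explicitly:
\begin{enumerate}
\item You must supply the crystal criterion for \(\iota^{\can}_{\IM\IV}\) on the Witt vector affine Grassmannian. The paper never states it, because it is absorbed into the citation.
\item You must note that \(\IS^{\can}\), \(\IM\IV^{\can}\) and \(\iota^{\can}_{\IM\IV}\) for \(G\) agree with those for \(G_{\adj}\); the paper uses \cite[Lemma 4.47]{vdH:RamifiedSatake} for this. Your product and restriction-of-scalars reductions only apply to \(G_{\adj}\).
\end{enumerate}

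\textbf{Corrections to Step 3.} Two claims are wrong as written, though the conclusion survives.

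First, constant large \(n_i\) along an orbit is not a solution in general. It would force \(l_i\geq 0\) for every \(i\), i.e.\ \(\lambda_I\) dominant, which fails for a general weight. Existence instead comes from the orbit sum. Along an orbit \(i_0, i_1=\sigma(i_0),\ldots,i_{r-1}\), put \(n_{i_0}=C\) and \(n_{i_k}=C-l_{i_1}-\cdots-l_{i_k}\). For \(C\gg 0\), all lower bounds hold, and the constraints at \(i_k\) with \(k\geq 1\) hold with equality. The only remaining constraint, at \(i_0\), is exactly \(\sum_{\text{orbit}}l_i\geq 0\). This holds because the sum equals \(\langle\tau_I,\sum_{\text{orbit}}\alpha_i\rangle\): indeed \(\lambda_I-\tau_I\in(\sigma-1)X_*(T)_I\) and the orbit sum is \(\sigma\)-invariant. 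Dominance of \(\tau_{\Gamma_F}\) then gives nonnegativity.

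Second, closure of the solution set under componentwise minimum needs no hypothesis at all. If \(\min(n_i,m_i)=n_i\), then \(\min(n_i,m_i)-\min(n_j,m_j)\geq n_i-n_j\), so difference constraints are preserved, and lower bounds are preserved trivially.

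So the orbit-sum inequality governs only nonemptiness. A nonempty, bounded-below, min-closed set of integer vectors has a least element, which is your \(\nu_I^{\bfb}\). No fallback to \cite{XiaoZhu:Cycles} is needed.
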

\begin{proof}
	Let \(\mu_I'\leq \mu_I\) be the unique element in \(X_*(T)_I^+\) such that \(\beta^{\IM\IV}(\bfb)\) lies in \(\IM\IV_{\mu_I'}^{\can}(\lambda_I)\).
	It suffices to find some minimal \(\nu_I^{\beta^{\IM\IV}(\bfb)}\in X_*(T)_I^+\) such that \(\lambda_I + \nu_I^{\beta^{\IM\IV}(\bfb)} - \sigma(\nu_I^{\beta^{\IM\IV}(\bfb)})\) is dominant and \(\beta^{\IM\IV}(\bfb)\) lies in the image of 
	\[\iota_{\IM\IV}^{\can}\colon \IS_{\nu_I,\mu_I'\mid \lambda_I+\nu_I}^{\can} \to \IM\IV_{\mu_I'}^{\can}(\lambda_I).\]
	Indeed, by the cartesian diagram \eqref{comparison Satake vs MV}, the same element \(\nu_I^{\bfb} = \nu_I^{\beta^{\IM\IV}(\bfb)}\) will then also satisfies the desired properties from the lemma.
	
	Assume first that \(G\) is adjoint, so that it splits up as a product of restrictions of scalars of unramified groups.
	It is clear that the sets of Satake and Mirkovic--Vilonen cycles are compatible with taking products, and so is the map \(\iota_{\IM\IV}^{\can}\).
	Recall that (canonical) affine Grassmannians are invariant under totally ramified restrictions of scalars \cite[Lemma 3.2]{HainesRicharz:Smoothness}, and they are transformed into products of themselves under unramified restrictions of scalars (after base change to \(\Spec \overline{k}\)), compatibly with the map \(\iota_{\IM\IV}^{\can}\).
	Hence, we are reduced to the unramified case, so that \cite[Lemma 4.4.3]{XiaoZhu:Cycles} applies.
	This gives us a unique minimal \(\nu_I\) when \(G\) is adjoint.
	
	We now omit the assumption that \(G\) is adjoint. 
	Consider the quotient \(G\to G_{\adj}\), and let \(T_{\adj}\subseteq G_{\adj}\) be the image of \(T\subseteq G\).
	Then it is easy to see (using e.g.~\cite[Lemma 4.47]{vdH:RamifiedSatake}) that \(\nu_I\in X_*(T)_I^+\) satisfies the conditions of the lemma (for the group \(G\)), if and only if the image of \(\nu_I\) in \(X_*(T_{\adj})_I^+\) satisfies the conditions of the lemma (for the group \(G_{\adj}\)).
	Under the assumption that \(Z_G\) is connected, the map \(X_*(T)_I^+ \to X_*(T_{\adj})_I^+\) is surjective, so that we may lift the element obtained in the adjoint case.
	It is then clearly minimal, and unique up to an element in \(X_*(Z_G)_I\).
\end{proof}

\begin{rmk}\thlabel{centrality of tau}
	Let \(\bfb\) and \(\nu_{I}^{\bfb}\) be as above, and \(\tau_I^{\bfb}=\lambda_I + \nu_I^{\bfb}+\sigma(\nu_I^{\bfb})\).
	Then \(\tau_{\Gamma_F}=\lambda_{\Gamma_F}=\tau_{\Gamma_F}^{\bfb}\), so that \(\varpi^{\tau_I^{\bfb}}\) and \(\varpi^{\tau_I}\) determine the same element in \(B(G)\).
	In particular, if \([\varpi^{\tau_I}]\) is basic, then \(\tau_I^{\bfb}\) is central.
\end{rmk}

In order to use the results of \cite[§4]{XiaoZhu:Cycles}, we will make the following assumption, similar to \cite[Hypothesis 4.4.1]{XiaoZhu:Cycles}.

\begin{ass}\thlabel{assumption on centralizer}
	For the rest of this subsection, we assume \(k\neq \IF_2\) if \(J_{\varpi^{\tau}}\) contains a simple factor of type \(B_n,C_n,F_4\), and \(k\neq \IF_2,\IF_3\) if it contains a factor of type \(G_2\).
\end{ass}

Note that, while the canonical affine Deligne--Lusztig varieties \(X_{\leq \mu}^{\can}(b)\) are naturally subschemes of \(\Gr_{\Gg}^{\can}\), the splitting versions \(X_{\leq \mu}^{\spl}(b)\) are not subschemes of \(\Gr_{\Gg}^{\spl}\).
However, there are natural maps
\[X_{\leq \mu}^{\spl}(b)\to X_{\leq \mu_I}^{\can}(b) \subseteq \Gr_{\Gg}^{\can}.\]
For \(\nu_I\in X_*(T)_I^+\), we define
\[Y_{\nu_I}^{\can}(\tau) := X_{\leq \mu_I}^{\can}(\varpi^\tau)\cap \Ss_{\nu_I}^{\can}\subseteq \Gr_{\Gg}^{\can}\]
and 
\[Y_{\nu_I}^{\spl}(\tau) := X_{\leq \mu}^{\spl}(\varpi^\tau) \times_{\Gr_{\Gg}^{\can}} \Ss_{\nu_I}^{\can}.\]
Alternatively, they are the schemes making the squares in the following diagram cartesian:
\begin{equation}\label{diagram for subadlv}\begin{tikzcd}
	Y_{\nu_I}^{\spl}(\tau) \arrow[r] \arrow[d] &\left(\Ss_{\nu_I}^{\can} \widetilde{\times} (\bigsqcup_{\lambda\in X_*(T)\colon \lambda\mapsto \lambda_I\in X_*(T)_I} \Ss_\lambda^{\spl})\right) \cap (\Gr_{\Gg}^{\can}\widetilde{\times} \Gr_{\Gg,\leq \mu}^{\spl}) \arrow[d]\\
	Y_{\nu_I}^{\can}(\tau) \arrow[d] \arrow[r] & (\Ss_{\nu_I}^{\can} \widetilde{\times} \Ss_{\lambda_I}^{\can}) \cap (\Gr_{\Gg}^{\can} \widetilde{\times} \Gr_{\Gg,\leq \mu_I}^{\can}) \arrow[d, "\pr \times m"] \\
	 \Ss_{\nu_I}^{\can}\arrow[r, "\identity \times \varpi^\tau\sigma"'] & \Ss_{\nu_I}^{\can} \times \Ss_{\tau_I+\sigma(\nu_I)}^{\can},
\end{tikzcd}\end{equation}
where \(\lambda_I = \tau_I+\sigma(\nu_I)-\nu_I\in X_*(T)_I\).
Thus, for any \(\lambda\in X_*(T)\) mapping to \(\lambda_I\) and \(\bfb\in \IM\IV_\mu^{\spl}(\lambda)\), it makes sense to define \(Y_{\nu_I}^{\spl,\bfb}(\tau)\) via the cartesian diagram
\[\begin{tikzcd}
	Y_{\nu_I}^{\spl,\bfb}(\tau) \arrow[r] \arrow[d] &\Ss_{\nu_I}^{\can} \widetilde{\times} (\Ss_\lambda^{\spl} \cap \Gr_{\Gg,\leq \mu}^{\spl})^{\bfb}\arrow[d]\\
	\Ss_{\nu_I}^{\can} \arrow[r] & \Ss_{\nu_I}^{\can} \times \Ss_{\tau_I+\sigma(\nu_I)}^{\can}.
\end{tikzcd}\]
For any \(\bfb_I\in \IM\IV_{\mu_I}^{\can}(\lambda_I)\), we define \(Y_{\nu_I}^{\can,\bfb_I}(\tau)\) similarly.

Now, for \(\bfa'\in \IS_{\nu_I,\mu_I\mid \tau_I+\sigma(\nu_I)}^{\can}\), define the perfect scheme \(X_{\mu_I,\nu_I}^{\can,\bfa'}(\tau)\) via the Cartesian diagram
\[\begin{tikzcd}
	X_{\mu_I,\nu_I}^{\can,\bfa'}(\tau) \arrow[r] \arrow[d] & \Gr_{\nu_I,\mu_I\mid \tau_I+\sigma(\nu_I)}^{\can,\bfa'} \arrow[d]\\
	\Gr_{\Gg,\leq \nu_I}^{\can} \arrow[r, "\identity \times \varpi^\tau \sigma"'] & \Gr_{\Gg,\leq \nu_I}^{\can} \times \Gr_{\Gg,\leq \tau_I+\sigma(\nu_I)}^{\can}.
\end{tikzcd}\]
Similarly, for \(\bfa\in \IS_{\nu_I,\mu\mid \tau+\sigma(\nu_I)}^{\spl}\), we can define \(X_{\mu,\nu_I}^{\spl,\bfa}(\tau)\) via
\[\begin{tikzcd}
	X_{\mu,\nu_I}^{\spl,\bfa}(\tau) \arrow[r] \arrow[d] & \Gr_{\nu_I,\mu\mid \tau+\sigma(\nu_I)}^{\spl,\bfa} \arrow[d]\\
	\Gr_{\Gg,\leq \nu_I}^{\can} \arrow[r, "\identity \times \varpi^\tau \sigma"'] & \Gr_{\Gg,\leq \nu_I}^{\can} \times \Gr_{\Gg,\leq \tau_I+\sigma(\nu_I)}^{\can}.
\end{tikzcd}\]

\begin{prop}\thlabel{Cycle in ADLV}
	Assume that \(Z_G\) is connected, and that \thref{assumption on centralizer} holds.
	Let \(\bfa \in \IS_{\nu_I,\mu\mid \tau_I+\sigma(\nu_I)}^{\spl}\) and \(\bfb = \iota_{\IM\IV}^{\spl}(\bfa)\), and let \(\nu_{\bfb,I}\) be a minimal element obtained from \thref{minimal elements in satake to mv}.
	Then \(X_{\mu,\nu_I}^{\spl,\bfa}(\tau)\) has a unique irreducible component \(X_\mu^{\spl,\bfb}(\tau)\) of dimension \(\langle \rho,\mu-\tau\rangle\), and all other irreducible components have strictly smaller dimension.
	Moreover, the scheme \(X_\mu^{\spl,\bfb}(\tau)\times_{\Gr_{\Gg,\leq \nu_I}^{\can}} \Gr_{\Gg,\nu_I}^{\can}\) is nonempty and irreducible (necessarily of dimension \(\langle \rho,\mu-\tau\rangle\)).
	
	Similar statements hold for the canonical versions.
\end{prop}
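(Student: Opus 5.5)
The plan is to deduce the splitting statement from the canonical one, which itself follows from the unramified case \cite[Proposition 4.4.5]{XiaoZhu:Cycles} via the reductions of \thref{reductions for adlv}, in the same way as \thref{minimal elements in satake to mv}.

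\emph{Canonical case.} First assume \(G\) is adjoint. It is then a product of Weil restrictions of unramified groups. Products are handled factorwise. Totally ramified restrictions of scalars leave \(\Gr_{\Gg}^{\can}\), the Schubert varieties, the semi-infinite orbits and the Satake cycles unchanged \cite[Lemma 3.2]{HainesRicharz:Smoothness}. For unramified restrictions of scalars, \thref{reductions for adlv}(3) turns \(X^{\can,\bfa'}_{\mu_I,\nu_I}(\tau)\) into a convolution-type variety for \(G'\) with respect to \(\Nm(\varpi^\tau)\), just as in the proof of \cite[Lemma 3.7]{Zhu:Affine}. After these steps the statement is exactly \cite[§4.4]{XiaoZhu:Cycles}, which is where \thref{assumption on centralizer} enters. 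For general \(G\) with connected center, the argument in the proof of \thref{minimal elements in satake to mv} shows that \(\Gr_{\Gg}^{\can}\to \Gr_{\Gg_{\adj}}^{\can}\) is an isomorphism on connected components. This map is compatible with \(\varpi^\tau\sigma\) and with the Satake cycles, so the varieties \(X^{\can,\bfa'}\) for \(G\) and for \(G_{\adj}\) are identified.

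\emph{Splitting case.} Write \(\bfa'=\beta^{\IS}(\bfa)\in \IS^{\can}_{\nu_I,\mu_I'\mid \tau_I+\sigma(\nu_I)}\). The map \(\Gr^{\spl,\bfa}_{\nu_I,\mu\mid\tau_I+\sigma(\nu_I)}\to \Gr^{\can}_{\nu_I,\mu_I\mid \tau_I+\sigma(\nu_I)}\) is proper, and its image contains \(\Gr^{\can,\bfa'}\) by \thref{comparing Satake cycles}. Base changing along \(\identity\times\varpi^\tau\sigma\) gives a proper map \(X^{\spl,\bfa}_{\mu,\nu_I}(\tau)\to X^{\can}_{\mu_I,\nu_I}(\tau)\).

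\emph{Dimension bound.} Stratify the target by the Schubert cell \(\mu_I''\) of the second factor. Over the \(\mu_I''\)-stratum the fibers have dimension \(\le\langle\rho,\mu-\mu_I''\rangle\) by semismallness. By the canonical case, together with \thref{geometry splitting adlv} applied to each stratum, the stratum itself has dimension \(\le\langle\rho,\mu_I''-\tau\rangle\). Hence \(\dim X^{\spl,\bfa}_{\mu,\nu_I}(\tau)\le\langle\rho,\mu-\tau\rangle\).

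\emph{Uniqueness of the top component.} A component of this dimension must dominate a top-dimensional component of some \(\mu_I''\)-stratum. Since it lies in the \(\bfa\)-cycle, the genericity argument of \thref{comparing Satake cycles} forces \(\mu_I''=\mu_I'\) and forces the image to be \(X^{\can}_{\mu_I'}{}^{\bfb_I}(\tau)\), with \(\bfb_I=\beta^{\IM\IV}(\bfb)\). Over the open part of this image the map is a Zariski-locally trivial fibration \cite[Lemma 5.1]{FoxImai:Supersingular}. Its fiber is the fiber of \(\Gr_{\Gg,\le\mu}^{\spl}\to\Gr^{\can}_{\Gg,\le\mu_I}\) over \(\varpi^{\mu_I'}\), cut out by \(\bfa\). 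The cartesian square \eqref{comparison Satake vs MV} says that \(\bfa\) corresponds to exactly one top-dimensional component of this fiber, namely the one determined by \(\bfb\) over \(\bfb_I\). The preimage of the canonical top component is therefore irreducible of dimension \(\langle\rho,\mu-\tau\rangle\), and it is nonempty because the canonical component is. Call it \(X^{\spl,\bfb}_\mu(\tau)\).

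\emph{Restriction to the open cell.} Finally, \(X^{\spl,\bfb}_\mu(\tau)\times_{\Gr_{\Gg,\le\nu_I}^{\can}}\Gr^{\can}_{\Gg,\nu_I}\) is an open subset of an irreducible scheme. It is nonempty because its image already meets \(\Gr_{\Gg,\nu_I}^{\can}\) in the canonical case, using minimality of \(\nu_I\). Hence it is irreducible of the same dimension.

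\emph{Main obstacle.} The hardest step is the unramified-restriction-of-scalars reduction in the canonical case. Under the isomorphism of \thref{reductions for adlv}(3), Satake cycles for \(\Res_{F'/F}G'\) must match Satake cycles for \(G'\) on a convolution Grassmannian, and this identification has to be compatible with Frobenius and with the minimality of \(\nu_I\). I would first attempt it by writing \(\Gr^{\can}\) over \(\overline{k}\) as \(\prod_{\tau\in\Sigma}\Gr_{\Gg'}\). On this product \(\varpi^\tau\sigma\) cyclically permutes the factors, so projecting to the first factor identifies the fixed-point-type fiber product with the one for \(G'\) and \(\Nm(\varpi^\tau)\).
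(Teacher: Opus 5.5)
Your route is the paper's. The canonical case is reduced via \thref{reductions for adlv} to \cite[Theorem 4.4.5]{XiaoZhu:Cycles}. The splitting case is deduced by mapping to the canonical side, stratifying by the Schubert cell \(\mu_I''\) of \(g^{-1}\varpi^\tau\sigma(g)\), and combining Fox--Imai local triviality with the matching of top-dimensional fibre components to fibres of \(\beta^{\IS}\). Your dimension bound and your final step (restriction to \(\Gr^{\can}_{\Gg,\nu_I}\)) are fine.

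The gap is the sentence asserting that a \(\langle\rho,\mu-\tau\rangle\)-dimensional component \(C\) of \(X^{\spl,\bfa}_{\mu,\nu_I}(\tau)\) must lie generically over the stratum \(\mu_I'\) of \(\beta^{\IS}(\bfa)\). The genericity argument of \thref{comparing Satake cycles} only locates the generic point of \(\Gr^{\spl,\bfa}_{\nu_I,\mu\mid\tau_I+\sigma(\nu_I)}\) itself. \(C\) is cut out by the graph of \(\varpi^\tau\sigma\), so its generic point need not be generic in \(\Gr^{\spl,\bfa}\). Over points of a smaller stratum \(\mu_I''<\mu_I'\), the cycle \(\Gr^{\spl,\bfa}\) can contain an entire \(\langle\rho,\mu-\mu_I''\rangle\)-dimensional component of the fibre of \(\Gr^{\spl}_{\Gg,\leq\mu}\to\Gr^{\can}_{\Gg,\leq\mu_I}\). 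If this happens over a top-dimensional piece of \(X^{\can}_{\mu_I''}(\varpi^\tau)\cap\Gr^{\can}_{\Gg,\leq\nu_I}\), you get a component over \(\mu_I''\) of dimension \(\langle\rho,\mu_I''-\tau\rangle+\langle\rho,\mu-\mu_I''\rangle=\langle\rho,\mu-\tau\rangle\). Nothing in your argument excludes this.

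It does occur. Take \(G=\Res_{F'/F}\PGL_2\) with \(F'/F\) totally ramified quadratic, \(\mu=(\omega,\omega)\), \(\tau=0\) and \(\nu_I=2\omega\). Write points of \(\Gr^{\spl}_{2\omega,\mu\mid 2\omega}\) as triples \((g,x,h)\) of vertices of the Bruhat--Tits tree of \(\PGL_2(\breve{F}')\), with \(x\) adjacent to \(g\) and \(h\), and \(g,h\in\Gr^{\can}_{\Gg,\leq 2\omega}\). Let \(\bfa_1\) be the component whose generic point has \(g\neq h\), so \(\beta^{\IS}(\bfa_1)\) lies in the stratum \(2\omega\). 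It contains \((o,x,o)\) for every neighbour \(x\) of the base vertex \(o\): take \((h',x,o)\) with \(h'\neq o\) a neighbour of \(x\), and let \(h'\to o\). Hence \(X^{\spl,\bfa_1}_{\mu,2\omega}(0)\) has the component \(\{o\}\times\mathbb{P}^1\), which lies over the stratum \(0\).

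Here \(2\omega\) is not minimal (\(\nu_{\bfb,I}=\omega\)), so the statement is not contradicted. But your justification never uses minimality of \(\nu_I\), so it cannot be correct as written; this is exactly the step where minimality must enter. One way is to transport the semi-infinite-orbit analysis of \cite[\S4.4]{XiaoZhu:Cycles} to the splitting setting:
\begin{itemize}
\item over \(\Ss^{\can}_{\nu_I}\), the scheme \(X^{\spl,\bfa}_{\mu,\nu_I}(\tau)\) is \(Y^{\spl,\bfb}_{\nu_I}(\tau)\times_{\Gr^{\can}_{\Gg}}\Gr^{\can}_{\Gg,\nu_I}\) by \thref{Satake to MV cycles};
\item over the complement, you must show the dimension is \(<\langle\rho,\mu-\tau\rangle\), using minimality of \(\nu_I\).
\end{itemize}

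The paper organizes this step differently, as a global count of top-dimensional components of \(\bigsqcup_{\bfa}X^{\spl,\bfa}_{\mu,\nu_I}(\tau)\) against \(\IS^{\spl}\), followed by the assertion that each \(X^{\spl,\bfa}\) contains exactly one. That count applies the canonical case to every \(\bfa'\in\IS^{\can}_{\nu_I,\mu_I'\mid\tau_I+\sigma(\nu_I)}\) at the same \(\nu_I\), which presupposes minimality for all of them at once. So it does not supply the missing argument either.
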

\begin{proof}
	For the canonical models, the proposition reduces to the case of unramified groups by \thref{reductions for adlv}, in which case the result was proven in \cite[Theorem 4.4.5]{XiaoZhu:Cycles}.
	(As usual, the reduction step involving unramified restrictions of scalars is more subtle, and can be handled similarly as the reduction from splitting to canonical models below.)
	To deduce the splitting case, we will need the following lemma.
	
	\begin{lem}\thlabel{lemma irred open}
		For any \(\mu_I'\in X_*(T)_I^+\) and \(\bfa'\in \IS_{\nu_I,\mu_I'\mid \tau_I+\sigma(\nu_I)}^{\can}\), the preimage of \((\Gr_{\Gg,\leq \nu_I}^\can \widetilde{\times} \Gr_{\Gg,\mu_I'}^\can) \times_{\Gr_{\Gg}^{\can}} \Gr_{\Gg,\leq \tau_I+\sigma(\nu_I)}^\can\) under the natural map \(X_{\mu_I',\nu_I}^{\can,\bfa'} \to \Gr_{\nu_I,\mu_I'\mid \tau_I+\sigma(\nu_I)}^{\can}\) is an open subscheme in \(X_{\mu_I',\nu_I}^{\can,\bfa'}\) of dimension \(\langle \rho,\mu_I'-\tau_I\rangle\).
	\end{lem}
	\begin{proof}
		The preimage in question is clearly an open subscheme. 
		To show the dimension statement, it suffices to show the generic point of the unique \(\langle \rho,\mu_I'-\tau_I\rangle\)-dimensional component of \(X_{\mu_I',\nu_I}^{\can,\bfa'}\) maps to \((\Gr_{\Gg,\leq \nu_I}^\can \widetilde{\times} \Gr_{\Gg,\mu_I'}^\can) \times_{\Gr_{\Gg}^{\can}} \Gr_{\Gg,\leq \tau_I+\sigma(\nu_I)}^\can \subseteq \Gr_{\nu_I,\mu_I'\mid \tau_I+\sigma(\nu_I)}^{\can}\).
		But this follows from the proof of \cite[Theorem 4.4.5]{XiaoZhu:Cycles}.
		Indeed, the unique irreducible component is obtained as the closure of \(Y_{\nu_I}^{\can,\bfb'} \cap \Gr_{\Gg,\nu_I}^{\can}\), where \(\bfb'=\iota_{\IM\IV}^{\can}(\bfa')\), which is irreducible by \cite[Proposition 4.4.7]{XiaoZhu:Cycles}.
		Moreover, the unique generic point of \(Y_{\nu_I}^{\can,\bfb'} \cap \Gr_{\Gg,\nu_I}^{\can}\) gets sent to \((\Gr_{\Gg,\leq \nu_I}^\can \widetilde{\times} \Gr_{\Gg,\mu_I'}^\can) \times_{\Gr_{\Gg}^{\can}} \Gr_{\Gg,\leq \tau_I+\sigma(\nu_I)}^\can \subseteq \Gr_{\nu_I,\mu_I'\mid \tau_I+\sigma(\nu_I)}^{\can}\) by \cite[(4.4.6)]{XiaoZhu:Cycles}, since the horizontal arrows in that diagram are perfectly smooth and surjective.
	\end{proof}
	
	For the splitting case, consider the cartesian diagram
	\[\begin{tikzcd}
		\coprod_{\mu_I'\leq \mu_I}Z_{\mu_I'} \arrow[r] \arrow[d] & \coprod_{\mu_I'\leq \mu_I} (\Gr_{\Gg,\leq \nu_I}^{\can} \widetilde{\times} \Gr_{\Gg,\mu_I'}^{\can}) \times_{\Gr_{\Gg}^{\can}} \Gr_{\Gg,\leq \tau_I+\sigma(\nu_I)}^{\can}\arrow[d]\\
		\Gr_{\Gg,\leq \nu_I}^{\can} \arrow[r] & \Gr_{\Gg,\leq \nu_I}^\can \times \Gr_{\Gg,\leq \tau_I+\sigma(\nu_I)}^{\can}.
	\end{tikzcd}\]
	Then by \thref{lemma irred open} and the canonical case of the proposition, the irreducible components of each \(Z_{\mu_I'}\) are canonically in bijection with \(\IS_{\nu_I,\mu_I'\mid \tau_I+\sigma(\nu_I)}^{\can}\).
	
	On the other hand, by \cite[Lemma 5.1]{FoxImai:Supersingular}, the restriction of the convolution map \(\Gr_{\Gg,\leq \mu}^{\spl}\to \Gr_{\Gg,\leq \mu_I}^{\can}\) to \(\Gr_{\Gg,\mu_I'}^{\can}\) is a locally trivial fibration, which has dimension \(\leq \langle \rho,\mu-\mu_I'\rangle\).
	Moreover, the \(\langle \rho,\mu-\mu_I'\rangle\)-dimensional irreducible components of its fibers are canonically in bijection with the fibers of the map \(\beta^{\IS}\) from \thref{comparing Satake cycles}.
	This shows that the \(\langle \rho,\mu-\tau\rangle\)-dimensional irreducible components of
	\[\bigsqcup_{\bfa\in \IS_{\nu_I,\mu_I\mid \tau_I+\sigma(\nu_I)}} X_{\mu,\nu_I}^{\spl,\bfa} = \Gr_{\Gg,\leq \nu_I}^{\can} \times_{\Gr_{\Gg,\leq \nu_I}^{\can} \times \Gr_{\Gg,\leq \tau_I+\sigma(\nu_I)}^{\can}} \Gr_{\nu_I,\mu\mid \tau+\sigma(\nu_I)}^{\spl}\] are naturally in bijection with \(\IS_{\nu_I,\mu_I\mid \tau_I+\sigma(\nu_I)}\), and it is clear that each \(X_{\mu,\nu_I}^{\spl,\bfa}\) contains a unique such component.
	Repeating the above argument, but replacing \(\Gr_{\Gg,\leq \nu_I}^{\can}\) by \(\Gr_{\Gg,\nu_I}^{\can}\) everywhere, the nonemptyness and irreducibility of \(X_\mu^{\spl,\bfb}(\tau)\times_{\Gr_{\Gg,\leq \nu_I}^{\can}} \Gr_{\Gg,\nu_I}^{\can}\) also follows from the canonical case.
\end{proof}

\section{Motivic correspondences on moduli of local shtukas}\label{Sec:Motivic corr}

After defining the splitting affine Grassmannians and splitting affine Deligne--Lusztig varieties in the previous sections, we move towards moduli of local shtukas, and define splitting versions of these as well.
We then formulate and prove the main local result of this paper, \thref{Main local theorem}. 
Throughout this section, we will assume \(G/F\) is essentially unramified and use \thref{notation essentially unramified}, but we note that the results involving only canonical models also hold for general quasi-split \(G\) (with very special parahoric model \(\Gg/\Oo_F\)).
Throughout this section, we will consider all our geometric objects over \(\Spec \overline{k}\).

\subsection{Moduli of unbounded splitting local shtukas}

Recall that we defined a notion of (\(\mu\)-bounded) splitting local shtukas in Section \ref{Sec: bounded local shtukas} via a moduli interpretation.
Similar to \cite[§5]{XiaoZhu:Cycles}, this bounded version can actually be extended to an unbounded version, using \thref{definition splitting grassmannians} (although any direct connection between Shimura varieties and moduli of local shtukas, including the splitting versions, will only involve the bounded part).
For this, the group-theoretic methods from Section \ref{Sec:Splitting Gr} will be crucial.

Recall \cite[§5.2]{XiaoZhu:Cycles} that the moduli of local shtukas for \(\Gg\) is defined as the étale quotient \((LG/\Ad_{\sigma^{-1}} L^+\Gg)^{\et}\) for the Frobenius-twisted conjugation action; we will call it the moduli of canonical local shtukas, and denote it \(\Sht_{\Gg}^{\can}\).
It admits a natural map to the Hecke stack \(\Hck_\Gg^{\can}\).

\begin{dfn}
	The \emph{moduli of splitting local shtukas} is the fiber product
	\[\Sht_{\Gg}^{\spl}:=\Sht_{\Gg}^{\can} \times_{\Hck_{\Gg}^{\can}} \Hck_{\Gg}^{\spl},\]
	where \(\Hck_{\Gg}^{\spl}\) is defined in \thref{definition splitting grassmannians}.
	In particular, there is a natural map \(\Sht_{\Gg}^{\spl}\to \Hck_{\Gg}^{\spl}\) (pulled back from \(\Sht_{\Gg}^{\can}\to \Hck_{\Gg}^{\can}\)).
	For any \(\mu\in X_*(T)^+\), there are natural subschemes
	\[\Sht_{\Gg,\leq \mu}^{\spl} := \Sht_{\Gg}^{\can} \times_{\Hck_{\Gg}^{\can}} \Hck_{\Gg,\leq \mu}^{\spl}.\]
\end{dfn}

More generally, we also need iterated versions of the Hecke stacks and moduli of local shtukas, defined via twisted products.
Here, it will be important to distinguish them from the twisted products arising from the splitting models.

\begin{dfn}\thlabel{defi iterated shtuka}
	Let \(\mu_{\bullet,I} = (\mu_{1,I},\ldots,\mu_{t,I})\) be a tuple of elements in \(X_*(T)_I^+\).
	Then the \emph{stack of iterated (canonical) local shtukas} is
	\[\Sht_{\Gg,\leq \mu_{\bullet,I}}^{\can} := \Sht_{\Gg}^{\can} \times_{\Hck_{\Gg}^{\can}} \Hck_{\Gg,\leq \mu_{\bullet,I}}^{\can}.\]
	Similarly, for any convolution Hecke stack as in \thref{defi splitting Schubert}, there is a natural stack of local shtukas, given by taking the fiber product along \(\Sht_{\Gg}^{\can} \to \Hck_{\Gg}^{\can}\), and will be denoted using similar notation as above.
\end{dfn}

It is also possible to define unbounded versions of these objects, but keeping track of the bounds will make the notation clearer later on.
However, it is important that we allow the bounds to be arbitrary, in contrast to Section \ref{Sec: bounded local shtukas}.

Recall also that these moduli of iterated canonical local shtukas admit a partial Frobenius morphism
\begin{equation} F_{\mu_{\bullet,I}}^{\can}\colon \Sht_{\Gg,\leq(\mu_{1,I},\ldots,\mu_{t,I})}^{\can}\cong \Sht_{\Gg,\leq(\sigma(\mu_{t,I}),\mu_{1,I},\ldots,\mu_{t-1,I})}^{\can},\end{equation}
induced by taking the quotient by the Frobenius-twisted conjugation action of \(L^+\Gg\) on
\[LG \widetilde{\times} \ldots \widetilde{\times} LG \cong LG \widetilde{\times} \ldots \widetilde{\times} LG  \colon (g_1,\ldots,g_{t-1},g_t) \mapsto (\sigma(g_{t}),g_1,\ldots,g_{t-1}).\]

\subsection{Correspondences between moduli of splitting local shtukas}

Next, we will introduce certain (geometric) correspondences between moduli of local shtukas.
Throughout, \(\lambda_\bullet=(\lambda_1,\ldots,\lambda_s)\) and \(\mu_\bullet = (\mu_1,\ldots,\mu_t)\) will denote tuples of elements in \(X_*(T)^+\), and similarly \(\lambda_{\bullet,I}\) and \(\mu_{\bullet,I}\) will denote tuples of elements in \(X_*(T)_I^+\).
The canonical versions can be defined similarly as in \cite[Definition 5.2.8]{XiaoZhu:Cycles}.
However, the splitting versions are more subtle, and cannot directly be obtained by base change from the canonical versions.
Instead, we will define them using the stack \(\BB(G)'\), defined as the étale sheafification of the quotient prestack \(LG/\Ad_{\sigma^{-1}} LG\) (cf.~\thref{remark modified Kottwitz stack}).
In particular, there are natural maps 
\[\Sht_{\Gg,\leq \mu_\bullet}^{\spl} \to \Sht_{\Gg,\leq \mu_{\bullet,I}}^{\can} \to \Sht_{\Gg}^{\can} = (LG/\Ad_{\sigma^{-1}} L^+\Gg)^{\et} \to \BB(G)'.\]

\begin{dfn}
	The \emph{Hecke correspondences} 
	\[\begin{tikzcd}
		&\Sht_{\lambda_\bullet\mid \mu_\bullet}^{\spl} \arrow[ld] \arrow[rd] &\\
		\Sht_{\Gg,\leq \lambda_\bullet}^{\spl} && \Sht_{\Gg,\leq \mu_\bullet}^{\spl}
	\end{tikzcd} \text{ and } \begin{tikzcd}
	&\Sht_{\lambda_{\bullet,I}\mid \mu_{\bullet,I}}^{\can} \arrow[ld] \arrow[rd] &\\
	\Sht_{\Gg,\leq \lambda_{\bullet,I}}^{\can} && \Sht_{\Gg,\leq \mu_{\bullet,I}}^{\can}
	\end{tikzcd} \]
	are defined as \(\Sht_{\lambda_\bullet\mid \mu_\bullet}^{\spl} := \Sht_{\Gg,\leq \lambda_\bullet}^{\spl} \times_{\BB(G)'} \Sht_{\Gg,\leq \mu_\bullet}^{\spl}\) and \(\Sht_{\lambda_{\bullet,I}\mid \mu_{\bullet,I}}^{\can} := \Sht_{\Gg,\leq \lambda_{\bullet,I}}^{\can} \times_{\BB(G)'} \Sht_{\Gg,\leq \mu_{\bullet,I}}^{\can}\).
	
	In particular there are natural maps \(\Sht_{\lambda_\bullet\mid \mu_\bullet}^{\spl} \to \Sht_{\lambda_{\bullet,I}\mid \mu_{\bullet,I}}^{\can}\), which are pfp proper and surjective (as this holds for each \(\Sht_{\Gg,\leq \mu_\bullet}^{\spl} \to \Sht_{\Gg,\leq \mu_{\bullet,I}}^{\can}\)).
	
	Finally, the composition maps
	\[\Comp\colon \Sht_{\lambda_\bullet\mid \mu_\bullet}^{\spl} \times_{\Sht_{\Gg,\leq \mu_\bullet}^{\spl}} \Sht_{\mu_\bullet\mid \kappa_\bullet}^{\spl} \to \Sht_{\lambda_\bullet\mid \kappa_\bullet}^{\spl}\]
	are defined as the composition
	\[\Sht_{\lambda_\bullet\mid \mu_\bullet}^{\spl} \times_{\Sht_{\Gg,\leq \mu_\bullet}^{\spl}} \Sht_{\mu_\bullet\mid \kappa_\bullet}^{\spl} = \left(\Sht_{\Gg,\leq \lambda_\bullet}^{\spl} \times_{\BB(G)'} \Sht_{\Gg,\leq \mu_\bullet}^{\spl}\right) \times_{\Sht_{\Gg,\leq \mu_\bullet}^{\spl}} \left(\Sht_{\Gg,\leq \mu_\bullet}^{\spl} \times_{\BB(G)'} \Sht_{\Gg,\leq \kappa_\bullet}^{\spl}\right)\] 
	\[\cong\Sht_{\Gg,\leq \lambda_\bullet}^{\spl} \times_{\BB(G)'} \Sht_{\Gg,\leq \mu_\bullet}^{\spl} \times_{\BB(G)'} \Sht_{\Gg,\leq \kappa_\bullet}^{\spl} \to \Sht_{\Gg,\leq \lambda_\bullet}^{\spl} \times_{\BB(G)'} \Sht_{\Gg,\leq \kappa_\bullet}^{\spl}\]
	These composition maps can also be defined for the canonical versions, and are compatible with the natural maps \(\Sht_{\lambda_\bullet\mid \mu_\bullet}^{\spl} \to \Sht_{\lambda_{\bullet,I}\mid \mu_{\bullet,I}}^{\can}\).
	
	This definition can naturally be generalized to define e.g.~\(\Sht_{\nu_I,\mu\mid \lambda_I}^{\spl}\) and \(\Sht_{\nu_I,\mu_I\mid \lambda_I}^{\can}\) (compare \thref{specific Satake corr}).
\end{dfn}

These Hecke correspondences also admit moduli interpretations.
Recall \cite[Definition 5.1.1]{XiaoZhu:Cycles} that the canonical iterated Hecke stack \(\Hck_{\Gg,\leq \mu_{\bullet,I}}^{\can}\) represents the functor sending a perfect \(\overline{k}\)-algebra \(R\) to the groupoid
\[\left\{\Ee_t\xdashrightarrow{\beta_t} \Ee_{t-1} \xdashrightarrow{\beta_{t-1}} \ldots \xdashrightarrow{\beta_2} \Ee_1 \xdashrightarrow{\beta_1} \Ee_0\right\},\]
where \(\Ee_i\) are \(\Gg\)-bundles over \(D_R=\Spec W_{\Oo_F}(R)\), and the \(\beta_i\) are modifications, i.e., isomorphisms over \(D_R^*=\Spec W_{\Oo_F}(R) \otimes_{\Oo_F} F\), which are moreover bounded by \(\mu_{i,I}\) in the sense of \cite[Definition 3.1.3]{XiaoZhu:Cycles}.
Moreover, for a \(\Gg\)-torsor \(\Ee\) on \(D_R\), let \({}^{\sigma} \Ee := (\sigma \otimes \id)^*\Ee\) be its pullback along the automorphism \(\sigma\otimes \id\) of \(W_{\Oo_F}(R)=W(R) \otimes_{W(k)} \Oo_F\).
Then the moduli of iterated canonical local shtukas represents the functor sending \(R\) to the similar groupoid
\[\left\{\Ee_t\xdashrightarrow{\beta_t} \Ee_{t-1} \xdashrightarrow{\beta_{t-1}} \ldots \xdashrightarrow{\beta_2} \Ee_1 \xdashrightarrow{\beta_1} {}^\sigma \Ee_t\right\}\]
with the modification satisfying the same bounds.
In other words, this classifies a point of \(\Hck_{\Gg,\leq \mu_{\bullet,I}}(R)\) together with an isomorphism \(\Ee_0\cong {}^\sigma \Ee_t\).
Recall from \cite[Lemma 3.23]{Zhu:Tame} that \(\BB(G)'\) classifies modifications pairs \((\Ee,\beta)\), where \(\Ee\) is a \(G\)-torsor on \(D_R^*\) which can be trivialized after an étale cover \(R\to R'\), and \(\beta\colon \Ee\cong {}^\sigma \Ee\) is an isomorphism.
Moreover, by the proof of \cite[Lemma 1.3]{Zhu:Affine} any \(G\)-torsor on \(D_R^*\) which is the restriction of a \(\Gg\)-torsor on \(D_R\) can be trivialized after such an étale cover of \(R\).
Consequently, the correspondence \(\Sht_{\lambda_{\bullet,I}\mid \mu_{\bullet,I}}^{\can}\) parametrizes commutative diagrams
\begin{equation}\label{diagram Hecke correspondences}\begin{tikzcd}
	\Ee_s' \arrow[d, dashed, "\beta"] \arrow[r, dashed, "\beta'_s"] & \Ee_{s-1}' \arrow[r, dashed, "\beta'_{s-1}"] & \ldots \arrow[r, dashed, "\beta'_2"] & \Ee'_1 \arrow[r, dashed, "\beta'_1"] & {}^\sigma \Ee'_s \arrow[d, dashed, "\sigma(\beta)"]\\
	\Ee_t\arrow[r, dashed, "\beta_t"] &\Ee_{t-1} \arrow[r, dashed, "\beta_{t-1}"] & \ldots \arrow[r, dashed, "\beta_2"] & \Ee_1 \arrow[r, dashed, "\beta_1"]& {}^\sigma \Ee_t,
\end{tikzcd}\end{equation}
where the upper row determines a point of \(\Sht_{\Gg,\leq \lambda_{\bullet,I}}^{\can}\), and the lower row a point of \(\Sht_{\Gg,\leq\mu_{\bullet,I}}^{\can}\).
If we moreover require the modification \(\beta\) to be bounded by \(\nu_I\in X_*(T)_I^+\), we get a closed substack \(\Sht_{\lambda_{\bullet,I}\mid \mu_{\bullet,I}}^{\nu_I,\can}\subseteq \Sht_{\lambda_{\bullet,I}\mid \mu_{\bullet,I}}^{\can}\).
Finally, we define a similar closed substack in the splitting case as the fiber product
\[\begin{tikzcd}
	\Sht_{\lambda_\bullet\mid \mu_\bullet}^{\nu_I,\spl} \arrow[r] \arrow[d] & \Sht_{\lambda_\bullet\mid \mu_\bullet}^{\spl}\arrow[d] \\
	\Sht_{\lambda_{\bullet,I}\mid \mu_{\bullet,I}}^{\nu_I,\can} \arrow[r] & \Sht_{\lambda_{\bullet,I}\mid \mu_{\bullet,I}}^{\can}.
\end{tikzcd}\]
Using the moduli interpretation \eqref{diagram Hecke correspondences}, the map \(\Comp\) is obtained by composing the vertical arrows.
Thus, \(\Comp\) restricts to morphisms
\[\Comp\colon \Sht_{\lambda_{\bullet,I}\mid \mu_{\bullet,I}}^{\nu_I,\can} \times_{\Sht_{\Gg,\leq \mu_{\bullet,I}}^{\can}} \Sht_{\mu_{\bullet,I}\mid \kappa_{\bullet,I}}^{\nu_I',\can} \to \Sht_{\lambda_{\bullet,I}\mid \kappa_{\bullet,I}}^{\nu_I+\nu_I',\can},\]
and hence similarly for the splitting versions.

\begin{rmk}\thlabel{remarks Hecke correspondences}
	We can also define Hecke correspondences between moduli of unbounded local shtukas.
	At least when \(s=t=1\), we will write them as \(\Sht_{\infty \mid \infty}^{\spl}\) and \(\Sht_{\infty \mid \infty}^{\can}\).
	They also admit closed substacks \(\Sht_{\infty \mid \infty}^{\nu_I,\spl}\) and \(\Sht_{\infty \mid \infty}^{\nu_I,\can}\) for \(\nu_I\in X_*(T)_I^+\).
\end{rmk}

\begin{ex}\thlabel{example hecke corr central}
	The moduli stack of local shtukas can be used to define affine Deligne--Lusztig varieties.
	Indeed, let \(\mu_I\in X_*(T)_I^+\) and \(b\in B(G)\).
	Recall from \cite[Proposition 3.27]{Zhu:Tame} that the underlying set of \(\Bb(G) = (LG/\Ad_\sigma LG)^{\et}\) agrees with \(B(G)\), so that \(b\) yields a point of \(\BB(G)\).
	Changing to \(\BB(G)'\) according to our conventions, we get the following cartesian diagram
	\begin{equation}\label{adlv as fibers can}\begin{tikzcd}
		X_{\leq \mu_I^*}^{\can}(b) \arrow[r] \arrow[d] & \{b^{-1}\} \arrow[d]\\
		\Sht_{\Gg,\leq \mu_I}^{\can} \arrow[r] & \BB(G)'. 
	\end{tikzcd}\end{equation}
	In particular, if \(\tau_I\in X_*(Z_G)_I\) is a central cocharacter, then Lang's theorem implies that 
	\begin{equation}\label{shtuka for central}\Sht_{\Gg,\leq \tau_I}^{\can} \cong \Gg(\Oo_F)\backslash\Spec \overline{k},\end{equation} 
	and we see that 
	\[\Sht_{\tau_I\mid \mu_I}^{\can} \cong \Gg(\Oo_F)\backslash X_{\leq \mu_I^*}^{\can}(\varpi^{\tau^*}).\]
	This moreover restricts to
	\[\Sht_{\tau_I\mid \mu_I}^{\nu_I,\can} \cong \Gg(\Oo_F)\backslash X_{\mu_I^*,\nu_I^*}^{\can}(\varpi^{\tau^*}).\]
	
	Similar statements hold for splitting models: let \(\mu\in X_*(T)^+\) and \(\tau\in X_*(Z_G)\).
	Then pulling back the above along \(\Sht_{\Gg,\leq \mu}^{\spl}\to \Sht_{\Gg,\leq \mu_I}^{\can}\), we obtain a cartesian square
	\begin{equation}\label{adlv as fibers spl}\begin{tikzcd}
		X_{\leq \mu^*}^{\spl}(b) \arrow[r] \arrow[d] & \{b^{-1}\} \arrow[d]\\
		\Sht_{\Gg,\leq \mu}^{\spl} \arrow[r] & \BB(G)',
	\end{tikzcd}\end{equation}
	and isomorphisms
	\[\Sht_{\tau\mid \mu}^{\nu_I,\spl} \cong \Gg(\Oo_F) \backslash X_{\mu^*,\nu_I^*}^{\spl}(\varpi^{\tau^*}).\]
\end{ex}

The same argument as \cite[Lemma 5.2.12]{XiaoZhu:Cycles} shows the following.
\begin{lem}\thlabel{representability of bounded correspondence spaces}
	The maps \(\Sht_{\lambda_\bullet\mid \mu_\bullet}^{\nu,\spl}\to \Sht_{\Gg,\leq\lambda_\bullet}^{\spl}\) and \(\Sht_{\infty \mid \infty}^{\nu,\spl}\to \Sht_{\Gg}^{\spl}\) are representable by pfp perfectly proper schemes.
	In particular, \(\Sht_{\lambda_\bullet\mid \mu_\bullet}^{\spl}\to \Sht_{\Gg,\leq \lambda_\bullet}^{\spl}\) and \(\Sht_{\infty \mid \infty}^{\spl}\to \Sht_{\Gg}^{\spl}\) are representable by ind-(perfectly proper) schemes.
	Similar statements hold for the canonical versions.
\end{lem}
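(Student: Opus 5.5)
The plan is to reduce everything to the canonical case, which is the analogue of \cite[Lemma 5.2.12]{XiaoZhu:Cycles}, and then pass to the splitting side by base change along proper maps. First I treat the canonical statement. A point of \(\Sht_{\lambda_{\bullet,I}\mid\mu_{\bullet,I}}^{\nu_I,\can}\) over a point of \(\Sht_{\Gg,\leq\lambda_{\bullet,I}}^{\can}\) is recorded by the diagram \eqref{diagram Hecke correspondences}. Once the upper row is fixed, the remaining data consist of a modification \(\beta\colon \Ee'_s\dashrightarrow \Ee_t\) bounded by \(\nu_I\), together with the lower chain \(\Ee_t\dashrightarrow\cdots\dashrightarrow{}^\sigma\Ee_t\). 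The lower chain is uniquely determined: \(\Ee_i\) is obtained by transporting the upper row along \(\beta\) and \(\sigma(\beta)\). The \(\mu_{\bullet,I}\)-bounds are then a closed condition.

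This gives a closed immersion of the fiber into the twisted form of \(\Gr_{\Gg,\leq\nu_I}^{\can}\) attached to the torsor \(\Ee'_s\). In other words, the map \(\Sht^{\nu_I,\can}_{\lambda_{\bullet,I}\mid\mu_{\bullet,I}}\to\Sht^{\can}_{\Gg,\leq\lambda_{\bullet,I}}\) factors as a closed immersion into \(\Sht^{\can}_{\Gg,\leq\lambda_{\bullet,I}}\times_{\Bb}\Hck\)-type fibration with fibers \(\Gr^{\can}_{\Gg,\leq\nu_I}\), followed by a projection. Because \(\Gr^{\can}_{\Gg,\leq\nu_I}\) is perfectly projective and pfp, this yields representability by pfp perfectly proper schemes. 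The unbounded-in-\(\lambda,\mu\) version \(\Sht^{\nu,\can}_{\infty\mid\infty}\to\Sht^{\can}_{\Gg}\) is handled by the same argument, simply omitting the closed bound conditions. Finally, writing \(\Sht^{\can}_{\lambda\mid\mu}=\bigcup_{\nu_I}\Sht^{\nu_I,\can}_{\lambda\mid\mu}\) as an increasing union of closed substacks gives the ind-(perfectly proper) statement.

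For the splitting version I would use the definition \(\Sht^{\nu_I,\spl}_{\lambda_\bullet\mid\mu_\bullet}=\Sht^{\nu_I,\can}\times_{\Sht^{\can}}\Sht^{\spl}\), where \(\Sht^{\spl}_{\lambda_\bullet\mid\mu_\bullet}=\Sht^{\spl}_{\Gg,\leq\lambda_\bullet}\times_{\BB(G)'}\Sht^{\spl}_{\Gg,\leq\mu_\bullet}\). The map to \(\Sht^{\spl}_{\Gg,\leq\lambda_\bullet}\) factors as
\[\Sht^{\nu_I,\spl}_{\lambda_\bullet\mid\mu_\bullet}\to \Sht^{\spl}_{\Gg,\leq\lambda_\bullet}\times_{\Sht^{\can}_{\Gg,\leq\lambda_{\bullet,I}}}\Sht^{\nu_I,\can}_{\lambda_{\bullet,I}\mid\mu_{\bullet,I}}\to\Sht^{\spl}_{\Gg,\leq\lambda_\bullet}.\]
The second arrow is a base change of the canonical case, so it is representable by pfp perfectly proper schemes. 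The first arrow is a base change of \(\Sht^{\spl}_{\Gg,\leq\mu_\bullet}\to\Sht^{\can}_{\Gg,\leq\mu_{\bullet,I}}\), which is itself a base change of \(\Hck^{\spl}_{\Gg,\leq\mu_\bullet}\to\Hck^{\can}_{\Gg,\leq\mu_{\bullet,I}}\). That map is schematic, pfp and perfectly proper, since it is a convolution map of perfectly projective schemes. Composing the two arrows gives the claim. The ind-statements then follow as before, by exhausting over \(\nu_I\).

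The main obstacle is the canonical fiber identification: one has to check carefully that \(\beta\) determines the lower row, and that the boundedness of the \(\beta_i\) is closed in families. I would first try to reproduce Xiao–Zhu's argument word for word, using the moduli description recalled above; the only input specific to the ramified setting is that \(\Gr^{\can}_{\Gg,\leq\nu_I}\) is perfectly proper and pfp.
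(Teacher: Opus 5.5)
Your overall route is what the paper intends; the paper itself only defers to \cite[Lemma 5.2.12]{XiaoZhu:Cycles}. Your reduction of the bounded splitting case is also correct: the first arrow of your factorization is pulled back from $\Hck^{\spl}_{\Gg,\le\mu_\bullet}\to\Hck^{\can}_{\Gg,\le\mu_{\bullet,I}}$, and the second from the canonical map.

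However, your description of the canonical fiber is wrong once $t\ge 2$. In \eqref{diagram Hecke correspondences} there are vertical arrows only at the two ends. So $\beta$ determines $\Ee_t$ and the composite $\Ee_t\dashrightarrow{}^\sigma\Ee_t$, but not the intermediate $\Ee_{t-1},\dots,\Ee_1$: there are no vertical maps between intermediate terms, and in general $s\neq t$. These are extra data, namely a point in a twisted fiber of the convolution morphism $\Gr^{\can}_{\Gg,\le\mu_{\bullet,I}}\to\Gr^{\can}_{\Gg}$. Hence the fiber is not a closed subscheme of the twisted $\Gr^{\can}_{\Gg,\le\nu_I}$; it is only perfectly proper and pfp over it. The conclusion survives. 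Treat the single bound $|\mu_{\bullet,I}|=\sum_i\mu_{i,I}$ first, then base change along $\Sht^{\can}_{\Gg,\le\mu_{\bullet,I}}\to\Sht^{\can}_{\Gg,\le|\mu_{\bullet,I}|}$, which is pulled back from the convolution morphism of Hecke stacks.

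Second, you never treat $\Sht^{\nu,\spl}_{\infty\mid\infty}\to\Sht^{\spl}_{\Gg}$. There your factorization only gives ind-properness, because its first arrow becomes a base change of the unbounded map $\Hck^{\spl}_\Gg\to\Hck^{\can}_\Gg$. This is not a defect of your method; the claim itself fails there. Take $G=\Res_{F'/F}\PGL_2$ with $F'/F$ totally ramified quadratic, and $\Hh$ the hyperspecial model of $\PGL_{2,F'}$. Then $\Gr^{\spl}_\Gg\to\Gr^{\can}_\Gg$ is the convolution $\Gr^{\can}_\Hh\widetilde{\times}\Gr^{\can}_\Hh\to\Gr^{\can}_\Hh$, and each of its fibers is isomorphic to $\Gr^{\can}_\Hh$ via the first coordinate. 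Hence the fibers of $\Sht^{\nu,\spl}_{\infty\mid\infty}\to\Sht^{\spl}_\Gg$ are $\Gr^{\can}_\Hh$-fibrations over a twisted $\Gr^{\can}_{\Gg,\le\nu}$, and are not pfp schemes.

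So for the unbounded splitting correspondence, pfp properness holds only after also bounding the second factor. That is all the paper uses later, since constructible motives are supported on bounded parts. Likewise, the ind-statement for $\Sht^{\spl}_{\infty\mid\infty}\to\Sht^{\spl}_\Gg$ needs an exhaustion over both $\nu_I$ and the bound on the second factor, not over $\nu_I$ alone.
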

More precisely, at least if \(\lambda_\bullet=\lambda\) and \(\mu_\bullet=\mu\) are 1-tuples, the fibers of \(\Sht_{\lambda\mid \mu}^{\spl}\to \Sht_{\Gg,\leq \lambda}^{\spl}\) can be identified with the splitting affine Deligne--Lusztig varieties from \thref{definition splitting adlv}.

Many of the properties established in \cite[§5]{XiaoZhu:Cycles} also hold for moduli of canonical local shtukas, and we will use these throughout (in fact, since we do not work with restricted local shtukas, we can avoid many of the technicalities appearing in \cite[§5.3]{XiaoZhu:Cycles}).
However, for splitting models, there are no obvious analogues of \cite[Lemmas 5.2.14--5.2.16]{XiaoZhu:Cycles}, and hence also not of \cite[Lemma 5.2.18]{XiaoZhu:Cycles}.
By \thref{spl vs can fully faithful} below, this will not yield any issues.

\begin{rmk}\thlabel{remark on restricted shtukas}
	Let \(\mu_I\in X_*(T)_I^+\), and let \(m\gge n> 0\) be two integers, such that the \(\sigma^{-1}\)-twisted conjugation action of \(L^+\Gg\) on \(\Gr_{\Gg,\leq \mu_I}^{(n),\can}\) factors through \(L^m\Gg\).
	Then, as in \cite[§5.3]{XiaoZhu:Cycles}, we can define the stack of \((m,n)\)-restricted \(\mu_I\)-bounded canonical local shtukas as
	\[\Sht_{\Gg,\leq \mu_I}^{(m,n),\can} := (\Gr_{\Gg,\leq \mu_I}^{(n),\can}/\Ad_{\sigma^{-1}}L^m\Gg)^{\et}.\]
	If \((m',n')\) and \((m,n)\) are as above, with \(m'\geq m\) and \(n'\geq n\), we get natural restriction maps
	\[\Sht_{\Gg{,}\leq \mu_I}^{(m',n'),\can} \to \Sht_{\Gg,\leq \mu_I}^{(m,n),\can}.\]
	These are the composition of affine bundles (since this holds for \(\Gr_{\Gg,\leq \mu_I}^{(n'),\can} \to \Gr_{\Gg,\leq \mu_I}^{(n),\can}\)), and base changes along \(*/L^{m'}\Gg \to */L^m\Gg\).
	Since \(L^{m'}\Gg\to L^m\Gg\) has split unipotent kernel, and we clearly have \(\Sht_{\Gg,\mu_I}^{\can} \cong \varprojlim_{m\gge n>0} \Sht_{\Gg,\leq \mu_I}^{(m,n),\can}\), we see that \(\Sht_{\Gg,\leq \mu_I}^{\can}\) satisfies \thref{assumption corr} \eqref{assumption 1}.
	
	Now, let \(\lambda_I,\nu_I\in X_*(T)_I^+\) be further dominant cocharacters.
	Then we would like the map \(\Sht_{\lambda_I\mid \mu_I}^{\nu_I,\can}\to \Sht_{\Gg,\leq \lambda_I}^{\can}\) to satisfy \thref{assumption corr} \eqref{assumption 2}.
	For \((m,n)\) with \(m\gge n\), the same (complicated) construction as in \cite[Definition 5.3.16]{XiaoZhu:Cycles} yields a pfp algebraic stack \(\Sht_{\lambda_I\mid \mu_I}^{\nu_I,(m,n),\can} \to \Sht_{\Gg,\leq \lambda_I}^{(m,n),\can}\).
	For a second pair \((m',n')\), with \(m'\gge n'\), \(m'\geq m\) and \(n'\geq n\), the analogue of \cite[Lemma 5.3.17]{XiaoZhu:Cycles} yields a cartesian diagram
	\[\begin{tikzcd}
		\Sht_{\lambda_I\mid \mu_I}^{\nu_I,(m',n'),\can} \arrow[d] \arrow[r] &  \Sht_{\Gg,\leq \lambda_I}^{(m',n'),\can} \arrow[d] \\
		\Sht_{\lambda_I\mid \mu_I}^{\nu_I,(m,n),\can} \arrow[r] & \Sht_{\Gg,\leq \lambda_I}^{(m,n),\can}.
	\end{tikzcd}\]
	This even holds for \(m'=n'=\infty\), so that \(\Sht_{\lambda_I\mid \mu_I}^{\nu_I,\can} \cong \varprojlim_{m\gge n>0} \Sht_{\lambda_I\mid ´\mu_I}^{\nu_I,(m,n),\can}\), and we see that \(\Sht_{\lambda_I\mid \mu_I}^{\nu_I,\can}\to \Sht_{\Gg,\leq \lambda_I}^{\can}\) indeed satisfies \thref{assumption corr} \eqref{assumption 2}.
	
	By symmetry, \(\Sht_{\lambda_I\mid \mu_I}^{\nu_I,\can}\to \Sht_{\Gg,\leq \mu_I}^{\can}\) also satisfies \thref{assumption corr} \eqref{assumption 2}.
	We note that the stack descending \(\Sht_{\lambda_I\mid \mu_I}^{\nu_I,\can}\to \Sht_{\Gg,\leq \mu_I}^{\can}\) to \(\Sht_{\Gg,\leq \mu_I}^{(m,n),\can}\) does not agree with the stack \(\Sht_{\lambda_I\mid \mu_I}^{\nu_I,(m,n),\can}\) from the previous paragraph.
	However, thanks the the additional level of flexibility of our sheaf theory as explained in Appendix \ref{Appendix:motives}, this is not an issue.
	In particular, this allows us to avoid many of the technical difficulties appearing in \cite[§5.3]{XiaoZhu:Cycles}.
	The discussion above also works verbatim for the moduli of iterated canonical local shtukas, as well as for the moduli of splitting local shtukas.
\end{rmk}

\subsection{Motives on moduli of local shtukas}\label{subsec:motives on shtukas}

With the geometry from the previous subsections at hand, we now explain how we can define suitable categories of motives on moduli of local shtukas.
As reviewed in the Appendix, one can define motives on arbitrary (perfect) prestacks \eqref{motives on prestacks}. 
This will simplify certain aspects of \cite{XiaoZhu:Cycles}; in particular the notion of restricted local shtukas will only be used in a lesser degree.

In Section \ref{Sec:Splitting Gr}, we explained how to define categories of stratified Tate motives on splitting affine Grassmannians and local Hecke stacks, and equip them with t-structures.
The case of moduli of local shtukas is similar, but additional subtleties arise.
Namely, fix some tuple \(\mu_\bullet\) of elements in \(X_*(T)^+\), and consider the cartesian diagram
\begin{equation}\label{diagram to define MATM of shtukas}\begin{tikzcd}
	\Gr_{\Gg,\leq \mu_\bullet}^{(\infty),\spl} \arrow[r] \arrow[d] & \Gr_{\Gg,\leq \mu_\bullet}^{\spl}\arrow[d]\\
	\Sht_{\Gg,\leq \mu_\bullet}^{\spl} \arrow[r] & \Hck_{\Gg,\leq \mu_\bullet}^{\spl},
\end{tikzcd}\end{equation}
where all arrows are \(L^+\Gg\)-fibrations.
Here, \(\Gr_{\Gg,\leq \mu_\bullet}^{(\infty),\spl}\) is the natural \(L^+\Gg\)-torsor over \(\Gr_{\Gg,\leq \mu_\bullet}^{\spl}\) defined similarly as \eqref{WT torsor over Fl}.
Moreover, the \(L^+\Gg\)-action on \(\Gr_{\Gg,\leq \mu_\bullet}^{\spl}\) making it an étale torsor over \(\Sht_{\Gg,\leq \mu_\bullet}^{\spl}\) corresponds to the \(\sigma^{-1}\)-twisted conjugation action.
In particular, with respect to this action, we have \[\DM(\Sht_{\Gg,\leq \mu_\bullet}^{\spl}) \cong \DM_{L^+\Gg}(\Gr_{\Gg,\leq \mu_\bullet}^{(\infty),\spl}).\]
Since the stratification \(\Gr_{\Gg,\leq \mu_\bullet}^{(\infty),\spl} = \bigsqcup_{\mu_\bullet'\leq \mu_\bullet \in (X_*(T)^+)^t} \Gr_{\Gg,\mu_\bullet'}^{(\infty),\spl}\) is \(L^+\Gg\)-stable, it makes sense to define
\[\DM(\Sht_{\Gg,\leq \mu_\bullet}^{\spl})\supseteq \DTM(\Sht_{\Gg,\leq \mu_\bullet}^{\spl}):=\DTM_{L^+\Gg}(\Gr_{\Gg,\leq \mu_\bullet}^{(\infty),\spl}),\]
where the motives are stratified Tate with respect to the above stratification.
We can moreover equip this category with a t-structure, normalized as in \thref{remarks about DATM on LG}, with heart \[\MTM(\Sht_{\Gg,\leq \mu_\bullet}^{\spl}) \cong \MTM_{L^+\Gg}(\Gr_{\Gg,\leq \mu_\bullet}^{(\infty),\spl}).\]
By construction, the !-pullbacks along the morphisms in the diagram \eqref{diagram to define MATM of shtukas} preserve \(\DTM\) and are t-exact.
Finally, there are obvious transition maps between the diagrams \(\eqref{diagram to define MATM of shtukas}\) for varying \(\mu_\bullet'\leq \mu_\bullet \in (X_*(T)^+)^t\), pushforward along which preserves \(\DTM\) and \(\MTM\), so that we can also define categories of (mixed) Tate motives on unbounded versions of moduli of local shtukas.
In particular, in the case \(t=1\), !-pullback gives a natural functor
\begin{equation}\label{functor from hecke to shtuka}\MTM(\Hck_{\Gg}^{\spl})\to \MTM(\Sht_{\Gg}^{\spl}).\end{equation}
Again, all of the above also works in the case of moduli of canonical local shtukas.

Finally, we want to define a category of motives on moduli of local shtukas where morphisms are given by (motivic) correspondences, generalizing \cite[§5.4]{XiaoZhu:Cycles}.
We will only do this for the unbounded, non-iterated moduli of local sthukas.
For simplicity, we will also only define this category for mixed Tate motives which are moreover constructible, rather than all of \(\DM\).
Recall that the category of constructible motives on an arbitrary perfect prestack has been defined in \eqref{constructible motives on prestacks}; we will denote the intersection \(\MTM(\Sht_{\Gg}^{\spl}) \cap \DM_{\cons}(\Sht_{\Gg}^{\spl})\subseteq \DM(\Sht_{\Gg}^{\spl})\) by \(\MTM_{\cons}(\Sht_{\Gg}^{\spl})\), and similar for the canonical versions.

By \thref{remark on restricted shtukas}, \(\Sht_{\infty\mid \infty}^{\nu_I,\spl} \to \Sht_{\Gg}^{\spl} \times \Sht_{\Gg}^{\spl}\) and \(\Sht_{\infty\mid \infty}^{\nu_I,\can} \to \Sht_{\Gg}^{\can} \times \Sht_{\Gg}^{\can}\) satisfy \thref{assumption corr}, so that the techniques from §\ref{App:Corr} apply in this setting.
(Strictly speaking, they only apply to the stacks of bounded local shtukas.
However, all the motives we will consider are constructible, so they will be supported on some bounded part, so that §\ref{App:Corr} does in fact apply.)

\begin{dfn}
	The category \(\MTM_{\cons}^{\Corr}(\Sht_{\Gg}^{\spl})\) is the (ordinary) category with the same objects as \(\MTM_{\cons}(\Sht_{\Gg}^{\spl})\).
	For \(\Ff_1,\Ff_2\in \MTM_{\cons}(\Sht_{\Gg}^{\spl})\), the set of morphisms \(\Ff_1\to \Ff_2\) is given by the filtered colimit
	\[\varinjlim_{\nu_I\in X_*(T)_I^+} \Corr_{\Sht_{\infty \mid \infty}^{\nu_I,\spl}} \left(\left(\Sht_{\Gg}^{\spl},\Ff_1\right),\left(\Sht_{\Gg}^{\spl},\Ff_2\right)\right),\]
	where the transition maps are given by the proper pushforward of correspondences along
	\[\begin{tikzcd}
		\Sht_{\Gg}^{\spl} \arrow[d, equal] & \Sht_{\infty\mid \infty}^{\nu_I,\spl} \arrow[l] \arrow[r] \arrow[d, hook] & \Sht_{\Gg}^{\spl} \arrow[d, equal]\\
		\Sht_{\Gg}^{\spl} & \Sht_{\infty\mid \infty}^{\nu_I',\spl} \arrow[l] \arrow[r] & \Sht_{\Gg}^{\spl}.
	\end{tikzcd}\]
	It remains to define the composition of correspondences, so let \(\Ff_1,\Ff_2,\Ff_3\in \MTM_{\cons}(\Sht_{\Gg}^{\spl})\), and \[f\in \Corr_{\Sht_{\infty \mid \infty}^{\nu_I,\spl}} \left(\left(\Sht_{\Gg}^{\spl},\Ff_1\right),\left(\Sht_{\Gg}^{\spl},\Ff_2\right)\right) \quad \text{ and } \quad f'\in \Corr_{\Sht_{\infty \mid \infty}^{\nu_I',\spl}} \left(\left(\Sht_{\Gg}^{\spl},\Ff_2\right),\left(\Sht_{\Gg}^{\spl},\Ff_3\right)\right).\]
	Then one can compose these motivic correspondences as in \thref{composition of correspondences} to get a correspondence supported on 
	\[\Sht_{\Gg}^{\spl} \leftarrow \Sht_{\infty\mid \infty}^{\nu_I,\spl} \times_{\Sht_{\Gg}^{\spl}} \Sht_{\infty\mid \infty}^{\nu_I',\spl} \to \Sht_{\Gg}^{\spl}.\]
	Proper pushforward of this correspondence (in the sense of \thref{construction pushforward correspondence}) along the diagram\
	\[\begin{tikzcd}
		\Sht_{\Gg}^{\spl} \arrow[d, equal] & \Sht_{\infty\mid \infty}^{\nu_I,\spl} \times_{\Sht_{\Gg}^{\spl}} \Sht_{\infty\mid \infty}^{\nu_I',\spl} \arrow[l] \arrow[r] \arrow[d, "\Comp"] & \Sht_{\Gg}^{\spl} \arrow[d, equal]\\
		\Sht_{\Gg}^{\spl} & \Sht_{\infty\mid \infty}^{\nu_I+\nu_I',\spl} \arrow[l] \arrow[r] & \Sht_{\Gg}^{\spl}
	\end{tikzcd}\]
	then yields the desired composition \(f'\circ f\in \Corr_{\Sht_{\infty \mid \infty}^{\nu_I+\nu_I'\spl}} \left(\left(\Sht_{\Gg}^{\spl},\Ff_1\right),\left(\Sht_{\Gg}^{\spl},\Ff_3\right)\right)\).
	
	To see that this is well-defined (i.e., compatible with changing \(\nu_I\in X_*(T)_I^+\)), it suffices to apply \thref{compatibility pushforward correspondence}, since the maps \(\Comp\) are compatible with changing \(\nu_I\).
	Similarly, associativity of this composition also follows from \thref{compatibility pushforward correspondence}, since the following diagram is readily seen to commute:
	\[\begin{tikzcd}
		\Sht_{\infty\mid \infty}^{\spl} \times_{\Sht_{\Gg}^{\spl}} \Sht_{\infty\mid \infty}^{\spl} \times_{\Sht_{\Gg}^{\spl}} \Sht_{\infty\mid \infty}^{\spl} \arrow[d, "\id \times \Comp"'] \arrow[r, "\Comp \times \id"] & \Sht_{\infty\mid \infty}^{\spl} \times_{\Sht_{\Gg}^{\spl}} \Sht_{\infty\mid \infty}^{\spl} \arrow[d, "\Comp"]\\
		\Sht_{\infty\mid \infty}^{\spl} \times_{\Sht_{\Gg}^{\spl}} \Sht_{\infty\mid \infty}^{\spl} \arrow[r, "\Comp"'] & \Sht_{\infty\mid \infty}^{\spl}.
	\end{tikzcd}\]
	In particular, by not using restricted local shtukas, we have avoided many technicalities used when checking the similar statements in \cite[§5.4.2]{XiaoZhu:Cycles}.
\end{dfn}

Again, a similar construction works for the case of canonical local shtukas, leading to the category \(\MTM_{\cons}^{\Corr}(\Sht_{\Gg}^{\can})\).
Moreover, proper pushforward of correspondences yields a natural functor \(\MTM_{\cons}^{\Corr}(\Sht_{\Gg}^{\spl})\to \MTM_{\cons}^{\Corr}(\Sht_{\Gg}^{\can})\).

\begin{prop}\thlabel{spl vs can fully faithful}
	The functor
	\[\MTM_{\cons}^{\Corr}(\Sht_{\Gg}^{\spl})\to \MTM_{\cons}^{\Corr}(\Sht_{\Gg}^{\can})\]
	is fully faithful.
\end{prop}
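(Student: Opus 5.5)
Write $\pi\colon \Sht_{\Gg}^{\spl}\to \Sht_{\Gg}^{\can}$ for the natural map; it is pfp perfectly proper on each bounded piece. I will compare the two Hom-sets through adjunction and proper base change, reducing them to the same space of correspondences. Fix $\Ff_1,\Ff_2\in \MTM_{\cons}(\Sht_{\Gg}^{\spl})$ and $\nu_I\in X_*(T)_I^+$.

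The key geometric input is that the square
\[\begin{tikzcd}
	\Sht_{\infty\mid\infty}^{\nu_I,\spl} \arrow[r] \arrow[d] & \Sht_{\Gg}^{\spl}\times \Sht_{\Gg}^{\spl} \arrow[d, "\pi\times\pi"]\\
	\Sht_{\infty\mid\infty}^{\nu_I,\can} \arrow[r] & \Sht_{\Gg}^{\can}\times \Sht_{\Gg}^{\can}
\end{tikzcd}\]
is cartesian. This follows from the definitions: $\Sht_{\infty\mid\infty}^{\spl}=\Sht_{\Gg}^{\spl}\times_{\BB(G)'}\Sht_{\Gg}^{\spl}$, and the $\nu_I$-bounded piece is by definition pulled back from the canonical one. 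By the general formalism of Appendix \ref{App:Corr}, a correspondence supported on $C$ from $(X_1,\Ff_1)$ to $(X_2,\Ff_2)$ is a map $c_1^*\Ff_1\to c_2^!\Ff_2$, i.e.\ a section of $c^!\IHom$-type objects on $X_1\times X_2$. Proper base change along this cartesian square, applied to the proper map $\pi\times\pi$, then gives a natural isomorphism
\[\Corr_{\Sht_{\infty\mid\infty}^{\nu_I,\spl}}\bigl((\Sht_{\Gg}^{\spl},\Ff_1),(\Sht_{\Gg}^{\spl},\Ff_2)\bigr)\cong \Corr_{\Sht_{\infty\mid\infty}^{\nu_I,\can}}\bigl((\Sht_{\Gg}^{\can},\pi_*\Ff_1),(\Sht_{\Gg}^{\can},\pi_*\Ff_2)\bigr).\]
This is the motivic analogue of \thref{basic properties of Satake correspondences}(2). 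The argument uses $\pi_!=\pi_*$, the identification $(\pi\times\pi)^!\IHom(p_1^*\pi_*\Ff_1,p_2^!\pi_*\Ff_2)\cong \IHom(p_1^*\Ff_1,p_2^!\Ff_2)$ up to pushforward, and the adjunctions. I then need to verify that this isomorphism coincides with the proper pushforward of correspondences defining the functor, via \thref{construction pushforward correspondence} and \thref{compatibility pushforward correspondence}. I also need it to be compatible with the transition maps in $\nu_I$, so that it passes to the filtered colimit.

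One subtlety is that $\pi_*\Ff_i$ must lie in $\MTM_{\cons}(\Sht_{\Gg}^{\can})$. To see this, pull back to $\Gr^{(\infty)}$ and use that pushforward along $\Hck^{\spl}\to\Hck^{\can}$ preserves $\MTM$ (\thref{Satake for splitting models}, via semismallness). The t-exactness should then transfer to shtukas, since the t-structures are defined by !-pullback to the same torsors. Once this and the cartesian square are in place, the functor is a bijection on each term of the colimit, hence on Hom-sets.

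The main obstacle is the identification of the abstract pushforward of correspondences with the base-change isomorphism, because the correspondence spaces are infinite-dimensional stacks. I would handle this by using constructibility to restrict to bounded pieces, and then descending to restricted shtukas as in \thref{remark on restricted shtukas}, where \thref{assumption corr} reduces everything to pfp algebraic stacks. There the standard finite-type argument applies.
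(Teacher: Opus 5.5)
Your proposal is correct and is essentially the paper's argument. The paper likewise uses that \(\Sht_{\infty\mid\infty}^{\nu_I,\spl}\) is the base change of \(\Sht_{\infty\mid\infty}^{\nu_I,\can}\) along \(\pi\times\pi\), and passes to restricted shtukas so that it works with pfp algebraic stacks. It then identifies the two correspondence groups using the exchange isomorphisms, proper base change, and K\"unneth together with Verdier duality; the latter gives \(\IHom(q_1^{\can,*}\pi_*\Ff_1,q_2^{\can,!}\pi_*\Ff_2)\cong(\pi\times\pi)_*\IHom(q_1^{\spl,*}\Ff_1,q_2^{\spl,!}\Ff_2)\), which is the precise form of your ``up to pushforward'' identification (a pushforward, not a \((\pi\times\pi)^!\)). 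Finally it takes \(\Hom(\unit,-)\) after pushing forward to \(\Spec\overline{k}\). The paper leaves implicit the compatibilities you flag: that this bijection is the one induced by pushforward of correspondences, that it respects the transition maps in \(\nu_I\), and that \(\pi_*\) preserves \(\MTM_{\cons}\).
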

\begin{proof}
	Let \(\nu_I\in X_*(T)_I^+\), and consider the diagram
	\[\begin{tikzcd}
		\Sht_{\infty \mid \infty}^{\nu_I{,}\spl} \arrow[d, "p_1^{\spl} \times p_2^{\spl}"'] \arrow[r, "\widetilde{m}"] & \Sht_{\infty\mid \infty}^{\nu_I{,}\can} \arrow[d, "p_1^{\can} \times p_2^{\can}"]\\
		\Sht_\Gg^{\spl} \times \Sht_\Gg^{\spl} \arrow[r, "m\times m"'] \arrow[d, "q_i^{\spl}"] & \Sht_\Gg^{\can} \times \Sht_\Gg^{\can} \arrow[d, "q_i^{\can}"]\\
		\Sht_\Gg^{\spl} \arrow[r, "m"'] & \Sht_\Gg^{\can},
	\end{tikzcd}\]
	where the upper square is cartesian.
	Let \(\Ff_1,\Ff_2\in \MTM_{\cons}(\Sht_{\Gg}^{\spl})\); since these are constructible, they are supported on some \(\Sht_{\Gg{,}\leq \mu}^{\spl}\).
	Passing to restricted local shtukas, we may pretend the stacks in the above diagram are pfp algebraic stacks.
	Then, we compute
	\begin{align*}
	&(p_1^{\can}\times p_2^{\can})_*\IHom_{\Sht_{\infty\mid \infty}^{\nu_I,\can}}(p_1^{\can,*}m_*\Ff_1,p_2^{\can,!}m_*\Ff_2)\\ &\cong (p_1^{\can}\times p_2^{\can})_*(p_1^{\can}\times p_2^{\can})^! \IHom_{\Sht_\Gg^{\can} \times \Sht_\Gg^{\can}}(q_1^{\can,*}m_*\Ff_1,q_2^{\can,!}m_*\Ff_2) & \text{\eqref{exchagne3}}\\
	& \cong (p_1^{\can}\times p_2^{\can})_*(p_1^{\can}\times p_2^{\can})^!(\ID(m_*\Ff_1) \boxtimes m_*\Ff_2) & \text{\eqref{Kunneth!up} and \eqref{KunnethHom}} \\
	& \cong (p_1^{\can}\times p_2^{\can})_*(p_1^{\can}\times p_2^{\can})^! (m_*\ID(\Ff_1) \boxtimes m_*\Ff_2) & \text{\eqref{exchange2}} \\
	& \cong (p_1^{\can}\times p_2^{\can})_*(p_1^{\can}\times p_2^{\can})^! (m\times m)_* \IHom_{\Sht_\Gg^{\spl} \times \Sht_\Gg^{\spl}}(q_1^{\spl,*}\Ff_1,q_2^{\spl,*}\Ff_2)& \text{\eqref{Kunneth*down}} \\
	& \cong (p_1^{\can}\times p_2^{\can})_* \widetilde{m}_* (p_1^{\spl} \times p_1^{\spl})^! \IHom_{\Sht_\Gg^{\spl} \times \Sht_\Gg^{\spl}}(q_1^{\spl,*}\Ff_1,q_2^{\spl,*}\Ff_2)&\\
	& \cong (m\times m)_* (p_1^{\spl} \times p_2^{\spl})_*(p_1^{\spl} \times p_1^{\spl})^! \IHom_{\Sht_\Gg^{\spl} \times \Sht_\Gg^{\spl}}(q_1^{\spl,*}\Ff_1,q_2^{\spl,*}\Ff_2)&\\
	& \cong (m\times m)_*(p_1^{\spl} \times p_2^{\spl})_* \IHom_{\Sht_{\infty \mid \infty}^{\nu_I,\spl}}(p_1^{\spl,*}\Ff_!,p_2^{\spl,!}\Ff_2).
	\end{align*}
	Pushing forward the outer terms of this series of equivalences to \(\Spec \overline{k}\) and taking \(\Hom(\unit,-)\) yields the desired groups of correspondences, which implies the proposition.
\end{proof}
Since \(\Sht^{\can}_{\infty\mid \infty} = \Sht_{\Gg}^{\can} \times_{\BB(G)'} \Sht_{\Gg}^{\can}\) and similarly for \(\Sht_{\Gg}^{\spl}\), both \(\MTM_{\cons}^{\Corr}(\Sht_{\Gg}^{\can})\) and \(\MTM_{\cons}^{\Corr}(\Sht_{\Gg}^{\spl})\) can be thought of as subcategories of \(\DM(\BB(G)')\) via h-descent.
This is visibly independent of whether one uses the splitting or canonical model, and hence explains why the functor should indeed be fully faithful.
To avoid a long digression on \(\DM(\BB(G)')\), we have given a direct argument instead; we will study motivic aspects of \(\BB(G)\) in detail in future work.

We finish this section by describing morphisms between certain distinguished objects in the categories constructed above.
By \thref{spl vs can fully faithful}, it suffices to consider the canonical version.
For \(M\in \MTM_{\cons}(\Spec \overline{k})\), denote by \(\delta_{\mathbf{1}}^{\can}(M)\in \MTM_{\cons}^{\Corr}(\Sht_{\Gg}^{\can})\) the object pulled back from \(\IC_0(M)\in \MTM_{\cons}(\Hck_{\Gg}^{\can})\) (as in \thref{notation representations}) under the map \(\Sht_{\Gg}^{\can}\to \Hck_{\Gg}^{\can}\).
Note that \(\delta_{\mathbf{1}}^{\can}(M)\) is the pushforward of the similarly defined \(\delta_{\mathbf{1}}^{\spl}(M)\in \MTM_{\cons}^{\Corr}(\Sht_{\Gg}^{\spl})\).
Let
\[\Hh_{\Gg}:=C_c(\Gg(\Oo)\backslash G(F)/\Gg(\Oo),\IZ[\frac{1}{p}])\]
denote the very special Hecke algebra of \(G\) with \(\IZ[\frac{1}{p}]\)-coefficients.
Then the following proposition can be proven similarly to \cite[Proposition 5.4.4]{XiaoZhu:Cycles}

\begin{prop}
	Let \(M,N\in \MTM_{\cons}(\Spec \overline{k})\).
	Then there are natural group isomorphisms
	\[\Hom_{\MTM_{\cons}^{\Corr}(\Sht_{\Gg}^{\can})}(\delta_{\mathbf{1}}^{\can}(M),\delta_{\mathbf{1}}^{\can}(N)) \cong \Hh_{\Gg} \otimes \Hom_{\MTM_{\cons}(\Spec \overline{k})}(M,N),\]
	which are compatible with composition.
	Consequently, there is a canonical ring isomorphism
	\[\End_{\MTM_{\cons}^{\Corr}(\Sht_{\Gg}^{\can})}(\delta_{\mathbf{1}}^{\can}(\unit)) \cong \Hh_{\Gg}.\]
\end{prop}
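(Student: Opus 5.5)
The plan is to follow the strategy of \cite[Proposition 5.4.4]{XiaoZhu:Cycles} and reduce the computation to the fiber of the Hecke correspondence over the unit point. By construction, a morphism \(\delta_{\mathbf{1}}^{\can}(M)\to \delta_{\mathbf{1}}^{\can}(N)\) is an element of
\[\varinjlim_{\nu_I} \Corr_{\Sht_{\infty\mid\infty}^{\nu_I,\can}}\bigl((\Sht_{\Gg}^{\can},\delta_{\mathbf{1}}^{\can}(M)),(\Sht_{\Gg}^{\can},\delta_{\mathbf{1}}^{\can}(N))\bigr).\]
Since \(\IC_0(M)\) is supported on the unit stratum \(\Hck_{\Gg,0}^{\can}=L^+\Gg\backslash \Spec\overline{k}\), the object \(\delta_{\mathbf{1}}^{\can}(M)\) is the pushforward of \(M\) along the closed immersion \(\Sht_{\Gg,\leq 0}^{\can}\into \Sht_{\Gg}^{\can}\). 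By \eqref{shtuka for central} with \(\tau=0\), we have \(\Sht_{\Gg,\leq 0}^{\can}\cong \Gg(\Oo_F)\backslash \Spec\overline{k}\). Base change for correspondences (the pushforward/pullback compatibilities from the appendix) then identifies the correspondence groups with correspondences supported on
\[\Sht_{0\mid 0}^{\nu_I,\can}\cong \Gg(\Oo_F)\backslash X_{0,\nu_I^*}^{\can}(1),\]
using \thref{example hecke corr central}.

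The next step is to identify this space concretely. Points of \(X^{\can}_{\leq 0}(1)\) bounded by \(\nu_I\) are the \(g\in \Gr_{\Gg}^{\can}\) with \(g^{-1}\sigma(g)\in L^+\Gg\) and \(g\) lying in \(\Gr_{\Gg,\leq\nu_I}^{\can}\). By Lang's theorem these form the discrete set \(G(F)/\Gg(\Oo_F)\cap \Gr_{\Gg,\leq \nu_I}^{\can}\). Hence \(\Sht_{0\mid 0}^{\nu_I,\can}\) is the discrete groupoid \(\Gg(\Oo_F)\backslash\bigl(G(F)/\Gg(\Oo_F)\bigr)_{\leq\nu_I}\), a finite disjoint union of classifying stacks of profinite groups \(\Gg(\Oo_F)\cap g\Gg(\Oo_F)g^{-1}\).

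On each such component \(B\Gamma_g\), a correspondence between \(M\) and \(N\) is an element of \(\Hom(M,N)\). Here I use that the two maps to \(\Gg(\Oo_F)\backslash \Spec \overline{k}\) are pro-étale quotient maps by open subgroups, so !-pullback agrees with *-pullback with no twist. I also use that motives with \(\IZ[\frac1p]\)-coefficients on \(B\Gamma\) for profinite \(\Gamma\) have no extra cohomology (pro-\(p\) parts are invertible), and I expect to cite the appendix for this. Summing over double cosets gives \(\Hh_{\Gg,\leq\nu_I}\otimes\Hom(M,N)\), and the colimit over \(\nu_I\) gives the claimed isomorphism. Compatibility of the transition maps should be clear, since they are induced by the closed immersions of these discrete unions.

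The main obstacle is compatibility with composition. The composition map \(\Comp\) restricted to the unit fibers is \((g,h)\mapsto gh\) on cosets, and proper pushforward along it of the fiberwise identity classes yields counts of points in fibers. This is exactly the convolution product in \(\Hh_{\Gg}\), as in \cite[Proposition 5.4.4]{XiaoZhu:Cycles}. What must be checked carefully is that the degree-zero pushforward along finite étale maps of classifying stacks gives multiplication by the index with the correct normalization. The ring statement then follows by taking \(M=N=\unit\).
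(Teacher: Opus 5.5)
Your proposal follows the paper's route, which simply defers to the argument of \cite[Proposition 5.4.4]{XiaoZhu:Cycles}: restrict to the unit fiber \(\Sht_{0\mid 0}^{\nu_I,\can}\cong \Gg(\Oo_F)\backslash (G(F)/\Gg(\Oo_F))_{\leq \nu_I}\), obtain one copy of \(\Hom(M,N)\) per double coset, and match composition with convolution by counting the fibers of \(\Comp\).

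One justification needs fixing, because this part of the paper uses \(\IZ[\frac{1}{p}]\)-coefficients. The groups \(\Gamma_g=\Gg(\Oo_F)\cap g\Gg(\Oo_F)g^{-1}\) are not pro-\(p\): they map onto subgroups of the \(k\)-points of the reductive quotient of the special fiber of \(\Gg\), whose orders are in general divisible by primes other than \(p\). So \(B\Gamma_g\) does carry higher \(\IZ[\frac{1}{p}]\)-cohomology. This is harmless, since a correspondence is a degree-zero map between objects of the heart. Because \(\Hom(M,N[-i])=0\) for \(i>0\), one still gets \(\Hom_{\DM(B\Gamma_g)}(M,N)=\Hom(M,N)^{\Gamma_g}=\Hom(M,N)\); the group cohomology only affects positive-degree Ext groups.
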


\subsection{Motivic correspondences between moduli of local shtukas}\label{Subsec:Motivic correspondences on shtukas}

After all the preparations above, we can finally state and prove the main local theorem of this paper.
As usual, our geometric objects are base changed to \(\Spec \overline{k}\).
Moreover, from now on we will use motives with rational coefficients, and write \(\DM(-):= \DM(-,\IQ)\) and so on.

Consider the Langlands dual group \(\widehat{G}\) of \(G\), defined over \(\IQ\).
The motivic Satake equivalence from \cite{vdH:RamifiedSatake} describes mixed Tate motives on \(\Hck_{\Gg}^{\can}\) as representations of \(\widehat{G}^I\), internally in \(\MTM(\Spec \overline{k})\).
Recall that \(\MTM(\Spec \overline{k}) := \MTM(\Spec \overline{k},\IQ)\) is equivalent to \(\IZ\)-graded \(\IQ\)-vector spaces \cite[Corollary 2.8]{vdH:RamifiedSatake}.
Thus, to simplify the notation, let us write
\[\Gmot^I:=\widehat{G}^I \rtimes \IG_m,\]
where the action is as in \cite[§9]{vdH:RamifiedSatake}, so that there is a monoidal equivalence
\[\MTM(\Hck_{\Gg}^{\can}) \cong \Rep_{\Gmot^I}(\QVect) =: \Rep(\Gmot^I).\]
by \cite[Corollary 9.6]{vdH:RamifiedSatake}.
Similarly, we can write
\[\Gmot :=  \widehat{G} \rtimes \IG_m,\]
which agrees with Deligne's modification of the Langlands dual group \cite{Deligne:Letter2007,FrenkelGross:Irregular}.
Then \thref{Satake for splitting models} yields an equivalence
\[\MTM(\Hck_{\Gg}^{\spl}) \cong \Rep_{\Gmot}(\QVect)=: \Rep(\Gmot),\]
but we emphasize that the left hand side does not a priori have a monoidal structure.
Under the above equivalences, constructible objects in \(\MTM\) correspond to finite dimensional representations.

Recall that \(\widehat{G}^I\) admits a natural \(\Gamma_k\)-action, which commutes with the \(\IG_m\)-action above (cf.~\cite[§9]{vdH:RamifiedSatake}).
This induces a \(\IG_m\times \Gamma_k\)-action on \(\Gmot^I=\widehat{G}^I\rtimes \IG_m\), such that the induced action on the subgroup \(\IG_m\) is trivial.

\begin{dfn}
	Let \(x\in \IG_m\times \Gamma_k\) and \(V\in \Rep(\Gmot^I)\).
	Then the \(x\)-twisted representation \(xV\) is defined as the composition \(\Gmot^I \xrightarrow{x^{-1}} \Gmot^I \to \GL(V)\), using the above action of \(\IG_m\times \Gamma_k\) on \(\Gmot^I\).
\end{dfn}

\begin{rmk}
	\begin{enumerate}
		\item Under the equivalence \(\Rep(\Gmot^I)\cong \Rep_{\widehat{G}^I}(\MTM(\Spec \overline{k}))\), the twisting of representations does not change the underlying object in \(\MTM(\Spec \overline{k})\).
		Indeed, this follows from the fact that the \(\IG_m\times \Gamma_k\)-action restricts to the trivial action on \(\IG_m\subseteq \widehat{G}^I\).
		\item The \(\Gamma_k\)-action on \(X_*(T)_I = X^*(\widehat{T}^I)\) is given by precomposition with \(\gamma^{-1}\).
		In particular, we see that for \(\gamma\in \Gamma_k\) and \(\mu\in X_*(T)_I^+\), we have \(\gamma V_\mu^{\can}(n) = V_{\gamma(\mu)}^{\can}(n)\).
		Moreover, the pullback of \(\Sat(V_{\gamma(\mu)}^{\can}(n))\) along the automorphism \(\Gr_{\Gg}^{\can} \xrightarrow{\gamma} \Gr_{\Gg}^{\can}\) will be supported on \(\Gr_{\Gg,\leq \mu}^{\can}\subseteq \Gr_{\Gg}^{\can}\), and we see that
		\begin{equation}\thlabel{error in XZ}\gamma^* \Sat(\gamma V_\mu^{\can}(n)) = \Sat(V_\mu^{\can}(n)).\end{equation}
		This fixes the erroneous \cite[(3.4.5)]{XiaoZhu:Cycles} (but we note that in \cite[Lemma 6.1.11]{XiaoZhu:Cycles}, the correct version was used). 
		A similar discussion holds for splitting models as well.
	\end{enumerate}
\end{rmk}

Now, consider the quotient stack \(\widehat{G}^Iq^{-1}\sigma/\Gmot^I\).
If we denote by \(\Rep^{\fd}\) the full subcategory of representations on finite-dimensional vector spaces, we have functors
\[\Rep^{\fd}(\Gmot) \to \Rep^{\fd}(\Gmot^I) \to \Coh^{\Gmot^I}(\widehat{G}^Iq^{-1}\sigma).\]
We denote the essential image of the second functor by
\[\Coh_{\widehat{G}^I\text{-}\mathrm{fr}}^{\Gmot^I}(\widehat{G}^Iq^{-1}\sigma),\]
and the essential image of the composition by
\[\Coh_{\widehat{G}\text{-}\mathrm{fr}}^{\Gmot^I}(\widehat{G}^Iq^{-1}\sigma);\]
these are the \emph{free} coherent sheaves on \(\widehat{G}^Iq^{-1}\sigma/\Gmot^I\), generated by representations of \(\Gmot^I\) and \(\Gmot\) respectively.
Now the main theorem of this section is:

\begin{thm}\thlabel{Main local theorem}
	There exists a commutative diagram
	\begin{equation}\label{big local diagram}
		\begin{tikzcd}[column sep=small]
			\Coh_{\widehat{G}\text{-}\mathrm{fr}}^{\Gmot^I}(\widehat{G}^Iq^{-1}\sigma) \arrow[ddd] \arrow[rrr, "\IL^{\spl}"] &&&\MTM_{\cons}^{\Corr}(\Sht_{\Gg}^{\spl}) \arrow[ddd]\\
			& \Rep_{\widehat{G}}^{\fd}(\MTM(\Spec \overline{k})) \arrow[d] \arrow[r, "\Sat"] \arrow[ul] & \MTM_{\cons}(\Hck_{\Gg}^{\spl}) \arrow[d] \arrow[ur] & \\
			& \Rep_{\widehat{G}^I}^{\fd}(\MTM(\Spec \overline{k})) \arrow[r, "\Sat"'] \arrow[ld] & \MTM_{\cons}(\Hck_{\Gg}^{\can}) \arrow[rd] & \\
			\Coh_{\widehat{G}^I\text{-}\mathrm{fr}}^{\Gmot^I}(\widehat{G}^Iq^{-1}\sigma) \arrow[rrr, "\IL^{\can}"'] &&& \MTM_{\cons}^{\Corr}(\Sht_{\Gg}^{\can}),
		\end{tikzcd}
	\end{equation}
	where the two vertical arrows on the left are obtained by restricting representations, and the vertical arrows on the right by (proper) pushforward.
\end{thm}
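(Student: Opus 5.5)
The plan follows \cite[§6]{XiaoZhu:Cycles}: first construct the bottom functor \(\IL^{\can}\) via a categorical-trace-type construction, then obtain \(\IL^{\spl}\) by the same recipe on splitting objects, and finally deduce commutativity from \thref{spl vs can fully faithful} together with \thref{Satake for splitting models}.

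\emph{The functor \(\IL^{\can}\).} On objects, a free coherent sheaf \(\widetilde{V}\) with \(V\in \Rep^{\fd}(\Gmot^I)\) is sent to the !-pullback of \(\Sat(V)\) along \(\Sht_{\Gg}^{\can}\to \Hck_{\Gg}^{\can}\), using the functor \eqref{functor from hecke to shtuka}. For morphisms, I would write \(\Hom(\widetilde{V},\widetilde{W})=(\Oo(\widehat{G}^Iq^{-1}\sigma)\otimes V^*\otimes W)^{\Gmot^I}\). By Peter--Weyl (this is where \(\IQ\)-coefficients are needed), this is a quotient of \(\bigoplus_{\nu_I}\Hom_{\Gmot^I}(V_{\nu_I}\otimes V, W\otimes {}^{q^{-1}\sigma}V_{\nu_I})\). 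An element \(V_{\nu_I}\otimes V\to W\otimes \sigma V_{\nu_I}\) is then converted into a cohomological correspondence supported on \(\Sht^{\nu_I,\can}_{\infty\mid\infty}\) by three steps:
\begin{itemize}
\item use \thref{basic properties of Satake correspondences}(2), which identifies Homs between convolutions with correspondences on Satake correspondences;
\item apply the partial Frobenius \(F^{\can}\) relating \(\Sht_{\leq(\nu_I,\mu_I)}\) and \(\Sht_{\leq(\mu_I,\sigma\nu_I)}\), using \eqref{error in XZ} to control the twist by \(\sigma\);
\item the twist by \(q^{-1}\) accounts for the Tate twist \(\unit(\langle 2\rho,\nu_I\rangle/\ldots)\) coming from the smooth pullback to \(\Sht\) with our normalization of t-structures (\thref{remarks about DATM on LG}).
\end{itemize}

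One then checks that the map is well defined on the quotient, i.e. compatible with the relations imposed by varying \(\nu_I\) via morphisms \(V_{\nu_I}\to V_{\nu_I'}\otimes V_{\kappa_I}\). It must also be compatible with composition, using the maps \(\Comp\) and \thref{compatibility pushforward correspondence}. Both checks are carried out as in \cite[§6.2]{XiaoZhu:Cycles}, and are simpler here because no restricted shtukas are needed (\thref{remark on restricted shtukas}).

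\emph{The functor \(\IL^{\spl}\).} For \(V\in\Rep^{\fd}(\Gmot)\), set \(\IL^{\spl}(\widetilde{V})\) to be the pullback to \(\Sht_{\Gg}^{\spl}\) of \(\Sat(V)\in\MTM(\Hck^{\spl}_{\Gg})\). Morphisms are defined by the same recipe, now using the mixed Satake correspondences \(\Gr_{\nu_I,\mu\mid\lambda_I}^{\spl}\) of \thref{specific Satake corr}. Here \(\nu_I\) stays canonical while \(V,W\) come from \(\Gmot\), so the partial Frobenius should be taken on the canonical factor only. The same compatibility checks then apply verbatim, using the composition maps for the splitting Hecke correspondences.

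\emph{Commutativity.} The inner squares commute by \thref{Satake for splitting models} and proper base change, since pullback to \(\Sht\) commutes with proper pushforward along \(m\). The left triangles commute by construction. For the outer square, objects agree: \(m_*\) of the pullback of \(\Sat^{\spl}(V)\) is the pullback of \(m_*\Sat^{\spl}(V)=\Sat^{\can}(V|_{\widehat{G}^I})\). On morphisms, I would compare directly. The splitting correspondence attached to a map \(V_{\nu_I}\otimes V\to W\otimes\sigma V_{\nu_I}\) pushes forward along \(\Sht^{\nu_I,\spl}\to\Sht^{\nu_I,\can}\) to the canonical one, by applying \thref{basic properties of Satake correspondences}(2) for both fiber products and the compatibility of proper pushforward of correspondences with base change. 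This is the step I expect to be the main obstacle: tracking the Frobenius twist and the Tate twists through the pushforward, together with the partial Frobenius, which has no literal splitting analogue. If that direct comparison is too messy, the fallback is \thref{spl vs can fully faithful}: once the objects agree, one can define \(\IL^{\spl}\) on morphisms by transport through the fully faithful right vertical functor, and only needs that the morphisms in the image of \(\IL^{\can}\) between restricted objects exist, which holds automatically. The paper, however, wants the direct construction, so I would attempt that first.
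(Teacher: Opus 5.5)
There is a genuine gap in your third bullet. Twisting a \(\Gmot^I\)-representation by \(q^{-1}\) means precomposing with \(\Ad_{\rho_{\adj}(q)}\), which acts trivially on \(\IG_m\subseteq\Gmot^I\). It does not change the underlying object of \(\MTM(\Spec\overline{k})\), so it cannot ``account for'' a Tate twist.

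There is also no Tate twist to account for. With the normalization of \thref{remarks about DATM on LG}, the \(!\)-pullback along \(\Sht_{\Gg}\to\Hck_{\Gg}\) is t-exact and twist-free. The partial Frobenius correspondence only uses \(\sigma^*\Sat(\sigma W)\cong\Sat(W)\). So the geometric recipe (Satake correspondence, partial Frobenius, Satake correspondence) accepts elements of \(\Hom_{\Gmot^I}(\sigma W\otimes V_1\otimes W^*,V_2)\). Peter--Weyl for the twisted conjugation action on \(\widehat{G}^Iq^{-1}\sigma\), by contrast, produces \(\Hom_{\Gmot^I}(q^{-1}\sigma W\otimes V_1\otimes W^*,V_2)\).

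The missing ingredient that bridges the two is \thref{twisting has no effect}. It gives a canonical isomorphism \(q^{-1}W\cong W\), namely multiplication by \(q^{-i}\) on the summand of Tate weight \(i\). This isomorphism is compatible with tensor products, which is needed for composition, since the auxiliary representations get tensored together. Without it, \(\mathscr{C}^{\can}_W\) is not defined on the source the theorem requires. This is exactly the motivic point where the construction departs from \cite[\S6.2]{XiaoZhu:Cycles}.

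Your primary route to \(\IL^{\spl}\) also rests on an unsupported claim, that the checks ``apply verbatim''. Well-definedness and compatibility with composition in \cite[\S6.2]{XiaoZhu:Cycles} use \cite[Lemmas 5.2.14--5.2.16 and 5.2.18]{XiaoZhu:Cycles}. These relate partial Frobenius to Hecke correspondences through convolution of iterated shtukas. Iterated splitting or mixed convolutions only map to \(\Gr_{\Gg}^{\can}\), not to a splitting Schubert variety (\thref{remarks on splitting grassmannians}). As the paper notes, these lemmas therefore have no evident splitting analogue.

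Conversely, the comparison you call the main obstacle is routine. Proper pushforward of correspondences commutes with pro-smooth pullback and with composition, and the Frobenius step only touches canonical factors.

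The paper constructs and checks only \(\IL^{\can}\): well-definedness through the regular representation in \thref{functor well defined}, and functoriality as in \cite[Lemma 6.2.7]{XiaoZhu:Cycles}. It then reduces everything about \(\IL^{\spl}\) to the canonical side, using full faithfulness of both outer vertical arrows. On the right this is \thref{spl vs can fully faithful}; on the left it is the inclusion \(\Coh_{\widehat{G}\text{-}\mathrm{fr}}\subseteq\Coh_{\widehat{G}^I\text{-}\mathrm{fr}}\). The outer square then commutes by construction, and the upper trapezoid follows from the lower one. For the lower trapezoid on morphisms, note that a \(\Gmot^I\)-map \(V_1\to V_2\) lies in the image of \(\Xi_{\IQ}\), so the Frobenius step is trivial.

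Your ``fallback'' is therefore the actual proof. If you want the direct splitting correspondences \(\mathscr{C}^{\spl}_W\) (used in \S\ref{subsec:examples of local Langlands} for the cycle class comparison), you may define them directly. Their well-definedness and compatibility with composition should then be deduced from faithfulness of the right vertical functor, via \(m_*\mathscr{C}^{\spl}_W(\bfa)=\mathscr{C}^{\can}_W(\bfa)\).
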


\begin{rmk}\thlabel{remark main local theorem}
	The lower trapezoid of \eqref{big local diagram} exists in general (at very special level), without assuming \(G\) to be essentially unramified.
\end{rmk}

This theorem will be proven throughout this subsection, following \cite[§6]{XiaoZhu:Cycles}.
Recall that the functors \(\MTM_{\cons}(\Hck_{\Gg}^{\spl}) \to \MTM_{\cons}^{\Corr}(\Sht_{\Gg}^{\spl})\) and \(\MTM_{\cons}(\Hck_{\Gg}^{\can}) \to \MTM_{\cons}^{\Corr}(\Sht_{\Gg}^{\can})\) have been defined in \eqref{functor from hecke to shtuka}.
In particular, this determines \(\IL^{\spl}\) and \(\IL^{\can}\) on the level of objects, so it remains to describe what they do to morphisms.

Let us start by making the morphism groups in \(\Coh_{\widehat{G}^I\text{-}\mathrm{fr}}^{\Gmot^I}(\widehat{G}^Iq^{-1}\sigma)\) explicit.
For \(V\in \Rep(\widehat{G}^I)\), we denote by \(\bfJ(V):=(\Oo_{\widehat{G}^Iq^{-1}\sigma} \otimes V)^{\widehat{G}^I}\) the space of global sections of the pullback of \(V\) along \(\widehat{G}^Iq^{-1}\sigma/\widehat{G}^I \to \Spec \IQ /\widehat{G}^I\).
In particular, we can write \(\bfJ(V)\) for \(V\in \Rep^{\fd}(\Gmot ^I)\), where we implicitly consider the restriction of \(V\) along \(\widehat{G}^I \to \Gmot^I\); in that case \(\bfJ(V)\) has an additional \(\IG_m\)-action.
If \(\IQ\in \Rep^{\fd}(\Gmot^I)\) is the trivial representation, we also write \(\bfJ:=\bfJ(\IQ)\).

\begin{lem}
	The \(\IG_m\)-action on \(\bfJ\) is trivial.
\end{lem}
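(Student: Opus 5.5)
The plan is to reduce the claim to a statement about invariant functions on the torus, where the $\IG_m$-action can be read off directly. By definition,
\[\bfJ = \Oo(\widehat{G}^Iq^{-1}\sigma)^{\widehat{G}^I},\]
and the $\IG_m$-action comes from the $\IG_m$-factor of $\Gmot^I=\widehat{G}^I\rtimes\IG_m$. This factor acts on $\widehat{G}^Iq^{-1}\sigma$ by conjugation, through its action on $\widehat{G}^I$ (as in \cite[§9]{vdH:RamifiedSatake}, given by a cocharacter of the adjoint torus, essentially $2\rho$ or $\rho$). Since $\widehat{G}^I$-invariants are taken first, the residual action of $\IG_m$ on invariant functions factors through its image in $\Gmot^I/\widehat{G}^I$.

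First I would use the structure of the semidirect product. The $\IG_m$-action on $\widehat{G}^I$ is given by conjugation by the image of a cocharacter $\IG_m\to\widehat{T}^I_{\adj}$. Concretely, $t$ acts by $\Ad(2\rho(t))$ or $\Ad(\rho(t))$ composed with nothing else. If this cocharacter lifts to $\widehat{T}^I$ (for instance $2\rho$ always does), then for every $t$ the action of $t$ on $\widehat{G}^Iq^{-1}\sigma$ coincides with conjugation by an element $h_t\in\widehat{T}^I\subseteq\widehat{G}^I$. A $\widehat{G}^I$-invariant function is therefore fixed. The one point to check is that $\IG_m$ commutes with $\sigma$, which the paper records: the $\IG_m\times\Gamma_k$-action is compatible.

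Second, I would handle the case where only $\rho$ (rather than a genuine cocharacter of $\widehat{T}^I$) appears. Here I would argue on geometric points over $\overline{\IQ}$, where $\Ad(\rho(t))$ is still inner, given by a choice of square root in a torus lifting. Alternatively, I would work after an isogeny: replace $\widehat{G}^I$ by a central extension in which $\rho$ lifts. Invariant functions for the original group pull back to invariant functions for the extension, because the central part acts trivially under conjugation. Equality of functions can then be checked after this faithfully flat base change.

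The main obstacle is pinning down precisely the $\IG_m$-action of \cite[§9]{vdH:RamifiedSatake} and verifying that it is inner up to center on $\widehat{G}^I$, compatibly with the twist by $q^{-1}\sigma$. The factor $q^{-1}$ lies in the $\IG_m$-part, and conjugating $q^{-1}\sigma$ by $h\in\IG_m$ must be checked to stay in $\widehat{G}^Iq^{-1}\sigma$ with the expected action. Once that is settled, the triviality follows formally from invariance.
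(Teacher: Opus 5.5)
Your argument is correct, but it takes a different route from the paper's. The paper embeds \(\widehat{G}^I\rtimes\IG_m\) into the inertia-invariants of the Vinberg monoid and invokes the twisted Chevalley restriction theorem \cite[Proposition 10.8]{vdH:RamifiedSatake}. That theorem says invariant functions on \(\widehat{G}^Iq^{-1}\sigma\) are detected on the commutative part \(V_{\widehat{T}}^I\), where the conjugation action of \(\IG_m\) is visibly trivial.

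You instead show that each \(t\) acts on the coset through an element of \(\widehat{G}^I\). The action of \(t\) on \(\widehat{G}^I\) is \(\Ad_{\rho_{\adj}(t)}\). For \(s^2=t\), the element \(\rho_{\adj}(t)\) is the image of \(2\rho(s)\in\widehat{T}^I(\overline{\IQ})\). Hence \(t\) acts on \(gq^{-1}\sigma\) as twisted conjugation by \(2\rho(s)\), and invariance gives \(t\cdot f=f\) for all \(t\in\overline{\IQ}^\times\). Infinitely many such \(t\) force the \(\IG_m\)-grading of \(\bfJ\) into degree \(0\).

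Your route is elementary and avoids the Chevalley restriction theorem, which is nontrivial in this twisted, possibly disconnected setting. The paper's route is a one-liner given that theorem, which it needs anyway for the later description of \(\bfJ\).

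One step should be stated precisely. Saying that \(\IG_m\) commutes with \(\sigma\) only gives \(t\cdot gq^{-1}\sigma=\Ad_{\rho_{\adj}(t)}(g)\,q^{-1}\sigma\). What you actually need is that the chosen lift \(h\) is fixed by \(\theta:=\Ad_{\rho_{\adj}(q^{-1})}\circ\sigma\). Twisted conjugation by \(h\) is \(gq^{-1}\sigma\mapsto hg\,\theta(h)^{-1}q^{-1}\sigma\). So a lift that is only inner ``up to center'', say \(\theta(h)=hz\) with \(z\neq 1\) central, would differ from the action of \(t\) by translation by \(z^{-1}\).

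For \(h=2\rho(s)\) the condition does hold. First, \(2\rho\in X^*(T)=X_*(\widehat{T})\) is \(\Gamma_F\)-invariant, since the Galois action preserves the pinning. Second, \(\Ad_{\rho_{\adj}(q^{-1})}\) is trivial on \(\widehat{T}^I\).

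For the same reason, your alternative via a central extension does not work as phrased. Twisted conjugation by a central \(c\) is translation by \(c\,\sigma(c)^{-1}\), not the identity. Keep the square-root argument instead.
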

\begin{proof}
	Recall that \(\widehat{G}^I\rtimes \IG_m\subseteq V_{\widehat{G},\rho_{\adj}}^I\), where \(V_{\widehat{G},\rho_{\adj}}^I\) are the inertia-invariants of the Vinberg monoid, as defined in e.g.~\cite[§10.1]{vdH:RamifiedSatake}.
	Recall also the commutative monoid \(V_{\widehat{T}}^I\to (\widehat{T}_{\adj}^+)^I\) defined in \cite[§10.1]{vdH:RamifiedSatake}.
	In particular, the natural \(\IG_m\)-action (given by conjugation) is trivial on \(V_{\widehat{T}}^I\), and hence also on its global sections.
	The lemma then follows from the Chevalley restriction theorem \cite[Proposition 10.8]{vdH:RamifiedSatake}.
\end{proof}

Next, for any \(V\in \Rep^{\fd}(\Gmot^I)\), let us denote by \(\widetilde{V}\) its pullback to \(\Coh_{\widehat{G}^I\text{-}\mathrm{fr}}^{\Gmot^I}(\widehat{G}^Iq^{-1}\sigma)\).
Then \(\IL^{\can}(\widetilde{V})\) agrees with the !-pullback of \(\Sat(V)\) along \(\Sht_{\Gg}^{\can} \to \Hck_{\Gg}^{\can}\).
Moreover, for \(V_1,V_2\in \Rep^{\fd}(\Gmot^I)\) we have
\[\Hom_{\Coh^{\Gmot^I}(\widehat{G}^Iq^{-1}\sigma)}(\widetilde{V_1},\widetilde{V_2}) \cong \Hom_{\Oo_{\widehat{G}^Iq^{-1}\sigma}}(\Oo_{\widehat{G}^Iq^{-1}\sigma} \otimes V_1, \Oo_{\widehat{G}^Iq^{-1}\sigma}\otimes V_2)^{\Gmot^I}\]
\[\cong (\Oo_{\widehat{G}^Iq^{-1}\sigma}\otimes V_1^* \otimes V_2)^{\Gmot^I} \cong \bfJ(V_1^*\otimes V_2)^{\IG_m}.\]

Now, for any auxiliary \(W\in \Rep^{\fd}(\Gmot^I)\), with basis \(\{e_i\}\) and dual basis \(\{e_i^*\}\) of \(W^*\), there is a natural map
\begin{equation}\label{reps to coh}\Xi_W\colon \Hom_{\Gmot^I}(q^{-1}\sigma W\otimes V_1\otimes W^*,V_2)\to (\Oo_{\widehat{G}q^{-1}\sigma}\otimes V_1^*\otimes V_2)^{\Gmot^I}\end{equation}
\[\bfa\mapsto \left(\Xi_W(\bfa)\colon g\mapsto \sum_i \bfa(g\cdot e_i\otimes e_i^*)\in \Hom_{\IQ}(V_1,V_2)\cong V_1^*\otimes V_2\right).\]
Then \(\Xi_W(\bfa)\) is independent of the choice of basis \(\{e_i\}\), and takes values in \(\Gmot^I\)-invariant functions from \(\widehat{G}^Iq^{-1}\sigma\) to \(V_1^*\otimes V_2\), as the morphisms are assumed to be \(\Gmot^I\)-equivariant.
Moreover, by the Peter--Weyl theorem describing the global sections of \(\widehat{G}^I\cong \widehat{G}^Iq^{-1}\sigma\) (which also holds for disconnected reductive groups), the family of maps \(\{\Xi_W\mid W\in \Rep^{\fd}(\Gmot^I)\}\) is jointly surjective.
In particular, to define \(\IL^{\can}\), which it suffices to do on morphisms, we may define the compositions
\[\mathscr{C}_W^{\can} \colon \Hom_{\Gmot^I}(q^{-1}\sigma W \otimes V_1 \otimes W^*,V_2) \xrightarrow{\Xi_W} (\Oo_{\widehat{G}^Iq^{-1}\sigma} \otimes V_1^* \otimes V_2)^{\Gmot^I} \to \Hom_{\MTM_{\cons}^{\Corr}(\Sht_{\Gg}^{\can})}(\IL^{\can}(\widetilde{V_1}), \IL^{\can}(\widetilde{V_2})),\]
and show this is constant along the fibers of \(\{\Xi_W\}_W\) (as well as compatible with composition).
Similar considerations hold when defining \(\IL^{\spl}\), in which case \(V_1,V_2\in \Rep^{\fd}(\Gmot)\), and we want to define the compositions
\[\mathscr{C}_W^{\spl} \colon \Hom_{\Gmot^I}(q^{-1}\sigma W \otimes V_1 \otimes W^*,V_2) \xrightarrow{\Xi_W} (\Oo_{\widehat{G}^Iq^{-1}\sigma} \otimes V_1^* \otimes V_2)^{\Gmot^I} \to \Hom_{\MTM_{\cons}^{\Corr}(\Sht_{\Gg}^{\spl})}(\IL^{\spl}(\widetilde{V_1}), \IL^{\spl}(\widetilde{V_2})).\]
However, we emphasize that the auxiliary \(W\)'s are still representations of \(\Gmot^I\).

The additional \(q^{-1}\) appearing in contrast to \cite[§6.2]{XiaoZhu:Cycles} ensures that the target of the maps \(\Xi_W\) consist of morphisms between equivariant vector bundles on \(\widehat{G}^Iq^{-1}\sigma\) rather than \(\widehat{G}^I\sigma\).
This is important since we do not fix a square root of \(q\) in our coefficients, and do not trivialize the Tate twist.
(For the same reason, it is \(\widehat{G}^Iq^{-1}\sigma\) that shows up in the stack of spherical Langlands parameters \cite[Definition 2.1]{vdH:SphericalParameters}.)
However, to construct the functors \(\IL^{\spl}\) and \(\IL^{\can}\), it is easier to work with \(\sigma W\otimes W^*\) instead of \(q^{-1}\sigma W\otimes W^*\).
It turns out that the two representations are isomorphic, and that we can in fact construct an explicit isomorphism, independent of choices or a square root of \(q\).

\begin{lem}\thlabel{twisting has no effect}
	Let \(W\) be a representation of \(\Gmot^I\).
	Then \(W\) and \(q^{-1}W\) admit a natural equivariant isomorphism, which is moreover compatible with the tensor product of representations.
\end{lem}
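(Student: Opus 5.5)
The idea is to use the central cocharacter built into \(\Gmot^I=\widehat{G}^I\rtimes \IG_m\). The element \(q^{-1}\in \IG_m\) acts on \(\Gmot^I\) through the \(\IG_m\times\Gamma_k\)-action, and that action is conjugation-like on the \(\widehat{G}^I\) factor: \(t\in \IG_m\) acts on \(\widehat{G}^I\) by \(\Ad(2\rho_{\adj}(t))\) (or a similar cocharacter, depending on the normalization of \cite[§9]{vdH:RamifiedSatake}), and it is trivial on the \(\IG_m\) factor. So the twisted representation \(q^{-1}W\) is \(W\) precomposed with an automorphism of \(\Gmot^I\).

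The plan is to show this automorphism is inner, namely conjugation by an element \(c(q)\in \Gmot^I\) built from the \(\IG_m\)-factor itself. Concretely, in the semidirect product, conjugation by \((1,t)\in\widehat{G}^I\rtimes\IG_m\) realizes exactly the \(\IG_m\)-action on \(\widehat{G}^I\), and it is trivial on \(\IG_m\). I would therefore set \(c=(1,q)\), or its inverse depending on conventions, and check from the definitions in \cite[§9]{vdH:RamifiedSatake} that the \(x\)-twist for \(x=(q^{-1},1)\in\IG_m\times\Gamma_k\) coincides with precomposition by \(\Ad(c)^{\pm1}\).

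Given this, the isomorphism is \(\rho_W(c)\colon W\to q^{-1}W\), or its inverse. This map is \(\Gmot^I\)-equivariant because
\[\rho_W(c)\rho_W(g)=\rho_W(cgc^{-1})\rho_W(c).\]
It involves no square root of \(q\), since \(q\in\IG_m(\IQ)\) is rational and \(c\) lies in \(\Gmot^I(\IQ)\). It depends on no choices beyond the structure of \(\Gmot^I\). Compatibility with tensor products follows because \(\rho_{W\otimes W'}(c)=\rho_W(c)\otimes\rho_{W'}(c)\). Naturality in \(W\) follows because any equivariant map commutes with the action of \(c\).

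The main obstacle is sign and normalization bookkeeping: checking that the action used to define \(xV\), which involves \(x^{-1}\), really corresponds to conjugation by the \(\IG_m\)-component, and deciding whether one should take \(c=(1,q)\) or \((1,q^{-1})\). I would settle this first on \(\widehat{T}^I\rtimes\IG_m\) acting on weight vectors, checking that the weight grading, that is the Tate twist, matches. I would then extend to \(\widehat{G}^I\) using the explicit definition of the semidirect product.
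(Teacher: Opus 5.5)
Your argument is correct, and it produces the same isomorphism as the paper, but you verify it differently.

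\textbf{The paper's route.} It decomposes \(W=\bigoplus_i W_i(i)\) using \(\MTM(\Spec \overline{k})\cong \grQVect\). It takes \(\phi\) to be multiplication by \(q^{-i}\) on \(W_i\). It then checks that \(\phi\) intertwines \(\Ad_{\rho_{\adj}(q)}\) by a reduction: \(\widehat{G}^I\rtimes\IG_m\) is controlled by its maximal torus and minimal Levi subgroups, which brings it down to a torus and to \(\widehat{G}^I=\SL_2\), \(\Gmot^I=\GL_2\). There it does an explicit matrix computation on the standard representation.

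\textbf{Why your map is the same.} The \(\IG_m\)-factor of \(\Gmot^I\) acts on \(W_i(i)\) through the grading. So the paper's \(\phi\) is precisely the action of the element \((1,q)\) of that factor. The paper's \(\GL_2\) check pins down the convention: there \(\phi\) acts as \(\mathrm{diag}(q,1)\) on the standard representation \(\IQ\oplus\IQ(-1)\).

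\textbf{What your route buys.} Your key observation is that the twisting automorphism \(\Ad_{\rho_{\adj}(q)}\) of \(\Gmot^I\) is the inner automorphism given by conjugation by \((1,q)\). This holds because the \(\IG_m\)-action used to define twists is, by construction, the same action used to form the semidirect product \(\widehat{G}^I\rtimes\IG_m\), and it is trivial on the abelian factor \(\IG_m\). With that in hand, equivariance, naturality and compatibility with tensor products are immediate. You avoid the reduction to semisimple rank one and the \(\GL_2\) computation entirely, and it is transparent that no square root of \(q\) or other choice enters.

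\textbf{The sign.} Your worry about the sign resolves itself. Since \(xV\) is defined by precomposing with \(x^{-1}\), the twist by \(q^{-1}\) is precomposition with the action of \(q\), which is \(\Ad((1,q))\). So \(c=(1,q)\). Whether the semidirect product is normalized with \(\rho_{\adj}\) or \(2\rho_{\adj}\) plays no role, because your argument only uses that the twisting action and the semidirect-product action coincide.

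What the paper's route adds is only an explicit description of the map in terms of the Tate-twist grading.
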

\begin{proof}
	The representation \(q^{-1}W\) is given by
	\[\Gmot^I \xrightarrow{\Ad_{\rho_{\adj}(q)}} \Gmot^I \to \Aut(W).\]
	Under the equivalences \(\Rep(\Gmot^I)\cong \Rep_{\widehat{G}^I}(\MTM(\Spec \overline{k}))\) and \(\MTM(\Spec \overline{k})\cong \grQVect\), we can decompose \(W = \bigoplus_{i\in \IZ} W_i(i)\), where \(W_i\) lies in degree 0.
	We claim that the automorphism \(\phi\in \Aut(W)\) (as a vector space) which preserves this decomposition and induces the multiplication by \(q^{-i}\) on \(W_i\) satisfies the desired conditions.
	It is clear that this is compatible with the tensor product of representations.
	
	We need to check the following diagram commutes:
	\[\begin{tikzcd}
		\Gmot^I \arrow[d, "\Ad_{\rho_{\adj}(q)}"'] \arrow[r] & \Aut(W) \arrow[d, "\phi\circ - \circ \phi^{-1}"]\\
		\Gmot^I \arrow[r] & \Aut(W).
	\end{tikzcd}\]
	Recall from \cite[§9]{vdH:RamifiedSatake} that \(\widehat{G}^I\) and its \(\IG_m\)-action are determined by the maximal torus \(\widetilde{T}^I\) and its minimal Levi's, along with the \(\IG_m\)-action on those.
	Thus, it suffices to show the diagram above commutes when \(G\) is replaced by either a torus or a group of semisimple rank 1.
	
	In case of a torus, the \(\IG_m\)-action is trivial, so the twist by \(q^{-1}\) does nothing.
	And as any action of a torus would preserve the decomposition \(W=\bigoplus_{i\in \IZ} W_i(i)\), conjugation by \(\phi\) also has no effect.
	
	For groups of semisimple rank 1, we may reduce to the case where \(G\) is adjoint.
	Thus we have either \(G=\PGL_2\) and \(\widehat{G}^I=\widehat{G}=\SL_2\), or \(G=\PU_3\) defined by a ramified quadratic extension, in which case \(\widehat{G}^I=\PGL_2\subseteq \widehat{G} = \SL_3\).
	We may thus assume \(\widehat{G}^I = \SL_2\), so that \(\widehat{G}^I\rtimes \IG_m = \GL_2\).
	Since every representation of \(\GL_2\) can be obtained from the standard representation by twisting the \(\IG_m\)-action and taking direct summands, we may assume \(W\in \Rep(\GL_2)\) is the standard representation, this decomposes as \(\IQ \otimes \IQ(-1)\) as an object in \(\MTM(\Spec \overline{k})\).
	In that case we can directly check that \(\phi\) yields the desired isomorphism, using that \(\Ad_{\rho_{\adj}(q)}\) acts on \(\GL_2\) by
	\[\begin{pmatrix}
		a & b \\ c & d
	\end{pmatrix} \mapsto \begin{pmatrix}
	a & qb \\ q^{-1}c & d
	\end{pmatrix},\]
	cf.~\cite[(6.12)]{CassvdHScholbach:Geometric}.
\end{proof}

Before we define the maps \(\mathscr{C}_W^{\can}\), let us explain how to construct certain motivic correspondences supported on the moduli of local shtukas.
For a representation \(V\in \Rep^{\fd}(\Gmot^I)\) we will denote by \(\Gr_{\Gg,V}^{\can} \subseteq \Gr_{\Gg}^{\can}\) the support of \(\Sat(V)\); this is a finite closed union of Schubert cells in \(\Gr_{\Gg}^{\can}\).
More generally, if \(V_\bullet = V_1\boxtimes \ldots \boxtimes V_t\) for \(V_i\in \Rep^{\fd}(\Gmot^I)\), we denote 
\[\Gr_{\Gg,V_\bullet}^{\can} := \Gr_{\Gg,V_1}^{\can} \widetilde{\times} \ldots \widetilde{\times} \Gr_{\Gg,V_t}^{\can}\subseteq \Gr_{\Gg}^{\can,\widetilde{\times} t}.\]
We can then define \(\Hck_{\Gg,V_\bullet}^{\can}\) by taking the quotient by \(L^+\Gg\), and define \(\Sht_{\Gg,V_\bullet}^{\can}\) as in \thref{defi iterated shtuka}.
We will use similar notation for splitting models.

For \(V_1,\ldots,V_t,V_1',\ldots,V_s'\in \Rep^{\fd}(\Gmot^I)\), we define correspondences
\[\Gr_{V_\bullet\mid V'_\bullet}^{\can} := \Gr_{\Gg,V_\bullet}^{\can} \times_{\Gr_{\Gg}^{\can}} \Gr_{\Gg,V'_\bullet}^{\can},\]
\[\Hck_{V_\bullet\mid V'_\bullet}^{\can} := L^+\Gg \backslash \Gr_{V_\bullet\mid V'_\bullet}^{\can},\]
and\[\Sht_{V_\bullet\mid V'_\bullet}^{\can} \colon \Sht_{\Gg,V_\bullet}^{\can} \times_{\BB(G)'} \Sht_{\Gg,V_\bullet'}^{\can}.\]
Then \thref{basic properties of Satake correspondences} (3) yields isomorphisms
\[\mathscr{C}_{\Hck}^{\can} \colon \Hom_{\Gmot^I}(V_\bullet,V_\bullet') \cong \Corr_{\Hck_{V_\bullet\mid V'_\bullet}^{\can}}\left(\Sat(V_\bullet),\Sat(V_\bullet')\right),\]
which are compatible with composition.
This allows us to construct the so-called Satake correspondences on moduli of local shtukas.

\begin{lem}
	Let \(U_\bullet,V_\bullet,W_\bullet\) be tuples of objects in \(\Rep^{\fd}(\Gmot^I)\).
	Then pullback of correspondences induces maps
	\[\mathscr{C}_{\Sht}^{\can} \colon \Hom_{\Gmot^I}(V_\bullet,W_\bullet) \to \Corr_{\Sht_{V_\bullet\mid W_\bullet}^{0,\can}}(\IL^{\can}(\widetilde{V_\bullet}),\IL^{\can}(\widetilde{W_\bullet})).\]
	These are moreover compatible with composition, in the sense that the diagram
	\[\begin{tikzcd}
		\Hom_{\Gmot^I}(U_\bullet,V_\bullet) \otimes \Hom_{\Gmot^I}(V_\bullet,W_\bullet) \arrow[r] \arrow[d] & \Corr_{\Sht_{V_\bullet\mid W_\bullet}^{0,\can}}(\IL^{\can}(\widetilde{U_\bullet}),\IL^{\can}(\widetilde{V_\bullet})) \otimes \Corr_{\Sht_{V_\bullet\mid W_\bullet}^{0,\can}}(\IL^{\can}(\widetilde{V_\bullet}),\IL^{\can}(\widetilde{W_\bullet})) \arrow[d] \\
		\Hom_{\Gmot^I}(U_\bullet,W_\bullet) \arrow[r] & \Corr_{\Sht_{V_\bullet\mid W_\bullet}^{0,\can}}(\IL^{\can}(\widetilde{U_\bullet}),\IL^{\can}(\widetilde{W_\bullet}))
	\end{tikzcd}\]
	commutes, where the right vertical arrow is given by proper pushforward of correspondences along
	\[\Sht_{U_\bullet\mid V_\bullet}^{\can} \times_{\Sht_{\Gg,V_\bullet}^{\can}} \Sht_{V_\bullet\mid W_\bullet}^{\can} \to \Sht_{U_\bullet\mid W_\bullet}^{\can}.\]
\end{lem}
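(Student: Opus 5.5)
The plan is to pull the Hecke-stack isomorphisms \(\mathscr{C}_{\Hck}^{\can}\) back along the natural maps from the stacks of local shtukas to the Hecke stacks. There are two such maps, and this is the key point. Let \(\Sht_{V_\bullet\mid W_\bullet}^{0,\can}\) be the closed substack of \(\Sht_{V_\bullet\mid W_\bullet}^{\can}\) where the vertical modification \(\beta\) in \eqref{diagram Hecke correspondences} is bounded by \(0\), i.e.\ is an isomorphism of \(\Gg\)-torsors. In the moduli description, a point of this substack is then the same as a point of \(\Sht_{\Gg}^{\can}\) (the torsor \(\Ee_t\cong\Ee'_s\) together with its Frobenius structure) together with two factorizations of one modification \(\Ee_t \dashrightarrow {}^\sigma\Ee_t\), bounded by \(V_\bullet\) and \(W_\bullet\) respectively. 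This should give a cartesian square
\[\begin{tikzcd}
\Sht_{V_\bullet\mid W_\bullet}^{0,\can} \arrow[r] \arrow[d] & \Hck_{V_\bullet\mid W_\bullet}^{\can} \arrow[d]\\
\Sht_{\Gg,V_\bullet}^{\can}\times_{\Sht_{\Gg}^{\can}}\Sht_{\Gg,W_\bullet}^{\can} \arrow[r] & \Hck_{\Gg,V_\bullet}^{\can}\times_{\Hck_{\Gg}^{\can}}\Hck_{\Gg,W_\bullet}^{\can}.
\end{tikzcd}\]
Equivalently, \(\Sht_{V_\bullet\mid W_\bullet}^{0,\can}\cong \Sht_{\Gg}^{\can}\times_{\Hck_{\Gg}^{\can}}\Hck_{V_\bullet\mid W_\bullet}^{\can}\), and its two legs are the base changes of the two legs of the Hecke correspondence. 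So the first step is to verify this identification directly from the moduli interpretation recalled before \thref{remarks Hecke correspondences}.

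Once the square is established, the map \(\mathscr{C}_{\Sht}^{\can}\) is the composite of \(\mathscr{C}_{\Hck}^{\can}\) with the pullback of cohomological correspondences along the morphism of correspondences
\[\bigl(\Sht_{\Gg,V_\bullet}^{\can}\leftarrow\Sht_{V_\bullet\mid W_\bullet}^{0,\can}\to\Sht_{\Gg,W_\bullet}^{\can}\bigr)\longrightarrow\bigl(\Hck_{\Gg,V_\bullet}^{\can}\leftarrow\Hck^{\can}_{V_\bullet\mid W_\bullet}\to\Hck_{\Gg,W_\bullet}^{\can}\bigr).\]
This pullback is the functoriality from Appendix \ref{Appendix:motives} for correspondences mapping to correspondences. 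It applies because the vertical maps \(\Sht_{\Gg,V_\bullet}^{\can}\to\Hck_{\Gg,V_\bullet}^{\can}\) are \(L^+\Gg\)-fibrations, hence pro-perfectly smooth, and \(!\)-pullback along them agrees with \(*\)-pullback up to a shift and twist. That shift and twist is absorbed in the normalization of the t-structures from \thref{remarks about DATM on LG}, so that \(\IL^{\can}(\widetilde{V_\bullet})\) is exactly the \(!\)-pullback of \(\Sat(V_\bullet)\). The finiteness conditions of \thref{assumption corr} hold by \thref{remark on restricted shtukas}, after passing to restricted shtukas \((m,n)\) with \(m\gge n\), which is harmless because pullback is compatible with these restriction maps.

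For compatibility with composition, I would check that the composition map \(\Comp\) on \(\Sht^{0,\can}\) is the base change of the Hecke-side composition \eqref{composition of Satake corr} along \(\Sht_{\Gg}^{\can}\to\Hck_{\Gg}^{\can}\). Concretely, composing two isomorphisms \(\beta\) gives an isomorphism, and the resulting diagram of correspondences is cartesian. Pullback of correspondences then commutes with composition by the base change compatibility of composition in the appendix (\thref{composition of correspondences}, \thref{compatibility pushforward correspondence}). Together with the fact that \(\mathscr{C}_{\Hck}^{\can}\) is compatible with composition by \thref{basic properties of Satake correspondences}(2), this gives the commutative square.

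I expect the main obstacle to be the bookkeeping of the cartesian squares in the composition step. The source of \(\Comp\) is a fiber product over \(\Sht_{\Gg,V_\bullet}^{\can}\), not over \(\Hck_{\Gg,V_\bullet}^{\can}\), so I would first show that this fiber product equals \(\Sht_{\Gg}^{\can}\times_{\Hck_{\Gg}^{\can}}(\Hck_{U_\bullet\mid V_\bullet}\times_{\Hck_{\Gg,V_\bullet}}\Hck_{V_\bullet\mid W_\bullet})\), again using the moduli description, and only then apply base change.
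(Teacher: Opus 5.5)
Your proposal is correct and follows the paper's proof. In both, \(\mathscr{C}_{\Sht}^{\can}\) is the pro-smooth pullback (\thref{construction pullback correspondence}) of \(\mathscr{C}_{\Hck}^{\can}\) along the diagram \(\Sht\to\Hck\) with cartesian squares, which is exactly your identification \(\Sht^{0,\can}_{V_\bullet\mid W_\bullet}\cong \Sht^{\can}_{\Gg}\times_{\Hck^{\can}_{\Gg}}\Hck^{\can}_{V_\bullet\mid W_\bullet}\).

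One citation should change in the composition step. To commute pullback with the pushforward along \(\Comp\), the relevant result is \thref{Compatibility pullback and pushforward correspondences}, not \thref{compatibility pushforward correspondence}. Its hypothesis is precisely the cartesianness of \(\Comp\) over the Hecke-side composition, which you already check.
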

\begin{proof}
	The map \(\mathscr{C}_{\Sht}^{\can}\) is given by the pro-smooth pullback of \(\mathscr{C}_{\Hck}^{\can}\) (\thref{construction pullback correspondence}) along the diagram
	\begin{equation}\label{diagram spl}\begin{tikzcd}
			\Sht_{\Gg,V_\bullet}^{\can} \arrow[d] & \Sht_{U_\bullet\mid V_\bullet}^{0,\can} \arrow[l] \arrow[d] \arrow[r] & \Sht_{\Gg,W_\bullet}^{\can} \arrow[d]\\
			\Hck_{\Gg,V_\bullet}^{\can} & \Hck_{V_\bullet\mid W_\bullet}^{\can} \arrow[l] \arrow[r] & \Hck_{\Gg,W_\bullet}^{\can},
	\end{tikzcd}\end{equation}
	where both squares are cartesian.
	The compatibility with composition then follows from the compatibility of pro-smooth pullback of correspondences with proper pushforward of correspondences, \thref{Compatibility pullback and pushforward correspondences}.
\end{proof}

Aside from the Satake correspondences, we will also need correspondences induced by the partial Frobenius.
Namely, let \(V_\bullet\in \Rep^{\fd}((\Gmot^I)^t)\) and \(W\in \Rep^{\fd}(\Gmot^I)\).
Then there are natural morphisms
\[\Sht_{\Gg,V_\bullet \otimes W}^{\can} \to \Hck_{\Gg,V_\bullet \boxtimes W}^{\can} \to \Hck_{\Gg,V_\bullet}^{\can} \times \Hck_{\Gg,W}^{\can},\]
where both maps are \(L^+\Gg\)-fibrations, and in particular pro-(perfectly smooth). 
Consider the commutative diagram
\begin{equation}\label{diagram to define frobenius correspondence}\begin{tikzcd}
	\Sht_{\Gg,\sigma W\boxtimes V_\bullet}^{\can} \arrow[d] \arrow[r, equal] & \Sht_{\Gg,\sigma W\boxtimes V_\bullet}^{\can} \arrow[d] \arrow[r, "F_{V_\bullet\boxtimes W}^{-1}"] & \Sht_{\Gg,V_\bullet \boxtimes W}^{\can} \arrow[d]\\
	\Hck_{\Gg,\sigma W}^{\can} \times \Hck_{\Gg,V_\bullet}^{\can} & \Hck_{\Gg,W}^{\can} \times \Hck_{\Gg,V_\bullet}^{\can} \arrow[l, "\sigma \times \id"] \arrow[r, equal] & \Hck_{\Gg,V_\bullet}^{\can} \times \Hck_{\Gg,W}^{\can},
\end{tikzcd}\end{equation}
where all horizontal arrows are isomorphisms, and vertical maps are pro-(perfectly smooth) (in particular, the squares are cartesian).
Using \eqref{error in XZ}, \thref{examples of correspondences} \eqref{pullback correspondence} gives a correspondence
\[\Gamma_{\sigma \times \identity}^*\colon \left(\Hck_{\Gg,\sigma W}^{\can} \times \Hck_{\Gg,V_\bullet}^{\can},\Sat(\sigma W) \boxtimes \Sat(V_\bullet)\right) \to \left(\Hck_{\Gg,V_\bullet}^{\can} \times \Hck_{\Gg,W}^{\can}, \Sat(V_\bullet) \boxtimes \Sat(W)\right).\]
Pulling back this correspondence along \eqref{diagram to define frobenius correspondence} then yields the \emph{partial Frobenius correspondence}
\[\ID \Gamma^{\can}_{F_{V_\bullet \boxtimes W}^{-1}} \colon \left(\Sht_{\Gg,\sigma W\boxtimes V_\bullet}^{\can},\IL^{\can}(\widetilde{\sigma W} \boxtimes \widetilde{V_\bullet})\right) \to \left(\Sht_{\Gg,V_\bullet \boxtimes W}^{\can}, \IL^{\can}(\widetilde{V_\bullet} \boxtimes \widetilde{W}) \right).\]

Now, let \(V_1,V_2,W\in \Rep^{\fd}(\Gmot^I)\).
Using \thref{twisting has no effect}, consider the map
\[\mathscr{C}_W^{\can}\colon \colon\Hom_{\Gmot^I}(q^{-1}\sigma W\otimes V_1\otimes W^*,V_2) \cong \Hom_{\Gmot^I}(\sigma W\otimes V_1\otimes W^*,V_2)\to \Hom_{\MTM_{\cons}^{\Corr}(\Sht_{\Gg}^{\can})}(\IL^{\can}(\widetilde{V_1}), \IL^{\can}(\widetilde{V_2})),\]
given by sending \(\bfa \in \Hom_{\Gmot^I}(\sigma W\otimes V_1 \otimes W^*,V_2)\) to the composition
\[\IL^{\can}(\widetilde{V_1}) \xrightarrow{\mathscr{C}_{\Sht}^{\can}(\delta_{\sigma W} \otimes \identity_{V_1})} \IL^{\can}(\widetilde{\sigma W^*} \boxtimes (\widetilde{\sigma W} \otimes \widetilde{V_1})) \xrightarrow{\ID \Gamma^{\can}_{F^{-1}_{(\sigma W \otimes V_1) \boxtimes W^*}}} \IL^{\can} ((\widetilde{\sigma W} \otimes \widetilde{V_1}) \boxtimes \widetilde{W^*}) \xrightarrow{\mathscr{C}_{\Sht}^{\can}(\bfa)} \IL^{\can}(\widetilde{V_2}),\]
where \(\delta_{\sigma W} \colon \IQ \to \sigma W^* \otimes \sigma W\) is the unit map.

\begin{prop}\thlabel{functor well defined}
	Let \(V_1,V_2\in \Rep^{\fd}(\Gmot^I)\).
	The collection of maps \(\mathscr{C}_W^{\can}\), for \(W\in \Rep^{\fd}(\Gmot^I)\), factors as in the following diagram
	\[\begin{tikzcd}[column sep=tiny]
		\coprod_{W} \Hom_{\Gmot^I}(q^{-1}\sigma W \otimes V_1 \otimes W^*, V_2) \arrow[rd, "\coprod_{W} \Xi_W"', two heads] \arrow[rr, "\coprod_{W} \mathscr{C}_W^{\can}"] && \Hom_{\MTM_{\cons}^{\Corr}(\Sht_{\Gg}^{\can})}(\IL^{\can}(\widetilde{V_1}), \IL^{\can}(\widetilde{V_2})).\\
		& (\Oo_{\widehat{G}^Iq^{-1}\sigma} \otimes V_1^* \otimes V_2)^{\Gmot^I} \arrow[ur, "\IL^{\can}"']&
	\end{tikzcd}\]
\end{prop}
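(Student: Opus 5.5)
Since \(\coprod_W\Xi_W\) is surjective, the task is to show that \(\mathscr{C}^{\can}_W(\bfa)=\mathscr{C}^{\can}_{W'}(\bfa')\) whenever \(\Xi_W(\bfa)=\Xi_{W'}(\bfa')\). Following the strategy of \cite[§6.2]{XiaoZhu:Cycles}, I would first establish two compatibility properties of the maps \(\mathscr{C}^{\can}_W\), and then show that these relations generate all of the relations among the \(\Xi_W\), using the Peter--Weyl theorem.

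\textbf{Additivity.} For \(W=W_1\oplus W_2\) and \(\bfa\) restricted to the two diagonal summands, we have \(\mathscr{C}^{\can}_{W}(\bfa)=\mathscr{C}^{\can}_{W_1}(\bfa_1)+\mathscr{C}^{\can}_{W_2}(\bfa_2)\). The off-diagonal pieces \(\sigma W_1\otimes V_1\otimes W_2^*\) contribute neither to \(\Xi_W\) nor to \(\mathscr{C}_W^{\can}\), because the unit \(\delta_{\sigma W}\) splits as a sum. This follows from the additivity of \(\mathscr{C}^{\can}_{\Sht}\) together with the compatibility of the partial Frobenius correspondence with direct sums.

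\textbf{Naturality (the key relation).} For a \(\Gmot^I\)-map \(f\colon W\to W'\), the two elements \(\bfa'\circ(\sigma f\otimes\id\otimes\id)\in\Hom(\sigma W\otimes V_1\otimes W'^*,V_2)\) must have the same image under the two routes, \(W\) and \(W'\). Concretely, we need
\[\mathscr{C}^{\can}_{W'}\bigl(\bfa'\circ(\id\otimes\id\otimes f^*)\bigr)=\mathscr{C}^{\can}_{W}\bigl(\bfa'\circ(\sigma f\otimes\id\otimes\id)\bigr).\]
This reduces to the statement that the partial Frobenius correspondence \(\ID\Gamma^{\can}_{F^{-1}}\) commutes with the Satake correspondences \(\mathscr{C}^{\can}_{\Sht}(\sigma f)\) and \(\mathscr{C}^{\can}_{\Sht}(f^*)\), with \(\sigma f\) acting on the first factor and \(f\) on the last. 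To prove it, I would pull everything back from the Hecke stacks. The partial Frobenius \(F\) lifts to the Satake correspondence spaces, compatibly with the cartesian squares \eqref{diagram to define frobenius correspondence} and \eqref{diagram spl}. One then uses \thref{Compatibility pullback and pushforward correspondences}, together with the identity \(\gamma^*\Sat(\gamma V)=\Sat(V)\) (the corrected \cite[(3.4.5)]{XiaoZhu:Cycles}) to identify \((\sigma\times\id)^*\mathscr{C}^{\can}_{\Hck}(\sigma f)\) with \(\mathscr{C}^{\can}_{\Hck}(f)\). The twist isomorphism of \thref{twisting has no effect} is natural in \(W\) and monoidal, so it commutes with everything involved.

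\textbf{Relations come only from these.} Finally, one shows that the kernel of \(\bigoplus_W\Xi_W\) is spanned by additivity and naturality relations. Via \(\Hom(\sigma W\otimes V_1\otimes W^*,V_2)\cong\Hom(\sigma W\otimes W^*, V_1^*\otimes V_2)\), the map \(\Xi_W\) factors through the coend
\[\int^{W}\sigma W\otimes W^*\;\cong\;\Oo_{\widehat{G}^I\sigma}=\bigoplus_{W\ \mathrm{irred}} W\otimes W^*\]
for \(\widehat{G}^I\), which holds by Peter--Weyl for the possibly disconnected reductive \(\widehat{G}^I\), after twisting by \(\sigma\). Hence \(\Oo_{\widehat{G}^Iq^{-1}\sigma}\otimes V_1^*\otimes V_2\), taken \(\Gmot^I\)-invariants, is the quotient of \(\bigoplus_W\Hom(\ldots)\) by precisely the naturality relations. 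To handle the extra \(\IG_m\), I would note that the \(\Gmot^I\)-invariants can be computed using \(\Gmot^I\)-representations \(W\), and that every \(\widehat{G}^I\)-representation extends to a \(\Gmot^I\)-representation up to a Tate twist. The factorization through \(\IL^{\can}\) then follows.

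I expect the main obstacle to be the naturality step: verifying at the level of motivic correspondences that the partial Frobenius correspondence intertwines the Satake correspondences, with all Tate twists and the \(q^{-1}\)-identification tracked correctly.
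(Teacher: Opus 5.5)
Your dinaturality step is the same input the paper relies on (the analogues of \cite[Lemma 6.2.3 and Remark 6.2.4]{XiaoZhu:Cycles}). However, the step ``relations come only from these'' is false as stated.

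\(\mathscr{C}^{\can}_W\) is only defined for \(\Gmot^I\)-representations \(W\) and \(\Gmot^I\)-equivariant \(\bfa\). So you can only prove naturality for \(\Gmot^I\)-equivariant \(f\colon W\to W'\). By Tannakian reconstruction, the coend over \(\Rep^{\fd}(\Gmot^I)\) is then the ring of functions on all of \(\Gmot^I\) (twisted by \(q^{-1}\sigma\)), not on \(\widehat{G}^I\). Meanwhile \(\Xi_W\) only sees a single fibre of the projection to \(\IG_m=\Gmot^I/\widehat{G}^I\), whose coordinate is invariant under twisted conjugation. Hence the kernel of \(\coprod_W\Xi_W\) is strictly larger than what additivity and \(\Gmot^I\)-naturality generate.

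Concretely, take a Tate twist \(W(1)=W\otimes\IQ(1)\). The spaces \(\Hom_{\Gmot^I}(q^{-1}\sigma W\otimes V_1\otimes W^*,V_2)\) and \(\Hom_{\Gmot^I}(q^{-1}\sigma W(1)\otimes V_1\otimes W(1)^*,V_2)\) coincide, because \(q^{-1}\sigma\) acts trivially on \(\Gmot^I/\widehat{G}^I\). The two \(\Xi\)-values of the same \(\bfa\) agree up to a fixed power of \(q\) (depending on conventions). Yet in general no chain of \(\Gmot^I\)-maps connects \(W\) and \(W(1)\). Your remark that every \(\widehat{G}^I\)-representation extends to \(\Gmot^I\) up to Tate twist does not close this gap. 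Reducing \(\widehat{G}^I\)-naturality to \(\Gmot^I\)-naturality, by splitting a \(\widehat{G}^I\)-map into its \(\IG_m\)-weight components \(W\to W'(n)\), requires exactly the missing Tate-twist relation.

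So you need an extra lemma comparing \(\mathscr{C}^{\can}_{W(1)}(\bfa)\) with \(\mathscr{C}^{\can}_W(\bfa)\), and this is precisely where the normalisation \(q^{-1}\) matters. It also shows that your claim that the isomorphism of \thref{twisting has no effect} ``commutes with everything involved'' fails at the crucial point. The automorphism \(\phi\) constructed there satisfies \(\phi_{W(1)}=q^{-1}\phi_W\), since \(W(1)=\bigoplus_i W_i(i+1)\). By contrast, the Satake and partial Frobenius correspondences for \(W(1)\) are just those for \(W\) tensored with \(\unit(1)\) and \(\unit(-1)\), via the canonical \(\sigma^*\unit(1)\cong\unit(1)\). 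You must therefore check that the resulting factor matches the one on the \(\Xi\)-side.

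The paper avoids describing the kernel at all. It uses the \(\Gmot^I\)-equivariant maps \(\operatorname{act}_U\colon U\to\Oo_{\widehat{G}^I}\otimes\underline{U}\) and \(m_U\colon\underline{U^*}\otimes U\to\Oo_{\widehat{G}^I}\) into the regular ind-representation. There \(\underline{U}\) absorbs all Tate twists at once, and \(\underline{\sigma W^*}=\underline{W^*}\). Naturality along these maps rewrites \(\mathscr{C}^{\can}_W(\bfa)\) as an expression in \(d_{q^{-1}\sigma}(\bfc')\), which depends only on \(\bfc=\Xi_W(\bfa)\). With the Tate-twist relation added and verified, your coend argument would be a valid and somewhat more conceptual alternative, but as written it has a gap.
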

\begin{proof}
	Let \(\bfc\in (\Oo_{\widehat{G}^Iq^{-1}\sigma} \otimes V_1^* \otimes V_2)^{\Gmot^I}\), and fix some \(W\in \Rep^{\fd}(\Gmot^I)\) and \(\bfa\in \Hom_{\Gmot^I}(q^{-1}\sigma W \otimes V_1 \otimes W^*,V_2)\) such that \(\bfc = \Xi_W(\bfa)\).
	
	Consider \(\Oo_{\widehat{G}^I}\in \Rep_{\widehat{G}^I}(\MTM(\Spec \overline{k}))\), corresponding to right translation of \(\widehat{G}^I\).
	Under the equivalence \(\Rep_{\widehat{G}^I}(\MTM(\Spec \overline{k})) \cong \Rep(\Gmot^I)\), this yields an action of \(\Gmot^I\) on \(\Oo_{\widehat{G}^I}\).
	Now, for any \(U\in \Rep_{\widehat{G}^I}(\MTM(\Spec \overline{k}))\), we have the coaction map \(U \to \Oo_{\widehat{G}^I} \otimes U\).
	This yields a \(\Gmot^I\)-equivariant map
	\[\operatorname{act}_U\colon U \to \Oo_{\widehat{G}^I} \otimes \underline{U}.\]
	Here, \(\underline{U}\) has the trivial action under \(\widehat{G}^I \subseteq \Gmot^I\), but carries the induced \(\IG_m\)-action.
	Equivalently, it is the object in \(\Rep(\Gmot^I) \cong \Rep_{\Gmot^I}(\MTM(\Spec \overline{k}))\) with the trivial \(\widehat{G}^I\)-action.
	Similarly, we have an equivariant map 
	\[m_U\colon \underline{U^*} \otimes U \to \Oo_{\widehat{G}^I}.\]
	This yields a morphism
	\[\sigma W^* \otimes V_2 \otimes W \xrightarrow{\operatorname{act}_{\sigma W^*}} \underline{W^*} \otimes \sigma \Oo_{\widehat{G}^I} \otimes V_2 \otimes W \xrightarrow{m_W} \sigma \Oo_{\widehat{G}^I} \otimes V_2 \otimes \Oo_{\widehat{G}^I}.\]
	Note that \(\Gamma_k\) acts trivially on \(\IG_m\), so that \(\underline{\sigma W^*} = \underline{W^*}\).
	More generally, we can twist representations in \(\Rep_{\widehat{G}^I}(\MTM(\Spec \overline{k}))\) by \(\sigma\), which does not change the underlying object in \(\MTM(\Spec \overline{k})\).
	
	Now, let \(\bfa'\) correspond to \(\bfa\) under
	\[\Hom_{\Gmot^I}(q^{-1}\sigma W \otimes V_1 \otimes W^*,V_2) \cong \Hom_{\Gmot^I}(V_1,\sigma W^* \otimes V_2 \otimes W).\]
	Consider the map \(d_{q^{-1}\sigma}\colon \Oo_{\widehat{G}^Iq^{-1}\sigma} \to q^{-1}\sigma \Oo_{\widehat{G}^I} \otimes \Oo_{\widehat{G}^I}\),
	intertwining the (twisted) conjugation action of \(\Gmot^I\) on \(\Oo_{\widehat{G}^Iq^{-1}\sigma}\) with the diagonal action on \(q^{-1}\sigma \Oo_{\widehat{G}^I} \otimes \Oo_{\widehat{G}^I}\), given as the composition
	\[d_{q^{-1}\sigma}\colon \Oo_{\widehat{G}^Iq^{-1}\sigma} \xrightarrow{q^{-1}\sigma} \Oo_{\widehat{G}^Iq^{-1}\sigma} \xrightarrow{\text{comultiplication}} q^{-1}\sigma \Oo_{\widehat{G}^I} \otimes \Oo_{\widehat{G}^I} \xrightarrow{\text{antipode} \otimes \identity} q^{-1}\sigma \Oo_{\widehat{G}^I} \otimes \Oo_{\widehat{G}^I};\]
	compare \cite[Proof of Lemma 6.2.5]{XiaoZhu:Cycles}.
	This yields a map
	\[(\Oo_{\widehat{G}^Iq^{-1}\sigma} \otimes V_1^* \otimes V_2)^{\Gmot^I} \xrightarrow{d_{q^{-1}\sigma}} (q^{-1}\sigma \Oo_{\widehat{G}^I} \otimes \Oo_{\widehat{G}^I} \otimes V_1^* \otimes V_2)^{\Gmot^I} \cong \Hom_{\Gmot^I}(V_1,\sigma \Oo_{\widehat{G}^I} \otimes V_2 \otimes \Oo_{\widehat{G}^I}),\]
	under which the image of \(\bfc\) is denoted by \(d_{q^{-1}\sigma}(\bfc')\).
	
	Next, we have two commutative diagrams
	\[\begin{tikzcd}[column sep=large]
		V_1 \arrow[r, "\bfa'"] \arrow[d, equal] & \sigma W^* \otimes V_2 \otimes W \arrow[d, "m_W \circ \operatorname{act}_{\sigma W^*}"] \\
		V_1 \arrow[r, "d_{q^{-1}\sigma}(\bfc')"'] & \sigma \Oo_{\widehat{G}^I} \otimes V_2 \otimes \Oo_{\widehat{G}^I},
	\end{tikzcd} \quad \text{and} \quad
	\begin{tikzcd}[column sep=large]
		V_2 \otimes W \otimes W^* \arrow[r, "\identity_{V_2} \otimes e_W"] \arrow[d, "m_W \circ \operatorname{act}_{W^*}"'] & V_2 \arrow[d, equal]\\
		V_2 \otimes \Oo_{\widehat{G}^I} \otimes \Oo_{\widehat{G}^I} \arrow[r, "\operatorname{ev}_{(1,1)}"']	& V_2,
	\end{tikzcd}\]
	where \(e_W \colon W\otimes W^*\to \IQ\) is the evaluation map, and \(\operatorname{ev}_{(1,1)}\) is the evaluation map at \((1,1)\in \widehat{G}^I \times \widehat{G}^I\).
	Then, by the analogue of \cite[Lemma 6.2.3]{XiaoZhu:Cycles} in our setting, \(\mathscr{C}_{W}^{\can}(\bfa)\) can be computed as
	\[\mathscr{C}_{\Sht}^{\can}(\identity_{V_2} \otimes e_W) \circ \ID_{F_{(V_2\otimes W)\boxtimes W^*}^{-1}}^{\can} \circ \mathscr{C}_{\Sht}^{\can}(\bfa'),\]
	which in turn, by \cite[Remark 6.2.4]{XiaoZhu:Cycles}, agrees with
	\[\mathscr{C}_{\Sht}^{\can}(\operatorname{ev}_{(1,1)}) \circ \ID \Gamma_{F_{(V_2 \otimes \Oo_{\widehat{G}^I}) \boxtimes \Oo_{\widehat{G}^I}}^{-1}}^{\can} \circ \mathscr{C}_{\Sht}^{\can}(d_{q^{-1}\sigma}(\bfc')).\]
	Since this only depends on \(\bfc\), rather than on \(W\) and \(\bfa\), we are done.
\end{proof}

\begin{proof}[Proof of \thref{Main local theorem}]
	The diagram \eqref{big local diagram} determines \(\IL^{\can}\) and \(\IL^{\spl}\) on the level of objects: for \(V\in \Rep^{\fd}(\Gmot^I) \cong \Rep_{\widehat{G}^I}^{\fd}(\MTM(\Spec \overline{k}))\), the object \(\IL^{\can}(\widetilde{V})\) must agree with the pullback of \(\Sat(V)\) along \(\Sht_{\Gg}^{\can} \to \Hck_{\Gg}^{\can}\), and similarly for \(\IL^{\spl}\) and \(V\in \Rep^{\fd}(\Gmot) \cong \Rep_{\widehat{G}}^{\fd}(\MTM(\Spec \overline{k}))\).
	Next, we define \(\IL^{\can}\) and \(\IL^{\spl}\) on the level of morphisms, and check the compatibility with composition; this will imply that \(\IL^{\can}\) and \(\IL^{\spl}\) are moreover well-defined on the level of objects.
	Since the outer vertical arrows of \eqref{big local diagram} are fully faithful, it suffices to consider \(\IL^{\can}\).
	Then \(\IL^{\can}\) was constructed in \thref{functor well defined}.
	The fact that \(\IL^{\can}\) is actually a functor, i.e., compatible with composition, can then be checked verbatim as in \cite[Lemma 6.2.7]{XiaoZhu:Cycles}.
	(Although this is a tedious check, the situation is slightly simplified by not using moduli of restricted local shtukas.
	In particular, the references to \cite[Lemma 5.3.23]{XiaoZhu:Cycles} can be replaced by \cite[Lemma 5.2.18]{XiaoZhu:Cycles}.)
	
	It remains to show \eqref{big local diagram} commutes.
	Commutativity of the inner square follows from \thref{Satake for splitting models}, whereas for the leftmost trapezoid this follows from functoriality of pullback.
	The rightmost trapezoid commutes since the pro-smooth pullback of correspondences commutes with proper pushforward by \thref{Compatibility pullback and pushforward correspondences}.
	By construction, the lower trapezoid commutes on the level of objects.
	To show it also commutes for morphisms, consider a \(\Gmot^I\)-equivariant map \(V_1\to V_2\).
	Then the pullback of this map to \(\Coh_{\widehat{G}^I\text{-}\mathrm{fr}}^{\Gmot^I}(\widehat{G}^I q^{-1}\sigma)\) lies in the image of \(\Xi_{\IQ}\), so that \(\mathscr{C}_{\Sht}^{\can}(\delta_{\sigma \IQ} \otimes \identity_{V_1})\) and \(\ID \Gamma_{F_{(\sigma \IQ \otimes V_1) \otimes \IQ}}^{\can}\) are trivial.
	Thus, \(\IL^{\can}\) sends this morphism to a correspondence pulled back from a Satake correspondence on Hecke stacks, which by definition agrees with the functor \(\MTM_{\cons}(\Hck_{\Gg}^{\can}) \to \MTM_{\cons}^{\Corr}(\Sht_{\Gg}^{\can})\).
	Finally, the outer square commutes by construction, and the commutativity of the upper trapezoid follows from the commutativity of the lower trapezoid and the fully faithfulness of the outer vertical arrows.
\end{proof}

\subsection{Examples of the functor}\label{subsec:examples of local Langlands}

Let us make the functor \(\IL^{\can}\) (and hence also \(\IL^{\spl}\)) explicit in certain cases, following \cite[§6.3]{XiaoZhu:Cycles}.
Let \(\tau\in X_*(Z_G)\) be central, \(\mu\in X_*(T)^+\) minuscule, and \(\nu_I\in X_*(T)_I^+\) arbitrary.
In that case, we have
\[\Sht_{\tau\mid \mu}^{\nu_I,\spl} \cong \Gg(\Oo_F)\backslash X_{\mu^*,\nu_I^*}^{\spl}(\varpi^{\tau^*})\]
by \thref{example hecke corr central}.
We will use the following diagram:
\begin{equation}\label{diagram of correspondences}\begin{tiny}\begin{tikzcd}
	\Spec \overline{k} & \Gr_{\Gg,\leq \sigma(\nu_I^*)}^{\can} \arrow[l] \arrow[r, "\id \times \varpi^{\tau_I^*}"] & \Gr_{\Gg,\leq \sigma(\nu_I^*)}^{\can} \times \Gr_{\Gg,\leq \tau_I^* + \sigma(\nu_I^*)}^{\can} \arrow[r, "\sigma^{-1} \times \id"] & \Gr_{\Gg,\leq \nu_I^*}^{\can} \times \Gr_{\Gg,\leq \tau_I^* + \sigma(\nu_I^*)}^{\can} & \Gr_{\nu_I^*,\mu^*\mid \tau_I^* + \sigma(\nu_I^*)}^{\spl} \arrow[r] \arrow[l] & \Spec \overline{k}\\
	\Gr_{\Gg,\leq \tau}^{(\infty),\spl} \arrow[u] \arrow[d] & Z_{\tau_I} \arrow[u] \arrow[l] \arrow[d] \arrow[r] & \Gr_{\Gg,\leq (\sigma(\nu_I^*),\tau_I+\sigma(\nu_I))}^{(\infty),\can} \arrow[u] \arrow[d] \arrow[r, dashed] & \Gr_{\Gg,\leq (\tau_I+\sigma(\nu_I),\nu_I^*)}^{(\infty),\can} \arrow[u, dashed] \arrow[d] & Z_\mu \arrow[u, dashed] \arrow[r] \arrow[d] \arrow[l] \arrow[ul, phantom, "(\clubsuit)"] & \Gr_{\Gg,\leq \mu}^{(\infty),\spl} \arrow[u] \arrow[d]\\
	\Sht_{\Gg,\leq \tau}^{\spl} & \Sht_{\tau\mid \sigma(\nu_I^*),\tau_I+\sigma(\nu_I)}^{0,\spl \mid \can} \arrow[l] \arrow[r] & \Sht_{\Gg,\leq (\sigma(\nu_I^*),\tau_I+\sigma(\nu_I))}^{\can} \arrow[r, "F^{-1}"'] & \Sht_{\Gg,\leq( \tau_I + \sigma(\nu_I),\nu_I^*)}^{\can} & \Sht_{\tau_I+\sigma(\nu_I),\nu_I^*\mid \mu}^{0,\can\mid \spl} \arrow[l] \arrow[r] & \Sht_{\Gg,\leq \mu}^{\spl}.
\end{tikzcd}\end{tiny}\end{equation}
Here,
\begin{itemize}
	\item we have used that \(\Gr_{\Gg,\leq \tau}^{\spl} \cong \Gr_{\Gg,\leq \tau_I}^{\can}\) since \(\tau\) is central, and hence also \(\Sht_{\Gg,\leq \tau}^{\spl} \cong \Sht_{\Gg,\leq \tau_I}^{\can}\),
	\item \(Z_{\tau_I}\) is defined as
	\[Z_{\tau_I}:=\{(g_1,g_2)\in \Gr_{\Gg,\leq (\sigma(\nu_I^*),\tau_I+\sigma(\nu_I))}^{(\infty),\can}\mid g_1g_2\in \varpi^{\tau_I}L^+\Gg\} \cong \Gr_{\Gg,\leq \sigma(\nu_I*)}^{\can} \times L^+\Gg,\]
	making the four leftmost squares cartesian (note that all vertical morphisms involved are \(L^+\Gg\)-fibrations),
	\item \(Z_\mu\) is defined as
	\[Z_\mu := \{(g_1,g_2) \in \Gr_{\Gg,\leq (\tau_I+\sigma(\nu_I),\nu_I^*)}^{(\infty),\can} \mid g_1,g_2\in L^+\Gg \varpi^\mu L^+\Gg\},\]
	making the two rightmost bottom squares cartesian,
	\item the map \(\Gr_{\Gg,\leq (\sigma(\nu_I^*),\tau_I+\sigma(\nu_I))}^{(\infty),\can} \to \Gr_{\Gg,\leq \sigma(\nu_I^*)}^{\can} \times \Gr_{\Gg,\leq \tau_I^* + \sigma(\nu_I^*)}^{\can}\) is induced by \((g_1,g_2) \mapsto (g_1,g_2^{-1})\).
\end{itemize}
We note that there is no actual map \(\Gr_{\Gg,\leq (\sigma(\nu_I^*),\tau_I+\sigma(\nu_I))}^{(\infty),\can} \to \Gr_{\Gg,\leq (\tau_I+\sigma(\nu_I),\nu_I^*)}^{(\infty),\can}\) making \eqref{diagram of correspondences} commute.
However, consider the diagram
\[\begin{tiny}\begin{tikzcd}
	\Gr_{\Gg,\leq (\sigma(\nu_I^*),\tau_I+\sigma(\nu_I))}^{(\infty),\can} \arrow[r] & \Gr_{\Gg,\leq (\sigma(\nu_I^*),\tau_I+\sigma(\nu_I))}^{\can} \arrow[r] & \Hck_{\Gg,\leq (\sigma(\nu_I^*),\tau_I+\sigma(\nu_I))}^{\can} \arrow[r] & \Hck_{\Gg,\leq \sigma(\nu_I^*)}^{\can} \times \Hck_{\Gg, \leq \tau_I + \sigma(\nu_I)}^{\can} \arrow[d]\\
	\Gr_{\Gg,\leq (\tau_I+\sigma(\nu_I),\nu_I^*)}^{(\infty),\can} \arrow[r] & \Gr_{\Gg,\leq (\tau_I+\sigma(\nu_I),\nu_I^*)}^{\can} \arrow[r] & \Hck_{\Gg,\leq (\tau_I+\sigma(\nu_I),\nu_I^*)}^{\can} \arrow[r] & \Hck_{\Gg,\leq \tau_I+\sigma(\nu_I)}^{\can} \times \Hck_{\Gg,\leq \sigma(\nu_I^*)}^{\can}.
\end{tikzcd}\end{tiny}\]
Then, when restricting to \(\MTM\), the !-pullback along the two leftmost arrows is fully faithful by \cite[Lemma 3.2.12]{RicharzScholbach:Intersection}, whereas !-pullback along the other arrows is an equivalence by \thref{Satake for convolution Gr}.
Since our main use of \eqref{diagram of correspondences} will be to pull back correspondences, which will only involve objects in \(\MTM(\Gr_{\Gg,\leq (\sigma(\nu_I^*),\tau_I+\sigma(\nu_I))}^{(\infty),\can})\) pulled back from \(\MTM(\Hck_{\Gg,\leq (\sigma(\nu_I^*),\tau_I+\sigma(\nu_I))}^{\can})\), we may safely pretend the dashed arrow \(\Gr_{\Gg,\leq (\sigma(\nu_I^*),\tau_I+\sigma(\nu_I))}^{(\infty),\can} \dasharrow \Gr_{\Gg,\leq (\tau_I+\sigma(\nu_I),\nu_I^*)}^{(\infty),\can}\) exists.
In fact, this dashed arrow behaves like it is an isomorphism, and !-pullback along this arrow (which is well-defined for the objects we will be interested in) commutes with the !-pullback along the other arrows.

Similarly, the dashed arrow \(\Gr_{\Gg,\leq (\tau_I+\sigma(\nu_I),\nu_I^*)}^{(\infty),\can} \dasharrow \Gr_{\Gg,\leq \nu_I^*}^{\can} \times \Gr_{\Gg,\leq \tau_I^* + \sigma(\nu_I^*)}^{\can}\) does not actually exist, but we can pretend there is a !-pullback along it using the diagram
\[\begin{tiny}
	\begin{tikzcd}
		\Gr_{\Gg,\leq (\tau_I+\sigma(\nu_I),\nu_I^*)}^{(\infty),\can} \arrow[r] & \Gr_{\Gg,\leq (\tau_I+\sigma(\nu_I),\nu_I^*)}^{\can} \arrow[r] & \Hck_{\Gg,\leq (\tau_I + \sigma(\nu_I))}^{\can} \times \Hck_{\Gg,\leq \nu_I^*}^{\can} \arrow[d]\\
		\Gr_{\Gg,\leq \nu_I^*}^{\can} \times \Gr_{\Gg,\leq \tau_I^* + \sigma(\nu_I^*)}^{\can} \arrow[rr] && \Hck_{\Gg,\leq (\tau_I^* + \sigma(\nu_I^*))}^{\can} \times \Hck_{\Gg,\leq \nu_I^*}^{\can},
	\end{tikzcd}
\end{tiny}\]
where the vertical arrow is induced by \(g\mapsto g^{-1} \colon \Hck_{\Gg,\leq (\tau_I + \sigma(\nu_I))}^{\can} \cong \Hck_{\Gg,\leq (\tau_I^* + \sigma(\nu_I^*))}^{\can}\). 
Again, !-pullback along all arrows induce equivalences on \(\MTM\) (except possibly the rightmost upper horizontal arrow, where !-pullback is still fully faithful).
Moreover, the dashed arrow \(\Gr_{\Gg,\leq (\tau_I+\sigma(\nu_I),\nu_I^*)}^{(\infty),\can} \dasharrow \Gr_{\Gg,\leq \nu_I^*}^{\can} \times \Gr_{\Gg,\leq \tau_I^* + \sigma(\nu_I^*)}^{\can}\) behaves like an \(L^+\Gg\)-torsor.

Finally, we can similarly pretend the dashed arrow \(Z_\mu \to \Gr_{\nu_I^*,\mu^*\mid \tau_I^* + \sigma(\nu_I^*)}^{\spl}\) exists, and makes the square \((\clubsuit)\) behave like it is cartesian.
Using this, all squares are cartesian (or at least behave like they are, in case the arrows do not actually exist), except for the upper rightmost square.
This finishes the explanation of the diagram \eqref{diagram of correspondences}.

Next, we have the following sequence of isomorphisms, using that \(\mu\) is minuscule, so that \(\Gr_{\Gg,\leq \mu}^{\spl}\) is smooth:
\[\Hom_{\Gmot^I}(q^{-1}\sigma V_{\nu_I}^{\can} \otimes V_{\tau}^{\spl} \otimes V_{\nu_I}^{\can,*},V_\mu^{\spl}) \cong \Hom_{\Gmot^I}(V_{\nu_I^*}^{\can} \otimes V_{\mu^*}^{\spl}(\langle 2\rho,\mu\rangle),V_{\tau_I^*}^{\can} \otimes V_{\sigma(\nu_I^*)}^	{\can})\]
\[\cong \Corr_{\Gr_{\nu_I^*,\mu^*\mid \tau_I^*+\sigma(\nu_I^*)}^{\spl}}\left( (\Gr_{\Gg,\leq \nu_I^*}^{\can} \widetilde{\times} \Gr_{\Gg,\leq \mu^*}^{\spl}, \IC_{\nu_I^*}(\IQ) \widetilde{\boxtimes} \IC_{\mu^*}(\IQ)(\langle 2\rho,\mu\rangle)),(\Gr_{\Gg,\leq \tau_I^* + \sigma(\nu_I^*)}^{\can},\IC_{\tau_I^*+\sigma(\nu_I^*)}(\IQ))\right)\]
\[\cong \Corr_{\Gr_{\nu_I^*,\mu^*\mid \tau_I^*+\sigma(\nu_I^*)}^{\spl}}\left((\Gr_{\Gg,\leq \nu_I^*}^{\can}, \IC_{\nu_I^*}(\IQ)[\langle 2\rho,\mu\rangle](\langle 2\rho,\mu\rangle)), (\Gr_{\Gg,\leq \tau_I^* + \sigma(\nu_I^*)}^{\can}, \IC_{\tau_I^*+\sigma(\nu_I^*)}(\IQ)) \right)\]
\[\cong \Corr_{\Gr_{\nu_I^*,\mu^*\mid \tau_I^*+\sigma(\nu_I^*)}^{\spl}}\left((\Gr_{\Gg,\leq \nu_I^*} \times \Gr_{\Gg,\leq \tau_I^*+\sigma(\nu_I^*)}^{\can}, \IC_{\nu_I^*}(\IQ) \boxtimes \ID(\IC_{\tau_I^*+\sigma(\nu_I^*)}(\IQ))),(\Spec \overline{k}, \IQ[-\langle 2\rho,\mu\rangle](-\langle 2\rho,\mu\rangle))\right),\]
where the last isomorphism follows from \thref{products and correspondences}.
We denote this composition by \(\bfa \mapsto \Sat(\bfa)^\sharp\).

Now, we can apply the pro-smooth pullback of correspondences from the lower to the middle row of \eqref{diagram of correspondences}, as well as from the upper to the middle row of \eqref{diagram of correspondences}.
Then:
\begin{itemize}
	\item Let 
	\[\ID((\Gamma_{\varpi^{\tau^*}})^\sharp)\colon (\Spec \overline{k},\IQ) \to (\Gr_{\Gg,\leq \sigma(\nu_I^*)} \times \Gr_{\Gg,\leq \tau^*+\sigma(\nu_I^*)},\IC_{\nu_I^*} \boxtimes \ID(\IC_{\tau_I^*+\sigma(\nu_I^*)}))\]
	be obtained by applying the constructions from \thref{examples of correspondences} \eqref{dual correspondence}, \thref{products and correspondences} and \thref{examples of correspondences} \eqref{pullback correspondence} respectively, and the fact that pullback along \(\varpi^{\tau^*}\colon \Gr_{\Gg,\leq \sigma(\nu_I^*)} \cong \Gr_{\Gg,\leq \tau^*+\sigma(\nu_I^*)}\) identifies \(\IC_{\tau_I^*+\sigma(\nu_I^*)}\) and \(\IC_{\sigma(\nu_I^*)}\).
	Then the pullback of \(\ID((\Gamma_{\varpi^{\tau^*}})^\sharp)\) agrees with the pullback of \(\mathscr{C}_{\Sht}^{\spl}(\delta_{V_{\sigma(\nu_I*)}^{\can}} \otimes \identity_{V_\tau^{\spl}})\).
	\item The pullback of \(\ID(F^{-1})\) agrees with the pullback of \(\Gamma_{\sigma^{-1}\times \id}^*\).
	\item For \(\bfa\in  \Hom_{\Gmot^I}(q^{-1}\sigma V_{\nu_I}^{\can} \otimes V_{\tau}^{\spl} \otimes V_{\nu_I}^{\can,*},V_\mu^{\spl})\), the pullback of \(\mathscr{C}_{\Sht}^{\spl}(\bfa)\) agrees with the pullback of \(\Sat(\bfa)^\sharp\).
\end{itemize}

Thus, by \thref{compatibility pullback correspondence}, the pullback of \(\mathscr{C}_{V_{\nu_I}^{\can}}^{\spl}(\bfa)\) to the middle row agrees with the pullback of 
\begin{equation}\label{new correspondence}
	\mathscr{C}_{\nu_I}^{\Gr}(\bfa):=\Sat(\bfa)^\sharp \circ \Gamma_{\sigma \times \identity}^* \circ \ID((\Gamma_{\varpi^{\tau^*}})^\sharp).
\end{equation}

We now specialize the above discussion to two cases.
The first will be to relate the functor \(\IL^{\can}\) to Hecke correspondences, via the Satake isomorphism.

\begin{rmk}\thlabel{sign switch Satake iso}
	Recall that for general quasi-split groups, the Satake isomorphism with \(\IZ\)-, and hence with \(\IZ[\frac{1}{p}]\),-coefficients was deduced from the motivic Satake equivalence in \cite[§10]{vdH:RamifiedSatake}.
	However, this Satake isomorphism has to be modified in order to be compatible with \(\IL^{\can}\), as we now explain.
	Recall that \(\sigma\) denotes an arithmetic Frobenius, whereas \(\phi\) denotes a geometric Frobenius.
	
	Let \(R(\Gmot^I)=K_0(\Rep^{\fd}(\Gmot^I))[\frac{1}{p}]\) be the Grothendieck ring of the category of finite dimensional \(\Gmot^I\)-representations, tensored with \(\IZ[\frac{1}{p}]\).
	By \cite[Lemma 10.9]{vdH:RamifiedSatake}, taking traces of representations yields a surjection \(R(\Gmot^I) \to \IZ[\frac{1}{p}][\widehat{G}^Iq\phi]^{\widehat{G}^I}\), where \(\widehat{G}^I\) acts on \(\widehat{G}^Iq\phi\) via conjugation.
	Using the motivic Satake equivalence and the trace of geometric Frobenius, the Grothendieck-Lefschetz trace formula yields an isomorphism 
	\[\Sat^{\cl}\colon \IZ[\frac{1}{p}][\widehat{G}^Iq\phi]^{\widehat{G}^I} \cong \Hh_{\Gg},\] 
	\cite[Theorem 10.11]{vdH:RamifiedSatake}.
	Moreover, the Satake transform \(\CT^{\cl}\) from \cite[§10.2]{vdH:RamifiedSatake} identifies \(\Hh_\Gg\) with a subring of \(\IZ[\frac{1}{p}][X^*(\widehat{T}^I)^{\phi}]\); note that this version of the Satake transform does not involve the modulus character, and hence does not need a square root of \(q\).
	
	On the other hand, taking traces also yields a surjection \(R(\Gmot^I) \to \IZ[\frac{1}{p}][\widehat{G}^Iq^{-1}\sigma]^{\widehat{G}^I}\).
	Moreover, a modified version of the Chevalley restriction isomorphism (replacing the geometric Frobenius by the arithmetic Frobenius in \cite[Proposition 10.8]{vdH:RamifiedSatake}) identifies \(\IZ[\frac{1}{p}][\widehat{G}^Iq^{-1}\sigma]^{\widehat{G}^I}\) with a subring of \(\IZ[\frac{1}{p}][X^*(\widehat{T}^I)^{\phi}]\); it follows from \cite[Theorem 10.7]{vdH:RamifiedSatake} that the resulting subring agrees with the one obtained by embedding \(\IZ[\frac{1}{p}][\widehat{G}^Iq\phi]^{\widehat{G}^I}\) into it.
	Since this subring moreover agrees with the image of the Satake transform \(\CT^{\cl}\), we obtain an isomorphism \[\Sat^{\cl'}\colon \IZ[\frac{1}{p}][\widehat{G}^Iq^{-1}\sigma]^{\widehat{G}^I} \cong \Hh_{\Gg}.\]

	Although \(\Sat^{\cl}\) and \(\Sat^{\cl'}\) do not agree, they are related via the commutative diagram
	\[\begin{tikzcd}[row sep=tiny]
		& \IZ[\frac{1}{p}][\widehat{G}^Iq\phi]^{\widehat{G}^I} \arrow[dd, "\operatorname{inv}"'] \arrow[r, "\Sat^{\cl}", "\cong"'] & \Hh_{\Gg} \arrow[r, "\CT^{\cl}"] & \IZ[\frac{1}{p}][X^*(\widehat{T}^I)^{\phi}] \arrow[dd, "\lambda\mapsto -\lambda"]\\
		R(\Gmot^I) \arrow[ur] \arrow[dr]&§&\\
		&\IZ[\frac{1}{p}][\widehat{G}^Iq^{-1}\sigma]^{\widehat{G}^I} \arrow[r, "\Sat^{\cl'}"', "\cong"] & \Hh_{\Gg} \arrow[r, "\CT^{\cl}"'] & \IZ[\frac{1}{p}][X^*(\widehat{T}^I)^{\phi}],
	\end{tikzcd}\]
	where the map \(\operatorname{inv} \colon \IZ[\frac{1}{p}][\widehat{G}^Iq\phi]^{\widehat{G}^I} \cong \IZ[\frac{1}{p}][\widehat{G}^Iq^{-1}\sigma]^{\widehat{G}^I}\) is induced by the \(\widehat{G}^I\)-equivariant isomorphism \(\widehat{G}^I q\phi\cong \widehat{G}^Iq^{-1}\sigma\colon g\cdot q\phi \mapsto (q^{-1}\sigma)(g) \cdot q^{-1} \sigma\).
\end{rmk}

Let us go back to the comparison between \(\IL^{\can}\) with Hecke correspondences, and assume \(\nu_I = \sigma(\nu_I)\), and \(\tau=\mu=0\).
Then the upper row of \eqref{diagram of correspondences} can be identified with
\[\Spec \overline{k} \leftarrow \Gr_{\Gg,\nu_I^*}^{\can} \xrightarrow{\Delta} \Gr_{\Gg,\leq \nu_I^*}^{\can} \times \Gr_{\Gg,\leq \nu_I^*}^{\can} \xrightarrow{\sigma^{-1} \times \identity}\Gr_{\Gg,\leq \nu_I^*}^{\can} \times \Gr_{\Gg,\leq \nu_I^*}^{\can} \xleftarrow{\Delta} \Gr_{\Gg,\leq \nu_I^*} \to \Spec \overline{k}.\]
Let \(\bfa\) be induced by the counit \(\bfe_{\nu_I}\colon V_{\nu_I}^{\can} \otimes V_{\nu_I}^{\can,*} \to \IQ\), so that \(\Gr_{\nu_I^*,\mu^*\mid \tau_I^*+\sigma(\nu_I^*)}^{\spl,\bfa} \cong \Gr_{\leq \nu_I^*}^{\can}\).
Then \(\mathscr{C}_{\nu_I}^{\Gr}(\bfa)\) can be identified with \(\delta_{\IC_{\nu_I^*}} \circ \Gamma_{\sigma\times \id}^* \circ e_{\IC_{\nu_I^*}}\), where \(\delta_{\IC_{\nu_I^*}}\) and \(e_{\IC_{\nu_I^*}}\) are the unit and counit correspondence from \thref{examples of correspondences} \eqref{co/unit corr}.
Then by \thref{motivic Grothendieck-Lefschetz}, this correspondence is supported on \(\Gr_{\Gg,\leq \nu_I^*}^{\can}(k)\), and the value at a point x is exactly the trace of geometric Frobenius on the stalk of \(\IC_{\nu_I^*}\) at a geometric point over \(x\).
By \thref{sign switch Satake iso}, we conclude:

\begin{thm}\thlabel{compatibility with Satake iso}
	The functor \(\IL^{\can}\) sends the structure sheaf to \(\delta_{\mathbf{1}}^{\can}(\IQ)\), and the induced map on endomorphism rings agrees with the Satake isomorphism
	\[\Sat^{\cl'}\colon \IQ[\widehat{G}^Iq^{-1}\sigma]^{\widehat{G}^I} \cong \Hh_\Gg\otimes \IQ.\]
\end{thm}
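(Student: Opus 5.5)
The proof largely assembles the discussion immediately preceding the statement; what remains is to make the endomorphism computation precise. By construction, \(\IL^{\can}(\widetilde{V})\) is the !-pullback of \(\Sat(V)\) along \(\Sht_{\Gg}^{\can}\to \Hck_{\Gg}^{\can}\). The structure sheaf is \(\widetilde{\IQ}\) for the trivial representation, and \(\Sat(\IQ)=\IC_0(\IQ)\). So \(\IL^{\can}(\Oo)\) is \(\delta_{\mathbf{1}}^{\can}(\IQ)\), which settles the first claim.

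For the endomorphism rings, the source is \(\bfJ(\IQ)^{\IG_m}=\bfJ\), since the \(\IG_m\)-action on \(\bfJ\) is trivial; that is, \(\IQ[\widehat{G}^Iq^{-1}\sigma]^{\widehat{G}^I}\). Via the maps \(\Xi_W\) with \(V_1=V_2=\IQ\), this ring is spanned by the traces of \(q^{-1}\sigma\) on \(W\in\Rep^{\fd}(\Gmot^I)\). By the Peter--Weyl argument used in \thref{functor well defined}, it is in fact spanned by the classes \(\Xi_{V_{\nu_I}^{\can}}(\bfe_{\nu_I})\), where \(\bfe_{\nu_I}\) is the counit (transported through \thref{twisting has no effect}). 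Here \(\nu_I\) runs over \(\sigma\)-stable dominant elements; for non-\(\sigma\)-stable \(\nu_I\) the invariant hom space vanishes. Since \(\IL^{\can}\) is a ring homomorphism by \thref{Main local theorem}, and both sides are free with compatible bases, it suffices to show that \(\IL^{\can}(\Xi_{V_{\nu_I}}(\bfe_{\nu_I}))\) equals \(\Sat^{\cl'}(\tr(q^{-1}\sigma\mid V_{\nu_I}))\) for each such \(\nu_I\).

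For this we specialize the diagram \eqref{diagram of correspondences} to \(\tau=\mu=0\) and \(\sigma(\nu_I)=\nu_I\). Applying \thref{compatibility pullback correspondence}, the correspondence \(\mathscr{C}^{\can}_{V_{\nu_I}}(\bfe_{\nu_I})\) becomes the pullback of \(\mathscr{C}^{\Gr}_{\nu_I}(\bfe_{\nu_I})=\delta_{\IC_{\nu_I^*}}\circ\Gamma^*_{\sigma\times\id}\circ e_{\IC_{\nu_I^*}}\). The motivic Grothendieck--Lefschetz formula (\thref{motivic Grothendieck-Lefschetz}) then identifies this class with the function on \(\Gr^{\can}_{\Gg,\leq\nu_I^*}(k)\) whose value at \(x\) is the trace of geometric Frobenius on the stalk of \(\IC_{\nu_I^*}\). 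We then pass from this to an element of \(\Hh_\Gg\) by identifying \(\Gr^{\can}_{\Gg}(k)\) with \(G(F)/\Gg(\Oo_F)\) through the isomorphism \(\Sht_{0\mid0}^{\nu_I,\can}\cong\Gg(\Oo_F)\backslash X^{\can}_{0,\nu_I^*}(1)\) of \thref{example hecke corr central}. This yields the Satake function of \(V_{\nu_I^*}\), i.e.\ \(\Sat^{\cl}\) applied to the trace of \(q\phi\).

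The main obstacle is the bookkeeping of dualities and signs. The correspondence naturally produces \(\nu_I^*\) together with geometric Frobenius, while the source uses \(q^{-1}\sigma\). We would reconcile these with the commutative diagram of \thref{sign switch Satake iso}. The inversion \(\operatorname{inv}\) exchanges \(\tr(q\phi\mid V_{\nu_I^*})\) with \(\tr(q^{-1}\sigma\mid V_{\nu_I})\), matching \(\lambda\mapsto-\lambda\) under \(\CT^{\cl}\). One must also check that the Tate twist from \thref{twisting has no effect} contributes no power of \(q\); this holds because \(\phi\) is the identity on weight-zero pieces and the relevant traces are computed after the identification. After tensoring with \(\IQ\), this gives the equality with \(\Sat^{\cl'}\).
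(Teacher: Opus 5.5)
Your proposal follows the paper's argument: specialize \eqref{diagram of correspondences} to \(\tau=\mu=0\) and \(\sigma(\nu_I)=\nu_I\) with \(\bfa\) the counit, identify \(\mathscr{C}^{\Gr}_{\nu_I}(\bfa)\) with \(\delta_{\IC_{\nu_I^*}}\circ\Gamma^*_{\sigma\times\id}\circ e_{\IC_{\nu_I^*}}\), apply \thref{motivic Grothendieck-Lefschetz}, and finish with \thref{sign switch Satake iso}; the Peter--Weyl reduction to the classes \(\Xi_{V_{\nu_I}}(\bfe_{\nu_I})\) and the passage to \(\Hh_\Gg\) via \thref{example hecke corr central} are details the paper leaves implicit. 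One correction to your last paragraph: the isomorphism of \thref{twisting has no effect} is the action of an element of \(\IG_m\subseteq\Gmot^I\), scaling the weight-\(i\) piece by \(q^{-i}\), and \(V_{\nu_I}\) has pieces in many weights, so it does insert powers of \(q\)---exactly the ones that make \(\Xi_{V_{\nu_I}}\) of the transported counit the trace on the coset \(\widehat{G}^Iq^{-1}\sigma\) and that match the Frobenius eigenvalues on the Tate twists in the stalks of \(\IC_{\nu_I^*}\)---rather than being harmless because it is the identity on weight-zero pieces.
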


Next, we compare \(\IL^{\spl}\) with the cycle class map.
For this, we again assume \(\nu_I\in X_*(T)_I^+\) and \(\tau\in X_*(Z_G)\) to be arbitrary, and \(\mu\in X_*(T)^+\) to be minuscule.
Moreover, we will simplify the situation by fixing a prime \(\ell\neq p\) and working with \(\ell\)-adic étale cohomology instead of motives; this will be sufficient for our applications.
Moreover, using a fixed square root \(\sqrt{q}\in \overline{\IQ}_\ell\), we can consider half-twists \((\frac{1}{2})\).
Using this, we define for \(\lambda_I\in X_*(T)^+_I\) the corresponding intersection complex as
\[\IC_{\lambda_I}^{\et}:= \im\left(\pH^0\left(\iota_{\lambda_I,!}\overline{\IQ_\ell}[\langle 2\rho,\lambda_I\rangle](\langle \rho,\lambda_I\rangle)\right) \to \pH^0\left(\iota_{\lambda_I,*}\overline{\IQ_\ell}[\langle 2\rho,\lambda_I\rangle](\langle \rho,\lambda_I\rangle)\right)\right),\]
where \(\iota_{\lambda_I}\colon \Gr_{\Gg,\lambda_I}^{\can} \into \Gr_{\Gg}^{\can}\) is the inclusion, and similarly for the splitting versions.
We will also denote \(V_{\lambda_I}^{\et}\) for the simple \(\overline{\IQ_\ell}\)-linear \(\widehat{G}^I\)-representation of highest weight \(\lambda_I\), which corresponds to \(\IC_{\lambda_I}^{\et}\) under the \(\ell\)-adic geometric Satake equivalence \cite[Theorem 9.9]{vdH:RamifiedSatake}.
Again, we have similar notation for the splitting versions.
Since \(\Sht_{\tau\mid \mu}^{\nu_I,\spl} \cong \Gg(\Oo_F) \backslash X_{\mu,\nu_I}(\varpi^{\tau})\) has dimension \(\leq \langle \rho,\nu\rangle\) by \thref{geometry splitting adlv}, we get
\[\Corr_Y^{\et}(\IL^{\spl}(\widetilde{V_{\tau}^{\spl}}),\IL^{\spl}(\widetilde{V_\mu^{\spl}})) \cong \mathrm{H}^{\BM}_{\langle 2\rho,\mu\rangle}(X_{\mu,\nu_I}^{\spl}(\varpi^{\tau^*}))^{\Gg(\Oo_F)}\]
by \cite[§A.2.18]{XiaoZhu:Cycles}.
By applying the methods of §\ref{Subsec:Motivic correspondences on shtukas} after the \(\ell\)-adic étale realization (cf.~also \cite[§6.2]{XiaoZhu:Cycles}), we can define for any \(\bfa\in \Hom_{\widehat{G}^I}(\sigma V_{\nu_I}^{\et} \otimes V_{\tau}^{\et} \otimes V_{\nu_I}^{\et,*}, V_{\mu}^{\et})\) a correspondence \(\mathscr{C}_{V_{\nu_I}^{\et}}(\bfa) \in \Corr_{Y}^{\et}(\widetilde{V_{\tau}^{\et}},\widetilde{V_{\mu}^{\et}})\), which we may thus view as an element in \(\mathrm{H}^{\BM}_{\langle 2\rho,\mu\rangle}(X_{\mu,\nu_I}^{\spl}(\varpi^{\tau^*}))\).

On the other hand, the support of the correspondence \eqref{new correspondence} (or more precisely, its \(\ell\)-adic realization) can be identified with \(X_{\mu,\nu_I}^{\spl}(\varpi^{\tau})\).
Then \eqref{new correspondence} can also be identified with an element in \(\mathrm{H}^{\BM}_{\langle 2\rho,\mu\rangle}(X_{\mu,\nu_I}^{\spl}(\varpi^{\tau^*}))\), and the discussion preceding \eqref{new correspondence} shows that this agrees with the element defined by \(\mathscr{C}_{V_{\nu_I}^{\et}}(\bfa)\).

\begin{prop}\thlabel{Langlands vs cycle class}
	Let \(\bfa\in \Hom_{\widehat{G}^I}(\sigma V_{\nu_I}^{\et} \otimes V_{\tau}^{\et} \otimes V_{\nu_I^*}^{\et}, V_{\mu}^{\et})\) be induced by an element in \(\IS_{\nu_I^*,\mu^*\mid \sigma(\nu_I^*)+\tau^*}^{\spl}\) via \thref{basic properties of Satake correspondences} (3).
	Let \(\bfb= \iota_{\IM\IV}^{\spl}(\bfa)\) be the element induced via \thref{Satake to MV cycles}.
	Then \(\mathscr{C}_{V_{\nu_I}^{\et}}(\bfa) \in \mathrm{H}^{\BM}_{\langle 2\rho,\mu\rangle}(X_{\mu,\nu_I}^{\spl}(\varpi^{\tau^*}))\) agrees with the fundamental class of \(X_{\mu^*}^{\spl,\bfb}(\tau^*) \subseteq X_{\mu^*,\nu_I^*}^{\spl}(\varpi^{\tau^*})\).
\end{prop}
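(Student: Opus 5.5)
Via the discussion preceding the statement, \(\mathscr{C}_{V_{\nu_I}^{\et}}(\bfa)\) is identified with the \(\ell\)-adic realization of the correspondence \(\mathscr{C}_{\nu_I}^{\Gr}(\bfa)=\Sat(\bfa)^\sharp \circ \Gamma_{\sigma \times \identity}^* \circ \ID((\Gamma_{\varpi^{\tau^*}})^\sharp)\) from \eqref{new correspondence}, pulled back to the middle row of \eqref{diagram of correspondences}. The plan is therefore to compute this composite as a Borel--Moore class on its support \(X_{\mu^*,\nu_I^*}^{\spl}(\varpi^{\tau^*})\), which is the fiber product of \(\Gr_{\Gg,\leq \nu_I^*}^{\can}\) (via \(\identity\times\varpi^{\tau^*}\sigma\)) with the Satake correspondence \(\Gr_{\nu_I^*,\mu^*\mid \tau_I^*+\sigma(\nu_I^*)}^{\spl}\). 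I will follow the strategy of the analogous statement in \cite[§6.3]{XiaoZhu:Cycles}, transported to the splitting setting.

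First, interpret \(\Sat(\bfa)^\sharp\). By \thref{basic properties of Satake correspondences} (3), since \(\bfa\) comes from an element of \(\IS_{\nu_I^*,\mu^*\mid \sigma(\nu_I^*)+\tau^*}^{\spl}\), the correspondence \(\Sat(\bfa)^\sharp\) is the fundamental class of the top-dimensional irreducible component \(\Gr_{\nu_I^*,\mu^*\mid \tau_I^*+\sigma(\nu_I^*)}^{\spl,\bfa}\), viewed in degree \(\langle 2\rho,\nu_I+\mu+\tau_I+\sigma(\nu_I)\rangle\) Borel--Moore homology. Here the minuscule hypothesis matters: \(\Gr_{\Gg,\leq\mu}^{\spl}\) is smooth, so \(\IC_{\mu^*}\) is a shifted constant sheaf and \(\Sat(\bfa)^\sharp\) is an honest cycle class and not a twisted homology class. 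Next, \(\ID((\Gamma_{\varpi^{\tau^*}})^\sharp)\) and \(\Gamma^*_{\sigma\times\identity}\) are graph-type correspondences. Their composite is the class of the graph of \(\identity\times\varpi^{\tau^*}\sigma\colon \Gr_{\Gg,\leq\nu_I^*}^{\can}\to \Gr_{\Gg,\leq \nu_I^*}^{\can}\times\Gr_{\Gg,\leq \tau_I^*+\sigma(\nu_I^*)}^{\can}\), paired against the diagonal-type class given by \(\IC_{\nu_I^*}\boxtimes\ID(\IC)\).

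The composition of correspondences therefore computes the refined Gysin pullback of \([\Gr^{\spl,\bfa}]\) along the graph of \(\varpi^{\tau^*}\sigma\). Because \(\sigma\) has zero differential, this graph meets every subvariety transversally in the sense of \cite[A.2]{XiaoZhu:Cycles}. The standard lemma that the intersection of a cycle with a Frobenius graph yields the fundamental class of the scheme-theoretic intersection (the same input used in \cite[Lemma 6.3.x]{XiaoZhu:Cycles}) then identifies the result with the fundamental class of \(X_{\mu^*,\nu_I^*}^{\spl,\bfa}(\tau^*)\), counted only through its components of the expected dimension \(\langle\rho,\mu-\tau\rangle\), i.e.\ homological degree \(\langle 2\rho,\mu\rangle\). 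Before applying this, I must check that the contributions of \(\IC_{\nu_I^*}\) on lower strata of \(\Gr_{\Gg,\leq\nu_I^*}^{\can}\) do not alter the top-degree class. This reduces to working over the open stratum \(\Gr_{\Gg,\nu_I^*}^{\can}\), where \(\IC\) is constant, combined with a dimension count.

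Finally, \thref{Cycle in ADLV}, applied with the minimal \(\nu_I\) from \thref{minimal elements in satake to mv}, says \(X_{\mu^*,\nu_I^*}^{\spl,\bfa}(\tau^*)\) has a unique component of dimension \(\langle\rho,\mu-\tau\rangle\), namely \(X_{\mu^*}^{\spl,\bfb}(\tau^*)\), and that it meets the open part over \(\Gr_{\Gg,\nu_I^*}^{\can}\) in a nonempty irreducible set. The top-degree class is therefore exactly \([X_{\mu^*}^{\spl,\bfb}(\tau^*)]\) with multiplicity one. Multiplicity one holds because the intersection is generically reduced, being perfect and transversal to Frobenius.

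I expect the main obstacle to be the transversality and multiplicity statement. In particular, I have to make rigorous that the dashed arrows in \eqref{diagram of correspondences} (notably \((\clubsuit)\)) act as genuine cartesian pullbacks for these classes, so that the refined intersection computed upstairs really is the fundamental class downstairs. I would handle this by reducing, via \thref{lemma irred open} and the locally trivial fibration \(\Gr_{\Gg,\leq\mu}^{\spl}\to\Gr_{\Gg,\leq\mu_I}^{\can}\), to the canonical case, and then importing the argument of \cite{XiaoZhu:Cycles}.
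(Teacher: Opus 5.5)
Your proposal is essentially the paper's proof. In both, \thref{Cycle in ADLV} shows that the top-degree class, supported on \(X^{\spl,\bfa}_{\mu^*,\nu_I^*}(\tau^*)\), is a multiple of \([X^{\spl,\bfb}_{\mu^*}(\tau^*)]\); one then restricts to the nonempty open part over \(\Gr^{\can}_{\Gg,\nu_I^*}\), where the composite becomes the intersection of the Frobenius-twisted diagonal with the fundamental class of the restricted Satake correspondence, this intersection is proper and smooth because Frobenius has zero differential, and \cite[Lemma A.2.21]{XiaoZhu:Cycles} gives multiplicity one.

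Your last paragraph is superfluous. The dashed arrows are already handled by the discussion preceding the statement, which you invoke at the start. No reduction to the canonical case is needed (nor would it help): \(\mu_I\) need not be minuscule, so you would lose the constancy of the \(\IC\)-sheaf on the splitting side that the argument uses.
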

\begin{proof}
	Recall from \thref{Cycle in ADLV} that \(X_{\mu^*}^{\spl,\bfb}(\tau^*)\) is the unique irreducible component of \(X_{\mu^*,\nu_I^*}^{\spl,\bfa}(\tau^*)\) of dimension \(\langle \rho,\mu-\tau\rangle\), and that all other irreducible components have strictly smaller dimension.
	Note that the correspondence \(\eqref{new correspondence}\) (viewed as an element in \(\mathrm{H}_{\langle2\rho,\mu\rangle}^{\BM}(X_{\mu^*,\nu_I^*}^{\spl}(\tau^*))\)) clearly lies in the image of
	\[\mathrm{H}_{\langle2\rho,\mu\rangle}^{\BM}(X_{\mu^*,\nu_I^*}^{\spl,\bfa}(\tau^*)) \to \mathrm{H}_{\langle2\rho,\mu\rangle}^{\BM}(X_{\mu^*,\nu_I^*}^{\spl}(\tau^*)).\]
	Since the source is 1-dimensional, it follows that \(\mathscr{C}_{V_{\nu_I}^{\et}}(\bfa)\) is a scalar multiple of the fundamental class of \(X_{\mu^*}^{\spl,\bfb}(\tau^*)\).
	Since \(X_{\mu^*}^{\spl,\bfb}(\tau^*)\times_{\Gr_{\Gg,\leq \nu_I^*}^{\can}} \Gr_{\Gg,\nu_I^*}^{\can} \neq \varnothing\) by \thref{Cycle in ADLV}, we may compute the cohomology class after restricting to \(\Gr_{\Gg,\nu_I^*}^{\can} \subseteq \Gr_{\Gg,\leq \nu_I^*}^{\can}\).
	
	Let \(Z:= \Gr_{\nu_I^*,\mu^*\mid \tau_I^*+\sigma(\nu_I^*)} \times_{\Gr_{\Gg,\leq \nu_I^*}^{\can} \times \Gr_{\Gg,\leq \tau_I^*+\sigma(\nu_I^*)}} \Gr_{\Gg,\nu_I^*}^{\can} \times \Gr_{\Gg,\tau_I^*+\sigma(\nu_I^*)}\).
	Then the restriction of the top row of \eqref{diagram of correspondences} to \(\Gr_{\Gg,\nu_I^*}^{\can} \subseteq \Gr_{\Gg,\leq \nu_I^*}^{\can}\) agrees with the intersection of the diagonal 
	\begin{equation}\label{corr1}\Delta\colon \Gr_{\Gg,\sigma(\nu_I^*)}^{\can} \to \Gr_{\Gg,\sigma(\nu_I^*)}^{\can} \times \Gr_{\Gg,\sigma(\nu_I^*)}^{\can}\end{equation}
	 with the correspondence
	\begin{equation}\label{corr2}Z\xrightarrow{c_1\times c_2} \Gr_{\Gg,\sigma(\nu_I^*)}^{\can} \times \Gr_{\Gg,\sigma(\nu_I^*)}^{\can}\end{equation}
	\[c_1\colon Z \to \Gr_{\Gg,\nu_I^*}^{\can} \xrightarrow{\sigma} \Gr_{\Gg,\sigma(\nu_I^*)}^{\can} \qquad c_2 \colon Z \to \Gr_{\tau_I^* + \sigma(\nu_I^*)} \xrightarrow{\varpi^{\tau_I}} \Gr_{\Gg,\sigma(\nu_I^*)}.\]
	Thus, the restriction of \eqref{new correspondence} is given by \(\delta_{\overline{\IQ}_\ell} \circ u^{\sharp}\), where \(\delta_{\overline{\IQ}_\ell}\) is defined in \cite[Example A.2.3 (4)]{XiaoZhu:Cycles}, and \(u\colon c_1^*\overline{\IQ}_\ell[\langle 2\rho,\mu\rangle](\langle \rho,\mu\rangle) \to c_2^!\overline{\IQ}_\ell\) is given by the fundamental class of \(Z\).
	By passing to smooth deperfections, and using that the Frobenius has trivial differential, the correspondences \eqref{corr1} and \eqref{corr2} intersect each other properly smoothly.
	Thus, we conclude by \cite[Lemma A.2.21]{XiaoZhu:Cycles}.
\end{proof}

\section{Motivic Jacquet--Langlands transfers}\label{Sec:JacquetLanglands}

In this section, we combine the construction of exotic Hecke correspondences between special fibers of Shimura varieties from \thref{Exotic Hecke correspondences}, with the motivic correspondences on the moduli of local shtukas from \thref{Main local theorem} to construct motivic realizations of the Jacquet--Langlands correspondence.
Throughout this section, we assume \(p\neq 2\).

We fix a setup as in Section \ref{Sec:Exotic}, i.e., we fix two global integral (at \(p\)) PEL data \((B,*,\Oo_B,V_i,(,)_i,\Ll_i,h_i)\) with \(i=1,2\), as well as an isomorphism \(V_1\otimes \IA_f \cong V_2\otimes \IA_f\) which is compatible with the pairings \((,)_i\) and the \(\Oo_B\)-structure.
Let \((\IG_i,\IX_i)\) be the corresponding Shimura data, with Hodge cocharacters \(\mu_i\) and reflex fields \(\IE_i\).
Then we have an isomorphism \(\IG_{1,\IA_f} \cong \IG_{2,\IA_f}\), which induces an Galois-equivariant isomorphism on dual groups, and assume that \(\mu_{1\mid Z(\widehat{G})^{\Gamma_{\IQ_p}}} = \mu_{2\mid Z(\widehat{G})^{\Gamma_{\IQ_p}}}\).
Let \(\IE:=\IE_1\IE_2\subseteq \mathbb{C}\), fix a place \(\pp\) of \(\IE\) lying over \(p\), and let \(E:=\IE_\pp\) be the corresponding completion with residue field \(\IF_q\).
We fix a small enough compact open subgroup \(K^p\subseteq \IG_1(\IA_f^p) \cong \IG_2(\IA_f^p)\), and assume that the integral models of \(G:=\IG_{1,\IQ_p} \cong \IG_{2,\IQ_p}\) corresponding to \(\Ll_1\) and \(\Ll_2\) agree and are very special; we denote it by \(\Gg\).
Finally, we assume that the splitting models associated to the above PEL data are flat and absolutely weakly normal, i.e., that \thref{assumption splitting flat} holds for both data.

In the situation above, we have the Shimura varieties \(\Sh_K(\IG_i,\IX_i)\) defined over \(\IE_i\) (hence over \(\IE\)),  and the integral models \(\mathscr{S}_{K^pK_p}^{\spl}(\IG_i,\IX_i)\) and \(\mathscr{S}_{K^pK_p}^{\can}(\IG_i,\IX_i)\) over \(\Oo_{E^{\Gal}}\) and \(\Oo_{E}\) respectively. 
We denote the perfections of their special fibers by \(\Sh_{\mu_i,K}^{\spl}\) and \(\Sh_{\mu_i,K}^{\can}\); these live over \(\IF_q\).
Throughout this section, all geometric objects (moduli of local shtukas, special fibers of Shimura varieties) will be considered over \(\overline{\IF}_q\); we denote the structure maps of the Shimura varieties by \(\gamma_i^{\spl} \colon \Sh_{\mu_i{,}K}^{\spl}\to \Spec \overline{\IF}_q\) and \(\gamma_i^{\can}\colon \Sh_{\mu_i,K}^{\can} \to \Spec \overline{\IF}_q\).

\subsection{Motivic realizations of the Jacquet--Langlands correspondence}

We first construct Jacquet--Langlands correspondences in the case of trivial coefficients.
Note that the (underived) prime-to-\(p\) Hecke algebras for \((\IG_1,\IX_1)\) and \((\IG_2,\IX_2)\) (at level \(K^p\) and with \(\IQ\)-coefficients) can be identified; we denote it by \(\Hh_{K^p}\).
The following theorem verifies further cases of \cite[Conjecture 4.60]{Zhu:Coherent}; although we only work at the underived level, we allow ramified cases, and provide motivic enhancements of the conjectural geometric Jacquet--Langlands correspondence.
Recall the functor \(\IL^{\can}\) from \thref{Main local theorem}, and the crystalline period map \(\loc_p^{\can}\colon \Sh_{\mu_i,K}^{\can} \to \Sht_{\Gg,\leq \mu_{i,I}}^{\can}\) from \eqref{shimura varieties to shtukas}.

\begin{thm}\thlabel{JL for canonical models}
	Let \(W_1,W_2\in \Rep^{\fd}_{\widehat{G}^I}(\MTM(\Spec \overline{\IF}_p))\), with pullbacks \(\widetilde{W_1},\widetilde{W_2}\in \Coh^{\Gmot^I}(\widehat{G}^I p^{-1} \sigma)\).
	Then there is a natural map
	\[\Hom_{\Coh^{\widehat{G}^I}(\widehat{G}^I p^{-1} \sigma)}\left(\widetilde{W_1},\widetilde{W_2}\right)
	\to \Hom_{\DM(\Spec \overline{\IF}_q)}\left(\gamma_{1,!}^{\can}\loc_p^{\can,!}\IL^{\can}(\widetilde{W_1}),\gamma_{2,!}^{\can}\loc_p^{\can,!}\IL^{\can}(\widetilde{W_2})\right),\]
	taking values in \(\Hh^p\)-equivariant morphisms.
	This map is moreover compatible with compositions.
\end{thm}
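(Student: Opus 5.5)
We will obtain the map as the composition of three maps. The first is \(\IL^{\can}\) on morphisms, from \thref{Main local theorem}. This gives
\[\Hom(\widetilde{W_1},\widetilde{W_2})\to \Hom_{\MTM^{\Corr}_{\cons}(\Sht_\Gg^{\can})}(\IL^{\can}(\widetilde{W_1}),\IL^{\can}(\widetilde{W_2})).\]
The target is the filtered colimit over \(\nu_I\) of the groups \(\Corr_{\Sht^{\nu_I,\can}_{\infty\mid\infty}}\). Since \(W_i\) is constructible, \(\IL^{\can}(\widetilde{W_i})\) is supported on a bounded piece \(\Sht_{\Gg,\leq \mu_{i,I}}^{\can}\). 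Any such correspondence is therefore supported on \(\Sht^{\nu_I,\can}_{\mu_{1,I}\mid\mu_{2,I}}\) for some \(\nu_I\).

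The second map pulls such correspondences back to the Shimura varieties. By \thref{Exotic Hecke correspondences}, the squares relating \(\Sh^{\can}_{\mu_i,K}\), \(\Sh^{\can}_{\mu_1\mid\mu_2}\) and the shtuka correspondence are cartesian. The maps \(\loc_p^{\can}\) are pro-(perfectly smooth) by \thref{smoothness of shimura to shtuka}, after passing to restricted shtukas as in \thref{remark on restricted shtukas}. Restricting \(\Sh^{\can}_{\mu_1\mid\mu_2}\) to the closed bounded piece over \(\Sht^{\nu_I,\can}_{\mu_{1,I}\mid\mu_{2,I}}\), we then apply the pro-smooth pullback of correspondences (\thref{construction pullback correspondence}). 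This gives correspondences between \(\loc_p^{\can,!}\IL^{\can}(\widetilde{W_1})\) and \(\loc_p^{\can,!}\IL^{\can}(\widetilde{W_2})\), supported on a cohomological correspondence \(\Sh^{\can}_{\mu_1,K}\leftarrow \Sh^{\nu_I,\can}_{\mu_1\mid\mu_2}\to \Sh^{\can}_{\mu_2,K}\). By \thref{from corr to sht}, both legs of this correspondence are perfectly proper.

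The third map is proper pushforward of correspondences to \(\Spec\overline{\IF}_q\) (\thref{construction pushforward correspondence}). Properness of the left leg is exactly what makes this pushforward compatible with \(\gamma_!\). It yields a morphism \(\gamma^{\can}_{1,!}\loc_p^{\can,!}\IL^{\can}(\widetilde{W_1})\to\gamma^{\can}_{2,!}\loc_p^{\can,!}\IL^{\can}(\widetilde{W_2})\). The result is compatible with enlarging \(\nu_I\) by \thref{compatibility pushforward correspondence} together with \thref{Compatibility pullback and pushforward correspondences}, so it passes to the colimit.

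For compatibility with composition, we will use the following steps:
\begin{itemize}
\item \(\IL^{\can}\) is a functor.
\item Composition of shtuka correspondences is pushforward along \(\Comp\).
\item On the Shimura side there are composition maps \(\Sh^{\can}_{\mu_1\mid\mu_2}\times_{\Sh^{\can}_{\mu_2,K}}\Sh^{\can}_{\mu_2\mid\mu_3}\to \Sh^{\can}_{\mu_1\mid\mu_3}\), lying over \(\Comp\). In the naive and splitting cases we will build these by composing quasi-isogenies via the moduli interpretation. In the canonical case we obtain them by taking schematic images, as in the proof of \thref{Exotic Hecke correspondences}.
\item We then invoke compatibility of pullback with pushforward and with composition of correspondences (\thref{compatibility pullback correspondence}), and functoriality of pushforward.
\end{itemize}
Here we must allow a third datum \(\mu_3\); when composing endomorphisms it equals one of \(\mu_1,\mu_2\).

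Prime-to-\(p\) Hecke equivariance holds for two reasons. First, prime-to-\(p\) Hecke operators act on all \(\Sh^{\can}_{\mu_i,K}\) and \(\Sh^{\can}_{\mu_1\mid\mu_2}\) through finite étale correspondences compatible with the exotic correspondences, since these only change the level structure. Second, these operators commute with \(\loc_p\).

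The main obstacle will be the passage from correspondences on pro-algebraic stacks of shtukas to ind-schemes on the Shimura side. The issue is checking that the pullback is well defined in the \(\nu_I\)-colimit and commutes with the composition maps, given that the canonical \(\Sh^{\can}_{\mu_1\mid\mu_2}\) is only a schematic image. The plan is to do all verifications on the splitting or naive models, where composition maps exist by moduli interpretations and the squares are cartesian. We then descend to the canonical models using the closed immersion into the naive version and the surjectivity from the splitting version.
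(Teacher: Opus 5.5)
Your proposal follows the paper's proof. The map is the composite of \(\IL^{\can}\) on morphisms (\thref{Main local theorem}), pro-smooth pullback of correspondences along the cartesian squares of \thref{Exotic Hecke correspondences}, and proper pushforward to \(\Spec \overline{\IF}_q\), with compatibility with the \(\nu_I\)-colimit and with composition coming from the appendix lemmas; you also spell out the Shimura-side composition maps over \(\Comp\) and the prime-to-\(p\) Hecke equivariance, both of which the paper leaves implicit. One correction: constructibility only bounds the support of \(\IL^{\can}(\widetilde{W_i})\) by \emph{some} Schubert bound, not by \(\mu_{i,I}\), so restricting to \(\Sht^{\nu_I,\can}_{\mu_{1,I}\mid\mu_{2,I}}\) is immediate only when the weights of \(W_i\) are bounded by \(\mu_{i,I}\) (as in all later applications) --- an assumption the paper's own pullback along the \(\mu_i\)-bounded middle row also makes silently.
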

\begin{proof}
	The map will be defined as the composition
	\[\Hom_{\Coh^{\Gmot^I}(\widehat{G}^I p^{-1} \sigma)}\left(\widetilde{W_1},\widetilde{W_2}\right) \to \Hom_{\MTM_{\cons}^{\Corr}(\Sht_{\Gg}^{\can})}\left(\IL^{\can}(\widetilde{W_1}),\IL^{\can}(\widetilde{W_2})\right)\]
	\[\to \varinjlim_{\nu_I\in X_*(T)_I^+} \Corr_{\Sh_{\mu_1\mid \mu_2}^{\can}}\left(\loc_p^{\can,!}\IL^{\can}(\widetilde{W_1}),\loc_p^{\can,!}\IL^{\can}(\widetilde{W_2})\right)\]
	\[\to \Hom_{\DM(\Spec \overline{\IF}_q)}\left(\gamma_{1,!}^{\can}\loc_p^{\can,!}\IL^{\can}(\widetilde{W_1}),\gamma_{2,!}^{\can}\loc_p^{\can,!}\IL^{\can}(\widetilde{W_2})\right),\]
	where \(\Sh_{\mu_1\mid \mu_2}^{\can} \to \Sh_{\mu_1,K}^{\can} \times \Sh_{\mu_2,K}^{\can}\) is the exotic Hecke correspondence from \thref{Exotic Hecke correspondences}.
	We explain how each of these arrows is defined.
	
	The first arrow arises from \thref{Main local theorem} (since we only need the canonical part of the diagram, this exists without the assumption that the local groups are essentially unramified, by \thref{remark main local theorem}).
	The second arrow is given by the pro-smooth pullback of correspondences along the middle part of \eqref{big diagram of correspondences}, as in \thref{construction pullback correspondence} (using \thref{smoothness of shimura to shtuka}).
	The fact that pullback of correspondences is compatible with the transition maps in the colimit, which are given by proper pushforward, follows from \thref{Compatibility pullback and pushforward correspondences}.
	The last arrow will follow from the proper pushforward of correspondences, using the diagram
	\[\begin{tikzcd}
		\Sh_{\mu_1,K}^{\can} \arrow[d, "\gamma_1^{\can}"'] & \Sh_{\mu_1\mid \mu_2}^{\can} \arrow[d] \arrow[l] \arrow[r] & \Sh_{\mu_2,K}^{\can} \arrow[d, "\gamma_2^{\can}"]\\
		\Spec \overline{\IF}_q & \Spec \overline{\IF}_q \arrow[l, equal] \arrow[r, equal] & \Spec \overline{\IF}_q,
	\end{tikzcd}\]
	as long as this is compatible with the transition morphisms for varying \(\nu_I\in X_*(T)_I^+\).
	But this follows immediately from the compatibility of proper pushforward with composition, \thref{compatibility pushforward correspondence}.
	
	This map is compatible with compositions (when fixing a third global integral PEL data satisfying the same conditions as \((B,*,\Oo_B,V_i,(,)_i,\Ll_i,h_i)\)), since this holds for each of the arrows above.
	Indeed, for the first arrow this follows since \thref{Main local theorem} gives a functor, and for the two other arrows this follows from \thref{compatibility pullback correspondence} and \thref{compatibility pushforward correspondence} respectively.
\end{proof}

\begin{rmk}\thlabel{remarks on JL}
	\begin{enumerate}
		\item An interesting special case arises when \(W_i\) are the representations \(V_{\mu_i}^{\spl}\) from \thref{notation representations}, so that their restriction to \(\widehat{G}^I\subseteq \Gmot^I\) agrees with the restriction of the irreducible \(\widehat{G}\)-representation of highest weight \(\mu_i\).
		Namely, in that case, the objects \(\loc_p^{\can,!}\IL^{\can}(\widetilde{W_i})\) should agree with the nearby cycles motive of \(\Sh_{\mu_i,K}^{\can}\).
		Indeed, at least on the level of \(\ell\)-adic cohomology, this follows from the fact that \(\loc_p^{\can}\) admits a pro-smooth formal and deperfected lift by \thref{smoothness of shimura to display}, the relation of ramified Satake with nearby cycles \cite{Zhu:Ramified,Richarz:Affine,vdH:RamifiedSatake}, as well as \cite{Berkovich:Vanishing} (which tells that the nearby cycles of \(\mathscr{S}_{K^pK_p}^{\can}\) agree with the nearby cycles of its formal completion).
		The same will hold motivically as well, once a motivic analogue of \cite{Berkovich:Vanishing} is known.
		\item If \(G=\IG_{\IQ_p}\) is essentially unramified, a similar morphism can be constructed using splitting models instead (see the next subsection for a more general construction).
		These two maps are moreover compatible with proper pushforward along \(\Sh_{\mu_i,K}^{\spl} \to \Sh_{\mu_i,K}^{\can}\).
		Since splitting models for essentially unramified groups are smooth (at very special level), the objects \(\loc_p^{\spl,!}(\IL^{\spl}(\widetilde{V_{\mu_i}^{\spl}}))\) agree with the shifted monoidal unit \(\IQ[\langle 2\rho,\mu_i\rangle]\in \DM(\Sh_{\mu_i,K}^{\spl})\).
		Thus, the compatibility of nearby cycles with proper pushforward and smooth pullback gives a different argument in this case for why \(\loc_p^{\can,!}\IL^{\can}(\widetilde{V_{\mu_i}^{\spl}})\) agrees with the nearby cycles of \(\Sh_{\mu_i,K}^{\can}\).
	\end{enumerate}
\end{rmk}

\begin{rmk}\thlabel{remark on S=T}
	Assume that \(G\) is essentially unramified, so that \(\loc_p^{\can,!}\IL^{\can}(\widetilde{V_{\mu_1}^{\spl}})\) is indeed the nearby cycles of \(\Sh_{\mu_1,K}^{\can}\).
	In that case, \thref{JL for canonical models} induces an action of the spherical Hecke algebra
	\[\Hh_{\Gg}\otimes \IQ \cong \bfJ \subseteq \End_{\Coh^{\Gmot^I}(\widehat{G}^I p^{-1} \sigma)}(V_{\mu_1}^{\spl})\]
	on \(\gamma_{1,!}^{\can}\loc_p^{\can,!}\IL^{\can}(\widetilde{V_{\mu_1}^{\spl}})\).
	On the other hand, \(\bfJ\) also acts on the same object via Hecke correspondences, and we conjecture that the two actions agree.
	
	In the unramified case and for \(\ell\)-adic cohomology, this was proven in \cite{Wu:S=T} by using diamonds and the Fargues--Scholze local Langlands correspondence \cite{FarguesScholze:Geometrization}.
	It is plausible that similar methods, together with the recent motivic refinement \cite{Scholze:Geometrization}, could prove the above conjecture as well.
	In order to avoid a long digression on \(p\)-adic geometry and Berkovich motives \cite{Scholze:Berkovich}, we leave this for future work.
	
	However, for 0-dimensional Shimura varieties, the compatibility of the two \(\bfJ\)-actions can be proven as in \cite[Proposition 7.3.14]{XiaoZhu:Cycles}, and it follows essentially from \thref{compatibility with Satake iso}.
\end{rmk}

Finally, we explain how the motivic nature of \thref{JL for canonical models} allows us to construct Jacquet--Langlands transfers between higher (rationalized) Chow groups.
For some smooth variety \(X\) over \(\overline{\IF}_q\) and \(m,n\in \IZ\), we denote by
\[\CH^{m,n}(X)_{\IQ} = \Hom_{\DM(X)}(\IQ,\IQ[m](n))\]
the (rationalized) higher Chow groups of \(X\) \cite[Example 11.2.3]{CisinskiDeglise:Triangulated}.

\begin{cor}\thlabel{JL for Chow and crystalline}
	Assume that \(G\) is essentially unramified, and that the \(\Sh_{\mu_i{,}K}^{\can}\) are perfectly proper.
	Then for any \(n_1,n_2\in \IZ\), there is a natural map
	\[\Hom_{\Coh^{\Gmot^I}(\widehat{G}^I p^{-1} \sigma)}(\widetilde{V_{\mu_1}^{\spl}}(n_1),\widetilde{V_{\mu_2}^{\spl}}(n_2)) \to \Hom_{\Hh^p}(\CH^{\langle 2\rho,\mu_1\rangle,n_1}(\Sh_{\mu_1,K}^{\spl})_{\IQ},\CH^{\langle 2\rho,\mu_2\rangle,n_2}(\Sh_{\mu_2,K}^{\spl})_{\IQ}),\]
	compatible with composition.
\end{cor}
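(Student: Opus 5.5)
We obtain the map in \thref{JL for Chow and crystalline} from the splitting analogue of \thref{JL for canonical models}, noted in \thref{remarks on JL} (2), by applying \(\Hom_{\DM(\Spec \overline{\IF}_q)}(\IQ,-)\) after dualizing. Since \(G\) is essentially unramified, \thref{Main local theorem} provides \(\IL^{\spl}\) on \(\Coh_{\widehat{G}\text{-}\mathrm{fr}}^{\Gmot^I}(\widehat{G}^Ip^{-1}\sigma)\), and \thref{Exotic Hecke correspondences} provides the exotic correspondence \(\Sh_{\mu_1\mid\mu_2}^{\spl}\). Following the proof of \thref{JL for canonical models} word for word, we compose three maps. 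First, \(\IL^{\spl}\) produces a morphism in \(\MTM_{\cons}^{\Corr}(\Sht_{\Gg}^{\spl})\). Second, this is pulled back pro-smoothly along \(\loc_p^{\spl}\), which is legitimate by \thref{smoothness of shimura to shtuka}, the cartesian squares of \thref{Exotic Hecke correspondences}, and \thref{construction pullback correspondence}. Third, we push forward properly to the point. The result is an \(\Hh^p\)-equivariant map, compatible with composition, of the form
\[\Hom(\widetilde{V_{\mu_1}^{\spl}}(n_1),\widetilde{V_{\mu_2}^{\spl}}(n_2))\to \Hom_{\DM(\Spec\overline{\IF}_q)}\bigl(\gamma^{\spl}_{1,!}\loc_p^{\spl,!}\IL^{\spl}(\widetilde{V_{\mu_1}^{\spl}})(n_1),\gamma^{\spl}_{2,!}\loc_p^{\spl,!}\IL^{\spl}(\widetilde{V_{\mu_2}^{\spl}})(n_2)\bigr).\]
Here the Tate twists pass through \(\IL^{\spl}\) because twisting is compatible with \(\Sat\) and with !-pullback.

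Next we identify the objects. The splitting models are smooth at very special level by \thref{example splitting models}, so by \thref{remarks on JL} (2) we have \(\loc_p^{\spl,!}\IL^{\spl}(\widetilde{V_{\mu_i}^{\spl}})\cong \IQ[\langle 2\rho,\mu_i\rangle]\) up to a fixed Tate twist coming from the normalization of \(\IC_{\mu_i}\); we absorb that twist into \(n_i\). This is the one point requiring care. Concretely, \(\IC_{\mu_i}\) is the shifted constant motive on the smooth convolution of minuscule Schubert varieties, the !-pullback along the pro-smooth \(\loc_p^{\spl}\) of relative dimension \(d\) contributes \([2d](d)\), and these shifts must combine so that the total is exactly \(\IQ[\langle 2\rho,\mu_i\rangle]=\IQ[2\dim]\) up to a twist. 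Perfectness of \(\Sh_{\mu_i,K}^{\spl}\) is harmless here, since \(\DM\) is insensitive to perfection.

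The remaining step converts pushforward to the point into Chow groups. Properness of \(\Sh_{\mu_i,K}^{\can}\), together with the perfect properness of \(\Sh^{\spl}\to\Sh^{\can}\) from \eqref{diagram of models}, shows that \(\gamma_i^{\spl}\) is perfectly proper, so \(\gamma_{i,!}^{\spl}=\gamma_{i,*}^{\spl}\). Writing \(M_i:=\gamma_{i,*}^{\spl}\IQ\), we have \(\CH^{m,n}(\Sh^{\spl}_{\mu_i,K})_\IQ=\Hom(\IQ,M_i[m](n))\). A morphism \(f\colon M_1[a_1](n_1)\to M_2[a_2](n_2)\) therefore induces, by postcomposition, a map \(\Hom(\IQ,M_1[a_1](n_1))\to\Hom(\IQ,M_2[a_2](n_2))\). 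Taking \(a_i=\langle 2\rho,\mu_i\rangle\) gives a map \(\CH^{\langle 2\rho,\mu_1\rangle,n_1}\to \CH^{\langle 2\rho,\mu_2\rangle,n_2}\). This assignment is functorial, so compatibility with composition and \(\Hh^p\)-equivariance are inherited from the previous step.

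We expect the main obstacle to be the bookkeeping of shifts and twists. One must check that the normalization of \(\IC_{\mu}\) in \thref{notation representations}, the smooth !-pullback along \(\loc_p^{\spl}\), and the choice of \(\gamma_!\) together land on precisely \(\IQ[\langle2\rho,\mu_i\rangle](n_i)\) rather than a dual or otherwise shifted object. If the computation instead produces \(\gamma_!\) of the dualizing motive, we would use Poincaré duality on the smooth proper \(\Sh^{\spl}\) to convert back.
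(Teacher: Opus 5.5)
Your argument is correct and is essentially the paper's: the paper applies \(\Hom_{\DM(\Spec \overline{\IF}_q)}(\IQ,-)\) to the canonical map of \thref{JL for canonical models} after identifying \(\gamma^{\can}_{i,!}\loc_p^{\can,!}\IL^{\can}(\widetilde{V^{\spl}_{\mu_i}})\cong\gamma^{\spl}_{i,*}\IQ[\langle 2\rho,\mu_i\rangle]\) via \thref{remarks on JL}, whereas you run the splitting version of that construction directly, which the same remark says is compatible with the canonical one. Your hedge about an extra Tate twist is unnecessary: the twists from the \(L^+\Gg\)-torsor and from the pro-smooth \(\loc_p^{\spl}\) cancel, so the identification with \(\IQ[\langle 2\rho,\mu_i\rangle]\) is exact, and this matters because absorbing a \(\mu_i\)-dependent twist into \(n_i\) would otherwise change the source Hom-space of the map.
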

By further applying the rigid realization from \cite[Example 17.2.23]{CisinskiDeglise:Triangulated}, which computes crystalline cohomology for smooth projective varieties, we further deduce a Jacquet--Langlands transfer between the crystalline cohomology of \(\Sh_{\mu_1{,}K}^{\spl}\) and \(\Sh_{\mu_2{,}K}^{\spl}\).
\begin{proof}
	Our assumptions imply that \(\Sh_{\mu_i,K}^{\spl}\) are perfectly proper and perfectly smooth.
	Recall from \thref{remarks on JL} that 
	\[\gamma_{i,!}^{\can} \loc_p^{\can,!} \IL^{\can}(\widetilde{V_{\mu_i}^{\spl}}) \cong \gamma_{i,!}^{\spl} \loc_p^{\spl,!} \IL^{\spl}(\widetilde{V_{\mu_i}^{\spl}}) \cong \gamma_{i,*}^{\spl}(\IQ[\langle 2\rho,\mu_i\rangle]).\]
	The corollary then follows by applying \(\Hom_{\DM(\Spec \overline{\IF}_q)}(\IQ,-)\) to the map from \thref{JL for canonical models}.
\end{proof}

\subsection{The case of nontrivial local systems}

Next, we consider the case of non-trivial motivic local systems.
For this, we also assume the PEL data \((B,*,\Oo_B,V_i,(,)_i,\Ll_i,h_i)\) are such that the \(G_i\) are essentially unramified and \(\Gg_i\) very special, so that the corresponding splitting models are smooth.

Recall that in \cite{Yu:Geometric}, Yu constructs geometric Jacquet--Langlands transfers between the cohomology of the special fibers of different Shimura varieties, with coefficients in local systems associated with representations of the group \(\IG_{1,\IQ_\ell}\cong \IG_{2,\IQ_\ell}\) for some \(\ell\neq p\).
To generalize this motivically, one needs to lift these local systems to objects in \(\DM\), but one also needs to find \(\IQ\)-vector spaces on which both \(\IG_1\) and \(\IG_2\) act, such that the representations become isomorphic over \(\IQ_\ell\) for all \(\ell\neq p\).
Since we use PEL type Shimura varieties, one such representation is given by \(V_1\), which has the same dimension as \(V_2\), as \(V_{1,\IA_f}\cong V_{2,\IA_f}\).
More generally, let \(U_1\) be any \(\IQ\)-linear \(\IG_1\)-representation obtained from \(V_1\) by taking direct sums, tensor products, duals, and exterior products, and let \(U_2\) denote the representation obtained from \(V_2\) via the same operations.
By \cite[Théorème 8.6]{Ancona:Decomposition}, these yield objects \(\Ll_{U_i}\in \DM(\Sh_{\mu_i,K}^{\spl})\); it is not hard to check that their \(\ell\)-adic realizations agree with the local systems constructed in \cite[§6.2]{Yu:Geometric} in the unramified case.
(Note that \cite{Ancona:Decomposition} considers canonical models of Shimura varieties over number fields, but the proofs work for general smooth quasi-projective moduli spaces of abelian varieties over a field; in particular for smooth splitting models.
Moreover, loc.~cit.~works with relative Chow motives, but these are canonically fully faithfully embedded in \(\DM\) \cite[§11.3.8]{CisinskiDeglise:Triangulated}.)

\begin{thm}\thlabel{JL transfer for local systems}
	Let \(\nu_I\in X_*(T)_I^+\), and \(U_1,U_2\) be as above.
	Then there is a natural map
	\[\Corr_{\Sh_{\mu_1\mid \mu_2}^{\nu_I,\spl}}((\Sh_{\mu_1,K}^{\spl},\IQ),(\Sh_{\mu_2,K}^{\spl},\IQ)) \to \Corr_{\Sh_{\mu_1\mid \mu_2}^{\nu_I,\spl}}((\Sh_{\mu_1,K}^{\spl},\Ll_{U_1}),(\Sh_{\mu_2,K}^{\spl},\Ll_{U_2})),\]
	which is compatible with changing \(\nu_I\) (using proper pushforward of correspondences), as well as with composition.
\end{thm}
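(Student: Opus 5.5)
The plan is to construct a single isomorphism of motives on the exotic correspondence,
\[\alpha\colon \pi_1^*\Ll_{U_1} \xrightarrow{\cong} \pi_2^*\Ll_{U_2}\quad\text{in } \DM(\Sh_{\mu_1\mid \mu_2}^{\nu_I,\spl}),\]
where \(\pi_i\colon \Sh_{\mu_1\mid \mu_2}^{\nu_I,\spl}\to \Sh_{\mu_i,K}^{\spl}\) are the two projections, and then to tensor correspondences with it. Since both \(\Sh_{\mu_i,K}^{\spl}\) are perfectly smooth, \(\Ll_{U_i}\) are dualizable (Chow motives attached to relative abelian schemes), and \(\pi_2^!\Ll_{U_2}\cong \pi_2^!\IQ\otimes \pi_2^*\Ll_{U_2}\). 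Consequently a correspondence \(u\colon \pi_1^*\IQ\to \pi_2^!\IQ\) induces
\[\pi_1^*\Ll_{U_1}\xrightarrow{u\otimes \alpha} \pi_2^!\IQ\otimes \pi_2^*\Ll_{U_2}\cong \pi_2^!\Ll_{U_2},\]
and this assignment defines the map in the theorem. This avoids any need to compare \(\IG_1\) and \(\IG_2\) over \(\IQ\), as warned in the introduction.

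\textbf{Step 1: constructing \(\alpha\).} I would use the moduli interpretation from the proof of \thref{Exotic Hecke correspondences}. A point of \(\Sh_{\mu_1\mid \mu_2}^{\spl}\) carries universal \(\Ll\)-sets of abelian schemes \(A_1,A_2\) together with a quasi-isogeny \(\beta\colon A_1\sim A_2\) compatible with the \(\Oo_B\)-action and the homogeneous polarizations. Since \(\nu_I\) bounds the modification, \(p^N\beta\) is an honest isogeny for some \(N\) depending only on \(\nu_I\). By Ancona's construction, \(\Ll_{U_i}\) is cut out of the relative Chow motive \(h^1(A_i)^{\otimes\bullet}\) (with duals and Tate twists from the polarization) by the tensor operations that define \(U_i\) from \(V_i\). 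An isogeny induces an isomorphism \(h^1(A_2)\cong h^1(A_1)\) of relative Chow motives with \(\IQ\)-coefficients, with inverse \(\deg^{-1}\) times the dual isogeny. It is \(\Oo_B\)-linear and compatible with polarizations up to a \(\IQ^\times\)-scalar, so it respects the projectors and twists defining \(\Ll_{U_i}\), once the similitude factor is absorbed into the Tate twist. This yields \(\alpha\). Independence of the choice of \(N\) is immediate, since multiplication by \(p\) acts on \(h^1\) by the scalar \(p\) and can be normalized away.

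\textbf{Step 2: compatibilities.} Compatibility with changing \(\nu_I\) holds because \(\alpha\) for \(\nu_I\) is the restriction of \(\alpha\) for \(\nu_I'\geq\nu_I\) along the closed immersion. Then \thref{compatibility pushforward correspondence}, combined with the projection formula for proper pushforward of a tensor product with a pulled-back isomorphism, gives the required compatibility.

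For composition, I would use the composition map of correspondences over \(\Sh_{\mu_1\mid\mu_2}\times_{\Sh_{\mu_2}}\Sh_{\mu_2\mid\mu_3}\). The composite quasi-isogeny \(\beta'\circ\beta\) pulls back to the composite of the two \(\alpha\)'s, by functoriality of Ancona's construction in isogenies. Together with \thref{composition of correspondences}, this gives the compatibility.

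\textbf{Expected main obstacle.} The hardest point is making Step 1 genuinely motivic and canonical. Ancona's functor is defined for abelian schemes up to isomorphism, and one needs its functoriality for isogenies, or quasi-isogenies, as morphisms in \(\DM\) over the perfect base. The perfection also has to be handled: \(\DM\) is insensitive to perfection, so the argument can be carried out on a deperfected model where the universal isogeny exists, and then transported. The similitude twist needs checking as well. If the direct argument is troublesome, I would first reduce to \(U_i=V_i\) (i.e.\ \(h^1\)) and deduce the general case by tensor functoriality.
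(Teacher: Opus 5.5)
Your proposal is correct and follows essentially the paper's route. The paper uses the universal \(p\)-quasi-isogeny over the exotic correspondence and the Deninger--Murre characterization of the summands via multiplication by \(n\) to identify the pulled-back local systems. It then transports a correspondence by applying \(\widetilde{f_2}_{,*}\widetilde{f_2}^*\), relative purity and base change, and finally projects onto the summands; by the projection formula this is the same as your \(u\otimes\alpha\). It also builds \(U_i\) from \(V_i^\vee\) using dual abelian schemes rather than polarizations, so the similitude twist you worry about never arises.
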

\begin{proof}
	First, we assume \(U_i=V_i^\vee\) is the dual of \(V_i\).
	Consider the diagram
	\[\begin{tikzcd}
		A_1 \arrow[d, "f_1"'] & & \widetilde{A_1} \arrow[rd, "\widetilde{f_1}"'] \arrow[rr, "\alpha", dashed] \arrow[ll, "\widetilde{p_1}"'] && \widetilde{A_2}\arrow[ld, "\widetilde{f_2}"] \arrow[rr, "\widetilde{p_2}"] && A_2 \arrow[d, "f_2"]\\
		\Sh_{\mu_1,K}^{\spl} &&& \Sh_{\mu_1\mid \mu_2}^{\nu_I,\spl} \arrow[lll, "p_1"] \arrow[rrr, "p_2"'] &&& \Sh_{\mu_2,K}^{\spl},
	\end{tikzcd}\]
	where \(A_i\to \Sh_{\mu_i,K}^{\spl}\) is the universal abelian variety (the additional structure will not be relevant for the proof) with pullback \(\widetilde{A_i}\to \Sh_{\mu_1\mid \mu_2}^{\nu_I,\spl}\), and \(\alpha\) is the universal \(p\)-quasi-isogeny (arising from the proof of \thref{Exotic Hecke correspondences}).
	Note that these abelian schemes are of relative dimension \(g=\frac{\dim_\IQ V_i}{2}\).
	By \cite[Corollary 3.2]{DeningerMurre:Motivic}, \(\Ll_{U_i}\in \DM(\Sh_{\mu_i,K}^{\spl})\) is characterized as the direct summand of \(f_{i,*}\IQ\) through which the multiplication by \(n\) map on \(A_i\) acts via multiplication by \(n\).
	By proper base change, the same characterization holds for the pullback \(p_i^*\Ll_{U_i}\in \DM(\Sh_{\mu_1\mid \mu_2}^{\nu_I,\spl})\), as a direct summand of \(\widetilde{f_i}_{,*} \IQ\).
	
	We claim that the \(p\)-quasi-isogeny \(\alpha\) (which is induced by \(V_{1,\IA_f}\cong V_{2,\IA_f}\)) induces an isomorphism \(\widetilde{f_1}_{,*}\IQ\cong \widetilde{f_2}_{,*}\IQ\), identifying the direct summands \(p_1^*\Ll_{U_1}\) and \(P_2^*\Ll_{U_2}\).
	Indeed, we can choose homomorphisms
	\[\widetilde{A_1}\to \widetilde{A_2}\to \widetilde{A_1} \to \widetilde{A_2}\]
	which are \(p\)-power multiples of \(\alpha\) and its inverse, and
	such that the composition of two maps is the multiplication by a power of \(p\) on \(\widetilde{A_i}\).
	By the above paragraph, and since \(p\) is invertible in our coefficients, multiplication by a power of \(p\) induces an isomorphism on \(\widetilde{f_i}_{,*}\IQ\), preserving the direct summands \(p_i^*\Ll_{U_i}\).
	This readily implies the claim.
	
	We then obtain canonical morphisms
	\[\Hom_{\Sh_{\mu_1\mid \mu_2}^{\nu_I, \spl}}(p_1^*\IQ,p_2^!\IQ)\to \Hom_{\Sh_{\mu_1\mid \mu_2}^{\nu_I, \spl}}(\widetilde{f_2}_{,*}\widetilde{f_2}^*p_1^*\IQ,\widetilde{f_2}_{,*}\widetilde{f_2}^*p_2^!\IQ)\]
	\[\cong \Hom_{\Sh_{\mu_1\mid \mu_2}^{\nu_I,\spl}}(\widetilde{f_1}_{,*}\widetilde{f_1}^*p_1^*\IQ,\widetilde{f_2}_{,*}\widetilde{f_2}^!p_2^!\IQ(-g)[-2g]) \cong \Hom_{\Sh_{\mu_1\mid \mu_2}^{\nu_I,\spl}}(\widetilde{f_1}_{,*}\widetilde{p_1}^*f_1^*\IQ,\widetilde{f_2}_{,*}\widetilde{p_2}^!f_2^!\IQ(-g)[-2g])\]
	\[\cong\Hom_{\Sh_{\mu_1\mid \mu_2}^{\nu_I,\spl}}(p_1^*f_{1,*}f_1^*\IQ, p_2^! f_{2,*} f_2^!\IQ(-g)[-2g]) \cong \Hom_{\Sh_{\mu_1\mid \mu_2}^{\nu_I,\spl}}(p_1^*f_{1,*}\IQ, p_2^! f_{2,*}\IQ)\]
	\[\to \Hom_{\Sh_{\mu_1\mid \mu_2}^{\nu_I,\spl}}(p_1^*\Ll_{U_1},p_2^!\Ll_{U_2}),\]
	where the last map is the projection obtained by taking the suitable direct summands.
	
	In general, \(U_i\) can be obtained from \(V_i^\vee\) by taking direct sums, tensor products, duals, and exterior products (by assumption).
	These cases can be handled similarly, by either replacing the \(A_i\) by disjoint unions, products, or dual abelian schemes respectively, or by replacing \(\widetilde{f_i}_{,*}\IQ\) by a different direct summand.
	
	Finally, the compatibility with changing \(\nu_I\) holds since each of the arrows above is compatible with proper pushforward.
\end{proof}

\begin{rmk}
	Although the previous theorem had some restrictions on the representations of \(\IG_1\) and \(\IG_2\), any \(\IQ\)-linear representation \(U\) of \(\IG_1\) is a direct summand of some representation \(U_1\) as above \cite[Theorem 4.14]{Milne:Algebraic}.
	The cohomology of \(\Sh_{\mu_1,K}^{\spl}\) with coefficients in \(\Ll_{U}\) is then a direct summand of the cohomology with coefficients in \(\Ll_{U_1}\), so we do not lose much by this restriction.
\end{rmk}

Combining \thref{JL transfer for local systems} with (the proof of) \thref{JL for canonical models}, we get the following Jacquet--Langlands transfer for the cohomology of Shimura varieties with coefficients in non-trivial local systems.
Recall the representations \(V_{\mu_i}^{\spl}\) from \thref{notation representations}.

\begin{cor}\thlabel{coro JL}
	Let \(U_1,U_2\) be as above, and \(n\in \IZ\).
	Then there is a natural map
	\[\Hom_{\Coh^{\Gmot^I}(\widehat{G}^I p^{-1} \sigma)}\left(\widetilde{V_{\mu_1}^{\spl}}, \widetilde{V_{\mu_2}^{\spl}}(n)\right) \to \Hom_{\DM(\Spec \overline{\IF}_p)}\left(\gamma_{1,!}^{\spl}(\Ll_{U_1}[\langle2\rho,\mu_1\rangle]),\gamma_{2,!}^{\spl}(\Ll_{U_2}[\langle2\rho,\mu_2\rangle](n))\right),\]
	taking values in \(\Hh^p\)-equivariant morphisms.
	This map is moreover compatible with composition.
\end{cor}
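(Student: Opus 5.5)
We obtain the map as the composite of three maps, following the proof of \thref{JL for canonical models} but working on the splitting side throughout, and inserting \thref{JL transfer for local systems} in the middle.

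\emph{Step 1: from coherent sheaves to correspondences on local shtukas.} Since the \(V_{\mu_i}^{\spl}\) are restrictions of \(\Gmot\)-representations, their pullbacks \(\widetilde{V_{\mu_i}^{\spl}}\) lie in \(\Coh_{\widehat{G}\text{-}\mathrm{fr}}^{\Gmot^I}(\widehat{G}^Ip^{-1}\sigma)\). We apply the functor \(\IL^{\spl}\) of \thref{Main local theorem} and land in \(\Hom_{\MTM_{\cons}^{\Corr}(\Sht_{\Gg}^{\spl})}(\IL^{\spl}(\widetilde{V_{\mu_1}^{\spl}}),\IL^{\spl}(\widetilde{V_{\mu_2}^{\spl}})(n))\), which is the colimit over \(\nu_I\) of correspondences supported on \(\Sht^{\nu_I,\spl}_{\infty\mid\infty}\).

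\emph{Step 2: pullback to exotic Hecke correspondences.} Restrict to the bounded parts \(\Sht_{\Gg,\leq\mu_i}^{\spl}\). Then pull back along the cartesian diagram of \thref{Exotic Hecke correspondences}, truncated by \(\nu_I\) to \(\Sh^{\nu_I,\spl}_{\mu_1\mid\mu_2}\). This is a pro-smooth pullback of correspondences (\thref{construction pullback correspondence}), which we may use by \thref{smoothness of shimura to shtuka}. Because \(\mu_i\) is minuscule and the splitting models are smooth, \(\Gr_{\Gg,\leq\mu_i}^{\spl}\) is smooth. Hence \(\loc_p^{\spl,!}\IL^{\spl}(\widetilde{V_{\mu_i}^{\spl}})\cong\IQ[\langle2\rho,\mu_i\rangle]\), as noted in \thref{remarks on JL}, with the twist \((n)\) carried along. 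The result is an element of
\[\varinjlim_{\nu_I}\Corr_{\Sh^{\nu_I,\spl}_{\mu_1\mid\mu_2}}\bigl((\Sh_{\mu_1,K}^{\spl},\IQ[\langle2\rho,\mu_1\rangle]),(\Sh_{\mu_2,K}^{\spl},\IQ[\langle2\rho,\mu_2\rangle](n))\bigr).\]
Compatibility with the transition maps follows from \thref{Compatibility pullback and pushforward correspondences}.

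\emph{Step 3: local systems and pushforward.} After shifting and twisting, we apply \thref{JL transfer for local systems} for each \(\nu_I\). That theorem is compatible with changing \(\nu_I\) and with composition, and shifts and Tate twists commute with every step of its construction. This yields correspondences from \(\Ll_{U_1}[\langle2\rho,\mu_1\rangle]\) to \(\Ll_{U_2}[\langle2\rho,\mu_2\rangle](n)\). Finally, we take proper pushforward of correspondences to \(\Spec\overline{\IF}_p\). This is legitimate because the maps \(\Sh^{\nu_I,\spl}_{\mu_1\mid\mu_2}\to\Sh_{\mu_i,K}^{\spl}\) are perfectly proper by \thref{from corr to sht}, and compatibility in \(\nu_I\) follows from \thref{compatibility pushforward correspondence}. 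The output is the desired morphism between \(\gamma^{\spl}_{1,!}\) and \(\gamma^{\spl}_{2,!}\).

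\(\Hh^p\)-equivariance holds because prime-to-\(p\) Hecke correspondences commute with the exotic correspondences: they act only on the level structure, while the quasi-isogeny lives at \(p\). They also commute with the construction of \thref{JL transfer for local systems}, since those prime-to-\(p\) isogenies likewise act on the universal abelian schemes.

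\emph{Main obstacle.} The hardest part is compatibility with composition, which requires each of the three steps to be functorial. For Step 1 this is given by \thref{Main local theorem}; for Step 2 by \thref{compatibility pullback correspondence}; and for the pushforward in Step 3 by \thref{compatibility pushforward correspondence}. What remains delicate is that the quasi-isogeny isomorphisms \(\widetilde f_{1,*}\IQ\cong\widetilde f_{2,*}\IQ\) from \thref{JL transfer for local systems} must compose correctly over \(\Sh_{\mu_1\mid\mu_2}\times_{\Sh_{\mu_2,K}}\Sh_{\mu_2\mid\mu_3}\). We would verify this through the universal quasi-isogenies: the quasi-isogeny attached to a composite correspondence is the composite of the two quasi-isogenies, and the induced maps on \(\widetilde f_*\IQ\), together with their direct-summand projections, are functorial in quasi-isogenies.
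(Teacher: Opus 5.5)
Your proposal is correct and follows the paper's argument. The paper also runs the proof of \thref{JL for canonical models} on the splitting side, using \(\loc_p^{\spl,!}\IL^{\spl}(\widetilde{V_{\mu_i}^{\spl}})\cong\IQ[\langle 2\rho,\mu_i\rangle]\), and combines it with \thref{JL transfer for local systems} followed by proper pushforward. Your ordering applies the local-system transfer to correspondences supported on \(\Sh^{\nu_I,\spl}_{\mu_1\mid\mu_2}\) before pushing forward to \(\Spec\overline{\IF}_p\); this is the precise form of the paper's ``composition'', since that transfer is only defined on correspondences and not on arbitrary morphisms over the point.
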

\begin{proof}
	Note that the pullback of \(\IL^{\spl}(\widetilde{V_{\mu_i}^{\spl}}) \in \DM(\Sht_{\Gg}^{\spl})\) to \(\DM(\Sh_{\mu_i,K}^{\spl})\) agrees with \(\IQ[\langle2\rho,\mu_i\rangle]\).
	The same proof as \thref{JL for canonical models} yields a morphism
	\[\Hom_{\Coh^{\Gmot^I}\widehat{G}^I p^{-1} \sigma)}\left(\widetilde{V_{\mu_1}^{\spl}}, \widetilde{V_{\mu_2}^{\spl}}(n)\right) \to \Hom_{\DM(\Spec \overline{\IF}_p)}\left(\gamma_{1,!}^{\spl}(\IQ[\langle 2\rho,\mu_1\rangle]),\gamma_{2,!}^{\spl}(\IQ[\langle 2\rho,\mu_2\rangle](n))\right).\]
	On the other hand, \thref{JL transfer for local systems} induces by proper pushforward of correspondences a morphism
	\[\Hom_{\DM(\Spec \overline{\IF}_p)}\left(\gamma_{1,!}^{\spl}(\IQ),\gamma_{2,!}^{\spl}(\IQ(n))\right) \to \Hom_{\DM(\Spec \overline{\IF}_p)}\left(\gamma_{1,!}^{\spl}(\Ll_{U_1}),\gamma_{2,!}^{\spl}(\Ll_{U_2}(n))\right).\]
	Composing these two yields the desired map, and it satisfies the required properties since these hold for the maps in Theorems \ref{JL for canonical models} and \ref{JL transfer for local systems}.
\end{proof}

\section{On the Tate conjecture for splitting models of Shimura varieties}\label{Sec:Tate}
We are now ready to tackle (generic instances of) the Tate conjecture for splitting models of Shimura varieties.
Namely, we will consider cycles in the basic Newton strata, and show that under some genericity assumption (and when sufficient understanding of the cohomology of the Shimura variety is available), they generate all Tate classes.
Roughly speaking, these cycles arise from exotic correspondences in the case where one of the Shimura varieties is a Shimura set.
Throughout this section, we will work with \(\ell\)-adic cohomology rather than motives, and fix an auxiliary prime \(\ell\), as well as a square root of \(q\) in \(\overline{\IQ}_\ell\), which we use to fix a half-Tate twist.
We will also assume \(p\neq 2\).

\subsection{Splitting models beyond the PEL type}

In the previous section, we considered Shimura varieties of PEL type, since their moduli interpretations have allowed us to construct exotic Hecke correspondences.
Since the methods that will be used in this section are mostly group-theoretic, we can work with Shimura varieties of Hodge type instead (in fact, the techniques should also be applicable to Shimura varieties of abelian type, but certain necessary inputs from the literature have only been recorded in the Hodge type case).
We will use this subsection to define and study splitting models in greater generality then \thref{Defi:splitting model}.

Let \((\IG,\IX)\) be a Shimura datum of Hodge type with reflex field \(\IE\), Hodge cocharacter \(\mu\), and Shimura variety \(\Sh_K(\IG,\IX)/\IE\) for sufficiently small compact open subgroup \(K\subseteq \IG(\IA_f)\).
We further assume that \(G:=\IG_{\IQ_p}\) is essentially unramified, and assume that \(K=K^pK_p\) with \(K_p\subseteq G(\IQ_p)\) a very special subgroup.
Finally, we assume that \((p,\IG,\IX,K)\) is of \emph{global Hodge type} in the sense of \cite[Definition 4.5.1]{PappasRapoport:padic}, i.e., that the corresponding parahoric \(\Gg/\IZ_p\) is a connected Bruhat--Tits stabilizer group scheme.
Let \(E\) be the completion of \(\IE\) at a place \(\pp\) lying over \(p\), with residue field \(k=k_\pp\).
Then \cite[Theorem 4.5.2]{PappasRapoport:padic} gives a canonical (in the sense of \cite[Conjecture 4.2.2]{PappasRapoport:padic}) integral model \(\mathscr{S}_{K^pK_p}^{\can}(\IG,\IX)/\Oo_E\) of \(\Sh_K(\IG,\IX)\).
We also put the following hypothesis on the Shimura datum:
\begin{ass}\thlabel{assumptions Shimura variety}
	\begin{enumerate}
		\item \(\mathscr{S}_{K^pK_p}^{\can}(\IG,\IX)\) satisfies \cite[Conjecture 4.9.2]{PappasRapoport:padic}, i.e., there exists a suitable smooth morphism \(\mathscr{S}_{K^pK_p}^{\can}(\IG,\IX) \to \Gg_{\Oo_E} \backslash \mathscr{M}_{\Gg,\preccurlyeq\mu}^{\can}\) of relative dimension \(\dim \Gg\), where \(\mathscr{M}_{\Gg,\preccurlyeq \mu}^{\can}\) is the local model from \cite{ScholzeWeinstein:Berkeley,AGLR:Local,GleasonLourenco:Tubular} (in particular, the perfection of its special fiber is identified with \(\Gr_{\Gg,\leq \mu_I}^{\can}\)).
		\item Consider the crystalline period map \(\loc_p\colon \Sh_{\mu,K}^{\can} \to \Sht_{\Gg,\leq \mu_I}^{\can}\), obtained by the reduction (in the sense of \cite[Definition 3.12]{Gleason:Specialization}) of the similar map on the level of v-sheaves arising from the definition of the canonical models as in \cite[Conjecture 4.2.2 (b)]{PappasRapoport:padic}.
		Then for \(m\geq n+1\), the composition of this map with the truncation \(\Sht_{\Gg{,}\leq \mu_I}^{\can} \to \Sht_{\Gg{,}\leq \mu_I}^{\can,(m,n)}\) is perfectly smooth.
		\item If \((\IG,\IX)\) is not of PEL type, we assume that the triple \((\IG,\IX,K^pK_p)\) is strongly admissible in the sense of \cite[Definition 2.1.6]{KisinZhou:Strongly}, i.e., we additionally have that the centralizer of a maximal \(\breve{\IQ}_p\)-split torus in \(G\) is an R-smooth torus in the sense of \cite[Definition 2.4.3]{KisinZhou:Independence}.
	\end{enumerate}
\end{ass}

\begin{rmk}
	\begin{enumerate}
		\item \cite[Conjecture 4.9.2]{PappasRapoport:padic} is known in many cases, cf.~\cite[Theorem 4.3.6]{DvHKZ:Conjecture} and \cite[Theorem 7.2.20]{KisinPappasZhou:Integral}.
		It also follows from \eqref{diagram of models} in the PEL cases previously considered in this paper.
		\item The smoothness of the (truncated) crystalline period map is known under certain assumptions by \cite[Corollary 2.57]{Hoff:Parahoric}.
		The smoothness is general seems to be known to experts, cf.~\cite[Theorem 6.3]{Zhu:Tame}.
		Since this has not yet been documented in the literature, we have given a proof in the PEL type cases relevant for this paper in \thref{smoothness of shimura to shtuka}.
		\item The fact that the crystalline period map, arising as the reduction from the construction of \cite{PappasRapoport:padic}, agrees with the constructions from \cite{XiaoZhu:Cycles,ShenYuZhang:EKOR,Hoff:Parahoric} and Section \ref{Sec: bounded local shtukas}, follows from the discussion in \cite[§5.3]{DvHKZ:Igusa}.
		\item The assumption that \((\IG,\IX,K^pK_p)\) is strongly admissible will arise in \thref{RZuniformization}.
		The condition that a torus is R-smooth is not a strong restriction, and we refer to \cite[Proposition 2.4.6]{KisinZhou:Independence} for examples.
	\end{enumerate}
\end{rmk}

\begin{rmk}\thlabel{recollection on PR model}
	Let us recall the construction of \(\mathscr{S}_{K^pK_p}^{\can}(\IG,\IX)\) as in \cite[§4.5]{PappasRapoport:padic} (which works without the assumption that \(G\) is essentially unramified).
	By definition, there exists an embedding of Shimura data \((\IG,\IX) \into (\GSp(V,\psi),S^{\pm})\), where \(V\) is a \(\IQ\)-vector space with a perfect alternating pairing \(\psi\), and \(S^{\pm}\) is the Siegel double space.
	Then there exists a parahoric group scheme \(\Hh\) of \(\GSp(V_{\IQ_p},\psi_{\IQ_p})\) such that the closed embedding \(G\to \GSp(V_{\IQ_p},\psi_{\IQ_p})\) extends to a dilated immersion \(\Gg\to \Hh\).
	Let \(K^{\flat}_p=\Hh(\IZ_p)\), and let \(K^{\flat p}\subseteq \GSp(\IA_f^p)\) be such that there is a closed immersion \(\Sh_{K^pK_p}(\IG,\IX)_{E} \to \Sh_{K^{\flat p}K^{\flat}_p}(\GSp(V,\Psi),S^{\pm}) \otimes_{\IQ} E\); this exists by \cite[Lemma 2.1.2]{Kisin:Integral}.
	
	Now, \(\Hh\) arises as the stabilizer of some periodic self-dual lattice chain \(\Lambda_\bullet\) in \(V_{\IQ_p}\).
	This yields a moduli interpretation of \(\Sh_{K^{\flat p}K^{\flat}_p}(\GSp(V,\Psi),S^{\pm})\), parametrizing chains of p-isogenies between polarized abelian varieties.
	This moduli interpretation moreover gives rise to a (canonical) integral model \(\mathscr{S}_{K^{\flat p}K^{\flat}_p}^{\can}(\GSp(V,\Psi),S^{\pm})\), as in §\ref{Sec:Models of Shimura varieties}.
	Then \(\mathscr{S}_{K^pK_p}^{\can}(\IG,\IX)\) is defined as the normalization of the (reduced) schematic closure of \(\Sh_{K^pK_p}(\IG,\IX)\) in \(\mathscr{S}_{K^{\flat p}K^{\flat}_p}^{\can}(\GSp(V,\Psi),S^{\pm}) \otimes_{\IZ_p} \Oo_E\).
	As usual, we denote its special fiber by \(\Sh_{\mu,K}^{\can}\).
	
	In particular, the choice of such a Hodge embedding as above attaches to each point \(x\in \Sh_{\mu,K}\) a polarized abelian variety \(A_x\).
	In the PEL case, they come equipped with extra structure, in the form of endomorphism and level structures.
	In the Hodge type case, this is generally not available, but instead one can attach to each \(x\in \Sh_{\mu,K}\) various ``tensors" in the (crystalline and \(\ell\)-adic) cohomology of \(A_x\).
	This will only play a minor role in what follows, and we refer to \cite[§4.6]{PappasRapoport:padic} for details (cf.~also \cite{Kisin:Integral,KisinPappas:Integral}).
\end{rmk}

Going back to the case where \(G\) is essentially unramified, we can define a splitting local model \(\mathscr{M}_{\Gg,\preccurlyeq\mu}^{\spl}\).
This is a smooth \(\Oo_{E^{\Gal}}\)-scheme \(\mathscr{M}_{\Gg,\preccurlyeq\mu}^{\spl}\) equipped with an \(\Gg_{\Oo_{E^{\Gal}}}\)-action, an equivariant map \(\mathscr{M}_{\Gg,\preccurlyeq\mu}^{\spl} \to \mathscr{M}_{\Gg,\preccurlyeq\mu}^{\can} \times_{\Spec \Oo_E} \Spec \Oo_{E^{\Gal}}\), which induces an isomorphism over the generic fiber of \(\Spec \Oo_{E^{\Gal}}\), and whose special fiber identifies with the map \(\Gr_{\Gg,\leq \mu}^{\spl} \to \Gr_{\Gg,\leq \mu_I}^{\can}\) from \thref{remarks on splitting grassmannians}.
Indeed, since local models are invariant under central isogenies and compatible with products, we can reduce to the case of restrictions of scalars.
In that case, the desired local model has been constructed in \cite[Definition 5.4.2]{Levin:Local}.
(More precisely, one should take the closed subscheme of the splitting Grassmannian defined in loc.~cit., given by the schematic closure of the minuscule Schubert cell in its generic fiber.
The fact that its special fiber has the correct description then follows from \cite[Proposition 5.4.4]{Levin:Local} and \cite[(5.4.4.2)]{Levin:Local}.)

\begin{dfn}
	The splitting integral model of \(\Sh_K(\IG,\IX)\) is defined via the following cartesian diagram:
	\[\begin{tikzcd}
		\mathscr{S}_{K^pK_p}^{\spl}(\IG,\IX) \arrow[r] \arrow[d] & \Gg_{\Oo_{E^{\Gal}}} \backslash \mathscr{M}_{\Gg,\preccurlyeq\mu}^{\spl} \arrow[d]\\
		\mathscr{S}_{K^pK_p}^{\can}(\IG,\IX) \arrow[r] & \Gg_{\Oo_{E}} \backslash \mathscr{M}_{\Gg,\preccurlyeq\mu}^{\can}.
	\end{tikzcd} \]
\end{dfn}

This is clearly representable by an \(\Oo_{E^{\Gal}}\)-scheme. 
Moreover, since \(\mathscr{M}_{\Gg,\preccurlyeq\mu}^{\spl}\) is smooth, the map \(\mathscr{M}_{\Gg,\preccurlyeq\mu}^{\spl} \to \mathscr{M}_{\Gg,\preccurlyeq\mu}^{\can}\) is proper, and the horizontal arrows are smooth, it follows that \(\mathscr{S}_{K^pK_p}^{\spl}(\IG,\IX) \to \mathscr{S}_{K^pK_p}^{\can}(\IG,\IX)\) is a resolution of singularities.
Finally, we denote by \(\Sh_{\mu,K}^{\spl}\) and \(\Sh_{\mu,K}^{\can}\) the perfections of the geometric special fibers of \(\mathscr{S}_{K^pK_p}^{\spl}(\IG,\IX)\) and \(\mathscr{S}_{K^pK_p}^{\can}(\IG,\IX)\).

Recall that \(\Sh_{\mu,K}^{\can}\) admits a \emph{Newton stratification}, where the strata are defined as the fibers of a natural map \(\Sh_{\mu,K}^{\can}(\overline{k}) \to B(G)\) \cite[Remark 4.2.1]{PappasRapoport:padic}.
This is well known to be indexed by \(B(G,\mu^*)\).

\begin{dfn}
	The Newton stratification of \(\Sh_{\mu,K}^{\spl}\) is defined by pulling back the Newton stratification of \(\Sh_{\mu,K}^{\can}\).
	In particular, it is indexed by \(B(G,\mu^*)\), and the non-empty Newton strata in \(\Sh_{\mu,K}^{\spl}\) correspond exactly to the non-empty Newton strata in \(\Sh_{\mu,K}^{\can}\) (by surjectivity of \(\Sh_{\mu,K}^{\spl} \to \Sh_{\mu,K}^{\can}\)).
	
	The \emph{basic} Newton strata, corresponding to the unique basic element in \(B(G,\mu^*)\), are denoted by \(\Sh_{\mu,K,\bas}^{\spl} \to \Sh_{\mu,K,\bas}^{\can}\).
\end{dfn}

By \cite[Theorem 1.3.14]{KisinMadapusiShin:HondaTate}, the basic Newton stratum \(\Sh_{\mu,K,\bas}^{\can}\) is non-empty, hence the same holds for \(\Sh_{\mu,K,\bas}^{\spl}\).
Let us fix a point \(x\in \Sh_{\mu,K,\bas}^{\spl}\), and denote its image in \(\Sh_{\mu,K,\bas}^{\can}\) the same way.
To understand the geometry of \(\Sh_{\mu,K,\bas}^{\spl}\) (both local and global), it suffices to understand the geometry of \(\Sh_{\mu,K,\bas}^{\can}\), as well as the map \(\Gr_{\Gg,\leq \mu}^{\spl} \to \Gr_{\Gg,\leq \mu_I}^{\can}\) (compare the proof of \thref{Irreducible components of splitting adlv}).
To make this precise, we prove a uniformization result à la Rapoport--Zink \cite[Theorem 6.23]{RapoportZink:Period}.

\begin{thm}\thlabel{RZuniformization}
	Let \(b\in B(G,\mu^*)\) be the unique basic element.
	There exists a commutative diagram
	\[\begin{tikzcd}
		\II(\IQ) \backslash X_{\leq \mu^*}^{\spl}(b) \times \IG(\IA_f^p) /K^p \arrow[d] \arrow[r, "\cong"] & \Sh_{\mu,K,\bas}^{\spl} \arrow[d]\\
		\II(\IQ) \backslash X_{\leq \mu_I^*}^{\can}(b) \times \IG(\IA_f^p)/K^p \arrow[r, "\cong"'] & \Sh_{\mu,K,\bas}^{\can}
	\end{tikzcd}\]
	of perfect schemes, which is equivariant for prime-to-\(p\) Hecke operators.
	Here, \(\II/\IQ\) is an inner form of \(\IG\) for which \(\II \otimes_{\IQ} \IA_f^p \cong \IG \otimes_{\IQ} \IA_f^p\) and \(\II \otimes_{\IQ} \IQ_p \cong J_b\), where \(J_b\) is the \(\sigma\)-twisted centralizer of \(b\in G(\breve{\IQ}_p)\), and \(\II(\IR)\) is compact modulo center.
\end{thm}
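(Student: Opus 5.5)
The plan is to take the bottom row as known input from the literature and then obtain the top row by pulling back along the splitting local model. In other words, the splitting uniformization is formally the base change of the canonical one. For the bottom isomorphism I will cite Rapoport--Zink uniformization of the basic locus for canonical models of Hodge type at parahoric level. In the PEL case this is \cite[Theorem 6.23]{RapoportZink:Period}, after identifying the reduced Rapoport--Zink space with \(X_{\leq\mu_I^*}^{\can}(b)\) using \thref{group theoretic interpretation of sht} and \thref{local model and admissible locus}. In the general Hodge type case I will use the uniformization available for strongly admissible triples (Kisin--Zhou, together with \cite{PappasRapoport:padic}); this is precisely why \thref{assumptions Shimura variety}(3) is imposed. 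That result also provides the inner form \(\II\) with the stated local and archimedean properties, since \(b\) is basic so that \(J_b\) is an inner form of \(G\).

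The heart of the argument is a compatibility statement. I need to know that, under the bottom isomorphism, the composite
\[\II(\IQ)\backslash X_{\leq\mu_I^*}^{\can}(b)\times\IG(\IA_f^p)/K^p \cong \Sh_{\mu,K,\bas}^{\can}\to \Gr\text{-stack }L^+\Gg\backslash\Gr_{\Gg,\leq\mu_I^*}^{\can}\]
agrees with the map \(g\mapsto g^{-1}b\sigma(g)\) on the first factor, which is the map used in \eqref{diagram splitting adlv}. On the Shimura side, this map is the crystalline period map \(\loc_p\) followed by \(\Sht^{\can}_{\Gg,\leq\mu_I}\to\Hck^{\can}_{\Gg}\), or equivalently the local model map of \thref{assumptions Shimura variety}(1). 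The key point is that uniformization is itself constructed through \(\loc_p\). Restricted to the basic locus, \(\loc_p\) factors through the fiber of \(\Sht^{\can}\to\BB(G)'\) over \(b\), and by \eqref{adlv as fibers can} in \thref{example hecke corr central} this fiber is a quotient of \(X^{\can}_{\leq\mu_I^*}(b)\). I would check this compatibility by comparing \(p\)-divisible groups, or \(G\)-shtukas, on both sides, using \cite[§5.3]{DvHKZ:Igusa} to match the period maps. I also need to track the dual and inverse conventions of \(\BB(G)'\) carefully. I expect this step to be the main obstacle.

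Granting this compatibility, the rest follows formally. By definition, \(\Sh^{\spl}_{\mu,K}\) is the pullback of \(\Sh^{\can}_{\mu,K}\) along \(\Gg\backslash\mathscr{M}^{\spl}\to\Gg\backslash\mathscr{M}^{\can}\). On perfect special fibers this map is \(L^+\Gg\backslash\Gr^{\spl}_{\Gg,\leq\mu^*}\to L^+\Gg\backslash\Gr^{\can}_{\Gg,\leq\mu_I^*}\); here I pass from \(\Gg\) to \(L^+\Gg\) quotients because the kernel is pro-unipotent and the action factors through a finite truncation, so the fiber products agree. The basic stratum of \(\Sh^{\spl}\) is likewise the pullback of the basic stratum of \(\Sh^{\can}\). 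On the other side, \(X^{\spl}_{\leq\mu^*}(b)\) is defined by the same pullback in \thref{definition splitting adlv}. Taking the fiber product of the bottom isomorphism with \(L^+\Gg\backslash\Gr^{\spl}\) therefore yields the top isomorphism and the commutative square. To pass the quotient by \(\II(\IQ)\) and the prime-to-\(p\) factor through the fiber product, I use that the map to the Hecke stack is \(\II(\IQ)\)-equivariant with trivial action on the target, since \(J_b(\IQ_p)\) acts trivially there, and is independent of \(\IG(\IA_f^p)/K^p\). Equivariance for prime-to-\(p\) Hecke operators is inherited from the canonical case, because those operators commute with the local model map.
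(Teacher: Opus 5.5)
Your proposal follows the paper's proof: take the canonical uniformization from the literature, identify the composite \(X^{\can}_{\leq\mu_I^*}(b)\to\Sh^{\can}_{\mu,K,\bas}\to L^+\Gg\backslash\Gr^{\can}_{\Gg,\leq\mu_I}\) with the twisted-conjugation map, then pull back along the splitting local model and pass the \(\II(\IQ)\)- and \(\IG(\IA_f^p)\)-actions through the fiber product. For the canonical step, the paper uses Pappas--Rapoport uniformization along an isogeny class, together with the fact that the basic locus is a single isogeny class; the latter comes from the He--Zhou--Zhu argument combined with Kisin--Zhou's special-point lifting for strongly admissible triples.

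The only discrepancy is the convention you already flagged. In the paper the composite is \(g\mapsto (g^{-1}b\sigma(g))^{-1}\) with target \(L^+\Gg\backslash\Gr^{\can}_{\Gg,\leq\mu_I}\), matching the special fiber \(\Gr^{\spl}_{\Gg,\leq\mu}\to\Gr^{\can}_{\Gg,\leq\mu_I}\) of the splitting local model, not \(\Gr^{\spl}_{\Gg,\leq\mu^*}\) as you wrote. So the inversion has to be carried over to the splitting Hecke stacks when you compare with the definition of \(X^{\spl}_{\leq\mu^*}(b)\).
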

\begin{rmk}
	The assumption that \((\IG,\IX,K^pK_p)\) is strongly admissible arises when applying \cite[Theorem 2.2.7]{KisinZhou:Strongly}, which is needed to show Rapoport--Zink uniformization for the canonical model.
	It can be omitted in the PEL type case, by instead appealing to the Rapoport--Zink uniformization from \cite[Theorem 6.23]{RapoportZink:Period}, which applies to the naive model.
	A similar remark also holds when applying \thref{RZuniformization} below.
\end{rmk}
\begin{proof}
	We first handle the case of canonical models; this has already been studied extensively in the literature \cite{XiaoZhu:Cycles,HeZhouZhu:Stabilizers,PappasRapoport:padic}.
	In this case, Pappas--Rapoport \cite{PappasRapoport:padic} have shown uniformization of the formal completion of \(\mathscr{S}_{K^pK_p}^{\can}(\IG,\IX)\) along an isogeny class by integral local Shimura varieties.
	Recall that the reduced special fibers of such integral local Shimura varieties agree with affine Deligne--Lusztig varieties \cite[Proposition 2.61]{Gleason:Geometric}, and that an isogeny class in \(\Sh_{\mu,K}^{\can}\) consists of those points whose associated abelian variety are quasi-isogenous, respecting the polarization (up to scalar) and the étale and crystalline tensors (as in \thref{recollection on PR model}).
	Thus, the theorem will follow from \cite[Theorem 4.10.6]{PappasRapoport:padic} and \cite[Corollary 6.3]{GleasonLimXu:Connected}, as soon as we know that \(\Sh_{\mu,K,\bas}^{\can} \subseteq \Sh_{\mu,K}^{\can}\) consists of a single isogeny class.
	For the integral models constructed in \cite{KisinPappas:Integral}, this was shown in \cite[Proposition 5.2.12]{HeZhouZhu:Stabilizers}, the key ingredient being that any isogeny class in the basic Newton stratum contains a point that lifts to a special point in the generic fiber.
	When \((\IG,\IX,K^pK_p)\) is strongly admissible, this last fact has recently been generalized in \cite[Theorem 2.2.7]{KisinZhou:Strongly}.
	Using this fact, the proof of \cite[Proposition 5.2.12]{HeZhouZhu:Stabilizers} goes through verbatim to show that \(\Sh_{\mu,K,\bas}^{\can}\subseteq \Sh_{\mu,K}^{\can}\) is a single isogeny class.
	This yields the canonical case.
	
	Now, the isomorphism \(\II(\IQ) \backslash X_{\leq \mu_I^*}^{\can}(b) \times \IG(\IA_f^p)/K^p \cong \Sh_{\mu,K,\bas}^{\can}\) is induced from a map \(X_{\leq \mu^*_I}^{\can}(b) \to \Sh_{\mu,K,\bas}^{\can}\), by letting \(\IG(\IA_f^p)\) act on \(\Sh_{\mu,K,\bas}^{\can}\) \cite[(4.10.6) and (4.10.7)]{PappasRapoport:padic}.
	Moreover, the composition
	\[X_{\leq \mu_I^*}^{\can}(b) \to \Sh_{\mu,K,\bas}^{\can} \to L^+\Gg \backslash \Gr_{\Gg,\leq \mu_I}^{\can}\] 
	agrees with the map \(g\mapsto (g^{-1}b\sigma(g))^{-1}\).
	In particular, we get the following diagram with cartesian squares:
	\[\begin{tikzcd}
		X_{\leq \mu^*}^{\spl}(b) \arrow[d] \arrow[r]& \Sh_{\mu,K,\bas}^{\spl}\arrow[d] \arrow[r] & L^+\Gg \backslash \Gr_{\Gg,\leq\mu}^{\spl} \arrow[d]\\
		X_{\leq \mu_I^*}^{\can}(b) \arrow[r] & \Sh_{\mu,K,\bas}^{\can} \arrow[r] & L^+\Gg \backslash \Gr_{\Gg,\leq \mu_I}^{\can}.
	\end{tikzcd}\]
	Letting \(\IG(\IA_f^p)\) act on the Shimura varieties then yields the following diagram, which still has cartesian squares:
	\[\begin{tikzcd}
		\II(\IQ) \backslash X_{\leq \mu^*}^{\spl}(b) \times \IG(\IA_f^p)/K^p \arrow[r] \arrow[d]& \Sh_{\mu,K,\bas}^{\spl} \arrow[r] \arrow[d] & L^+\Gg \backslash \Gr_{\Gg,\leq \mu}^{\spl} \arrow[d]\\
		\II(\IQ)\backslash X_{\leq \mu_I^*}^{\can} \times \IG(\IA_f^p)/K^p \arrow[r] & \Sh_{\mu,K,\bas}^{\can} \arrow[r] & L^+\Gg \backslash \Gr_{\Gg,\leq \mu_I}^{\can}.
	\end{tikzcd}\]
	But then the upper left horizontal arrow is an isomorphism, as the pullback of an isomorphism, yielding the desired diagram.
	The equivariance for prime-to-\(p\) Hecke operators follows since the isomorphisms are induced by the \(\IG(\IA_f^p)\)-action on the Shimura varieties.
\end{proof}

Following \cite[Proposition 7.3.5]{XiaoZhu:Cycles}, we can interpret this Rapoport--Zink uniformization as an exotic Hecke correspondence between the Shimura variety for \((\IG,\IX)\) and a 0-dimensional (weak) Shimura variety.
In the following, we will assume that the center \(Z_{\IG}\) of \(\IG\) is connected, and that the basic element \(b\in B(G,\mu^*)\) is very special.
In that case, \(b=p^{\tau^*}\) for some \(\tau\in X_*(Z_G)\) by \thref{When is basic element unramified}; this satisfies \(\mu_{\mid Z(\widehat{G})^{\Gamma_{\IQ_p}}} = \tau_{\mid Z(\widehat{G})^{\Gamma_{\IQ_p}}}\).
Moreover, \thref{When is basic element unramified} \eqref{item character} and \cite[Corollary 2.1.6]{XiaoZhu:Cycles} imply that there is an inner form \(\IG'\) of \(\IG\), which is trivial at the finite places, and for which \(\IG'_{\IR}\) is compact modulo center.
Consider the pair \((\IG',\IX')=(\IG',\{h\})\), with \(h\colon \Res_{\mathbb{C}/\IR} \IG_{m,\mathbb{C}} \to \IG_{\IR}\) the descent to \(\IR\) of \(\IG_{m,\mathbb{C}} \times \IG_{m,\mathbb{C}} \to \IG_{\mathbb{C}}\colon (x,y)\mapsto \tau(x)\cdot \overline{\tau(y)}\), where \(\overline{(-)}\) denotes complex conjugation. 
This is a weak Shimura datum in the sense of \cite[§2.2]{TianXiao:GorenOort}, whose Hodge cocharacter is \(\tau\).
The corresponding complex weak Shimura variety \(\Sh_K(\IG',\IX')/\mathbb{C}\) (at level \(K\subseteq \IG(\IA_f^p) \cong \IG'(\IA_f^p)\)) is just a finite discrete union of points, indexed by \(\IG'(\IQ) \backslash \IG'(\IA_f)/K\).
Let \(\IE'\) denote its reflex field and \(\widetilde{\IE}:=\IE\IE'\) the composite with the reflex field of \((\IG,\IX)\), let \(\widetilde{E}\) be a completion at a place lying over \(\pp\) (which was the place of \(\IE\) whose completion is \(E\)), and let \(\widetilde{k}\) be the residue field of \(\widetilde{E}\).
Then by \cite[§2.8]{TianXiao:GorenOort}, \((\IG',\IX')\) gives rise to a canonical weak Shimura variety \(\Sh_K(\IG',\IX')/\IE'\) (which we will usually consider over \(\widetilde{\IE}\)), as well as a canonical integral model \(\mathscr{S}_K^{\can}(\IG',\IX')/\Oo_{\widetilde{E}}\).
We then denote by \(\Sh_{\tau,K}^{\can}\) the perfection of the geometric special fiber of \(\mathscr{S}_K^{\can}(\IG',\IX')\).
Note that by \eqref{shtuka for central}, \(\Sht_{\Gg,\tau_I}^{\can}\) is just the classifying stack of a profinite group, so that there exists a natural pro-(perfectly smooth) map \(\Sh_{\tau,K}^{\can} \to \Sht_{\Gg,\tau_I}^{\can}\).
Moreover, since \(\Sht_{\Gg,\tau_I}^{\can} \cong \Sht_{\Gg,\tau}^{\spl}\) in this case, it makes sense to define \(\Sh_{\tau,K}^{\spl}:=\Sh_{\tau,K}^{\can}\), so that there is also a pro-(perfectly smooth) map \(\Sh_{\tau,K}^{\spl} \to \Sht_{\Gg,\tau}^{\spl}\).

\begin{cor}\thlabel{exotic corr via RZ unif}
	Assume that \(Z_{\IG}\) is connected, and that the basic element \(b\in B(G,\mu^*)\) is very special.
	Then there is a commutative diagram
	\[\begin{tikzcd}
		\Sh_{\tau{,}K}^{\spl} \arrow[dd] \arrow[rd,equal]&& \Sh_{\tau\mid \mu}^{\spl} \arrow[ll] \arrow[rr] \arrow[dd] \arrow[rd] \arrow[ld, phantom, "X"] && \Sh_{\mu{,}K}^{\spl} \arrow[dd, "\loc_p^{\spl}"' {yshift=8pt}] \arrow[rd] &\\
		&\Sh_{\tau{,}K}^{\can} \arrow[dd]&& \Sh_{\tau\mid \mu}^{\can} \arrow[ll] \arrow[rr] \arrow[dd] && \Sh_{\mu{,}K}^{\can} \arrow[dd, "\loc_p^{\can}"]\\
		\Sht_{\Gg{,}\tau}^{\spl} \arrow[rd, equal] && \Sht_{\tau\mid \mu}^{\spl} \arrow[ll] \arrow[rr] \arrow[rd] \arrow[ld, phantom, "X"] && \Sht_{\Gg{,}\leq \mu}^{\spl} \arrow[rd]\\
		& \Sht_{\Gg{,}\tau_I}^{\can} && \Sht_{\tau_I\mid \mu_I}^{\can} \arrow[ll] \arrow[rr] && \Sht_{\Gg{,}\leq \mu_I}^{\can},
	\end{tikzcd}\]
	where all squares are cartesian, except those marked ``\(X\)''.
	Moreover, \(\Sh_{\tau\mid \mu}^{\spl}\) and \(\Sh_{\tau\mid \mu}^{\can}\) are ind-(perfect schemes), which admit a \(\widetilde{k}\)-structure such that the upper half of the diagram is defined over \(\widetilde{k}\).
\end{cor}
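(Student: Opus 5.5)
The plan is to build the diagram from the bottom up. The moduli of local shtukas part comes essentially from definitions and \thref{example hecke corr central}. The Shimura variety part is then defined by pullback, and Rapoport--Zink uniformization (\thref{RZuniformization}) identifies what we get.

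\emph{Bottom layer.} Because \(\tau\) is central, \(\Gr_{\Gg,\leq\tau}^{\spl}\cong \Gr_{\Gg,\leq\tau_I}^{\can}\), so the left vertical identification \(\Sht_{\Gg,\tau}^{\spl}=\Sht_{\Gg,\tau_I}^{\can}\cong \Gg(\Oo_F)\backslash \Spec\overline{k}\) is automatic (cf.~\eqref{shtuka for central}). We take \(\Sht_{\tau\mid\mu}^{\spl}\) and \(\Sht_{\tau_I\mid\mu_I}^{\can}\) to be the Hecke correspondences (fiber products over \(\BB(G)'\)). The right lower square is cartesian by definition of \(\Sht^{\spl}_{\Gg,\leq\mu}\), since \(\Sht_{\tau\mid\mu}^{\spl}=\Sht_{\Gg,\tau}^{\spl}\times_{\BB(G)'}\Sht_{\Gg,\leq\mu}^{\spl}\). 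The left square need not be cartesian, and this is the reason for the ``\(X\)'' marking. By \thref{example hecke corr central}, \(\Sht_{\tau_I\mid\mu_I}^{\can}\cong \Gg(\Oo_F)\backslash X^{\can}_{\leq\mu_I}(\varpi^{\tau})\) up to the duality conventions \((-)^*\), and similarly \(\Sht_{\tau\mid\mu}^{\spl}\cong \Gg(\Oo_F)\backslash X^{\spl}_{\leq\mu}(\varpi^{\tau})\).

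\emph{Top layer.} We define \(\Sh_{\tau\mid\mu}^{?}:=\Sh_{\mu,K}^{?}\times_{\Sht^{?}_{\Gg,\leq\mu}}\Sht^{?}_{\tau\mid\mu}\) for \(?\in\{\spl,\can\}\). This makes the right upper squares cartesian, and together with the previous step it makes the right face cartesian. The front and back faces linking \(\Sh^{\spl}\) to \(\Sh^{\can}\) are cartesian by construction of \(\Sh^{\spl}_{\mu,K}\) and \(\Sh_{\mu,K}^{\spl}\to\Sh_{\mu,K}^{\can}\) (pullback along \(\Gr^{\spl}\to\Gr^{\can}\)). The main task is to construct the left map \(\Sh_{\tau\mid\mu}^{?}\to\Sh_{\tau,K}^{?}\) so that the left square over \(\Sht_{\Gg,\tau}\) is cartesian.

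\emph{Identification via uniformization.} Since \(\Sht_{\tau\mid\mu}\to\Sht_{\Gg,\leq\mu}\) factors through the fiber of \(\BB(G)'\) at \(b^{-1}\), by \eqref{adlv as fibers can} and \eqref{adlv as fibers spl}, the fiber product \(\Sh^{?}_{\tau\mid\mu}\) consists of points of \(\Sh_{\mu,K}^{?}\) in the basic stratum together with a trivialization of the isocrystal to \(b\). Following \cite[Proposition 7.3.5]{XiaoZhu:Cycles}, \thref{RZuniformization} gives
\[\Sh^{?}_{\tau\mid\mu}\cong \II(\IQ)\backslash\bigl(X^{?}_{\leq\mu^*}(b)\times \IG(\IA_f^p)/K^p\times J_b(\IQ_p)/\Gg(\Oo_F)\bigr)\]
in an appropriate form. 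Here \(\II\cong\IG'\), because both are inner forms of \(\IG\) that are trivial at the finite places (using \(J_b\cong G\) for very special basic \(b\), \thref{unramified iff quasisplit}) and compact modulo center at \(\infty\), so the Hasse principle for inner forms identifies them. Projecting to the \(\IG'(\IQ)\backslash \IG'(\IA_f)/K\)-component gives the map to \(\Sh_{\tau,K}^{?}\). The left square is then cartesian because the fiber over a point of \(\Sh_{\tau,K}\) is exactly \(X^{?}_{\leq\mu^*}(b)\), which is the fiber of \(\Sht_{\tau\mid\mu}\to\Sht_{\Gg,\tau}\).

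\emph{Ind-representability and \(\widetilde{k}\)-structure.} Ind-representability follows from \thref{representability of bounded correspondence spaces} by base change. The \(\widetilde{k}\)-structure comes from the \(\widetilde{k}\)-models of \(\Sh_{\tau,K}\) and \(\Sh_{\mu,K}^{?}\), together with a Frobenius descent of the uniformization, as in \cite[§7.3]{XiaoZhu:Cycles}.

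I expect the main obstacle to be matching the global uniformization data with the moduli of shtukas data compatibly. This includes identifying \(\II\) with \(\IG'\) compatibly with the \(\IA_f^p\)-isomorphisms, tracking the sign conventions (\(b=p^{\tau^*}\) versus \(\tau\)), and carrying out the Frobenius descent to \(\widetilde{k}\).
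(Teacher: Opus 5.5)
Your proposal is correct and follows essentially the same route as the paper: Rapoport--Zink uniformization (\thref{RZuniformization}), the fiber descriptions \eqref{adlv as fibers can} and \eqref{adlv as fibers spl}, the identification \(\II\cong\IG'\) via \cite[Corollary 2.1.6]{XiaoZhu:Cycles}, representability from \thref{representability of bounded correspondence spaces}, and the Frobenius-equivariance argument of \cite[Proposition 7.3.5]{XiaoZhu:Cycles} for the \(\widetilde{k}\)-structure. The only differences are cosmetic. The paper takes the explicit double quotient \(\IG'(\IQ)\backslash G(\IQ_p)\overset{\Gg(\IZ_p)}{\times}X^{?}_{\leq \mu^*}(b)\times\IG(\IA_f^p)/K^p\) as the definition of \(\Sh_{\tau\mid\mu}^{?}\) and shows it equals your fiber product, and it verifies the \(\widetilde{k}\)-structure only for the canonical models, deducing the splitting case by pullback.
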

\begin{proof}
	Once the diagram is constructed with the desired cartesianness properties, \(\Sh_{\tau\mid \mu}^{\spl}\) and \(\Sh_{\tau\mid \mu}^{\can}\) will be ind-(perfect schemes) by \thref{representability of bounded correspondence spaces}.
	Moreover, since \(b\) is assumed very special, both \(\IG'\) and the group \(\II\) from \thref{RZuniformization} are inner forms of \(\IG\) which are trivial at the finite places.
	Applying \cite[Corollary 2.1.6]{XiaoZhu:Cycles}, we can fix an isomorphism \(\IG'\cong \II\), compatible with the inner twists to \(\IG\).
	
	We first discuss the existence of the diagram over \(\Spec \overline{k}\).
	Let
	\[\Sh_{\tau\mid\mu}^{\spl} = \IG'(\IQ) \backslash G(\IQ_p) \overset{\Gg(\IZ_p)}{\times} X_{\leq \mu^*}^{\spl}(b) \times \IG(\IA_f^p)/K^p\]
	and 
	\[\Sh_{\tau\mid\mu}^{\can} = \IG'(\IQ) \backslash G(\IQ_p) \overset{\Gg(\IZ_p)}{\times} X_{\leq \mu_I^*}^{\can}(b) \times \IG(\IA_f^p)/K^p.\]
	Moreover, the fixed point \(x\in \Sh_{\mu,K,\bas}^{\spl}\) (and its image in \(\Sh_{\mu,K,\bas}^{\can}\)) satisfy
	\[\Sht_{\tau\mid\mu}^{\spl} \times_{\Sht_{\Gg,\leq \mu}^{\spl}} x \cong X_{\leq \mu^*}^{\spl}(b) \cong G(\IQ_p)/\Gg(\IZ_p)\]
	and 
	\[\Sht_{\tau_I\mid\mu_I}^{\can} \times_{\Sht_{\Gg,\leq \mu_I}^{\can}} x \cong X_{\leq \mu_I^*}^{\can}(b) \cong G(\IQ_p)/\Gg(\IZ_p)\]
	by \eqref{adlv as fibers can} and \eqref{adlv as fibers spl}, since \(b=p^{\tau^*}\) with \(\tau\) central.
	Thus, the Rapoport--Zink uniformization from \thref{RZuniformization} implies that
	\[\Sht_{\tau\mid \mu}^{\spl} \times_{\Sht_{\Gg,\leq \mu}^{\spl}} \Sh_{\mu,K}^{\spl} \cong \Sht_{\tau\mid \mu}^{\spl} \times_{\Sht_{\Gg,\leq \mu}^{\spl}} \Sh_{\mu,K,\bas}^{\spl}\cong \Sht_{\tau\mid \mu}^{\spl} \times_{\Sht_{\Gg,\leq \mu}^{\spl}} (\IG'(\IQ) \backslash X_{\leq \mu^*}^{\spl}(b) \times \IG(\IA_f^p)/K^p)\]
	\[\cong \IG'(\IQ) \backslash G(\IQ_p) \overset{\Gg(\IZ_p)}{\times} X_{\leq \mu^*}^{\spl}(b) \times \IG(\IA_f^p)/K^p = \Sh_{\tau\mid \mu}^{\spl},\]
	and similarly for the canonical models.
	This shows that the desired diagram exists, and the squares in the right half are cartesian.
	The two remaining marked squares on the left are cartesian by \eqref{adlv as fibers can} and \eqref{adlv as fibers spl}.
	
	We are left to show the upper half is defined over \(\Spec \widetilde{k}\).
	It suffices to do this for canonical models; the case of splitting models will follow by pullback.
	In that case, we can follow \cite[Proposition 7.3.5]{XiaoZhu:Cycles}; we sketch the argument for convenience of the reader.
	
	The fixed point \(x\) yields a tuple \((A_x,\eta_x,s_{\et,x},s_{0,x})\), where \(A_x\) is an abelian variety, \(\eta_x\) a level structure away from \(p\), and \(s_{\et,x}\) and \(s_{0,x}\) are étale and crystalline tensors for \(A_x\) (cf.~\thref{recollection on PR model}).
	It also corresponds to a (canonical) local shtuka \(x^*\Ee\), determined by the map \(\loc_p^{\can}\).
	On the other hand, a point \(y=(g_p,y_0,g^p)\in \Sh_{\tau\mid \mu}^{\can}\) yields a modification of this local shtuka via
	\[\Ee_y \xrightarrow{\beta_{y_0}} x^*\Ee \xrightarrow{g^p} x^*\Ee,\]
	which in turns induces a modification of \(A_x\), i.e., a \(p\)-power quasi-isogeny \(A_y\xrightarrow{\beta_y} A_x\).
	Via this modification, the level structure \(\eta_xg^p\) on \(A_x\) also induces a level structure \(\eta_y\) on \(A_y\), and the image of \(y\in \Sh_{\tau\mid \mu}^{\can}\) in \(\Sh_{\mu,K}^{\can}\) corresponds to the tuple \((A_y,\eta_y,\beta_y^{-1}(s_{\et,x}),\beta_y^{-1}(s_{0,x}))\).
	
	Consider the Frobenius action on \(\Sh_{\tau\mid \mu}^{\can}\) given by
	\[(g_p,y_0,g^p) \mapsto (p^{\IN_v(\tau^*)g_p,\sigma_v(y_0)},g^p) \quad \text{ for } \quad \IN_v(\tau^*) := \sum_i^{[k_v\colon \IF_p]} \sigma^i(\tau^*),\]
	where \(\sigma_v\) acts on \(X_{\leq \mu_I^*}^{\can}(b)\) as in \cite[(5.2)]{Rapoport:Guide}
	This makes the map \(\Sh_{\tau\mid \mu}^{\can} \to \Sh_{\tau,K}^{\can}\) \(\sigma_v\)-equivariant, where the action on the target is given by the reciprocity map as in \cite[§2.8]{TianXiao:GorenOort}.
	The same argument as in \cite[Proposition 7.3.5]{XiaoZhu:Cycles} then shows that the map \(\Sh_{\tau\mid \mu}^{\can} \to \Sh_{\mu,K}^{\can}\) is also \(\sigma_v\)-equivariant, concluding the proof.
\end{proof}

As another consequence of the Rapoport--Zink uniformization, we can describe irreducible components of the basic Newton stratum.
Recall that we have defined \(\IM\IV_{\mu^*}^{\spl,\Tatep}\) and \(\IM\IV_{\mu_I^*}^{\can,\Tatep}\) in \eqref{Tate MV cycles}.

\begin{cor}\thlabel{irreducible components of basic stratum}
	Assume that \(Z_{\IG}\) is connected, and that the basic element in \(B(G,\mu^*)\) is very special.
	Then \(\Sh_{\mu,K,\bas}^{\spl}\) and \(\Sh_{\mu,K,\bas}^{\can}\) are equidimensional of dimension \(\langle\rho,\mu\rangle\).
	Moreover, there is a commutative diagram
	\[\begin{tikzcd}
		\Irr(\Sh_{\mu,K,\bas}^{\spl}) \arrow[d] \arrow[r, "\cong"] & \IG'(\IQ) \backslash \IG(\IA_f)/K \times \IM\IV_{\mu^*}^{\spl,\Tatep} \arrow[d]\\
		\{*\} \sqcup \Irr(\Sh_{\mu,K,\bas}^{\can}) \arrow[r, "\cong"'] & \{*\} \sqcup \left(\IG'(\IQ)\backslash \IG(\IA_f)/K \times \IM\IV_{\mu_I^*}^{\can,\Tatep}\right).
	\end{tikzcd}\]
\end{cor}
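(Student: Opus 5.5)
The plan is to deduce everything from the Rapoport--Zink uniformization \thref{RZuniformization}, combined with the description of irreducible components of splitting and canonical affine Deligne--Lusztig varieties in the very special case, \thref{irreducible comp of ADLV in very special case}.

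First, the basic element \(b\in B(G,\mu^*)\) is very special and \(Z_G\) is connected, so \thref{When is basic element unramified} lets us write \(b=p^{\tau^*}\) with \(\tau\) central. Then \(\nu_b\) is central, \(\langle\rho,\nu_b\rangle=0\), and the defect vanishes. Since \(\mu^*\) is minuscule, \thref{irreducible comp of ADLV in very special case} shows that \(X^{\spl}_{\leq\mu^*}(b)\) and \(X^{\can}_{\leq\mu_I^*}(b)\) are equidimensional of dimension \(\langle\rho,\mu^*-\nu_b\rangle=\langle\rho,\mu\rangle\). The uniformization expresses each basic stratum as \(\II(\IQ)\backslash X(b)\times\IG(\IA_f^p)/K^p\). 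Here \(\II(\IQ)\) is discrete and the quotient is locally a finite disjoint union of copies of \(X(b)\), because stabilizers are finite for \(K^p\) small. Equidimensionality of dimension \(\langle\rho,\mu\rangle\) therefore follows.

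Next, I would compute components. By the uniformization,
\[\Irr(\Sh^{\spl}_{\mu,K,\bas})\cong \II(\IQ)\backslash\bigl(\Irr X^{\spl}_{\leq\mu^*}(b)\times\IG(\IA_f^p)/K^p\bigr).\]
\thref{irreducible comp of ADLV in very special case} identifies \(\Irr X^{\spl}_{\leq\mu^*}(b)\) \(J_b(\IQ_p)\)-equivariantly with \(\bigsqcup_{\lambda_{\Gamma}=\lambda_b}\IM\IV^{\spl}_{\mu^*}(\lambda)\times J_b(\IQ_p)/J_b(\IZ_p)\). Since \(b\) is very special, \(J_b\cong G\), and \(J_b(\IZ_p)\) is a very special parahoric; all very special parahorics are conjugate, so we may take it to be \(K_p\). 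Using \(\II\cong\IG'\), as fixed in the proof of \thref{exotic corr via RZ unif} via \cite[Corollary 2.1.6]{XiaoZhu:Cycles}, the product \(G(\IQ_p)/K_p\times\IG(\IA_f^p)/K^p\) becomes \(\IG(\IA_f)/K\). This yields \(\IG'(\IQ)\backslash\IG(\IA_f)/K\times\bigsqcup_{\lambda}\IM\IV^{\spl}_{\mu^*}(\lambda)\), because the \(\IG'(\IQ)\)-action on the MV-factor is trivial: \(J_b(F)\) acts trivially on the target of the diagram \eqref{diagram splitting adlv}. It remains to match the index set \(\{\lambda:\lambda_{\Gamma_F}=\lambda_b\}\) with \(\{\lambda:\lambda_I\in\Lambda\}\) from \eqref{Tate MV cycles}. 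With \(\lambda_b=\tau^*_{\Gamma}\) central, one uses \thref{description of Lambda} together with \thref{unramified in BGmu}: the weights of \(V_{\mu^*}\) with \(\Gamma\)-image \(\tau^*_\Gamma\) are exactly \(\tau^*_I+(\sigma-1)X_*(T)_I\), and this set equals the set of \(\Lambda\)-weights. The canonical row is handled identically, with \(\IM\IV^{\can}_{\mu_I'^*}\) for \(\mu_I'\le\mu_I\). Only \(\mu_I'=\mu_I\) contributes top-dimensional components, since \(X^{\can}_{\mu_I'}\) has dimension \(\langle\rho,\mu_I'\rangle<\langle\rho,\mu\rangle\) otherwise.

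Finally, I would define the vertical maps. The left map sends a component of \(\Sh^{\spl}_{\bas}\) to the closure of its image when that image has full dimension \(\langle\rho,\mu\rangle\), and to \(*\) otherwise. The right map uses \(\beta^{\IM\IV}\) from \thref{comparing MV cycles}, landing in \(\IM\IV^{\can}_{\mu_I^*}\) when the index \(\mu'_I=\mu_I\) and in \(*\) otherwise. Commutativity comes from the commutative diagram in \thref{Irreducible components of splitting adlv}, made compatible with the cartesian diagram of uniformizations. The main obstacle I expect is this last bookkeeping step: verifying that the top-dimensional components of \(X^{\spl}\) lying over lower strata \(\mu_I'<\mu_I\) are exactly the ones collapsing to \(*\), and that the identification \(J_b(\IZ_p)=K_p\) is compatible with the uniformization isomorphism. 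I would handle this via the fibration argument of \cite[Lemma 5.1]{FoxImai:Supersingular} used in the proof of \thref{Irreducible components of splitting adlv}.
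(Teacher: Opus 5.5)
Your proposal is correct and follows essentially the same route as the paper: Rapoport--Zink uniformization (\thref{RZuniformization}) combined with \thref{irreducible comp of ADLV in very special case}, the identification \(\II\cong\IG'\) fixed in the proof of \thref{exotic corr via RZ unif}, and \thref{description of Lambda} to match the index set with \(\IM\IV^{\Tatep}\). Your vertical maps are the ones the paper uses: send a component to its image when that image has dimension \(\langle\rho,\mu\rangle\), and to \(*\) otherwise, which is the same as collapsing the strata \(\mu_I'<\mu_I\). Commutativity is deduced, as in the paper, from the commutative diagram for affine Deligne--Lusztig varieties.
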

\begin{proof}
	Rapoport--Zink uniformization, \thref{RZuniformization}, and \thref{irreducible comp of ADLV in very special case} imply that \(\Sh_{\mu,K,\bas}^{\spl}\) and \(\Sh_{\mu,K,\bas}^{\can}\) are equidimensional of dimension \(\langle \rho,\mu\rangle\), and describe the irreducible components (since we have already seen in the proof of \thref{exotic corr via RZ unif} that \(\II \cong \IG'\) under our assumptions). 
	To see that this description indeed agrees with the desired one, we can additionally use \thref{description of Lambda}.
	
	The image of an irreducible component of \(\Sh_{\mu,K,\bas}^{\spl}\) in \(\Sh_{\mu,K,\bas}^{\can}\) is automatically irreducible and closed, but can be of dimension less than \(\langle\rho,\mu\rangle\).
	The left vertical map is then given by sending such an irreducible component to its image in \(\Sh_{\mu,K,\bas}^{\can}\) if this is of dimension \(\langle\rho,\mu\rangle\), and to \(\{*\}\) otherwise.
	Similarly, the right vertical map sends irreducible components of \(\Gr_{\Gg,\leq \mu^*}^{\spl}\cap \Ss_\lambda^{\spl}\) to their image in \(\Gr_{\Gg,\leq \mu_I}^{\can} \cap \Ss_{\lambda_I}^{\can}\) when they have the correct dimension, and to \(\{*\}\) otherwise.
	The commutativity of the diagram then follows from \thref{irreducible comp of ADLV in very special case}.
\end{proof}

\subsection{Cycles on splitting models}\label{subsec:Tate}

Recall that we have fixed a prime \(\ell\neq p\).
We will also denote by \(I\) the inertia group of \(\IQ_p\).

Throughout this section, we fix a Hodge type Shimura datum \((\IG,\IX)\) as in the previous section; in particular \(G:=\IG_{\IQ_p}\) is essentially unramified, and we have the canonical and splitting special fibers \(\Sh_{\mu,K}^{\can}\) and \(\Sh_{\mu,K}^{\spl}\) over \(\Spec \overline{k}\), for some sufficiently small \(K=K^pK_p\subseteq \IG(\IA_f)\) with \(K_p\subseteq G(\IQ_p)\) very special.
Moreover, \(Z_G\) is assumed to be connected.
Let \(\Hh_K := C_c^{\infty}(K\backslash \IG(\IA_f)/K, \overline{\IQ}_\ell)\) be the global Hecke algebra at level \(K\), with coefficients in \(\overline{\IQ}_\ell\).
Similarly, we denote the Hecke algebras away from \(p\) (resp.~at \(p\)) by \(\Hh_{K^p}:=C_c^{\infty}(K^p\backslash \IG(\IA_f^p)/K^p, \overline{\IQ}_\ell)\) (resp.~by \(\Hh_{\Gg}=\Hh_{K_p}:=C_c(K_p\backslash G(\IQ_p)/K_p,\overline{\IQ}_\ell)\)).
Let \(\phi_v\) denote the geometric Frobenius of the residue field \(k=k_v\) of \(E\).
For an irreducible \(\Hh_K\)-module \(\pi_f\) and \(i\in \IZ_{\geq 0}\), we write
\[W^{2i}(\pi_f,\overline{\IQ}_\ell(i)) := \Hom_{\Hh_K}(\pi_f,\mathrm{H}_c^{2i}(\Sh_{\mu,K}^{\spl},\overline{\IQ}_\ell(i))),\]
and let
\[T^i(\pi_f,\overline{\IQ}_\ell) := \bigcup_{j\geq 1} W^{2i}(\pi_f,\overline{\IQ}_\ell(i))^{\phi_v^j},\]
be the space of Tate classes in the \(\pi_f\)-isotypic part of the cohomology of \(\Sh_{\mu,K}^{\spl}\).
Recall that, at least when \(\Sh_{\mu,K}^{\spl}\) is proper, the Tate conjecture \cite{Tate:Algebraic} predicts that these arise from algebraic cycles of codimension \(i\).
We will show this under certain genericity conditions for \(\pi_f\), which we define below.

Since we are now working with \(\overline{\IQ}_\ell\)-coefficients, we will use the fixed square root of \(p\) to consider the stack of spherical Langlands parameters over \(\overline{\IQ}_\ell\) as \(\widehat{G}^Ip^{-1}\sigma_p/\widehat{G}^I \cong \widehat{G}^I\sigma_p/\widehat{G}^I\).
Similarly, we now redefine \(\bfJ:=\Gamma(\widehat{G}^I\sigma_p/\widehat{G}^I,\Oo)\), which can be identified with the spherical Hecke algebra \(\Hh_{\Gg}\) by \cite[Corollary 10.12]{vdH:RamifiedSatake}.
More generally, for a \(\overline{\IQ}_\ell\)-linear \(\widehat{G}^I\)-representation \(V\), we redefine
\[\bfJ(V):=\Gamma(\widehat{G}^I\sigma_p/\widehat{G}^I, \widetilde{V}) \cong (\Oo_{\widehat{G}^I} \otimes V)^{c_{\sigma_p}(\widehat{G}^I)}.\]
Then we still have \(\Hom_{\Coh^{\widehat{G}^I}(\widehat{G}^I\sigma_p)}(\widetilde{V},\widetilde{W}) \cong \bfJ(V^*\otimes W)\).
Note also that by using the fixed \(p^{\frac{1}{2}}\), the twisted Chevalley restriction theorem \cite[Proposition 10.8]{vdH:RamifiedSatake} induces an isomorphism \(\overline{\IQ}_\ell[\widehat{G}^I\sigma_p]^{\widehat{G}^I} \cong \overline{\IQ}_\ell[\widehat{T}^I_{\sigma_p}]^{W_0}\), where \(W_0=W^{\Gamma_{\IQ_p}}\) is the relative Weyl group of \(G\),
and \(\widehat{T}^I_{\sigma_p} := (\widehat{T}^I)_{\sigma_p} := \widehat{T}^I/(\sigma_p-1)\widehat{T}^I\).
Note that the neutral component of \(\widehat{T}_{\sigma_p}^I\) can be canonically identified with the dual group of the maximal split torus \(A\subseteq G\).
Let \(\Phi(G,A)^\vee\) be the relative (with respect to \(A\subseteq G\)) coroot system, which we will view as a subset \(\Phi(G,A)^\vee \subseteq X^*(\widehat{T})_I^{\sigma_p}\).

\begin{rmk}
	In general, there are multiple notions of root data for fixed point of reductive groups, cf.~\cite{Haines:Dualities,ALRR:Fixed}.
	However, by our assumption that \(G\) is essentially unramified, the positive roots of \(\widehat{G}\) all lie in an equivalence class of type (i') in the notation of \cite[§4.1]{ALRR:Fixed}.
	This shows that the different notions of root data from \cite[Proposition 6.1]{ALRR:Fixed} agree.
	More precisely, the roots \(\RR^{(I)}\) of \(\widehat{G}^I\) are exactly the image of the roots \(\RR\) of \(\widehat{G}\) under the quotient \(X^*(\widehat{T}) = X_*(T) \to X_*(T)_I = X^*(\widehat{T}^I)\).
	Note that \(X^*(\widehat{T}^I)\) may contain torsion, but this will not be problematic for us.
\end{rmk}

\begin{prop}\thlabel{determinant of pairing}
	Let \(V\) be a finite-dimensional \(\overline{\IQ}_\ell\)-linear \(\widehat{G}^I\)-representation.
	Then
	\begin{enumerate}
		\item\label{it-projective} \(\bfJ(V)\) is a finite projective \(\bfJ\)-module.
		\item\label{it-determinant} The determinant of the pairing \(\bfJ(V) \otimes_{\bfJ} \bfJ(V^*)\) is the divisor on \(\widehat{G}^I\sigma_p\GIT\widehat{G}^I \cong \widehat{T}_{\sigma_p}^I \GIT W_0\) defined by the function
		\[\prod_{\alpha \in \Phi(G,A)^\vee\mid \frac{\alpha}{2}\notin \Phi(G,A)^\vee} (e^\alpha-1)^{\zeta_\alpha} \prod_{\alpha \in \Phi(G,A)^\vee\mid \frac{\alpha}{2}\in \Phi(G,A)^\vee} (e^\alpha +1)^{\zeta_\alpha},\]
		where 
		\[\zeta_\alpha = \sum_{n\geq 0} \dim V_{\mid \widehat{G}^{\Gamma_F}}(n\alpha_{\sigma}),\]
		and \(\alpha_{\sigma}\in \Phi(G,S)^\vee\) is a coroot for the relative system \((G,S)\) such that the sum of its \(\sigma\)-translates is \(\alpha\).
	\end{enumerate}
\end{prop}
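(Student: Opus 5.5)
The plan is to reduce to the unramified case, where both statements are in \cite[§1.4]{XiaoZhu:Cycles} (building on \cite{XiaoZhu:Vector}). The bridge is the twisted Chevalley restriction isomorphism \(\overline{\IQ}_\ell[\widehat{G}^I\sigma_p]^{\widehat{G}^I}\cong \overline{\IQ}_\ell[\widehat{T}^I_{\sigma_p}]^{W_0}\) of \cite[Proposition 10.8]{vdH:RamifiedSatake}.

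The first step is to reduce to the adjoint, essentially unramified building blocks. Both \(\bfJ(V)\) and the pairing are compatible with products. They are also compatible with central isogenies, by passing to a \(z\)-extension or by noting that the central torus part contributes only a free Laurent polynomial factor and no roots. So we may assume \(G=\Res_{F'/\IQ_p}H\) with \(H\) unramified over \(F'\). Write \(F'/F'^{\unr}\) (totally ramified of degree \(e\)) over \(F'^{\unr}/\IQ_p\) (unramified of degree \(f\)). Then \(\widehat{G}=\widehat{H}^{ef}\), with inertia acting by permuting the \(e\) factors within each of the \(f\) blocks. Hence \(\widehat{G}^I=\widehat{H}^{f}\), embedded diagonally, and \(\sigma_p\) permutes the \(f\) blocks cyclically, composed with the Frobenius of \(H\).

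The second step is the standard Shapiro-type reduction for the unramified restriction. The map \(\widehat{H}^f\sigma_p\to \widehat{H}\sigma_p^f\), \((h_i)\mapsto h_1\sigma_p(h_2)\cdots\), induces an isomorphism of quotient stacks \(\widehat{G}^I\sigma_p/\widehat{G}^I\cong \widehat{H}\sigma_{F'}/\widehat{H}\). This parallels the norm map of \thref{reductions for adlv}. Under it, a \(\widehat{G}^I\)-representation \(V\) goes to an explicit \(\widehat{H}\)-representation, and \(\widehat{T}^I_{\sigma_p}\GIT W_0\) matches the corresponding object for \(H\). Moreover, \(\Phi(G,A)^\vee\) matches \(\Phi(\Res_{F'^{\unr}/\IQ_p}H,A)^\vee\).

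This leaves an unramified group and an arbitrary representation of \(\widehat{G}^I=\widehat{H}^f\), not necessarily restricted from \(\widehat{G}\). Part \eqref{it-projective} then follows from \cite{XiaoZhu:Vector}: \(\bfJ(V)\) is a module of covariants for the twisted-conjugation action, and it is projective because \(\widehat{G}^I\sigma_p\to \widehat{T}^I_{\sigma_p}\GIT W_0\) is flat with reduced regular-semisimple generic fibres. One can also argue via Cohen--Macaulayness over the regular (polynomial-times-torus) ring \(\bfJ\), after checking that \(\widehat{T}^I_{\sigma_p}\GIT W_0\) is smooth. The torsion in \(X^*(\widehat{T}^I)\) only contributes a finite étale factor and does not affect this.

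For part \eqref{it-determinant}, the divisor can be computed in codimension one. At a generic point of \(\widehat{T}^I_{\sigma_p}\GIT W_0\), the twisted centralizer is a torus and the pairing is perfect. So we only need to examine generic points of the walls \(e^\alpha=\pm1\), where the twisted centralizer becomes a rank-one group with derived group \(\SL_2\), \(\PGL_2\), or (for non-reduced roots) the \(\mathrm{PU}_3\)-type group appearing in the proof of \thref{twisting has no effect}. Localizing at such a wall reduces the statement to a rank-one computation, as in \cite[§1.4]{XiaoZhu:Cycles}. The order of vanishing along the wall counts the weights of \(V\) restricted to \(\widehat{G}^{\Gamma_F}\) lying on the positive ray \(n\alpha_\sigma\), \(n\ge0\), which gives \(\zeta_\alpha\). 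The sign \(e^\alpha+1\) for divisible roots comes from the non-reduced (type \(BC\)) case.

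The main obstacle is this rank-one verification for divisible roots, together with the bookkeeping that matches weights of \(\widehat{G}^{\Gamma_F}\) with coroots \(\alpha_\sigma\) of \((G,S)\). I would handle it by direct computation with \(\SL_3\) acted on by its pinned outer automorphism, checking that \(e^\alpha+1\) is the correct local equation and that the multiplicity is \(\zeta_\alpha\).
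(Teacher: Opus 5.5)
Your overall route matches the paper's. You reduce, via products, a central cover, the Frobenius-equivariant invariance of \(\widehat{G}^I\) under totally ramified restriction of scalars, and the Shapiro-type isomorphism for unramified restriction of scalars (\cite[Lemma 4.1.2]{XiaoZhu:Vector}), to Xiao--Zhu's results for unramified groups.

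The gap is the step you dispose of with ``compatible with central isogenies''. That step is essentially the entire content of the paper's proof of (1), and your sketch omits all of its ingredients. What is needed is the following.
\begin{itemize}
\item Since \(Z_G\) is connected, \(\widehat{G}_{\der}\) is simply connected. Hence \(\widehat{G}_{\der}\cong\prod_i\widehat{H}_i^{e_if_i}\), and so \((\widehat{G}_{\der})^I\cong\prod_i\widehat{H}_i^{f_i}\).
\item One then checks that this group equals \((\widehat{G}^I)_{\der}\), using \(\pi_0(\widehat{G}^I)=\pi_0(\widehat{T}^I)\). Only then is \(\widehat{G}':=\widehat{G}^I_{\der}\times Z(\widehat{G}^I)\to\widehat{G}^I\) surjective with finite central kernel \(\Delta\), with first factor connected, simply connected and semisimple. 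Your proposal never uses that \(Z_G\) is connected, and this is exactly where that hypothesis enters.
\item Even with this in hand, \(\widehat{G}^I\sigma_p/\widehat{G}^I\) is not a product of a semisimple part and a ``free Laurent polynomial factor''; moreover \(Z(\widehat{G}^I)\) can be disconnected. You must show that \(\Spec\bfJ_{\widehat{G}'}\to\Spec\bfJ\) is a torsor under the quotient \(D(\Omega)\) of \(\Delta_{\sigma_p}\) through which \(\Delta\) acts; the paper does this by comparing Peter--Weyl bases of invariants.
\item You also need \(\bfJ(V)=\bfJ_{\widehat{G}'}(V)^{D(\Omega)}\) together with \(\bfJ(V)\otimes_{\bfJ}\bfJ_{\widehat{G}'}\cong\bfJ_{\widehat{G}'}(V)\). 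Then projectivity, and later the determinant, descend along this finite étale cover.
\end{itemize}
The z-extension you mention points the wrong way for this purpose. It realizes \(\widehat{G}\) as a normal subgroup of \(\widehat{\tilde G}\), so you can neither pull \(V\) back nor compare the invariant rings. What it provides, connected centre of the dual group, is not what is needed here.

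For (2), once this reduction is in place, the paper simply quotes \cite[Theorem 6.1.2]{XiaoZhu:Vector} for the unramified \(H\), together with the fact that \(H\) and \(\Res_{F'/F}H\) have the same relative root system. Your plan to recompute the divisor by codimension-one localization and rank-one cases would amount to reproving that theorem, including the reduction to rank one at a wall.

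The ramified \(\PU_3\) from \thref{twisting has no effect} cannot occur for essentially unramified \(G\), since inertia merely permutes the factors of \(\widehat{G}_{\der}\). Divisible relative coroots come only from the Frobenius of an unramified \(H_i\), such as an unramified odd unitary group. That is, they come from twisted conjugation by the pinned outer automorphism of \(\SL_3\), which is the case you end up describing and which Xiao--Zhu already treat.
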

\begin{proof}
	\begin{enumerate}
		\item Since \(Z_G\) is connected by assumption, the derived subgroup \(\widehat{G}_{\der}\) of \(\widehat{G}\) is simply connected.
		Moreover, using \thref{notation essentially unramified}, the adjoint quotient \(G \to G_{\adj} \cong \prod_i \Res_{F_i/F} H_i\) induces a dual morphism \(\prod_i \widehat{H_i}^{e_if_i} \to \widehat{G}\), which factors through the derived subgroup \(\widehat{G}_{\der}\).
		Since the group \(\prod_i \widehat{H_i}^{e_if_i}\) is also simply connected (as the dual of an adjoint group), we get an isomorphism \(\prod_i \widehat{H_i}^{e_if_i} \to \widehat{G}_{\der}\), and hence an isomorphism 
		\[\prod_i \widehat{H_i}^{f_i} \cong (\widehat{G}_{\der})^I\]
		by passing to inertia-invariants.
		Now, the two diagonal morphisms in
		\[\begin{tikzcd}
			(\widehat{G}_{\der})^I \arrow[rd] \arrow[rr] & & (\widehat{G}^I)_{\der} \arrow[ld]\\
			& \widehat{G}^I&
		\end{tikzcd}\]
		are closed immersions.
		Hence, the natural map \((\widehat{G}_{\der})^I \to (\widehat{G}^I)_{\der}\) is a closed immersion as well.
		We claim that it is also surjective: indeed, since \(\pi_0(\widehat{G}^I)=\pi_0(\widehat{T}^I)\) \cite[Proposition 5.4 (7)]{ALRR:Fixed}, which is abelian, \((\widehat{G}^I)_{\der}\) agrees with the derived subgroup of the neutral connected component of \(\widehat{G}^I\); this implies the claim.
		Using this, we can and will denote \(\widehat{G}_{\der}^I:=(\widehat{G}_{\der})^I \cong (\widehat{G}^I)_{\der}\).
		Thus, we have shown that the derived subgroup of \(\widehat{G}^I\) is a split connected simply connected reductive group.
		(Note that this does not hold in general, by \cite[Example 5.9 (1)]{ALRR:Fixed}).
		
		Now, let \(\widehat{G}':= \widehat{G}_{\der}^I \times Z(\widehat{G}^I)\); the \(\sigma_p\)-action on \(\widehat{G}^I\) lifts uniquely to a \(\sigma_p\)-action to \(\widehat{G}'\) \cite[9.16]{Steinberg:Endomorphisms}.
		We will write \(\bfJ_{\widehat{G}^I}=\bfJ\), and \(\bfJ_{\widehat{G}'}\) for the ring of global sections of \(\widehat{G}'\sigma_p/\widehat{G}'\).
		Similarly, we define \(\bfJ_{\widehat{G}^I}(V)\) and \(\bfJ_{\widehat{G}'}(V)\).
		Consider the exact sequence
		\begin{equation}\label{enlarging dual group}1 \to \Delta \to \widehat{G}' \to \widehat{G}^I \to 1,\end{equation}
		and let \(\Delta_{\sigma_p}\) be the coinvariants of \(\Delta\) with respect to the induced \(\sigma_p\)-action.
		Let \(\widehat{T}'\) be the preimage of \(\widehat{T}^I\) in \(\widehat{G}'\), yielding an exact sequence
		\[1\to X^*(\widehat{T}^I) \to X^*(\widehat{T}') \to X^*(\Delta) \to 1.\]
		Passing to \(\sigma_p\)-invariants, we obtain a sequence
		\[1\to X^*(\widehat{T}^I)^{\sigma_p} \to X^*(\widehat{T}')^{\sigma_p} \to X^*(\Delta)^{\sigma_p},\]
		which is however not necessarily exact on the right.
		Let \(\Omega\subseteq X^*(\Delta)^{\sigma_p}\) be the image of \(X^*(\widehat{T}')^{\sigma_p}\) in \(X^*(\Delta)^{\sigma_p}\), corresponding to a diagonalizable quotient \(D(\Omega)\) of \(\Delta_{\sigma_p}\), and an exact sequence
		\[1\to D(\Omega) \to \widehat{T}'_{\sigma_p} \to \widehat{T}^I_{\sigma_p} \to 1.\]
		Then the action of the finite group \(\Delta\) on \(\bfJ_{\widehat{G}'}\) factors through the quotients \(\Delta \onto \Delta_{\sigma_p} \onto D(\Omega)\).
		We claim that the natural map \(\Spec \bfJ_{\widehat{G}'}\to \Spec \bfJ_{\widehat{G}^I}\) is a \(D(\Omega)\)-torsor.
		
		Indeed, since we are working over a field of characteristic 0, the Peter--Weyl theorem implies that
		\[\overline{\IQ}_\ell[\widehat{G}^I]^{c_{\sigma_p}(\widehat{G}^I)} \cong \bigoplus_{\mu\in X^*(\widehat{T}^I)^+} (V_\mu \otimes V_\mu^*)^{c_{\sigma_p}(\widehat{G}^I)} \cong \bigoplus_{X^*(\widehat{T}^I)^{+,\sigma_p}} \overline{\IQ}_\ell,\]
		and similarly
		\[\overline{\IQ}_\ell[\widehat{G}']^{c_{\sigma_p}(\widehat{G}')} \cong \bigoplus_{X^*(\widehat{T}')^{+,\sigma_p}} \overline{\IQ}_\ell.\]
		Since \(D(\Omega)\) acts freely on \(\bfJ_{\widehat{G}'}\), and by comparing the basis above, we see that \(\Spec \bfJ_{\widehat{G}'} \to \Spec \bfJ_{\widehat{G}^I}\) is indeed a \(D(\Omega)\)-torsor.
		
		Now, since the \(\Delta_{\sigma_p}\)-action on \(\bfJ_{\widehat{G}'}\) factors through \(D(\Omega)\), \cite[(4.1.3)]{XiaoZhu:Vector} implies that \(\bfJ_{\widehat{G}'}(V)^{D(\Omega)} = \bfJ_{\widehat{G}^I}(V)\).
		Then the same argument as in \cite[Lemma 4.1.3]{XiaoZhu:Vector} implies that the natural map \(\bfJ_{\widehat{G}^I}(V) \otimes_{\bfJ_{\widehat{G}^I}} \bfJ_{\widehat{G}'} \to \bfJ_{\widehat{G}'}(V)\) is an isomorphism.
		Thus, by faithfully flat descent, we are reduced to showing the proposition for \(\widehat{G}'=Z(\widehat{G}^I) \times \widehat{G}_{\der}^I\), and hence for \(G_{\der}^I\).
		But we have seen above that this is a connected, semisimple, simply connected reductive group, so that \cite[Theorem 4.3.2]{XiaoZhu:Vector} applies.
		
		\item By (1), it suffices to compute the determinant of the pairing \(\bfJ_{\widehat{G}'}(V) \otimes_{\bfJ_{\widehat{G}'}} \bfJ_{\widehat{G}'}(V^*)\).
		By \cite[(4.1.1)]{XiaoZhu:Vector}, everything is compatible with products.
		Since we clearly have a perfect pairing for diagonalizable groups, it suffices to consider the semisimple simply connected group \(\widehat{G}_{\der}^I\), which agrees with the inertia-invariants of the dual group of the adjoint quotient \(G_{\adj}\).
		Again using the compatibility with products, we may assume that \(G=\Res_{F'/\IQ_p} H\), where \(H/F'\) is adjoint and unramified over \(F'\).
		Since the proposition for \(H\) was shown in \cite[Theorem 6.1.2]{XiaoZhu:Vector}, it remains to handle restrictions of scalars.
		
		Let \(F\) be the maximal unramified subextension of \(F'/\IQ_p\), and set \(K := \Res_{F'/F} H\), so that \(G=\Res_{F/\IQ_p} K\).
		Then there is a Frobenius-equivariant isomorphism \(\widehat{H}^{I_{F'}} \cong \widehat{K}^{I_F}\), and the relative root systems of \(K\) and \(H\) agree (using the choices of compatible tori from \thref{notation essentially unramified}).
		Finally the passage from \(K\) to \(G\) along the unramified extension \(F/\IQ_p\) can be handled by \cite[Lemma 4.1.2]{XiaoZhu:Vector}.
	\end{enumerate}
\end{proof}

We can now define the desired genericity conditions.
We denote by \(\sigma_p\) and \(\phi_p\) the arithmetic and geometric \(p\)-Frobenii respectively, and let \(m>0\) be some integer such that the \(\Gamma_{\IF_p}\)-action on \(\widehat{G}^I_{\overline{\IQ}_\ell}\) factors through \(\Gal(\IF_{p^m}/\IF_p)\).

\begin{dfn}\thlabel{defi general}
	Let \(\gamma\sigma_p\in \widehat{G}^I\sigma_p \GIT\widehat{G}^I\) be an element, and \(V\in \Rep^{\fd}(\widehat{G}^I_{\overline{\IQ}_\ell})\).
	\begin{enumerate}
		\item \(\gamma\sigma_p\) is \emph{\(V\)-general} if it does not lie in the divisor from \thref{determinant of pairing} \eqref{it-determinant}.
		\item \(\gamma\sigma_p\) is \emph{strongly \(V\)-general} if for every dominant cocharacter \(\lambda_{\Gamma_{\IQ_p}} \in X_*(\widehat{T}^{\Gamma_{\IQ_p}})\) such that \(V_{\widehat{G}^{\Gamma_{\IQ_p}}}(\lambda_{\Gamma_{\IQ_p}})\neq 0\) and which does not factor through \(\widehat{Z_G}^{\Gamma_{\IQ_p}}\), we have \(\lambda_{\Gamma_{\IQ_p}}((\gamma \phi_p)^{mn})\neq 1\) for all \(n>1\).
	\end{enumerate}
\end{dfn}
Recall that \(Z_G\) is assumed to be connected, so that it is a torus and \(\widehat{Z_G}\) is defined.
Note also that any regular semisimple element is \(V\)-general for any \(V\in \Rep^{\fd}(\widehat{G}_{\overline{\IQ_\ell}}^I)\).

Let \(V_{\mu^*}\) be the \(\overline{\IQ}_\ell\)-linear simple \(\widehat{G}\)-representation of highest weight \(\mu^*\), and denote its restriction to \(\widehat{G}^I\) similarly.
Then we can define \(V_{\mu^*}^{\Tatep}\) as in \eqref{Tatep}.
Similarly, \(V_{\mu_I^*}\) denotes the simple \(\widehat{G}^I\)-representation of highest weight \(\mu_I^*\), and again we have defined \(V_{\mu_I^*}^{\Tatep}\) in \eqref{Tatep}.
The following lemma explains the relevance of the strong generality condition.

\begin{lem}\thlabel{lemma strongly general}
	Let \(\gamma\phi_p \in \widehat{T}^I\rtimes \Gal(\IF_{p^m}/\IF_p)\subseteq \widehat{G}^I\rtimes \Gal(\IF_{p^m}/\IF_p)\) be such that its image in \(\widehat{Z_G}^I\rtimes \Gal(\IF_{p^m}/\IF_p)\) has finite order.
	Then we have
	\begin{equation}\label{Tatep as fixed points}V_{\mu^*}^{\Tatep} \subseteq \bigcup_{j\geq 1} V_{\mu^*}^{r_{\mu^*}((\gamma\phi_p)^{jm})}.\end{equation}
	Moreover, if \(\gamma\) is strongly general, then this inclusion is an isomorphism.
\end{lem}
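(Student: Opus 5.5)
Since \(\gamma\phi_p\) lies in the torus part \(\widehat{T}^I\rtimes \Gal(\IF_{p^m}/\IF_p)\), I will compute everything weight by weight. The power \((\gamma\phi_p)^{m}\) lies in \(\widehat{T}^I\), because \(\phi_p^m\) acts trivially on \(\widehat{G}^I\). Explicitly, \((\gamma\phi_p)^m=\prod_{i=0}^{m-1}\phi_p^i(\gamma)=:\Nm(\gamma)\in \widehat{T}^I\). Thus \(r_{\mu^*}((\gamma\phi_p)^{jm})=\Nm(\gamma)^j\) acts on each weight space \(V_{\mu^*}(\lambda_I)\), \(\lambda_I\in X^*(\widehat{T}^I)=X_*(T)_I\), by the scalar \(\lambda_I(\Nm(\gamma))^j\). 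Moreover \(\lambda_I(\Nm(\gamma))=\bigl(\sum_{i=0}^{m-1}\sigma^i(\lambda_I)\bigr)(\gamma)\), up to replacing \(\sigma\) by its inverse, which is harmless because the sum runs over a full cycle. The union on the right of \eqref{Tatep as fixed points} is therefore the sum of those weight spaces \(V_{\mu^*}(\lambda_I)\) for which \(\bigl(\sum_i\sigma^i\lambda_I\bigr)(\gamma)\) is a root of unity. Here I use that the union is an increasing union of subspaces along divisible \(j\), so it is itself a subspace, namely the sum of the eigenspaces with root-of-unity eigenvalue.

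For the inclusion, let \(\lambda_I\in\Lambda\). Then \(\sum_{i=0}^{m-1}\sigma^i(\lambda_I)\in X_*(Z_G)_I\), so this character of \(\widehat{T}^I\) factors through the quotient \(\widehat{T}^I\to \widehat{Z_G}^I\); here I use that \(Z_G\) is connected, so that \(X_*(Z_G)\subseteq X_*(T)\) is dual to this quotient. Its value at \(\gamma\) thus depends only on the image of \(\gamma\) in \(\widehat{Z_G}^I\). More precisely, it is \(\chi(\Nm(\bar\gamma))\) for a character \(\chi\), and \(\Nm(\bar\gamma)=(\bar\gamma\phi_p)^m\) has finite order because \(\bar\gamma\phi_p\) does. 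Hence the value is a root of unity, \(\Nm(\gamma)^j\) acts trivially on \(V_{\mu^*}(\lambda_I)\) for suitable \(j\), and \eqref{Tatep}, which defines \(V_{\mu^*}^{\Tatep}=\bigoplus_{\lambda_I\in\Lambda}V_{\mu^*}(\lambda_I)\), gives the inclusion.

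For the reverse inclusion under strong generality, I take a weight \(\lambda_I\notin\Lambda\) with \(V_{\mu^*}(\lambda_I)\ne0\) and aim to show \(\lambda_I(\Nm\gamma)^j\ne1\) for all \(j\). I group the weights by their image \(\lambda_{\Gamma_{\IQ_p}}\) in \(X_*(T)_{\Gamma_{\IQ_p}}\), the character group of \(\widehat{T}^{\Gamma_{\IQ_p}}\). The quantity \(\bigl(\sum_i\sigma^i\lambda_I\bigr)(\gamma)=\lambda_I(\Nm\gamma)\) depends only on \(\lambda_{\Gamma}\), and it equals \(\lambda_\Gamma\) evaluated on \((\gamma\phi_p)^m\), viewed in the torus of \(\widehat{G}^{\Gamma_{\IQ_p}}\). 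Applying a Weyl element of \(W_0\) if necessary, I may assume \(\lambda_\Gamma\) is dominant, with \(V_{\mu^*|\widehat{G}^{\Gamma}}(\lambda_\Gamma)\ne0\). The condition \(\lambda_I\notin\Lambda\) should mean exactly that \(\lambda_\Gamma\) does not factor through \(\widehat{Z_G}^{\Gamma}\): indeed \(\sum_i\sigma^i\lambda_I\) is the norm of \(\lambda_\Gamma\), and rationally the norm lands in \(X_*(Z_G)_I\) precisely when \(\lambda_\Gamma\) is central. Strong generality then gives \(\lambda_\Gamma((\gamma\phi_p)^{mn})\ne1\) for all \(n>1\), and the case \(n=1\) is absorbed by passing to multiples. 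So \(\Nm(\gamma)^j\) has no fixed vectors on this weight space.

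The main obstacle is this identification of \(\Lambda\) with the weights whose \(\Gamma\)-coinvariant image is central. It requires care with torsion in \(X_*(T)_I\), with the \(W_0\)-action, and with the difference between \(\Gamma\)-coinvariants and the \(m\)-fold norm. If the identification fails integrally, my fallback is to use \thref{description of Lambda} together with \thref{classification of spherical elements}: all weights in \(\Lambda\) form a single class \(\tau_I+(\sigma-1)X_*(T)_I\), corresponding to the basic element, and any weight outside \(\Lambda\) gives a non-basic, hence non-central, \(\lambda_\Gamma\).
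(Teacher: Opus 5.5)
Your proposal is correct in substance and follows the paper's skeleton. In both, \((\gamma\phi_p)^{m}=\Nm(\gamma)\in\widehat{T}^I\) acts diagonally on weight spaces. Tate weights get root-of-unity eigenvalues through the center. Non-Tate weights are non-central, so strong generality rules out fixed vectors.

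You differ in how \(\Lambda\) is tied to centrality. The paper first uses \thref{description of Lambda} and \thref{centrality of tau} to see that \(V_{\mu^*}^{\Tatep}\) is a single \(\widehat{G}^{\Gamma_{\IQ_p}}\)-weight space for one central weight. Hence \(\Nm(\gamma)\) acts on all of it by one scalar \(z(\Nm\bar\gamma)\), with \(z\) a character of \(\widehat{Z_G}^I\), and this scalar has finite order. You instead work weight by weight from the definition of \(\Lambda\) alone. This bypasses the Kottwitz-set input behind \thref{description of Lambda} (basic elements of \(B(G,\mu)\), \thref{classification of spherical elements}). It only loses the fact that the Tate weights form a single \((\sigma-1)\)-coset, which this lemma does not need.

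Three adjustments are needed.

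First, in the forward inclusion you only know that \(N(\lambda_I):=\sum_{i<m}\sigma^i\lambda_I\) lifts to some \(\chi'\in X_*(Z_G)_I\). So the eigenvalue is \(\chi'(\bar\gamma)\); it is not of the form \(\chi(\Nm\bar\gamma)\), which is what the paper's route produces. The conclusion still holds. Since \(\sigma^m=1\) on \(X_*(T)_I\), \(N(\lambda_I)\) is \(\sigma\)-invariant, so \(\chi'(\phi_p^k\bar\gamma)=\chi'(\bar\gamma)\) for all \(k\). Hence \(\chi'(\bar\gamma)^m=\chi'(\Nm\bar\gamma)\) is a root of unity, and therefore so is \(\chi'(\bar\gamma)\).

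Second, the step you flag as the main obstacle is not one. For the reverse inclusion you only need that \(\lambda_I\notin\Lambda\) forces \(\lambda_{\Gamma_{\IQ_p}}\) to be non-central, i.e.\ the contrapositive of ``central \(\Rightarrow\) \(\lambda_I\in\Lambda\)''. This holds integrally, with no torsion issues. If \(\lambda_I=z+(\sigma-1)\nu\) with \(z\) in the image of \(X_*(Z_G)_I\), then \(N(\lambda_I)=N(z)\) lies in that image. This is because \(N\) kills \((\sigma-1)X_*(T)_I\) and \(\sigma\) preserves the image of \(X_*(Z_G)_I\). The converse implication, where your rational argument and the fallback would be needed, plays no role.

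Third, the reduction to dominant \(\lambda_{\Gamma_{\IQ_p}}\) should be phrased as replacing \(\gamma\) by a \(\sigma\)-twisted \(W_0\)-conjugate, which replaces \(\Nm(\gamma)\) by \(w(\Nm\gamma)\). Applying \(w\) to the weight alone changes the eigenvalue. The paper's proof also passes over this point silently.
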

\begin{proof}
	By \thref{description of Lambda} (and since the corresponding \(\tau_I\) is central by \thref{centrality of tau}), \(V_{\mu^*}^{\Tatep}\) decomposes (as a vector space) as
	\[V_{\mu^*}^{\Tatep} = \bigoplus_{\lambda_{\Gamma_{\IQ_p}} \in X^*(\widehat{Z_G}^{\Gamma_{\IQ_p}}) = X^*(\widehat{Z_G})_{\Gamma_{\IQ_p}}} V_{\mu^*\mid \widehat{G}^{\Gamma_{\IQ_p}}}(\lambda_{\Gamma_{\IQ_p}}).\]
	By the assumption that \(\gamma\phi_p\) has finite image in \(\widehat{Z_G}^I\rtimes \Gal(\IF_{p^m}/\IF_p)\), we see that \(r_{\mu^*}((\gamma \phi_p)^{jm})\) acts trivially on each direct summand, which gives the inclusion \eqref{Tatep as fixed points}.
	On the other hand, if \(\lambda_{\Gamma_{\IQ_p}}\in X^*(\widehat{T}^{\Gamma_{\IQ_p}})\) does not factor through \(\widehat{Z_G}^{\Gamma_{\IQ_p}}\) and \(\gamma\phi_p\) is strongly general, then \((V_{\mu^*\mid \widehat{G}^{\Gamma_{\IQ_p}}}(\lambda_{\Gamma_{\IQ_p}}))^{r_\mu^*((\gamma\phi_p)^{jm})} = 0\) for \(j>1\).
	This shows that \eqref{Tatep as fixed points} is an isomorphism when \(\gamma \phi_p\) is strongly general.
\end{proof}

Let us recall certain Shimura varieties that will be of special interest, and have also been the focus of e.g.~\cite{Kottwitz:lambda-adic,ScholzeShin:Cohomology}.

\begin{dfn}
	Let \(F\) be a CM field, with totally real subfield \(F_1\), such that \(F/F_1\) is unramified at all places above \(p\).
	Let \(D\) be a central division algebra over \(F\), of dimension \(n^2\), with an involution \(*\) that restricts to complex conjugation on \(F\).
	Let \(b\in D^\times\) with \(b^*=b\) such that \((v,w)\mapsto vbw^*\) is a positive pairing on \(B\otimes_{\IQ} \IR\); this exists by \cite[Lemma 2.8]{Kottwitz:Points}.
	Define an involution \(\sharp\colon D^{\op}\cong D^{\op}\colon v\mapsto bv^*b^{-1}\), let \(\beta\in F\) such that \(\beta^\sharp=-\beta\), and consider the inner product \(\langle v,w\rangle = \tr_{D^{\op}}(vb^{-1}\beta w^{\sharp})\).
	
	Then the tuple \((B,*,V,(,),h) = (D^{\op},\sharp,D^{\op},\langle \rangle,h_0)\), where \(D^{\op}\) acts on itself via left-multiplication, and \(h_0\colon\mathbb{C} \to D\otimes_{\IQ} \IR \cong \End_{D^{\op}\otimes_{\IQ} R}(D^{\op}\otimes_{\IQ} \IR)\) is any *-homomorphism such that \((v,w)\mapsto \langle v,h_0(i)w\rangle\) is positive-definite, is a global rational PEL datum (in the sense of \thref{PEL data}).
	Following \cite{ScholzeShin:Cohomology}, the resulting Shimura variety (which is automatically proper) will be called a \emph{compact unitary group Shimura variety}.
\end{dfn}

In the terminology of \cite{ScholzeShin:Cohomology}, the local group appearing in the above Shimura data at a \emph{split place} is essentially unramified.
Thus, our methods apply to these Shimura varieties.

Finally, we prove the main theorem of the paper.

\begin{thm}\thlabel{main global theorem}
	Let \((\IG,\IX)\) be a Shimura datum of Hodge type, satisfying \thref{assumptions Shimura variety}, such that \(G:=\IG_{\IQ_p}\) is essentially unramified.
	We fix a very special parahoric model \(\Gg/\IZ_p\) of \(G\).
	Assume that the center \(Z_G\) of \(G\) is connected, and that \(V_{\mu^*}^{\Tatep}\neq 0\).
	Let \(\IG'\) denote the unique inner form of \(\IG\), which is trivial at the finite places, and compact modulo center at the archimedean place (it exists by \thref{When is basic element unramified} \eqref{item character} and \cite[Corollary 2.1.6]{XiaoZhu:Cycles}).
	Then the following hold:
	\begin{enumerate}
		\item The basic Newton strata \(\Sh_{\mu,K,\bas}^{\spl}\) and \(\Sh_{\mu,K,\bas}^{\can}\) are equidimensional of dimension \(\langle \rho,\mu\rangle\).
		Moreover, there is a canonical \(\Hh_{K,\overline{\IQ}_\ell}\)-equivariant diagram
		\[\begin{tikzcd}
			\mathrm{H}^{\BM}_{\langle2 \rho,\mu\rangle}(\Sh_{\mu,K,\bas}^{\spl}) \arrow[r, "\cong"] \arrow[d] & C(\IG'(\IQ)\backslash \IG'(\IA_f)/K,\overline{\IQ}_\ell) \otimes_{\overline{\IQ}_\ell} V_{\mu^*}^{\Tatep}\arrow[d]\\
			\mathrm{H}^{\BM}_{\langle2 \rho,\mu\rangle}(\Sh_{\mu,K,\bas}^{\can}) \arrow[r, "\cong"'] & C(\IG'(\IQ)\backslash \IG'(\IA_f)/K,\overline{\IQ}_\ell) \otimes_{\overline{\IQ}_\ell} V_{\mu_I^*}^{\Tatep},
		\end{tikzcd}\]
		where \(C(\IG'(\IQ)\backslash \IG'(\IA_f)/K,\overline{\IQ}_\ell)\) denotes the space of functions \(\IG'(\IQ)\backslash \IG'(\IA_f)/K \to \overline{\IQ}_\ell\).
		\item Let \(\pi_f\) be an irreducible \(\Hh_{K,\overline{\IQ}_\ell}\)-module, and
		\[\mathrm{H}^{\BM}_{\langle2 \rho,\mu\rangle}(\Sh_{\mu,K,\bas}^{\spl})[\pi_f] := \Hom_{\Hh_{K,\overline{\IQ}_\ell}}(\pi_f,\mathrm{H}^{\BM}_{\langle 2\rho,\mu\rangle}(\Sh_{\mu,K,\bas}^{\spl}))\otimes \pi_f\]
		its \(\pi_f\)-isotypical component.
		If the Satake parameter of the component \(\pi_{f,p}\) of \(\pi_f\) at \(p\) is \(V_{\mu^*}\)-general in the sense of \thref{defi general}, then the restriction of the cycle class map
		\[\mathrm{H}^{\BM}_{\langle2 \rho,\mu\rangle}(\Sh_{\mu,K,\bas}^{\spl})[\pi_f] \subseteq \mathrm{H}^{\BM}_{\langle2 \rho,\mu\rangle}(\Sh_{\mu,K,\bas}^{\spl}) \xrightarrow{\cl} \mathrm{H}^{\langle 2\rho,\mu\rangle}_{\comp}(\Sh_{\mu,K}^{\spl},\overline{\IQ}_\ell(\langle \rho,\mu\rangle))\]
		is injective.
		\item
		Assume that \(\Sh_{K^pK_p}(\IG,\IX)\) is a compact unitary group Shimura variety, and let \(\pi_f^p\) be an irreducible \(\Hh_{K^p,\overline{\IQ}_\ell}\)-module.
		Then the \(\pi_f^p\)-isotypical component of the cycle class map \(\cl\) surjects onto
		\[\sum_{\pi_p} T^{\langle \rho,\mu\rangle}(\pi_p \pi_f^p,\overline{\IQ}_\ell) \otimes \pi_p\pi_f^p\]
		if the Satake parameters of the \(\pi_p\) are all strongly \(V_{\mu^*}\)-general, where the \(\pi_p\) range over those irreducible representations of the local Hecke algebra \(\Hh_{\Gg}\) such that \(\pi_p\pi_{f}^p\) appears in \(C(\IG'(\IQ)\backslash \IG'(\IA_f)/K,\overline{\IQ}_\ell)\).
	\end{enumerate}
\end{thm}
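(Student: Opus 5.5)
The plan follows the strategy of \cite[§7]{XiaoZhu:Cycles}, using the splitting objects built above to transport the argument. For (1), I first note that \(V_{\mu^*}^{\Tatep}\neq 0\) implies that the basic element of \(B(G,\mu^*)\) is very special, by \thref{When is basic element unramified}. Since \(Z_G\) is connected, this basic element is \(p^{\tau^*}\) with \(\tau\in X_*(Z_G)\). Equidimensionality and the description of irreducible components then come from \thref{irreducible components of basic stratum}. Top-degree Borel--Moore homology of an equidimensional scheme has the fundamental classes of its irreducible components as a basis. The bijection \(\Irr(\Sh^{\spl}_{\mu,K,\bas})\cong \IG'(\IQ)\backslash\IG(\IA_f)/K\times \IM\IV^{\spl,\Tatep}_{\mu^*}\), together with the fact that MV cycles form a basis of \(V_{\mu^*}^{\Tatep}\) (\thref{remark MV cycles}), gives the horizontal isomorphisms. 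Prime-to-\(p\) Hecke equivariance comes from \thref{RZuniformization}. To get equivariance at \(p\), I would identify the \(\Hh_\Gg\)-action via the exotic correspondence of \thref{exotic corr via RZ unif} and \thref{compatibility with Satake iso}, as in \cite[Proposition 7.3.14]{XiaoZhu:Cycles}. The vertical map is proper pushforward, and it matches \(\beta^{\IM\IV}\) by the commutative square of \thref{irreducible components of basic stratum}.

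For (2), I view the cycle class map as a Jacquet--Langlands transfer. The correspondence \(\Sh^{\spl}_{\tau,K}\leftarrow\Sh^{\spl}_{\tau\mid\mu}\to\Sh^{\spl}_{\mu,K}\) from \thref{exotic corr via RZ unif} is pulled back from \(\Sht^{\spl}_{\tau\mid\mu}\). Correspondences there come from \(\bfJ(V_{\tau}^*\otimes V_{\mu^*})\), via the \(\ell\)-adic realization of \thref{Main local theorem} (the upper arrow \(\IL^{\spl}\)). By \thref{Langlands vs cycle class}, the element attached to \(\bfa\in\IS^{\spl}\) is the fundamental class of \(X^{\spl,\bfb}_{\mu^*}(\tau^*)\) with \(\bfb=\iota^{\spl}_{\IM\IV}(\bfa)\). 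Pushing forward to the Shimura variety, the cycle class of a basic component is therefore the image of \(C(\IG'(\IQ)\backslash\IG'(\IA_f)/K)\) under the transfer \(\bfJ(V_\tau^*\otimes V_{\mu^*})\) applied to the standard generators. Running the transfer backwards via \(\bfJ(V_{\mu^*}^*\otimes V_\tau)\), the composite \(\mathrm{H}^{\BM}\to \mathrm{H}_c\to\mathrm{H}^{\BM}\) is the intersection matrix. This is the pairing \(\bfJ(V)\otimes_\bfJ\bfJ(V^*)\to\bfJ\) with \(V\) the \(\tau\)-weight part of \(V_{\mu^*}\), so its determinant is given by \thref{determinant of pairing}. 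Compatibility with \(\Hh_\Gg\) (\thref{remark on S=T} in the \(0\)-dimensional case) lets me specialize \(\bfJ\) at the Satake parameter of \(\pi_{f,p}\). \(V_{\mu^*}\)-generality says the determinant does not vanish there, which gives injectivity on the \(\pi_f\)-part.

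For (3), I would compute \(\dim T^{\langle\rho,\mu\rangle}(\pi_p\pi_f^p)\) from \cite{ScholzeShin:Cohomology}. At a split place, \(W^{\langle2\rho,\mu\rangle}(\pi_f)\) is, up to multiplicity \(m(\pi_f)\) on \(\IG'\), the representation \(r_{\mu^*}\) of the Weil group evaluated on the Satake parameter. Its Frobenius-fixed part under \((\gamma\phi_p)^{jm}\) is \(V_{\mu^*}^{\Tatep}\) by \thref{lemma strongly general}, using strong generality; the central character condition holds since \(\pi\) is algebraic. Its dimension is therefore \(m(\pi_f)\dim V_{\mu^*}^{\Tatep}\), which matches the dimension of the \(\pi_f\)-part of the source of \(\cl\) from (1). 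Injectivity from (2) then gives surjectivity; strong generality implies generality, possibly after a small extra check. The main obstacle I expect is step (2): matching \(\IL^{\spl}\) with the global cycle class map, including the Hecke-action compatibility at \(p\) and the correct identification of the intersection pairing with the determinant of \thref{determinant of pairing}. There I would follow \cite[§7.4]{XiaoZhu:Cycles} verbatim, substituting the splitting objects.
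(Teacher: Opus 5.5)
Your arguments for (1) and (2) follow the paper's proof. The ingredients match: very specialness of the basic element via \thref{When is basic element unramified}, and components via \thref{irreducible components of basic stratum}. The basic cycle classes are written as Jacquet--Langlands transfers along \thref{exotic corr via RZ unif} and identified by \thref{Langlands vs cycle class}. The dual transfer comes from Poincar\'e duality on the smooth \(\Sh^{\spl}_{\mu,K}\), and the intersection determinant is computed by \thref{determinant of pairing}, specialized using the \(0\)-dimensional compatibility of \(\bfJ\)-actions.

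Two imprecisions in (2). First, the \(V\) entering \thref{determinant of pairing} is the full central twist \(V^{\spl}_\mu\otimes V^{\can,*}_{\tau_I}\), a \(\widehat{G}^I\)-representation, not a weight space. Second, equating the intersection determinant with the pairing determinant needs the MV-cycle elements \(\bfa_{i,\mathrm{in}}\) to form a \(\bfJ\)-basis. The paper gets this from Xiao--Zhu's basis theorem after passing to \(\widehat{G}^I_{\der}\times Z(\widehat{G}^I)\), where the \(\bfb\)-dependent central \(\tau_{\bfb,I}\) are untwisted to one \(\tau_I\). Your ``standard generators'' hides both points.

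The genuine gap is in (3). You compare dimensions for a single \(\pi_f=\pi_p\pi_f^p\). This presupposes that \(\cl\) sends the \(\pi_f\)-isotypic part of \(\HBM_{\langle2\rho,\mu\rangle}(\Sh^{\spl}_{\mu,K,\bas})\) into the \(\pi_f\)-isotypic part of \(\mathrm{H}^{\langle2\rho,\mu\rangle}_{\comp}(\Sh^{\spl}_{\mu,K})\), i.e.\ that \(\cl\) is \(\Hh_{K_p}\)-equivariant. That is not known. On the source, the \(\pi_p\)-decomposition is that of \(C(\IG'(\IQ)\backslash\IG'(\IA_f)/K)\). It matches the \(\bfJ\)-action through \(\IL\) by the \(0\)-dimensional case, and \(\cl\) intertwines this with the \(\IL\)-induced \(\bfJ\)-action on \(\mathrm{H}_{\comp}(\Sh^{\spl}_{\mu,K})\). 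On the target, however, the \(\pi_p\)-decomposition defining \(W^{2i}(\pi_f)\) and used in the Scholze--Shin formula comes from Hecke correspondences on the generic fibre. Identifying these two actions is exactly the open conjecture of \thref{remark on S=T}; only \(\Hh_{K^p}\)-equivariance of \(\cl\) is available. This is why the statement sums over \(\pi_p\), and why the paper remarks after the theorem that a single \(\pi_f\) would need that conjecture.

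The repair is to fix only \(\pi_f^p\), in three steps.
\begin{enumerate}
\item The composite \(\cl'\circ\cl\) acts on each summand \(C(\IG'(\IQ)\backslash\IG'(\IA_f)/K)[\pi_p\pi_f^p]\otimes V^{\Tatep}_{\mu^*}\) through the intersection matrix evaluated at the Satake parameter of \(\pi_p\). So the argument of (2) gives injectivity on the whole \(\pi_f^p\)-part at once.
\item The Grothendieck-group identity \(m_{\IG'}(\pi_p\pi_f^p)[V_{\mu^*}]=[W(\pi_p\pi_f^p)]\), with \thref{lemma strongly general}, bounds \(\sum_{\pi_p}\dim T^{\langle\rho,\mu\rangle}(\pi_p\pi_f^p)\) by \(\sum_{\pi_p}m_{\IG'}(\pi_p\pi_f^p)\dim V^{\Tatep}_{\mu^*}\).
\item Cycle classes of basic components are Tate classes landing in the \(\pi_f^p\)-part, so the summed dimension count yields surjectivity onto \(\sum_{\pi_p}T^{\langle\rho,\mu\rangle}(\pi_p\pi_f^p)\otimes\pi_p\pi_f^p\).
\end{enumerate}
As you note, invoking (2) here also needs the \(\pi_p\) to be \(V_{\mu^*}\)-general.
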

\begin{rmk}
	Once the conjecture from \thref{remark on S=T} is known, it will be possible to replace the sum in part (3) above by a single irreducible \(\Hh_{K,\overline{\IQ}_\ell}\)-module \(\pi_f\).
\end{rmk}
\begin{proof}
	(1) By \thref{When is basic element unramified}, the condition that \(V_{\mu^*}^{\Tatep}\neq 0\) is equivalent to the basic element \(b\in B(G,\mu^*)\) being very special.
	The dimension statement and computation of Borel--Moore homology then follow from \thref{irreducible components of basic stratum}.
		
	(2) By \thref{irreducible comp of ADLV in very special case} and \thref{irreducible components of basic stratum}, we get canonical decompositions
	\[X_{\leq \mu^*}^{\spl}(b) = \bigcup_{\bfb\in \IM\IV_{\mu^*}^{\spl,\Tatep}} X_{\leq \mu^*}^{\bfb,\spl}(b)\]
	and
	\[\Sh_{\mu,K,\bas}^{\spl} = \bigcup_{\bfb\in \IM\IV_{\mu^*}^{\spl,\Tatep}} \Sh_{\mu,K,\bas}^{\bfb,\spl}.\]
	As in \cite[§A.2.18]{XiaoZhu:Cycles}, there are the cycle class maps
	\[\cl(\bfb)\colon \mathrm{H}^{\BM}_{\langle 2\rho,\mu\rangle}(\Sh_{\mu,K,\bas}^{\bfb,\spl}) \cong C(\IG'(\IQ)\backslash \IG'(\IA_f)/K,\overline{\IQ}_\ell) \to \mathrm{H}^{\langle 2\rho,\mu\rangle}_\comp(\Sh_{\mu,K},\overline{\IQ}_\ell (\langle \rho,\mu\rangle))\]
	and 
	\[\cl:= \bigoplus_{\bfb\in \IM\IV^{\spl,\Tatep}}\cl(\bfb):\bigoplus_{\bfb\in \IM\IV^{\spl,\Tatep}}\mathrm{H}^{\BM}_{\langle2 \rho,\mu\rangle}(\Sh_{\mu,K,\bas}^{\bfb,\spl}) \cong \mathrm{H}^{\BM}_{\langle 2\rho,\mu\rangle}(\Sh_{\mu,K,\bas}^{\spl})\to \mathrm{H}^{\langle 2\rho,\mu\rangle}_\comp(\Sh_{\mu,K},\overline{\IQ}_\ell (\langle \rho,\mu\rangle)).\]
	Let \(\tau^*\in X_*(Z_G)\subseteq X_*(T)\) such that \(p^{\tau^*}\) represents the (very special) basic element \(b\in B(G,\mu^*)\), \thref{When is basic element unramified}.
	Then we can apply the proof of \thref{JL for canonical models} to the exotic Hecke correspondence from \thref{exotic corr via RZ unif} to get a map
	\[\JL_{\tau_I,\mu}\colon \mathrm{H}^0_c(\Sh_{\tau,K}^{\can}, \overline{\IQ}_\ell) \otimes_{\bfJ} \Hom_{\bfJ}(\widetilde{V_{\tau_I}^{\can}},\widetilde{V_\mu^{\spl}}) \to \mathrm{H}_c^{\langle 2\rho,\mu\rangle}(\Sh_{\mu,K}^{\spl},\overline{\IQ}_\ell(\langle\rho,\mu\rangle)).\]
	
	Now, for \(\bfb\in \IM\IV^{\spl,\Tatep}_{\mu^*}\), let \(\lambda\in X_*(T)\) be the unique element such that \(\bfb\in \IM\IV^{\spl}_{\mu^*}(\lambda^*)\).
	Then by \thref{minimal elements in satake to mv}, there exists a minimal \(\nu_{\bfb,I}\in X_*(T)_I^+\) such that \(\lambda_I + \nu_{\bfb,I} - \sigma(\nu_{\bfb,I})\in X_*(T)_I^+\) and \(\bfb\) lies in the image of the map \(\iota_{\IM\IV}^{\spl}\colon \IS_{\nu_{\bfb,I}^*,\mu^*\mid \lambda_I^*+\nu_I^*}^{\spl} \to \coprod_{\lambda\in X_*(T)\colon \lambda\mapsto \lambda_I\in X_*(T)_I} \IM\IV_{\mu^*}^{\spl}(\lambda^*)\) from \thref{Satake to MV cycles}.
	Let \(\tau_{\bfb,I}:= \lambda_I + \nu_{\bfb,I} - \sigma(\nu_{\bfb,I})\), which is central by \thref{centrality of tau}.
	Then \(\bfa := (\iota_{\IM\IV}^{\spl})^{-1}(\bfb)\) yields an irreducible component of \(\Gr_{\nu_{\bfb,I}^*,\mu^*\mid \tau_{\bfb,I}^* + \sigma(\nu_{\bfb,I}^*)}^{\spl}\), and hence an element
	\[\bfa_{\mathrm{in}} \in \Hom_{\widehat{G}^I_{\overline{\IQ}_\ell}}(V_{\nu_{\bfb,I}^*}^{\can} \otimes V_{\mu^*}^{\spl}, V_{\tau_{\bfb,I}^*}^{\can} \otimes V_{\sigma(\nu_{\bfb,I}^*)}^{\can}) \cong \Hom_{\widehat{G}^I_{\overline{\IQ}_\ell}}(V_{\sigma(\nu_{\bfb,I})}^{\can} \otimes V_{\tau_{\bfb,I}}^{\can} \otimes V_{\nu_{\bfb,I}^*}^{\can}, V_{\mu}^{\spl})\]
	by \thref{basic properties of Satake correspondences} \eqref{homs and irr}.
	By \eqref{reps to coh}, this yields an element in \(\Hom_{\bfJ}(\widetilde{V_{\tau_{\bfb,I}}^{\can}},\widetilde{V_{\mu}^{\spl}})\).
	This element can be plugged into the map \(\JL_{\tau_{\bfb,I},\mu}\), and the diagram
	\[\begin{tikzcd}
		\mathrm{H}_c^0(\Sh_{\tau_{\bfb},K}^{\can},\overline{\IQ}_\ell) \arrow[rr, "\cong"] \arrow[rd, "\JL_{\tau_{\bfb,I},\mu}(\bfa_{\mathrm{in}})"'] && \mathrm{H}_{\langle2\rho,\mu\rangle}^{\BM}(\Sh_{\mu,K,\bas}^{\bfb,\spl}) \arrow[dl, "\cl(\bfb)"]\\
		&\mathrm{H}_c^{\langle2\rho,\mu\rangle}(\Sh_{\mu,K,}^{\spl}, \overline{\IQ}_\ell(\langle\rho,\mu\rangle))&
	\end{tikzcd}\]
	commutes by \thref{Langlands vs cycle class}; here \(\Sh_{\tau_{\bfb},K}^{\can}\) is the special fiber of the weak Shimura variety associated with \(\IG'\) and some lift \(\tau_{\bfb}\) of \(\tau_{\bfb,I}\).
	
	On the other hand, we have a map \(\cl'(\bfb)\colon \mathrm{H}_c^{\langle2\rho,\mu\rangle}(\Sh_{\mu,K}^{\spl},\overline{\IQ}_\ell(\langle \rho,\mu\rangle)) \to \mathrm{H}_c^0(\Sh_{\tau_{\bfb},K}^{\can},\overline{\IQ}_\ell)\) which is dual to the cycle class map \cite[§A.2.18]{XiaoZhu:Cycles}; here we crucially use the smoothness of \(\Sh_{\mu,K}^{\spl}\).
	Moreover, similar to the above paragraph, we have maps
	\[\JL_{\mu,\tau_{\bfb,I}} \colon \mathrm{H}_c^{\langle2\rho,\mu\rangle}(\Sh_{\mu,K}^{\spl},\overline{\IQ}_\ell(\langle \rho,\mu\rangle)) \otimes_{\bfJ} \Hom_{\bfJ}(\widetilde{V_\mu^{\spl}},\widetilde{V_{\tau_{\bfb,I}}^{\can}}) \to \mathrm{H}_c^0(\Sh_{\tau_{\bfb},K}^{\can},\overline{\IQ}_\ell)\]
	and
	\[\bfa_{\mathrm{out}} \in \Hom_{\widehat{G}^I_{\overline{\IQ}_\ell}}(V_{\tau_{\bfb,I}^*}^{\can} \otimes V_{\sigma(\nu_{\bfb,I}^*)}^{\can},V_{\nu_{\bfb,I}^*}^{\can} \otimes V_{\mu^*}^{\spl}) \cong \Hom_{\widehat{G}^I_{\overline{\IQ}_\ell}}(V_{\sigma(\nu_{\bfb,I}^*)}^{\can} \otimes V_{\mu}^{\spl} \otimes V_{\nu_{\bfb,I}}^{\can}, V_{\tau_{\bfb,I}}^{\can}),\]
	and the maps \(\cl'(\bfb)\) and \(\JL_{\mu,\tau_{\bfb,I}}(\bfa_{\mathrm{out}})\) agree.
	Thus, if \(\{\bfb_1,\ldots,\bfb_r\}\) are the elements of \(\IM\IV_{\mu^*}^{\spl,\Tatep}\), the intersection matrix of the cycle classes coming from the basic Newton stratum has \((i,j)\)-entry
	\begin{equation}\label{intersection matrix}\JL_{\mu,\tau_{\bfb_j,I}}(\bfa_{j,\mathrm{out}}) \circ \JL_{\tau_{\bfb_i,I},\mu}(\bfa_{i,\mathrm{in}}).\end{equation}
	
	For \(i\) and \(j\), consider the morphism \(\widetilde{V_{\tau_{\bfb_i,I}}^{\can}} \to \widetilde{V_{\tau_{\bfb_j,I}}^{\can}}\) induced by \[\Xi_{V_{\nu_{\bfb_j,I}^*}^{\can}}(\bfa_{j,\mathrm{out}})\circ \Xi_{V_{\nu_{\bfb_i,I}}^{\can}}(\bfa_{i,\mathrm{in}}) \colon \widetilde{V_{\tau_{\bfb_i,I}}^{\can}} \to \widetilde{V_{\mu}^{\spl}} \to \widetilde{V_{\tau_{\bfb_j,I}}^{\can}},\]
	where the \(\Xi_-\) are defined similarly to \eqref{reps to coh}.
	This induces an endomorphism \(\gamma\) of the vector bundle \(\bigoplus_{\IM\IV_{\mu^*}^{\spl,\Tatep}} \widetilde{V_{\tau_{\bfb_i,I}}^{\can}}\) on \(\widehat{G}^I\sigma/\widehat{G}\).
	In order to determine when the determinant of the intersection matrix \eqref{intersection matrix} is invertible, we may consider the matrix coming from the endomorphism \(\gamma\), since the Jacquet--Langlands maps \(\JL\) are compatible with composition.
	
	The determinant of \(\gamma\) can be viewed as a global section of \(\widehat{G}^I\sigma_p /\widehat{G}^I\), i.e., as an element of \(\bfJ=\bfJ_{\widehat{G}^I}\).
	To compute it, consider the short exact sequence
	\[1 \to \Delta \to \widehat{G}':= \widehat{G}^I_{\der} \times Z(\widehat{G}^I) \to \widehat{G}^I \to 1\]
	as in \eqref{enlarging dual group}, on which \(\sigma_p\) acts.
	Then \cite[(4.1.3)]{XiaoZhu:Vector} yields an isomorphism \(\Gamma(\widehat{G}^I\sigma_p /\widehat{G}^I,\Oo_{\widehat{G}^I \sigma_p/\widehat{G}^I}) \cong \Gamma(\widehat{G}'\sigma_p /\widehat{G}', \Oo_{\widehat{G}'\sigma_p/\widehat{G}'})^{\Delta_{\sigma_p}}\), where \(\Delta_{\sigma_p}\) denotes the \(\sigma_p\)-coinvariants of \(\Delta\).
	Thus, it suffices to compute the determinant as a global section of \(\widehat{G}'\sigma_p/\widehat{G}'\), i.e., as an element in \(\bfJ':=\bfJ_{\widehat{G}'}\).
	
	Now, the restriction of \(\tau_I\in X_*(T)_I\cong X^*(\widehat{T}^I)\) to \(X^*(Z(\widehat{G})^I)\) is the central character of \(V_{\mu}^{\spl}\) (or more precisely, the central character of each direct summand of \(V_\mu^{\spl}\) as a \(\widehat{G}^I\)-representation).
	The restriction of \(\lambda_I = \tau_{\bfb_i,I} + \sigma(\nu_{\bfb_i,I}) - \nu_{\bfb_i,I} \in X^*(\widehat{T}^I)\) to \(Z(\widehat{G})^I\subseteq \widehat{T}^I\) then agrees with \(\tau_I\), which we may also view as a character of \(\widehat{G}'\) via restriction along \(\widehat{G}' = \widehat{G}_{\der}^I\times Z(\widehat{G})^I \to Z(\widehat{G})^I\).
	This yields an isomorphism \(\widetilde{V_{\tau_I}^{\can}} \cong \widetilde{V_{\tau_{\bfb_i,I}}^{\can}} \colon f\mapsto fe^{\nu_{\bfb_i,I\mid Z(\widehat{G})^I}}\) of vector bundles on \(\widehat{G}'\sigma_p/\widehat{G}'\), and we may replace all \(\tau_{\bfb,I}\) above by \(\tau_I\) when working on \(\widehat{G}'\sigma_p/\widehat{G}'\).
	Moreover, the set \(\left\{\widetilde{V_{\tau_I}^{\can}} \xrightarrow{\cong} \widetilde{V_{\tau_{\bfb_i,I}}^{\can}} \xrightarrow{\bfa_{i,\mathrm{in}}} \widetilde{V_{\mu}^{\spl}}\mid i\in \IM\IV_{\mu^*}^{\spl,\Tatep}\right\}\) forms a basis of \(\Hom_{\bfJ'}(\widetilde{V_{\tau_I}^{\can}},\widetilde{V_{\mu}^{\spl}})\) by \cite[Theorem 4.3.2]{XiaoZhu:Vector}, and dually we get a basis for \(\Hom_{\bfJ'}(\widetilde{V_{\mu}^{\spl}},\widetilde{V_{\tau_I}^{\can}})\).
	The determinant \(\Dd\) (as a global section on \(\widehat{G}'\sigma_p/\widehat{G}'\)) can thus be identified with the determinant of the pairing
	\[\Hom_{\bfJ'}(\widetilde{V_{\tau_I}^{\can}},\widetilde{V_{\mu}^{\spl}}) \otimes_{\bfJ'} \Hom_{\bfJ'}(\widetilde{V_{\mu}^{\spl}},\widetilde{V_{\tau_I}^{\can}}) \to \bfJ',\]
	which in turn can be identified with
	\[\bfJ'(V_\mu^{\spl} \otimes V_{\tau_I}^{\can,*}) \otimes_{\bfJ'} \bfJ'(V_{\tau_I}^{\can} \otimes V_{\mu}^{\spl,*}) \to \bfJ'.\]
	Since the cycle class map \(\mathrm{H}_{\langle 2\rho,\mu\rangle}^{\BM}(\Sh_{\mu,K,\bas}^{\spl})[\pi_f] \to \mathrm{H}_c^{\langle 2\rho,\mu\rangle}(\Sh_{\mu,K}^{\spl},\overline{\IQ}_\ell(\langle \rho,\mu\rangle))\) is injective if the Satake parameter of \(\pi_{f,p}\) does not belong to the divisor determined by \(\Dd\), part (2) of the theorem follows from \thref{determinant of pairing}.
	
	(3) Let \(\pi_f^p\) be an irreducible \(\Hh_{K^p}\)-module, and \(\pi_p\) an irreducible \(\Hh_{K_p}\)-module.
	Let  \(m_{\IG'}(\pi_p\pi_f^p)\) denote the multiplicity of \(\pi_p\pi_f^p\) in \(C(\IG'(\IQ) \backslash \IG'(\IA_f) / K, \overline{\IQ}_\ell)\).
	
	Now, \(W(\pi_p\pi_f^p):=\bigoplus_{i=0}^{\langle 2\rho,\mu\rangle} W^{2i}(\pi_p\pi_f^p,\overline{\IQ}_\ell(i))\) admits an action of \(\Gamma_{\IE}\), and its restriction to \(\Gamma_{E^{\Gal}}\) factors through \(\Gamma_k\) (since \(\mathscr{S}_{K}^{\spl}(\IG,\IX)\) is proper smooth over \(\Oo_{E^{\Gal}}\)).
	On the other hand we have a representation of \(\Gamma_k\) on \(V_{\mu^*}\) obtained via the composition
	\[\Gamma_k \to \widehat{G}^I \rtimes \Gamma_k \to \GL(V_{\mu^*}).\]
	Here, the map \(\Gamma_k \to \widehat{G}^I \rtimes \Gamma_k\) is the spherical Langlands parameter (in the sense of \cite{Zhu:Ramified,vdH:SphericalParameters}) attached to \(\pi_p\) under the Satake isomorphism \cite[Theorem 10.11]{vdH:RamifiedSatake} (after base change to \(\overline{\IQ}_\ell\); in particular this depends on the choice of \(q^{\frac{1}{2}}\)).
	Moreover, the representation \(\widehat{G}^I\rtimes \Gamma_k \to \GL(V_\mu^*)\) is the one from \cite[Definition 5.2]{vdH:SphericalParameters}, i.e., its restriction to \(\widehat{G}^I\subseteq \widehat{G}^I\rtimes \Gamma_k\) agrees with the restriction along \(\widehat{G}^I\subseteq \widehat{G}\) of the irreducible representation of highest weight \(\mu^*\), and the \(\Gamma_k\)-action fixes this highest weight space.
	
	Then by combining \cite[Theorem 9.1]{ScholzeShin:Cohomology} and \cite[Theorem 2.2.1]{XiaoZhu:Cycles}, we see that in the Grothendieck group of \(\phi_v\)-representations, we have
	\begin{equation}\label{trace formula}m_{\IG'}(\pi_p\pi_f^p)[V_{\mu^*}] = [W(\pi_p\pi_f^p)].\end{equation}
	Indeed, note that \cite[§2.2]{XiaoZhu:Cycles} do not need any unramified assumptions.
	Moreover, \cite{ScholzeShin:Cohomology} use the Langlands parameter associated to \(\pi_p\) constructed by Harris--Taylor \cite{HarrisTaylor:Geometry}.
	But this is known to agree with the Fargues--Scholze local Langlands correspondence \cite[Theorem IX.7.4]{FarguesScholze:Geometrization}, which in turn is compatible with our construction by \cite[Corollary 4.5]{vdH:SphericalParameters}.
	
	Finally, part (3) of the theorem follows from \eqref{trace formula} by applying \(\bigcup_{j\geq 1}(-)^{\phi_v^j}\) to both sides, using \thref{lemma strongly general}.
\end{proof}

\appendix
\appendicestocpagenum

\section{Complements on étale motives}\label{Appendix:motives}

In this appendix, we recall and gather the necessary background on étale motives, which is the sheaf theory used throughout most of the main text.
We will refer to \cite[§2]{vdH:RamifiedSatake} for more details, and emphasize the parts that have not been discussed in loc.~cit.
For the sake of accessibility, and since everything is probably well-known, we will be brief and focus on the applications necessary in the main text, rather than developing the six functor formalism in the highest possible generality.
Throughout this appendix, \(k\) will denote a finite field of characteristic \(p\), or an algebraic closure thereof.
Moreover, \(\Lambda\) will denote any (discrete) \(\IZ[\frac{1}{p}]\)-algebra.

\subsection{Review of the motivic six functor formalism}

The motivic theory of interest in this paper is the theory of étale motives, constructed in \cite{CisinskiDeglise:Etale} via the h-topology (and upgraded to the level of \(\infty\)-categories in \cite{Robalo:Theorie,Khan:Motivic,Preis:Motivic}); in particular it satisfies h-descent.
As explained in \cite[Theorem 2.10]{RicharzScholbach:Witt} (cf.~also \cite[Proposition 2.2]{vdH:RamifiedSatake}), this theory is invariant under the perfection functor, and hence descends to a functor
\[\DM(-) := \DM(-,\Lambda) \colon (\Sch_k^{\pfp})^{\op} \to \Pr_{\Lambda}^{\St}\colon(f\colon X\to Y)\mapsto (f^!\colon \DM(Y)\to \DM(X)),\]
which comes equipped with a full six-functor formalism.
Here, \(\Sch^{\pfp}_k\) is the category of perfect schemes which are \emph{perfectly of finite presentation} over \(k\) (i.e., locally the perfection of a scheme of finite presentation over \(k\)), and \(\Pr_{\Lambda}^{\St}\) is the \(\infty\)-category of stably presentably \(\Lambda\)-linear \(\infty\)-categories, with colimit-preserving \(\Lambda\)-linear functors.

By applying Kan extensions as in \cite[Definition 2.2.1]{RicharzScholbach:Intersection} (after fixing a large enough regular cardinal \(\kappa\) and only considering affine \(k\)-schemes which \(\kappa\)-filtered colimits of affine schemes in \(\Sch_k^{\pfp}\)), this can be extended to a functor
\begin{equation}\label{motives on prestacks}\DM(-):=\DM(-,\Lambda)\colon (\PreStk_k^{\perf})^{\op}\to \Pr_{\Lambda}^{\St}\colon (f\colon X\to Y) \mapsto (f^!\colon \DM(Y) \to \DM(X)),\end{equation}
where \(\PreStk_k^{\perf}=\Fun((\AffSch_k^{\perf})^{\op},\Ani)\) is the category of perfect prestacks over \(k\), i.e., presheaves on the category of perfect affine schemes over \(k\), with values in anima (i.e., \(\infty\)-groupoids).
More precisely, this means that for a perfect prestack \(X\), we have
\begin{equation}\label{kanlim}\DM(X) \cong \varprojlim_{T\to X} \DM(T),\end{equation}
where \(T\) ranges over the category of perfect affine schemes equipped with a map to \(X\), and
\begin{equation}\label{kancolim}\DM(T) \cong \varinjlim_{T\to T'} \DM(T'),\end{equation}
where \(T'\) ranges over the perfect affine schemes which are pfp over \(\Spec k\); cf.~\cite[Remark 2.2.2 (i)]{RicharzScholbach:Intersection}.
However, although \(f^!\) exists for all morphisms \(f\) of prestacks, this Kan extended formalism does not admit the six functors for all morphisms \(f\).

It is explained in \cite[§2.2]{vdH:RamifiedSatake} why \(f^*,f_*,f_!\) exist for morphisms of ind-schemes (which are required to be schematic in the case of \(f^*\)), and quotients of morphisms of schemes by pro-smooth group actions.
More generally, \cite[Proposition 2.3.3]{RicharzScholbach:Intersection} established the existence of a left adjoint \(f_!\) of \(f^!\) for a general morphism of ind-algebraic stacks (locally of finite type).
In this paper, we will also need the *-functors for representable morphisms of algebraic stacks.
To construct these, recall that for a category \(\Cc\), there is the \(\infty\)-category of correspondences
\(\Corr(\Cc)\), with the same objects as \(\Cc\), and where the morphisms \(X\to Y\) are diagrams 
\begin{equation}\label{geometric corr}\begin{tikzcd}
	&Z\arrow[ld, "f"'] \arrow[rd, "g"]&\\
	X&&Y
\end{tikzcd}\end{equation}
in \(\Cc\); we refer to \cite[Chapter 7]{GaitsgoryRozenblyum:Study1}, \cite[§2]{HeyerMann:6functor}, or \cite[§8.1]{Zhu:Tame} for the precise definition, as well as the symmetric monoidal structure on this category.
We also note that this category is different from the notion of motivic (or cohomological) correspondences that we will recall in §\ref{App:Corr}.
By requiring the map \(f\colon Z\to Y\) to lie in a class of morphisms \(E\) in \(\Cc\), we obtain a full subcategory \(\Corr(\Cc)_E\subseteq \Corr(\Cc)\).
Then the motivic six-functor formalism \(\DM\colon (\Sch_k^{\pfp})^{\op} \to \Pr_{\Lambda}^{\St}\) can be packaged into a lax symmetric monoidal functor
\[\DM\colon \Corr(\Sch_k^{\pfp}) \to \Pr_{\Lambda}^{\St},\]
sending a correspondence as in \eqref{geometric corr} to the functor \(f_*g^!\).
(Note that this is different from the usual convention, which would be \(f_!g^*\), but is in line with the formalism from \cite[§10.4]{Zhu:Tame}.
Moreover, the lax monoidality requires the compatibility of \(\boxtimes\) with !-pullback and *-pushforward, which holds under our assumptions by \cite[Propositions 2.3.5 and 2.1.20]{JinYang:Kunneth}.)
Let \(\AlgStk^{\pfp}\) be the category of perfect algebraic stacks, perfectly of finite presentation, and \(\Repr\) the class of morphisms of such stacks which are representable in schemes.
(For simplicity, in order to avoid having to worry about *-functors between general morphisms of algebraic spaces, we will assume the diagonals of all algebraic stacks are representable by schemes.
This holds for all quotient stacks, and all algebraic stacks that appear in this paper.)
Then \cite[Proposition 8.45]{Zhu:Tame} yields an extension
\[\DM\colon \Corr(\AlgStk^{\pfp})_{\Repr} \to \Pr_{\Lambda}^{\St},\]
which agrees with the previously constructed functor \(\DM\).
The resulting \((-)_*\) and \((-)^!\) clearly commute with colimits, and the same argument as in \cite[Proposition 2.3.3]{RicharzScholbach:Intersection} shows that they commute with limits.
Thus, we get left adjoints \((-)^*\) for representable morphisms, and \((-)_!\) in general.

Moreover, for an algebraic stack \(X\), the category \(\DM(X)\) admits a symmetric monoidal structure, making the *-pullback functors symmetric monoidal; we will denote the monoidal unit by \(\unit_X\), or even by \(\unit\).
Indeed, since we assumed the diagonal \(\Delta_X\) of \(X\) to be representable by schemes, the functor is given by \(-\otimes - := \Delta^*(-\boxtimes-)\), where the exterior product \(\boxtimes\) is given by the lax symmetric monoidal structure of \(\DM\).
This tensor product commutes with colimits in each variable, and hence \(\DM(X)\) is closed symmetric monoidal.
The internal Hom is denoted by
\[\IHom_X(-,-)\colon \DM(X)^{\op} \times \DM(X) \to \DM(X).\]

Let us also record various Künneth formulas for motives on algebraic stacks.

\begin{prop}\thlabel{Kunneth}
	Let \(f_i\colon X_i\to Y_i\), for \(i=1,2\), be representable morphisms of pfp algebraic stacks, and consider the commutative diagram
	\[\begin{tikzcd}
		X_1 \arrow[d, "f_1"] & X_1 \times_k X_2 \arrow[l] \arrow[r] \arrow[d, "f"] & X_2 \arrow[d, "f_2"]\\
		Y_1 & Y_1\times_k Y_2 \arrow[l] \arrow[r] & Y_2.
	\end{tikzcd}\]
	Then there are natural isomorphisms of functors
	\begin{equation}\label{Kunneth*up}f_1^*(-) \boxtimes f_2^*(-) \cong f^*(-\boxtimes -),\end{equation}
	\begin{equation}\label{Kunneth!down}f_{1,!}(-) \boxtimes f_{2,!}(-) \cong f_!(-\boxtimes-),\end{equation}
	\begin{equation}\label{Kunneth*down}f_{1,*}(-) \boxtimes f_{2,*}(-) \cong f_*(-\boxtimes -),\end{equation}
	\begin{equation}\label{Kunneth!up}f_1^!(-) \boxtimes f_2^!(-) \cong f^!(-\boxtimes -),\end{equation}
	and
	\begin{equation}\label{KunnethHom}\IHom_{X_1}(-,-)\boxtimes \IHom_{X_2}(-,-) \cong \IHom_{X_1\times_k X_2}(-\boxtimes-,-\boxtimes-).\end{equation}
\end{prop}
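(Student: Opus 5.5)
The plan is to reduce all five formulas to the case of schemes, where they are known, by descent along smooth atlases. For pfp schemes, \eqref{Kunneth!down} is the standard compatibility of $!$-pushforward with exterior products (proper base change plus the projection formula). \eqref{Kunneth*up} holds by monoidality of $*$-pullback. \eqref{Kunneth*down} and \eqref{Kunneth!up} are exactly \cite[Propositions 2.1.20 and 2.3.5]{JinYang:Kunneth}, which are already used to make $\DM$ lax symmetric monoidal on $\Corr(\Sch_k^{\pfp})$. \eqref{KunnethHom} for constructible arguments is \cite[Proposition 2.1.20]{JinYang:Kunneth} as well; we note that, as in the main text, it will only be applied to constructible objects.

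To pass from schemes to algebraic stacks, I would proceed as follows. First I would write down the natural comparison maps at the level of stacks. For \eqref{Kunneth*up} this comes from the symmetric monoidality of $f^*$ together with $\boxtimes = \pr_1^*(-)\otimes \pr_2^*(-)$. Maps \eqref{Kunneth*down} and \eqref{Kunneth!up} are built into the lax symmetric monoidal structure of the extended functor $\DM\colon \Corr(\AlgStk^{\pfp})_{\Repr}\to \Pr^{\St}_\Lambda$ from \cite[Proposition 8.45]{Zhu:Tame}: the correspondence $Y_1\times Y_2 \leftarrow X_1\times X_2 \xrightarrow{=} X_1\times X_2$ is the product of the two correspondences $Y_i\leftarrow X_i\to X_i$. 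Then \eqref{Kunneth!down} follows from \eqref{Kunneth!up} by passing to left adjoints, and \eqref{KunnethHom} follows from \eqref{Kunneth*up} and \eqref{Kunneth!up} via the adjunction.

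Next I would check that these maps are isomorphisms. Choose smooth pfp atlases $U_i\to Y_i$. Since the $f_i$ are representable, the base changes $V_i := X_i\times_{Y_i} U_i$ are schemes, and $V_1\times V_2\to X_1\times X_2$ is again a smooth atlas. Because $\DM(Y_1\times Y_2)$ is the limit of $\DM$ on the Čech nerve of $U_1\times U_2$ (by \eqref{kanlim} and h-descent), and smooth $!$-pullback is conservative and jointly detects isomorphisms, it suffices to check each formula after $!$-pullback along the atlas. Smooth base change for representable morphisms, together with the relation $g^! \cong g^*(d)[2d]$ for smooth $g$, lets us commute these pullbacks past $f_*$, $f_!$, $f^*$, $f^!$ and $\IHom$ (for the last, smooth pullback commutes with $\IHom$). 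The exterior product is compatible with this because $\pr_i^*$ and $\pr_i^!$ of smooth maps differ by twists, and the twists add up across the two factors. This reduces everything to the scheme case on $V_i\to U_i$.

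The main obstacle is the bookkeeping in this last step: verifying that the comparison maps formed on stacks restrict, under these base change identifications, to the scheme-level Künneth maps, rather than to some other isomorphism. The cleanest way to see this is to note that the Künneth maps on stacks are defined functorially through the lax monoidal structure on $\Corr$, which is constructed by descent from schemes in \cite[§8]{Zhu:Tame}; compatibility with restriction to atlases is then automatic. For \eqref{KunnethHom} an additional constructibility assumption on the first arguments may be required, matching the scheme case.
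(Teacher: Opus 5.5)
Your proposal is correct and takes the same route as the paper. Both construct the comparison maps formally from the six-functor formalism, then check that they are isomorphisms after pulling back along a product of smooth atlases (base-changed along the representable \(f_i\)), which reduces everything to the scheme case of Jin--Yang's K\"unneth theorem. Your caveat that \eqref{KunnethHom} needs a constructibility hypothesis, as in the scheme case, is a fair refinement: the paper's statement leaves it implicit, and the formula is only ever applied to constructible objects.
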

\begin{proof}
	Let us fix smooth atlases \(v_i\colon V_i\to Y_i\), and set \(u_i\colon U_i:=V_i \times_{Y_i} X_i \to X_i\).
	We also have the smooth atlases \(V:=V_1\times_k V_2\to Y_1\times_k Y_2\) and \(U:= U_1 \times_k U_2 \to X_1 \times_k X_2\).
	Then the functors \(f^*,f_*,f_!,f^!\) are compatible with pullback to the smooth atlas.
	When the \(X_i,Y_i\) are schemes, the proposition was shown in \cite[Theorem 1.2.2]{JinYang:Kunneth}.
	In general, the maps can be constructed in the same way, by using formal properties of the 6-functor formalism.
	To check that these maps are isomorphisms, we may pull back to the smooth atlases, which is conservative and compatible with the construction of the maps, which reduces to the case of schemes.
\end{proof}

A similar reduction to the case of schemes, combined with \cite[Theorem 2.4.50 (5)]{CisinskiDeglise:Triangulated}, yields the following.

\begin{prop}\thlabel{Exchange}
	Let \(f\colon X\to Y\) be a representable morphism of pfp algebraic stacks, and let \(M,N\in \DM(Y)\) and \(L\in \DM(X)\).
	Then there are natural isomorphisms
	\begin{equation}\label{exchange1}(f_!L) \otimes M \cong f_!(L\otimes f^*M),\end{equation}
	\begin{equation}\label{exchange2}\IHom_Y(f_!L,M) \cong f_* \IHom_X(L,f^!M),\end{equation}
	and
	\begin{equation}\label{exchagne3}f^!\IHom_Y(M,N) \cong \IHom_X(f^*M,f^!N).\end{equation}
\end{prop}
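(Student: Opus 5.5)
The plan is to reduce to the case of schemes, where \cite[Theorem 2.4.50 (5)]{CisinskiDeglise:Triangulated} gives all three isomorphisms, in the same way as in the proof of \thref{Kunneth}. This happens in two stages: first construct the maps formally, then check invertibility on a smooth atlas.

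\emph{Construction of the maps.} For representable \(f\) the extended formalism \(\DM\colon \Corr(\AlgStk^{\pfp})_{\Repr}\to \Pr^{\St}_\Lambda\) gives adjunctions \(f^*\dashv f_*\) and \(f_!\dashv f^!\). It also gives the symmetric monoidal structure \(-\otimes-=\Delta^*(-\boxtimes-)\), for which \(f^*\) is monoidal. The projection map \(f_!(L\otimes f^*M)\to f_!L\otimes M\) is produced as in the scheme case, by combining base change for \(f_!\) along \(f\times\id\) with the compatibility \eqref{Kunneth!down} of \(\boxtimes\) with \(f_!\) and with \(\Delta^*\). Once \eqref{exchange1} is an isomorphism, \eqref{exchange2} and \eqref{exchagne3} are adjoint to it by the Yoneda lemma. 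For any test object \(K\in\DM(Y)\) one computes \(\Hom(K,\IHom_Y(f_!L,M))\cong\Hom(K\otimes f_!L,M)\cong\Hom(f_!(f^*K\otimes L),M)\cong\Hom(K,f_*\IHom_X(L,f^!M))\), which gives \eqref{exchange2}. Similarly, for \(K\in \DM(X)\) one computes \(\Hom(K,f^!\IHom_Y(M,N))\cong\Hom(f_!K\otimes M,N)\cong \Hom(f_!(K\otimes f^*M),N)\cong\Hom(K,\IHom_X(f^*M,f^!N))\), which gives \eqref{exchagne3}. So it suffices to show that \eqref{exchange1} is an isomorphism.

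\emph{Reduction.} Choose a smooth atlas \(v\colon V\to Y\) with \(V\) a scheme, and let \(u\colon U:=V\times_Y X\to X\). Because \(f\) is representable, \(U\) is a scheme and the base change \(f'\colon U\to V\) is a morphism of schemes. Pullback \(v^*\) (equivalently \(v^!\) up to a Tate twist and shift, by smoothness) is conservative, since \(\DM(Y)\) is the limit over the atlas by \eqref{kanlim} and h-descent. Smooth base change gives \(v^*f_!\cong f'_!u^*\), and \(v^*\), \(u^*\) are monoidal. Hence \(v^*\) sends the map \eqref{exchange1} to the projection map for \(f'\) applied to \(u^*L\) and \(v^*M\), which is an isomorphism by the scheme case.

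\emph{Main obstacle.} The only delicate point is checking that the map built on the stack really pulls back to the map built on the scheme, i.e.\ that the construction is compatible with smooth base change. I would handle this by building every map inside the correspondence-category formalism of \cite[Proposition 8.45]{Zhu:Tame}. In that formalism, base change isomorphisms and the lax monoidal structure are coherently part of the data, so compatibility with restriction to \(V\) is automatic; this is exactly the point that the proof of \thref{Kunneth} also glosses over in the same way.
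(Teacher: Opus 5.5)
Your proposal is correct and follows the same route as the paper, whose proof is a single line: a reduction to schemes via a smooth atlas, exactly as in \thref{Kunneth}, combined with \cite[Theorem 2.4.50 (5)]{CisinskiDeglise:Triangulated}. Your only variation is to check just the projection formula \eqref{exchange1} on the atlas and then deduce \eqref{exchange2} and \eqref{exchagne3} by passing to right adjoints, a harmless economy that spares you from verifying that \(\IHom\) and \(f_*\) are compatible with smooth pullback.
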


Recall that (under our assumptions on the base scheme \(k\)), for any pfp scheme \(X\), the category of compact objects in \(\DM(X)\) agrees with the category \(\DM_{\cons}(X)\) of \emph{constructible} objects (i.e., the thick stable subcategory generated by Tate twists of the motives of perfectly smooth \(X\)-schemes) \cite[Theorem 5.2.4]{CisinskiDeglise:Etale}.
Although compactness is a purely categorical notion, and compactness in \(\DM(X)\) for a pfp algebraic stack \(X\) cannot be checked after pullback to a smooth atlas, we can define a subcategory of \(\DM(X)\), which does satisfy descent, as well as certain finiteness properties.
Namely, since the six 6 functors for étale motives on pfp schemes preserve constructible objects \cite[Corollary 6.2.14]{CisinskiDeglise:Etale}, we obtain a six-functor formalism
\[\DM_{\cons}(-):=\DM_{\cons}(-,\Lambda)\colon (\Sch_k^{\pfp})^{\op} \to \Cat_{\infty,\Lambda}^{\perf}\colon X\mapsto \DM_{\cons}(X),\]
taking values in small idempotent-complete stably \(\Lambda\)-linear \(\infty\)-categories, with exact functors.
By applying the same Kan extension process as for \(\DM\), we can extend \(\DM_{\cons}\) to a functor
\begin{equation}\label{constructible motives on prestacks}\DM_{\cons}(-) := \DM_{\cons}(-,\Lambda) \colon (\PreStk_k^{\perf})^{\op}\to \Cat_{\infty,\Lambda}^{\perf} \colon X \mapsto \DM_{\cons}(X).\end{equation}
This yields, for any perfect prestack \(X\), a full subcategory \(\DM_{\cons}(X)\subseteq \DM(X)\), called the category of \emph{constructible motives} (but it does not agree with the category of compact objects in general).
Moreover, \eqref{kanlim} and \eqref{kancolim} still hold for \(\DM_{\cons}\), but we have to take colimits in \(\Cat_{\infty,\Lambda}^{\perf}\) instead of \(\Pr_{\Lambda}^{\St}\) (in the case of limits, this does not make a difference).
In particular, for a pfp algebraic stack \(X\), a motive \(M\in \DM(X)\) is constructible if and only if its !-pullback to some (equivalently, any) smooth atlas is constructible.
If \(X\) is a colimit of such pfp algebraic stacks \(X_i\) along closed immersions, then we have
\[\DM(X) \cong \varinjlim_i \DM(X_i),\]
with the (fully faithful) transition maps given by pushforward, and \(M\in \DM(X)\) is constructible exactly when it lies in the essential image of \(\DM_{\cons}(X_i) \subseteq \DM(X_i) \to \DM(X)\) for some \(X_i\).

\subsection{Recollections on motivic correspondences}\label{App:Corr}

Next, we review the formalism of motivic correspondences.
Most of the proofs are easy and will follow \cite[§A.2]{XiaoZhu:Cycles}, but we will work with a slightly different setup.
Namely, instead of working with algebraic stacks, with will allow a class of geometric objects which will also incorporate the moduli stacks of local shtukas from Section \ref{Sec:Motivic corr}.

\begin{ass}\thlabel{assumption corr}
	Whenever we are working with motivic correspondences, we will assume the following setup: \(c_X\colon C\to X\) and \(c_Y\colon C\to Y\) are morphisms of prestacks such that
	\begin{enumerate}
		\item\label{assumption 1} \(X =\varprojlim_{i\geq 0} X_{i}\), where each \(X_{i}\) is an algebraic stack over \(\Spec k\), such that the transition morphisms \(X_{i+1}\to X_i\) are either affine bundles, or the base change along a morphism \(*/H \to */H'\), where \(H\to H'\) is a surjection of algebraic \(k\)-groups with split unipotent kernel. 
		The same condition is required for \(Y\).
		\item\label{assumption 2} There exists a map \(c_{X,0}\colon C_0\to X_0\) which is representable by schemes, such that \(C_0 \times_{X_0} X \cong C\), and the induced map \(C \cong C_0\times_{X_0} X \to X\) agrees with \(c_X\).
		The same condition is required for \(c_Y\).
	\end{enumerate}
	In particular, we get a presentation \(C=\varprojlim_i C_0 \times_{X_0} X\), which satisfies condition (1).
	We get a similar presentation induced by \(c_{Y,0}\), but we emphasize that we do not require these two presentations to agree.
\end{ass}

Under the above assumptions, we can compute motives on \(C,X,Y\) as follows.

\begin{lem}\thlabel{DM as colimit}
	Let \(c_X\colon C\to X\) be as in \thref{assumption corr}.
	Then there is an equivalence
	\[\DM(C) \cong \varinjlim_i \DM(C_i),\]
	where the transition morphisms, given by !-pullback, are fully faithful.
	In particular, \(\DM(C)\) is a closed symmetric monoidal category.
	A similar equivalence holds for \(X\), compatibly with the (non monoidal) !-pullback along \(c_X\).
\end{lem}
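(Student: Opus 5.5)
The plan is to reduce to the pfp case via the Kan extension formulas \eqref{kanlim} and \eqref{kancolim}. I will treat \(X\) first and then obtain \(C\) by base change.

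First, I claim that for each \(i\) the map \(p_i\colon X\to X_i\) is a pro-system of morphisms of the allowed types. Each \(X_{i+1}\to X_i\) is either an affine bundle or a base change of \(*/H\to */H'\) with split unipotent kernel \(U\). In both cases \(X_{i+1}\to X_i\) is smooth with fibres that are iterated extensions of affine spaces. So \(!\)-pullback \(\DM(X_i)\to\DM(X_{i+1})\) is fully faithful. For affine bundles this is homotopy invariance, since \(\pi^!\cong\pi^*(d)[2d]\) and \(\pi_*\pi^*\cong\id\). For \(*/H\to */H'\) it follows by smooth descent along \(*\to */H'\) and the same homotopy invariance for \(H\to H'\), an affine-space bundle because \(U\) is split unipotent; this is the argument already used in \thref{remarks about DATM on LG}. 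Next I identify \(\DM(X)\) with \(\varinjlim_i\DM(X_i)\) in \(\Pr^{\St}_\Lambda\). Since \(X=\varprojlim X_i\) as prestacks and \(\DM\) is right Kan extended from affines via \eqref{kanlim}, we have \(\DM(X)=\varprojlim_{T\to X}\DM(T)\). Any perfect affine \(T\to X\) is a \(\kappa\)-filtered limit of pfp affines, so by \eqref{kancolim} the space of such maps is computed via the \(X_i\). I would compare with a smooth atlas \(V_0\to X_0\), set \(V_i:=V_0\times_{X_0}X_i\) and \(V:=\varprojlim V_i\), and reduce by smooth descent (\(!\)-pullback satisfies descent, and the Čech nerves are compatible in \(i\)) to the schematic case. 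For the schemes \(V=\varprojlim V_i\) with affine transition maps, this is exactly \eqref{kancolim}: \(\DM(V)\cong\varinjlim\DM(V_i)\) along \(!\)-pullback. Finally, limits of Čech-nerve diagrams commute with this filtered colimit because all transition functors are fully faithful, so the colimit is computed by a union of full subcategories. I expect this commutation of the descent limit with the filtered colimit to be the main technical obstacle. I would handle it by noting that, in the colimit, each \(\DM(X_i)\) maps fully faithfully and the descent data are levelwise.

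For \(C\), I use \thref{assumption corr}\eqref{assumption 2}: \(C_i:=C_0\times_{X_0}X_i\) and \(C\cong\varprojlim C_i\). Each \(C_{i+1}\to C_i\) is the base change of \(X_{i+1}\to X_i\), so it is of the same type. The argument above therefore applies verbatim and gives \(\DM(C)\cong\varinjlim\DM(C_i)\) with fully faithful transitions. Compatibility with \(c_X^!\) holds because \(c_{X}\) is the limit of the maps \(c_{X,i}\colon C_i\to X_i\). These form cartesian squares with the transition maps, and \(!\)-pullbacks compose.

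For the monoidal structure, each \(\DM(C_i)\) is closed symmetric monoidal by the discussion before \thref{Kunneth}. The transition functors are \(!\)-pullbacks along smooth maps, so they equal \(*\)-pullback up to a twist and shift. I would renormalise them as \(\pi^*\), which is symmetric monoidal and differs from \(\pi^!\) by an invertible twist, so the colimit category is unchanged. Then \(\varinjlim\) in \(\CAlg(\Pr^{\St}_\Lambda)\) is computed underlying, giving a presentably symmetric monoidal structure, which is closed by the adjoint functor theorem. The same applies to \(X\). The non-monoidal \(c_X^!\) is compatible with these identifications by the previous paragraph.
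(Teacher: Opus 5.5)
\textbf{There is a gap in the step you single out yourself: commuting the \v{C}ech-descent limit with the filtered colimit.} Your justification is that, with fully faithful transitions, ``the colimit is computed by a union of full subcategories'', so descent data live at a finite stage. This is not valid here.

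The colimit in \eqref{kancolim} is taken in \(\Pr^{\St}_\Lambda\). There, a filtered colimit along fully faithful functors is not the union. The union is the colimit in \(\mathrm{Cat}_\infty\), which is not presentable, and the \(\Pr^{\St}_\Lambda\)-colimit is only generated under colimits by it. For instance, take an infinite direct sum of objects that are genuinely new at unboundedly many stages \(V_i\). It lies in the essential image of no \(\DM(V_i)\), because these images are closed under retracts.

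Hence an object of \(\varprojlim_{[n]}\varinjlim_i \DM(V_i^n)\), with \(V_i^\bullet\) the \v{C}ech nerve, need not be a descent datum at any finite level. There is also no formal reason for a totalization to commute with a filtered colimit in \(\Pr^{\St}_\Lambda\). Full faithfulness of the transitions does not repair this.

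\textbf{The missing idea, which is how the paper argues, is to pass to right adjoints.}
\begin{itemize}
\item First renormalize the transitions to \(\pi_i^*\) by purity, as in your last paragraph.
\item The colimit in \(\Pr^L\) is then the limit in \(\Pr^R\), computed in \(\mathrm{Cat}_\infty\), along the pushforwards \(\pi_{i,*}\).
\item The descent equivalence is a limit along \(!\)-pullbacks of the \v{C}ech nerve maps.
\item Base change \(f^!\pi_{i,*}\cong \pi'_{i,*}(f')^!\) assembles both into a single bi-indexed diagram of right adjoints.
\item The two limits then commute formally, giving \(\DM(C)\cong\varprojlim_i\DM(C_i)\cong\varinjlim_i\DM(C_i)\).
\end{itemize}

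A related point: to apply \eqref{kancolim} at the scheme level, you need the atlas \(V_0\) to be affine and the transitions to be schematic. The steps that are base changes of \(*/H\to */H'\) are not schematic, so \(V_i\) is then not a scheme. The paper first discards these steps, using that \(!\)-pullback along them is an equivalence.

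The remaining parts of your proposal agree with the paper: full faithfulness via homotopy invariance, transport to \(C\) by base change, the monoidal structure via \(\pi_i^*\), and compatibility with \(c_X^!\).
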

\begin{proof}
	Since the maps \(C_j\to C_i\) are given either by an affine bundle, or by the base change along \(*/H \to */H'\), where \(H\onto H'\) has split unipotent kernel, the transition morphisms, given by !-pullback, are indeed fully faithful by homotopy invariance.
	To compute \(\DM(C)\), we may assume that all the transition maps \(C_{i+1}\to C_i\) are affine bundles. 
	Indeed, in the other case, \(C_{i+1}\) will be of the form \(C_i/U\), where \(U\) is some split unipotent group scheme acting trivially, so that !-pullback \(\DM(C_{i})\to \DM(C_{i+1})\) is an equivalence, and we may safely ignore it in what follows.
	Note that in the colimit \(\varinjlim_i \DM(C_i)\), the transition maps, given by !-pullback, can be replaced by *-pullback by purity.
	Since these *-pullbacks are symmetric monoidal, the sequential colimit admits a symmetric monoidal structure.
	It is even closed, since the tensor product commutes with colimits in both variables.
	On the other hand, passing to right adjoints, the colimit may even be computed as \(\varprojlim_i \DM(C_i)\), where the transition maps are given by *-pushforward.
	
	Now, choose an affine smooth atlas \(V_0\to C_0\); pullback to \(C_i\) gives an affine smooth atlas \(V_i:= V_0 \times_{C_0} C_i\to C_i\); we also set \(V:= V_0\times_{C_0} C\), which is still affine.
	In that case we do have \(\DM(V) \cong \varprojlim \DM(V_i)\), with the transition morphisms given by *-pushforward.
	Indeed, this follows from \eqref{kancolim}, since any map \(V \to T'\), with \(T'\) a perfect affine scheme, perfectly of finite type, will factor through some \(V_i\).
	Applying the same argument to the \v{C}ech nerves of \(V_i\to C_i\), and using that the limit along *-pushforward commute with the limit along the !-pullback in the descent equivalence \cite[Example 2.2.19]{RicharzScholbach:Intersection} by base change, we see that \(\DM(C) \cong \varinjlim_i \DM(C_i)\).
	
	The final assertion is clear.
\end{proof}

Consequently, we can define
\[c_X^*\colon \DM(X)\to \DM(C)\]
termwise using this colimit, since *-pullback \(\DM(X_i)\to \DM(C_i)\) commutes with the transition maps (given by !-pullback) by purity.
Similarly, we can define \(c_{X,*},c_{X,!}\colon \DM(C)\to \DM(X)\) using this colimit, and we get adjunctions \((c_X^*,c_{X,*})\) and \((c_{X,!},c_X^!)\).

Let us now define motivic correspondences.
This notion is classical, going back to \cite{SGA5} for \(\ell\)-adic cohomology, and has been studied for motives in e.g.~\cite[Definition 3.4.2.1]{Cisinski:Cohomological}.

\begin{dfn}\thlabel{defi motivic corr}
	Let \((c_X, c_Y) \colon C\to X\times Y\) satisfy \thref{assumption corr}, and let \(M\in \DM(X)\) and \(N\in \DM(Y)\).
	A \emph{motivic correspondence from \((X,M)\) to \((Y,N)\) supported on \(C\)} is a morphism
	\[\alpha\colon c_X^*M \to c_Y^!N\]
	in \(\DM(C)\).
	We will sometimes also write \(\alpha\colon (X,M)\to (Y,N)\), and denote by \(\Corr_C(M,N)\) the set of isomorphism classes of correspondences between \((X,M)\) and \((Y,N)\) which are supported on \(C\) (for the natural notion of isomorphisms between motivic correspondences).
\end{dfn}

In particular, although \(\Corr_C(M,N)\) admits the structure of an anima, it will suffice to consider the underlying set for the purposes of this paper.

By \thref{DM as colimit}, \(\DM(C)\) admits an internal Hom.
Since any prestack admits a dualizing motive \(\omega\), we can define the Verdier duality functor
\[\ID:= \IHom_C(-,\omega_C)\colon \DM(C)^{\op} \to \DM(C).\]
When restricted to constructible objects \(M\), Verdier duality interchanges *- and !-pullback, as well as *- and !-pushforward.
Moreover, \(\ID\) induces an involution on constructible objects.
Both these claims can be checked after pullback to a smooth atlas of some \(C_i\), where they follow from \cite[Corollary 6.3.15]{CisinskiDeglise:Etale} and \cite[Theorem 2.4.50 (5)]{CisinskiDeglise:Triangulated}.

\begin{ex}\thlabel{examples of correspondences}
	Here are some examples of motivic correspondences, keeping the notation from \thref{assumption corr}.
	\begin{enumerate}
		\item For any \(M\in \DM(C)\), there is the \emph{pushforward correspondence}
		\[(\Gamma_{c_Y})_!\colon M\to c_Y^!c_{Y,!}M\in \Corr_C(M,c_{Y,!}M),\]
		arising from the adjunction \((c_{Y,!},c_Y^!)\).
		\item\label{pullback correspondence} For any \(N\in \DM(C)\), there is the \emph{pullback correspondence}
		\[\Gamma_{c_Y}^*\colon c_Y^*N\to c_Y^*N\in \Corr_C(N,c_Y^*N),\]
		given by the identity morphism.
		\item\label{dual correspondence} Let \(M\in \DM_{\cons}(X)\), and \(N\in \DM_{\cons}(Y)\).
		Then for any correspondence \(\alpha\in \Corr_C(X,Y)\), there is a natural \emph{dual correspondence} \(\ID(\alpha)\in \Corr_C(\ID(N),\ID(M))\).
		\item\label{co/unit corr} Assume now that \(X\) is a pfp scheme, and let \(M\in \DM(X)\).
		Then the unit map \(\unit \to \Delta^!(M \boxtimes \ID(M))\) induces a correspondence 
		\[\delta_M\in \Corr_X\left((\Spec k, \unit),(X \times X,M \boxtimes \ID(M))\right).\]
		Dually, the counit \(M \otimes \ID(M) \to \omega_X\) induces a correspondence 
		\[e_M\in \Corr_X\left((X\times X,M\boxtimes \ID(M)),(\Spec k,\unit) \right).\]
	\end{enumerate}
\end{ex}

\begin{constr}\thlabel{composition of correspondences}
	Motivic correspondences can be composed.
	Indeed, let \((C,c_X,c_Y,\alpha)\in \Corr_C(M,N)\) and \((D,d_Y,d_Z,\beta)\in \Corr_D(N,L)\) be correspondences, for \(M\in \DM(X)\), \(N\in \DM(Y)\), and \(L\in \DM(Z)\).
	Then we define a correspondence
	\[\beta\circ \alpha \in \Corr_{C\times_Y D}(M,L)\] via the composition
	\[p^*c_X^*M \xrightarrow{p^*\alpha} p^*c_Y^!N \to q^!d_Y^*N \xrightarrow{q^!\beta} q^!d_Z^!L,\]
	where \(p\colon C\times_Y D\to C\) and \(q\colon C\times_Y D\to D\) are the projection maps, and the middle morphism is adjoint to the base change isomorphism.
	More precisely, we may restrict to some \(Y_i\), consider the induced stacks \(C_i\) and \(D_i\), as well as \(C_i \times_{Y_i} D_i\).
	Since these are all pfp algebraic stacks, we indeed have the desired base change isomorphism by the six functor formalism.
	
	See also \cite[Definition 3.4.2.6]{Cisinski:Cohomological} for an alternative, but equivalent, formulation.
\end{constr}

The following lemma is easy, but we give a proof to illustrate why we do not require the two presentations of \(C\) in \thref{assumption corr} to agree.
Similar arguments will work for the constructions below (which will not be given in detail), and this will simplify discussions related to moduli of restricted local shtukas in the main body of the paper.

\begin{lem}
	The composition of correspondences, defined in \thref{composition of correspondences}, is associative.
\end{lem}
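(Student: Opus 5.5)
To prove associativity, fix three correspondences $\alpha\in\Corr_C(M,N)$, $\beta\in\Corr_D(N,L)$, $\gamma\in\Corr_E(L,P)$, with $M\in\DM(X)$, $N\in\DM(Y)$, $L\in\DM(Z)$, $P\in\DM(W)$. Both $\gamma\circ(\beta\circ\alpha)$ and $(\gamma\circ\beta)\circ\alpha$ are supported on the same prestack
\[C\times_Y D\times_Z E,\]
using the canonical identification $(C\times_Y D)\times_Z E\cong C\times_Y(D\times_Z E)$. It therefore suffices to compare the two resulting morphisms from the $*$-pullback of $M$ to the $!$-pullback of $P$ on this triple fiber product. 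Unwinding \thref{composition of correspondences}, each is a composite of the pullbacks of $\alpha,\beta,\gamma$, interleaved with base change maps. For the first bracketing these are the maps $p^*c_Y^!\to q^!d_Y^*$ and $q'^*d_Z^!\to r^!e_Z^*$, pulled back further along the relevant projections. For the other bracketing they are the same maps, pulled back along the other projections.

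The heart of the argument is that the base change transformations are compatible with pasting of cartesian squares and commute with further $*$-pullback and $!$-pullback along the projections. Both points are formal consequences of the six functor formalism, provided everything is realized on honest pfp algebraic stacks. Given this, naturality lets me move $\alpha$, $\beta$ and $\gamma$ past the base change maps. Both composites then become
\[\text{(pullback of }\gamma)\circ(\text{base change})\circ(\text{pullback of }\beta)\circ(\text{base change})\circ(\text{pullback of }\alpha),\]
with identical base change maps, so they agree.

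The main obstacle is the reduction to finite level, since $C,D,E$ are only pro-objects as in \thref{assumption corr}, and the presentations of $C$ coming from $c_X$ and from $c_Y$ need not agree. Here is how I would handle it. The base change for $\alpha$ and $\beta$ only involves the fiber product over $Y$, so I restrict to a single level $Y_i$, using the presentations of $C$ and $D$ induced via $c_{Y,0}$ and $d_{Y,0}$ from condition \eqref{assumption 2}. Similarly, for the base change over $Z$ I pick a level $Z_j$ and the presentations induced from $Z_0$. Each individual base change isomorphism is then an isomorphism of functors between pfp algebraic stacks, where the six functor formalism applies. By \thref{DM as colimit}, the relevant categories of motives are colimits along fully faithful $!$-pullbacks. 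Moreover, $*$-pullback and $!$-pullback commute with these transition maps by purity. Hence a base change map defined at one level is compatible with the transition maps and with any other presentation.

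Consequently, an identity of morphisms in $\DM(C\times_Y D\times_Z E)$ can be checked after pulling back to any sufficiently deep finite level; no common presentation is needed. At such a level the equality reduces to the usual associativity of correspondences for algebraic stacks, as in \cite[§A.2]{XiaoZhu:Cycles}, which is the pasting compatibility of base change.
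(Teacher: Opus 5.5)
Your overall route is the one the paper takes. Both composites live on $T=C\times_Y D\times_Z E$ and are compared by base-change bookkeeping. The needed transformations are built on honest pfp algebraic stacks and transported to the pro-objects via the colimit description and purity. However, you have put the subtlety in the wrong place, and your key claim is inaccurate at exactly the step that matters.

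The base changes over $Y$ and over $Z$ are already built into the definition of composition. The new ingredient is the middle cartesian square $T=(C\times_Y D)\times_D(D\times_Z E)$ over $D$, the square marked $\clubsuit$ in the paper. Write $P\colon T\to C\times_Y D$ and $Q\colon T\to D\times_Z E$. In $\gamma\circ(\beta\circ\alpha)$ the correspondence $\beta$ enters as $P^*q^!\beta$, whereas in $(\gamma\circ\beta)\circ\alpha$ it enters as $Q^!q'^*\beta$. These are maps between different objects ($P^*q^!d_Y^*N$ versus $Q^!q'^*d_Y^*N$, and similarly for the targets). So, as written, the two composites do not consist of ``identical base change maps''. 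Nor can $P^*q^!$ and $Q^!q'^*$ simply be interchanged, since the natural map between them is not invertible in general.

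What the argument needs is the exchange transformation $\mathrm{ex}\colon P^*q^!\to Q^!q'^*$ for this square, together with two pasting identities:
\begin{itemize}
\item the base-change map for $T$ over $Y$ equals $\mathrm{ex}\circ P^*(\text{base change for } C\times_Y D)$;
\item the base-change map for $T$ over $Z$ equals $Q^!(\text{base change for } D\times_Z E)\circ\mathrm{ex}$.
\end{itemize}
Naturality of $\mathrm{ex}$ in $\beta$ then identifies the two composites.

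Constructing $\mathrm{ex}$ is where your finite-level discussion does not suffice. The map $C\times_Y D\to D$ is presented through the levels of $Y$, while $D\times_Z E\to D$ is presented through the levels of $Z$. Neither of your separate reductions (to some $Y_i$, resp.\ some $Z_j$) realizes the square over $D$ on algebraic stacks.

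One has to choose a finite-level model $D_0$ of $D$, together with models $(C\times_Y D)_0$ and $(D\times_Z E)_0$ over it and their fibre product. One constructs the exchange there using the six-functor formalism, then passes to the colimit using that the functors involved commute with smooth $!$-pullback. This is precisely what the paper does.

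In particular, your assertion that ``no common presentation is needed'' is wrong for this square. Your final reduction to ``the usual associativity for algebraic stacks'' at a deep enough level tacitly presupposes exactly such a common finite-level model, which you never produce. A minor point: the maps $p^*c_Y^!\to q^!d_Y^*$ are adjoint to base-change isomorphisms and are not themselves isomorphisms.
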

\begin{proof}
	Consider the diagram
	\[\begin{tikzcd}
		&&& C\times_Y D \times_Z E \arrow[ld, "p_1"'] \arrow[rd, "p_2"] \arrow[dd, phantom, "\clubsuit"] &&&\\
		&& C\times_Y D \arrow[ld] \arrow[rd, "q_1"'] && D \times_Z E \arrow[ld, "q_2"] \arrow[rd] &&\\
		& C \arrow[ld] \arrow[rd] && D \arrow[ld] \arrow[rd] && E \arrow[ld] \arrow[rd]&\\
		X && Y && Z && W
	\end{tikzcd}\]
	with cartesian squares, and let \(\alpha\in \Corr_C(X,Y)\), \(\beta\in \Corr_D(Y,Z)\) and \(\gamma\in \Corr_E(Z,W)\) be motivic correspondences.
	The equality \((\gamma\circ \beta) \circ \alpha=\gamma\circ (\beta\circ \alpha)\in \Corr_{C\times_Y D \times_Z E}(X,W)\) can be constructed as usual via base change morphisms; the only slightly subtle point involves the square marked ``\(\clubsuit\)''.
	There, it suffices to construct a natural transformation \(p_1^*\circ q_1^! \to p_2^! \circ q_2^*\).
	Consider \(D_0\), as well as \((C \times_Y D)_0\) and \((D\times_Z E)_0\) as in \thref{assumption corr}, and let \((C \times_Y D \times_Z E)_0\) be the obvious fiber product.
	Let us denote by \((p_1)_0,(p_2)_0,(q_1)_0,(q_2)_0\) the obvious maps between these algebraic stacks.
	Then we can construct a natural transformation of functors \((p_1)_0^*(q_1)_0^! \to (p_2^!)_0 (q_2)_0^*\), since all objects are usual (perfect) algebraic stacks.
	Since the relevant functors commute with smooth !-pullback by purity, we can conclude by passing to the colimit, as in \thref{DM as colimit}.
\end{proof}

Here is another way to produce motivic correspondences.

\begin{lem}\thlabel{products and correspondences}
	Let \((c_X,c_Y)\colon C\to X\times_k Y\) be as in \thref{assumption corr}, and let \(M\in \DM(X)\) and \(N\in \DM_{\cons}(Y)\).
	Then there is a natural bijection
	\[\Corr_C\left((X,M),(Y,N)\right) \cong \Corr_C\left((X\times_k Y,M \otimes \ID(N)),(\Spec k,\unit)\right)\]
	\[\alpha \leftrightarrow \alpha^\sharp\]
\end{lem}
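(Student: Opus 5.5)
We will construct the bijection as a chain of adjunction and six-functor isomorphisms in \(\DM(C)\), and we reduce to a single stage of the presentation whenever a base change or exchange isomorphism is needed. Write \(c:=(c_X,c_Y)\colon C\to X\times_k Y\) and \(\pi\colon C\to \Spec k\). A correspondence \(\alpha^\sharp\) from \((X\times_k Y, M\otimes\ID(N))\) to \((\Spec k,\unit)\) supported on \(C\) is a morphism \(c^*(M\boxtimes \ID(N))\to \pi^!\unit=\omega_C\). (Here \(M\otimes \ID(N)\) on \(X\times_k Y\) is read as the exterior product \(M\boxtimes \ID(N)\).)

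The chain of identifications is as follows. First, by the Künneth formula \eqref{Kunneth*up} applied to the factorization of \(c\) through the graph, together with the definition of \(\otimes\) as \(\Delta^*\) of \(\boxtimes\), we have \(c^*(M\boxtimes\ID(N))\cong c_X^*M\otimes c_Y^*\ID(N)\). Second, by tensor–hom adjunction in the closed monoidal category \(\DM(C)\) (\thref{DM as colimit}),
\[\Hom(c_X^*M\otimes c_Y^*\ID(N),\omega_C)\cong \Hom(c_X^*M,\IHom_C(c_Y^*\ID(N),\omega_C)) = \Hom(c_X^*M,\ID(c_Y^*\ID(N))).\]
Third, since \(N\) is constructible, Verdier duality exchanges \(*\)- and \(!\)-pullback, so \(\ID(c_Y^*\ID(N))\cong c_Y^!\ID\ID(N)\cong c_Y^!N\). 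Alternatively, \eqref{exchagne3} gives \(\IHom_C(c_Y^*\ID N,c_Y^!\omega_Y)\cong c_Y^!\IHom_Y(\ID N,\omega_Y)\). The result is \(\Hom(c_X^*M,c_Y^!N)=\Corr_C((X,M),(Y,N))\), and \(\alpha\leftrightarrow\alpha^\sharp\) is the composite bijection. Naturality follows because each step is a natural isomorphism of functors.

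The main obstacle is justifying these isomorphisms on prestacks satisfying \thref{assumption corr} rather than on pfp algebraic stacks. We handle this as in the proof of associativity. For Künneth and the compatibility of \(\ID\) with pullbacks, we work with the finite-level representable maps \(c_{X,0}\colon C_0\to X_0\) and \(c_{Y,0}\), where all six-functor statements hold, since \(C_0\to X_0\) is representable and \(N\) is constructible, so Verdier duality is involutive. We then pass to the colimit \(\DM(C)\cong\varinjlim\DM(C_i)\). This passage is legitimate because the relevant functors commute with the transition \(!\)-pullbacks, which are smooth up to twist by purity.

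One subtlety remains: the two presentations of \(C\) induced by \(c_X\) and \(c_Y\) need not agree. This is harmless, because each isomorphism involves only one of \(c_X\), \(c_Y\), and the identifications take place in the single category \(\DM(C)\), which is independent of the presentation. Finally, the definition of \(\omega_C\) agrees with \(\pi^!\unit\), and the identification \(\omega_C\cong c_Y^!\omega_Y\) holds by functoriality of \(!\)-pullback.
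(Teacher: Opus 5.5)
Your proposal is correct and is essentially the paper's own argument. The paper likewise rewrites \((c_X\times c_Y)^*(M\boxtimes\ID(N))\cong c_X^*M\otimes c_Y^*\ID(N)\) by K\"unneth. It then sends \(\alpha\) to \(\alpha\otimes\mathrm{id}\) followed by the evaluation \(c_Y^!N\otimes\ID(c_Y^!N)\to\omega_C\), using \(c_Y^*\ID(N)\cong\ID(c_Y^!N)\) for constructible \(N\). Your chain of tensor--hom adjunction and biduality produces the same map, and it has the small advantage of making bijectivity explicit, which the paper leaves implicit.
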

\begin{proof}
	The bijection is given by sending \(\alpha\) to the composition
	\[(c_X\times c_Y)^*(M \boxtimes \ID(N)) \cong c_X^*M \otimes c_Y^*\ID(N) \xrightarrow{\alpha \otimes \identity} c_Y^!M \otimes \ID (c_Y^!N) \to \omega_C.\]
\end{proof}

\begin{constr}\thlabel{construction pushforward correspondence}
	Consider a commutative diagram
	\begin{equation}\label{diagram pushforward correspondence}\begin{tikzcd}
		X \arrow[d, "f_X"'] & C \arrow[l, "c_X"'] \arrow[r, "c_Y"] \arrow[d, "f"] & Y \arrow[d, "f_Y"]\\
		X' & D \arrow[l, "d_X"] \arrow[r, "d_Y"'] & Y',
	\end{tikzcd}\end{equation}
	where \((c_X,c_Y)\colon C\to X\times_k Y\) and \((d_X,d_Y)\colon D\to X'\times_k Y'\) satisfy \thref{assumption corr}, and the natural map \(C\to X \times_{X'} D\) is representable by perfectly proper schemes.
	Then for \(M\in \DM(X)\), \(N\in \DM(Y)\) and \(\alpha\in \Corr_C(M,N)\) we can define the \emph{pushforward correspondence} \(f_!\alpha\in \Corr_D(f_{X,!}M,f_{Y,!}N)\) as the composition
	\[d_X^*f_{X,!}M \to f_!c_X^*M \xrightarrow{f_!\alpha} f_! c_Y^!N\to d_Y^!f_{Y,!} N,\]
	where the first and last arrows are constructed via the natural adjunctions (cf.~\cite[(A.1.3) and (A.1.4)]{XiaoZhu:Cycles}).
\end{constr}

\begin{lem}\thlabel{compatibility pushforward correspondence}
	Pushforward of motivic correspondences is compatible with both vertical and horizontal composition of diagrams \eqref{diagram pushforward correspondence}.
\end{lem}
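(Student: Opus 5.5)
The plan is to reduce both compatibilities to the standard coherence of base change and exchange transformations in the six functor formalism, working at a finite stage of the presentations from \thref{assumption corr}, and then pass to the colimit using \thref{DM as colimit}. The approach parallels \cite[Lemma A.2.6]{XiaoZhu:Cycles} (or the analogous statement in \cite{Cisinski:Cohomological}). The only new issue is that our prestacks are limits of algebraic stacks, with presentations that need not agree on the two sides of a correspondence.

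For vertical composition, consider two stacked diagrams of the form \eqref{diagram pushforward correspondence}, with maps \(f\colon C\to D\) and \(g\colon D\to E\). We must show that \(g_!f_!\alpha\) agrees with \((gf)_!\alpha\). Unwinding \thref{construction pushforward correspondence}, this amounts to two identities. The first is that the composite of the exchange maps
\[e_X^*g_{X,!}f_{X,!}\to g_!d_X^*f_{X,!}\to g_!f_!c_X^*\]
equals the exchange map for the outer square. The second is the dual identity for \((-)_!\) against \((-)^!\). Both follow from the compatibility of the natural transformations \((-)^*(-)_!\to(-)_!(-)^*\) and \((-)_!(-)^!\to(-)^!(-)_!\) with pasting of squares. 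Each is built from units and counits of adjunctions together with \(g_!f_!\cong(gf)_!\). For horizontal composition, take two correspondences \(\alpha\) on \(C\) and \(\beta\) on \(C'\) over adjacent diagrams, with induced map \(f\times_Y f'\colon C\times_Y C'\to D\times_{Y'}D'\). We must show \((f\times f')_!(\beta\circ\alpha)=f'_!\beta\circ f_!\alpha\). After expanding \thref{composition of correspondences}, this reduces to a commutative diagram relating the base change transformation \(p^*c_Y^!\to q^!c_Y'^*\) of \(C\times_Y C'\) with the one for \(D\times_{Y'}D'\), through the exchange maps along \(f,f',f_Y\). This again follows from the compatibility of base change with pasting and with the \((f_!,f^!)\) and \((f^*,f_*)\) adjunctions. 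Here one uses that \(C\to X\times_{X'}D\) is perfectly proper, so that \(f_!=f_*\) relatively and proper base change applies.

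The main obstacle is making sense of these transformations, since the objects are only prestacks satisfying \thref{assumption corr} and the two presentations of each \(C\) need not agree. I would handle this as in the proof of associativity above: choose a common finite stage \(i\), large enough that all maps (including \(f\) and the fiber products) descend to pfp algebraic stacks \(C_i,D_i,\ldots\) with representable maps. Then verify the required identities there, where the six functor formalism of \(\Corr(\AlgStk^{\pfp})_{\Repr}\) gives all base change isomorphisms with their standard coherences. Finally, observe that all functors involved commute with the smooth \(!\)-pullbacks along transition maps, by purity, so the identities pass to the colimit \(\varinjlim_i\DM(C_i)\). The independence from the choice of stage follows because the transition functors are fully faithful.
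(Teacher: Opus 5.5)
Your proposal matches the paper's argument: the paper's proof simply says that the proofs of \cite[Lemma A.2.8]{XiaoZhu:Cycles} and \cite[Lemma A.2.10]{XiaoZhu:Cycles} (the formal pasting compatibilities of the exchange and base change transformations, for vertical and horizontal composition respectively) carry over, with the passage from the prestacks of \thref{assumption corr} to pfp algebraic stacks handled as in the associativity lemma. One small caution: you do not need, and \thref{assumption corr} does not guarantee, a single common finite stage to which all maps descend compatibly; it suffices that each transformation is defined on some finite-level model and commutes with the smooth \(!\)-pullbacks along transition maps, which is how the paper treats the square marked \(\clubsuit\).
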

\begin{proof}
The proofs of \cite[Lemma A.2.8]{XiaoZhu:Cycles} and \cite[Lemma A.2.10]{XiaoZhu:Cycles} carry over to our setting.
\end{proof}

Next, we introduce the pro-smooth pullback of correspondences, where we will slightly deviate from \cite[A.2.11]{XiaoZhu:Cycles}.

\begin{lem}\thlabel{purity for pro-smooth morphisms}
	Let 
	\[\begin{tikzcd}
		C' \arrow[d, "c_X'"'] \arrow[r, "f'"] & C \arrow[d, "c_X"]\\
		X' \arrow[r, "f"'] & X
	\end{tikzcd}\]
	be a cartesian diagram of perfect prestacks, where \(c_X\colon C\cong \varprojlim_i C_i\to X\cong \varprojlim_i X_i\) and \(c_X'\) satisfy \thref{assumption corr}, and \(f\cong \varprojlim_i f_i\) is a limit of perfectly smooth schematic morphisms \(f_i\colon X_i'\to X_i\).
	Then relative purity induces a natural equivalence of functors
	\begin{equation}\label{pro-purity}(f')^!c_X^*\cong (c_X')^*f^!.\end{equation}
\end{lem}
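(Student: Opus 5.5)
The plan is to reduce to a finite level and apply the usual smooth base change / relative purity for pfp algebraic stacks. By \thref{assumption corr} \eqref{assumption 2}, there is a map \(c_{X,0}\colon C_0\to X_0\), representable by schemes, with \(C\cong C_0\times_{X_0}X\). Hence \(C_i:=C_0\times_{X_0}X_i\) gives a presentation of \(C\). The cartesian square then gives \(C'\cong C_0\times_{X_0}X'\), and we set \(C_i':=C_i\times_{X_i}X_i'\). We may assume (after reindexing, and possibly refining the presentation of \(X'\)) that \(X'\cong\varprojlim_i X_i'\) is the presentation in which \(f\) is the limit of the \(f_i\). For each \(i\) this yields a cartesian square of pfp algebraic stacks
\[\begin{tikzcd}
C_i' \arrow[d, "c_{X,i}'"'] \arrow[r, "f_i'"] & C_i \arrow[d, "c_{X,i}"]\\
X_i' \arrow[r, "f_i"'] & X_i,
\end{tikzcd}\]
with \(f_i\) perfectly smooth and schematic and \(c_{X,i}\) representable.

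At finite level, relative purity gives \(f_i^!\cong f_i^*(d_i)\langle\ldots\rangle\), a Tate twist and shift by the (locally constant) relative dimension, and likewise for \(f_i'\) with the same twist. Combining this with the canonical isomorphism \((f_i')^*c_{X,i}^*\cong (c_{X,i}')^*f_i^*\) produces an equivalence
\[(f_i')^!c_{X,i}^*\cong (c_{X,i}')^*f_i^!.\]
Equivalently, this is the exchange map adjoint to base change, which is an isomorphism for smooth \(f_i\). I will use the exchange-map description so that the equivalence is canonical and does not depend on the chosen twists. It can be checked after pullback to a smooth atlas, where it reduces to the scheme case of \cite{CisinskiDeglise:Etale}.

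Next, I pass to the colimit using \thref{DM as colimit}, which gives \(\DM(C)\cong\varinjlim_i\DM(C_i)\) and similarly for \(X,X',C'\), with !-pullback transition maps. Here \(c_X^*\) is defined termwise, and \(f^!\) and \((f')^!\) are the termwise !-pullbacks. I then need compatibility of the finite-level equivalences with the transition maps \(g\colon X_{i+1}\to X_i\), which are affine bundles or base changes along \(*/H\to */H'\), together with their pullbacks to \(C\) and \(X'\). Since all these transition maps are smooth, the exchange maps compose: the square for level \(i+1\) stacked with the transition squares equals the transition squares stacked with the level \(i\) square. Naturality of the exchange map under pasting of cartesian squares then gives the required coherence, so the isomorphisms assemble to \eqref{pro-purity}.

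The main obstacle I expect is this coherence step. We need the equivalences to form a morphism of diagrams of \(\infty\)-categories and not merely a levelwise family of isomorphisms. I would handle it by constructing the map \((f')^!c_X^*\to(c_X')^*f^!\) globally first, as the adjoint of the base change isomorphism \(c_{X,!}'(f')^*\cong f^*c_{X,!}\) available in the colimit formalism, and only then check that it is an equivalence levelwise. This avoids gluing choices.
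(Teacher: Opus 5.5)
This is essentially the paper's proof: you apply relative purity levelwise to the cartesian squares \(C_i'\to C_i\) over \(f_i\colon X_i'\to X_i\) and then pass to the colimit via \thref{DM as colimit}. Your third-paragraph pasting of exchange isomorphisms is precisely what the paper's ``passing to the colimit'' leaves implicit, and since the colimit is indexed by \(\IN\), compatibility between consecutive levels is all the coherence needed.

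Drop the last paragraph, though. The mate of \(c'_{X,!}(f')^*\cong f^*c_{X,!}\) is a transformation \((f')^*c_X^!\to(c_X')^!f^*\), not a comparison of \((f')^!c_X^*\) with \((c_X')^*f^!\). The correct exchange map is \((c_X')^*f^!\to(f')^!c_X^*\), the mate of \(f'_!(c_X')^*\cong c_X^*f_!\). Moreover, for the pro-smooth \(f\) the colimit formalism supplies neither \(f^*\) nor \(f_!\); for instance, the levelwise \(f_i^*\cong f_i^!(-d_i)[-2d_i]\) do not commute with the transition \(!\)-pullbacks once the relative dimensions \(d_i\) grow. That ``global'' construction would therefore need its own justification, which the levelwise route avoids.
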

For example, \(f\) could be a torsor under a pro-algebraic group, such as the positive loop group of a parahoric group scheme.
\begin{proof}
	Recall from \thref{DM as colimit} that \(\DM(X)\cong\varinjlim_i \DM(X_i)\) etc.
	For each \(i\), let \(f_i'\colon C_i'\to C_i\) be the base change of \(f_i\); then relative purity yields an equivalence 
	\[(f_i')^!c_X^*\cong (c_X')^*f_i^!.\]
	This implies the lemma by passing to the colimit.
\end{proof}

\begin{constr}\thlabel{construction pullback correspondence}
	Consider a commutative diagram
	\begin{equation}\label{diagram pullback correspondence}\begin{tikzcd}
		X' \arrow[d, "f_X"'] & D \arrow[l, "d_X"'] \arrow[r, "d_Y"] \arrow[d, "f"] & Y' \arrow[d, "f_Y"]\\
		X & C \arrow[l, "c_X"] \arrow[r, "c_Y"'] & Y,
	\end{tikzcd}\end{equation}
	where \((c_X,c_Y)\colon C\to X\times_k Y\) and \((d_X,d_Y)\colon D\to X'\times_k Y'\) satisfy \thref{assumption corr}, the left square is cartesian, and \(f_X\) is a limit of perfectly smooth schematic maps as in \thref{purity for pro-smooth morphisms}.
	Let \(M\in \DM(X)\), \(N\in \DM(Y)\) and \(\alpha\in \Corr_{C}(M,N)\).
	Then we define the \emph{pro-smooth pullback correspondence} \(f^!\alpha\in \Corr_D(f_X^!M,f_Y^!N)\) as the composition
	\[d_X^*f_X^!M \xrightarrow{\eqref{pro-purity}} f^!c_X^*M\xrightarrow{f^!\alpha} f^! c_Y^!N \cong d_Y^!f_Y^!N \in \Corr_D(f_X^!M,f_Y^!N).\]
\end{constr}

Again, this construction satisfies natural compatibilities.

\begin{lem}\thlabel{compatibility pullback correspondence}
	Pullback of motivic correspondences is compatible with both vertical and horizontal composition of diagrams \eqref{diagram pullback correspondence}.
\end{lem}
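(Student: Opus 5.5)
Two compatibilities have to be verified: pulling back along a horizontal composite of two diagrams \eqref{diagram pullback correspondence} agrees with composing the two pullbacks, and pulling back along a vertical stack of two such diagrams agrees with pulling back twice. Both follow the pattern of \cite[Lemmas A.2.12 and A.2.13]{XiaoZhu:Cycles}. The new point is that our prestacks are only pro-algebraic, in the sense of \thref{assumption corr}. I will reduce everything to finite levels \(X_i,C_i,D_i\), where all objects are pfp algebraic stacks and the usual six-functor compatibilities are available, and then pass to the colimit using \thref{DM as colimit}.

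\emph{Vertical composition.} Stack a diagram \(X''\leftarrow E\to Y''\) over \(X'\leftarrow D\to Y'\) via maps \(g_X,g,g_Y\), with the left square cartesian and \(g_X\) pro-smooth. Then \(f_X\circ g_X\) is again a limit of perfectly smooth schematic maps, and the composite left square is cartesian, so \((fg)^!\alpha\) is defined. The claim \(g^!f^!\alpha=(fg)^!\alpha\) reduces to two facts. First, the pro-purity isomorphism \eqref{pro-purity} is compatible with composition: the isomorphism for \(f_X g_X\) equals the pasting of those for \(f_X\) and \(g_X\). Second, the isomorphisms \(f^!c_Y^!\cong d_Y^!f_Y^!\) are compatible with composition. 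The second fact is just the coherence of \((-)^!\) as a functor. For the first, I check it at each finite level, where it is the standard transitivity of relative purity for smooth morphisms (the Thom twists add up), and this identification is compatible with the !-pullback transition maps by base change. Passing to the colimit as in the proof of \thref{purity for pro-smooth morphisms} gives the claim.

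\emph{Horizontal composition.} Take two diagrams, one over \(C\to X\times Y\) and one over \(C'\to Y\times Z\), with pullbacks \(D\) and \(D'\) lying over \(X'\), \(Y'\), \(Z'\). The composite pullback is supported on \(D\times_{Y'}D'\to C\times_Y C'\). I will first check that the left square of the composite diagram is cartesian; this follows by pasting cartesian squares, since both \(D\) and \(D'\) are pulled back. Then I compare \(f^!\alpha\), \(f'^!\beta\) composed as in \thref{composition of correspondences} with \((f\times f')^!(\beta\circ\alpha)\). Unwinding both sides, they agree provided a certain diagram of natural transformations commutes. That diagram involves the base change map \(p^*c_Y^!\to q^!c'^*_Y\) for the square \(C\times_Y C'\), the same map for \(D\times_{Y'}D'\), and the purity isomorphisms for \(f\), \(f'\) and the induced map \(f\times f'\).

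The main obstacle is this last compatibility between purity and the base change transformation \(p^*c^!\to q^!c'^*\). The two projections of \(C\times_YC'\) come with the two different finite presentations of \thref{assumption corr}, so there is no single tower in which to work. I would handle it as in the associativity lemma: choose the finite-level models \((C\times_YC')_0\) and \((D\times_{Y'}D')_0\) built from \(C_0,C'_0,Y_0\), and prove the commutativity there. At that level it is the usual compatibility of smooth base change and purity with the exchange morphism, valid for pfp algebraic stacks by reduction to smooth atlases and \cite{CisinskiDeglise:Triangulated}. Finally, all functors involved commute with smooth !-pullback by purity, so the identity persists in the colimit \(\varinjlim_i\DM\). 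This argument does not depend on which presentation was used, which is exactly why the presentations need not agree.
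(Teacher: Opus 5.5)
Your proposal is correct and follows the same route as the paper. The paper's proof just says both compatibilities are proved as in Xiao--Zhu; it cites their Lemmas A.2.14 and A.2.15, not A.2.12--A.2.13. Your reduction to finite-level pfp algebraic stacks, followed by passage to the colimit via \thref{DM as colimit} and \thref{purity for pro-smooth morphisms}, is the mechanism the paper itself spells out in its associativity lemma for composition of correspondences.
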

\begin{proof}
	This can be proven as \cite[Lemma A.2.14]{XiaoZhu:Cycles} and \cite[Lemma A.2.15]{XiaoZhu:Cycles} respectively.
\end{proof}

We will also need the following compatibility between proper pushforward and pro-smooth pullback of motivic correspondences.

\begin{lem}\thlabel{Compatibility pullback and pushforward correspondences}
	Consider a diagram
	\[\begin{tikzcd}
		 &X_2' \arrow[dd, "f_X'"' {yshift=10pt}] \arrow[dl, "g_X'"'] && D_2 \arrow[ll, "d_{X{,}2}"'] \arrow[rr, "d_{Y{,}2}"] \arrow[dd, "f'"' {yshift=10pt}] \arrow[dl, "g'"'] && Y_2' \arrow[dd, "f_Y'" {yshift=10pt}] \arrow[dl, "g_Y'"] \\
		X_1' \arrow[dd, crossing over, "f_X"' {yshift=10pt}] && D_1 \arrow[ll, crossing over, "d_{X,1}"' {xshift=12pt}] \arrow[rr, crossing over, "d_{Y,1}" {xshift=12pt}] && Y_1'&\\
		&X_2 \arrow[dl, "g_X"'] && C_2 \arrow[ll, "c_{X{,}2}" {xshift=-12pt}] \arrow[rr, "c_{Y{,}2}"' {xshift=-12pt}] \arrow[dl, "g"] && Y_2 \arrow[dl, "g_Y"] \\
		X_1 && C_1 \arrow[from=uu, crossing over, "f"' {yshift=10pt}] \arrow[ll, "c_{X,1}"] \arrow[rr, "c_{Y,1}"'] && Y_1 \arrow[from=uu, crossing over, "f_Y" {yshift=10pt}] &
	\end{tikzcd}\]
	where the horizontal rectangles are in \thref{construction pullback correspondence}, the vertical rectangles are as in \thref{construction pushforward correspondence}, the square with vertices \(D_2,D_1,C_2,C_1\) is cartesian, and the maps \(f_X,f_X',f,f'\) are additionally representable by perfectly proper schemes.
	Let \(M\in \DM(X_1')\) and \(N\in \DM(Y_1')\), and let \(\alpha\in \Corr_{D_1}(M,N)\) be a motivic correspondence.
	Then there is a natural commutative diagram of correspondences
	\[\begin{tikzcd}[column sep=large]
		(X_2,g_{X}^!f_{X,!}M) \arrow[r, "g^!f_!(\alpha)"] \arrow[d] & (Y_2,g_{X}^!f_{X,!}N) \\
		(X_2,f_{X,!}' (g_{X}')^!M) \arrow[r, "f'_!(g')^!(\alpha)"'] & (Y_2,f_{X,!}'(g_X')^!N) \arrow[u],
	\end{tikzcd}\]
	where the base change morphisms \(g_X',f_{X,!}M\to f_{X,!}'(g_X')^!M\) and \(f_{X,!}'(g_X')^!N \to g_X^!f_{X,!}N\) are the ones used in \thref{construction pushforward correspondence}.
\end{lem}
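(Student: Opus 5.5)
The strategy is to write down both composite correspondences explicitly as morphisms in \(\DM(D_2)\), and compare them using base change isomorphisms. To make this possible, first reduce to pfp algebraic stacks. By \thref{assumption corr} and \thref{DM as colimit}, all motive categories are sequential colimits along fully faithful !-pullbacks. All the functors involved (*-pullback along \(c\), \(d\); !-pullback along the pro-smooth \(g\)'s; !-pushforward along the proper \(f\)'s) are defined termwise and commute with the transition maps, by purity and \thref{purity for pro-smooth morphisms}. So it suffices to prove the statement for a fixed finite stage \(i\), where every object is a pfp algebraic stack and the usual six-functor formalism applies. As in the associativity lemma, the two presentations of each correspondence space need not agree. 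One simply works with a common refinement.

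At that finite stage, unwind the definitions. By \thref{construction pushforward correspondence} and \thref{construction pullback correspondence}, \(g^!f_!(\alpha)\) is the composite
\[c_{X,2}^*g_X^!f_{X,!}M \to g^!c_{X,1}^*f_{X,!}M \to g^!f_!d_{X,1}^*M \xrightarrow{g^!f_!\alpha} g^!f_!d_{Y,1}^!N \to g^!c_{Y,1}^!f_{Y,!}N \cong c_{Y,2}^!g_Y^!f_{Y,!}N.\]
The other composite, \(f'_!(g')^!(\alpha)\) pre- and post-composed with the base change maps, reads
\[c_{X,2}^*g_X^!f_{X,!}M\to c_{X,2}^*f'_{X,!}g_X'^!M \to f'_!d_{X,2}^*g_X'^!M \to f'_!g'^!d_{X,1}^*M \xrightarrow{f'_!g'^!\alpha} f'_!g'^!d_{Y,1}^!N \to \cdots \to c_{Y,2}^!g_Y^!f_{Y,!}N.\]
Since the square \(D_2,D_1,C_2,C_1\) is cartesian and \(f\) is proper, there is a proper base change isomorphism \(g^!f_!\cong f'_!g'^!\). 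Naturality of this isomorphism turns the middle arrows \(g^!f_!\alpha\) and \(f'_!g'^!\alpha\) into one another. It then remains to check that the left-hand prefixes agree, and likewise the right-hand suffixes.

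Commutativity of those prefixes and suffixes is a compatibility between base change transformations over a cube of cartesian and commutative squares. The right-hand side involves only !-pullbacks and !-pushforwards, so it follows from the functoriality (2-categorical coherence) of the base change isomorphism for \(f_!\) against \(g^!\). The left-hand side mixes *-pullback along \(c_{X}\), \(d_X\) with !-pullback along the smooth \(g\)'s. Here I would use relative purity: write \(g^!=g^*\langle d\rangle\) (twist and shift by the relative dimension) for the perfectly smooth maps, reducing everything to compatibilities among *-pullbacks and !-pushforwards. Those are then standard, exactly as in \cite[Lemma A.2.16]{XiaoZhu:Cycles}.

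I expect the main obstacle to be this left face. The twists from purity must be the same for \(g_X\), \(g\) and \(g'\), which holds because the left horizontal squares are cartesian, so the relative dimensions agree. One must also check that the purity isomorphisms are compatible with the proper base change. I would verify this after pulling back to smooth atlases, where it becomes the scheme-level statement handled in \cite{CisinskiDeglise:Triangulated}. Finally, pass back to the colimit over \(i\).
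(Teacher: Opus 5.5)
Your proposal is correct and follows the paper's route. The paper's proof is exactly: reduce to pfp algebraic stacks via the colimit presentations, then apply the standard coherences of base change maps in the six-functor formalism, as in \cite[Lemma A.2.17]{XiaoZhu:Cycles} (the paper cites A.2.17, not A.2.16). Your unwinding of both composites, the proper base change $g^!f_!\cong f'_!(g')^!$, and the purity argument for the left face simply spell this out.

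You also do not need a common refinement of presentations. The prefix involves only the $X$-side maps and the suffix only the $Y$-side maps, so each can be checked at finite level in its own presentation and then passed to the colimit via purity. This is how the paper's associativity lemma handles mismatched presentations.
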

\begin{proof}
	The usual argument reduces us to the case of algebraic stacks, where it follows from usual properties of the six-functor formalism, as in \cite[Lemma A.2.17]{XiaoZhu:Cycles}.
\end{proof}

Finally, recall the following form of the Grothendieck--Lefschetz trace formula.

\begin{prop}\thlabel{motivic Grothendieck-Lefschetz}
	Let \(X/\IF_q\) be a finite type scheme, \(\Ff\in \DM(X,\IQ)\) a locally constructible motive (in the sense of \cite[Definition 3.3.1.12]{Cisinski:Cohomological}), and consider their pullback to \(\overline{\IF}_q\), denoted \((\overline{X},\overline{\Ff})\).
	Then the motivic correspondence
	\[\delta_{\overline{\Ff}} \circ (\Gamma_{\sigma_q}^*)^\sharp \in \Corr_{\overline{X}}\left((\overline{X}\times \overline{X},\overline{\Ff} \boxtimes \ID(\sigma_q^*\overline{\Ff})),(\overline{X} \times \overline{X},\overline{\Ff} \boxtimes \ID(\overline{\Ff}))\right)\]
	is supported on \(X(\IF_q)\subset \overline{X}\), and the value at a point \(x\in X(\IF_q)\) (with geometric point \(\overline{x}\)) is given by the trace of the geometric \(q\)-Frobenius on the stalk \(\Ff_{\overline{x}}\).
\end{prop}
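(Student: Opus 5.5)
The plan is to reduce to the classical statement for schemes of finite type over \(\overline{\IF}_q\) and to the known motivic trace formalism, checking support and values separately. The first step identifies the correspondence explicitly. The composite \(\delta_{\overline{\Ff}} \circ (\Gamma_{\sigma_q}^*)^\sharp\) is supported on the fiber product of the graph of \(\sigma_q\) (via \thref{products and correspondences}) with the diagonal \(\Delta\colon \overline{X}\to \overline{X}\times\overline{X}\), taken over \(\overline{X}\times \overline{X}\). This fiber product is the fixed-point scheme of Frobenius, i.e.\ \(\overline{X}^{\sigma_q}\). Because the Frobenius has vanishing differential, the graph and the diagonal meet transversally. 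Hence \(\overline{X}^{\sigma_q}\) is the finite reduced (étale) scheme \(\coprod_{x\in X(\IF_q)}\Spec \overline{\IF}_q\). This gives the support statement immediately, and the correspondence becomes an element of \(\bigoplus_{x}\Hom(\unit,\unit)\cong\bigoplus_x \IQ\).

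The second step computes each component. I would localize at a point \(x\in X(\IF_q)\) by pulling back along the inclusion of an open neighbourhood, which is compatible with correspondences by \thref{compatibility pullback correspondence}. I would then show that the local term equals the trace of the map \(\sigma_q^*\overline{\Ff}_{\overline{x}}\cong \overline{\Ff}_{\overline{x}}\) induced by the Weil structure on \(\Ff\). On the transversal locus the unit and counit correspondences from \thref{examples of correspondences} \eqref{co/unit corr} restrict to the categorical coevaluation and evaluation of the dualizable object \(\Ff_{\overline{x}}\in\DM(\Spec\overline{\IF}_q,\IQ)\). Here local constructibility ensures that the stalk is dualizable, i.e.\ constructible. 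The composite coevaluation, then Frobenius, then evaluation is by definition the categorical trace of Frobenius on the stalk, and this lies in \(\End(\unit)=\IQ\).

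The main obstacle I expect is justifying the local-term computation motivically. In general the naive local term can differ from the trace (Deligne--Fujiwara), and transversality of Frobenius is exactly what removes this discrepancy. The first approach I would try is to cite Cisinski's motivic Grothendieck--Lefschetz formula and the accompanying comparison of motivic and categorical local terms. For locally constructible \(\Ff\), \cite{Cisinski:Cohomological} proves exactly that the Frobenius correspondence's local terms are stalkwise traces. The remaining work is then only to match his conventions (\(f_!g^*\) versus \(f_*g^!\), and the geometric versus arithmetic Frobenius) with our normalization \(\sigma_q^*\). As a fallback, I would pass to \(\ell\)-adic realization, which is faithful on \(\End(\unit)=\IQ\). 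The classical Grothendieck--Lefschetz formula of \cite{SGA5}, in the form \cite[Lemma A.2.21]{XiaoZhu:Cycles}, then gives the result, and it suffices to check that the realization commutes with the constructions of the correspondences involved.
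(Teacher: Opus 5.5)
Your proposal is correct and takes essentially the same route as the paper. The paper identifies the statement with Cisinski's motivic Grothendieck--Lefschetz formula \cite[Theorem 3.4.2.25]{Cisinski:Cohomological}, and uses \cite[Example 3.4.2.12]{Cisinski:Cohomological} to read the trace as a \(0\)-cycle on the Frobenius fixed-point locus, which is \(X(\IF_q)\); your transversality discussion and the \(\ell\)-adic fallback are extra justification beyond that citation.
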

\begin{proof}
	This is a restatement of \cite[Theorem 3.4.2.25]{Cisinski:Cohomological}.
	Indeed, by \cite[Example 3.4.2.12]{Cisinski:Cohomological}, the object denoted \(\operatorname{Tr}\) in loc.~cit.~is viewed as a 0-cycle on the Frobenius-fixed point locus of \(\overline{X}\), which is exactly \(X(\IF_q)\).
\end{proof}

\bibliographystyle{alphaurl}
\bibliography{bib}
	
\end{document}